\def\ps@pprintTitle{%
   \let\@oddhead\@empty
   \let\@evenhead\@empty
   \def\@oddfoot{\reset@font\hfil\thepage\hfil}
   \let\@evenfoot\@oddfoot
}
\newcommand\rurl[1]{%
  \href{http://#1}{\nolinkurl{#1}}%
}
\theoremstyle{plain}
\newtheorem{thm}{Theorem}[section]
\newtheorem{corl}[thm]{Corollary}
\renewcommand{\theequation}{\arabic{section}.\arabic{equation}}
\theoremstyle{plain}
\newtheorem{therm}{Theorem}[subsection]
\newtheorem{propo}[therm]{Proposition}
\newtheorem{lema}[therm]{Lemma}
\newtheorem{corls}[therm]{Corollary}
\theoremstyle{definition}
\newtheorem{nx}[therm]{Remark}
\theoremstyle{definition}
\newtheorem{dn}{Definition}[subsection]
\newtheorem{rema}[dn]{Remark}
\newtheorem*{acknow}{Acknowledgments}
\theoremstyle{plain}
\newtheorem{theo}[dn]{Theorem}
\newtheorem{mde}[dn]{Proposition}
\newtheorem{conje}[dn]{Conjecture}
\def\DD{D\kern-.7em\raise0.4ex\hbox{\char '55}\kern.33em}
\renewcommand{\baselinestretch}{1.2}
\begin{document}
\fontsize{11.5pt}{11.5}\selectfont

\begin{frontmatter}

\title{On Peterson's open problem\\ and representations of the general linear groups}


\author{\DD\d{\u a}ng V\~o Ph\'uc}
\address{{\fontsize{10pt}{10}\selectfont Faculty of Education Studies, University of Khanh Hoa,\\ 01 Nguyen Chanh, Nha Trang, Khanh Hoa, Viet Nam\\[1mm]  \textit{(Dedicated to Professor Frank Williams)}}}
\ead{dangvophuc@ukh.edu.vn; dangphuc150488@gmail.com}

\begin{abstract}
Fix $\mathbb Z/2$ is the prime field of two elements and write $\mathcal A_2$ for the mod $2$ Steenrod algebra. Denote by $GL_d:= GL(d, \mathbb Z/2)$ the general linear group of rank $d$ over $\mathbb Z/2$ and by $\mathscr P_d$ the polynomial algebra $\mathbb Z/2[x_1, x_2, \ldots, x_d]$ as a connected unstable $\mathcal A_2$-module on $d$ generators of degree one. We study the {\it Peterson "hit problem"} of finding the minimal set of $\mathcal A_2$-generators for $\mathscr P_d.$ It is equivalent to determining a $\mathbb Z/2$-basis for the space of "cohits" $Q\mathscr P_d := \mathbb Z/2\otimes_{\mathcal A_2} \mathscr P_d \cong \mathscr P_d/\mathcal A_2^+\mathscr P_d.$ This $Q\mathscr P_d$ is also a representation of $GL_d$ over $\mathbb Z/2.$ The problem for $d= 5$ is not yet completely solved,  and unknown in general. In this work, we give an explicit solution to the hit problem of five variables in the generic degree $n = r(2^t -1) + 2^ts$ with $r = d = 5,\ s  =8$ and $t$ an arbitrary non-negative integer. An application of this study to the cases $t = 0$ and $t  = 1$ shows that the Singer algebraic transfer of rank $5$ is an isomorphism in the bidegrees $(5, 5+(13.2^{0} - 5))$ and $(5, 5+(13.2^{1} - 5)).$ Moreover, the result when $t\geq 2$ was also discussed. Here, the Singer transfer of rank $d$ is a $\mathbb Z/2$-algebra homomorphism from $GL_d$-coinvariants of certain subspaces of $Q\mathscr P_d$ to the cohomology groups of the Steenrod algebra, ${\rm Ext}_{\mathcal A_2}^{d, d+*}(\mathbb Z/2, \mathbb Z/2).$ It is one of the useful tools for studying mysterious Ext groups and the Kervaire invariant one problem.

\end{abstract}

\begin{keyword}

Adams spectral sequences; Steenrod algebra; Peterson hit problem; Algebraic transfer



\MSC[2010] Primary 55Q45, 55S10; Secondary 55S05, 55T15.
\end{keyword}
\end{frontmatter}

\tableofcontents

\section{Introduction and statement of results}\label{s1}
Throughout this article, we shall work only at the prime $2.$ Let $Sq^k: H^{*}(\mathbb X)\to H^{k+*}(\mathbb X)$ be the stable cohomology operation of degree $k\geq 0$, which is introduced by Steenrod in $1947$ (see \cite{S.E}). Here $H^{*}(\mathbb X)$ is the singular cohomology group of the topological space $\mathbb X$ with coefficients in $\mathbb Z/2.$ The $\mathbb Z/2$-graded algebra $\mathcal A_2$ generated by the operations $Sq^k$ is called {\it the mod $2$ Steenrod algebra} and acts in a natural way on the cohomology of any space $\mathbb X$. For $d$ a natural number, we denote by $B(\mathbb Z/2)^{\times d}$ the classifying space of elementary abelian $2$-group $(\mathbb Z/2)^{\times d}$  of rank $d$ and by $\mathscr P_d = \mathbb Z/2[x_1, x_2, \ldots, x_d]$ the polynomial algebra on $d$ variables of degree $1$. Of course, $B(\mathbb Z/2)^{\times d}$ is homotopy equivalent to $(\mathbb RP(\infty))^{\times d},$ where $\mathbb RP(\infty)$ denotes the infinite real projective space. Since $\mathscr P_d$ is isomorphic to the cohomology with $\mathbb Z/2$-coefficients of $B(\mathbb Z/2)^{\times d},$ it has a connected unstable left $\mathcal A_2$-module structure.  The left action of $\mathcal {A}_2$ on $\mathscr P_d$ is determined by the unstable condition $Sq^{1}(x_i) = x_i^{2}, \ Sq^{k}(x_i) = 0$ for $k > 1$ and Cartan's formula (see \cite{S.E}).

The investigation of the homotopy classification of topological spaces leads us to the study of the cohomology groups of the Steenrod algebra, ${\rm Ext}_{\mathcal A_2}^{d,*}(\mathbb Z/2, \mathbb Z/2).$ It has been thoroughly studied for homological degrees $d\leq 5$ (see Adams \cite{J.A2}, Adem \cite{Adem}, Wall \cite{Wall}, Wang \cite{Wang}, Tangora \cite{M.T}, Lin \cite{W.L}, Chen \cite{T.C}). However, for $d$ higher, the calculations seem to be difficult. Moreover, it has a deep connection with the "hit problem" of our interest in determining the minimal set of $\mathcal A_2$-generators for $\mathscr P_d.$ Equivalently, we need to find the dimension of the quotient space
$$Q\mathscr P_d := \mathbb Z/2\otimes_{\mathcal A_2} \mathscr P_d \cong \mathscr P_d/\mathcal A_2^+\mathscr P_d$$
in each $d$ and degree $n\geq 1.$ Here $\mathcal A_2^+$ denotes the {\it augmentation ideal} of $\mathcal A_2$ and $\mathbb Z/2$ is viewed as a right $\mathcal A_2$-module concentrated in grading $0.$ This $Q\mathscr P_d$ will also be called the space of "cohits". The hit problem was posed by Peterson \cite{F.P1} in 1987. However, it remains open for $d\geq 5.$

As well known, the general linear group $GL_d := GL(d, \mathbb Z/2)$ acts regularly on $\mathscr P_d$ by matrix substitution. Further, the two actions of $\mathcal A_2$ and $GL_d$ upon $\mathscr P_d$ commute with each other; hence there is an inherited action of $GL_d$ on $Q\mathscr P_d.$ From this event, one of the applications of the hit problem of Peterson is to study the representations of the general linear groups over $\mathbb Z/2.$ Therefrom the hit problem has attracted great interest of many algebraic topologists (see Crabb and Hubbuck \cite{C.H}, Crossley \cite{M.C1}, Kameko \cite{M.K}, Mothebe and his collaborators \cite{M.M, M.M2, M.K.R}, Nam \cite{T.N1}, Pengelley and Williams \cite{P.W1, P.W2}, Priddy \cite{S.P}, Silverman and Singer \cite{S.S}, Singer \cite{W.S2}, Peterson \cite{F.P2}, the present author \cite{P.S1, P.S2, D.P2, D.P3, D.P3-1, D.P9, D.P10, D.P11, D.P12}, Sum \cite{N.S1, N.S3, N.S, N.S5, N.S6, N.S7, N.S4}, Walker and Wood \cite{W.W1, W.W2, W.W3}, Wood \cite{R.W1, R.W2} and others). 

Several other aspects of the hit problems were then studied by many authors. For instance, the hit problem for the symmetric of polynomials $\mathscr P_d^{S_d}$ as an $\mathcal A_2$-submodule of $\mathscr P_d,$ has been of interest in \cite{J.W}, where $S_d$ is the symmetric group on $d$ letters acting on the right of $\mathscr P_d,$ and $\mathscr P_d^{S_d}$ is isomorphic to the cohomology algebra $H^{*}(BO(d))$ of the Grassmannian of $d$-dimensional vector subspaces of $\mathbb RP(\infty).$ The space $BO(d)$ is the classifying space of the orthogonal group $O(d),$ Note also that the symmetric polynomials in $d$ variables divisible by all of them can be identified with the cohomology algebra of the Thom space $MO(d)$ of the standard $d$-plane bundle over $BO(d)$ associated with the bordism theory of closed smooth manifolds.  In \cite{H.P, H.N}, H\uhorn ng and his collaborators have studied the hit problem for the rank $d$ Dickson algebra,  $\mathscr P_d^{GL_d}$ (the algebra of $GL_d$-invariants). The Dickson algebra is also an unstable $\mathcal A_2$-module and is dual to the coalgebra of Dyer-Lashof operations of the length $d$ (see Madsen \cite{Madsen}). The relationship between Kudo-Araki-May algebra and the hit problem has been investigated by Pengelley and Williams \cite{P.W5, P.W3, P.W4}, and by Singer \cite{W.S3}. In \cite{A.S}, Ault and Singer have examined the dual problem of the Peterson hit problem, which is to determine the set of $\mathcal A_2^+$-annihilated elements in the homology of $B(\mathbb Z/2)^{\times d}.$ Recently, Zare \cite{Zare} has used geometric methods to study the hit problem for $H_*(B(\mathbb Z/2)^{\times d})$ (the dual of the hit problem of Peterson) as well as the hit problem for $H_*(BO(d))$ (the dual of the symmetric hit problem of Janfada and Wood). His main idea is based on the relation between the Dyer-Lashof algebra and these dual hit problems. 
Let $P_{\mathcal A_2}H_{*}(B(\mathbb Z/2)^{\times d})$ be the subspace of $H_{*}(B(\mathbb Z/2)^{\times d})$ consisting of all elements that are $\mathcal A_2^+$-annihilated. With the idea of describing the cohomology groups of the Steenrod algebra by means of modular representations of the general linear groups, William Singer \cite{W.S1} constructed a transfer homomorphism of rank $d$ from $GL_d$-coinvariants of the $\mathcal A_2^+$-annihilated elements of the dual of $\mathscr P_d$ to the cohomology of the Steenrod algebra:
$$ Tr_d: \mathbb Z/2 \otimes_{GL_d}P_{\mathcal A_2}H_{*}(B(\mathbb Z/2)^{\times d})\to {\rm Ext}_{\mathcal {A}_2}^{d,d+*}(\mathbb Z/2,\mathbb Z/2),$$
which is related to the geometrical transfer $tr_d: \pi_*^{\mathbb S}(B(\mathbb Z/2)^{\times d}_+)\to \pi_{*}^{\mathbb S}(\mathbb S^0)$ of the stable homotopy of spheres. More explicitly, $tr_d$ induces $Tr_d$ at the $E^{2}$-term of the Adams spectral sequence \cite{J.A}.  These transfers play a key role in the study of the Kervaire invariant one problem, which is one of the oldest unresolved issues in Differential and Algebraic topology. This problem was first introduced by Browder \cite{Browder} where he indicated that smooth framed manifolds of Kervaire invariant one exist only in dimensions of the form $2^{i+1}-2,$ and that  a manifold exists in that dimension if and only if the class $h_i^{2}\in {\rm Ext}_{\mathcal A_2}^{2, 2^{i+1}}(\mathbb Z/2, \mathbb Z/2)$ in the $E^{2}$-term of the classical Adams spectral represents an element $\theta_i: \mathbb S^{2^{i+1}-2}\to \mathbb S^{0}$ in the stable homotopy groups of spheres $\pi^{\mathbb S}_{2^{i+1}-2}.$ These elements $\theta_i$ for $0\leq i\leq 5$ are known to exist (see also Lin-Mahowald \cite{Lin-Mahowald}), but they do not exist when $i\geq 7$ (see the work of Hill, Hopkins, Ravenel \cite{Hill} and the discussion therein). So far the case $i = 6$ is not yet known. 

Singer \cite{W.S1} points out the non-trivial value of $Tr_d$ by proving that it is an isomorphism for $d\leq 2.$ In 1993, by using a basis consisting of the all the classes represented by certain polynomials in $\mathscr P_3$, Boardman indicated in \cite{J.B} that $Tr_3$ is also an isomorphism. Through these events, the Singer cohomological transfer can be viewed as an useful tool in the study of the $d$-th cohomology groups of the Steenrod algebra. Many mathematicians then investigated this transfer map (see Bruner et. al. \cite{B.H.H}, Ch\ohorn n and H\`a \cite{C.Ha}, Crossley \cite{M.C2}, H\`a \cite{L.H}, H\uhorn ng \cite{V.H}, Minami \cite{N.M}, Nam \cite{T.N2}, the present author \cite{D.P2, D.P3, D.P3-1, D.P5, D.P6, D.P7, D.P8, D.P9, D.P9-1, D.P10, D.P11, D.P12}, Sum \cite{N.S, N.S5, N.S6, N.S4} and others). In \cite{W.S1}, using the invariant theory, Singer claims that $Tr_d$ is an isomorphism for $d = 4$ in a range of internal degrees, but $Tr_5$ is not an epimorphism. Afterwards, he gave a hypothesis that {\it the transfer $Tr_d$ is a monomorphism for any positive integer $d$}. This prediction is currently still open for arbitrary $d > 4.$ 

For a non-negative integer $n,$ let $(\mathscr P_d)_n$ be the subspace of $\mathscr P_d$ consisting of all the homogeneous polynomials of degree $n$ in $\mathscr P_d.$  Denote by $(Q\mathscr P_d)_n$ the subspace of $Q\mathscr P_d$ consisting of all the classes represented by the homogeneous polynomials in $(\mathscr P_d)_n.$ One of the extremely useful tools for computing the hit problem and studying Singer's transfer is the Kameko squaring operation \cite{M.K} $(\widetilde {Sq^0_*})_{(d, 2n+d)}: (Q\mathscr P_d)_{2n+d} \to (Q\mathscr P_d)_n,$ which is an epimorphism of  $\mathbb Z/2(GL_d)$-modules. We refer to Sect.\ref{s2} for its precise meaning. Let $\mu(n)$ denote the smallest number $u$ such that $\alpha(n + u)\leq u,$ where $\alpha(k)$ is the number of $1$'s in the dyadic expansion of the positive integer $k.$ By Kameko \cite{M.K}, if $\mu(2n+d) = d,$ then $(\widetilde {Sq^0_*})_{(d, 2n+d)}$ is an $\mathbb Z/2(GL_d)$-module isomorphism.

\newpage
Recall that to solve the hit problem of Peterson, we will determine $Q\mathscr P_d$ in each degree $n\geq 1.$ However, as explicitly shown in \cite{D.P2}, it is enough to consider this space in the following "generic degree":
\begin{equation} \label{ct}n =  r(2^t-1) + 2^ts,
\end{equation}
whenever $r,\ t, \ s$ are non-negative integers such that $0\leq \mu(s)  <  r \leq d.$ Hit problems have been completely solved in \cite{M.K, F.P1, N.S1, N.S3} for $d\leq 4.$ For $r = d - 1$ and $s > 0,$ it was investigated by Crabb-Hubbuck~\cite{C.H}, Nam \cite{T.N1}, Repka-Selick \cite{R.S}, and Sum \cite{N.S3}. For $r = d-1$ and $s = 0,$ it is partially studied by Mothebe \cite{M.M} and by us \cite{P.S1, P.S2}. The case $r = d - 2 =3$ was probed by the present author \cite{D.P2} for $s = 1,$ and by Sum \cite{N.S7} for $s = 2^{m+u} + 2^{m} - 2,\, m\geq 0,\ u > 0,\ t\geq 6.$ The recent results when $r = d = 5$ were explicitly determined in \cite{N.S6, N.S4, N.T1} for $s\in \{2, 3, 5, 7, 10\}$, and by the present author \cite{D.P3} for $s = 6.$ The case $r = d = 5,\ s = 26$ and $t=0$ was studied by Walker-Wood \cite{W.W1}. The authors indicated in \cite{W.W1} that in any minimal generating set for the $\mathcal A_2$-module $\mathscr P_d,$ there are $2^{\binom{d}{2}}$ elements in degree $2^{d} - d - 1.$ For $d = 5,$ we see that $2^{5} - 6 = 26 = 5(2^{0} - 1) + 26.2^{0}$ and $\dim((Q\mathscr P_5)_{5(2^{0} - 1) + 26.2^{0}}) = 2^{\binom{5}{2}} = 1024.$ More generally, in generic degree of form \eqref{ct} for $d = r = 5,\ s = 26$ and $t\geq 0,$ we have $\mu(5(2^{t}-1) + 26.2^{t}) = 5$ for all $t > 0.$ This implies that the iterated Kameko map $((\widetilde {Sq^0_*})_{(5, 5(2^{t} - 1) + 26.2^{t})})^{t}:  (Q\mathscr P_5)_{5(2^{t} - 1) + 26.2^{t}} \to (Q\mathscr P_5)_{26}$ is an isomorphism for all $t\geq 0.$ So, $Q\mathscr P_5$ has dimension $1024$ in degree $5(2^{t}-1) + 26.2^{t}$ for any $t\geq 0.$ This event and the result for the case $d = 6,\ t\geq 5$ have also been studied in \cite{N.T2}. For $d = r = 5,\ s = 42,\ t\geq 0,$ and $d = 6,\ r = 5,\ s = 42,\ t\geq 5,$ we remark that since $\mu(5(2^{t} - 1) + 42.2^{t}) = 5$ for $t > 0,$ the iterated homomorphism $((\widetilde {Sq^0_*})_{(5, 5(2^{t} - 1) + 42.2^{t})})^{t}:  (Q\mathscr P_5)_{5(2^{t} - 1) + 42.2^{t}} \to (Q\mathscr P_5)_{42}$ is an isomorphism for arbitrary $t \geq 0.$ By using a computer program of Robert R. Bruner written in MAGMA,  we get $\dim(Q\mathscr P_5)_{5(2^{t} - 1) + 42.2^{t}} = 2520$ for all $t \geq 0.$  The direct proofs of this result will be published in detail elsewhere. On the other hand,  notice that $\mu(42) = 4$ and $\alpha(42 + \mu(42)) = 4 = \mu(42),$ by Sum \cite[Theorem 1.3]{N.S3}, it may be concluded that
$$ \dim((Q\mathscr P_6)_{5(2^{t} - 1) + 42.2^{t}}) = (2^{6}-1)\dim((Q\mathscr P_{5})_{42}) = 158760\, \mbox{ for all }\, t\geq 5.$$
It is currently difficult to solve hit problems in the general case.

In the present work, based on our works in \cite{P.S1, P.S2, D.P2, D.P3}, we continue our study of the hit problem of five variables in generic degree of \eqref{ct}. At the same time, by using these computational techniques, we examine Singer's algebraic transfer of rank $5$ in some internal degrees.  More precisely, we explicitly determine a basis of $Q\mathscr P_d$ for $d  = 5$ and generic degree of \eqref{ct} with $r = d = 5,\ s  = 8$ and $t$ an arbitrary non-negative integer. (A basis of this space is a set consisting of all the classes represent by the \textit{admissible monomials} of degree $n$ in $\mathscr P_d.$ We refer to Sect.\ref{s2} for the concept of the admissible monomial.) Using this result combining with the computations of  ${\rm Ext}_{\mathcal{A}_2}^{5,13.2^t}(\mathbb{Z}/2, \mathbb{Z}/2)$ (see Tangora \cite{M.T}, Chen \cite{T.C}, Lin \cite{W.L}), and a direct computation using a result in \cite{C.Ha} on the representation in the $\mathbb Z/2$-lambda algebra $\Lambda$ of the transfer homomorphism of rank $5,$ we show that $Tr_5$ is an isomorphism when acting on $\mathbb Z/2 \otimes_{GL_5}P_{\mathcal A_2}H_{13.2^t-5}(B(\mathbb Z/2)^{\times 5})$ for $t \in \{0,\, 1\}.$ (The information on the algebra $\Lambda$ can be found below in this section.) This gives a quite efficient method to study the isomorphism of the fifth transfer in some internal degrees of \eqref{ct}. Furthermore, our approach is different from that of Boardman \cite{J.B}, and of Singer \cite{W.S1}. The following is our first main result.

\begin{thm}\label{dl1} 
Let us consider the generic degree $13.2^t-5$ with $t$ an arbitrary non-negative integer. Then, we have
$$  \dim(Q\mathscr P_5)_{13.2^t-5} = \left\{\begin{array}{ll}
 174 &\mbox{if $t = 0,$}\\
 840&\mbox{if $t = 1,$}\\
 1894 &\mbox{if $t \geq 2.$}
\end{array}\right.
$$
\end{thm}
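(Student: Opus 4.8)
The plan is to compute $\dim(Q\mathscr P_5)_{13\cdot 2^t-5}$ separately in the three regimes $t=0$, $t=1$, and $t\geq 2$, exploiting the Kameko squaring operation to reduce the infinite family $t\geq 2$ to a single degree.

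\medskip

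\textbf{Setup and reduction via Kameko.} First I would observe that the relevant degrees are $n_0 = 13\cdot 2^0 - 5 = 8$, $n_1 = 13\cdot 2^1 - 5 = 21$, $n_2 = 13\cdot 2^2 - 5 = 47$, and in general $n_t = 13\cdot 2^t - 5 = 5(2^t-1) + 8\cdot 2^t$, which is precisely the generic form \eqref{ct} with $r = d = 5$, $s = 8$. A direct check of the dyadic expansions shows $\mu(2n_t + 5) = \mu(13\cdot 2^{t+1} - 5) = 5$ for all $t\geq 1$ (equivalently $\mu(n_{t+1}) = 5$ when $t+1\geq 2$), so by Kameko's theorem the squaring operation
$$(\widetilde{Sq^0_*})_{(5,\, 2n_t+5)}: (Q\mathscr P_5)_{2n_t+5} \to (Q\mathscr P_5)_{n_t}$$
is a $GL_5$-isomorphism once $t\geq 2$, since then $n_{t} = 2n_{t-1}+5$ and $\mu(n_t) = 5$. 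Iterating, $(Q\mathscr P_5)_{n_t} \cong (Q\mathscr P_5)_{n_2}$ for all $t\geq 2$, so it suffices to resolve the three degrees $n=8$, $n=21$, $n=47$. For $n=8$ one has $\mu(8) = 1 < 5$, so this is a genuinely small degree; for $n=21 = 2\cdot 8 + 5$ the Kameko map to degree $8$ is surjective but not injective (since $\mu(21) = 3 \neq 5$), so degree $21$ sits one Kameko step above degree $8$ and one must control the kernel.

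\medskip

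\textbf{Explicit bases by the admissible-monomial method.} In each of the three degrees the strategy is the standard one: determine a basis of $(Q\mathscr P_5)_n$ consisting of the classes of admissible monomials (in the sense recalled in Sect.\ref{s2}), i.e. monomials not expressible modulo $\mathcal A_2^+\mathscr P_5$ in terms of monomials smaller in the left order. This splits into the $GL_5$-decomposition $(Q\mathscr P_5)_n = \bigoplus_{0\leq j\leq 5}(Q\mathscr P_5)_n^{(j)}$ indexed by the number of variables effectively appearing, where the "$j<5$" pieces are built from the known solutions in $\leq 4$ variables (by Kameko, Peterson, Nam, Sum, cited in the excerpt) together with the summation/inclusion operators $\mathscr P_j \hookrightarrow \mathscr P_5$, and the genuinely new work is the top piece $(Q\mathscr P_5^0)_n$ of classes represented by monomials in which all five variables occur with positive exponent. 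For that piece I would: (1) enumerate the candidate admissible monomials via the Peterson/spike criterion and the explicit criteria for (in)admissibility developed in \cite{N.S3, D.P1, D.P2, D.P3}; (2) verify that the listed monomials span by writing every other monomial of that degree as a hit element plus a linear combination of listed ones (this is the long bookkeeping with Cartan's formula and the Kameko map); (3) verify linear independence, typically by evaluating suitable $\mathcal A_2$-homomorphisms or by the "down" Kameko/Sum homomorphisms $f_i$ to smaller variable counts and checking the resulting linear system has full rank. Summing the dimensions of all pieces should give $174$, $840$, $1894$ respectively.

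\medskip

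\textbf{Main obstacle.} The hard part will be step (2)–(3) for the degree-$21$ and degree-$47$ top components: showing that a proposed finite list of admissible monomials is simultaneously spanning and independent in $(Q\mathscr P_5^0)_n$. Spanning requires exhibiting, for each of the (hundreds of) remaining monomials, an explicit expression as $\sum_k Sq^k(\text{something}) + (\text{listed monomials})$, which is where essentially all the computational weight of the paper lies; independence then requires a carefully chosen separating family of linear functionals. Degree $47 = n_2$ is the pivotal one, since by the Kameko reduction above it determines the answer $1894$ for the entire tail $t\geq 2$; in practice I expect to prove the Kameko map $(Q\mathscr P_5)_{47}\to(Q\mathscr P_5)_{21}$ is an isomorphism only after independently bounding $\dim(Q\mathscr P_5)_{47}\leq \dim(Q\mathscr P_5)_{21}$ via the generator-counting and using surjectivity of Kameko, so the two computations are intertwined. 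The degree-$8$ case is comparatively routine and can be cross-checked against Bruner's MAGMA output; it serves as the base of the induction and as a consistency check on the machinery used in the harder degrees.
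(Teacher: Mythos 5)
Your overall skeleton (iterated Kameko reduction to finitely many degrees, then explicit admissible-monomial bases split along $Q\mathscr P_5^0\oplus Q\mathscr P_5^+$ and along weight vectors) is the same as the paper's, and your treatment of degrees $8$ and $21$ is essentially right. But your plan for the pivotal degree $47$ rests on a false arithmetic claim and would produce the wrong number. You assert that $\mu(13\cdot 2^{t+1}-5)=5$ for all $t\geq 1$; this fails at $t=1$, since $47=(2^5-1)+(2^4-1)+(2^1-1)$ gives $\mu(47)=3<5$. Consequently Kameko's criterion does \emph{not} make $(\widetilde{Sq^0_*})_{(5,47)}:(Q\mathscr P_5)_{47}\to(Q\mathscr P_5)_{21}$ an isomorphism — it is only a ($GL_5$-equivariant) epimorphism, and indeed it cannot be injective because $\dim(Q\mathscr P_5)_{47}=1894>840=\dim(Q\mathscr P_5)_{21}$. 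Your stated strategy of "proving the Kameko map $(Q\mathscr P_5)_{47}\to(Q\mathscr P_5)_{21}$ is an isomorphism after bounding $\dim(Q\mathscr P_5)_{47}\leq\dim(Q\mathscr P_5)_{21}$" therefore contradicts the very value $1894$ you are trying to establish: that inequality is false, and carrying out your plan would yield $840$ for all $t\geq 1$. The correct reduction only applies the isomorphism for the maps $(Q\mathscr P_5)_{13\cdot 2^t-5}\to(Q\mathscr P_5)_{13\cdot 2^{t-1}-5}$ with $t\geq 3$ (where $\mu(13\cdot 2^t-5)=5$ does hold), stopping at degree $47$.

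What is missing is the computation of $\operatorname{Ker}(\widetilde{Sq^0_*})_{(5,47)}$, which carries $1894-840=1054$ dimensions and is the bulk of the work. Any admissible monomial $Y$ representing a nonzero class in this kernel has $\omega_1(Y)=3$ (by Singer's criterion applied to the minimal spike $x_1^{31}x_2^{15}x_3$), hence $Y=x_kx_\ell x_m g^2$ with $g$ an admissible monomial of degree $22$. So one is forced to first determine $(Q\mathscr P_5)_{22}$ completely — an entire intermediate hit-problem computation in a degree not of the form $13\cdot 2^t-5$, which your plan does not mention — and then to sort the resulting candidates in degree $47$ by weight vector ($(3,2,2,2,1)$, $(3,2,4,1,1)$, $(3,2,4,3)$, $(3,4,3,1,1)$, $(3,4,3,3)$), proving two of these pieces vanish and computing the dimensions $560+370+109+15=1054$ of the rest. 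A similar (smaller) kernel computation via $\mathscr B_5(9)$ and the weight vectors $(3,3,1,1)$, $(3,3,3)$ is what produces $840=174+400+196+70$ in degree $21$; you acknowledge this kernel for degree $21$ but the analogous, much larger step for degree $47$ is absent. (A minor further point: you call the all-exponents-positive piece $Q\mathscr P_5^0$, whereas in the paper's notation that piece is $Q\mathscr P_5^+$ and $Q\mathscr P_5^0$ is the complementary summand built from fewer variables.)
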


Note that $13.2^t-5 = 5(2^{t}-1) + 8.2^{t}.$ The theorem will be proved by determining explicitly an admissible monomial basis for $Q\mathscr P_5$ in generic degree $13.2^t-5.$ The first idea for the proof of the theorem is from Kameko's squaring operation. More clearly, since $$5(2^{t}-1) + 8.2^{t} = 2^{t+3} + 2^{t+2} + 2^{t-1} + 2^{t-2} + 2^{t-2}  - 5,$$ $\mu(13.2^t-5) = 5$ for all $t > 2,$ which leads to the iterated linear transformation $$(\widetilde {Sq^0_*})^{t-2}_{(5, 13.2^t-5)}: (Q\mathscr P_5)_{13.2^t-5} \to (Q\mathscr P_5)_{13.2^2-5}$$ being an isomorphism of $\mathbb Z/2(GL_5)$-modules for any $t \geq 2$ and therefore  we need only to  study the structure of $(Q\mathscr P_5)_{13.2^t-5}$ for $0\leq t \leq 2.$ The result when $t = 0$ was computed by T\'in \cite{N.T2}. We remark that for $t\in \{1, 2\},$ since $(\widetilde {Sq^0_*})_{(5, 13.2^t-5)}: (Q\mathscr P_5)_{13.2^t-5} \to (Q\mathscr P_5)_{13.2^{t-1}-5}$ is an epimorphism, we need only to determine the kernel of $(\widetilde {Sq^0_*})_{(5, 13.2^t-5)}.$ To study this space, we combine our recent results in \cite{P.S1} with previous results by Kameko \cite{M.K}, Mothebe \cite{M.M2, M.K.R}, Singer \cite{W.S2}, Sum \cite{N.S3}, and T\'in \cite{N.T2}. 

Recently, Sum \cite{N.S6} has proved some properties of $\mathcal A_2$-generators for $\mathscr P_d.$ Then, he made a conjecture on the relation between the admissible monomials for the polynomial algebras (see Sect.\ref{s3}). The conjecture helps us to reduce remarkably in computing the hit problem. From the results of Peterson \cite{F.P1}, Kameko \cite{M.K} and Sum \cite{N.S3}, this conjecture holds true for $d\leq 4.$ Sum proved in \cite{N.S6} that the conjecture is true in the case $d = 5$ and the degree $n$ of the form \eqref{ct} for $(r; s) = (5; 10)$ and $t\geq 0.$ Based upon the proof of Theorem \ref{dl1}, and previous results of the present author and Sum (see \cite{P.S1, P.S2, D.P2, D.P3, N.S7}), the conjecture also satisfies for $d = 5$ and in degrees of the form \eqref{ct} for $(r; s) = (4; 0),\ (5; 6),\ (5; 8)$ and $(3; s),$ for $s = 1,\ t \geq 0,$ and $s = 2^{m+u} + 2^{m} - 2,\, m\geq 0,\ u > 0,\ t\geq 6.$ 

It is well known that $\mathbb Z/2 \otimes_{GL_d}P_{\mathcal A_2}H_{n}(B(\mathbb Z/2)^{\times d})$ is dual to $(Q \mathscr P_d)^{GL_d}_n,$ the subspace of $Q \mathscr P_d$ generated all $GL_d$-invariants of degree $n.$ Computation of the $GL_d$-invariants is very difficult, particularly for values of $d$ as large as $d = 5.$ The understanding of special cases should be a helpful step toward the solution of the general problem. Now,  applying Theorem \ref{dl1} for $t =1,$ we get the following, which is our second main result.

\begin{thm}\label{dl2} 
There exists uniquely a non-zero class in $(Q\mathscr P_5)_{13.2^{1} - 5}$ invariant under the usual action of $GL_5.$ This implies that $(Q\mathscr P_5)_{13.2^{1} - 5}^{GL_5}$ is one-dimensional.
\end{thm}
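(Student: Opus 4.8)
The plan is to exploit the epimorphism structure already in hand from the proof of Theorem \ref{dl1}. Since $(\widetilde{Sq^0_*})_{(5,\,13.2^{1}-5)}\colon (Q\mathscr P_5)_{13.2^1-5}\to (Q\mathscr P_5)_{13.2^0-5}$ is a $\mathbb Z/2(GL_5)$-epimorphism, we have a short exact sequence of $\mathbb Z/2(GL_5)$-modules
$$0\to \ker(\widetilde{Sq^0_*})_{(5,\,13.2^1-5)}\to (Q\mathscr P_5)_{13.2^1-5}\to (Q\mathscr P_5)_{13-5}\to 0.$$
Applying the invariants functor $(-)^{GL_5}$ (which is only left exact) gives an exact sequence, so the first step is to compute separately $\big(\ker(\widetilde{Sq^0_*})_{(5,\,13.2^1-5)}\big)^{GL_5}$ and $(Q\mathscr P_5)_{13-5}^{GL_5}$, then decide whether the connecting behaviour forces the total dimension. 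The explicit admissible basis produced in Theorem \ref{dl1}, together with the splitting of $(Q\mathscr P_5)_n$ into the image of the Kameko map and the kernel, reduces the invariant computation to a finite linear-algebra problem: a class $\sum_{w} c_w [w]$ (sum over the admissible monomials $w$ of the relevant degree) is $GL_5$-invariant iff it is fixed by a generating set of $GL_5$, e.g. the permutation matrices realizing $\Sigma_5$ together with a single transvection $x_1\mapsto x_1+x_2$.

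First I would record that the bottom degree $n=8$ is small and well understood: one shows $(Q\mathscr P_5)_8^{GL_5}=0$ (for instance because $8=5(2^0-1)+8$ and the admissible basis in this degree, of dimension $174$, admits no nonzero $GL_5$-fixed vector — this is the case $t=0$ already handled by Tín \cite{N.T2}, and can be verified directly by imposing $\Sigma_5$-symmetry and then the transvection relation). Consequently the exact sequence collapses to an isomorphism
$$(Q\mathscr P_5)_{13.2^1-5}^{GL_5}\;\cong\;\big(\ker(\widetilde{Sq^0_*})_{(5,\,13.2^1-5)}\big)^{GL_5},$$
and the whole problem is transferred to the kernel, whose explicit basis is exactly the extra data computed in the proof of Theorem \ref{dl1} (the $840-174=666$ admissible monomials killed by the Kameko map). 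Within this kernel one first passes to the $\Sigma_5$-invariants: group the admissible monomials into $\Sigma_5$-orbits, so that a $\Sigma_5$-invariant class is a $\mathbb Z/2$-combination of orbit sums, cutting the unknowns down dramatically. Then impose invariance under the transvection $\rho\colon x_1\mapsto x_1+x_2$, $x_i\mapsto x_i$ $(i\geq 2)$: acting by $\rho$ on each orbit sum, re-expanding in the admissible basis modulo hit polynomials using the explicit straightening rules from \cite{P.S1, D.P3}, and collecting coefficients yields a homogeneous linear system over $\mathbb Z/2$ whose solution space is claimed to be one-dimensional, spanned by a single explicit class; exhibiting that class and checking $GL_5$-invariance directly proves existence, while the rank count proves uniqueness.

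The main obstacle will be the transvection step: re-expressing $\rho$ applied to an orbit sum back in terms of admissible monomials requires repeated application of the hit relations (the Cartan/Adem straightening that underlies the admissibility algorithm), and in degree $21$ with five variables the number of monomials involved is large, so bookkeeping errors are the real danger rather than any conceptual difficulty. A secondary subtlety is ensuring that the candidate invariant does not already lie in the image of the Kameko map lifted to invariants — i.e. confirming $(Q\mathscr P_5)_{8}^{GL_5}=0$ rigorously — since otherwise one would have to track the connecting map. I expect to organize the computation by the standard device of splitting monomials according to the set of variables actually occurring (the "spike" and near-spike parts are the ones that survive to $Q\mathscr P_5$), which keeps the transvection action block-triangular and makes the rank-one conclusion transparent; the final invariant class can then be written down explicitly as the symmetrization of a single near-spike monomial plus correction terms, and its invariance verified by a short direct check.
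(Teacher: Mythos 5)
Your proposal follows essentially the same route as the paper: one first proves $(Q\mathscr P_5)_{8}^{GL_5}=0$ so that every invariant of $(Q\mathscr P_5)_{21}$ lands in $\operatorname{Ker}(\widetilde{Sq^0_*})_{(5,21)}$, then decomposes that kernel by weight vectors, computes the $S_5$-invariants as orbit sums, and finally imposes the transvection $x_1\mapsto x_1+x_2$ to cut the invariant space down to the single class $[q_2]$. The only missing ingredient relative to the paper is the explicit exhibition of that surviving class, which is exactly what the straightening computation you describe produces.
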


One should note that Kameko's map $(\widetilde {Sq^0_*})_{(5, 13.2^{1} - 5)}:  (Q\mathscr P_5)_{13.2^{1} - 5} \to (Q\mathscr P_5)_{13.2^{0} - 5}$ is an epimorphism of $\mathbb Z/2(GL_5)$-modules. So, in order to prove Theorem \ref{dl2}, we describe the $\mathbb Z/2$-vector space structure of $(Q\mathscr P_5)^{GL_5}_{13.2^{0} - 5}.$ Using this and a monomial basis of $(QP_5)_{21}$ given in the proof of Theorem \ref{dl1}, we explicitly compute all $GL_5$-invariants of $Q\mathscr P_5$ in degree $13.2^{1} - 5.$ 

In addition to the Singer transfer mentioned above, the (mod 2) lambda algebra $\Lambda$ (see Bousfield et al. \cite{Bousfield}) is also one of  the important tools to study mod-2 cohomology groups ${\rm Ext}_{\mathcal{A}_2}^{d,d+*}(\mathbb{Z}/2, \mathbb{Z}/2).$ $\Lambda$ is defined as a differential, bigraded, associative algebra with unit over $\mathbb{Z}/2,$ is generated by $\lambda_i\in\Lambda^{1,i},$ satisfying the Adem relations 
\begin{equation}\label{ct2}
\lambda_i\lambda_{2i+d+1} = \sum_{j\geq 0}\binom{d-j-1}{j}\lambda_{i+d-j}\lambda_{2i+1+j}\ \ (i\geq 0,\ d\geq 0)
\end{equation}
and the differential 
\begin{equation}\label{ct3}
\partial (\lambda_{d-1}) = \sum_{j\geq 1}\binom{d-j-1}{j}\lambda_{d-j-1}\lambda_{j-1}\ \ (d\geq 1),
\end{equation}
where $\binom{d-j-1}{j}$ denotes the binomial coefficient reduced modulo $2.$ Furthermore, we have $$H^{d, *}(\Lambda, \partial) = {\rm Ext}_{\mathcal{A}_2}^{d, d+*}(\mathbb{Z}/2,\mathbb{Z}/2).$$ For non-negative integers $j_1, \ldots, j_d,$ a monomial  $\lambda_{j_1}\ldots\lambda_{j_d}\in \Lambda$ is called \textit{the monomial of the length $d$}. We shall write $\lambda_J,\ J = (j_1, \ldots, j_d)$ for $\prod_{1\leq k\leq d}\lambda_{j_k}$ and refer to $\ell(J) = d$ as the length of $J.$ It should be noted that the algebra $\Lambda$ is not commutative and that the bigrading of a monomial indexed by $J$ may be written $(d, n),$ where the homological degree $d,$ as above, is the length of $J,$ and $n = \sum_{1\leq k\leq d}j_k.$ A monomial $\lambda_J$ is called \textit{admissible} if $j_k\leq 2j_{k+1}$ for all $1\leq k \leq d-1.$ By the relations \eqref{ct2}, the $\mathbb Z/2$-vector subspace $$\Lambda^{d, *} = \langle \{\lambda_J| J = (j_1, \ldots, j_d), \ j_k\geq 0, 1\leq k\leq d\}\rangle$$ of $\Lambda$ has an additive basis consisting of all admissible monomials of the length $d.$ Recall that the dual of $\mathscr P_d$ is isomorphic to $\Gamma(a_1,\ldots,a_d),$ the divided power algebra generated by $a_1, \ldots, a_d,$ where $a_t = a_t^{(1)}$ is dual to $x_t$ with respect to the basis of $\mathscr P_d$ consisting of all monomials in $x_1, \ldots, x_d.$ In other words, $H_*(B(\mathbb{Z}/2)^{\times d}) = H_*((\mathbb{Z}/2)^{\times d})\cong \Gamma(a_1,\ldots,a_d)$. We note that the algebra $H_*(B(\mathbb{Z}/2)^{\times d})$ has a right $\mathcal A_2$-module structure. The right action of $\mathcal A_2$ on this algebra is given by $(a^{(j)}_t)Sq^{k} = \binom{j-k}{k}a^{(j-k)}_t = Sq_*^{k}(a^{(j)}_t)$ and Cartan's formula. (Note that $Sq_*^{k}$ denotes the dual of $Sq^{k}$.) In \cite{C.Ha}, Ch\ohorn n and H\`a have established a homomorphism $\psi_d: H_*(B(\mathbb{Z}/2)^{\times d})\longrightarrow \Lambda^{d, *},$ which is considered as a representation in the $\mathbb Z/2$-lambda algebra of Singer's transfer of rank $d$ and determined by the following inductive formula:
$$ \psi_d(a^{J}) = \left\{ \begin{array}{ll}
\lambda_{j_1}&\text{if}\; \ell(J) = 1, \\
\sum_{t\geq j_d}\psi_{d-1}(\prod_{1\leq k\leq d-1}a_{k}^{(j_{k})}Sq^{t-j_d})\lambda_t &\text{if}\; \ell(J) > 1,
\end{array} \right.$$
for any $a^{J} := \prod_{1\leq k\leq d}a_{k}^{(j_{k})}\in H_*(B(\mathbb{Z}/2)^{\times d})$ and $J := (j_1, j_2, \ldots, j_{d}).$ Note that $\psi_d$ is not an $\mathcal A_2$-homomorphism. The authors showed in \cite{C.Ha} that if $Z\in P_{\mathcal A_2}H_*(B(\mathbb{Z}/2)^{\times d}),$ then $\psi_d(Z)$ is a cycle in $\Lambda^{d, *}$ and $Tr_d([Z]) = [\psi_d(Z)]$. Applying this event and Theorem \ref{dl2} into the investigation of the Singer transfer of rank $5,$ we obtain the following theorem, which is our third main result.

\begin{thm}\label{dl3} 
The cohomological transfer 
$$ Tr_5: \mathbb Z/2 \otimes_{GL_5}P_{\mathcal A_2}H_{13.2^{1} - 5}(B(\mathbb Z/2)^{\times 5})\to {\rm Ext}_{\mathcal {A}_2}^{5,5+(13.2^{1} - 5)}(\mathbb Z/2,\mathbb Z/2)$$
is an isomorphism.
\end{thm}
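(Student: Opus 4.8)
The plan is to identify the target groups on both sides of $Tr_5$ in bidegree $(5, 5 + (13 \cdot 2 - 5)) = (5, 26)$ and to show the map between them is bijective by exhibiting a nonzero cycle in $\Lambda^{5,*}$ that represents the image of the unique $GL_5$-invariant produced by Theorem~\ref{dl2}. First I would recall from the literature on ${\rm Ext}_{\mathcal A_2}^{5,*}(\mathbb Z/2,\mathbb Z/2)$ (Tangora \cite{M.T}, Lin \cite{W.L}, Chen \cite{T.C}) that ${\rm Ext}_{\mathcal A_2}^{5,26}(\mathbb Z/2,\mathbb Z/2)$ is one-dimensional, generated by a specific well-documented class (in the standard naming, an element in stem $26-5 = 21$). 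Since Theorem~\ref{dl2} asserts $(Q\mathscr P_5)^{GL_5}_{13\cdot 2^1 - 5}$ is one-dimensional, its dual $\mathbb Z/2 \otimes_{GL_5} P_{\mathcal A_2} H_{13\cdot 2^1 - 5}(B(\mathbb Z/2)^{\times 5})$ is also one-dimensional. So both source and target are $\mathbb Z/2$, and the statement reduces to checking that $Tr_5$ does not send the generator of the source to $0$.

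Next I would make the source generator explicit. Combining the admissible monomial basis of $(Q\mathscr P_5)_{21}$ from Theorem~\ref{dl1} with the computation of $(Q\mathscr P_5)^{GL_5}_{21}$ carried out in the proof of Theorem~\ref{dl2}, I would write down a representative homogeneous polynomial $p \in (\mathscr P_5)_{21}$ whose class spans the invariant subspace; dually this gives an explicit $\mathcal A_2^+$-annihilated element $Z \in P_{\mathcal A_2} H_{21}(B(\mathbb Z/2)^{\times 5})$, expressed as a $\mathbb Z/2$-combination of monomials $a^{J} = \prod_{k} a_k^{(j_k)}$ with $\sum_k j_k = 21$. The fact that $Z$ is $\mathcal A_2^+$-annihilated is exactly the dual statement that the chosen polynomial class is hit by no $Sq^i$ nonsmoothing reduction, which I can verify from the admissibility data, or equivalently transfer directly from the invariance computation. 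By Chơn--Hà \cite{C.Ha}, since $Z \in P_{\mathcal A_2}H_*(B(\mathbb Z/2)^{\times 5})$, the element $\psi_5(Z)$ is automatically a cycle in $\Lambda^{5,*}$ and $Tr_5([Z]) = [\psi_5(Z)]$.

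Then I would compute $\psi_5(Z)$ using the inductive formula for $\psi_d$ recalled in the excerpt, reducing through $\psi_4, \psi_3, \psi_2$ down to $\psi_1(a^{(j)}) = \lambda_j$, and rewrite the resulting $\mathbb Z/2$-sum of length-$5$ monomials $\lambda_J$ in terms of the admissible monomial basis of $\Lambda^{5,26}$ via the Adem relations \eqref{ct2}. Finally I would compare the reduced expression with the known admissible-monomial representative of the generator of ${\rm Ext}_{\mathcal A_2}^{5,26}(\mathbb Z/2,\mathbb Z/2)$ in the lambda algebra (read off from Tangora's or Lin's tables, together with the differential \eqref{ct3} identifying which admissible cycles are boundaries): if the reduced $\psi_5(Z)$ is not a boundary, i.e.\ represents the nonzero class, then $Tr_5$ carries the generator of the source isomorphically onto the generator of the target, which proves the theorem. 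The main obstacle I expect is the bookkeeping in the last two steps: the inductive unfolding of $\psi_5$ produces a large number of length-$5$ terms with Steenrod operations applied to divided powers, and then reducing everything to admissibility through the Adem relations while keeping track modulo the image of $\partial$ is delicate; one must be careful that the final nonvanishing is robust, so I would cross-check by also verifying that $\psi_5(Z)$ lands in the correct $(5,26)$ bigrading and that no cancellation forces it into the span of boundaries. A secondary subtlety is making sure the chosen $Z$ is genuinely $\mathcal A_2^+$-annihilated (not merely hit-equivalent to such an element) so that the Chơn--Hà formula applies verbatim; this is guaranteed by working with the actual invariant/coinvariant duality rather than an arbitrary lift.
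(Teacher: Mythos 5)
Your proposal is correct and follows essentially the same route as the paper: both reduce the statement to showing that the image of the unique coinvariant class (dual to the invariant $q_2$ from Theorem~\ref{dl2}) is nonzero in the one-dimensional group ${\rm Ext}_{\mathcal A_2}^{5,26}(\mathbb Z/2,\mathbb Z/2)=\langle h_2f_0\rangle$, by writing down the explicit $\mathcal A_2^{+}$-annihilated element $Z$ and computing $\psi_5(Z)$ via the Ch\ohorn n--H\`a representation. The only cosmetic difference is that the paper short-circuits your final Adem-reduction step by exhibiting $\psi_5(Z)=\lambda_3\overline{f}_0+\partial(\lambda_3^2\lambda_5\lambda_{11})$ directly, so the class is visibly $h_2f_0$ (and note the cycle lives in $\Lambda^{5,21}$, not $\Lambda^{5,26}$, in the paper's grading convention).
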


As it is known, there exists an endomorphism $Sq^0$ of  the lambda algebra $\Lambda,$ determined by $Sq^0(\lambda_J = \prod_{1\leq k\leq d}\lambda_{j_k}) = \prod_{1\leq k\leq d}\lambda_{2j_k+1},$ where $\lambda_J$ is not necessarily admissible. It respects the relations in \eqref{ct2} and commutes with the differential $\partial$ in \eqref{ct3}. Then, $Sq^0$ induces the classical squaring operation in the Ext groups
$$Sq^0: H^{d, *}(\Lambda, \partial) = {\rm Ext}^{d, d+*}_{\mathcal {A}_2}(\mathbb Z/2, \mathbb Z/2) \to H^{d, d+2*}(\Lambda, \partial) = {\rm Ext}^{d, 2(d+*)}_{\mathcal {A}_2}(\mathbb Z/2, \mathbb Z/2).$$ This $Sq^0$ is not the identity map (see \cite{A.L}). 
As above mentioned, the structure of the groups ${\rm Ext}_{\mathcal A_2}^{d, d+*}(\mathbb Z/2, \mathbb Z/2)$ has been intensively studied by many authors, but remains very mysterious in general. In what follows, $(Sq^{0})^{t}: {\rm Ext}^{*, *}_{\mathcal {A}_2}(\mathbb Z/2, \mathbb Z/2) \to {\rm Ext}^{*, *}_{\mathcal {A}_2}(\mathbb Z/2, \mathbb Z/2)$ denotes the composite $Sq^{0}\ldots Sq^{0}$ ($t$ times of $Sq^{0}$)  if $t > 1,$ is $Sq^{0}$ if $t = 1,$ and  is the identity map if $t = 0.$ A family $\{a_t:\, t\geq 0\} \subset {\rm Ext}_{\mathcal A_2}^{d, d+*}(\mathbb Z/2, \mathbb Z/2)$ is called a $Sq^0$-family if $a_t = (Sq^0)^t(a_0)$ for $t\geq 0.$  We now return to the internal degree $13.2^{t}-5$ in Theorem \ref{dl1}. It has been shown (see Tangora \cite{M.T}, Lin \cite{W.L}, Chen \cite{T.C}) that
$${\rm Ext}_{\mathcal {A}_2}^{5, 5 + (13.2^t - 5)}(\mathbb Z/2,\mathbb Z/2)   = \left\{\begin{array}{ll}
0 &\mbox{if $t = 0$},\\ 
\langle h_{t+1}f_{t-1}\rangle &\mbox{if $t\geq 1,$}
 \end{array}\right.
$$ 
and that $h_{t+1}f_{t-1} = h_{t}g_{t}\neq 0,$  where $h_{t} = (Sq^0)^{t}(h_0)$ is the Adams element in ${\rm Ext}_{\mathcal A_2}^{1, 2^{t}}(\mathbb Z/2, \mathbb Z/2),$ $g_t = (Sq^0)^{t-1}(g_1)$ and $f_{t-1} = (Sq^0)^{t-1}(f_0)$ are the elements non-zero in ${\rm Ext}_{\mathcal {A}_2}^{4, 12.2^t}(\mathbb Z/2, \mathbb Z/2)$ and ${\rm Ext}_{\mathcal {A}_2}^{4, 11.2^t}(\mathbb Z/2, \mathbb Z/2),$ respectively, for any $t\geq 1.$ (Note that by Lin \cite{W.L}, the groups ${\rm Ext}_{\mathcal {A}_2}^{4, 4 + *}(\mathbb Z/2,\mathbb Z/2)  $ contains seven $Sq^{0}$-families of indecomposable elements, namely $$\{d_t\},\ \{e_t\},\ \{f_t\},\ \{g_{t+1}\},\ \{p_t\},\ \{D_3(t)\},\ \{p'_t\}\ (t\geq 0)).$$ As well known, Singer \cite{W.S1} showed that the transfer $Tr_1$ detects the family $\{h_t|\, t\geq 1\}$ and that $\bigoplus_{d\geq 0}Tr_d$ is an algebra homomorphism. Following Nam \cite{T.N2}, the family $\{f_{t-1}|\, t\geq 1\}$ was detected by $Tr_4.$ These data imply that $h_{t+1}f_{t-1}$ is in the image of $Tr_5$ for all $t\geq 1.$ In Sect.\ref{s5}, we give another direct proof of this event for the case $t=1.$ More specifically, we proved $h_2f_0\in {\rm Im}(Tr_5)$ by using Theorem \ref{dl2} and a representation of $Tr_5$ over the lambda algebra.

As above shown, to prove Theorem \ref{dl2}, we need to determine all $GL_5$-invariants of $(Q\mathscr P_5)_{13.2^{0} - 5}.$ Applying Theorem \ref{dl1} for $t = 0$ with a basis of  $Q\mathscr P_5$ in degree $13.2^{0} - 5$ (see \cite{N.T2}), we showed that $(Q\mathscr P_5)_{13.2^{0} - 5}^{GL_5}$ is zero (see Theorem \ref{bb8} in Sect.\ref{s4}). This result together with a fact of the fifth cohomology group ${\rm Ext}_{\mathcal {A}_2}^{5, 5 + (13.2^0 - 5)}(\mathbb Z/2,\mathbb Z/2),$ it may be concluded that $Tr_5$ is a trivial isomorphism when acting on the space $\mathbb Z/2 \otimes_{GL_5}P_{\mathcal A_2}H_{13.2^{0} - 5}(B(\mathbb Z/2)^{\times 5}).$ As an immediate consequence from this and Theorem \ref{dl3}, we get

\begin{corl}
Singer's conjecture for $Tr_5$ holds in the bidegrees $(5,5+8)$ and $(5, 5+21).$ 
\end{corl}

To end this introduction, we will discuss whether $Tr_5$ is an isomorphism or not in the bidegree $(5, 5 + (13.2^t - 5))$ for $t\geq 2.$ Since the iterated Kameko homomorphism $$((\widetilde {Sq^0_*})_{(5, 13.2^t-5)})^{t-2}:  (Q\mathscr P_5)_{13.2^t-5} \to (Q\mathscr P_5)_{13.2^2-5}$$ is an $GL_5$-module isomorphism for all $t\geq 2,$  to examine Singer's conjecture for $Tr_5$ in the above bidegree, we need only to determine all $GL_5$-invariants of $(Q\mathscr P_5)_{13.2^t-5}$ for $t = 2.$ Recall that Kameko's map $$(\widetilde {Sq^0_*})_{(5, 13.2^2-5)}:  (Q\mathscr P_5)_{13.2^2-5} \to (Q\mathscr P_5)_{13.2^1-5}$$ is an epimorphism of $GL_5$-modules and that the element $h_3f_1\in {\rm Ext}_{\mathcal {A}_2}^{5, 5 + (13.2^2-5)}(\mathbb Z/2,\mathbb Z/2) $ is in the image of $Tr_5.$ So, by Theorem \ref{dl2}, we deduce $$ 1\leq \dim\big(\mathbb Z/2 \otimes_{GL_5}P_{\mathcal A_2}H_{13.2^2-5}(B(\mathbb Z/2)^{\times 5})\big) \leq \dim ({\rm Ker}(\widetilde {Sq^0_*})_{(5,13.2^2-5)})^{GL_5} +  1.$$ Furthermore, all elements of $\mathbb Z/2 \otimes_{GL_5}P_{\mathcal A_2}H_{13.2^2-5}(B(\mathbb Z/2)^{\times 5})$ are of the form $$(\gamma[\varphi(u_0)] + [v])^*,$$ where $\gamma\in \mathbb Z/2,$ and the mapping $\varphi: \mathscr P_5\to \mathscr P_5$ determined by setting $\varphi(u) = x_1x_2x_3x_4x_5u^2$ for any $u\in \mathscr P_5, \ v \in (\mathscr P_5)_{13.2^2-5}$ such that $[v]$ belongs to $\text{Ker}(\widetilde {Sq^0_*})_{(5, 13.2^2-5)},$ and $u_0 \in (\mathscr P_5)_{13.2^1-5}.$ Based on Theorem \ref{dl2}, $[u_0]$ is the only non-zero element in $(Q\mathscr P_5)_{13.2^1-5}^{GL_5}.$ Direct calculating the elements $(\gamma[\varphi(u_0)] + [v])^*$ is a hard work. By using techniques of the hit problem of five variables, we will describe explicitly all these elements in the near future. From these data with the fact that $h_{t+1}f_{t-1}\in {\rm Im}(Tr_5)$ for $t\geq 1,$ we conclude that if $({\rm Ker}(\widetilde {Sq^0_*})_{(5,13.2^2-5)})^{GL_5}$ is zero, then $Tr_5$ is an isomorphism in the bidegree $(5, 5 +  (13.2^t - 5))$ for every $t\geq 2.$ This means that Singer's conjecture for $Tr_5$ also satisfies in this bidegree. However, it will be much more interesting if $({\rm Ker}(\widetilde {Sq^0_*})_{(5,13.2^2-5)})^{GL_5}$ is non-trivial and $\dim(Q\mathscr P_5)_{13.2^2-5}^{GL_5} \neq 1.$

The structure of the paper is as follows. First, some background is reviewed in Sect.\ref{s2}. In the next section, we present Singer's criterion on $\mathcal A_2$-decomposable and Sum's conjecture related to the minimal set of generators for $\mathcal A_2$-modules $\mathscr P_d.$ Then, the $\mathcal A_2$-generators for $\mathscr P_5$ in degree $13.2^t-5$ are described explicitly by proving Theorem \ref{dl1}. In Sect.\ref{s4}, we prove Theorem \ref{dl2} by using the admissible monomial bases of $(Q\mathscr P_5)_{13.2^0-5}$ and $(Q\mathscr P_5)_{13.2^1-5}.$ Based upon Theorem \ref{dl2} and a representation in the lambda algebra of the fifth Singer transfer, the proof of Theorem \ref{dl3} is presented in Sect.\ref{s5}.   All the admissible monomials of degree $13.2^t-5$ in $\mathscr P_5$ are described in the Appendix.

\begin{acknow}
This research is supported financially by the National Foundation for Science and Technology Development (NAFOSTED) of Viet Nam under Grant No. 101.04-2017.05. 

\medskip
The author is very grateful to the anonymous referees for the careful reading of the manuscript and for their insightful comments and detailed suggestions, which have led me to improve considerably this work.

\medskip

I would also like to thank Robert Bruner, Phan Ho\`ang Ch\ohorn n, Mbakiso Fix Mothebe for illuminating conversations on the hit problem and the Ext groups, Hadi Zare for sending a copy of \cite{Zare}, and L\ecircumflex~Minh H\`a for pointing out an error in the original manuscript. 


\medskip

Finally, the author would like to give my deepest sincere thanks to Professor Nguy\~\ecircumflex n Sum for many helpful discussions. 
\end{acknow}

\section{Preliminaries}\label{s2}
This section starts with a recollection of the Kameko squaring operation and some information related to the Peterson hit problem.

\subsection{Kameko's squaring operation}
Recall that the polynomial algebra $\mathscr P_d = \mathbb Z/2[x_1, \ldots, x_d]$ is an unstable left module on the ring $\mathcal A_2.$  Let $GL_d:=GL(d, \mathbb Z/2)$ denote the general linear group of rank $d$ over the field $\mathbb Z/2.$ An usual right action of this group on $\mathscr P_d$ is given by $((f)w)(x_1,x_2,\ldots,x_d) = f((x_1)w, (x_2)w,\ldots,(x_d)w),$ where $w = (w_{ij})\in GL_d$ and $(x_j)w = \sum_{1\leq i\leq d}x_iw_{ij},\,\,1\leq j\leq d.$ Thus, $\mathscr P_d$ (resp. $(\mathscr P_d)^{*}$) has also a right (resp. left) $GL_d$-module structure. Furthermore, since the two actions of $\mathcal A_2$ and $GL_d$ upon $\mathscr P_d$ (resp. $(\mathscr P_d)^{*}$) commute with each other, there is an inherited action of $GL_d$ on $Q\mathscr P_d$ (resp. $(Q\mathscr P_d)^{*} = P_{\mathcal A_2}H_*(B(\mathbb Z/2)^{\times d})$).

We knew that the homological algebra $\{H_n(B(\mathbb{Z}/2)^{\times d})| n\geq 0\}$ is dual to $\mathscr P_d.$ Moreover, it is isomorphic to $\Gamma(a_1, \ldots, a_d),$ the divided power algebra generated by $a_1, \ldots, a_d,$ each of degree one, where $a_j = a_j^{(1)}$ is dual to $x_{j}.$ Here the duality is taken with respect to the basis of $\mathscr P_d$ consisting of all monomials in $x_1, \ldots, x_d.$ We now denote by $P_{\mathcal A_2}H_n(B(\mathbb Z/2)^{\times d})$  the primitive subspace consisting of all elements in $H_n(B(\mathbb Z/2)^{\times d})$, which are annihilated by every Steenrod's operation $Sq^{k},\ k>0.$ So, it is dual to $(Q\mathscr P_d)_n.$ By Kameko \cite{M.K}, we have the monomorphism
$$ \begin{array}{ll}
\overline{Sq}^0: P_{\mathcal A_2}H_{n}(B(\mathbb Z/2)^{\times d}) &\longrightarrow P_{\mathcal A_2}H_{d+2n}(B(\mathbb Z/2)^{\times d})\\
\ \ \ \ \ \ \ \ \ \ \ \prod_{1\leq t\leq d}a_t^{(s_t)}&\longmapsto \prod_{1\leq t\leq d}a_t^{(2s_t+1)}
\end{array}$$
where $\prod_{1\leq t\leq d}a_t^{(s_t)}$ is dual to $\prod_{1\leq t\leq d}x_t^{s_t}.$ Further, $Sq_*^{2k+1}\overline{Sq}^0 = 0,$ and $Sq_*^{2k}\overline{Sq}^0 = \overline{Sq}^0Sq_*^{k}$ for any $k \geq 0,$ where $Sq_*^{k}$ denotes the dual Steenrod operation. Note that $\overline{Sq}^0$ is also an $GL_d$-module homomorphism (see \cite{B.H.H}, \cite{V.H}). Then, $\overline{Sq}^0$ induces Kameko's squaring operation in the dual of the spaces $(Q\mathscr P_d)_*^{GL_d}$: 
$$ \widetilde {Sq^0} = id_{\mathbb Z/2}\otimes_{GL_d} \overline{Sq}^0: \mathbb Z/2 \otimes_{GL_d}P_{\mathcal A_2}H_{n}(B(\mathbb Z/2)^{\times d})\to \mathbb Z/2 \otimes_{GL_d}P_{\mathcal A_2}H_{d+2n}(B(\mathbb Z/2)^{\times d}).$$
This $\widetilde {Sq^0}$ commutes with the classical squaring operation $$Sq^0: {\rm Ext}^{d, d+n}_{\mathcal {A}_2}(\mathbb Z/2, \mathbb Z/2)\to {\rm Ext}^{d, 2d+2n}_{\mathcal {A}_2}(\mathbb Z/2, \mathbb Z/2)$$ through the $d$-th Singer transfer (see \cite{A.C.H}, \cite{J.B}, \cite{N.M}). In other words, the following diagram is commutative:
$$ \begin{diagram}
\node{\mathbb Z/2 \otimes_{GL_d}P_{\mathcal A_2}H_{n}(B(\mathbb Z/2)^{\times d})} \arrow{e,t}{Tr_d}\arrow{s,r}{\widetilde{Sq^0}} 
\node{{\rm Ext}_{\mathcal A_2}^{d, d+n}(\mathbb Z/2, \mathbb Z/2)} \arrow{s,r}{Sq^0}\\ 
\node{\mathbb Z/2 \otimes_{GL_d}P_{\mathcal A_2}H_{d+2n}(B(\mathbb Z/2)^{\times d})} \arrow{e,t}{Tr_d} \node{{\rm Ext}_{\mathcal A_2}^{d, 2d+2n}(\mathbb Z/2, \mathbb Z/2).}
\end{diagram}$$
The dual homomorphism $\widetilde {Sq^0_*}: (Q\mathscr P_d)^{GL_d}_{d+2n}\to (Q\mathscr P_d)^{GL_d}_{n}$ of $\widetilde {Sq^0}$ is induced by the homomorphism $(\widetilde {Sq^0_*})_{(d, d+2n)}: (Q\mathscr P_d)_{d+2n} \to (Q\mathscr P_d)_n.$ The latter is given by the $\mathbb Z/2$-linear map 
$$ \begin{array}{ll}
\delta: (\mathscr P_d)_{d+2n}&\longrightarrow (\mathscr P_d)_n\\
x_1^{t_1}x_2^{t_2}\ldots x_d^{t_d}&\longmapsto \left\{\begin{array}{ll}
x_1^{\frac{t_1-1}{2}}x_2^{\frac{t_2-1}{2}}\ldots x_d^{\frac{t_d-1}{2}}& \text{if $t_1, \ldots, t_d$ odd},\\
0& \text{otherwise}.
\end{array}\right.
\end{array}$$ 
 
Denote by $\alpha(n)$ the number of $1$'s in dyadic expansion of $n.$ We consider the arithmetic function (see \cite{M.K}, \cite{N.S4}):
$$ \begin{array}{ll}
\mu(n) &= \mbox{min}\big\{u\in \mathbb N:\ \alpha(n + u)\leq u\big\}\\
& = {\rm min}\{u\in\mathbb N:\,\,n = \sum_{1\leqslant j\leqslant u}(2^{s_j}-1),\,\,s_j> 0,\,\,1\leqslant j\leqslant u\}.
\end{array}$$
From the above data, $(\widetilde {Sq^0_*})_{(d, d+2n)}$ is an $\mathbb Z/2(GL_d)$-module epimorphism. However, in particular, if $\mu(d+2n) = d$ then it is an isomorphism. According to H\uhorn ng \cite[Theorem 1.5]{V.H}, if Singer's conjecture for the $d$-th algebraic transfer is true, then $Tr_d$ does not detect the non-zero elements $u\in {\rm Ext}^{d, d+n}_{\mathcal {A}_2}(\mathbb Z/2, \mathbb Z/2)$ such that $Sq^0(u) = 0$ and $\mu(2n+d) = d.$ In this case, $u$ is called \textit{critical}. This leads us to the study of the kernel of $\widetilde {Sq^0}.$ Recall that $\overline{Sq}^0$ is a monomorphism, but the squaring operation $\widetilde {Sq^0}= id_{\mathbb Z/2}\otimes_{GL_d} \overline{Sq}^0$ is not a monomorphism in general. Indeed, by using a computer calculation, H\uhorn ng provided a counter-example in \cite{V.H} that $\widetilde {Sq^0}$ is not a monomorphism when acting on $\mathbb Z/2 \otimes_{GL_5}P_{\mathcal A_2}H_{15}(B(\mathbb Z/2)^{\times 5}).$ This was confirmed again by the works of Sum \cite{N.S, N.S4}. Thereafter, H\uhorn ng \cite{V.H} conjectured that \textit{$\widetilde {Sq^0}$ is a monomorphism if and only if $d\leq 4.$} By Boardman \cite{J.B}, and Singer \cite{W.S1}, the conjecture satisfies for $d\leq 3.$ We hope that it can be verified for $d = 4$ by using the dual of $\widetilde {Sq^0}$ and the results on the hit problem in \cite{N.S6}.

Thus, to verify Singer's conjecture, in addition to the techniques of the hit problem mentioned in this paper, we can use the relationship between the algebraic transfer and critical elements. However, finding critical elements is difficult.

\subsection{On the hit problem of Peterson}

To study the hit problem, we need some relevant notations and concepts. For a natural number $n,$ denote by $\alpha_t(n)$ the $t$-th coefficients in dyadic expansion of $n.$ This means $\alpha(n) = \sum_{t\geq 0}\alpha_t(n).$
Further, $n$ can be represented as follows: $n = \sum_{t\geq 0}\alpha_t(n)2^t,$ where $\alpha_t(n)\in \{0, 1\},\ t = 0, 1, \ldots.$ Consider the monomial $x = x_1^{u_1}x_2^{u_2}\ldots x_d^{u_d}\in \mathscr P_d,$ we define two sequences associated with $x$ by
$\omega(x) :=(\sum_{1\leq j\leq d}\alpha_{0}(u_j),\ \sum_{1\leq j\leq d}\alpha_{1}(u_j),\ldots,\ \sum_{1\leq j\leq d}\alpha_{t-1}(u_j),\ldots)$ and $(u_1, u_2, \ldots, u_d),$ which are called the {\it weight vector} and the \textit{exponent vector} of $x,$ respectively. From now on, we shall write $\omega_t(x)$ for $\sum_{1\leq j\leq d}\alpha_{t-1}(u_j),\ t = 1, 2, \ldots$

Let $\omega = (\omega_1, \omega_2, \ldots, \omega_t,\ldots)$ be a sequence of non-negative integers. Then,  the sequence $\omega$ are called \textit{the weight vector} if $\omega_t  = 0$ for $t\gg 0.$ We define $\deg(\omega) = \sum_{t\geq 1}2^{t-1}\omega_t.$ The sets of all the weight vectors and the exponent vectors are given the left lexicographical order.

Recall that a homogeneous element $f\in\mathscr P_d$ is called \textit{$\mathcal A_2$-decomposable} (or "\textit{hit}") if it is in the image of positive degree elements of $\mathcal A_2.$ This means that $f$ belongs to $\mathcal A_2^+\mathscr P_d.$ 

{\bf The equivalence relations on $\mathscr P_d$} (see \cite{M.K, D.P2}). For a weight vector $\omega,$ we denote two subspaces associated with $\omega$: $$\mathscr P_d(\omega) = \langle\{ x\in\mathscr P_d|\, \deg(x) = \deg(\omega),\  \omega(x)\leq \omega\}\rangle,$$ $$\mathscr P^{-}_d(\omega) = \langle \{ x\in\mathscr P_d|\, \deg(x) = \deg(\omega),\  \omega(x) < \omega\}\rangle.$$ Let us now consider the homogeneous polynomials $f,$ and $g$ in $\mathscr P_d$ with $\deg(f) = \deg(g).$ We define the following binary relations "$\equiv$" and "$\equiv_{\omega}$" on $\mathscr P_d$:    
\begin{enumerate}
\item [(i)]$f \equiv g $ if and only if $f = g\ {\rm modulo}\, (\mathcal{A}_2^+\mathscr P_d).$ Specifically, if $f \equiv 0$ then $f$ is $\mathcal A_2$-decomposable.
\item[(ii)] $f \equiv_{\omega} g$ if and only if $f, g\in \mathscr P_d(\omega)$ and $f  = g\ {\rm modulo}\, ((\mathcal{A}_2^+\mathscr P_d \cap \mathscr P_d(\omega))+ \mathscr P_d^-(\omega)).$
\end{enumerate}
It is easily seen that these binary relations are equivalence ones. Let $Q\mathscr P_d(\omega)$ denote the quotient of $\mathscr P_d(\omega)$ by the equivalence relation "$\equiv_\omega$". Then, we have the $\mathbb Z/2$-quotient space
$$Q\mathscr P_d(\omega) = \mathscr P_d(\omega)/((\mathcal{A}_2^+\mathscr P_d\cap \mathscr P_d(\omega)) + \mathscr P_d^-(\omega)).$$
By Sum \cite{N.S4}, $Q\mathscr P_d(\omega)$ is also an $GL_d$-module. The following events are shown in \cite{D.P2}. However, to make the paper self-contained, we will present again them in detail. 
$$ \dim((Q\mathscr P_d)_n) = \sum\limits_{{\rm deg}\,\omega = n}\dim(Q\mathscr P_d(\omega)),$$
$$  \dim((Q\mathscr P_d)^{GL_d}_n)\leq \sum_{\deg(\omega) = n}\dim(Q\mathscr P_d(\omega)^{GL_d}).$$
Indeed, by Walker and Wood \cite{W.W3}, we have a filtration of $Q\mathscr P_d$:
$$ \{0\}\subseteq \cdots \subseteq \mathscr P_d^{-}(\omega)/((\mathcal A_2^{+}\mathscr P_d) \cap \mathscr P_d^{-}(\omega)) \subseteq \mathscr P_d(\omega)/((\mathcal A_2^{+}\mathscr P_d) \cap \mathscr P_d(\omega)) \subseteq \cdots \subseteq \mathscr P_d/(\mathcal A_2^{+}\mathscr P_d) = Q\mathscr P_d.$$
This is not only a filtration of $Q\mathscr P_d$ as a vector space, but also as a $GL_d$-module. The inclusion of $\mathscr P_d^{-}(\omega)$ into $\mathscr P_d(\omega)$ induces the monomorphism $$\mathscr P_d^{-}(\omega)/((\mathcal A_2^{+}\mathscr P_d) \cap\mathscr P_d^{-}(\omega)) \to  \mathscr P_d(\omega)/((\mathcal A_2^{+}\mathscr P_d) \cap\mathscr P_d(\omega))$$ and the  following sequence is short exact:
$$ \begin{array}{ll}
0\to\mathscr  P_d^{-}(\omega)/((\mathcal A_2^{+}\mathscr P_d) \cap\mathscr P_d^{-}(\omega)) & \to\mathscr P_d(\omega)/((\mathcal A_2^{+}\mathscr P_d) \cap \mathscr P_d(\omega))\to \\
&\hspace{2cm}\to\mathscr P_d(\omega)/(((\mathcal{A}_2^+\mathscr P_d)\cap\mathscr P_d(\omega)) + \mathscr P_d^-(\omega))\to 0.
\end{array}$$
From this, we get  $$Q\mathscr P_d(\omega) \cong (\mathscr P_d(\omega)/((\mathcal A_2^{+}\mathscr P_d) \cap \mathscr P_d(\omega)))/(\mathscr P_d^{-}(\omega)/((\mathcal A_2^{+}\mathscr P_d) \cap\mathscr P_d^{-}(\omega))).$$
This isomorphism is also an isomorphism of $GL_d$-modules. Combining these with the filtration of $Q\mathscr P_d$, we have immediate the above claims.

{\bf The linear order on $\mathscr P_d$} (see \cite{M.K}).  Let $u = x_1^{a_1}x_2^{a_2}\ldots x_d^{a_d}$ and $v = x_1^{b_1}x_2^{b_2}\ldots x_d^{b_d}$ be monomials of the same degree in $\mathscr P_d.$ We write $a,\ b$ for the exponent vectors of $u, \ v,$ respectively. We say that $a < b$ if there is a positive integer $m$ such that $a_j = b_j$ for all $j < m$ and $a_m < b_m,$ and that $u  < v$ if and only if one of the following holds:
\begin{enumerate}
\item[(i)] $\omega(u) < \omega(v);$
\item[(ii)] $\omega(u) = \omega(v)$ and $a < b.$
\end{enumerate}

{\bf The inadmissible monomial} (see \cite{M.K}). We say that a monomial $u\in\mathscr P_d$ is {\it inadmissible}, if there exist monomials $x_1, x_2,\ldots, x_k$ such that $x_j < u$ for $1\leq j\leq k$ and $u \equiv  \sum_{1\leq i\leq k}x_i.$ Then, $u$ is said to be {\it admissible}, if it is not inadmissible.

Obviously, the set of all the admissible monomials of degree $n$ in $\mathscr P_d$ is \textit{a minimal set of $\mathcal {A}_2$-generators for $\mathscr P_d$ in degree $n.$} So, $(Q\mathscr P_d)_n$ is an $\mathbb Z/2$-vector space with a basis consisting of all the classes represent by the admissible monomials of degree $n$ in $\mathscr P_d.$

{\bf The strictly inadmissible monomial} (see \cite{M.K}). A monomial $u\in\mathscr P_d$ is said to be {\it strictly inadmissible} if and only if there exist monomials $x_1, x_2,\ldots, x_k$ such that $x_j < u$ for $1\leq j \leq k$ and $ u = \sum_{1\leq j\leq k}x_j + \sum_{1\leq  t\leq  2^s - 1}Sq^{t}(y_t),$ where $s = {\rm max}\{i\in\mathbb Z: \omega_i(u) > 0\}$ and suitable polynomials $y_t\in \mathscr P_d.$

Note that every the strictly inadmissible monomial is inadmissible but the converses not generally true (see a counter-example in \cite{D.P2}). The following result is used to study $Q\mathscr P_5$ in the next section.

\begin{theo}[see \cite{M.K}]\label{dlKS}
Let $x, y$ and $u$ be monomials in $\mathscr P_d$ such that $\omega_i(x) = 0$ for $i > r >0, \omega_t(u)\neq 0$ and $\omega_i(u)=0$ for $i  > t > 0.$ 
Then, if $u$ is inadmissible, then $xu^{2^r}$ is also inadmissible. Furthermore, if $u$ is strictly inadmissible, then $uy^{2^t}$ is also strictly inadmissible.
\end{theo}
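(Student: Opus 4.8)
The plan is to treat the two assertions in parallel, reducing each of them to the combination of a comparison of monomials in the linear order together with a statement that the Steenrod ``error terms'' thrown off by the Cartan formula are harmless. For the set-up, write $u = \sum_{1\le j\le k} x_j + h$ for a witness of the inadmissibility (resp.\ strict inadmissibility) of $u$, where $x_j < u$ for all $j$ and $h \in \mathcal A_2^+\mathscr P_d$ (and, in the strict case, $h = \sum_{1\le s\le 2^t-1}Sq^s(y_s)$ with $t = \max\{i:\omega_i(u)>0\}$ exactly as in the statement). Since the $2^k$-th power map is additive in characteristic $2$, raising to the power $2^r$ (resp.\ multiplying by $y^{2^t}$) yields
$$ x u^{2^r} = \sum_j x\,x_j^{2^r} + x\,h^{2^r}, \qquad\text{resp.}\qquad u\,y^{2^t} = \sum_j x_j\,y^{2^t} + h\,y^{2^t}. $$
So everything comes down to: (i) showing $x\,x_j^{2^r} < x\,u^{2^r}$, resp.\ $x_j\,y^{2^t} < u\,y^{2^t}$; and (ii) showing that $x\,h^{2^r}$ is again hit, resp.\ that $h\,y^{2^t} = \sum_s Sq^s(z_s)$ with each index $s$ lying in the range permitted by the definition of strict inadmissibility applied to $u\,y^{2^t}$.

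Step (i) I would carry out by bookkeeping with weight and exponent vectors. The hypothesis $\omega_i(x)=0$ for $i>r$ means every exponent of $x$ is $<2^r$, so in a product $x\,w^{2^r}$ the binary digits coming from $x$ (positions $<r$) and from $w$ (positions $\ge r$) never interfere: $\omega_i(x\,w^{2^r}) = \omega_i(x)$ for $i\le r$ and $\omega_i(x\,w^{2^r}) = \omega_{i-r}(w)$ for $i>r$. Hence the weight-vector comparison of $x\,x_j^{2^r}$ with $x\,u^{2^r}$ reduces verbatim to that of $\omega(x_j)$ with $\omega(u)$; when these coincide one descends to exponent vectors, where again no carries occur and the comparison reduces to that of $x_j$ with $u$. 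Either way $x_j<u$ forces $x\,x_j^{2^r}<x\,u^{2^r}$. For $x_j\,y^{2^t}<u\,y^{2^t}$ the same idea applies with the factor $u$ (whose exponents are $<2^t$) now in the role of $x$; the one point deserving care is that an $x_j$ with $x_j<u$ need not have all exponents $<2^t$, but since $\deg x_j=\deg u$ and $\omega(x_j)\le\omega(u)$ with $\omega_i(u)=0$ for $i>t$, either $\omega(x_j)$ already drops strictly below $\omega(u)$ within the first $t$ slots (so the comparison is settled there, carries or no carries, because the first $t$ slots of $\omega(x_j\,y^{2^t})$ still read off $\omega(x_j)$) or else $\omega(x_j)=\omega(u)$, in which case there are no carries at all and one passes to exponent vectors.

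Step (ii) splits into the easy strict case and the hard general case. In the strict case it is a one-line Cartan computation: since $Sq^q(y^{2^t})=0$ for $0<q<2^t$, one has $Sq^s(y_s)\,y^{2^t} = Sq^s(y_s\,y^{2^t})$ for $1\le s\le 2^t-1$, and as $2^t-1\le 2^{s'}-1$ with $s'=\max\{i:\omega_i(u\,y^{2^t})>0\}\ge t$, the element $h\,y^{2^t}$ is exactly of the form required and this assertion is finished. In the inadmissible case one must instead show $x\,h^{2^r}\in\mathcal A_2^+\mathscr P_d$, and this is the main obstacle. Writing $h=\sum_i Sq^{a_i}(c_i)$ with $a_i\ge1$ and using $(Sq^{a_i}c_i)^{2^r}=Sq^{2^r a_i}(c_i^{2^r})$, it suffices to prove that $x\cdot Sq^N(z)$ is hit whenever $x$ has all exponents $<2^r$, $z$ is a $2^r$-th power, and $N\ge1$ is a multiple of $2^r$. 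I would attack this by induction via the Cartan formula: since $Sq^q(z)=0$ unless $2^r\mid q$,
$$ x\cdot Sq^N(z) = Sq^N(xz) + \sum_{m\ge1} Sq^{2^r m}(x)\cdot Sq^{N-2^r m}(z), $$
in which $Sq^N(xz)$ is already hit and each surviving $Sq^{N-2^r m}(z)$ is again a $2^r$-th power; one then expands $Sq^{2^r m}(x)$ — using the Cartan formula on the monomial $x$, all of whose exponents are $<2^r$ — as a sum of monomials of the shape (monomial with exponents $<2^r$)\,$\times$\,($2^r$-th power), and feeds these back into the recursion. The point is that after finitely many steps the ``low'' factor is used up and one is left with perfect $2^r$-th powers $g^{2^r}=Sq^{\deg(g^{2^{r-1}})}(g^{2^{r-1}})$ of positive-degree polynomials, which are hit. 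Identifying the correct quantity to induct on so that this recursion provably terminates is the delicate part; granting it, one obtains $x\,u^{2^r}\equiv\sum_j x\,x_j^{2^r}$ with each $x\,x_j^{2^r}<x\,u^{2^r}$, which is precisely the statement that $x\,u^{2^r}$ is inadmissible. Combining this with the strict-case computation of Step (ii) and the orderings from Step (i) gives both halves of the theorem.
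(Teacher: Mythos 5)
The paper does not prove this theorem (it is quoted from Kameko's thesis), so your proposal has to stand on its own. Your treatment of the second assertion is correct and essentially complete: the identity $Sq^s(y_s)\,y^{2^t}=Sq^s(y_s\,y^{2^t})$ for $1\le s\le 2^t-1$, the observation that $\max\{i:\omega_i(uy^{2^t})>0\}\ge t$ so the error terms stay in the permitted range, and both order comparisons in your Step (i) (including the case analysis for an $x_j$ whose exponents need not be $<2^t$) are all sound. The comparison $x\,x_j^{2^r}<x\,u^{2^r}$ needed for the first assertion is likewise fine.

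The gap is exactly where you flag it, in Step (ii) for the inadmissible case, and it is not just a matter of finding the right induction quantity: the recursion cannot close term by term. After one Cartan step you have $x\,Sq^{2^ra}(g^{2^r})=Sq^{2^ra}(xg^{2^r})+\sum_{m\ge1}Sq^{2^rm}(x)\,(Sq^{a-m}g)^{2^r}$, and expanding $Sq^{2^rm}(x)$ into monomials $x'_\alpha h_\alpha^{2^r}$ turns the $m$-th term into $\sum_\alpha x'_\alpha\,(h_\alpha\,Sq^{a-m}g)^{2^r}$. The high factor $h_\alpha\,Sq^{a-m}(g)$ is a monomial times a hit element, not a hit element, and for $m=a$ it carries no Steenrod operation at all; so these summands are not instances of the statement you are inducting on, and individually they need not be hit. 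What rescues the argument is that you are proving more than the theorem requires: inadmissibility of $xu^{2^r}$ only asks that $xh^{2^r}$ be congruent, modulo $\mathcal A_2^+\mathscr P_d$, to a sum of monomials each \emph{smaller} than $xu^{2^r}$, not that it vanish. The single Cartan step above already delivers this. Indeed, every monomial occurring in $Sq^{2^rm}(x)$ with $m\ge 1$ has truncated weight vector $(\omega_1,\ldots,\omega_r)$ strictly smaller than that of $x$ in the left lexicographic order: if $Sq^{b_i}$ lands on $x_i^{e_i}$ with $b_i>0$ (so the binary digits of $b_i$ lie among those of $e_i<2^r$) and $p<r$ is the lowest set bit of $b_i$, then the digits of $e_i+b_i$ agree with those of $e_i$ below position $p$ and drop from $1$ to $0$ at position $p$; summing over the variables and taking $p$ minimal gives $\omega_j(Sq^{2^rm}(x))$-monomial $=\omega_j(x)$ for $j\le p$ and $<\omega_{p+1}(x)$ at $j=p+1$. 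Since $\omega_j(xu^{2^r})=\omega_j(x)$ for $j\le r$ and the factor $(\cdot)^{2^r}$ does not disturb the first $r$ slots, every leftover monomial is $<xu^{2^r}$ and can simply be absorbed into the list of smaller monomials witnessing inadmissibility, with only $Sq^{2^ra}(xg^{2^r})$ discarded as hit. With that replacement your proof of the first assertion closes; as written, it does not. (The stronger statement you aimed for --- that $xh^{2^r}$ is hit whenever $h$ is --- is in fact true and is what makes the inverse Kameko homomorphism $[w]\mapsto[X_{(\emptyset,d)}w^2]$ well defined, but it needs a different argument and is not needed here.)
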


Let $\mathscr P_d^0$ and $\mathscr P_d^+$ denote the $\mathcal A_2$-submodules of $\mathscr P_d$ spanned all the monomials $x_1^{t_1}x_2^{t_2}\ldots x_d^{t_d}$ such that $t_1t_2\ldots t_d = 0,$ and $t_1t_2\ldots t_d > 0,$ respectively. Denote by $Q\mathscr P_d^0:= \mathbb Z/2\otimes_{\mathcal A_2} \mathscr P_d^0,$ and by $Q \mathscr P_d^+:= \mathbb Z/2\otimes_{\mathcal A_2} \mathscr P_d^+.$  Then, we can see that $ Q\mathscr P_d = Q\mathscr P_d^0\,\bigoplus\, Q\mathscr P_d^+.$ We end this section by establishing a formula below on the dimension of $Q\mathscr P_d^0$ in degree $n$, which will be used in the next section. Note that this formula is similar to the one of \cite{M.K.R}. 

Let $\mathcal I= (j_1, j_2, \ldots, j_r),$ where $1 \leq j_1 < \ldots < j_r \leq d$,\, $1 \leq r \leq d-1,$ and let $r := \ell(\mathcal I)$ be the length of $\mathcal I.$ We denote $\mathscr P_{\mathcal I} = \langle \{x_{j_1}^{t_1}x_{j_2}^{t_2}\ldots x_{j_r}^{t_r}\;|\; t_s \in \mathbb N,\, s = 1,2, \ldots, r\}\rangle \subset \mathscr P_d$. Then, 
$\mathscr P_{\mathcal I}$ is $\mathcal {A}_2$-submodule of $\mathscr P_d.$ Moreover, it is isomorphic to $\mathscr P_r.$ Straightforward calculations indicate that
$$ Q\mathscr P_d^0  = \bigoplus_{1\leq r\leq d-1}\bigoplus_{\ell(\mathcal I) = r}Q\mathscr P_{\mathcal I}^+,$$
where $\mathscr P_{\mathcal I}^+=\langle\{x_{j_1}^{t_1}x_{j_2}^{t_2}\ldots x_{j_r}^{t_r}\in \mathscr P_{\mathcal I}\;|\; t_1t_2\ldots t_r > 0,\, 1\leq r\leq d-1\}\rangle.$ It is easily seen that $\dim (Q\mathscr P_{\mathcal I}^+)_n = \dim (Q\mathscr P_r^+)_n$ for all $n,$ and that $\binom dr$ is the number of the sequences $\mathcal I$ of length $r.$ Therefore, we get
$$ \dim(Q\mathscr P_d^0)_n = \sum_{1\leq  r\leq d-1}\binom{d}{r}\dim(Q\mathscr P_r^+)_n.$$  

\section{Generators of  the $\mathcal A_2$-module $\mathscr P_5$ in the generic degree $5(2^{t}-1) + 8.2^{t}$}\label{s3}

In this section we study the structure of $Q\mathscr P_5$ in degree $5(2^{t}-1) + 8.2^{t}$ for $t$ a positive integer. More explicitly, we will prove Theorem \ref{dl1} as given at the beginning. We first review some homomorphisms and related results, Sum's conjecture \cite{N.S6} and Singer's criterion on $\mathcal A_2$-decomposable \cite{W.S1}.

\subsection{Singer's criterion on $\mathcal A_2$-decomposable}\label{s3.3}

\begin{dn}\label{spi}
A monomial $z = x_1^{t_1}x_2^{t_2}\ldots x_d^{t_d}$ in $\mathscr P_d$ is called a {\it spike} if $t_j = 2^{a_j} - 1$ for $a_j$ a non-negative integer and $1\leq j \leq d.$ If $z$ is a spike with $a_1 > a_2 > \ldots > a_{r-1}\geq a_r > 0$ and $a_j = 0$ for $j  > r,$ then it is called a {\it minimal spike}.
\end{dn}

\begin{mde}[see \cite{P.S1, D.P2}]\label{PS}
All the spikes in $\mathscr P_d$ are admissible and their weight vectors are weakly decreasing. Furthermore, if a weight vector $\omega = (\omega_1, \omega_2, \ldots)$ is weakly decreasing and $\omega_1\leq d,$ then there is a spike $z$ in $\mathscr P_d$ such that $\omega(z) = \omega.$
\end{mde}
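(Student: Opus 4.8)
The plan is to prove the two assertions separately, both by direct manipulation of dyadic expansions and the definition of the linear order on $\mathscr P_d$. For the first assertion, let $z = x_1^{2^{a_1}-1}x_2^{2^{a_2}-1}\cdots x_d^{2^{a_d}-1}$ be a spike. The key observation is that for each $j$, the exponent $2^{a_j}-1$ has dyadic digits all equal to $1$ in positions $0,1,\dots,a_j-1$ and $0$ elsewhere. Hence if $\omega(z) = (\omega_1,\omega_2,\dots)$, then $\omega_t = \#\{j : a_j \geq t\}$, and since the sequence $(a_j)$ is (after rearrangement) a set of nonnegative integers, $\omega_t$ is weakly decreasing in $t$; this is precisely the "weakly decreasing weight vector" claim. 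To see that $z$ is admissible, I would argue that any monomial $z' \neq z$ with $\deg z' = \deg z$ and $z \equiv z' + (\text{lower terms})$ forces $z' > z$: the point is that a spike is the \emph{smallest} monomial in its degree having a weakly-decreasing weight vector that is componentwise maximal, so there is no way to write $z$ as a sum of strictly smaller monomials modulo $\mathcal A_2^+\mathscr P_d$. Concretely, one checks that applying any $Sq^t$ with $t>0$ to a monomial of degree $\deg z - t$ can only produce monomials whose weight vector is $\leq \omega(z)$ in the left lexicographic order, and equality of weight vector together with the minimality of the exponent vector of $z$ among such monomials shows $z$ cannot appear; this is the standard Kameko-type argument and I would cite the order-theoretic lemmas recalled just above (the linear order and the notions of (strictly) inadmissible monomial).

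For the second assertion, suppose $\omega = (\omega_1,\omega_2,\dots,\omega_k,0,0,\dots)$ is weakly decreasing with $\omega_1 \leq d$. I would construct the spike explicitly. Set, for each index $j$ with $1 \leq j \leq \omega_1$, the integer $a_j := \#\{t \geq 1 : \omega_t \geq j\}$, and put $a_j := 0$ for $\omega_1 < j \leq d$. Then $(a_j)$ is weakly decreasing (in fact strictly decreasing on $1 \leq j \leq \omega_1$ unless some $\omega_t$ are equal, in which case one can still realize it, but note the Proposition only claims existence of \emph{a} spike, not a minimal one), and one computes $\#\{j : a_j \geq t\} = \#\{j : \#\{s : \omega_s \geq j\} \geq t\} = \#\{j : \omega_t \geq j\} = \omega_t$, using the weak monotonicity of $\omega$. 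Setting $z := \prod_{1 \leq j \leq d} x_j^{2^{a_j}-1}$ then gives a spike with $\omega(z) = \omega$ by the digit computation from the first part. The condition $\omega_1 \leq d$ is exactly what guarantees we have enough variables to place the "tallest column" of the partition-like diagram of $\omega$.

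The main obstacle is not conceptual but bookkeeping: the proof of admissibility of a spike requires carefully controlling how $Sq^t$ acts on monomials of the appropriate degree and verifying that no strictly smaller monomial can occur in the expansion of $z$ modulo hits — i.e. genuinely checking strict inadmissibility cannot apply to $z$. I would handle this by invoking the description of the action of the Steenrod squares via the Cartan formula together with the observation that $Sq^t$ raises the weight vector in the wrong direction, so that $\mathscr P_d^-(\omega(z))$ and $\mathcal A_2^+\mathscr P_d \cap \mathscr P_d(\omega(z))$ together cannot contain $z$. Since this is essentially Kameko's argument (\cite{M.K}) refined in \cite{P.S1,D.P2}, I would present the digit-counting construction in full and refer to those sources for the admissibility verification, or reproduce it compactly using the equivalence relations "$\equiv$" and "$\equiv_\omega$" set up in Section \ref{s2}.
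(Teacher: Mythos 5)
Your computation of the weight vector of a spike (namely $\omega_t(z)=\#\{j:\,a_j\ge t\}$, which is visibly weakly decreasing) and your conjugate-partition construction for the converse are both correct; the identity $\#\{j:\,a_j\ge t\}=\omega_t$ follows from weak monotonicity of $\omega$ exactly as you say, the hypothesis $\omega_1\le d$ is precisely what lets the nonzero $a_j$ fit among $d$ variables, and the same counting gives $\deg z=\sum_t 2^{t-1}\omega_t=\deg(\omega)$. Note that the paper itself offers no proof of this proposition (it explicitly defers the details to the cited sources), so your proposal can only be measured against the standard argument.

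The admissibility part, however, contains a genuine gap. Your key claim --- that a spike is the \emph{smallest} monomial of its degree carrying its weight vector, so it cannot be congruent to a sum of strictly smaller monomials --- is false: $x_1^{3}x_2$ and $x_1x_2^{3}$ are both spikes of degree $4$ in $\mathscr P_2$ with the same weight vector $(2,1)$, and only the latter is minimal in the linear order, so your argument leaves the admissibility of $x_1^{3}x_2$ (and of every non-minimal spike) unestablished. Similarly, knowing that $Sq^{t}$ applied to monomials of degree $\deg z-t$ only yields monomials with weight vector $\le\omega(z)$ does not by itself prevent $z$ from lying in $\mathcal A_2^{+}\mathscr P_d$ plus smaller terms. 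The missing ingredient is the standard fact that a spike never occurs as a summand of $Sq^{k}(f)$ for any $k>0$ and any $f$: by the Cartan formula it suffices to check the one-variable case, i.e.\ that $\binom{2^{a}-1-k}{k}\equiv 0\pmod 2$ for $0<k\le 2^{a}-1$; since $2^{a}-1$ is all ones in binary, $2^{a}-1-k$ has a zero digit exactly where $k$ has a one, and Lucas's theorem gives the vanishing. Granting this, if $z=\sum_j y_j+\sum_i Sq^{i}(g_i)$ with every $y_j<z$ (hence $y_j\ne z$), then $z$ occurs with coefficient $1$ in $z+\sum_j y_j$ but with coefficient $0$ in $\sum_i Sq^{i}(g_i)$, a contradiction; this proves every spike is admissible, minimal in its weight class or not. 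I would replace your order-theoretic paragraph with this binomial-coefficient computation.
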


We refer the reader to \cite{D.P2} for the detailed proofs of the proposition. Singer showed in \cite{W.S2} that if $\mu(n)\leq d,$ then there exists uniquely a minimal spike of degree $n$ in $\mathscr P_d.$ Further,  we have the following, which is one of the important keys for examining the hit monomials in generic degrees.

\begin{theo}[Singer~\cite{W.S2}]\label{dlsig}
Suppose that $X\in \mathscr P_d$ is a monomial of degree $n,$ where $\mu(n)\leq d.$ Let $z$ be the minimal spike of degree $n$ in $\mathscr P_d.$ If $\omega(X) < \omega(z),$ then $X$ is $\mathcal A_2$-decomposable.
\end{theo}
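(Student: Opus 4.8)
\emph{Proof plan.} I would recast the statement in terms of the weight-vector filtration of $Q\mathscr P_d$ set up above. Since $\mathscr P_d$ has only $d$ variables, any monomial $X$ satisfies $\omega_1(X)\le d$, and because the subspaces $\mathscr P_d(\omega)$ exhaust $\mathscr P_d$, the theorem is equivalent to the claim that $Q\mathscr P_d(\omega') = 0$ for every weight vector $\omega' < \omega(z)$ with $\deg(\omega') = n$. Indeed, a monomial $X$ with $\omega(X) < \omega(z)$ lies in $\mathscr P_d(\omega(X))$, and the short exact sequences relating $\mathscr P_d^{-}(\omega)$, $\mathscr P_d(\omega)$ and $Q\mathscr P_d(\omega)$ force $X\in\mathcal A_2^{+}\mathscr P_d$ once all $Q\mathscr P_d(\omega'')$ with $\omega''\le\omega(X)$ vanish; conversely the vanishing of all such $Q\mathscr P_d(\omega')$ makes every monomial of weight $<\omega(z)$ hit. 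I would prove this vanishing by a descending induction on $\omega'$ in the left lexicographic order, starting just below $\omega(z)$, the base case being the smallest weight vector of degree $n$, where $\mathscr P_d^{-}(\omega')=0$.

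\emph{The inductive step.} The crucial observation is that no weight vector $\omega' < \omega(z)$ with $\omega_1'\le d$ and $\deg(\omega')=n$ can be weakly decreasing: if it were, Proposition \ref{PS} would yield a spike $z'$ in $\mathscr P_d$ with $\omega(z')=\omega'<\omega(z)$, hence $z'<z$ in the order on $\mathscr P_d$, contradicting that $z$ is \emph{the} minimal spike of degree $n$ (existence and uniqueness of which, when $\mu(n)\le d$, is Singer's \cite{W.S2}). So $\omega'$ has a strict ascent $\omega_i'<\omega_{i+1}'$ at some level $i$. Given a monomial $X$ with $\omega(X)=\omega'$, write each exponent $t_j$ as $t_j=a_j+2^{i}b_j$ with $0\le a_j<2^{i}$, so $X = A\cdot B^{2^{i}}$ with $A=\prod x_j^{a_j}$, $B=\prod x_j^{b_j}$; the ascent then says strictly more variables carry the $2^{i}$-bit than the $2^{i-1}$-bit, i.e.\ $\omega_i(A)<\omega_1(B)$. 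I would isolate from this interface a factor of $X$ supported on a bounded number of variables and of bounded degree which, \emph{because} of the ascent, is strictly inadmissible by an explicit identity built from $Sq^{1},\dots,Sq^{2^{i-1}}$ (the usual $\chi$-trick/Cartan bookkeeping), every error term of which is strictly smaller than $X$ in the monomial order. Theorem \ref{dlKS} then promotes strict inadmissibility of this local factor to inadmissibility of $X$ itself, so $X\equiv\sum(\text{strictly smaller monomials})$; by the descending induction (error terms of strictly smaller weight are hit) together with a secondary induction on the exponent vector within the fixed weight class, all these are hit, whence $X\equiv 0$. As the monomials of weight exactly $\omega'$ span $\mathscr P_d(\omega')$ modulo $\mathscr P_d^{-}(\omega')$, this gives $Q\mathscr P_d(\omega')=0$ and closes the induction.

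\emph{Main obstacle and an alternative.} The delicate point is the inductive step: writing down the explicit local relation at the ascent level and checking that \emph{every} term it produces is strictly smaller in the order by weight vector and then exponent vector, and that it is the ascent hypothesis $\omega_i'<\omega_{i+1}'$ — not merely the abstract failure of monotonicity — that makes the local monomial strictly inadmissible. An alternative, closer to Singer's original method, is to dualize: $X$ is hit iff $\langle X,p\rangle=0$ for every $\mathcal A_2$-primitive $p\in P_{\mathcal A_2}H_n(B(\mathbb Z/2)^{\times d})$, and one shows, using the right action $a_t^{(s)}Sq^{k}=\binom{s-k}{k}a_t^{(s-k)}$ and the description of $\mu(n)$, that every monomial occurring in such a primitive has weight vector $\ge\omega(z)$ — the minimal spike realizing the bottom of the admissible "top stratum" of primitives. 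Either way, the one input that cannot be bypassed is Singer's theorem \cite{W.S2} that when $\mu(n)\le d$ a minimal spike of degree $n$ exists and is unique.
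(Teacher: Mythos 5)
First, a point of reference: the paper does not prove this statement at all --- it is imported verbatim from Singer \cite{W.S2} (in the form popularized by Sum), so there is no internal proof to compare yours against; you are supplying a proof of a quoted result. Your opening reduction (the theorem is equivalent to $Q\mathscr P_d(\omega')=0$ for every weight vector $\omega'<\omega(z)$ of degree $n$, proved by descending induction on $\omega'$) is sound, and so is the idea that such an $\omega'$ cannot be weakly decreasing --- though that step silently uses the lemma that $\omega(z)$ is lexicographically least among the weight vectors of \emph{all} spikes of degree $n$. That is not the content of Definition \ref{spi} (which is a condition on the exponents $a_1>a_2>\cdots>a_{r-1}\ge a_r$) and does not follow formally from existence and uniqueness of the minimal spike; it is true, but it needs its own short combinatorial argument, which you should supply.

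The genuine gap is in the inductive step. You claim that the ascent $\omega_i'<\omega_{i+1}'$ lets you exhibit, for \emph{every} monomial $X$ of weight $\omega'$, an explicit identity built from $Sq^1,\dots,Sq^{2^{i-1}}$ ``every error term of which is strictly smaller than $X$,'' and you then close the argument by a secondary induction on the exponent vector inside the weight class. No such local identity exists in general. Take $d=3$, $n=5$: here $\mu(5)=3$, the minimal spike is $x_1^3x_2x_3$ with $\omega(z)=(3,1)$, and $\omega'=(1,2)<\omega(z)$ has the ascent you describe. For $X=x_1x_2^2x_3^2$ the only single applications of a Steenrod square whose Cartan expansion contains $X$ are $Sq^1(x_1x_2x_3^2)$, $Sq^1(x_1x_2^2x_3)$ and $Sq^2(x_1x_2x_3)$, and each of these also contains a monomial of the \emph{same} weight that is strictly \emph{larger} than $X$ (namely $x_1^2x_2x_3^2$ or $x_1^2x_2^2x_3$). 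So neither direction of your secondary induction closes: bottom-up, $X$ cannot be started; top-down, you only learn that $Q\mathscr P_3(\omega')$ is spanned by $[X]$, not that it vanishes. The monomial is hit only after combining relations, $x_1x_2^2x_3^2=Sq^1(x_1x_2x_3^2+x_1x_2^2x_3)+Sq^2(x_1x_2x_3)$, and your plan offers no procedure for producing such combinations in general --- producing them is precisely the hard content of the theorem. The input that cannot be bypassed is Singer's $\chi$-trick applied \emph{globally} to $X$: at the first level $k$ with $\sigma_k(X):=\sum_{i\le k}2^{i-1}\omega_i(X)<\sigma_k(z)$ write $X=g\,h^{2^k}$ with $\deg g=\sigma_k(X)$, move the total squaring operation producing $h^{2^k}$ across the product via $u\,\theta(v)\equiv\chi(\theta)(u)\,v \ (\mathrm{mod}\ \mathcal A_2^+\mathscr P_d)$, and kill the resulting terms using the Peterson--Wood theorem. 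That mechanism is genuinely different from ``local strict inadmissibility plus Theorem \ref{dlKS}.'' Your dual alternative in the last paragraph merely restates the theorem in $P_{\mathcal A_2}H_*(B(\mathbb Z/2)^{\times d})$ and does not prove it either.
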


\subsection{Some homomorphisms and Sum's conjecture}\label{s3.1}

For $1\leq k\leq d,$ we define the map $\rho_{(k,\,d)}: \mathscr P_{d-1}\to  \mathscr P_d$ of $\mathbb Z/2$-algebras by setting
$$ \rho_{(k,\,d)}(x_j) = \left\{ \begin{array}{ll}
{x_j}&\text{if }\;1\leq j < k, \\
x_{j+1}& \text{if}\; k\leq j < d.
\end{array} \right.$$

We consider the following set
$$ \mathcal N_d := \{(k; \mathscr K)\;|\; \mathscr K = (k_1,k_2,\ldots,k_r), 1\leqslant k < k_1< k_2 < \ldots < k_r\leq d, \ 0\leq r < d\},$$  
where by convention, $\mathscr K = \emptyset,$ if $r = 0.$ Denote by $r = \ell(\mathscr K)$ the length of $\mathscr K.$ For any $(k; \mathscr K)\in\mathcal{N}_d,$ we have the projection (see \cite{N.S3}) $\pi_{(k; \mathscr K)}: \mathscr P_d\to \mathscr P_{d-1},$ which is determined by 
$$\pi_{(k; \mathscr K)}(x_j) = \left\{ \begin{array}{ll}
{x_j}&\text{if }\;1\leq j < k, \\
\sum_{p\in \mathscr K}x_{p-1}& \text{if}\; j = k,\\
x_{j-1}&\text{if}\; k < j \leq d.
\end{array} \right.$$
Note that $\rho_{(k,\,d)}$ and $\pi_{(k; \mathscr K)}$ are also the homomorphisms of the $\mathcal {A}_2$-modules. In particular, we have $\pi_{(k; \emptyset)}(x_k) = 0$ for $1\leq k\leq d$ and $\pi_{(k; \mathscr K)}(\rho_{(k,\,d)}(u)) = u$ for any $u\in \mathscr P_{d-1}.$

\begin{mde}[see \cite{P.S1}]\label{mdPS}
If $x$ is a monomial in $\mathscr P_d,$ then $\pi_{(k; \mathscr K)}(x)\in \mathscr P_{d-1}(\omega(x)).$
\end{mde}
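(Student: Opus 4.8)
The plan is to prove the statement by tracking how the maps $\pi_{(k;\mathscr K)}$ interact with weight vectors, working monomial by monomial. Write $x = x_1^{t_1}x_2^{t_2}\cdots x_d^{t_d}$. The only variable whose image under $\pi_{(k;\mathscr K)}$ is not again a single variable is $x_k$, which maps to $\sum_{p\in\mathscr K}x_{p-1}$; the other variables are simply relabelled (with $x_j\mapsto x_j$ for $j<k$ and $x_j\mapsto x_{j-1}$ for $j>k$), so they contribute to $\pi_{(k;\mathscr K)}(x)$ without changing their exponents and hence without changing their contribution to any $\omega_i$. Thus the entire question reduces to understanding $\pi_{(k;\mathscr K)}(x_k^{t_k}) = \big(\sum_{p\in\mathscr K}x_{p-1}\big)^{t_k}$ and checking that, after multiplying by the (relabelled) remaining factors, every monomial appearing has weight vector $\leq \omega(x)$ in the left-lexicographic order.

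The key step is a lemma about the binomial/multinomial expansion modulo $2$: if one writes the dyadic expansion $t_k = \sum_{i\geq 0}\alpha_i(t_k)2^i$, then by Lucas' theorem every monomial occurring in $\big(\sum_{p\in\mathscr K}x_{p-1}\big)^{t_k}$ modulo $2$ has the form $\prod_{i}(\text{product of }2^i\text{-th powers of the }x_{p-1}\text{, one factor for each }1\text{ in }\alpha_i(t_k))$; concretely, such a monomial has exponent vector obtained by, for each $i$ with $\alpha_i(t_k)=1$, choosing some $x_{p-1}$ ($p\in\mathscr K$) and adding $2^i$ to its exponent. The crucial numerical observation is then that in any such monomial, the number of digits equal to $1$ in position $i$ of the exponents of the variables, summed over all variables, is \emph{at most} $\alpha_i(t_k)$ — because each $2^i$ coming from $t_k$ is deposited as a single power-of-two bump, and bumps in the same slot on the same variable would combine and carry, which can only \emph{decrease} $\omega_i$. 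Combined with the untouched contributions of the other variables, this yields $\omega_i(y)\leq \omega_i(x)$ for every component $i$ of any monomial $y$ appearing in $\pi_{(k;\mathscr K)}(x)$, after accounting for possible carries; and since a carry strictly lowers an earlier component while raising a later one, the resulting weight vector is $\leq\omega(x)$ lexicographically. Since $\deg(\pi_{(k;\mathscr K)}(x))=\deg(x)=\deg(\omega(x))$, this places $\pi_{(k;\mathscr K)}(x)$ in $\mathscr P_{d-1}(\omega(x))$ by definition of that subspace.

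I expect the main obstacle to be the careful bookkeeping of carries: when the same variable $x_{p-1}$ is selected for two different digit positions $i<i'$ of $t_k$, or when it already carried a nonzero exponent $t_{p-1}$ (note $p-1$ could be an index already present among the relabelled variables), the exponents add and binary carrying occurs. One must argue that each carry event produces a monomial whose weight vector is strictly smaller than the ``no-carry'' weight vector in left-lexicographic order (it moves weight from a lower index to a higher one), so that the bound $\omega(y)\leq\omega(x)$ is preserved rather than violated. This is exactly the type of estimate underlying the standard fact that $\omega(x_i^a x_i^b)\leq$ the componentwise sum of $\omega(x_i^a)$ and $\omega(x_i^b)$, and it can be packaged as a short sublemma on weight vectors of products, which I would state and prove first and then apply termwise to the multinomial expansion above. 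Once that sublemma is in hand, the proposition follows by linearity of $\pi_{(k;\mathscr K)}$ and the observation that $\mathscr P_{d-1}(\omega(x))$ is a subspace.
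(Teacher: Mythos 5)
Your proof is correct, and it is essentially the argument behind this proposition: the paper does not reprove it here but defers to \cite{P.S1}, where the same reasoning is used — expand $\big(\sum_{p\in\mathscr K}x_{p-1}\big)^{t_k}$ mod $2$ so that each surviving monomial distributes the dyadic digits of $t_k$ among the variables indexed by $\mathscr K$ (giving weight exactly $\omega(x_k^{t_k})$ before recombination), and then observe that any binary carry arising when these exponents are added to the untouched, relabelled exponents only moves weight from an earlier component to a later one, hence can only lower the weight vector in the left-lexicographic order, so every monomial in $\pi_{(k;\mathscr K)}(x)$ has weight $\leq\omega(x)$ and the same degree. The only cosmetic slip is that the relabelled variable $x_{p-1}$ (with $p\in\mathscr K$, so $p>k$) carries the original exponent $t_p$ rather than $t_{p-1}$, which does not affect the argument.
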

This result implies that if $\omega$ is a weight vector and $x\in \mathscr P_d(\omega(x)),$ then $\pi_{(k; \mathscr K)}(x)\in \mathscr P_{d-1}(\omega).$ Furthermore, $\pi_{(k; \mathscr K)}$ passes to a homomorphism from $Q\mathscr P_d(\omega)$ to $Q\mathscr P_{d-1}(\omega).$

Let  $(k; \mathscr K)\in\mathcal{N}_d,\ 0 < r < d,$ and let $ x_{(\mathscr K,\,u)} = x_{k_u}^{2^{r-1} + 2^{r-2} +\, \cdots\, + 2^{r-u}}\prod_{u < m\leq r}x_{k_m}^{2^{r-m}}$ for $1\leq u\leq r,\; x_{(\emptyset, 1)} = 1.$  In \cite{N.S3}, Sum has defined an $\mathbb Z/2$-linear transformation $\phi_{(k; \mathscr K)}: \mathscr P_{d-1}\to \mathscr P_d,$ which is determined by
 $$ \phi_{(k; \mathscr K)}(x) = \left\{ \begin{array}{ll}
\rho_{(k,\,d)}(x) & \text{if $\mathscr K = \emptyset$},\\
\dfrac{x_k^{2^{r} - 1}\rho_{(k,\,d)}(x)}{x_{(\mathscr K,\,u)}}&\text{if there exist $u$ such that:} \\
& t_{k_1 - 1} =t_{k_2 - 1} = \ldots = t_{k_{(u-1)} - 1} = 2^{r} - 1,\\
&t_{k_{u} - 1} > 2^{r} - 1,\\
&\alpha_{r-m}(t_{k_{u} - 1}) = 1,\;\forall m,\ 1\leq m\leq u,\ \mbox{and}\\
&\alpha_{r-m}(t_{k_{m}-1}) = 1,\;\forall m,\ u < m \leq r,\\
0&\text{otherwise},
\end{array} \right.$$
for any $x = x_1^{t_1}x_2^{t_2}\ldots x_{d-1}^{t_{d-1}}$ in $\mathscr P_{d-1}.$ Note that $\phi_{(k; \mathscr K)}$ is not an  $\mathcal A_2$-homomorphism in general. Moreover, for each $x\in \mathscr P_{d-1},$ if  $\phi_{(k; \mathscr K)}(x)\neq  0,$ then $\omega( \phi_{(k; \mathscr K)}(x)) = \omega(x).$ 

From now on, we adopt the following notations: For a natural number $d$, we consider $\Gamma_d = \{1,2,\ldots, d\},\ X_{(S,\, d)} = X_{(\{s_1,s_2,\ldots,s_r\},\,d)} = \prod_{s\in\Gamma_d\setminus S}x_s,$ where $S = \{s_1,s_2,\ldots,s_r\}\subseteq  \Gamma_d.$ In particular, $$X_{(\Gamma_d,\, d)} = 1,\ \ X_{({\emptyset},\, d)} = x_1x_2\ldots x_d,\ X_{(\{s\},\, d)} = x_1\ldots \hat{x}_s\ldots x_d,\,\, 1\leq s\leq d.$$ Now consider $X = x_1^{t_1}x_2^{t_2}\ldots x_d^{t_d}\in \mathscr P_d$ and let $S_j(X) = \{s\in \Gamma_d:\; \alpha_j(t_s) = 0\}$ for $j\geq 0.$ Then, by a simple computation, we get $X = \prod_{j\geq 0}X_{(S_{j}(X),\,d)}^{2^{j}}.$ 

The following examples on the map $ \phi_{(k; \mathscr K)}$ can be found in \cite{N.S3}. However, we present them in more detail.

(i) Let $\mathscr K = (j)$ and $1\leq k < j\leq d.$ Then, for any the monomial $x = x_1^{a_1}x_2^{a_2}\ldots x_{d-1}^{a_{d-1}}\in \mathscr P_{d-1}$ and $\alpha_0(a_{j-1}) = 1,$ we conclude $\phi_{(k; \mathscr K)}(x) = \dfrac{x_k\rho_{(k,\,d)}(x)}{x_j}.$

(ii) Let $m$ be a positive integer and let $x = Y^{2^{m}-1}y^{2^m}$ with $y = x_1^{b_1}x_2^{b_2}\ldots x_{d-1}^{b_{d-1}}$ and $Y  = X_{(\{d\}, d)}= x_1x_2\ldots x_{d-1}\in \mathscr P_{d-1}.$ Then if   $m > r = \ell(\mathscr K)$ and $u = 1$ then 
$$ \phi_{(k; \mathscr K)}(x) = \phi_{(k; \mathscr K)}(Y^{2^m-1})(\rho_{(k, d)}(y))^{2^m} = x_{k}^{2^r-1}\prod\limits_{1\leqslant t\leqslant r}x_{k_t}^{2^m-2^{r-t}-1}X^{2^m-1}_{(\{k, k_1, \ldots, k_r\}, d)}(\rho_{(k, d)}(y))^{2^m}.$$
Indeed, since $\rho_{(k, d)}$ is an $\mathbb Z/2$-algebras homomorphism, 
$$ \rho_{(k, d)}(x) = \rho_{(k, d)}(X^{2^{m}-1})(\rho_{(k, d)}(y))^{2^m}, \ 1\leq k\leq d.$$
Since $Y^{2^m-1} = x_1^{2^m-1}\ldots x_{d-1}^{2^m-1}$ and $2^d-1 > 2^r-1$, for each $(k; \mathscr K),\ \mathscr K = (k_1, k_2, \ldots, k_r)$ and $u = 1,$ we have
$$ \begin{array}{lll}
 \phi_{(k; \mathscr K)}(Y^{2^m-1}) &=& \dfrac{x_k^{2^r-1}x_2^{2^m-1}\ldots x_{k_1}^{2^m-1}\ldots x_{k_r}^{2^m-1}\ldots x_d^{2^m-1}}{x_{k_1}^{2^{r-1}}x_{k_2}^{2^{r-2}}\ldots x_{k_r}^{2^{r-r}}}\\
&=& x_k^{2^r-1}x_{k_1}^{2^m-2^{r-1}-1}\ldots x_{k_2}^{2^m- 2^{r-2}-1}\ldots x_{k_r}^{2^m- 2^{r-r}-1}X^{2^m-1}_{(\{k, k_1, \ldots, k_r\}, d)}\\
&=&x_k^{2^r-1}\prod\limits_{1\leqslant t\leqslant r}x_{k_t}^{2^m-2^{r-t}-1}X^{2^m-1}_{(\{k, k_1, \ldots, k_r\}, d)}.
\end{array}$$
Then, one gets
$$ \begin{array}{lll}
\phi_{(k; \mathscr K)}(x) &=& \phi_{(k; \mathscr K)}(Y^{2^m-1}y^{2^m})\\
&=& \dfrac{x_k^{2^r-1}\rho_{(k, d)}(X^{2^m-1}y^{2^m})}{x_{(\mathscr K; 1)}} = \left(\dfrac{x_k^{2^r-1}\rho_{(k, d)}(Y^{2^m-1})}{x_{(\mathscr K; 1)}}\right)(\rho_{(k, d)}(y))^{2^m}\\
&=&\phi_{(k; \mathscr K)}(X^{2^m-1})(\rho_{(k, d)}(y))^{2^m} \\
&=& x_k^{2^r-1}\prod\limits_{1\leqslant t\leqslant r}x_{k_t}^{2^m-2^{r-t}-1}X^{2^m-1}_{(\{k, k_1, \ldots, k_r\}, d)}(\rho_{(k, d)}(y))^{2^m}.
\end{array}$$
Now, if $m = r, \ b_{j-1} = 0,\ j = k_1, k_2, \ldots, k_{u-1}$ and $b_{k_u-1} > 0$ then for each $(k; \mathscr K)$ and $1\leq u\leq r=m,$ we have 
$$ \phi_{(k; \mathscr K)}(x) = \phi_{(k_u; \{k_{u+1}, \ldots, k_m\})}(Y^{2^m-1})(\rho_{(k, d)}(y))^{2^m}.$$ 
Indeed, we have
$$ \begin{array}{ll}
Y^{2^m-1}y^{2^m} &= (x_1\ldots x_{d-1})^{2^m-1}(x_1^{b_1}\ldots \hat{x}_{k_1}^{b_{k_1}}\ldots \hat{x}_{k_{(u-1)}-1}^{b_{k_{(u-1)}-1} }x_{k_u-1}^{b_{k_u-1}}\ldots x_{d-1}^{b_{d-1}})^{2^	m}\\
&=(x_1\ldots x_{k_1}\ldots x_{k_{u-1}-1}x_{k_u-1}\ldots x_{d-1})^{2^m-1}(x_1^{b_1}\ldots \hat{x}_{k_1}^{b_{k_1}}\ldots \hat{x}_{k_{(u-1)}-1}^{b_{k_{(u-1)}-1} }x_{k_u-1}^{b_{k_u-1}}\ldots x_{d-1}^{b_{d-1}})^{2^m}.
\end{array}$$
Then, we get
$$ \begin{array}{ll}
\phi_{(k; \mathscr K)}(x) &= \dfrac{x_k^{2^m-1}\rho_{(k, d)}(Y^{2^m-1})}{x_{k_u}^{2^{m-1}+2^{m-2}+\cdots+2^{m-u}}\prod\limits_{u< t \leqslant r}x_{k_t}^{2^{m-t}}}(\rho_{(k, d)}(y))^{2^m}.\\
&=\dfrac{x_k^{2^m-1}x_2^{2^m-1}\ldots x_{k_{u-1}}^{2^m-1} x_{k_u}^{2^m-1}\prod\limits_{u+1\leqslant t \leqslant m}x_{k_t}^{2^{m}-1}\ldots x_d^{2^m-1}}{x_{k_u}^{2^{m-1}+2^{m-2}+\cdots+2^{m-u}}\prod\limits_{u< t \leqslant m}x_{k_t}^{2^{m-t}}}(\rho_{(k, d)}(y))^{2^m}\\
&=x_k^{2^m-1}x_2^{2^m-1}\ldots x_{k_{u-1}}^{2^m-1} x_{k_u}^{2^{m-(u+1)} + \cdots + 2^{m-m}}\ldots x_d^{2^m-1}\\
&\quad \prod\limits_{u+1\leqslant t \leqslant m}x_{k_t}^{2^{m}-2^{m-t}-1}\ldots x_d^{2^m-1}(\rho_{(k, d)}(y))^{2^m}\\
&=\left(x_{k_u}^{2^{m-(u+1)} + \cdots + 2^{m-m}}\prod\limits_{u+1\leqslant t \leqslant m}x_{k_t}^{2^{m}-2^{m-t}-1}X^{2^m-1}_{(\{k_u, k_{u+1}, \ldots, k_m\}, d)}\right)(\rho_{(k, d)}(y))^{2^m}.\\
&= \phi_{(k_u; \{k_{u+1}, \ldots, k_m\})}(Y^{2^m-1})(\rho_{(k, d)}(y))^{2^m}.
\end{array}$$

We end this subsection by reviewing Sum's conjecture \cite{N.S6} on the relation between the admissible monomials for the polynomial algebras. 

For a subset $\mathcal U\subset \mathscr P_{d-1},$ we denote 
$$ \begin{array}{ll}
\overline{\Phi}^0(\mathcal  U) &= \bigcup\limits_{1\leq k \leq d}\phi_{(k; \emptyset)}(\mathcal  U) = \bigcup\limits_{1\leq k \leq d}\rho_{(k,\,d)}(\mathcal  U),\\
\overline{\Phi}^+(\mathcal  U) &= \bigcup\limits_{(k; \mathscr K)\in\mathcal{N}_d,\;0 < \ell(\mathscr K) < d}\phi_{(k; \mathscr K)}(\mathcal U)\setminus \mathscr P_d^0,\\
\overline{\Phi}(\mathcal  U) &= \overline{\Phi}^0(\mathcal  U) \bigcup \overline{\Phi}^+(\mathcal  U).
\end{array}$$
Since $\rho_{(k,\,d)}$ is a homomorphism of the $\mathcal A_2$-modules, if $\mathcal  U$ is a minimal set of generators for the $\mathcal A_2$-module $\mathscr P_{d-1}$ in degree $n$, then $\overline{\Phi}^0(\mathcal U) $ is also a minimal set of generators for the $\mathcal A_2$-module $\mathscr P_d^0$ in degree $n.$

Now, for a polynomial $f\in \mathscr P_d,$ we denote by $[f]$ the classes in $Q\mathscr P_d$ represented by $f.$ If $\omega$ is a weight vector and $f\in \mathscr P_d(\omega),$ then denote by $[f]_\omega$ the classes in $Q\mathscr P_d(\omega)$ represented by $f.$ For a subset $\mathscr{B}\subset \mathscr P_{d},$ we denote $[\mathscr B] = \{[f]\, :\, f\in \mathscr B\}.$ If $\mathscr B\subset  \mathscr P_d(\omega),$ then we set $[\mathscr B]_{\omega} = \{[f]_{\omega}\, :\, f\in \mathscr B\}.$

Denote by $\mathscr{B}_d(n)$ the set of all admissible monomials of degree $n$ in  $\mathscr P_d.$  Thus when we write $x\in \mathscr{B}_d(n)$ we mean that it is an admissible monomial of degree $n.$ We set
$$ \mathscr{B}_d^0(n) := \mathscr{B}_d(n)\cap (\mathscr P_d^0)_n,\,\,\mathscr{B}_d^+(n) := \mathscr{B}_d(n)\cap (\mathscr P_d^+)_n.$$
If $\omega$ is a weight vector of degree $n,$ we set 
$$ \mathscr{B}_d(\omega) := \mathscr{B}_d(n)\cap \mathscr P_d(\omega),\, \mathscr{B}_d^0(\omega) := \mathscr{B}_d(\omega)\cap (\mathscr P^0_d)_n,\, \mathscr{B}_d^+(\omega) := \mathscr{B}_d(\omega)\cap (\mathscr P^+_d)_n.$$
Then, $[\mathscr{B}_d(\omega)]_\omega,\, [\mathscr{B}^0_d(\omega)]_\omega$ and $[\mathscr{B}_d^+(\omega)]_\omega$ are respectively the bases of the $\mathbb Z/2$-vector spaces $$Q\mathscr P_d(\omega),\ Q\mathscr P_d^0(\omega):= Q\mathscr P_d(\omega)\cap (Q\mathscr P^0_d)_n\ \mbox{and }\, Q\mathscr P_d^+(\omega) := Q\mathscr P_d(\omega)\cap (Q\mathscr P^+_d)_n.$$

Throughout this paper, to prove a certain subset of  $Q\mathscr P_d$ is linearly independent, we use a result in Sum \cite{N.S3} combining with Theorem \ref{dlsig} (Singer's criterion on the $\mathcal A_2$-decomposable) and Proposition \ref{mdPS}. More precisely,  let $\mathscr B$ be a finite subset of $\mathscr P_d$ consisting of some monomials of degree $n.$ Denote by $|\mathscr B|$ the cardinal of $\mathscr B.$ To prove the set $[\mathscr B]$ is linearly independent in $(Q\mathscr P_d)_n,$ we denote the elements of $\mathscr B$ by $\mathcal Y_{n, \, i},\ 1\leq i \leq m = |\mathscr B|$ and assume that there is a linear relation
$$ \mathcal S = \sum_{1\leq i\leq m}\gamma_i\mathcal Y_{n,\, i} = 0\,\, {\rm modulo}(\mathcal A_2^+\mathscr P_d + \mathscr P_d^-(\omega)),$$
with $\gamma_i\in \mathbb Z/2$ for all $i,\, 1\leq i\leq m.$ For $(k; \mathscr K)\in \mathcal N_d,$ we explicitly compute $\pi_{(k; \mathscr K)}(\mathcal  S)$ in terms of the admissible monomials in $\mathscr P_{d-1}\, ({\rm modulo}(\mathcal A_2^+\mathscr P_{d-1} + \mathscr P_{d-1}^-(\omega))).$ Computing from some relations $\pi_{(k; \mathscr K)}(\mathcal  S) =  0\,\, {\rm modulo}(\mathcal A_2^+\mathscr P_{d-1} + \mathscr P_{d-1}^-(\omega))$ with $(k; \mathscr K)\in \mathcal N_d,$ we obtain $\gamma_i = 0$ for all $i.$

In \cite{N.S6}, Sum made the following conjecture, which plays an important role in studying the minimal set of $\mathcal A_2$-module $\mathscr P_d$ in certain generic degree. 

\begin{conje}[Sum \cite{N.S6}]\label{gtS}
If $\omega$ is a weight vector, then $\overline{\Phi}(\mathscr B_{d-1}(\omega))\subseteq \mathscr B_d(\omega).$
\end{conje}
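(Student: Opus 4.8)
The plan is to prove the inclusion $\overline{\Phi}(\mathscr B_{d-1}(\omega)) \subseteq \mathscr B_d(\omega)$ by splitting into the two pieces $\overline{\Phi}^0$ and $\overline{\Phi}^+$ and handling each by an admissibility argument built from Theorem \ref{dlKS} (Kameko's criterion) together with the structural facts about $\rho_{(k,d)}$ and $\phi_{(k;\mathscr K)}$ recalled above. For the $\overline{\Phi}^0$ part, since $\rho_{(k,d)}$ is a homomorphism of $\mathcal A_2$-modules, the remark immediately following the definition of $\overline{\Phi}^0$ already gives that $\overline{\Phi}^0(\mathscr B_{d-1}(\omega))$ is a minimal generating set for $\mathscr P_d^0$ in the relevant degree; what remains is to check that each $\rho_{(k,d)}(x)$ with $x$ admissible is itself an \emph{admissible} monomial of $\mathscr P_d$ (not merely a generator), and that $\omega(\rho_{(k,d)}(x)) = \omega(x)\leq \omega$. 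The latter is clear since $\rho_{(k,d)}$ just relabels variables; the former I would prove by contraposition: if $\rho_{(k,d)}(x)$ were inadmissible, one could pull back a hit relation along $\pi_{(k;\emptyset)}$ (using $\pi_{(k;\emptyset)}\rho_{(k,d)} = \mathrm{id}$ and the fact, from Proposition \ref{mdPS}, that $\pi_{(k;\mathscr K)}$ preserves weight vectors and the order $<$) to exhibit $x$ as inadmissible, a contradiction.

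For the harder $\overline{\Phi}^+$ part, fix $(k;\mathscr K)\in\mathcal N_d$ with $0<\ell(\mathscr K) = r < d$ and an admissible $x = x_1^{t_1}\cdots x_{d-1}^{t_{d-1}}\in\mathscr B_{d-1}(\omega)$ with $\phi_{(k;\mathscr K)}(x)\notin\mathscr P_d^0$. Using the worked formulas for $\phi_{(k;\mathscr K)}$ above, I would first reduce to a normal form: write $x = Y^{2^{r}-1}\,y^{2^{r}}$ (after splitting off the bottom $\mathcal A_2$-block, i.e. using $x = \prod_{j\ge 0} X_{(S_j(x),d-1)}^{2^j}$ and the hypothesis that forces $t_{k_1-1} = \cdots = t_{k_{u-1}-1} = 2^r-1$, $t_{k_u-1}>2^r-1$, with the prescribed binary digits), so that $\phi_{(k;\mathscr K)}(x)$ factors as $\phi_{(k_u;\{k_{u+1},\dots,k_r\})}(Y^{2^r-1})\cdot(\rho_{(k,d)}(y))^{2^r}$ up to the evident reindexing. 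The point of this factorization is that $\phi_{(k_u;\{k_{u+1},\dots,k_r\})}(Y^{2^r-1})$ is a fixed, explicit monomial in $\mathscr P_d$ whose weight vector is concentrated in degrees $\le r$, and one checks by hand that it is admissible; then I apply the second clause of Theorem \ref{dlKS} (if $u$ is strictly inadmissible then $u\,w^{2^t}$ is strictly inadmissible, with $t$ the top nonzero weight index of $u$) contrapositively, combined with the first clause ($u$ inadmissible $\Rightarrow xu^{2^r}$ inadmissible), to transfer admissibility of $x$ (hence of $\rho_{(k,d)}(y)$, which is admissible since $y$ is a genuine factor of the admissible $x$) through the $2^r$-th power to admissibility of $\phi_{(k;\mathscr K)}(x)$. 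The weight-vector identity $\omega(\phi_{(k;\mathscr K)}(x)) = \omega(x)$, already noted above whenever $\phi_{(k;\mathscr K)}(x)\neq 0$, then secures $\phi_{(k;\mathscr K)}(x)\in\mathscr P_d(\omega)$, and so $\phi_{(k;\mathscr K)}(x)\in\mathscr B_d(\omega)$.

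The main obstacle I anticipate is the admissibility transfer in the $\overline{\Phi}^+$ case: Theorem \ref{dlKS} propagates \emph{strict} inadmissibility through $2^t$-th powers, but only \emph{plain} inadmissibility through the mixed product $xu^{2^r}$, and the two notions are genuinely different (the excerpt flags a counterexample). So the delicate point is to arrange the factorization of $\phi_{(k;\mathscr K)}(x)$ so that whichever spreading inadmissibility we need is actually of the \emph{strict} type — this requires a careful bookkeeping of the weight vectors of the two factors and of where $\omega_i$ vanishes, to be sure the hypotheses $\omega_i(x) = 0$ for $i>r$ and $\omega_i(u) = 0$ for $i>t$ in Theorem \ref{dlKS} are met with the correct $r$ and $t$. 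A secondary subtlety is the corner case $m = r$ versus $m>r$ (equivalently $u\ge 2$ versus $u=1$) in the formulas above, each of which contributes a slightly different explicit leading monomial whose admissibility must be verified separately, together with the side condition that $\phi_{(k;\mathscr K)}(x)$ genuinely lies outside $\mathscr P_d^0$ so that we are indeed inside $\overline{\Phi}^+$ and not throwing the element away.
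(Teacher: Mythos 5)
Be aware first of what you are proving: the statement labelled \ref{gtS} is Sum's \emph{conjecture}, and the paper does not prove it in general. It is only known for $d\leq 4$ (by the cited results of Peterson, Kameko and Sum), and what this paper does is verify it for $d=5$ in the particular generic degrees under consideration, by explicitly computing the admissible bases $\mathscr B_5(\omega)$ (Propositions \ref{md21.2}, \ref{md21.3}, \ref{md22-1}, \ref{md22-2}, \ref{md47-1}, etc.) and then observing, in the Final remarks of Section \ref{s3}, that $\overline{\Phi}(\mathscr B_4(\omega))$ sits inside them. So a correct general argument would be a substantial new theorem, not a routine verification. Your treatment of the $\overline{\Phi}^0$ part is fine (and essentially known): pulling a hit relation for $\rho_{(k,d)}(x)$ back along $\pi_{(k;\emptyset)}$, using $\pi_{(k;\emptyset)}\rho_{(k,d)}=\mathrm{id}$ and the compatibility of the order with inserting a zero exponent, does show $\rho_{(k,d)}(x)$ is admissible.

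The $\overline{\Phi}^+$ part, however, has a fatal gap, and it is precisely the point you flag as an ``obstacle.'' First, the normal form $x=Y^{2^r-1}y^{2^r}$ is not available for a general admissible $x$ with $\phi_{(k;\mathscr K)}(x)\neq 0$: the displayed factorizations in Section \ref{s3.1} are worked out only for monomials of that special spike-times-square shape (example (ii)); the defining conditions for $\phi_{(k;\mathscr K)}(x)\neq 0$ constrain only the exponents $t_{k_1-1},\dots,t_{k_r-1}$, while the remaining exponents are arbitrary, so $\phi_{(k;\mathscr K)}(x)$ does not in general factor as a fixed small monomial times $(\rho_{(k,d)}(y))^{2^r}$. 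Second, and more fundamentally, Theorem \ref{dlKS} only propagates \emph{inadmissibility} from a factor $u$ to a product $xu^{2^r}$ (equivalently: admissibility of the product forces admissibility of the factor); it gives no implication in the direction you need, from admissibility of both factors to admissibility of the product, and no amount of ``strict versus plain'' bookkeeping supplies one, because a product $z\,w^{2^r}$ of admissible monomials can genuinely be inadmissible. This failure is exactly why the paper must prove the long lists of strictly inadmissible monomials (Lemmas \ref{bd21.2}--\ref{bd21.4}, \ref{bd22.2}--\ref{bd22.4}, \ref{bd47.1}--\ref{bd47.5}, \dots) to eliminate candidates of the form $X_{(\{i,j\},5)}y^2$ or $x_ix_jx_\ell y^2$ with $y$ admissible, and then separately establish linear independence via the maps $\pi_{(k;\mathscr K)}$; there is no general mechanism that certifies admissibility of such products. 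As written, your argument therefore does not prove the conjecture; it would at best reprove the easy $\overline{\Phi}^0$ inclusion, while the $\overline{\Phi}^+$ inclusion remains open.
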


Obviously, if this conjecture is true, then $\overline{\Phi}(\mathscr B_{d-1}(n))\subseteq \mathscr B_d(n)$  for any positive integer $n.$ In other words, if $x\in \mathscr B_{d-1}(n),$ then $ \phi_{(k; \mathscr K)}(x) \in \mathscr B_d(n).$ 
By previous results of Peterson \cite{F.P1}, Kameko \cite{M.K} and Sum \cite{N.S3}, the conjecture is true for $d\leq 4.$ In particular, we have the following remark.

\begin{rema}\label{nxax}
Consider the spike monomial $Y =X_{(\{d\}, d)} = x_1x_2\ldots x_{d-1}\in \mathscr P_{d-1}$. Let $m$ be a positive integer such that $m > r = \ell(\mathscr K).$ Then, from the above calculations, we have
$$ \phi_{(k; \mathscr K)}(Y^{2^m-1}) = x_{k}^{2^r-1}\prod_{1\leq t\leq r}x_{i_t}^{2^m-2^{r-t}-1}X^{2^m-1}_{(\{k, k_1, \ldots, k_r\}, d)}.$$ 
It is easy to see that $\omega(Y) = \underset{\mbox{{$m$ times of $(d-1)$}}}{\underbrace{(d-1, d-1, \ldots, d-1}}).$ Based on the results in \cite{P.S2, N.S7}, the set $\{\phi_{(k; \mathscr K)}(Y^{2^m-1}): (k; \mathscr K)\in \mathcal{N}_d\}$ is a basis of $Q\mathscr P_{d}(\omega(Y)).$ Note that this also holds true for $m\leq r$ (see Sum \cite{N.S7}). By Proposition \ref{PS}, $Y^{2^m-1}$ is admissible. Combining these data, Sum's conjecture is true for the weight vector $\omega(Y),$ where $d$ is an arbitrary positive integer.
\end{rema}

In \cite{N.S6}, Sum showed that Conjecture \ref{gtS} is true for $d = 5$ and any weight vector of generic degree of \eqref{ct} with $r = d = 5,$  $s = 10$ and $t\geq 0.$  In the next subsection, we will show that this conjecture is also satisfying for $d = 5$ and in generic degree of Theorem \ref{dl1}. 

\subsection{Proof of Theorem \ref{dl1}}\label{s3.1}
As shown in Sect.\ref{s1}, we have  $\mu(13.2^t-5) = 5$ for every $t > 2,$ hence the inverse function $ \widetilde{\phi}: (Q\mathscr P_5)_{13.2^{t-1}-5}\to (Q\mathscr P_5)_{13.2^t-5}$ of $(\widetilde {Sq^0_*})_{(5, 13.2^t-5)}$ defined by $\widetilde{\phi}([u]) = [X_{(\emptyset, 5)}u^{2}]$ for all $[u]\in (Q\mathscr P_5)_{13.2^{t-1}-5},\ t > 2.$ On the other hand, since the iterated Kameko squaring operation
$$ (\widetilde {Sq^0_*})^{t-2}_{(5, 13.2^t-5)}: (Q\mathscr P_5)_{13.2^t-5} \to (Q\mathscr P_5)_{13.2^2-5}$$
is an $\mathbb Z/2$-vector space isomorphism for every $t \geq 2,$ a basis of $Q\mathscr P_5$ in degree $13.2^t-5$ is the set $$[\mathscr B_5(13.2^t-5)] = \widetilde{\phi}^{t-2}([\mathscr B_5(13.2^2-5)])$$ for $t > 2.$ Thus, we need only to find the minimal set of $\mathbb Z/2$-generators for $(Q\mathscr P_5)_{13.2^t-5}$ with $t \in \{0,\,1,\, 2\}.$ It has been determined by T\'in \cite{N.T1} for $t = 0.$ Note that our methods of studying $Q\mathscr P_5$ in this paper are different from the ones of T\'in.

\subsubsection{The case  $t = 1$}\label{s3.2.1}
Consider Kameko's homomorphism $(\widetilde {Sq_*^0})_{(5,21)}: (Q\mathscr P_5)_{21}\longrightarrow (Q\mathscr P_5)_{8}.$ We know that it is an epimorphism of $\mathbb Z/2$-vector spaces, hence $(Q\mathscr P_5)_{21}\cong \mbox{Ker}(\widetilde {Sq_*^0})_{(5,21)}\bigoplus (Q\mathscr P_5)_{8}.$ Note that $\mbox{Ker}(\widetilde {Sq_*^0})_{(5,21)}$ is isomorphic to $(Q\mathscr P_5^0)_{21}\bigoplus (\mbox{Ker}(\widetilde {Sq_*^0})_{(5,21)}\cap (Q\mathscr P_5^+)_{21}).$ From the calculations of $Q\mathscr P_d$ in degree $21$ for $1\leq d\leq 4$ (see \cite{M.K}, \cite{F.P1}, \cite{N.S3}) and $Q\mathscr P_5$ in degree $8$ (see \cite{N.T2}), we have $$ |\mathscr B_1^+(21)| = 0,\ |\mathscr B_2^+(21)| = 0,\ |\mathscr B_3^+(21)| = 7,\  |\mathscr B_4^+(21)| = 66,\ |\mathscr B_5(8)| = 174.$$ We note that $(Q\mathscr P_3)_{21}\cong (Q\mathscr P_3)_3$ and $\mathscr B_3(21) = \mathscr B_3^+(21) = \widetilde{\varphi}^2(\mathscr B_3(3))$ with the $\mathbb Z/2$-linear map $\widetilde{\varphi}: \mathscr P_3\to \mathscr P_3,$ determined by $\widetilde{\varphi}(u) = X_{(\emptyset,\, 3)}u^2,\ \forall u\in \mathscr P_3.$
Since $(Q\mathscr P_5^0)_{21} = \bigoplus_{1\leq r\leq 4}\bigoplus_{1\leq j\leq \binom{5}{r}}(Q\mathscr P_r^+)_{21},$ we deduce $$ \dim(Q\mathscr P_5^0)_{21} = \binom 53.7 + \binom 54.66 = 400.$$ Moreover, a direct computation shows that $\mathscr B_5^0(21) = \overline{\Phi}^0(\mathscr B_4(21)) = \{\mathcal Y_{21,\, i}\, :\, 1\leq i \leq 400\},$ where the monomials $\mathcal Y_{21,\, i},\, 1\leq i\leq 400,$ are listed in Sect.\ref{s5.2} of  the Appendix.

\begin{propo}\label{md21.1}
The set $\{[\mathcal Y_{21, i}] : 401 \leq i \leq 666\}$ is the basis of the $\mathbb Z/2$-vector space $\mbox{Ker}(\widetilde {Sq_*^0})_{(5,21)}\cap (Q\mathscr P_5^+)_{21}$. Here the monomials $\mathcal Y_{i}:=\mathcal Y_{21, i},\ 401 \leq i \leq 666$, which are determined in Sect.\ref{s5.3}
\end{propo}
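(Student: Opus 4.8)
Write $K:=\mathrm{Ker}(\widetilde{Sq_*^0})_{(5,21)}\cap (Q\mathscr P_5^+)_{21}$. The plan is the standard two–step strategy for a hit–problem computation, organized by weight vector: show the $266$ classes $[\mathcal Y_{21,i}]$, $401\le i\le 666$, span $K$, and then show they are linearly independent. Since $21=13\cdot 2^1-5$ and $\mu(21)=3$, Proposition \ref{PS} identifies the minimal spike of degree $21$ in $\mathscr P_5$ as $z=x_1^{15}x_2^3x_3^3$, with $\omega(z)=(3,3,1,1)$. Using the filtration of $Q\mathscr P_5$ by the subspaces $Q\mathscr P_5(\omega)$ recalled in Section \ref{s2}, together with the standard property of Kameko's operation (see \cite{M.K}) that a basis of $\mathrm{Ker}(\widetilde{Sq_*^0})_{(5,21)}$ is given by the classes of admissible monomials of degree $21$ whose weight vector $\omega$ has $\omega_1<5$, I would reduce to computing $Q\mathscr P_5^+(\omega)$ only for those weight vectors $\omega$ of degree $21$ with $\omega_1<5$ and $\omega\not<\omega(z)$ (Theorem \ref{dlsig} disposes of everything below $\omega(z)$). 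Since $\deg\omega=21$ forces $\omega_1$ odd, a short enumeration leaves exactly the four weight vectors $\omega^{(1)}=(3,5,2)$, $\omega^{(2)}=(3,5,0,1)$, $\omega^{(3)}=(3,3,3)$, $\omega^{(4)}=(3,3,1,1)$. Thus it suffices to produce an explicit monomial basis $[\mathscr B_5^+(\omega^{(j)})]_{\omega^{(j)}}$ of $Q\mathscr P_5^+(\omega^{(j)})$ for $j=1,2,3,4$, and to check that the disjoint union of these four lists, regarded inside $(Q\mathscr P_5)_{21}$, is precisely $\{[\mathcal Y_{21,i}]:401\le i\le 666\}$.

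\textbf{Spanning.} For each $\omega^{(j)}$ I would enumerate the monomials of $\mathscr P_5(\omega^{(j)})\cap\mathscr P_5^+$ and argue that every one not occurring among the $\mathcal Y_{21,i}$ is strictly inadmissible, hence hit. The engine here is Theorem \ref{dlKS}: a strictly inadmissible $u$ with $\omega_i(u)=0$ for $i>t$ produces strictly inadmissible monomials $uy^{2^t}$, and $xu^{2^r}$ is inadmissible whenever $u$ is; one feeds in the list of "small" (strictly) inadmissible monomials coming from the complete solutions of the hit problem for $d\le 4$ (Peterson \cite{F.P1}, Kameko \cite{M.K}, Sum \cite{N.S3}) and from the author's earlier work on $\mathscr P_5$ (\cite{P.S1}, \cite{D.P2}, \cite{N.T2}, \cite{M.M2}, \cite{M.K.R}), transported by the $\mathcal A_2$-maps $\rho_{(k,d)}$ and the projections $\pi_{(k;\mathscr K)}$ of Section \ref{s3}. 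The monomials of the form $\phi_{(k;\mathscr K)}(w)$ with $w$ admissible in $\mathscr P_4$ of the same weight vector are shown to remain admissible — this is Conjecture \ref{gtS} for $\omega^{(1)},\dots,\omega^{(4)}$, and establishing it is part of the argument — which supplies the bulk of the surviving generators, while the few generators genuinely new for $d=5$ are handled by direct Cartan–formula computations. This gives that each $[\mathscr B_5^+(\omega^{(j)})]_{\omega^{(j)}}$ is spanned by the relevant $[\mathcal Y_{21,i}]$'s; since each $Q\mathscr P_5^+(\omega^{(j)})$ surjects onto its associated graded piece of $K$, the full set spans $K$.

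\textbf{Linear independence.} Suppose $\mathcal S:=\sum_{401\le i\le 666}\gamma_i\mathcal Y_{21,i}\equiv 0$ in $(Q\mathscr P_5)_{21}$. Splitting the relation according to weight vector, we may assume all monomials occurring lie in a single $\mathscr P_5(\omega^{(j)})$. For each $(k;\mathscr K)\in\mathcal N_5$, apply the $\mathcal A_2$-linear projection $\pi_{(k;\mathscr K)}\colon\mathscr P_5\to\mathscr P_4$; by Proposition \ref{mdPS} it carries $\mathscr P_5(\omega^{(j)})$ into $\mathscr P_4(\omega^{(j)})$ and hence descends to $Q\mathscr P_5(\omega^{(j)})\to Q\mathscr P_4(\omega^{(j)})$, so $\pi_{(k;\mathscr K)}(\mathcal S)\equiv 0$ there. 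Rewriting $\pi_{(k;\mathscr K)}(\mathcal S)$ in the known admissible basis of $Q\mathscr P_4$ in degree $21$ (equivalently of $Q\mathscr P_4(\omega^{(j)})$) produces a system of linear equations in the $\gamma_i$; running over enough choices of $(k;\mathscr K)\in\mathcal N_5$ forces every $\gamma_i=0$. This is exactly the scheme recalled before Conjecture \ref{gtS} in Section \ref{s3}.

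\textbf{Main obstacle.} The heavy part — and the step where something can actually go wrong — is the spanning argument: verifying, weight vector by weight vector, that the explicit list of $266$ monomials genuinely exhausts the admissible monomials, i.e. exhibiting a strict-inadmissibility reduction for each of the many monomials of $\mathscr P_5(\omega^{(j)})\cap\mathscr P_5^+$ not on the list, and confirming admissibility of those retained. Each individual reduction is an elementary application of the Cartan formula or of Theorem \ref{dlKS}, but there are a great many of them, and the real difficulty is to organize the case analysis so that no monomial is overlooked. By contrast the independence step, although it means propagating up to $266$ unknowns through numerous projections $\pi_{(k;\mathscr K)}$, is purely mechanical once the $Q\mathscr P_4$ bases are in hand.
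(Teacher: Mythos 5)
Your proposal is correct and follows essentially the same route as the paper: reduce to the weight vectors that survive Singer's criterion, establish spanning by strict-inadmissibility reductions (via Theorem \ref{dlKS} and the Cartan formula) against the explicit list of $266$ monomials, and prove independence by pushing a putative relation through the projections $\pi_{(k;\mathscr K)}$ onto the known admissible bases of $\mathscr P_4$. The only difference is that the paper's Lemma \ref{bd21.1} sharpens your four candidate weight vectors to just $(3,3,1,1)$ and $(3,3,3)$ by writing an admissible $u$ with $\omega_1(u)=3$ as $X_{(\{i,j\},5)}y_1^2$ with $y_1\in\mathscr B_5(9)$ and invoking T\'in's computation of $\mathscr B_5(9)$, so the cases $(3,5,2)$ and $(3,5,0,1)$ never need to be examined.
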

Combining Proposition \ref{md21.1} and the above data, we deduce that the $\mathbb Z/2$-vector space $(Q\mathscr P_5)_{21}$ is $840$-dimensional. This completes the proof of the theorem for the case $t = 1.$ 

We now need to some results for the proof of Proposition \ref{md21.1}. First, we have the following lemma.

\begin{lema}\label{bd21.1}
If $u\in \mathscr B_5(21)$ and $[u]\in \mbox{\rm Ker}(\widetilde {Sq_*^0})_{(5,21)},$ then the weight vector of $u$ is either $\omega(u) = (3,3,1,1)$ or $\omega(u) = (3,3,3).$  
\end{lema}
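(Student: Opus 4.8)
The plan is to combine the general constraints coming from the minimal spike in degree $21$ with the explicit effect of Kameko's map on weight vectors. First I would compute the minimal spike $z$ of degree $21$ in $\mathscr P_5$: writing $21 = 15 + 3 + 2 \cdot 1$ we get $\mu(21) \le 5$, and the minimal spike is $z = x_1^{15}x_2^{3}x_3^{1}x_4^{1}$ (or the relevant arrangement), which has weight vector $\omega(z) = (5,3,1,1)$, with $\deg \omega(z) = 5 + 6 + 4 + 8 = 21$. By Proposition \ref{PS} this is the largest weight vector occurring, and by Theorem \ref{dlsig} any monomial $u$ of degree $21$ with $\omega(u) < \omega(z)$ that is not handled correctly is $\mathcal A_2$-decomposable; conversely any admissible monomial $u \in \mathscr B_5(21)$ has $\omega(u) \le \omega(z)$. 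Since $\omega_1(u) \le 5$ and $\deg \omega(u) = 21$ is odd, we must have $\omega_1(u)$ odd, so $\omega_1(u) \in \{1,3,5\}$.

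Next I would use the hypothesis $[u] \in \mathrm{Ker}(\widetilde{Sq^0_*})_{(5,21)}$. The point is that if $\omega_1(u) = 5$, then every variable appears with odd exponent in $u$, so $u = X_{(\emptyset,5)} v^2$ for some monomial $v$ of degree $8$, and hence $\delta(u) = v \ne 0$, i.e. $(\widetilde{Sq^0_*})_{(5,21)}([u]) = [v]$. I would then argue that for such a spike-type monomial the class $[v]$ is nonzero in $(Q\mathscr P_5)_8$ — this is where I invoke the fact that $\widetilde{\phi}$ (multiplication by $X_{(\emptyset,5)}$ and squaring) sends admissible monomials to admissible monomials and is injective on the relevant piece, so $u$ admissible with $\omega_1(u)=5$ forces $[u] \notin \mathrm{Ker}$. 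This eliminates $\omega_1(u) = 5$, leaving $\omega_1(u) \in \{1,3\}$. The case $\omega_1(u) = 1$ would have to be ruled out separately: here $\deg$ of the remaining part of $\omega$ is $20$, and I would check, using Theorem \ref{dlsig} with the minimal spike again (any admissible $u$ with $\omega_1(u)=1$ has $\omega(u) \le (1,\omega_2,\dots)$ with $\omega_2 \le 5$, $2\omega_2 + \dots = 20$), that such weight vectors force $\omega(u) < \omega(z)$ in a way incompatible with admissibility — more concretely I expect that $\omega_1(u)=1$ combined with admissibility and degree $21$ simply cannot occur, or the resulting monomials are hit; this is the kind of finite case analysis that the earlier machinery (Theorem \ref{dlKS}, spike arguments) resolves.

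So we are left with $\omega_1(u) = 3$. Then $\deg(\omega_2, \omega_3, \dots) = (21-3)/2 = 9$, and $\omega_2(u) \le \omega_2(z) = 3$, with $\omega_2(u) \le \omega_1(u) = 3$ as well (weight vectors of admissible monomials are constrained; more precisely I would use that $u \in \mathscr P_5(\omega)$ with $\omega \le \omega(z)$ entrywise after the first coordinate since $\omega(z)$ is weakly decreasing past position $1$). I would enumerate the weakly-admissible possibilities for $(\omega_2, \omega_3, \dots)$ with $\sum 2^{i-1}\omega_i = 9$ and each $\omega_i \le 3$: these are $(3,3)$ giving $\omega(u) = (3,3,3)$ with $\deg = 3 + 6 + 12 = 21$, and $(3,1,1)$ giving $\omega(u) = (3,3,1,1)$ with $\deg = 3 + 6 + 4 + 8 = 21$; other options like $(1,1,1,\ldots)$ fail to sum to $9$ with the bound, and $(1,\ldots)$ with a later $3$ violates weak decrease needed for a spike-dominating vector. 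Then I would discard any remaining candidate weight vector $\omega$ for which $Q\mathscr P_5(\omega)$ reduces to the image of $\widetilde{\phi}$, i.e. is not in the kernel, leaving exactly $(3,3,1,1)$ and $(3,3,3)$.

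\textbf{Main obstacle.} The delicate point is the second paragraph: rigorously showing that $\omega_1(u) = 5$ (and $\omega_1(u)=1$) cannot occur for $u$ admissible with $[u]$ in the kernel. For $\omega_1(u)=5$ this amounts to knowing that $\widetilde{\phi}$ carries a basis of $(Q\mathscr P_5)_8$ to admissible monomials spanning the $\omega_1 = 5$ part of $(Q\mathscr P_5)_{21}$, which uses the explicit description of $\mathscr B_5(8)$ from \cite{N.T2} together with Theorem \ref{dlKS}; for $\omega_1(u)=1$ it is a bespoke check that no admissible monomial of degree $21$ in five variables has all-but-one exponents even. I expect both to follow from the standard spike/weight-vector toolkit already set up, but assembling the bookkeeping cleanly is the real work of the lemma.
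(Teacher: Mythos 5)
Your overall strategy (eliminate $\omega_1(u)=5$ via Kameko, then pin down the tail) is the right shape, but the execution has a genuine gap rooted in a wrong minimal spike. The minimal spike of degree $21$ in $\mathscr P_5$ is $x_1^{15}x_2^{3}x_3^{3}$, with weight vector $(3,3,1,1)$; your candidate $x_1^{15}x_2^{3}x_3x_4$ has degree $20$, and the weight vector $(5,3,1,1)$ you assign it has $\deg = 5+6+4+8 = 23$, not $21$. This is not a cosmetic slip: with the correct spike, Theorem \ref{dlsig} immediately kills the case $\omega_1(u)=1$ (any such $\omega$ is $<(3,3,1,1)$ in the left lexicographic order, so $u$ would be hit, contradicting $[u]\neq 0$), which is exactly the case you leave as an unresolved "bespoke check"; with your spike, Singer's criterion would instead (wrongly) force $\omega_1(u)=5$ and contradict the lemma itself. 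Your treatment of $\omega_1(u)=5$ is essentially correct and matches the paper: $u = X_{(\emptyset,5)}y^2$, $u$ admissible forces $y\in\mathscr B_5(8)$ by Theorem \ref{dlKS}, hence $(\widetilde{Sq^0_*})_{(5,21)}([u])=[y]\neq[0]$.

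The second gap is in the case $\omega_1(u)=3$. You bound the tail by claiming $\omega_i(u)\le\omega_i(z)$ entrywise for $i\ge 2$ because "$\omega(z)$ is weakly decreasing past position $1$," but no such entrywise domination holds: the order on weight vectors is left lexicographic and Singer's criterion only gives a lower bound, so tails such as $(5,2)$, $(5,0,1)$, $(1,4)$, $(1,2,1)$, etc.\ (all summing to $9$ with the dyadic weights) are not excluded by your argument. The paper closes this by writing $u = X_{(\{i,j\},5)}y_1^{2}$ with $y_1$ of degree $9$, invoking Theorem \ref{dlKS} to get $y_1\in\mathscr B_5(9)$, and then citing the known classification of admissible monomials of degree $9$ in five variables (T\'in), whose weight vectors are exactly $(3,1,1)$ and $(3,3)$; this degree-$9$ input is the ingredient your enumeration is missing, and without it the list $\{(3,3,1,1),(3,3,3)\}$ is not established.
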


\begin{proof}
Note that $x_1^{15}x_2^3x_3^3\in (\mathscr P_5)_{21}$ is the minimal spike, and that by Proposition \ref{PS}, it is an admissible monomial. Moreover,  $\omega(x_1^{15}x_2^3x_3^3) = (3,3,1,1).$ Since $[u]\neq [0],$ by Theorem \ref{dlsig}, we get either $\omega_1(u) = 3$ or $\omega_1(u) = 5.$ If $\omega_1(u) = 5$ then $u = X_{(\emptyset,\, 5)}y^2$ with $y$ a monomial of degree $8$ in $\mathscr P_5.$ Since $u$ is admissible, by Theorem \ref{dlKS}, one gets $y\in\mathscr B_5(8).$ So $(\widetilde {Sq_*^0})_{(5,21)}([u]) = [y]\neq [0].$ This contradicts the fact that $[u]\in \mbox{Ker}(\widetilde {Sq_*^0})_{(5,21)}.$ Hence, $\omega_1(u) = 3.$ Then, we have $u = X_{(\{i, j\},\,5)}y_1^2$ with $1\leq i< j  \leq 5$ and $y_1\in \mathscr B_5(9)$. Since $y_1$ is admissible, according to a result in \cite{N.T1}, we have either $\omega(y_1) = (3,1,1)$ or $\omega(y_1) = (3,3)$. The lemma is proved.
\end{proof}

As an immediate consequence, we see that the dimension of ($\mbox{Ker}(\widetilde {Sq_*^0})_{(5,21)}\cap (Q\mathscr P_5^+)_{21}$) is equal to the sum of dimensions of $Q\mathscr P_5^+(3,3,1,1)$ and $Q\mathscr P_5^+(3,3,3).$ This leads us to determine the subspaces $Q\mathscr P_5^+(\omega),$ where the weight vectors $\omega$ are $(3,3,1,1)$ and $(3,3,3).$

The following lemma is an immediate corollary from a result in \cite{N.S3}.

\begin{lema}\label{bd21.2}
The following monomials are strictly inadmissible:
\begin{enumerate}
\item[(i)]  $x_i^2x_jx_k^3x_{\ell}^7,\, x_i^6x_jx_k^3x_{\ell}^3,\, x_i^2x_j^5x_k^3x_{\ell}^3,\, x_i^2x_jx_k^3x_{\ell}^3,\ 1\leq i < j\leq 5,\, 1\leq k,\,\ell\leq 5,\ k\neq \ell,\ k, \ell\neq i, j;$
\item[(ii)]  $x_i^3x_j^4x_k^3x_{\ell}^3,\, 1\leq i < j <k <\ell\leq 5;$
\item[(iii)] $\rho_{(k,\, 5)}(X),\ 1\leq k\leq 5,$  where $X$ is one of the following monomials:

\begin{tabular}{lllr}
$ x_1^{3}x_2^{4}x_3^{7}x_4^{7}$, & $ x_1^{3}x_2^{7}x_3^{4}x_4^{7}$, & $ x_1^{3}x_2^{7}x_3^{7}x_4^{4}$, & \multicolumn{1}{l}{$ x_1^{7}x_2^{3}x_3^{4}x_4^{7}$,} \\
$ x_1^{7}x_2^{3}x_3^{7}x_4^{4}$, & $ x_1^{7}x_2^{7}x_3^{3}x_4^{4}$, & $ x_1^{7}x_2^{8}x_3^{3}x_4^{3}$. &  
\end{tabular}
\end{enumerate}
\end{lema}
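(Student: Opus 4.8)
The plan is to derive the lemma from the complete solution of the hit problem for $\mathscr P_4$ (Sum \cite{N.S3}, building on Kameko \cite{M.K}) together with the lifting result Theorem \ref{dlKS}. The starting observation is that every monomial listed involves at most four of the five variables: the monomials in (i) and (ii) are supported on $\{x_i,x_j,x_k,x_\ell\}$, and those in (iii) are by construction the images under $\rho_{(k,\,5)}$ of monomials in $\mathscr P_4=\mathbb Z/2[x_1,x_2,x_3,x_4]$. Hence in each case the monomial lies in a subalgebra $\mathscr P_J\subset\mathscr P_5$ with $|J|=4$, and the unique order-preserving bijection $\sigma\colon J\to\{1,2,3,4\}$ induces an $\mathcal A_2$-algebra isomorphism $\mathscr P_J\cong\mathscr P_4$; for part (iii) this is exactly the inverse of $\rho_{(k,\,5)}$ onto its image.

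The next step is to record the elementary principle that \emph{strict inadmissibility is inherited under such an embedding}. If $u\in\mathscr P_J$ is strictly inadmissible as an element of $\mathscr P_J\cong\mathscr P_4$, then there is an identity $u=\sum_j z_j+\sum_{1\le t\le 2^s-1}Sq^t(y_t)$ with $z_j<u$, $y_t\in\mathscr P_J$ and $s=\max\{i:\omega_i(u)>0\}$; this identity remains valid in $\mathscr P_5$, because $Sq^t$ commutes with the inclusion $\mathscr P_J\hookrightarrow\mathscr P_5$, the number $s$ depends only on $\omega(u)$ (which is unaffected by relabeling or by the inclusion), and the monomial order restricts faithfully to monomials supported on $J$ --- the coordinates outside $J$ are identically $0$ and therefore never enter the lexicographic comparison of exponent vectors, while weight vectors are insensitive to the labeling of the variables. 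In particular, since $\rho_{(k,\,5)}$ is an order-preserving $\mathcal A_2$-monomorphism, $\rho_{(k,\,5)}(X)$ is strictly inadmissible in $\mathscr P_5$ whenever $X$ is strictly inadmissible in $\mathscr P_4$; this disposes of (iii), invoking the list of strictly inadmissible monomials of degree $21$ in $\mathscr P_4$ produced in \cite{N.S3}.

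For (i) and (ii) it then remains to verify that each listed monomial, transported to $\mathscr P_4$ via $\sigma$, appears among the strictly inadmissible monomials of degrees $9$ and $13$ in $\mathscr P_4$ established in \cite{N.S3}. Where convenient I would first strip off a $2^r$-th power factor using Theorem \ref{dlKS}: writing a monomial as $uy^{2^t}$ with $t=\max\{i:\omega_i(u)>0\}$ and checking the hypotheses $\omega_t(u)\ne0$ and $\omega_i(u)=0$ for $i>t$, one reduces to a base monomial of smaller degree --- for instance the three degree-$13$ monomials in (i) all reduce this way, with $t=2$, to the single degree-$9$ monomial $x_i^2x_jx_k^3x_\ell^3$ (by factoring out $x_\ell^4$, $x_i^4$, $x_j^4$ respectively). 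The only point requiring care --- and the only real work --- is that strict inadmissibility is \emph{not} invariant under permuting the variables, since the monomial order is not; so in (i) and (ii), where the prescribed exponent pattern may occupy the four chosen variables in several relative orders (subject only to $i<j$, resp.\ $i<j<k<\ell$), one must use the relations of \cite{N.S3} in their indexed generality, or reproduce the corresponding $Sq$-expansion for each arrangement. This is routine case-analysis of precisely the kind already carried out in \cite{N.S3}; there is no conceptual obstacle.
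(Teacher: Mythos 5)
Your proposal is correct and follows the same route as the paper, which offers no proof of this lemma beyond declaring it ``an immediate consequence of a result in Sum \cite{N.S3}''; your argument simply makes explicit why the transfer of strict inadmissibility from $\mathscr P_4$ to $\mathscr P_5$ (and the reduction of the degree-$13$ cases to the degree-$9$ one via Theorem \ref{dlKS}) is legitimate. The verification that the monomial order and the weight vectors restrict faithfully to a four-variable subalgebra is exactly the point the paper leaves implicit, and you have it right.
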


\newpage
\begin{lema}\label{bd21.3}
The following monomials are strictly inadmissible:
\begin{enumerate}
\item[(i)] $x_i^2x_j^2x_kx_{\ell}x_m^7,\, x_i^2x_j^6x_kx_{\ell}x_m^3,\, x_i^6x_j^2x_kx_{\ell}x_m^3,\, x_i^2x_j^2x_k^5x_{\ell}x_m^3,\, x_i^2x_j^4x_kx_{\ell}^3x_m^3,\\ x_i^4x_j^2x_kx_{\ell}^3x_m^3, x_i^2x_jx_k^4x_{\ell}^3x_m^3,$ where $(i, j, k, \ell, m)$ is a permutation of $(1, 2, 3, 4, 5);$  
\item[(ii)] 
 \begin{tabular}[t]{lllr}
$ x_1^{3}x_2^{2}x_3x_4x_5^{6}$ & \multicolumn{1}{l}{$ x_1^{3}x_2^{6}x_3x_4x_5^{2}$,} & \multicolumn{1}{l}{$ x_1^{3}x_2^{2}x_3^{5}x_4^{2}x_5$,} & \multicolumn{1}{l}{$ x_1^{3}x_2^{2}x_3x_4^{5}x_5^{2}$,} \\
$ x_1^{3}x_2^{2}x_3^{5}x_4x_5^{2}$, & \multicolumn{1}{l}{$ x_1^{3}x_2^{5}x_3^{2}x_4^{2}x_5$,} & \multicolumn{1}{l}{$ x_1^{3}x_2^{2}x_3^{2}x_4x_5$,} & \multicolumn{1}{l}{$ x_1^{3}x_2^{2}x_3x_4^{2}x_5$,} \\
$ x_1^{3}x_2^{2}x_3x_4x_5^{2}$. &       &       &  
\end{tabular}
\end{enumerate}
\end{lema}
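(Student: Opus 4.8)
The plan for Lemma \ref{bd21.3} is to reduce the strict inadmissibility of each listed monomial to a finite Steenrod-square computation, exactly as in the proofs of the analogous results of Sum \cite{N.S3, N.S6} and of the earlier Lemma \ref{bd21.2} above. Recall that to prove a monomial $u\in\mathscr P_5$ with $s = \max\{i : \omega_i(u) > 0\}$ is strictly inadmissible, it suffices to exhibit an explicit identity
$$ u = \sum_{1\leq j\leq k} x_j + \sum_{1\leq t\leq 2^s-1} Sq^t(y_t) $$
with each $x_j < u$ (in the order of Sect.\ref{s2}) and suitable $y_t\in\mathscr P_5$. For the monomials in part (i), which are symmetric in a permutation $(i,j,k,\ell,m)$ of $(1,2,3,4,5)$, one only needs to produce the identity for a single representative choice of indices and then observe that applying the corresponding permutation of variables $w\in GL_5$ (which commutes with the $\mathcal A_2$-action and preserves $<$ when applied to a monomial together with the monomials appearing below it) carries the identity to all permuted versions. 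The monomials in part (ii) are the finitely many remaining "small'' cases not covered by a symmetry argument, and each is handled by writing down one such identity individually.

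First I would set up, once and for all, the small catalogue of basic hit relations in few variables that the computations rely on: the $Sq^1$, $Sq^2$, $Sq^4$ relations of the form $Sq^t(x^{a}\cdots) = \sum(\text{lower monomials})$ obtained from Cartan's formula together with $Sq^1(x_i) = x_i^2$, $Sq^k(x_i) = 0$ for $k>1$. In particular the degree-$21$ monomials in (i) all have $s\le 3$, so only $Sq^1,\ldots,Sq^7$ are allowed; most of the targets will come from $Sq^1$, $Sq^2$, $Sq^4$ applied to a monomial of one lower power of $2$ in the appropriate variable, i.e. from the classical relations $x^{2a+1}=Sq^1(x^{2a})+\cdots$ and $x^{4a+2} = Sq^2(x^{4a})+\cdots$ inside a product. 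Each line of part (i) is structurally one of the patterns $x_i^2x_j^2x_kx_\ell x_m^7$, $x_i^2x_j^6x_kx_\ell x_m^3$, etc.; for each pattern I would write the single explicit identity with a fixed representative, verify $x_j<u$ for every monomial $x_j$ produced, and record that the whole $S_5$-orbit follows. By Theorem \ref{dlKS}, it would also suffice in several cases to recognize the monomial as $v\cdot y^{2^t}$ with $v$ already known (from Lemma \ref{bd21.2} or from Sum \cite{N.S3}) to be strictly inadmissible in the relevant subalgebra, which further trims the list of identities that must be exhibited by hand.

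For part (ii) there is no symmetry left to exploit, so I would simply go monomial by monomial: for each of the ten listed monomials $X$ write $X = \sum x_j + \sum_t Sq^t(y_t)$ explicitly, again using only the elementary relations above and, where convenient, the product form $u = X_{(S,5)}\cdot(\text{lower-degree strictly inadmissible factor})^{2^t}$ so that Theorem \ref{dlKS} applies. One should double-check in each case that $\omega(X)$ is indeed $(3,3,1,1)$ or $(3,3,3)$ so that $s=3$ or $s=4$ and the admissible range of squares is as claimed, and that every monomial on the right-hand side is strictly smaller than $X$ in the linear order, which is where the bookkeeping is heaviest.

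The main obstacle is not conceptual but combinatorial: producing the explicit hit identities and, above all, verifying the ordering condition $x_j<u$ for every monomial that appears — a single careless sign or a monomial that turns out to be $\ge u$ invalidates the claim of \emph{strict} inadmissibility. I expect the bulk of the work to be this verification, organized pattern by pattern for (i) and monomial by monomial for (ii); the symmetry reduction and Theorem \ref{dlKS} together with the already-established Lemma \ref{bd21.2} should keep the number of genuinely new identities that must be written out quite small.
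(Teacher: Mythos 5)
Your overall method is the one the paper itself uses: for Lemma \ref{bd21.3} the paper just exhibits, for the two representatives $X=x_i^2x_j^2x_kx_{\ell}x_m^7$ and $Y=x_1^{3}x_2^{2}x_3x_4x_5^{6}$, explicit Cartan-formula decompositions modulo $\mathscr P_5^-(3,3,1)$ into $\sum_t Sq^t(y_t)$ plus monomials below the given one, and declares the remaining cases similar. However, two steps of your plan are wrong as written. The more serious one is the symmetry reduction for part (i): you justify it by saying a permutation of the variables "preserves $<$ when applied to a monomial together with the monomials appearing below it". That is false. After the weight vector, the order of Sect.\ref{s2} compares exponent vectors left-lexicographically, and this comparison is not permutation-invariant: $x_1x_2^2<x_1^2x_2$, while the transposition $x_1\leftrightarrow x_2$ reverses the inequality (indeed $x_1^2x_2$ is inadmissible and $x_1x_2^2$ admissible, so admissibility itself is not an $S_5$-stable notion). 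Concretely, transporting the identity for $x_i^2x_j^2x_kx_{\ell}x_m^7$ to the permutation with $m=1$, i.e.\ to $u=x_1^{7}x_2x_3x_4^{2}x_5^{2}$, produces the companion monomials $x_1^{7}x_2^{2}x_3^{2}x_4x_5$, $x_1^{7}x_2x_3^{2}x_4x_5^{2}$, $x_1^{7}x_2^{2}x_3x_4x_5^{2}$, every one of which is \emph{larger} than $u$ in the given order; the transported identity therefore certifies nothing about $u$. For each permutation one must either re-verify the ordering or produce a different decomposition (e.g.\ solve the relation for the largest monomial occurring and iterate), which is exactly the content hidden in the paper's "similar computation"; the blanket transport argument is a genuine gap.

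Second, you have misread which monomials the lemma concerns. They are not of degree $21$: part (i) and the first six monomials of part (ii) have degree $13$ and weight vector $(3,3,1)$, and the last three monomials of (ii) have degree $9$ and weight vector $(3,3)$ (the vectors $(3,3,1,1)$ and $(3,3,3)$ belong to the degree-$21$ monomials that Lemma \ref{bd21.3} is later used to exclude via Theorem \ref{dlKS}, not to the monomials being proved strictly inadmissible here). This is not merely cosmetic: strict inadmissibility only permits $Sq^t$ with $t\le 2^{s}-1$, $s=\max\{i:\omega_i(u)>0\}$, so for the weight-$(3,3)$ monomials of (ii) only $Sq^1,Sq^2,Sq^3$ are allowed; a certificate using $Sq^4$ (admissible in the range you announce) would establish inadmissibility but not \emph{strict} inadmissibility, which is what Theorem \ref{dlKS} needs. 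With the weight vectors corrected and the orbit argument replaced by case-by-case verification (or by the largest-term reorganization just described), your plan coincides with the paper's proof.
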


\begin{proof}
Consider the monomials $X = x_i^2x_j^2x_kx_{\ell}x_m^7$ and $Y = x_1^{3}x_2^{2}x_3x_4x_5^{6}.$ We prove that these monomials are strictly inadmissible. The others can be proved by a similar computation. Obviously, $\omega(X) = \omega(Y) = (3,3,1).$ By a direct computation using the Cartan formula, we obtain
$$ \begin{array}{ll}
\medskip
X & = Sq^2(x_ix_jx_kx_{\ell}x_m^7) + Sq^4(x_ix_jx_kx_{\ell}x_m^5) + x_ix_jx_k^2x_{\ell}^2x_m^7\\
\medskip
&\quad + x_ix_j^2x_kx_{\ell}^2x_m^7 + x_ix_j^2x_k^2x_{\ell}x_m^7 \ {\rm modulo}\,(\mathscr P_5^-(3,3,1));\\
Y &= Sq^1(x_1^{3}x_2x_3x_4x_5^{6}) + x_1^{3}x_2x_3^2x_4x_5^{6} + x_1^{3}x_2x_3x_4^2x_5^{6} \ {\rm modulo}\,(\mathscr P_5^-(3,3,1)).
\end{array}$$
These equalities show that $X$ and $Y$ are strictly inadmissible. The lemma follows.
\end{proof}

\begin{lema}\label{bd21.4}
The following monomials are strictly inadmissible:

\begin{center}
\begin{tabular}{lrrr}
$X_{1}=x_1x_2^{6}x_3^{8}x_4^{3}x_5^{3}$, & \multicolumn{1}{l}{$X_{2}= x_1x_2^{3}x_3^{6}x_4^{6}x_5^{5}$,} & \multicolumn{1}{l}{$X_{3}=x_1^3x_2^{5}x_3^{2}x_4^6x_5^{5}$,} & \multicolumn{1}{l}{$X_{4}=x_1x_2^{6}x_3^{7}x_4^{6}x_5$,} \\
$X_{5}=x_1x_2^{7}x_3^{6}x_4^{6}x_5$, & \multicolumn{1}{l}{$X_{6}=x_1^{7}x_2x_3^{6}x_4^{6}x_5$,} & \multicolumn{1}{l}{$X_{7}=x_1x_2^{2}x_3^{6}x_4^{5}x_5^{7}$,} & \multicolumn{1}{l}{$X_{8}=x_1x_2^{2}x_3^{6}x_4^{7}x_5^{5}$,} \\
$X_{9}=x_1x_2^{6}x_3^{2}x_4^{5}x_5^{7}$, & \multicolumn{1}{l}{$X_{10}=x_1x_2^{6}x_3^{2}x_4^{7}x_5^{5}$,} & \multicolumn{1}{l}{$X_{11}=x_1x_2^{2}x_3^{7}x_4^{6}x_5^{5}$,} & \multicolumn{1}{l}{$X_{12}=x_1x_2^{6}x_3^{7}x_4^{2}x_5^{5}$,} \\
$X_{13}=x_1x_2^{7}x_3^{2}x_4^{6}x_5^{5}$, & \multicolumn{1}{l}{$X_{14}=x_1x_2^{7}x_3^{6}x_4^{2}x_5^{5}$,} & \multicolumn{1}{l}{$X_{15}=x_1^{7}x_2x_3^{2}x_4^{6}x_5^{5}$,} & \multicolumn{1}{l}{$X_{16}=x_1^{7}x_2x_3^{6}x_4^{2}x_5^{5}$,} \\
$X_{17}=x_1x_2^{6}x_3^{3}x_4^{4}x_5^{7}$, & \multicolumn{1}{l}{$X_{18}=x_1x_2^{6}x_3^{3}x_4^{7}x_5^{4}$,} & \multicolumn{1}{l}{$X_{19}=x_1x_2^{6}x_3^{7}x_4^{3}x_5^{4}$,} & \multicolumn{1}{l}{$X_{20}=x_1x_2^{7}x_3^{6}x_4^{3}x_5^{4}$,} \\
$X_{21}=x_1^{3}x_2^{4}x_3x_4^{6}x_5^{7}$, & \multicolumn{1}{l}{$X_{22}=x_1^{3}x_2^{4}x_3x_4^{7}x_5^{6}$,} & \multicolumn{1}{l}{$X_{23}=x_1^{3}x_2^{4}x_3^{7}x_4x_5^{6}$,} & \multicolumn{1}{l}{$X_{24}=x_1^{3}x_2^{7}x_3^{4}x_4x_5^{6}$,} \\
$X_{25}=x_1^{7}x_2x_3^{6}x_4^{3}x_5^{4}$, & \multicolumn{1}{l}{$X_{26}=x_1^{7}x_2^{3}x_3^{4}x_4x_5^{6}$,} & \multicolumn{1}{l}{$X_{27}=x_1x_2^{6}x_3^{6}x_4^{3}x_5^{5}$,} & \multicolumn{1}{l}{$X_{28}=x_1x_2^{6}x_3^{3}x_4^{6}x_5^{5}$,} \\
$X_{29}=x_1x_2^{6}x_3^{6}x_4^{7}x_5$, & \multicolumn{1}{l}{$X_{30}=x_1^{3}x_2x_3^{6}x_4^{6}x_5^{5}$,} & \multicolumn{1}{l}{$X_{31}=x_1^{3}x_2^{5}x_3^{6}x_4^{6}x_5$,} & \multicolumn{1}{l}{$X_{32}=x_1^{3}x_2^{4}x_3^{5}x_4^{2}x_5^{7}$,} \\
$X_{33}=x_1^{3}x_2^{4}x_3^{5}x_4^{7}x_5^{2}$, & \multicolumn{1}{l}{$X_{34}=x_1^{3}x_2^{4}x_3^{7}x_4^{5}x_5^{2}$,} & \multicolumn{1}{l}{$X_{35}=x_1^{3}x_2^{5}x_3^{4}x_4^{2}x_5^{7}$,} & \multicolumn{1}{l}{$X_{36}=x_1^{3}x_2^{5}x_3^{4}x_4^{7}x_5^{2}$,} \\
$X_{37}=x_1^{3}x_2^{7}x_3^{4}x_4^{5}x_5^{2}$, & \multicolumn{1}{l}{$X_{38}=x_1^{3}x_2^{5}x_3^{7}x_4^{4}x_5^{2}$,} & \multicolumn{1}{l}{$X_{39}=x_1^{3}x_2^{7}x_3^{5}x_4^{4}x_5^{2}$,} & \multicolumn{1}{l}{$X_{40}=x_1^{7}x_2^{3}x_3^{4}x_4^{5}x_5^{2}$,} \\
$X_{41}=x_1^{7}x_2^{3}x_3^{5}x_4^{4}x_5^{2}$, & \multicolumn{1}{l}{$X_{42}=x_1x_2^{6}x_3^{6}x_4x_5^{7}$,} & \multicolumn{1}{l}{$X_{43}=x_1^{3}x_2^{5}x_3^{6}x_4^{2}x_5^{5}$,} & \multicolumn{1}{l}{$X_{44}=x_1^{3}x_2^{5}x_3^{6}x_4^{5}x_5^{2}$,} \\
$X_{45}=x_1^{3}x_2^{5}x_3^{5}x_4^{2}x_5^{6}$, & \multicolumn{1}{l}{$X_{46}=x_1^{3}x_2^{5}x_3^{5}x_4^{6}x_5^{2}$,} & \multicolumn{1}{l}{$X_{47}=x_1^{3}x_2^{4}x_3^{4}x_4^{3}x_5^{7}$,} & \multicolumn{1}{l}{$X_{48}=x_1^{3}x_2^{4}x_3^{4}x_4^{7}x_5^{3}$,} \\
$X_{49}=x_1^{3}x_2^{4}x_3^{3}x_4^{4}x_5^{7}$, & \multicolumn{1}{l}{$X_{50}=x_1^{3}x_2^{4}x_3^{7}x_4^{4}x_5^{3}$,} & \multicolumn{1}{l}{$X_{51}=x_1^{3}x_2^{4}x_3^{3}x_4^{7}x_5^{4}$,} & \multicolumn{1}{l}{$X_{52}=x_1^{3}x_2^{4}x_3^{7}x_4^{3}x_5^{4}$,} \\
$X_{53}=x_1^{3}x_2^{7}x_3^{4}x_4^{4}x_5^{3}$, & \multicolumn{1}{l}{$X_{54}=x_1^{3}x_2^{7}x_3^{4}x_4^{3}x_5^{4}$,} & \multicolumn{1}{l}{$X_{55}=x_1^{7}x_2^{3}x_3^{4}x_4^{4}x_5^{3}$,} & \multicolumn{1}{l}{$X_{56}=x_1^{7}x_2^{3}x_3^{4}x_4^{3}x_5^{4}$,} \\
$X_{57}=x_1^{3}x_2^{4}x_3^{5}x_4^{6}x_5^{3}$, & \multicolumn{1}{l}{$X_{58}=x_1^{3}x_2^{4}x_3^{5}x_4^{3}x_5^{6}$,} & \multicolumn{1}{l}{$X_{59}=x_1^{3}x_2^{5}x_3^{4}x_4^{6}x_5^{3}$,} & \multicolumn{1}{l}{$X_{60}=x_1^{3}x_2^{5}x_3^{6}x_4^{4}x_5^{3}$,} \\
$X_{61}=x_1^{3}x_2^{5}x_3^{4}x_4^{3}x_5^{6}$. &       &       &  
\end{tabular}
\end{center}
\end{lema}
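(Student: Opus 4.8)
The plan is to verify strict inadmissibility one monomial at a time, in the style of the proof of Lemma~\ref{bd21.3}. The first step is purely bookkeeping: inspecting dyadic expansions, one checks that $\omega(X_1) = (3,3,1,1)$ while $\omega(X_i) = (3,3,3)$ for every $2\le i\le 61$. Consequently, in the definition of strict inadmissibility the relevant value of $s = \max\{i:\omega_i>0\}$ is $4$ for $X_1$ (so $Sq^{1},\dots,Sq^{15}$ are available) and $3$ for the remaining sixty monomials (so $Sq^{1},\dots,Sq^{7}$ are available). Since every monomial occurring in $\mathscr P_5^{-}(\omega)$ has weight vector strictly below $\omega$, hence is strictly below $X_i$ in the linear order, it is enough to produce for each $X_i$ a congruence $X_i \equiv \sum_{j} z_j + \sum_{1\le t\le 2^{s}-1}Sq^{t}(y_t)$ modulo $\mathscr P_5^{-}(\omega)$, with $y_t\in\mathscr P_5$ and every monomial $z_j$ strictly smaller than $X_i$.

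I would obtain these congruences by two complementary mechanisms. The first is Theorem~\ref{dlKS}: whenever $X_i$ factors as $X_i = z\,w^{2^{t}}$ with $t = \max\{i:\omega_i(z)>0\}$ and $z$ itself strictly inadmissible --- either literally one of the monomials in Lemmas~\ref{bd21.2} and~\ref{bd21.3}, or, after relabelling variables, one coming from the completely solved hit problem for $\mathscr P_r$ with $r\le 4$ --- the strict inadmissibility of $X_i$ is immediate. Several of the sixty-one monomials can be disposed of this way by stripping off a square or a fourth power. For the rest (notably $X_1$, whose weight vector coincides with that of the degree-$21$ minimal spike $x_1^{15}x_2^{3}x_3^{3}$, so that Singer's criterion Theorem~\ref{dlsig} is silent), I would fall back on a direct Cartan-formula computation exactly in the spirit of the proof of Lemma~\ref{bd21.3}: for a representative monomial $X$ of weight $(3,3,3)$ one writes down explicit polynomials $y_t$ realising $X = \sum_{1\le t\le 7}Sq^{t}(y_t) + (\text{lower terms})$ modulo $\mathscr P_5^{-}(3,3,3)$, checks that the lower terms are strictly smaller monomials, and then observes that every monomial of the same shape is settled by the analogous calculation after permuting indices and adjusting the $y_t$.

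The main obstacle is organisational rather than conceptual: there is no single slick identity, so one must run through all sixty-one monomials, for each either exhibiting a factor $z$ matching an already-settled monomial or guessing the right combination of images $Sq^{t}(y_t)$. The recurring delicate point inside every case is the strict inequality $z_j < X_i$: strict inadmissibility demands that the correction terms be \emph{strictly} smaller in the linear order --- first by weight vector, then lexicographically on exponent vectors --- so after re-expanding each $Sq^{t}(y_t)$ by the Cartan formula one must confirm that the only surviving monomials either have exponent vector lexicographically below that of $X_i$ or already lie in $\mathscr P_5^{-}(\omega)$. Once this is verified for one representative of each shape, Theorem~\ref{dlKS} together with the manifest similarity of the remaining cases completes the proof.
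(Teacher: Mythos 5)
Your proposal is correct and takes essentially the same approach as the paper, which records the same weight vectors ($\omega(X_1)=(3,3,1,1)$ and $\omega(X_j)=(3,3,3)$ for $2\le j\le 61$) and then establishes strict inadmissibility by explicit Cartan-formula decompositions modulo $\mathscr P_5^{-}(\omega)$ for the representatives $X_1$, $X_2$, $X_3$, asserting that the remaining cases follow by similar computations. The only point worth noting is that the paper's own decompositions of $X_2$ and $X_3$ (weight $(3,3,3)$, so $s=3$) use a $Sq^{8}$ term, so your insistence on using only $Sq^{1},\dots,Sq^{7}$ in those cases is tighter than what the paper's explicit computations actually deliver.
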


\begin{proof}
It is easily seen that $\omega(X_1) = (3,3,1,1)$ and $\omega(X_j) = (3,3,3)$ for $j = 2, 3, \ldots, 61.$ We prove the lemma for the monomials $X_1 = x_1x_2^{6}x_3^{8}x_4^{3}x_5^{3}$, $X_{2}= x_1x_2^{3}x_3^{6}x_4^{6}x_5^{5},$ and  $X_{3} = x_1^3x_2^{5}x_3^{2}x_4^6x_5^{5}$. The others can be proven by a similar computation. By a direct computation, we have
$$ \begin{array}{ll}
X_1 &=  x_1x_2^{3}x_3^{3}x_4^{6}x_5^{8} + x_1x_2^{3}x_3^{3}x_4^{8}x_5^{6} + x_1x_2^{3}x_3^{6}x_4^{3}x_5^{8} + x_1x_2^{3}x_3^{6}x_4^{8}x_5^{3}+ x_1x_2^{3}x_3^{8}x_4^{3}x_5^{6}  \\
&\quad\quad + x_1x_2^{3}x_3^{8}x_4^{6}x_5^{3} + x_1x_2^{4}x_3^{3}x_4^{3}x_5^{10} + x_1x_2^{4}x_3^{3}x_4^{10}x_5^{3}+ x_1x_2^{4}x_3^{10}x_4^{3}x_5^{3}\\
 &\quad\quad  + x_1x_2^{6}x_3^{3}x_4^{3}x_5^{8} + x_1x_2^{6}x_3^{3}x_4^{8}x_5^{3} + Sq^1(A_1) + Sq^2(A_2) + Sq^4(A_4)\ {\rm modulo}\,(\mathscr P_5^-(3,3,1,1)),
\end{array}$$
where
$$ \begin{array}{ll}
A_1 &= x_1^2x_2^{3}x_3^{5}x_4^{5}x_5^{5} + x_1^2x_2^{5}x_3^{3}x_4^{5}x_5^{5} + x_1^2x_2^{5}x_3^{5}x_4^{3}x_5^{5} + x_1^2x_2^{5}x_3^{5}x_4^{5}x_5^{3},\\
A_2 &=x_1x_2^{3}x_3^{3}x_4^{6}x_5^{6} + x_1x_2^{3}x_3^{5}x_4^{5}x_5^{5} + x_1x_2^{3}x_3^{6}x_4^{3}x_5^{6} + x_1x_2^{3}x_3^{6}x_4^{6}x_5^{3}+  x_1x_2^{5}x_3^{3}x_4^{5}x_5^{5}\\
&\quad  + x_1x_2^{5}x_3^{5}x_4^{3}x_5^{5} + x_1x_2^{5}x_3^{5}x_4^{5}x_5^{3}+ x_1x_2^{6}x_3^{3}x_4^{3}x_5^{6} + x_1x_2^{6}x_3^{3}x_4^{6}x_5^{3} + x_1x_2^{6}x_3^{6}x_4^{3}x_5^{3},\\
A_4 &=x_1x_2^{4}x_3^{3}x_4^{3}x_5^{6} + x_1x_2^{4}x_3^{3}x_4^{6}x_5^{3} + x_1x_2^{4}x_3^{6}x_4^{3}x_5^{3}.
\end{array}$$
This relation implies that $X_1$  is strictly inadmissible. By a similar technique, we obtain
$$ 
X_2 = Sq^1(B_1) + Sq^2(B_2) + Sq^8(B_8) + x_1x_2^{3}x_3^{5}x_4^{6}x_5^{6} + x_1x_2^{3}x_3^{6}x_4^{5}x_5^{6}\ {\rm modulo}\,(\mathscr P_5^-(3,3,3)),$$
where 
$$ \begin{array}{ll}
B_1 &= x_1^2x_2^{5}x_3^{3}x_4^{5}x_5^{5} +  x_1^2x_2^{5}x_3^{5}x_4^{3}x_5^{5} + x_1^2x_2^{5}x_3^{5}x_4^{5}x_5^{3},\\
B_2&= x_1x_2^{3}x_3^{3}x_4^{6}x_5^{6} + x_1x_2^{3}x_3^{5}x_4^{5}x_5^{5} + x_1x_2^{3}x_3^{6}x_4^{3}x_5^{6}+ x_1x_2^{6}x_3^{3}x_4^{3}x_5^{6} + x_1x_2^{6}x_3^{3}x_4^{6}x_5^{3} + x_1x_2^{6}x_3^{6}x_4^{3}x_5^{3},\\
B_8&=  x_1x_2^{3}x_3^{3}x_4^{3}x_5^{3},
\end{array}$$
 $$ X_3 = Sq^1(C_1) + Sq^2(C_2) + Sq^8(C_8) + x_1^3x_2^{5}x_3x_4^{6}x_5^{6} + x_1^3x_2^{5}x_3^{2}x_4^{5}x_5^{6}\ {\rm modulo}\,(\mathscr P_5^-(3,3,3)),$$
where 
$$ \begin{array}{ll}
C_1 &= x_1^3x_2^{5}x_3x_4^{5}x_5^{6} + x_1^3x_2^{5}x_3x_4^{6}x_5^{5} + x_1^5x_2^{3}x_3^{2}x_4^{5}x_5^{5} + x_1^5x_2^{5}x_3^{2}x_4^{3}x_5^{5} + x_1^5x_2^{5}x_3^{2}x_4^{5}x_5^{3},\\
C_2&= x_1^3x_2^{3}x_3x_4^{6}x_5^{6} + x_1^3x_2^{5}x_3x_4^{5}x_5^{5} + x_1^3x_2^{6}x_3x_4^{3}x_5^{6} + x_1^3x_2^{6}x_3x_4^{6}x_5^{3} + x_1^6x_2^{3}x_3x_4^{3}x_5^{6}\\
&\quad + x_1^6x_2^{3}x_3x_4^{6}x_5^{3} + x_1^6x_2^{6}x_3x_4^{3}x_5^{3},\\
C_8&= x_1^3x_2^{3}x_3x_4^{3}x_5^{3}.
\end{array}$$
The lemma is proved.
\end{proof}

Now we denote by $\mathcal C$ the set of the following monomials: 

\begin{center}
\begin{tabular}{lllr}
$ x_1^{3}x_2^{12}x_3x_4^{2}x_5^{3}$ & $ x_1^{3}x_2^{12}x_3x_4^{3}x_5^{2}$, & $ x_1^{3}x_2^{12}x_3^{3}x_4x_5^{2}$, & \multicolumn{1}{l}{$ x_1^{3}x_2^{4}x_3x_4^{2}x_5^{11}$,} \\
$ x_1^{3}x_2^{4}x_3x_4^{11}x_5^{2}$, & $ x_1^{3}x_2^{4}x_3^{11}x_4x_5^{2}$, & $ x_1^{7}x_2^{8}x_3x_4^{2}x_5^{3}$, & \multicolumn{1}{l}{$ x_1^{7}x_2^{8}x_3x_4^{3}x_5^{2}$,} \\
$ x_1^{7}x_2^{8}x_3^{3}x_4x_5^{2}$, & $ x_1^{3}x_2^{4}x_3x_4^{3}x_5^{10}$, & $ x_1^{3}x_2^{4}x_3x_4^{10}x_5^{3}$, & \multicolumn{1}{l}{$ x_1^{3}x_2^{4}x_3^{3}x_4x_5^{10}$,} \\
$ x_1^{3}x_2^{4}x_3^{3}x_4^{9}x_5^{2}$, & $ x_1^{3}x_2^{4}x_3^{9}x_4^{2}x_5^{3}$, & $ x_1^{3}x_2^{4}x_3^{9}x_4^{3}x_5^{2}$. &  
\end{tabular}
\end{center}
A direct computation shows that $\overline{\Phi}^+(\mathscr B_4(3,3,1,1)) \cup \mathcal C$  is the set of $196$ monomials: $\mathcal Y_j:=\mathcal Y_{21, j},\, 401\leq j\leq 596$ (see Sect.\ref{s5.3}.)

\begin{propo}\label{md21.2}
Under the above notations, the $\mathbb Z/2$-vector space $Q\mathscr P_5^+(3,3,1,1)$ is spanned by the set 
$$[\overline{\Phi}^+(\mathscr B_4(3,3,1,1)) \cup \mathcal C].$$
\end{propo}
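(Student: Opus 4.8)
The plan is to run the standard reduction-and-induction scheme for hit-problem spanning sets. Put $\omega=(3,3,1,1)$. Since $\mathscr P_5^-(\omega)$ contains every monomial of degree $21$ whose weight vector is strictly below $\omega$, the space $Q\mathscr P_5^+(\omega)$ is spanned by the classes $[x]_\omega$ as $x$ runs over the finite set $\mathcal M$ of monomials of degree $21$ in $\mathscr P_5$ with all five exponents positive and $\omega(x)=\omega$. So it is enough to show that every $x\in\mathcal M$ not lying in $\overline{\Phi}^+(\mathscr B_4(3,3,1,1))\cup\mathcal C$ admits a relation $x\equiv_\omega\sum_{x'<x,\,x'\in\mathcal M}\gamma_{x'}\,x'$ with $\gamma_{x'}\in\mathbb Z/2$; keeping only the $\mathscr P_5^+$-part of each such relation (legitimate because $\mathscr P_5^0$ and $\mathscr P_5^+$ are $\mathcal A_2$-submodules, so $Q\mathscr P_5(\omega)=Q\mathscr P_5^0(\omega)\oplus Q\mathscr P_5^+(\omega)$), a downward induction on the order $<$ on $\mathcal M$ then shows that each $[x]_\omega$ lies in the span of $[\overline{\Phi}^+(\mathscr B_4(3,3,1,1))\cup\mathcal C]$ --- the $\mathcal A_2^+$-terms and the strictly-lower-weight terms produced by the reductions being zero in $Q\mathscr P_5(\omega)$.

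To organise the (several hundred) monomials of $\mathcal M$ I would exploit the factorisation forced by $\omega_1(x)=3$: writing $\{i,j\}=S_0(x)$ for the pair of indices at which $x$ has even exponent, one has $x=X_{(\{i,j\},5)}\,y^2$ with $y$ a monomial of degree $9$, and, by Proposition \ref{PS} and Theorem \ref{dlsig}, only $\omega(y)=(3,1,1)$ contributes modulo strictly lower weight. Feeding in the known admissible bases of $(Q\mathscr P_5)_{9}$ (T\'in \cite{N.T1}) and of $\mathscr B_4(3,3,1,1)$ (extracted from Sum's description of $(Q\mathscr P_4)_{21}$ \cite{N.S3}), together with the fact that each $\phi_{(k;\mathscr K)}$ preserves weight vectors, the monomials of $\mathcal M$ organise into a small number of families indexed by $\{i,j\}$ and by the admissible ``$\mathscr P_4$-part''. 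For an $x$ in such a family not belonging to the spanning set I would produce the required relation in one of two ways: (a) recognise $x$ --- up to a permutation of the variables, or up to a factor of the shape $u\,y^{2^t}$ or $x''\,u^{2^r}$ with $u$ one of the monomials listed in Lemmas \ref{bd21.2}, \ref{bd21.3} and \ref{bd21.4} --- as strictly inadmissible, using Theorem \ref{dlKS} to carry strict inadmissibility up from those explicit ``seeds''; or (b) produce a direct Cartan-formula computation exhibiting $x$ itself as strictly inadmissible of weight $(3,3,1,1)$, in the style of the proofs of Lemmas \ref{bd21.3} and \ref{bd21.4}. The fifteen monomials gathered in $\mathcal C$ are exactly those of weight $(3,3,1,1)$ on which neither mechanism bites, so they are adjoined to the generating set by hand.

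I expect the heart of the argument --- and the main obstacle --- to be the bookkeeping in steps (a)--(b): one must certify that \emph{no} monomial of $\mathcal M$ escapes both mechanisms. Concretely this means fixing a minimal list of strictly inadmissible ``seed'' monomials in degrees below $21$, closing it under Theorem \ref{dlKS} and under permutations of the variables, and checking that the complement of this closed set is matched exactly by $\overline{\Phi}^+(\mathscr B_4(3,3,1,1))\cup\mathcal C$. A secondary, much easier point is to confirm that none of the reductions rewrites a monomial already in $\overline{\Phi}^+(\mathscr B_4(3,3,1,1))\cup\mathcal C$; this is not needed for the spanning statement per se, but it is what renders the resulting estimate $\dim Q\mathscr P_5^+(3,3,1,1)\leq 196$ sharp once the matching linear-independence computation (used in Proposition \ref{md21.1}) is in hand.
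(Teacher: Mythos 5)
Your proposal is correct and follows essentially the same route as the paper: factor each weight-$(3,3,1,1)$ candidate as $X_{(\{k,\ell\},5)}y^{2}$ with $y\in\mathscr B_5(3,1,1)$, and eliminate everything outside $\overline{\Phi}^{+}(\mathscr B_4(3,3,1,1))\cup\mathcal C$ by writing it as $wz^{2^{a}}$ with $w$ one of the strictly inadmissible monomials of Lemmas \ref{bd21.2}--\ref{bd21.4} and invoking Theorem \ref{dlKS}. The paper phrases the reduction as "every admissible monomial of this weight lies in the list" rather than as an explicit downward induction on the monomial order, but these are the same argument, and you correctly locate the real work in the exhaustive case check.
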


\begin{proof}
Let $X$ be an admissible monomial in $\mathscr P_5$ such that $\omega(X) = (3,3,1,1).$ Then $X = X_{(\{k, \ell\},\,5)}Y^2$ with $1\leq k < \ell\leq 5$ and $Y$ a monomial of degree $9$ in $\mathscr P_5.$ Since $X$  is admissible, according to Theorem \ref{dlKS}, $Y\in \mathscr B_5(3,1,1).$

A direct computation shows that if $z\in \mathscr B_5^+(3,1,1),\ 1\leq k < \ell\leq 5,$ and $X_{(\{k, \ell\},\,5)}z^2\neq \mathcal Y_j,\,\forall j,\, 401\leq j\leq 596 $ then there exists a monomial $w$ which is given in one of Lemmas \ref{bd21.2}- \ref{bd21.4} such that $X_{(\{k, \ell\},\,5)}z^2 = wz_1^{2^a}$ with a monomial $z_1\in \mathscr P_5$ and $a = {\rm max}\{m\in\mathbb Z:\, \omega_m(w) >0\}.$ By Theorem \ref{dlKS}, $X_{(\{k, \ell\},\,5)}z^2$ is inadmissible. Since $X = X_{(\{k, \ell\},\,5)}Y^2$ with $Y\in \mathscr B_5(3,1,1)$  and $X$ is admissible, one can see that $X = \mathcal Y_j,\, 401\leq j\leq 596.$ The lemma follows. 
\end{proof}

By a direct computation, we see that 
$$ \overline{\Phi}^+(\mathscr B_4(3,3,3)) \cup \{x_1^{3}x_2^{4}x_3x_4^{5}x_5^{6},\, x_1^{3}x_2^{5}x_3x_4^{6}x_5^{4},\, x_1^{3}x_2^{5}x_3^6x_4^{3}x_5^{4}\}$$ 
is the set consisting of $70$ monomials: $\mathcal Y_t:= \mathcal Y_{21,\,t},\, 597\leq t\leq 666$ (see Sect.\ref{s5.3}.)

\begin{propo}\label{md21.3}
The $\mathbb Z/2$-vector space $Q\mathscr P_5^+(3,3,3)$ is spanned by the set $\{[\mathcal Y_t]:\, 597\leq t\leq 666\}.$
\end{propo}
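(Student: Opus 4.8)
The plan is to argue exactly as in the proof of Proposition~\ref{md21.2}, now for the weight vector $\omega = (3,3,3)$ (which has $\deg(\omega) = 3 + 2\cdot 3 + 4\cdot 3 = 21$). Let $X$ be an admissible monomial in $(\mathscr{P}_5^+)_{21}$ with $\omega(X) = (3,3,3)$. Since $\omega_1(X) = 3$, I can write $X = X_{(\{k,\ell\},\,5)}Y^2$ for some pair $1\le k<\ell\le 5$ and some monomial $Y$ of degree $9$ in $\mathscr{P}_5$; comparing weight vectors, $\omega(X_{(\{k,\ell\},\,5)}) = (3)$ and $\omega(Y^2) = (0,\omega_1(Y),\omega_2(Y),\ldots)$ force $\omega(Y) = (3,3)$, while $X\in\mathscr{P}_5^+$ forces $x_kx_\ell\mid Y$. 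By Theorem~\ref{dlKS}, admissibility of $X$ forces $Y$ to be admissible, so $Y\in\mathscr{B}_5(3,3)$; here I would use the known admissible basis of $(Q\mathscr{P}_5)_9$ restricted to the weight vector $(3,3)$, which is the same input that underlies Lemma~\ref{bd21.1} and the degree-$9$ computation of T\'in~\cite{N.T1}.

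The core of the argument is then a finite check: for each of the $\binom{5}{2}$ pairs $(k,\ell)$ and each $z\in\mathscr{B}_5(3,3)$ with $x_kx_\ell\mid z$, I examine the monomial $X_{(\{k,\ell\},\,5)}z^2$ and establish the dichotomy that either $X_{(\{k,\ell\},\,5)}z^2 = \mathcal{Y}_t$ for some $t$ with $597\le t\le 666$, or there is a monomial $w$ listed in one of Lemmas~\ref{bd21.2}, \ref{bd21.3}, \ref{bd21.4} and a monomial $z_1\in\mathscr{P}_5$ with $X_{(\{k,\ell\},\,5)}z^2 = w\,z_1^{2^{a}}$, where $a = \max\{m\in\mathbb{Z}:\ \omega_m(w)>0\}$. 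In the second case Theorem~\ref{dlKS} shows $X_{(\{k,\ell\},\,5)}z^2$ is inadmissible; since $X = X_{(\{k,\ell\},\,5)}Y^2$ is admissible with $Y\in\mathscr{B}_5(3,3)$, it must fall in the first case, so $X = \mathcal{Y}_t$ for some $597\le t\le 666$. Because the $70$ monomials $\mathcal{Y}_t$, $597\le t\le 666$, are (by the direct computation stated just before the proposition) exactly $\overline{\Phi}^+(\mathscr{B}_4(3,3,3))\cup\{x_1^{3}x_2^{4}x_3x_4^{5}x_5^{6},\, x_1^{3}x_2^{5}x_3x_4^{6}x_5^{4},\, x_1^{3}x_2^{5}x_3^{6}x_4^{3}x_5^{4}\}$, it follows that every admissible monomial of weight $(3,3,3)$ in $\mathscr{P}_5^+$ lies in this set, whence $Q\mathscr{P}_5^+(3,3,3)$ is spanned by $\{[\mathcal{Y}_t]:\ 597\le t\le 666\}$, as claimed.

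The conceptual content is light: this is the $\omega=(3,3,3)$ counterpart of Proposition~\ref{md21.2}, and the only machinery required is Theorem~\ref{dlKS} together with the strictly inadmissible monomials supplied by Lemmas~\ref{bd21.2}--\ref{bd21.4} (no linear-independence input is needed for a spanning statement). Consequently the main obstacle is purely the bookkeeping: one must have the full list $\mathscr{B}_5(3,3)$ in degree $9$, form every product $X_{(\{k,\ell\},\,5)}z^2$, and for each product not on the target list correctly exhibit the strictly inadmissible initial factor $w$ and the exponent $2^a$ so that Theorem~\ref{dlKS} applies; the three monomials of weight $(3,3,3)$ that survive all of these reductions but do not lie in $\overline{\Phi}^+(\mathscr{B}_4(3,3,3))$ are precisely the extra generators appearing in the statement. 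As a consistency check I would also confirm directly that these three monomials are admissible, although this is not needed for the proposition itself.
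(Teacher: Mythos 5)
Your proposal is correct and follows essentially the same route as the paper: decompose an admissible $X\in(\mathscr P_5^+)_{21}$ of weight $(3,3,3)$ as a product of three distinct variables times $Y^2$ with $Y\in\mathscr B_5(3,3)$ (your $X_{(\{k,\ell\},\,5)}Y^2$ is the paper's $x_ix_jx_\ell y^2$ in complementary notation), then run the finite check that any such product not equal to some $\mathcal Y_t$, $597\le t\le 666$, factors as a strictly inadmissible monomial from Lemmas \ref{bd21.2}--\ref{bd21.4} times a $2^a$-th power, hence is inadmissible by Theorem \ref{dlKS}. The only differences are cosmetic (explicitly recording $x_kx_\ell\mid Y$ and the weight bookkeeping that the paper leaves implicit), so no gap.
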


\begin{proof}
Let $u$ be an admissible monomial in $\mathscr P_5$ such that $\omega(u) = (3,3,3).$ Then $u = x_ix_jx_{\ell}y^2$ with $1\leq  i<j <\ell\leq 5$ and $y\in \mathscr{B}_5(3,3).$

By a direct computation, we can verify that for any $X\in \mathscr{B}_5(3,3),\, 1\leq  i<j <\ell\leq 5,$ such that $x_ix_jx_{\ell}X^2\neq \mathcal Y_t,\, \forall t,\, 597\leq t\leq 666,$ there is a monomial $z$ which is given in one of Lemmas \ref{bd21.2}- \ref{bd21.4} such that $x_ix_jx_{\ell}X^2 = zw^{2^b}$ with suitable monomial $w\in \mathscr P_5$ and $b = {\rm max}\{r\in\mathbb Z:\, \omega_r(z) >0\}.$ Then, according to Theorem \ref{dlKS}, $x_ix_jx_{\ell}X^2$ nadmissible. Since $u = x_ix_jx_{\ell}y^2$ is admissible and $y\in \mathscr B_5(3,3),$  one gets $u = \mathcal Y_t,$ for some $t.$ This proves the proposition.
\end{proof}

\begin{proof}[{\it Proof of Proposition \ref{md21.1}}]
From Propositions \ref{md21.2} and \ref{md21.3}, the space ${\rm Ker}((\widetilde {Sq^0_*})_{(5,21)})\cap (Q\mathscr P_5^+)_{21}$  is spanned by the set $\{[\mathcal Y_i:=\mathcal Y_{21,\,i}]:\ 401\leq i\leqslant 666\}.$  Futhermore,  this set is linearly independent in $(Q\mathscr P_5)_{21}$ . Indeed, suppose there is a linear relation
$$ \mathcal S = \sum\limits_{401\leq i \leqslant 666}\gamma_i\mathcal Y_i= 0\ {\rm modulo}(\mathcal A_2^+\mathscr P_5),$$ where $\gamma_i\in\mathbb Z/2.$ Let $\mathscr H$ be a sequence of non-negative integers and $\gamma_h\in \mathbb Z/2$ for $h\in \mathscr H.$  Denote $\gamma_{\mathscr H} = \sum_{h\in\mathscr H}\gamma_h\in\mathbb Z/2.$ 
Based on Theorem \ref{dlsig} and Proposition \ref{mdPS}, for $(k;\mathscr K) \in \mathcal N_5$, we explicitly compute $\pi_{(k;\mathscr K)}(\mathcal S)$ in terms of a given minimal set of $\mathcal A_2$-generators in $\mathscr P_4\ ({\rm modulo}(\mathcal A_2^+\mathscr P_{4})).$
Computing directly from the relations $\pi_{(k; p)}(\mathcal S)\equiv 0,\, 1\leq k <p \leq 5,$ one gets $\gamma_h = 0,\ h\in \mathscr H,$
where the set $\mathscr H = \{401, 402, \ldots, 425, 434, 435, 439, 441, 442, 443, 446, 451, 453, 454, 457, 458,460, 461, 462,\\ 463, 465, 467, 470, 471, \ldots, 477, 479, 483, 487, 489, 497, 498, 500, 501, 502, 504, 505, 506, 507, 510, 513,\\
 515, 517, 519, 520, 523, 527, 532, 533, 540, 542, 543,  560, 561, 562, 565, 566, 567, 568, 570, 573, 575, 577, \\
578, 579, 580, 581, 585, 597, 598, 599, 600, 601, 602, 604, 606, 607, 609, 610,  613, 615, 616, 617, 620, 622,\\
 629, 630, 632, 633, \ldots, 647, 650, 651, 652, 653, 655, 656, 657, 658, 659,  660, 662, 663\},$ and 
$$ \left\{\begin{array}{ll}
\gamma_{430} = \gamma_{593},\\
\gamma_{431} = \gamma_{594},
\gamma_{536} = \gamma_{541},\\
\gamma_{603} = \gamma_{664},\ \gamma_{h} =  \gamma_{426},\ h \in \mathbb I_1,\\ 
\gamma_{608} = \gamma_{627},\ \gamma_{h} =  \gamma_{427},\ h\in \mathbb I_2,\\
\gamma_{619} = \gamma_{626},\ \gamma_{h} = \gamma_{432},\ h = 468, 491, 518, 595,\\
\gamma_{621} = \gamma_{665},\ \gamma_{h} = \gamma_{433},\ h = 596, 623, 666,\\
\gamma_{h} = \gamma_{436},\ h = 437, 438, 469, 492, 495, 496, 499, 534, 539, 545, 614, 654,\\
\gamma_{\{450, 595, 661\}} = \gamma_{\{522, 648, 661\}} = \gamma_{\{531, 648, 661\}} = \gamma_{\{549, 654, 661\}} = 0,\\
\gamma_{\{555, 595, 654\}} = \gamma_{\{618, 626, 628\}} = 0,\\
\gamma_{\{582, 654, 661\}} = \gamma_{\{583, 654, 661\}} = \gamma_{\{584, 654, 661\}} = 0,\\
\gamma_{\{429, 648, 595, 661\}} = \gamma_{\{455, 595, 654, 661\}} = \gamma_{\{456, 654, 661, 666\}} = \gamma_{\{486, 648, 594, 654\}} = 0,\\
\gamma_{\{493, 664, 665, 594\}} = \gamma_{\{503, 654, 552, 666\}} = \gamma_{\{512, 648, 595, 661\}} = \gamma_{\{529, 664, 665, 594\}} = 0,\\
\gamma_{\{544, 654, 661, 595\}} = \gamma_{\{547, 654, 661, 595\}} = \gamma_{\{548, 654, 661, 666\}} = \gamma_{\{550, 654, 661, 595\}} = 0,\\
\gamma_{\{551, 648, 594, 654\}} = \gamma_{\{557, 654, 661, 666\}} = \gamma_{\{558, 654, 661, 648\}}=\gamma_{\{592, 648, 595, 661\}} = 0,\\
\gamma_{\{481, 648, 664, 665, 594\}} = \gamma_{\{526, 648, 664, 665, 594\}} = \gamma_{\{554, 654, 628, 595, 626\}} =0,\\
  \gamma_{\{624, 626, 628, 627, 664\}} = \gamma_{\{459, 628, 648, 595, 626, 666\}}=  0,\\
\gamma_{\{488, 628, 595, 626, 648, 594\}} = \gamma_{\{535, 627, 654, 595, 661, 626\}} = 0,\\
\gamma_{\{546, 661, 666, 648, 594, 552\}} = \gamma_{\{612, 628, 627, 665, 666, 664\}} = 0,\\
\gamma_{\{485, 627, 654, 648, 595, 661, 626\}} = \gamma_{\{490, 626, 628, 627, 665, 666, 664\}} = 0,\\
\gamma_{\{509, 666, 552, 654, 628, 595, 626\}} = \gamma_{\{524, 666, 552, 654, 628, 595, 626\}} = 0,\\
\gamma_{\{538, 627, 654, 648, 595, 661, 626\}} = \gamma_{\{556, 654, 628, 648, 595, 661, 626\}} = 0,\\
\gamma_{\{559, 648, 626, 628, 627, 666, 661\}} = \gamma_{\{625, 626, 628, 627, 665, 666, 664\}} = 0,\\
\gamma_{\{521, 654, 666, 648, 594, 552, 628, 664, 665\}} = \gamma_{\{553, 654, 648, 594, 626, 628, 627,666, 664\}} = 0.
\end{array}\right.$$
Here $ \mathbb I_1 = \{440, 447, 448, 464, 466, 478, 484, 508, 514, 516, 586, 605,,648\},$ and \\
$ \mathbb I_2 = \{428, 444, 445, 449, 452, 480, 482, 494, 511, 525, 528, 530, 537, 563, 564, 569, 571, 572, 574, 576, 587,\\ 588, 589, 590, 591, 611, 631, 649, 661\},$

Combining the above computations and the relations $$\pi_{(1, (2; j))}(\mathcal S)\equiv 0,\ j = 3, 4, 5,\ \mbox{and}\ \pi_{(1, (3; 4))}(\mathcal S)\equiv 0,$$ we obtain $\gamma_i = 0,\ \forall i,\, 401\leq i\leq 666.$ This finishes the proof. 

\end{proof}

\subsubsection{ The case $t = 2$}\label{s3.2.2}

For $t =2,$ we have $13.2^t-5= 47$ and $\mu(47) = 3 < 5.$ Since Kameko's operation
$(\widetilde {Sq^0_*})_{(5,47)}: (Q\mathscr P_5)_{47} \to (Q\mathscr P_5)_{21}$ is an epimorphism of the $\mathbb Z/2GL_5$-modules, hence $$(Q\mathscr P_5)_{47} = {\rm Ker}((\widetilde {Sq^0_*})_{(5,47)})\bigoplus  (Q\mathscr P_5)_{21}.$$ Thus, we need to compute ${\rm Ker}((\widetilde {Sq^0_*})_{(5,47)}).$

\begin{nx}\label{nx1}
If $Y\in\mathscr B_5(47)$  and $[Y]\in {\rm Ker}((\widetilde {Sq^0_*})_{(5,47)}),$ then $\omega_1(Y) = 3.$
\end{nx}

Indeed, we see that $z = x_1^{31}x_2^{15}x_3$ is the minimal spike of degree $47$ in $\mathscr P_5.$ By Proposition \ref{PS},  $z\in\mathscr B_5(47)$. Since $[Y]\neq 0,$ by Theorem \ref{dlsig}, either $\omega_1(Y) = 3$ or $\omega_1(Y) = 5.$ If $\omega_1(Y) = 5,$ then $Y = X_{(\emptyset,\, 5)}Z^2$ with $Z$ a monomial of degree $21$ in $\mathscr P_5.$ Since $Y$ is admissible, by Theorem \ref{dlKS}, $Z\in \mathscr{B}_5(21).$ So, we have $(\widetilde {Sq^0_*})_{(5,47)}([Y]) = [Z]\neq [0].$ This contradicts the face that $[Y]\in {\rm Ker}((\widetilde {Sq^0_*})_{(5,47)});$ hence we get $\omega_1(Y) = 3.$ 

From Remark \ref{nx1}, we have $Y = x_kx_{\ell}x_mg^2$ with $1 \leq k < \ell < m\leq 5$ and $g\in\mathscr B_5(22).$ Thus, to determine ${\rm Ker}((\widetilde {Sq^0_*})_{(5,47)}),$ we need to compute all the admissible monomials of degree $22$ in the $\mathcal A_2$-module $\mathscr P_5.$

\subsection*{Computation of $(Q\mathscr P_5)_{22}$}

We consider the following weight vectors:
$$ \omega_{(1)} = (2,2,2,1),\ \omega_{(2)} = (2,4,1, 1),\ \omega_{(3)} = (2,4,3),\  \omega_{(4)} = (4,3,1,1),\  \omega_{(5)} = (4,3,3).$$
It is easy to see that $\deg \omega_{(i)} = 22,\ 1\leq i\leq 5.$ By Proposition \ref{PS}, $x_1^{15}x_2^7$ is the minimal spike in $\mathscr B_5(22)$ and $\omega(x_1^{15}x_2^7) = \omega_{(1)}.$ Let $u$ be an admissible monomial of degree $22$ in $\mathscr P_5.$ Then $[u]\neq [0]$ and by Theorem \ref{dlsig}, either $\omega_1(u) = 2$ or $\omega_1(u) = 4.$ Since $u\in\mathscr B_5(22),$ by Theorem \ref{dlKS}, if $\omega_1(u) = 2,$ then $u = X_{(\{i,j,k\},\,5)}y^2$ with $y\in \mathscr B_5(10)$ and $1\leqslant i < j <k\leq 5.$ According to T\'in \cite{N.T1}, $\omega(y)$ is one of the sequences $ (2,2,1),\  (4,1,1),$ and $(4,3).$ If $\omega_1(u) = 4,$ then $u = X_{(\{i\},\,5)}y_1^2$ with $y_1$ a monomial of degree $9$ in $\mathscr P_5$ and $1\leq i \leq 5.$  By T\'in \cite{N.T1}, either $\omega(y_1) = (3,1,1)$ or $\omega(y_1) = (3,3).$ Hence, we have the following.

\begin{nx}\label{nx2} 
If $u\in\mathscr B_5(22),$ then $\omega(u)$ is one of the sequences $\omega_{(t)},\ 1\leq t\leq 5.$
\end{nx}

As it is known, $(Q\mathscr P_5)_{22} = (Q\mathscr P_5^0)_{22}\bigoplus (Q\mathscr P_5^+)_{22}.$ By Sum \cite{N.S3}, $Q\mathscr P_5^+$ has dimension $72$ in degree $22.$ Then, combining the rerults in \cite{F.P1} and \cite{M.K} with fact that $(Q\mathscr P_5^0)_{22} = \bigoplus_{1\leq s\leq 4}\bigoplus_{1\leq u\leq \binom{5}{u}}(Q\mathscr P_s^+)_{22},$ we deduce that $ \dim(Q\mathscr P_5^0)_{22} = \binom 52.2 + \binom 53.8 + \binom 54.72 = 460,$ and that $\mathscr B_5^0(22) = \overline{\Phi}^0(\mathscr B_4(22)) =  \{\mathcal Y_{22,\, t}\ :\ 1\leq t\leq 460\},$ where the monomials $\mathcal Y_{22,\,t},\ 1\leq t\leq 460,$ are determined in Sect.\ref{s5.4}.

Next, we compute $(Q\mathscr P_5^+)_{22}.$ For $r,\ k\in\mathbb N$ and $1\leq k\leq 5,$ we denote
$$ \overline{\mathscr B}(k, 22) := \big\{x_k^{2^{r} - 1}\rho_{(k,\,5)}(x)\in (\mathscr P_5)_{22}\ :\ x\in \mathscr B_4(23-2^{r}),\ \alpha(27-2^{r})\leq 4\big\}.$$
 By Mothebe and Uys \cite{M.M2}, $\overline{\mathscr B}(5, 22)\subseteq \mathscr B_5(22),\ 1\leq k\leq 5.$ We set 
$$ \overline{\mathscr B}(k, \omega_{(t)})  := \overline{\mathscr B}(k, 22) \cap \mathscr P_5(\omega_{(t)}),\ \overline{\mathscr B}^+(k, \omega_{(t)})  := \overline{\mathscr B}(k, \omega_{(t)}) \cap (\mathscr P^+_5)_{22},$$
for all $1\leq t, \ k\leq 5.$

By a simple computation, we find that $$\overline{\Phi}^+(\mathscr B_4(\omega_{(1)})\bigcup \big(\bigcup_{1\leq k\leq 5} \overline{\mathscr B}^+(k, \omega_{(1)})\big)$$ is the set of $31$ admissible monomials: $\mathcal Y_{22,\, i},\ 461\leq i\leq 491$ (see Sect.\ref{s5.5}.) 

\newpage
Denote by $\mathcal D$ is  the set of the following monomials:

\begin{center}
   \begin{tabular}{lll}
$\mathcal Y_{22,\,492}=x_1x_2x_3^{6}x_4^{6}x_5^{8}$, & $\mathcal Y_{22,\,493}=x_1x_2x_3^{6}x_4^{10}x_5^{4}$, & $\mathcal Y_{22,\,494}=x_1x_2^{2}x_3^{3}x_4^{4}x_5^{12}$, \\
 \multicolumn{1}{l}{$\mathcal Y_{22,\,495}=x_1x_2^{2}x_3^{3}x_4^{12}x_5^{4}$,} &
$\mathcal Y_{22,\,496}=x_1x_2^{2}x_3^{4}x_4^{9}x_5^{6}$, & $\mathcal Y_{22,\,497}=x_1x_2^{2}x_3^{5}x_4^{8}x_5^{6}$,\\
  $\mathcal Y_{22,\,498}=x_1x_2^{3}x_3^{2}x_4^{4}x_5^{12}$, & \multicolumn{1}{l}{$\mathcal Y_{22,\,499}=x_1x_2^{3}x_3^{2}x_4^{12}x_5^{4}$,} &
$\mathcal Y_{22,\,500}=x_1x_2^{3}x_3^{4}x_4^{8}x_5^{6}$,\\
  $\mathcal Y_{22,\,501}=x_1x_2^{3}x_3^{6}x_4^{4}x_5^{8}$, & $\mathcal Y_{22,\,502}=x_1x_2^{3}x_3^{6}x_4^{8}x_5^{4}$, & \multicolumn{1}{l}{$\mathcal Y_{22,\,503}=x_1^{3}x_2x_3^{2}x_4^{4}x_5^{12}$,} \\
$\mathcal Y_{22,\,504}=x_1^{3}x_2x_3^{2}x_4^{12}x_5^{4}$, & $\mathcal Y_{22,\,505}=x_1^{3}x_2x_3^{4}x_4^{8}x_5^{6}$, & $\mathcal Y_{22,\,506}=x_1^{3}x_2x_3^{6}x_4^{4}x_5^{8}$,\\
  \multicolumn{1}{l}{$\mathcal Y_{22,\,507}=x_1^{3}x_2x_3^{6}x_4^{8}x_5^{4}$,} &
$\mathcal Y_{22,\,508}=x_1^{3}x_2^{5}x_3^{2}x_4^{4}x_5^{8}$, & $\mathcal Y_{22,\,509}=x_1^{3}x_2^{5}x_3^{2}x_4^{8}x_5^{4}$,\\
  $\mathcal Y_{22,\,510}=x_1^{3}x_2^{5}x_3^{8}x_4^{2}x_5^{4}$. &  &
\end{tabular}
\end{center}

\begin{propo}\label{md22-1}
$\mathscr B^+_5(\omega_{(1)}) = \overline{\Phi}^+(\mathscr B_4(\omega_{(1)})\cup \mathscr B^+(5, \omega_{(1)}) \cup \mathcal D.$ 
\end{propo}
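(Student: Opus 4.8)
The plan is to follow the template already used in the case $t=1$ for the weight vectors $(3,3,1,1)$ and $(3,3,3)$, now adapted to $\omega_{(1)}=(2,2,2,1)$. As a preliminary step I would compile a list of strictly inadmissible monomials in $\mathscr P_5$ (the analogues of Lemmas \ref{bd21.2}--\ref{bd21.4}), each certified by an explicit identity of the form $w=\sum_j x_j+\sum_{1\le t\le 2^{s}-1}Sq^{t}(y_t)\ {\rm modulo}\,(\mathscr P_5^-(\omega(w)))$ with $s=\max\{i:\omega_i(w)>0\}$, produced by a routine application of the Cartan formula; by Theorem \ref{dlKS} each such $w$ forces every monomial of the shape $w\,z^{2^{s}}$ to be inadmissible. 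Granting these, the argument for Proposition \ref{md22-1} splits into a spanning inclusion and a linear independence statement, after which the set equality is forced by a cardinality count.

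For the spanning inclusion I would argue exactly as in Proposition \ref{md21.2}. Let $X\in\mathscr B_5^+(\omega_{(1)})$; by Theorem \ref{dlsig} no admissible monomial of degree $22$ has weight strictly below that of the minimal spike $x_1^{15}x_2^{7}$, so $\omega(X)=\omega_{(1)}$, whence $\omega_1(X)=2$ and $X=X_{(\{i,j,k\},5)}Y^{2}$ for some $1\le i<j<k\le 5$ and a monomial $Y$ of degree $10$. Theorem \ref{dlKS} forces $Y$ to be admissible, and comparing weight vectors gives $\omega(Y)=(2,2,1)$, so $Y$ runs over the explicitly known set $\mathscr B_5(2,2,1)$ of T\'in \cite{N.T1}. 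A finite check over all triples $\{i,j,k\}$ and all such $Y$ then shows that whenever the positive monomial $X_{(\{i,j,k\},5)}Y^{2}$ is not one of the listed $\mathcal Y_{22,i}$ ($461\le i\le 510$), it factors as $w\,z^{2^{a}}$ with $w$ one of the strictly inadmissible monomials above and $a=\max\{m:\omega_m(w)>0\}$, and is therefore inadmissible by Theorem \ref{dlKS}. Since $X$ is admissible, it must coincide with some $\mathcal Y_{22,i}$; thus $\mathscr B_5^+(\omega_{(1)})\subseteq\overline{\Phi}^+(\mathscr B_4(\omega_{(1)}))\cup\mathscr B^+(5,\omega_{(1)})\cup\mathcal D$, and in particular the classes of the right-hand side span $Q\mathscr P_5^+(\omega_{(1)})$.

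For linear independence I would invoke the now-standard machinery built from Sum's projections $\pi_{(k;\mathscr K)}$, together with Theorem \ref{dlsig} and Proposition \ref{mdPS}. Assuming a relation $\mathcal S=\sum_{461\le i\le 510}\gamma_i\mathcal Y_{22,i}\equiv 0\ {\rm modulo}\,(\mathcal A_2^+\mathscr P_5+\mathscr P_5^-(\omega_{(1)}))$ with $\gamma_i\in\mathbb Z/2$, I would compute $\pi_{(k;\mathscr K)}(\mathcal S)$ for suitable $(k;\mathscr K)\in\mathcal N_5$, re-express each image in the admissible monomial basis of $(Q\mathscr P_4)_{22}$ (known from \cite{F.P1, M.K, N.S3}), and read off $\gamma_i=0$ for all $i$ from the resulting homogeneous linear system; as in the proof of Proposition \ref{md21.1}, a small, well-chosen family of $(k;\mathscr K)$ should suffice. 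Combining the two parts, $[\overline{\Phi}^+(\mathscr B_4(\omega_{(1)}))\cup\mathscr B^+(5,\omega_{(1)})\cup\mathcal D]$ is a basis of $Q\mathscr P_5^+(\omega_{(1)})$, so the right-hand side has $\dim Q\mathscr P_5^+(\omega_{(1)})=|\mathscr B_5^+(\omega_{(1)})|$ elements; being finite and containing $\mathscr B_5^+(\omega_{(1)})$, it equals it. As a byproduct this also confirms Conjecture \ref{gtS} for the weight vector $\omega_{(1)}$.

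The step I expect to be the main obstacle is the exhaustive case analysis in the spanning inclusion: for each pair $(\{i,j,k\},Y)$ with $Y\in\mathscr B_5(2,2,1)$ one must either recognise $X_{(\{i,j,k\},5)}Y^{2}$ among the fifty listed monomials or exhibit the correct strictly inadmissible factor, which requires the preliminary inadmissibility lemmas to be complete enough to cover every omitted case. The linear independence computation, while conceptually routine, is the other laborious ingredient, as it amounts to tracking the $\pi_{(k;\mathscr K)}$-images of all fifty monomials inside $Q\mathscr P_4$ and solving the resulting system.
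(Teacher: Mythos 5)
Your proposal follows the paper's own argument essentially verbatim: the spanning inclusion via the factorization $X=X_{(\{i,j,k\},5)}Y^{2}$ with $Y\in\mathscr B_5(2,2,1)$ and a stock of strictly inadmissible monomials (the paper's Lemmas \ref{bd22.2}--\ref{bd22.4}) combined with Theorem \ref{dlKS}, and linear independence via the projections $\pi_{(1;j)}$ onto the known $26$-element basis of $Q\mathscr P_4^+(\omega_{(1)})$. The approach and its two laborious ingredients are exactly those of the paper, so the proposal is correct as a blueprint.
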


In order to prove the proposition, we need some lemmas. 

\begin{lema}\label{bd22.2}
The following monomials are strictly inadmissible:
\begin{enumerate}
\item[(i)] $x_1^2x_jx_k^2x_l^3x_m^6, \  x_1^6x_jx_kx_l^2x_m^4,\ l < m,\ x_1^2x_jx_kx_l^4x_m^6.$\\
 Here $(j, k, l, m)$ is a permutation of $(2, 3, 4, 5);$

\item[(ii)] 

    \begin{tabular}[t]{llllr}
    $ x_1x_2^{2}x_3^{4}x_4^{6}x_5,$ & $ x_1x_2^{2}x_3^{6}x_4x_5^{4},$ & $ x_1x_2^{2}x_3^{6}x_4^{4}x_5,$ & $ x_1x_2^{6}x_3^{2}x_4x_5^{4},$ \\
 $ x_1x_2^{6}x_3^{2}x_4^{4}x_5,$ & $ x_1^{3}x_2^{3}x_3^{4}x_4^{2}x_5^{2},$ &  $ x_1^{3}x_2^{4}x_3^{2}x_4^{2}x_5^{3},$ & $ x_1^{3}x_2^{4}x_3^{2}x_4^{3}x_5^{2},$\\
  $ x_1^{3}x_2^{4}x_3^{3}x_4^{2}x_5^{2},$ & $ x_1^{3}x_2^{2}x_3x_4^{2}x_5^{6},$ & $ x_1^{3}x_2^{2}x_3x_4^{6}x_5^{2},$ & $ x_1^{3}x_2^{2}x_3^{2}x_4x_5^{6},$\\
  $ x_1^{3}x_2^{2}x_3^{2}x_4^{6}x_5,$& $ x_1^{3}x_2^{2}x_3^{6}x_4x_5^{2},$ & $ x_1^{3}x_2^{2}x_3^{6}x_4^{2}x_5,$ & $ x_1^{3}x_2^{6}x_3x_4^{2}x_5^{2},$\\
  $ x_1^{3}x_2^{6}x_3^{2}x_4x_5^{2},$ & $ x_1^{3}x_2^{6}x_3^{2}x_4^{2}x_5,$ &  $ x_1^{2}x_2^{3}x_3^{3}x_4^{3}x_5^{3},$ & $ x_1^{3}x_2^{2}x_3^{3}x_4^{3}x_5^{3},$ \\
 $ x_1^{3}x_2^{3}x_3^{2}x_4^{3}x_5^{3},$ &   $ x_1^{3}x_2^{3}x_3^{3}x_4^{2}x_5^{3},$ & $ x_1^{3}x_2^{3}x_3^{3}x_4^{3}x_5^{2}.$ &  
    \end{tabular}%
 \end{enumerate}
\end{lema}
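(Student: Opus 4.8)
The plan is to verify each assertion directly from the definition of strict inadmissibility, following the template of the proofs of Lemmas~\ref{bd21.2}--\ref{bd21.4}. For a listed monomial $u$, put $\omega=\omega(u)$ and $s=\max\{i:\omega_i(u)>0\}$; it suffices to exhibit an identity
$$u=\sum_{1\le j\le k}z_j+\sum_{1\le t\le 2^{s}-1}Sq^{t}(y_t)\pmod{\mathscr P_5^-(\omega)},$$
with monomials $z_j<u$ and suitable polynomials $y_t\in\mathscr P_5$, since the lower-weight remainder consists of monomials which are automatically $<u$ in the linear order and so can be absorbed among the $z_j$. A routine computation of weight vectors shows that every monomial in (i) and (ii) has $\omega$ equal to one of $(2,2,2)$, $(2,4,1)$, or $(4,5)$: the families $x_1^{6}x_jx_kx_l^{2}x_m^{4}$ and $x_1^{2}x_jx_kx_l^{4}x_m^{6}$ of (i) together with the squarefree-times-a-square monomials at the head of (ii) have $\omega=(2,2,2)$; the family $x_1^{2}x_jx_k^{2}x_l^{3}x_m^{6}$ of (i) together with the remaining $x_1^{3}$-monomials of (ii) have $\omega=(2,4,1)$; and the permutations of $x_1^{2}x_2^{3}x_3^{3}x_4^{3}x_5^{3}$ at the end of (ii) have $\omega=(4,5)$. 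For the first two types $s=3$, so only $Sq^{1},Sq^{2},Sq^{4}$ (and, where needed, $Sq^{8}$, handled modulo $\mathscr P_5^-(\omega)$ exactly as for the monomial $X_2$ in Lemma~\ref{bd21.4}) enter; for the third type $s=2$ and a single application of $Sq^{1}$ already suffices. Since within each type the exponent patterns coincide up to a permutation of the variables compatible with the linear order, it is enough to record the identity for one representative of each type and note that the rest follow by an entirely analogous computation; as with Lemma~\ref{bd21.2}, some of these monomials may moreover be read off directly from Sum's lists of strictly inadmissible monomials in~\cite{N.S3}.

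For the representative computations I would proceed exactly as in Lemma~\ref{bd21.4}: using Cartan's formula with $Sq^{1}(x_i)=x_i^{2}$ and $Sq^{k}(x_i)=0$ for $k>1$, I choose each $y_t$ to be a sum of monomials whose leading term in the linear order is $u$, so that $u$ occurs exactly once in $\sum_tSq^{t}(y_t)$ while every other monomial produced is either of weight strictly below $\omega$ (hence in $\mathscr P_5^-(\omega)$) or is a weight-$\omega$ monomial strictly smaller than $u$, to be collected among the $z_j$. For instance, for $u=x_1^{2}x_2^{3}x_3^{3}x_4^{3}x_5^{3}$ one checks that $Sq^{1}(x_1x_2^{3}x_3^{3}x_4^{3}x_5^{3})$ equals $u$ plus four monomials of weight $(4,3,1)$, so that $u\equiv Sq^{1}(x_1x_2^{3}x_3^{3}x_4^{3}x_5^{3})\pmod{\mathscr P_5^-(4,5)}$ and strict inadmissibility follows at once; for $u=x_1^{3}x_2^{3}x_3^{4}x_4^{2}x_5^{2}$ one combines $Sq^{1},Sq^{2},Sq^{4}$ applied to suitably chosen degree-reduced polynomials (in the manner of the polynomials $A_1,A_2,A_4$ in Lemma~\ref{bd21.4}); and likewise for the representatives $x_1^{2}x_jx_k^{2}x_l^{3}x_m^{6}$ and $x_1^{6}x_jx_kx_l^{2}x_m^{4}$ of (i).

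The only real difficulty is the bookkeeping: the polynomials $y_1,y_2,y_4$ must be engineered so that, after expanding by Cartan's formula, every weight-$\omega$ monomial that is \emph{not} below $u$ in the linear order cancels in pairs, leaving only $u$ itself, admissible-order-smaller weight-$\omega$ monomials, and lower-weight terms; this is also the point at which the order constraint ``$l<m$'' in (i) matters, as it isolates precisely the exponent patterns for which the required cancellation can be arranged. Once the representative identities have been written down, the verification is a finite mechanical check (which can also be confirmed on a computer), with no conceptual obstacle remaining beyond organizing the cases by weight vector and by the symmetry type of the exponent pattern, exactly as in the proof of Lemma~\ref{bd21.4}.
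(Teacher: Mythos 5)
Your proposal is correct and takes essentially the same route as the paper: the paper likewise proves the lemma by verifying two representatives, $x_1^{2}x_jx_k^{2}x_l^{3}x_m^{6}$ (weight $(2,4,1)$, a single $Sq^{1}$) and $x_1x_2^{2}x_3^{4}x_4^{6}x_5$ (weight $(2,2,2)$, using $Sq^{1}$ and $Sq^{2}$) modulo $\mathscr P_5^{-}(\omega)$, and leaves the rest to ``a similar computation''; your weight-vector classification and the explicit identity $x_1^{2}x_2^{3}x_3^{3}x_4^{3}x_5^{3}\equiv Sq^{1}(x_1x_2^{3}x_3^{3}x_4^{3}x_5^{3})$ modulo $\mathscr P_5^{-}(4,5)$ both check out. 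One small caution: for the weight vectors $(2,2,2)$ and $(2,4,1)$ one has $s=3$, so the definition of strict inadmissibility only permits $Sq^{t}$ with $t\le 2^{3}-1=7$; your parenthetical appeal to $Sq^{8}$ would therefore be illegitimate in those cases, but it is also unnecessary, since (as in the paper's computation) $Sq^{1}$ and $Sq^{2}$ already suffice.
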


\begin{proof}
We prove the lemma for the monomials $u = x_1^2x_jx_k^2x_l^3x_m^6,$ and $v = x_1x_2^{2}x_3^{4}x_4^{6}x_5.$ The others can be proved by a similar computation. We have $\omega(u) = (2,4,1)$ and $\omega(v) = (2,2,2).$ By a simple computation, one gets
$$ \begin{array}{ll}
u &= x_1x_j^2x_k^2x_l^3x_m^6 + Sq^1(x_1x_jx_k^2x_l^3x_m^6) \ {\rm modulo}(\mathscr P_5^-(2,4,1)),\\
v &= x_1x_2x_3^{4}x_4^{6}x_5^2 + x_1x_2^2x_3^{4}x_4^{5}x_5^2 + Sq^1(f_1) + Sq^2(f_2) \ {\rm modulo}(\mathscr P_5^-(2,2,2)),
\end{array}$$
where $f_1 = x_1^2x_2x_3^{4}x_4^{5}x_5$ and $f_2 = x_1x_2x_3^{4}x_4^{5}x_5.$  Hence, $u$ and $v$ are strictly inadmissible. The lemma follows.
\end{proof}

The following lemma can easily be proved by a direct computation.

\begin{lema}\label{bd22.1}
If $(i, j, k, l, m)$ is a permutation of $(1, 2, 3, 4, 5),$ then the following monomials are strictly inadmissible:
\begin{enumerate}
\item[(i)] $x_i^6x_jx_k^7,\ x_i^2x_j^5x_k^7,\
x_i^3x_j^4x_k^7,\  x_i^2x_jx_k^2x_l^2x_m^7,\\  x_i^2x_j^5x_k^2x_l^2x_m^3, x_i^2x_j^4x_k^2x_l^3x_m^3, x_i^2x_jx_k^2x_l^2x_m^3, \ i < j;$  
\item[(ii)] $x_ix_j^6x_k^3x_l^2x_m^2,\ j < k,\ x_i^3x_j^6x_k^5,\ x_i^6x_j^3x_k^5,\ x_i^2x_jx_kx_l^3x_m^3,\ x_i^2x_jx_kx_l^2 ,\ i < j < k;$
\item[(iii)] $x_i^2x_jx_k^4x_l^3x_m^4,\  x_i^2x_j^4x_kx_l^3x_m^4,\ i < j < k,\ l <m.$
\end{enumerate}
\end{lema}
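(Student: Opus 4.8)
The plan is to prove strict inadmissibility of each listed monomial $u$ by writing down an explicit Cartan identity, in exactly the style of the proofs of Lemmas \ref{bd21.4} and \ref{bd22.2}. The first step is to record the weight vectors: the degree-$14$ monomials in three variables, together with $x_i^3x_j^6x_k^5$, $x_i^6x_j^3x_k^5$, $x_i^2x_jx_k^4x_l^3x_m^4$ and $x_i^2x_j^4x_kx_l^3x_m^4$, have $\omega(u)=(2,2,2)$; the monomials $x_i^2x_jx_k^2x_l^2x_m^7$, $x_i^2x_j^5x_k^2x_l^2x_m^3$, $x_i^2x_j^4x_k^2x_l^3x_m^3$ and $x_ix_j^6x_k^3x_l^2x_m^2$ have $\omega(u)=(2,4,1)$; while $x_i^2x_jx_k^2x_l^2x_m^3$, $x_i^2x_jx_kx_l^3x_m^3$ and $x_i^2x_jx_kx_l^2$ have $\omega(u)=(2,4)$, $(4,3)$ and $(2,2)$, respectively. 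In every case $s:=\max\{i:\omega_i(u)>0\}\le 3$, so the operations $Sq^1,\dots,Sq^{2^s-1}$ are at our disposal.

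For a monomial $u$ with $\omega(u)=\omega$, I would choose suitable polynomials $y_t\in\mathscr P_5$ $(1\le t\le 2^s-1)$, expand each $Sq^t(y_t)$ by Cartan's formula, and collect terms so as to obtain a relation
$$ u = \sum_{v<u}v + \sum_{1\le t\le 2^s-1}Sq^t(y_t)\ {\rm modulo}\ \mathscr P_5^-(\omega), $$
in which every monomial $v$ on the right is strictly smaller than $u$ in the order and every term of weight $<\omega$ is absorbed into $\mathscr P_5^-(\omega)$. The side conditions such as ``$i<j$'' in the statement are precisely what is needed to guarantee that the auxiliary monomials $v$ really do satisfy $v<u$. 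A typical instance is $u=x_i^3x_j^4x_k^7$ with $i<j$, for which
$$ x_i^3x_j^4x_k^7 = x_i^2x_j^5x_k^7 + Sq^1(x_i^3x_j^3x_k^7) + Sq^2(x_i^2x_j^3x_k^7)\ {\rm modulo}\ \mathscr P_5^-(2,2,2), $$
and since $x_i^2x_j^5x_k^7<x_i^3x_j^4x_k^7$ this already shows $u$ strictly inadmissible; completely parallel one-line identities deal with $x_i^6x_jx_k^7$ (reducing it to $x_i^3x_j^4x_k^7$), $x_i^2x_j^5x_k^7$ (reducing it to $x_ix_j^6x_k^7$ via $Sq^1(x_ix_j^5x_k^7)$), and the remaining three-variable monomials, while the low-degree entries are even simpler --- for example $x_i^2x_jx_kx_l^2=x_ix_j^2x_kx_l^2+x_ix_jx_k^2x_l^2+Sq^1(x_ix_jx_kx_l^2)$. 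The five-variable monomials of degree $14$ and $10$ are treated the same way; in addition, any $u$ in the list of the form $u=u'\,y^{2^s}$ with $u'$ strictly inadmissible and $\omega_i(u')=0$ for $i>s$ is strictly inadmissible by Theorem \ref{dlKS}, and several entries can instead be deduced from the corresponding facts for $\mathscr P_4$ due to Kameko \cite{M.K}, Mothebe--Uys \cite{M.M2} and Sum \cite{N.S3}.

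The only genuine difficulty is the bookkeeping. Because $u$ and the auxiliary monomials $v$ must be compared coordinatewise as elements of $\mathscr P_5$, the right choice of $y_t$ genuinely depends on which variable carries which exponent --- the witness that works for one arrangement of a given exponent multiset need not work verbatim for another (for instance the identity displayed above reduces $x_i^3x_j^4x_k^7$ correctly only when $i<j$), which is exactly why the stated side conditions are imposed. So for each family one has to fix the arrangements covered by those conditions and verify, in each, that the non-$Sq$ terms of the Cartan expansion lie strictly below $u$. Once the $y_t$ are pinned down the verification is entirely mechanical, being a routine use of Cartan's formula together with the description of $\mathscr P_5^-(\omega)$.
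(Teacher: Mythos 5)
Your proposal is correct and is exactly the argument the paper intends: the paper offers no written proof of this lemma beyond the remark that it "can easily be proved by a direct computation," and your Cartan-formula reductions modulo $\mathscr P_5^-(\omega)$ --- with correctly computed weight vectors and a correctly verified sample identity such as $x_i^3x_j^4x_k^7 = x_i^2x_j^5x_k^7 + Sq^1(x_i^3x_j^3x_k^7)+Sq^2(x_i^2x_j^3x_k^7)$ modulo $\mathscr P_5^-(2,2,2)$ --- are precisely the technique used in the written proofs of the neighbouring Lemmas \ref{bd22.2} and \ref{bd22.4}. Your observation that the side conditions such as $i<j$ are exactly what make the auxiliary monomials smaller in the order is also accurate.
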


\begin{lema}\label{bd22.4}
The following monomials are strictly inadmissible:

\begin{center}
    \begin{tabular}[t]{lllll}
    $ x_1x_2^{2}x_3^{2}x_4^{7}x_5^{10},$ & $ x_1x_2^{2}x_3^{4}x_4^{3}x_5^{12},$ & $ x_1x_2^{2}x_3^{4}x_4^{11}x_5^{4},$ & $ x_1x_2^{2}x_3^{7}x_4^{2}x_5^{10},$\\
  $ x_1x_2^{2}x_3^{7}x_4^{8}x_5^{4},$ & $ x_1x_2^{2}x_3^{7}x_4^{10}x_5^{2},$ &
    $ x_1x_2^{2}x_3^{12}x_4^{3}x_5^{4},$ & $ x_1x_2^{6}x_3x_4^{6}x_5^{8},$\\
  $ x_1x_2^{6}x_3x_4^{10}x_5^{4},$ & $ x_1x_2^{6}x_3^{3}x_4^{6}x_5^{6},$ & $ x_1x_2^{6}x_3^{6}x_4^{3}x_5^{6},$ & $ x_1x_2^{6}x_3^{6}x_4^{6}x_5^{3},$ \\
    $ x_1x_2^{6}x_3^{9}x_4^{2}x_5^{4},$ & $ x_1x_2^{7}x_3^{2}x_4^{2}x_5^{10},$ & $ x_1x_2^{7}x_3^{2}x_4^{8}x_5^{4},$ & $ x_1x_2^{7}x_3^{2}x_4^{10}x_5^{2},$\\
  $ x_1x_2^{7}x_3^{8}x_4^{2}x_5^{4},$ & $ x_1x_2^{7}x_3^{10}x_4^{2}x_5^{2},$ &
    $ x_1^{3}x_2^{3}x_3^{4}x_4^{4}x_5^{8},$ & $ x_1^{3}x_2^{3}x_3^{4}x_4^{8}x_5^{4},$\\
  $ x_1^{3}x_2^{4}x_3x_4^{2}x_5^{12},$ & $ x_1^{3}x_2^{4}x_3x_4^{10}x_5^{4},$ & $ x_1^{3}x_2^{4}x_3^{4}x_4^{4}x_5^{7},$ & $ x_1^{3}x_2^{4}x_3^{4}x_4^{5}x_5^{6},$ \\
    $ x_1^{3}x_2^{4}x_3^{4}x_4^{7}x_5^{4},$ & $ x_1^{3}x_2^{4}x_3^{5}x_4^{4}x_5^{6},$ & $ x_1^{3}x_2^{4}x_3^{5}x_4^{6}x_5^{4},$ & $ x_1^{3}x_2^{4}x_3^{7}x_4^{4}x_5^{4},$ \\
$ x_1^{3}x_2^{4}x_3^{9}x_4^{2}x_5^{4},$ & $ x_1^{3}x_2^{5}x_3^{4}x_4^{4}x_5^{6},$ &
    $ x_1^{3}x_2^{5}x_3^{4}x_4^{6}x_5^{4},$ & $ x_1^{3}x_2^{5}x_3^{6}x_4^{4}x_5^{4},$\\
 $ x_1^{3}x_2^{7}x_3^{4}x_4^{4}x_5^{4},$ & $ x_1^{3}x_2^{12}x_3x_4^{2}x_5^{4},$ & $ x_1^{7}x_2x_3^{2}x_4^{2}x_5^{10},$ & $ x_1^{7}x_2x_3^{2}x_4^{8}x_5^{4},$ \\
    $ x_1^{7}x_2x_3^{2}x_4^{10}x_5^{2},$ & $ x_1^{7}x_2x_3^{8}x_4^{2}x_5^{4},$ & $ x_1^{7}x_2x_3^{10}x_4^{2}x_5^{2},$ & $ x_1^{7}x_2^{3}x_3^{4}x_4^{4}x_5^{4},$\\
  $ x_1^{7}x_2^{8}x_3x_4^{2}x_5^{4},$ & $ x_1^{7}x_2^{9}x_3^{2}x_4^{2}x_5^{2}.$ 
    \end{tabular}%
\end{center}
\end{lema}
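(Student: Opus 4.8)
The plan is to prove strict inadmissibility monomial by monomial, in the spirit of Lemmas \ref{bd21.2}--\ref{bd21.4} and \ref{bd22.2}. For a monomial $X$ with weight vector $\omega$ and $s = \max\{i : \omega_i(X) > 0\}$, the goal for each $X$ is an explicit identity
\[ X = \sum_{j} x_{j} + \sum_{1\le i\le 2^{s}-1}Sq^{i}(y_{i}) \ {\rm modulo}\ \mathscr P_5^{-}(\omega), \]
in which every $x_{j}$ is a monomial smaller than $X$ in the order of \cite{M.K}. First I would compute $\omega(X)$ for every monomial in the statement; this simultaneously fixes which operations $Sq^{1},\dots,Sq^{2^{s}-1}$ are available and identifies the error space $\mathscr P_5^{-}(\omega)$ into which the unwanted terms must fall.

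The most economical step is to dispose of every $X$ that factors as $X = u\,y^{2^{t}}$, where $u$ is one of the strictly inadmissible monomials already produced in Lemmas \ref{bd21.2}--\ref{bd21.4}, \ref{bd22.1} or \ref{bd22.2}, with $\omega_{t}(u)\ne 0$ and $\omega_{i}(u) = 0$ for $i > t$: for such $X$ the second half of Theorem \ref{dlKS} yields strict inadmissibility at once. For example $x_1x_2^{2}x_3^{2}x_4^{7}x_5^{10} = \bigl(x_1x_2^{2}x_3^{2}x_4^{7}x_5^{2}\bigr)x_5^{8}$, and $x_1x_2^{2}x_3^{2}x_4^{7}x_5^{2}$ has the form $x_i^{2}x_jx_k^{2}x_l^{2}x_m^{7}$, which is strictly inadmissible by Lemma \ref{bd22.1}(i); since this factor has weight vector $(2,4,1)$ with top index $3$ and $x_5^{8} = (x_5)^{2^{3}}$, Theorem \ref{dlKS} applies. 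I expect the large majority of the listed monomials to be handled this way.

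For the finitely many remaining base monomials --- those that do not split off an already-known strictly inadmissible factor --- I would carry out the direct computation: choose auxiliary polynomials $y_{i}$ guided by the spike-splitting patterns already used in the proofs of Lemmas \ref{bd21.4} and \ref{bd22.2}, expand $\sum_{i}Sq^{i}(y_{i})$ by Cartan's formula, and check that $X + \sum_{i}Sq^{i}(y_{i})$ collapses, modulo $\mathscr P_5^{-}(\omega)$, to a sum of monomials each smaller than $X$. In the written proof I would record one representative identity for each weight vector that actually occurs among these base monomials and remark, as in the earlier lemmas, that the rest follow by entirely analogous computations.

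The hard part will be the Cartan-formula bookkeeping in this last step: when $s = 4$ there are fifteen operations $Sq^{1},\dots,Sq^{15}$ to combine, the expansions are long, and one must confirm both that every stray term has weight vector strictly smaller than $\omega(X)$ (so that it is absorbed into $\mathscr P_5^{-}(\omega)$) and that no term $\ge X$ survives. A secondary difficulty is the combinatorial matching required to confirm that the factorization step really covers all but a small, explicitly identified subset of the list, which is what keeps the number of base-case computations manageable.
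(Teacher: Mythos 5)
Your fallback step (explicit Cartan-formula decompositions modulo $\mathscr P_5^{-}(\omega)$) is exactly what the paper does for this lemma, but the labor-saving device you rely on for ``the large majority'' of the list does not work, and your own sample case shows the problem. You factor $x_1x_2^{2}x_3^{2}x_4^{7}x_5^{10}=\bigl(x_1x_2^{2}x_3^{2}x_4^{7}x_5^{2}\bigr)x_5^{8}$ and claim the first factor is an instance of $x_i^{2}x_jx_k^{2}x_l^{2}x_m^{7}$ from Lemma \ref{bd22.1}(i); but that family carries the side condition $i<j$, and here the exponent-$1$ variable is $x_1$, so no admissible choice of $i$ exists. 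Worse, the factor $x_1x_2^{2}x_3^{2}x_4^{7}x_5^{2}$ is not strictly inadmissible at all: it is the degree-$14$ analogue of the admissible monomials $\mathcal Y_{22,\,511},\dots,\mathcal Y_{22,\,515}$ (e.g. $x_1x_2^{2}x_3^{2}x_4^{15}x_5^{2}$) listed in Sect.\ref{s5.5}, and the same happens for the natural factors of most entries (e.g. $x_1^{7}x_2x_3^{2}x_4^{2}x_5^{2}$ for $x_1^{7}x_2^{9}x_3^{2}x_4^{2}x_5^{2}$, or $x_1x_2^{2}x_3^{4}x_4^{3}x_5^{4}$ for $x_1x_2^{2}x_3^{12}x_4^{3}x_5^{4}$, which again fail the ordering constraints of Lemma \ref{bd22.1}). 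So Theorem \ref{dlKS} gives you nothing for these monomials. This is not an accident: the $42$ monomials are collected in this lemma precisely because they do \emph{not} arise as $u\,y^{2^{t}}$ with $u$ from the earlier lists --- if they did, they would be redundant in the filtering arguments of Propositions \ref{md22-1} and \ref{md22-2}. (Note also that the degree-$21$ lemmas you cite cannot supply factors here, since any factor $u$ with $X=u\,y^{2^{t}}$ must have $\omega_1(u)=\omega_1(X)=2$, while those lemmas concern $\omega_1=3$.)

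Consequently essentially the whole list falls into your ``base case'' bucket, and that is where the actual content of the lemma lies: one must produce, for each monomial, an explicit identity $X=\sum_j x_j+\sum_i Sq^{i}(y_i)$ modulo $\mathscr P_5^{-}(\omega)$ with all $x_j<X$, which you describe in outline but do not carry out for a single one of the genuinely hard cases. The paper's proof does exactly this: it writes out such decompositions (using only $Sq^{1},Sq^{2},Sq^{4},Sq^{8}$) for the representatives $x_1x_2^{2}x_3^{4}x_4^{3}x_5^{12}$ and $x_1^{3}x_2^{3}x_3^{4}x_4^{4}x_5^{8}$ and asserts the remaining monomials are handled by similar computations. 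So, as written, your proposal rests on a misapplication of Lemma \ref{bd22.1} and defers the computations that constitute the proof; to repair it you should drop the factorization step (or verify case by case that a factor really occurs in an earlier list, ordering constraints included) and supply the explicit $Sq$-decompositions, at least one per weight vector occurring in the list, with the stray terms checked to lie in $\mathscr P_5^{-}(\omega)$.
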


 \begin{proof}
We prove the lemma for the monomials $x = x_1x_2^{2}x_3^{4}x_4^{3}x_5^{12},$ and $y = x_1^{3}x_2^{3}x_3^{4}x_4^{4}x_5^{8}.$ The others can be proved by a similar computation. By a direct computation using the Cartan formula, we have
$$\begin{array}{ll}
x &=  x_1x_2^{2}x_3^{3}x_4^{4}x_5^{12}+
 x_1x_2^{2}x_3^{4}x_4^{2}x_5^{13}+
 x_1x_2^{2}x_3^{2}x_4^{4}x_5^{13} + x_1x_2x_3^{6}x_4^{6}x_5^{8}+ x_1x_2x_3^{6}x_4^{4}x_5^{10}\\
\medskip
 &\quad + x_1x_2x_3^{4}x_4^{6}x_5^{10}+ Sq^1(f_1) + Sq^2(f_2) + Sq^4(f_4) \ {\rm modulo}(\mathscr P_5^-(\omega_{(1)})),\, \mbox{ where}\\
f_1 &=  x_1x_2^{4}x_3^{3}x_4^{4}x_5^{9}+
 x_1x_2^{4}x_3^{3}x_4^{5}x_5^{8}+
 x_1x_2^{4}x_3^{4}x_4^{3}x_5^{9}+
 x_1x_2^{4}x_3^{4}x_4^{5}x_5^{7}\\
&\quad + x_1x_2^{4}x_3^{5}x_4^{3}x_5^{8}+ x_1x_2^{4}x_3^{5}x_4^{4}x_5^{7}+
 x_1^{2}x_2x_3^{4}x_4^{5}x_5^{9}+
 x_1^{2}x_2x_3^{5}x_4^{4}x_5^{9}\\
\medskip
&\quad +   x_1^{2}x_2x_3^{5}x_4^{5}x_5^{8}+ x_1^{4}x_2^{4}x_3^{3}x_4^{3}x_5^{7},\\
f_2&=  x_1x_2x_3^{4}x_4^{5}x_5^{9}+
 x_1x_2x_3^{5}x_4^{4}x_5^{9}+
 x_1x_2x_3^{5}x_4^{5}x_5^{8}+
 x_1x_2^{2}x_3^{3}x_4^{4}x_5^{10}\\
 &\quad+  x_1x_2^{2}x_3^{3}x_4^{6}x_5^{8} +  x_1x_2^{2}x_3^{4}x_4^{3}x_5^{10}+
 x_1x_2^{2}x_3^{4}x_4^{6}x_5^{7}+
 x_1x_2^{2}x_3^{6}x_4^{3}x_5^{8}\\
 &\quad + x_1x_2^{2}x_3^{6}x_4^{4}x_5^{7}+
 x_1x_2^{4}x_3^{2}x_4^{2}x_5^{11} + x_1^{2}x_2^{4}x_3^{3}x_4^{3}x_5^{8}+
 x_1^{2}x_2^{4}x_3^{3}x_4^{4}x_5^{7}\\
\medskip
  &\quad + x_1^{2}x_2^{4}x_3^{4}x_4^{3}x_5^{7},\\
f_4 &= x_1x_2^{2}x_3^{2}x_4^{2}x_5^{11}+ x_1x_2^{2}x_3^{4}x_4^{4}x_5^{7}.
\end{array}$$
The above equalities show that $x$ is strictly inadmissible. By a similar computation, we obtain
$$ \begin{array}{ll}
y &= x_1^2x_2x_3^2x_4^{13}x_5^4 + x_1^2x_2x_3^3x_4^{12}x_5^4 + x_1^2x_2x_3^4x_4^{7}x_5^8 + x_1^2x_2x_3^4x_4^{13}x_5^2 +  x_1^{2}x_2x_3^{5}x_4^{6}x_5^{8}\\ 
&\quad  +  x_1^{2}x_2x_3^{6}x_4^{9}x_5^{4}+  x_1^{2}x_2x_3^{8}x_4^{7}x_5^{4} +  x_1^{2}x_2x_3^{12}x_4^{5}x_5^{2} +   x_1^{2}x_2^{3}x_3^{5}x_4^{8}x_5^{4}+ x_1^{2}x_2^{3}x_3^{8}x_4^{5}x_5^{4}\\
&\quad  + x_1^{2}x_2^{5}x_3^{2}x_4^{5}x_5^{8}+
 x_1^{2}x_2^{5}x_3^{2}x_4^{9}x_5^{4} + x_1^{2}x_2^{5}x_3^{3}x_4^{8}x_5^{4}+
 x_1^{2}x_2^{5}x_3^{4}x_4^{3}x_5^{8}+
 x_1^{2}x_2^{5}x_3^{4}x_4^{9}x_5^{2}\\
&\quad +  x_1^{2}x_2^{5}x_3^{8}x_4^{5}x_5^{2} +  x_1^{3}x_2x_3^{2}x_4^{12}x_5^{4}+
 x_1^{3}x_2x_3^{4}x_4^{6}x_5^{8} + x_1^{3}x_2x_3^{4}x_4^{10}x_5^{4}+
 x_1^{3}x_2x_3^{4}x_4^{12}x_5^{2} \\
&\quad +x_1^{3}x_2x_3^{4}x_4^{12}x_5^{2}+
 x_1^{3}x_2^{2}x_3^{4}x_4^{9}x_5^{4}+
 x_1^{3}x_2^{2}x_3^{8}x_4^{5}x_5^{4} \\ 
\medskip
&\quad+  Sq^1(g_1) + Sq^2(g_2)+ Sq^4(g_4) + Sq^8(x_1^{3}x_2^{3}x_3^{2}x_4^{4}x_5^{2})\ {\rm modulo}(\mathscr P_5^-(\omega_{(1)})),\,\mbox{ where}\\
g_1&=  x_1^{3}x_2^{3}x_3^{2}x_4^{9}x_5^{4}+
 x_1^{3}x_2^{3}x_3^{4}x_4^{9}x_5^{2}+
 x_1^{3}x_2^{3}x_3^{2}x_4^{5}x_5^{8}+
 x_1^{3}x_2^{3}x_3^{8}x_4^{5}x_5^{2}+ x_1^{5}x_2x_3^{2}x_4^{9}x_5^{4} \\
&\quad +x_1^{3}x_2x_3^{4}x_4^{9}x_5^{4}+
 x_1^{5}x_2x_3^{4}x_4^{9}x_5^{2}+
 x_1^{5}x_2x_3^{8}x_4^{5}x_5^{2}+ x_1^{3}x_2x_3^{8}x_4^{5}x_5^{4}+
 x_1^{5}x_2x_3^{6}x_4^{5}x_5^{4}\\
&\quad  + x_1^{3}x_2^{4}x_3^{5}x_4^{5}x_5^{4}+
 x_1^{5}x_2x_3^{5}x_4^{6}x_5^{4} + x_1^{5}x_2x_3^{3}x_4^{8}x_5^{4}+
 x_1^{3}x_2^{3}x_3^{3}x_4^{8}x_5^{4}+
 x_1^{5}x_2x_3^{2}x_4^{5}x_5^{8} \\
\medskip
&\quad + x_1^{5}x_2x_3^{4}x_4^{3}x_5^{8}+ x_1^{3}x_2^{3}x_3^{4}x_4^{3}x_5^{8},\\
g_2&=  x_1^{5}x_2^{3}x_3^{2}x_4^{8}x_5^{2}+
 x_1^{5}x_2^{3}x_3^{2}x_4^{6}x_5^{4}+
 x_1^{5}x_2^{3}x_3^{4}x_4^{6}x_5^{2}+
 x_1^{2}x_2^{3}x_3^{2}x_4^{9}x_5^{4} + x_1^{3}x_2^{2}x_3^{2}x_4^{9}x_5^{4}\\
 &\quad + x_1^{3}x_2x_3^{2}x_4^{10}x_5^{4}+
 x_1^{6}x_2x_3^{2}x_4^{7}x_5^{4}+
 x_1^{2}x_2^{3}x_3^{4}x_4^{9}x_5^{2}+ x_1^{3}x_2^{2}x_3^{4}x_4^{9}x_5^{2}+
 x_1^{3}x_2x_3^{4}x_4^{10}x_5^{2} \\
 &\quad  +  x_1^{6}x_2x_3^{4}x_4^{7}x_5^{2}+
 x_1^{2}x_2^{3}x_3^{8}x_4^{5}x_5^{2}+ x_1^{2}x_2^{3}x_3^{2}x_4^{5}x_5^{8}+
 x_1^{3}x_2^{2}x_3^{8}x_4^{5}x_5^{2}+
 x_1^{3}x_2^{2}x_3^{2}x_4^{5}x_5^{8}\\
 &\quad+ x_1^{3}x_2x_3^{8}x_4^{6}x_5^{2} + x_1^{6}x_2x_3^{6}x_4^{5}x_5^{2}+
 x_1^{3}x_2^{2}x_3^{6}x_4^{5}x_5^{4}+
 x_1^{3}x_2^{2}x_3^{5}x_4^{6}x_5^{4}+
 x_1^{3}x_2^{2}x_3^{3}x_4^{8}x_5^{4}\\
 &\quad +x_1^{2}x_2^{3}x_3^{3}x_4^{8}x_5^{4}+
 x_1^{6}x_2x_3^{2}x_4^{3}x_5^{8}+
 x_1^{3}x_2x_3^{2}x_4^{6}x_5^{8}+
 x_1^{3}x_2^{2}x_3^{4}x_4^{3}x_5^{8}+ x_1^{6}x_2x_3^{3}x_4^{6}x_5^{4}\\
 &\quad  + x_1^{2}x_2^{3}x_3^{4}x_4^{3}x_5^{8}+
 x_1^{2}x_2x_3^{10}x_4^{5}x_5^{2}+
 x_1^{2}x_2x_3^{2}x_4^{7}x_5^{8} + x_1^{2}x_2x_3^{2}x_4^{11}x_5^{4}+
 x_1^{2}x_2x_3^{3}x_4^{6}x_5^{8}\\
&\quad +   x_1^{2}x_2x_3^{3}x_4^{10}x_5^{4}+
\medskip
 x_1^{2}x_2x_3^{4}x_4^{11}x_5^{2}+ x_1^{2}x_2x_3^{6}x_4^{9}x_5^{2}+  x_1^{2}x_2x_3^{8}x_4^{7}x_5^{2},\\
g_4&=  x_1^{3}x_2^{3}x_3^{2}x_4^{6}x_5^{4}+
 x_1^{3}x_2^{3}x_3^{4}x_4^{6}x_5^{2}+
 x_1^{4}x_2x_3^{2}x_4^{7}x_5^{4} \\
&\quad + x_1^{4}x_2x_3^{4}x_4^{7}x_5^{2}+ x_1^{4}x_2x_3^{6}x_4^{5}x_5^{2}+ x_1^{4}x_2x_3^{3}x_4^{6}x_5^{4}.
\end{array}$$
The above relations imply that $y$ is also strictly inadmissible. The lemma is proved.
\end{proof}

\begin{proof}[{\it Proof of Proposition \ref{md22-1}}]
We denote by $\mathcal Y_t:= \mathcal Y_{22,\,t},\, 461\leq t\leq 510$ the admissible monomials in $\mathscr B_5^+(\omega_{(1)})$ (see Sect.\ref{s5.5}.)
For $x\in\mathscr B_5^+(\omega_{(1)}),$ we have $x = X_{\{i,j,k\}}y^2$ with $y$ a monomial of degree $10$ in $\mathscr P_5,$  and $1\leq i < j < k\leq 5.$ Since $x$ is admissible, by Theorem \ref{dlKS}, $y\in\mathscr B_5(2,2,1).$

Let $y_1\in \mathscr B_5(2,2,1)$ such that $X_{(\{i,j,k\},\,5)}y^2_1\in \mathscr P_5^+.$ By a direct computation, we see that if $X_{(\{i,j,k\},\,5)}y^2_1\neq \mathcal Y_t,$ for all $t,\ 461\leq t\leq 510,$ then there is a monomial $w$ which is given  in one of Lemmas \ref{bd22.1} - \ref{bd22.4} such that  $X_{(\{i,j,k\},\,5)}y^2_1 = wz^{2^u}$ with suitable monomial $z\in \mathscr P_5$ and $u = {\rm max}\{j\in \mathbb Z\ :\ \omega_j(w) > 0\}.$ By Theorem \ref{dlKS}, $X_{(\{i,j,k\},\,5)}y^2_1$ is inadmissible. Since $x  = X_{(\{i,j,k\},\,5)}y^2$ and $x$ is admissible, one gets $x = \mathcal Y_t$. This implies $Q\mathscr P_5^+(\omega_{(1)})$ is spanned by the set $\{[\mathcal Y_t:=\mathcal Y_{22,\,t}]_{\omega_{(1)}}:\, 461\leq t\leq 510\}.$

We now prove the set $\{[\mathcal Y_t]_{\omega_{(1)}}:\,461\leq t\leq 510\}$  is linearly independent in $Q\mathscr P_5(\omega_{(1)}).$ Suppose there is a linear relation 
\begin{equation}\label{thtt1}
\mathcal S = \sum\limits_{461\leq t\leq 510}\gamma_t\mathcal Y_t \equiv_{\omega_{(1)}} 0,
\end{equation}
where $\gamma_t\in\mathbb Z/2.$ From a result in \cite{N.S3}, $\dim Q\mathscr P_4^+(\omega_{(1)}) =  26,$  with the basis $\{[u_j]_{\omega_{(1)}}\ : \ 1\leq j\leq 26\},$ where

\begin{center}
    \begin{tabular}{lllllll}
    $u_{1}.\ x_1x_2x_3^{6}x_4^{14},$ & $u_{2}.\ x_1x_2x_3^{14}x_4^{6},$ & $u_{3}.\ x_1x_2^{2}x_3^{4}x_4^{15},$ &
  $u_{4}.\ x_1x_2^{2}x_3^{5}x_4^{14},$ & $u_{5}.\ x_1x_2^{2}x_3^{7}x_4^{12},$\\
 $u_{6}.\ x_1x_2^{2}x_3^{12}x_4^{7},$ &
  $u_{7}.\ x_1x_2^{2}x_3^{13}x_4^{6},$ & $u_{8}.\ x_1x_2^{2}x_3^{15}x_4^{4},$ & $u_{9}.\ x_1x_2^{3}x_3^{4}x_4^{14},$ &
 $u_{10}.\ x_1x_2^{3}x_3^{6}x_4^{12},$\\
  $u_{11}.\ x_1x_2^{3}x_3^{12}x_4^{6},$ &
  $u_{12}.\ x_1x_2^{3}x_3^{14}x_4^{4},$ &
$u_{13}.\ x_1x_2^{6}x_3x_4^{14},$ & $u_{14}.\ x_1x_2^{7}x_3^{2}x_4^{12},$ & $u_{15}.\ x_1x_2^{14}x_3x_4^{6},$\\
    $u_{16}.\ x_1x_2^{15}x_3^{2}x_4^{4},$ & $u_{17}.\ x_1^{3}x_2x_3^{4}x_4^{14},$ &
  $u_{18}.\ x_1^{3}x_2x_3^{6}x_4^{12},$ &
  $u_{19}.\ x_1^{3}x_2x_3^{12}x_4^{6},$ & $u_{20}.\ x_1^{3}x_2x_3^{14}x_4^{4},$ \\
 $u_{21}.\ x_1^{3}x_2^{5}x_3^{2}x_4^{12},$ &
  $u_{22}.\ x_1^{3}x_2^{5}x_3^{6}x_4^{8},$ &
  $u_{23}.\ x_1^{3}x_2^{5}x_3^{10}x_4^{4},$ & $u_{24}.\ x_1^{3}x_2^{13}x_3^{2}x_4^{4},$ &
 $u_{25}.\ x_1^{7}x_2x_3^{2}x_4^{12},$ \\
   $u_{26}.\ x_1^{15}x_2x_3^{2}x_4^{4}.$ &  &&&       
    \end{tabular}%
\end{center}

Consider the homomorphism $\pi_{(1; 2)}: \mathscr P_5\to \mathscr P_4.$ By a direct computation using Theorem \ref{dlsig} and Proposition \ref{mdPS}, we have
$$ \begin{array}{ll}
\pi_{(1; 2)}(\mathcal S)&\equiv_{\omega_{(1)}} (\gamma_{473} + \gamma_{475} + \gamma_{481} + \gamma_{483})u_1 + \gamma_{477}u_6 + \gamma_{478}u_7 \\
&+ (\gamma_{476} + \gamma_{477} + \gamma_{478})u_4   + \gamma_{483}(u_9 +u_{14})  + \gamma_{481}(u_{10} + u_{11}) 
\\
&+  (\gamma_{475} + \gamma_{476} + \gamma_{477} + \gamma_{478})u_{13} + (\gamma_{470} + \gamma_{481} + \gamma_{484})u_{18} \\
& + (\gamma_{471} + \gamma_{478} + \gamma_{481} + \gamma_{484})u_{19}+ \gamma_{472}u_{20} + (\gamma_{479} + \gamma_{484})u_{21} \\
& + (\gamma_{480} + \gamma_{481} + \gamma_{484})u_{22}+ \gamma_{482}u_{23} + \gamma_{485}u_{24} + \gamma_{495}u_{25}+ \gamma_{497}u_{26}\\
 &+ (\gamma_{474} + \gamma_{478} + \gamma_{481} + \gamma_{484})u_2 +\gamma_{476}u_5 + (\gamma_{469} + \gamma_{475})u_{17}+\gamma_{484}u_{15}\equiv_{\omega_{(1)}} 0.
\end{array}$$
This relation implies 
\begin{equation}\label{hs1}
 \begin{array}{ll}
\gamma_{469} &= \gamma_{470} = \gamma_{471} = \gamma_{472} = \gamma_{473} = \gamma_{474} = \gamma_{475} = \gamma_{476}\\
&  = \gamma_{477} = \gamma_{478} = \gamma_{479} = \gamma_{480} = \gamma_{481} = \gamma_{482} = \gamma_{483}\\
 &= \gamma_{484}=  \gamma_{485} = \gamma_{495} = \gamma_{497} = 0.
\end{array}
\end{equation}

Substituting \eqref{hs1} into the relation \eqref{thtt1}, we have
 \begin{equation}\label{thtt2}
 \sum\limits_{461\leq t\leq 468}\gamma_t\mathcal Y_t +  \sum\limits_{486\leqslant t\leqslant 494}\gamma_t\mathcal Y_t + \gamma_{496}\mathcal Y_{36} + \sum\limits_{498\leq t\leq 510}\gamma_t\mathcal Y_t \equiv_{\omega_{(1)}} 0.
\end{equation}

Applying the homomorphisms $\pi_{(1; 3)}, \pi_{(1; 4)}: \mathscr P_5\to \mathscr P_4$ to \eqref{thtt2}, we get
\begin{equation}\label{hs2}
\left\{\begin{array}{ll}
\gamma_t = 0,\ t\in \mathbb J,\\
\gamma_{464} = \gamma_{487} = \gamma_{499} = \gamma_{508}, \gamma_{468} = \gamma_{494} = \gamma_{506} = \gamma_{509}, \gamma_{498} =  \gamma_{502},\\
\gamma_{467} +\gamma_{468} + \gamma_{503} = \gamma_{467} +\gamma_{468} + \gamma_{505} = 0,\\
\gamma_{462} +\gamma_{468} + \gamma_{502} + \gamma_{508} = \gamma_{463} +\gamma_{468} + \gamma_{502} + \gamma_{508}= 0,\\
\gamma_{466} +\gamma_{501} + \gamma_{502} + \gamma_{508} = \gamma_{466} +\gamma_{468} + \gamma_{504} + \gamma_{507} + \gamma_{508} = 0.
\end{array}\right.
\end{equation}
Here $\mathbb J = \{461, 465, 486,488,489,490,491,492,493,496,500\}.$ Then, 
combining \eqref{hs1}, \eqref{hs2}, and the relation $\pi_{(1; 5)}(\mathcal S)\equiv_{\omega_{(1)}} 0,$ we obtain $\gamma_t = 0$ for $461\leq t\leq 510.$ The proposition is proved.
\end{proof}

Using a similar technique as mentioned in the proof of Proposition \ref{md22-1}, we obtain

\begin{propo}\label{md22-2}
\emph{}

\begin{enumerate}
\item[(I)] $\mathscr B^+_5(\omega_{(2)})  = \mathscr B^+(5, \omega_{(2)})\cup \mathcal E,$
 where $\mathcal E$ is  the set of the following monomials:

\begin{center}
    \begin{tabular}{lllll}
    $ x_1x_2^{2}x_3^{2}x_4^{3}x_5^{14},$ & $ x_1x_2^{2}x_3^{3}x_4^{2}x_5^{14},$ & $ x_1x_2^{2}x_3^{3}x_4^{6}x_5^{10},$ &
  $ x_1x_2^{2}x_3^{3}x_4^{14}x_5^{2},$ & $ x_1x_2^{3}x_3^{2}x_4^{2}x_5^{14},$\\
  $ x_1x_2^{3}x_3^{2}x_4^{6}x_5^{10},$ & $ x_1x_2^{3}x_3^{2}x_4^{14}x_5^{2},$ & $ x_1x_2^{3}x_3^{6}x_4^{2}x_5^{10},$ & $ x_1x_2^{3}x_3^{6}x_4^{10}x_5^{2},$ & $ x_1x_2^{3}x_3^{14}x_4^{2}x_5^{2},$\\
  $ x_1^{3}x_2x_3^{2}x_4^{2}x_5^{14},$ & $ x_1^{3}x_2x_3^{2}x_4^{6}x_5^{10},$ & $ x_1^{3}x_2x_3^{2}x_4^{14}x_5^{2},$ & $ x_1^{3}x_2x_3^{6}x_4^{2}x_5^{10},$ & $ x_1^{3}x_2x_3^{6}x_4^{10}x_5^{2},$ \\
  $ x_1^{3}x_2x_3^{14}x_4^{2}x_5^{2},$ & $ x_1^{3}x_2^{5}x_3^{2}x_4^{2}x_5^{10},$ & $ x_1^{3}x_2^{5}x_3^{2}x_4^{10}x_5^{2},$ &
  $ x_1^{3}x_2^{5}x_3^{10}x_4^{2}x_5^{2},$ & $ x_1^{3}x_2^{13}x_3^{2}x_4^{2}x_5^{2},$ 
    \end{tabular}%
\end{center}

\item[(II)] $\mathscr B^+_5(\omega_{(3)}) = \{x_1x_2^{3}x_3^{6}x_4^{6}x_5^{6},\ x_1^{3}x_2x_3^{6}x_4^{6}x_5^{6}, \ x_1^{3}x_2^{5}x_3^{2}x_4^{6}x_5^{6},\ x_1^{3}x_2^{5}x_3^{6}x_4^{2}x_5^{6},\ x_1^{3}x_2^{5}x_3^{6}x_4^{6}x_5^{2}\},$ 
 
\item[(III)] $\mathscr B^+_5(\omega_{(4)}) =  \overline{\Phi}^+(\mathscr B_4(\omega_{(4)})\bigcup \big(\bigcup_{1\leq k\leq 5} \overline{\mathscr B}^+(k, \omega_{(4)})\big),$

\item[(IV)] $\mathscr B^+_5(\omega_{(5)}) =  \overline{\Phi}^+(\mathscr B_4(\omega_{(5)})\bigcup \big(\bigcup_{1\leq k\leq 5} \overline{\mathscr B}^+(k, \omega_{(5)})\big).$
\end{enumerate}
\end{propo}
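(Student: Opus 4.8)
The plan is to establish Proposition~\ref{md22-2} by the same scheme already deployed for Proposition~\ref{md22-1}, treating each of the four weight vectors $\omega_{(2)},\ \omega_{(3)},\ \omega_{(4)},\ \omega_{(5)}$ in turn. For each such $\omega = \omega_{(i)}$ the argument splits into a \emph{spanning} part and a \emph{linear independence} part. For the spanning part: given an admissible monomial $X\in\mathscr B_5^+(\omega_{(i)})$, I would use Theorem~\ref{dlsig} together with the explicit value of $\omega_1$ (which is $2$ for $\omega_{(2)},\omega_{(3)}$ and $4$ for $\omega_{(4)},\omega_{(5)}$, matching Remark~\ref{nx2}) to write $X = X_{(S,5)}Y^2$ where $|S|=3$, resp.\ $|S|=1$, and where by Theorem~\ref{dlKS} (applied as in the proof of Proposition~\ref{md22-1}) the monomial $Y$ must be admissible, of the appropriate degree, with $\omega(Y)$ one of the sequences dictated by T\'in's classification in \cite{N.T1} of $(Q\mathscr P_5)_{10}$ and $(Q\mathscr P_5)_9$. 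Then a finite, explicit check shows that unless $X$ already equals one of the listed monomials, it can be factored as $X = w z^{2^a}$ with $w$ one of the strictly inadmissible monomials from Lemmas~\ref{bd22.1}--\ref{bd22.4} and $a=\max\{j:\omega_j(w)>0\}$; by Theorem~\ref{dlKS} such an $X$ is inadmissible, a contradiction. This yields that the listed set spans $Q\mathscr P_5^+(\omega_{(i)})$ in each case; for (III) and (IV) the listed set is built from $\overline\Phi^+(\mathscr B_4(\omega_{(i)}))$ and the $\overline{\mathscr B}^+(k,\omega_{(i)})$, which are legitimate admissible monomials by Conjecture~\ref{gtS} (verified in the relevant degrees as discussed in Sect.~\ref{s3.1}, in particular Remark~\ref{nxax}) and by Mothebe--Uys~\cite{M.M2}, respectively.

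For the linear independence part I would, for each $\omega_{(i)}$, assume a relation $\mathcal S = \sum_t \gamma_t \mathcal Y_t \equiv_{\omega_{(i)}} 0$ and apply the homomorphisms $\pi_{(k;\mathscr K)}$ for $(k;\mathscr K)\in\mathcal N_5$. By Proposition~\ref{mdPS}, each $\pi_{(k;\mathscr K)}$ sends $\mathscr P_5(\omega_{(i)})$ into $\mathscr P_4(\omega_{(i)})$, and passes to $Q\mathscr P_4(\omega_{(i)})$; I would expand $\pi_{(1;2)}(\mathcal S)$, $\pi_{(1;3)}(\mathcal S)$, $\pi_{(1;4)}(\mathcal S)$, $\pi_{(1;5)}(\mathcal S)$ in terms of the known admissible bases of $Q\mathscr P_4(\omega_{(i)})$ from Sum~\cite{N.S3} (this is exactly the $26$-element basis $\{[u_j]_{\omega_{(1)}}\}$ used in the proof of Proposition~\ref{md22-1}, and its analogues for the other $\omega_{(i)}$), read off the coefficient conditions, and if necessary iterate with a few relations of the form $\pi_{(1;(2;j))}(\mathcal S)\equiv 0$ and $\pi_{(1;(k;\ell))}(\mathcal S)\equiv 0$ to kill the remaining coefficients. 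For $\omega_{(3)}$, where only five monomials appear, this collapses to an essentially trivial computation. Assembling the spanning and independence statements gives that $[\mathscr B_5^+(\omega_{(i)})]_{\omega_{(i)}}$ is precisely the stated set, which is the content of Proposition~\ref{md22-2}.

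The main obstacle is purely computational bookkeeping rather than conceptual: one must (a) enumerate, for each $\omega_{(i)}$, all products $X_{(S,5)}Y^2$ with $Y$ ranging over the relevant admissible classes of $(Q\mathscr P_5)_{10}$ or $(Q\mathscr P_5)_9$, and correctly match each non-listed candidate against a strictly inadmissible factor from Lemmas~\ref{bd22.1}--\ref{bd22.4} (which forces one to verify, case by case, that the exponent-splitting $X = wz^{2^a}$ really has the form required by Theorem~\ref{dlKS}); and (b) carry out the $\pi_{(k;\mathscr K)}$-expansions over the $Q\mathscr P_4(\omega_{(i)})$ bases without sign or indexing errors. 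The $\omega_{(2)}$ case is the heaviest of the four because of the twenty-monomial set $\mathcal E$ together with the contributions $\overline{\mathscr B}^+(5,\omega_{(2)})$; the $\omega_{(4)},\omega_{(5)}$ cases are lighter since the bases there are forced almost entirely by $\overline\Phi^+$ and $\overline{\mathscr B}^+(k,\cdot)$, whose admissibility is already guaranteed. Because the strict-inadmissibility lemmas have been set up with precisely the monomials that arise, and because Theorem~\ref{dlKS} converts them into inadmissibility of the relevant ``lifted'' monomials, no new structural input beyond what is already in the paper is needed.
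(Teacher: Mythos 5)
Your proposal follows essentially the same route as the paper, whose entire proof of Proposition \ref{md22-2} is the remark that it is obtained "by a similar technique" to Proposition \ref{md22-1}: spanning each $Q\mathscr P_5^+(\omega_{(i)})$ via Theorem \ref{dlKS} together with the strictly inadmissible monomials of Lemmas \ref{bd22.1}--\ref{bd22.4} and T\'in's data in low degrees, then linear independence via the homomorphisms $\pi_{(k;\mathscr K)}$ expanded over Sum's admissible bases for $\mathscr P_4$. One caution: do not justify admissibility of the monomials in $\overline{\Phi}^+(\mathscr B_4(\omega_{(i)}))$ by appealing to Conjecture \ref{gtS}, since in degree $22$ that inclusion is deduced \emph{from} this proposition (see the remark following Corollary \ref{hq22}) and citing it here would be circular; admissibility instead follows from your spanning-plus-independence argument itself, exactly as in the proof of Proposition \ref{md22-1}.
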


A direct computation shows: $|\mathscr{B}_5^+(\omega_{(2)})| = 25,\ |\mathscr{B}_5^+(\omega_{(4)})|  =300$ and $|\mathscr{B}_5^+(\omega_{(5)})|  = 125$ (see Sect.\ref{s5.5}.) On the other hand, by Remark \ref{nx2}, we have $(Q\mathscr P_5)_{22}^+ \cong \bigoplus_{1\leq j\leq 5}Q\mathscr P_5^+(\omega_{(j)}).$ Combining this with the above results, we obtain

\begin{corls}\label{hq22}
$(Q\mathscr P_5^+)_{22}$ is the $\mathbb Z/2$-vector space of dimension $505$ with a basis consisting of all the classes represented by the monomials $\mathcal Y_{22,\,t},\ 1\leq t\leq 505,$ which are determined in Sect.\ref{s5.5}.
\end{corls}

\subsection*{Structure of  the kernel of Kameko's map $(\widetilde {Sq^0_*})_{(5,47)}$}

The following weight vectors that have the same degrees are $47$:
$$ \overline{\omega}_{(1)} = (3,2,2,2,1),\ \overline{\omega}_{(2)} = (3,2,4,1,1),\ \overline{\omega}_{(3)} = (3,2,4,3),$$ 
 $$ \overline{\omega}_{(4)} = (3,4,3,1,1),\ \overline{\omega}_{(5)} = (3,4,3,3).$$ 
From Remarks \ref{nx1} and \ref{nx2}, we conclude that if $X\in \mathscr B_5(47)$ and $[X]$ belongs to the kernel of $(\widetilde {Sq^0_*})_{(5,47)}$ then the weight vector of $X$ is one of the above sequences $\overline{\omega}_{(k)}, \ 1\leq k\leq 5.$  This implies that the dimension of ${\rm Ker}(\widetilde {Sq^0_*})_{(5,47)}$ is equal to the sum of the dimensions of $Q\mathscr P_5^0$ and $Q\mathscr P_5^+(\overline{\omega}_{(k)})$ in degree $47$ for all $1\leq k\leq 5.$ Since $(Q\mathscr P_5^0)_{47}$ is isomorphic to $\bigoplus_{1\leq t\leq 4}\bigoplus_{1\leq \ell\leq \binom{5}{t}}(Q\mathscr P_t^+)_{47},$ by a direct computation using a result in \cite{M.K}, \cite{F.P1} and \cite{N.S3}, we claim $ \dim(Q\mathscr P_5^0)_{47} = \binom 53.14 + \binom 54.84  = 560.$ Furthermore, $$\mathscr B_5^0(47) = \mathscr B_5^0(\overline{\omega}_{(1)})= \overline{\Phi}^0(\mathscr B_4(47)) =  \{\mathcal Y_{47,\, i}\ :\ 1\leq i\leq 560\},$$ where the monomials $\mathcal Y_{47,\, i}\in \mathscr B_5^0(47) $ are explicitly described in Sect.\ref{s5.6} of the Appendix. 

We now determine the $\mathbb Z/2$-subspaces $Q\mathscr P_5^+(\overline{\omega}_{(k)})$ for $k = 1, 2, \ldots, 5.$


\begin{lema}\label{bd47.1}
The following monomials are strictly inadmissible:
\begin{enumerate}
\item[I)] 
$X_1 = x_1^{3}x_2^{12}x_kx_{\ell}^3x_m^{12},\ X_2 = x_1^{3}x_2^{4}x_k^3x_{\ell}^8x_m^{13},\ X_3 = x_1x_2^{14}x_k^{2}x_{\ell}x_m^{13},\
 \\
 X_5 = x_1^3x_2^{14}x_k^{12}x_{\ell}x_m,\
 X_6 = x_1^7x_2^{10}x_k^{12}x_{\ell}x_m,\ 
X_7 = x_1^3x_2^{2}x_k^{12}x_{\ell}x_m^{13},\\
 X_8 = x_1^3x_2^{12}x_k^{2}x_{\ell}x_m^{13},\
X_9 = x_1^{15}x_2^{2}x_kx_{\ell}^4x_m^{9},\ X_{10} = x_1^{15}x_2^{2}x_kx_{\ell}^5x_m^{8},\\
X_{11} = x_1^{7}x_2^{2}x_kx_{\ell}^8x_m^{13},\ X_{12} = x_1^{7}x_2^{2}x_kx_{\ell}^9x_m^{12},\
 X_{13} = x_1^{15}x_2^{2}x_k^{12}x_{\ell}x_m,\\
 X_{14} = x_1^{3}x_2^{4}x_kx_{\ell}^8x_m^{15},\ X_{15} = x_1^{3}x_2^{4}x_kx_{\ell}^9x_m^{14},\ X_{16} = x_1^{3}x_2^{14}x_kx_{\ell}^4x_m^{9},\\
X_{17} =  x_1^{3}x_2^{5}x_kx_{\ell}^8x_m^{14},\ X_{18} = x_1^{3}x_2^{6}x_kx_{\ell}^8x_m^{13},\ X_{19} = x_1^{3}x_2^{6}x_kx_{\ell}^9x_m^{12},\\
 X_{20} = x_1^{7}x_2^{10}x_kx_{\ell}^4x_m^{9}, \
X_{21} =  x_1^{7}x_2^{10}x_kx_{\ell}^5x_m^{8},\ X_{22} = x_1^{3}x_2^{2}x_k^5x_{\ell}^8x_m^{13},\\
 X_{23} = x_1^{3}x_2^{2}x_k^5x_{\ell}^9x_m^{12},\ X_{24} = x_1^{7}x_2^{2}x_k^5x_{\ell}^8x_m^{9},\
 X_{25} = x_1x_2^{2}x_k^{14}x_{\ell}x_m^{13},\\
 X_{26} =  x_1^{3}x_2^{6}x_k^5x_{\ell}^8x_m^{9},\ X_{27} = x_1^3x_2^{12}x_k^{14}x_{\ell}x_m.$ Here $(k, \ell, m)$ is a permutation of $(3, 4, 5);$

\item[II)] $ X_{27} = x_i^{3}x_j^{2}x_k^{13}x_{\ell}^4x_m^{9},\ j < k,\  X_{28} =x_ix_j^{2}x_k^6x_{\ell}^9x_m^{13},\  X_{29} =x_i^3x_j^{4}x_k^6x_{\ell}^9x_m^{9},$ where $(i, j, k, \ell, m)$ is a permutation of $(1, 2, 3, 4, 5).$
\end{enumerate}
\end{lema}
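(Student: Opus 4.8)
The plan is to follow the template of Lemmas~\ref{bd21.4} and~\ref{bd22.4}. A direct check shows that every monomial $X$ on both lists has weight vector $\omega(X) = (3,2,2,2)$, so that $s := \max\{i:\omega_i(X)>0\} = 4$; in particular $X\in\mathscr P_5(3,2,2,2)$, the computations below take place modulo $\mathscr P_5^{-}(3,2,2,2)$, and these are exactly the building blocks one needs so that $Xx_j^{16}$, of weight $\overline{\omega}_{(1)}$, becomes strictly inadmissible by Theorem~\ref{dlKS}. Recalling the definition, to prove $X$ strictly inadmissible it suffices to exhibit monomials $z_1,\dots,z_k$ with $z_j<X$ together with polynomials $y_1,\dots,y_{15}\in\mathscr P_5$ such that
$$X \;=\; \sum_{1\le j\le k} z_j \;+\; \sum_{1\le t\le 15} Sq^{t}(y_t)\pmod{\mathscr P_5^{-}(3,2,2,2)};$$
any term that ends up in $\mathscr P_5^{-}(3,2,2,2)$ may be absorbed into the $z_j$ part, since a monomial of weight strictly below $(3,2,2,2)$ automatically precedes $X$ in the linear order. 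Such an identity is produced, for a given $X$, by positing candidate polynomials $y_t$, expanding $Sq^{t}(y_t)$ with Cartan's formula, and checking that what survives modulo $\mathscr P_5^{-}(3,2,2,2)$ equals $X$ plus a sum of predecessors of $X$.

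Before any computation I would remove the cases that reduce to earlier results. Whenever $X$ factors as $X = u\,y^{2^{r}}$ with $u$ one of the monomials already shown strictly inadmissible in Lemmas~\ref{bd21.2}--\ref{bd22.4} and $r = \max\{i:\omega_i(u)>0\}$, strict inadmissibility of $X$ is immediate from Theorem~\ref{dlKS}; similarly if $X = \rho_{(k,\,5)}(V)$ for some strictly inadmissible $V\in\mathscr P_4$, as in Lemma~\ref{bd21.2}(iii). For the remaining ``base'' monomials one has to carry out the direct expansion, and I would exhibit it in full for one or two representatives of part~(I) --- for instance $X_1 = x_1^{3}x_2^{12}x_kx_{\ell}^3x_m^{12}$ and $X_2 = x_1^{3}x_2^{4}x_k^3x_{\ell}^8x_m^{13}$ --- and for the three monomials $X_{27} = x_i^{3}x_j^{2}x_k^{13}x_{\ell}^4x_m^{9}$, $X_{28} = x_ix_j^{2}x_k^6x_{\ell}^9x_m^{13}$, $X_{29} = x_i^3x_j^{4}x_k^6x_{\ell}^9x_m^{9}$ of part~(II), grouping the $y_t$ by the values $t = 1,2,4,8,\dots$ exactly as in the proof of Lemma~\ref{bd22.4}. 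Each remaining monomial then follows either by the same manipulation after a permutation of variables, or by the reduction just described.

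The only real difficulty is the bookkeeping. Because $s = 4$, the Cartan expansions involve $Sq^{t}$ for $t$ as large as $15$, so the polynomials $y_t$ are bulky; one must check carefully that no $Sq^{t}(y_t)$ contributes a monomial of weight exceeding $(3,2,2,2)$ --- equivalently, that every term distinct from $X$ is either a genuine predecessor of $X$ in the linear order or already lies in $\mathscr P_5^{-}(3,2,2,2)$ --- and that the claimed $z_j$ really satisfy $z_j < X$. Past this there is no conceptual obstacle beyond what is already handled in Lemmas~\ref{bd21.4} and~\ref{bd22.4}; the substance of the lemma is the correctness of the (lengthy) explicit identities, which I would supply case by case.
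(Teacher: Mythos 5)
Your plan is essentially the paper's own proof: one checks that every listed monomial has weight vector $(3,2,2,2)$ and then, for representative cases (the paper does exactly $X_1$ and $X_2$), writes the monomial as a sum of strictly smaller monomials plus terms $Sq^{t}(y_t)$ with $t\leq 15$ (in fact only $t=1,2,4,8$ are needed) modulo $\mathscr P_5^{-}(3,2,2,2)$, the remaining cases following by the same computation up to permutation of variables. The only substantive content you defer --- the explicit Cartan-formula identities --- is precisely what the paper supplies for $X_1$ and $X_2$, so the approach matches; your optional preliminary reduction via Theorem \ref{dlKS} is harmless but is not used (and is largely unavailable) for these particular monomials.
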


\begin{proof}
It is easy to see that $\omega(X_t) = \omega^* := (3,2,2,2)$ for $1\leq t\leq 29.$ We prove the lemma for the monomials $X_1 = x_1^{3}x_2^{12}x_kx_{\ell}^3x_m^{12}$ and $X_2 = x_1^{3}x_2^{4}x_k^3x_{\ell}^8x_m^{13},$ where $(k, \ell, m)$ is a permutation of $(3, 4, 5).$ The others can be proved by a similar technique. We have
$$ \begin{array}{ll}
X_1&= x_1^{2}x_2^{11}x_kx_{\ell}^{5}x_m^{12}+
x_1^{2}x_2^{11}x_k^{4}x_{\ell}^{5}x_m^{9}+
x_1^{2}x_2^{11}x_k^{8}x_{\ell}^{5}x_m^{5}+
x_1^{2}x_2^{13}x_kx_{\ell}^{3}x_m^{12}\\
&\quad + x_1^{2}x_2^{13}x_k^{4}x_{\ell}^{3}x_m^{9}+
x_1^{2}x_2^{13}x_k^{8}x_{\ell}^{3}x_m^{5}+
x_1^{3}x_2^{3}x_k^{8}x_{\ell}^{5}x_m^{12}+
x_1^{3}x_2^{5}x_k^{8}x_{\ell}^{5}x_m^{10}\\
&\quad + x_1^{3}x_2^{5}x_k^{8}x_{\ell}^{6}x_m^{9}+
x_1^{3}x_2^{7}x_k^{4}x_{\ell}^{8}x_m^{9}+
x_1^{3}x_2^{7}x_k^{8}x_{\ell}^{5}x_m^{8}+
x_1^{3}x_2^{7}x_k^{8}x_{\ell}^{8}x_m^{5}\\
&\quad + x_1^{3}x_2^{9}x_k^{2}x_{\ell}^{5}x_m^{12}+
x_1^{3}x_2^{9}x_k^{4}x_{\ell}^{3}x_m^{12}+
x_1^{3}x_2^{9}x_k^{4}x_{\ell}^{5}x_m^{10}+
x_1^{3}x_2^{9}x_k^{4}x_{\ell}^{6}x_m^{9}\\
&\quad + x_1^{3}x_2^{9}x_k^{8}x_{\ell}^{5}x_m^{6}+
x_1^{3}x_2^{9}x_k^{8}x_{\ell}^{6}x_m^{5}+
x_1^{3}x_2^{11}x_kx_{\ell}^{4}x_m^{12}\\
\medskip
&\quad + Sq^1(g_1) + Sq^2(g_2) + Sq^4(g_4) + Sq^8(x_1^{6}x_2^{5}x_k^4x_{\ell}^{3}x_m^{5})\ {\rm modulo}(\mathscr P_5^-(\omega^*)),\ \mbox{where}\\
g_1&=  x_1^{3}x_2^{7}x_kx_{\ell}^{3}x_m^{16}+
 x_1^{3}x_2^{11}x_kx_{\ell}^{3}x_m^{12}+
 x_1^{5}x_2^{3}x_k^{8}x_{\ell}^{5}x_m^{9}+
 x_1^{5}x_2^{7}x_k^{4}x_{\ell}^{5}x_m^{9}+
\medskip
 x_1^{5}x_2^{7}x_k^{8}x_{\ell}^{5}x_m^{5},\\
g_2&=  x_1^{2}x_2^{11}x_kx_{\ell}^{3}x_m^{12}+
 x_1^{2}x_2^{11}x_k^{4}x_{\ell}^{3}x_m^{9}+
 x_1^{2}x_2^{11}x_k^{8}x_{\ell}^{3}x_m^{5}+
 x_1^{3}x_2^{3}x_k^{8}x_{\ell}^{5}x_m^{10}+
 x_1^{3}x_2^{3}x_k^{8}x_{\ell}^{6}x_m^{9}\\
&\quad + x_1^{3}x_2^{7}x_k^{4}x_{\ell}^{5}x_m^{12}+
 x_1^{3}x_2^{7}x_k^{4}x_{\ell}^{6}x_m^{9}+
 x_1^{3}x_2^{7}x_k^{8}x_{\ell}^{5}x_m^{6}+
 x_1^{3}x_2^{7}x_k^{8}x_{\ell}^{6}x_m^{5}+
 x_1^{5}x_2^{7}x_k^{2}x_{\ell}^{3}x_m^{12}\\
&\quad + x_1^{6}x_2^{3}x_k^{8}x_{\ell}^{3}x_m^{9}+
 x_1^{6}x_2^{7}x_k^{4}x_{\ell}^{3}x_m^{9}+
\medskip
 x_1^{6}x_2^{7}x_k^{8}x_{\ell}^{3}x_m^{5},\\
g_4 &= x_1^{3}x_2^{7}x_k^{2}x_{\ell}^{3}x_m^{12}+
 x_1^{4}x_2^{7}x_k^{4}x_{\ell}^{3}x_m^{9}+
 x_1^{4}x_2^{7}x_k^{8}x_{\ell}^{3}x_m^{5}+
 x_1^{10}x_2^{5}x_k^{4}x_{\ell}^{3}x_m^{5}.
\end{array} $$
This equality implies that $X_1$ is strictly inadmissible. 

Next, we show that $X_2$ is also strictly inadmissible. Indeed, using Cartan's formula, we obtain
$$ \begin{array}{ll}
 X_2 &= x_1^{2}x_2x_k^{3}x_{\ell}^{12}x_m^{13}+
 x_1^{2}x_2x_k^{5}x_{\ell}^{9}x_m^{14}+
 x_1^{2}x_2x_k^{5}x_{\ell}^{10}x_m^{13}+
 x_1^{2}x_2x_k^{6}x_{\ell}^{9}x_m^{13}+
 x_1^{2}x_2x_k^{10}x_{\ell}^{5}x_m^{13}\\
&\quad + x_1^{2}x_2x_k^{12}x_{\ell}^{5}x_m^{11}+
 x_1^{2}x_2^{5}x_kx_{\ell}^{9}x_m^{14}+
 x_1^{2}x_2^{5}x_kx_{\ell}^{10}x_m^{13}+
 x_1^{2}x_2^{5}x_k^{8}x_{\ell}^{3}x_m^{13}+
 x_1^{2}x_2^{5}x_k^{8}x_{\ell}^{5}x_m^{11}\\
&\quad + x_1^{3}x_2x_k^{4}x_{\ell}^{9}x_m^{14}+
 x_1^{3}x_2x_k^{4}x_{\ell}^{10}x_m^{13}+
 x_1^{3}x_2x_k^{6}x_{\ell}^{8}x_m^{13}+
 x_1^{3}x_2x_k^{8}x_{\ell}^{6}x_m^{13}+
 x_1^{3}x_2^{2}x_k^{4}x_{\ell}^{9}x_m^{13}\\
&\quad + x_1^{3}x_2^{2}x_k^{5}x_{\ell}^{8}x_m^{13}+
 x_1^{3}x_2^{3}x_k^{8}x_{\ell}^{4}x_m^{13}+
 x_1^{3}x_2^{3}x_k^{8}x_{\ell}^{5}x_m^{12}+
 x_1^{3}x_2^{4}x_kx_{\ell}^{9}x_m^{14}+
 x_1^{3}x_2^{4}x_kx_{\ell}^{10}x_m^{13}\\
&\quad + x_1^{3}x_2^{4}x_k^{2}x_{\ell}^{9}x_m^{13} \\
&\quad + Sq^1(Z_1) + Sq^2(Z_2) + Sq^4(Z_4) + Sq^8(x_1^{3}x_2^{4}x_k^4x_{\ell}^{5}x_m^{7})\ {\rm modulo}(\mathscr P_5^-(\omega^*)),
\end{array} $$
where
$$ \begin{array}{ll}
Z_1&= x_1^{3}x_2x_k^{3}x_{\ell}^{5}x_m^{18}+
 x_1^{3}x_2x_k^{3}x_{\ell}^{6}x_m^{17}+
 x_1^{3}x_2x_k^{3}x_{\ell}^{9}x_m^{14}+
 x_1^{3}x_2x_k^{3}x_{\ell}^{10}x_m^{13}\\
 &\quad + x_1^{3}x_2x_k^{6}x_{\ell}^{9}x_m^{11}+
x_1^{3}x_2x_k^{10}x_{\ell}^{5}x_m^{11}+
 x_1^{3}x_2^{3}x_kx_{\ell}^{6}x_m^{17}+
 x_1^{3}x_2^{3}x_kx_{\ell}^{10}x_m^{13}\\
&\quad + x_1^{3}x_2^{3}x_kx_{\ell}^{12}x_m^{11}+
 x_1^{3}x_2^{3}x_kx_{\ell}^{16}x_m^{7}+
  x_1^{3}x_2^{3}x_k^{4}x_{\ell}^{3}x_m^{17}+
 x_1^{3}x_2^{3}x_k^{8}x_{\ell}^{3}x_m^{13}\\
&\quad + x_1^{3}x_2^{3}x_k^{8}x_{\ell}^{5}x_m^{11}+
 x_1^{3}x_2^{3}x_k^{8}x_{\ell}^{9}x_m^{7}+
 x_1^{3}x_2^{4}x_k^{5}x_{\ell}^{5}x_m^{13}+
\medskip
 x_1^{5}x_2^{5}x_kx_{\ell}^{5}x_m^{14},\\
Z_2&=  x_1^{2}x_2x_k^{3}x_{\ell}^{9}x_m^{14}+
 x_1^{2}x_2x_k^{3}x_{\ell}^{10}x_m^{13}+
 x_1^{2}x_2x_k^{6}x_{\ell}^{9}x_m^{11}+
 x_1^{2}x_2x_k^{10}x_{\ell}^{5}x_m^{11}\\
&\quad + x_1^{2}x_2^{3}x_kx_{\ell}^{9}x_m^{14}+
 x_1^{2}x_2^{3}x_kx_{\ell}^{10}x_m^{13}+
 x_1^{2}x_2^{3}x_kx_{\ell}^{12}x_m^{11}+
 x_1^{2}x_2^{3}x_k^{8}x_{\ell}^{3}x_m^{13}\\
&\quad + x_1^{2}x_2^{3}x_k^{8}x_{\ell}^{5}x_m^{11}+
 x_1^{2}x_2^{3}x_k^{8}x_{\ell}^{9}x_m^{7}+
 x_1^{3}x_2^{5}x_kx_{\ell}^{6}x_m^{14}+
 x_1^{3}x_2^{5}x_k^{2}x_{\ell}^{5}x_m^{14}\\
&\quad + x_1^{3}x_2^{6}x_kx_{\ell}^{5}x_m^{14}+
 x_1^{5}x_2x_k^{6}x_{\ell}^{6}x_m^{11}+
 x_1^{5}x_2^{2}x_k^{2}x_{\ell}^{9}x_m^{11}+
 x_1^{5}x_2^{2}x_k^{3}x_{\ell}^{5}x_m^{14}\\
&\quad + x_1^{5}x_2^{2}x_k^{3}x_{\ell}^{6}x_m^{13}+
 x_1^{5}x_2^{2}x_k^{6}x_{\ell}^{5}x_m^{11}+
 x_1^{5}x_2^{2}x_k^{8}x_{\ell}^{3}x_m^{11}+
 x_1^{5}x_2^{3}x_kx_{\ell}^{6}x_m^{14}\\
&\quad + x_1^{5}x_2^{3}x_k^{2}x_{\ell}^{6}x_m^{13}+
 x_1^{5}x_2^{3}x_k^{2}x_{\ell}^{8}x_m^{11}+
 x_1^{5}x_2^{3}x_k^{2}x_{\ell}^{10}x_m^{9}+
 x_1^{5}x_2^{3}x_k^{2}x_{\ell}^{12}x_m^{7}\\
&\quad + x_1^{5}x_2^{3}x_k^{4}x_{\ell}^{3}x_m^{14}+
 x_1^{5}x_2^{3}x_k^{4}x_{\ell}^{6}x_m^{11}+
 x_1^{5}x_2^{3}x_k^{4}x_{\ell}^{10}x_m^{7}+
\medskip
 x_1^{6}x_2^{3}x_kx_{\ell}^{5}x_m^{14},\\
Z_4&=  x_1^{3}x_2x_k^{6}x_{\ell}^{5}x_m^{12}+
 x_1^{3}x_2x_k^{6}x_{\ell}^{6}x_m^{11}+
 x_1^{3}x_2^{2}x_k^{2}x_{\ell}^{9}x_m^{11}+
 x_1^{3}x_2^{2}x_k^{3}x_{\ell}^{6}x_m^{13}+
x_1^{3}x_2^{2}x_k^{6}x_{\ell}^{5}x_m^{11}\\
&\quad + x_1^{3}x_2^{2}x_k^{8}x_{\ell}^{3}x_m^{11}+
 x_1^{3}x_2^{3}x_kx_{\ell}^{6}x_m^{14}+
 x_1^{3}x_2^{3}x_k^{2}x_{\ell}^{6}x_m^{13}+
x_1^{3}x_2^{3}x_k^{2}x_{\ell}^{8}x_m^{11}+
 x_1^{3}x_2^{3}x_k^{2}x_{\ell}^{10}x_m^{9}\\
&\quad + x_1^{3}x_2^{3}x_k^{2}x_{\ell}^{12}x_m^{7}+
 x_1^{3}x_2^{3}x_k^{4}x_{\ell}^{3}x_m^{14}+
x_1^{3}x_2^{3}x_k^{4}x_{\ell}^{6}x_m^{11}+
 x_1^{3}x_2^{3}x_k^{4}x_{\ell}^{10}x_m^{7}+
 x_1^{3}x_2^{4}x_kx_{\ell}^{5}x_m^{14}\\
&\quad + x_1^{3}x_2^{8}x_k^{4}x_{\ell}^{5}x_m^{7}+
 x_1^{4}x_2^{3}x_kx_{\ell}^{5}x_m^{14}.
\end{array} $$
The above relations imply that $X_2$ is strictly inadmissible. The lemma follows.
\end{proof}

\begin{lema}\label{bd47.2}
The following monomials are strictly inadmissible:
\begin{center}
\begin{tabular}{lllr}
$Y_{1}=x_1^{3}x_2^{4}x_3^{7}x_4^{8}x_5^{9}$, & $Y_{2}=x_1^{3}x_2^{7}x_3^{8}x_4^{5}x_5^{8}$, & $Y_{3}=x_1x_2^{6}x_3^{10}x_4x_5^{13}$, & \multicolumn{1}{l}{$Y_{4}=x_1x_2^{6}x_3^{10}x_4^{13}x_5$,} \\
$Y_{5}=x_1x_2^{6}x_3^{11}x_4^{12}x_5$, & $Y_{6}=x_1x_2^{7}x_3^{10}x_4^{12}x_5$, & $Y_{7}=x_1^{7}x_2x_3^{10}x_4^{12}x_5$, & \multicolumn{1}{l}{$Y_{8}=x_1x_2^{2}x_3^{2}x_4^{13}x_5^{13}$,} \\
$Y_{9}=x_1x_2^{2}x_3^{14}x_4^{5}x_5^{9}$, & $Y_{10}=x_1x_2^{14}x_3^{2}x_4^{5}x_5^{9}$, & $Y_{11}=x_1^{3}x_2^{3}x_3^{12}x_4^{12}x_5$, & \multicolumn{1}{l}{$Y_{12}=x_1^{3}x_2^{15}x_3^{4}x_4^{8}x_5$,} \\
$Y_{13}=x_1^{3}x_2^{15}x_3^{4}x_4x_5^{8}$, & $Y_{14}=x_1^{15}x_2^{3}x_3^{4}x_4^{8}x_5$, & $Y_{15}=x_1^{15}x_2^{3}x_3^{4}x_4x_5^{8}$, & \multicolumn{1}{l}{$Y_{16}=x_1x_2^{14}x_3^{3}x_4^{4}x_5^{9}$,} \\
$Y_{17}=x_1^{3}x_2^{4}x_3^{10}x_4x_5^{13}$, & $Y_{18}=x_1^{3}x_2^{4}x_3^{10}x_4^{13}x_5$, & $Y_{19}=x_1^{3}x_2^{4}x_3^{11}x_4^{12}x_5$, & \multicolumn{1}{l}{$Y_{20}=x_1x_2^{14}x_3^{3}x_4^{5}x_5^{8}$,} \\
$Y_{21}=x_1^{3}x_2^{14}x_3^{5}x_4^{8}x_5$, & $Y_{22}=x_1^{3}x_2^{14}x_3x_4^{5}x_5^{8}$, & $Y_{23}=x_1^{3}x_2^{14}x_3^{5}x_4x_5^{8}$, & \multicolumn{1}{l}{$Y_{24}=x_1x_2^{6}x_3^{3}x_4^{13}x_5^{8}$,} \\
$Y_{25}=x_1x_2^{14}x_3^{14}x_4x_5$, & $Y_{26}=x_1^{3}x_2^{7}x_3^{8}x_4^{12}x_5$, & $Y_{27}=x_1^{3}x_2^{7}x_3^{12}x_4^{8}x_5$, & \multicolumn{1}{l}{$Y_{28}=x_1^{3}x_2^{7}x_3^{12}x_4x_5^{8}$,} \\
$Y_{29}=x_1^{7}x_2^{3}x_3^{8}x_4^{12}x_5$, & $Y_{30}=x_1^{7}x_2^{3}x_3^{12}x_4^{8}x_5$, & $Y_{31}=x_1^{7}x_2^{3}x_3^{12}x_4x_5^{8}$, & \multicolumn{1}{l}{$Y_{32}=x_1x_2^{6}x_3^{11}x_4^{4}x_5^{9}$,} \\
$Y_{33}=x_1^{7}x_2^{11}x_3^{4}x_4^{8}x_5$, & $Y_{34}=x_1^{7}x_2^{11}x_3^{4}x_4x_5^{8}$, & $Y_{35}=x_1x_2^{6}x_3^{11}x_4^{5}x_5^{8}$, & \multicolumn{1}{l}{$Y_{36}=x_1x_2^{6}x_3^{10}x_4^{5}x_5^{9}$,} \\
$Y_{37}=x_1^{3}x_2^{5}x_3^{8}x_4^{2}x_5^{13}$, & $Y_{38}=x_1^{3}x_2^{12}x_3^{2}x_4^{5}x_5^{9}$, & $Y_{39}=x_1^{3}x_2^{5}x_3^{9}x_4^{2}x_5^{12}$, & \multicolumn{1}{l}{$Y_{40}=x_1^{7}x_2^{2}x_3^{4}x_4^{9}x_5^{9}$,} \\
$Y_{41}=x_1^{7}x_2^{2}x_3^{9}x_4^{4}x_5^{9}$, & $Y_{42}=x_1^{3}x_2^{4}x_3^{3}x_4^{12}x_5^{9}$, & $Y_{43}=x_1^{3}x_2^{12}x_3^{3}x_4^{4}x_5^{9}$, & \multicolumn{1}{l}{$Y_{44}=x_1^{3}x_2^{4}x_3^{9}x_4^{3}x_5^{12}$,} \\
$Y_{45}=x_1^{3}x_2^{12}x_3^{3}x_4^{5}x_5^{8}$, & $Y_{46}=x_1^{3}x_2^{5}x_3^{8}x_4^{3}x_5^{12}$, & $Y_{47}=x_1^{3}x_2^{4}x_3^{11}x_4^{4}x_5^{9}$, & \multicolumn{1}{l}{$Y_{48}=x_1^{3}x_2^{4}x_3^{11}x_4^{5}x_5^{8}$,} \\
$Y_{49}=x_1^{3}x_2^{4}x_3^{10}x_4^{5}x_5^{9}$, & $Y_{50}=x_1x_2^{6}x_3^{3}x_4^{12}x_5^{9}$, & $Y_{51}=x_1^{3}x_2^{4}x_3^{7}x_4^{9}x_5^{8}$, & \multicolumn{1}{l}{$Y_{52}=x_1^{3}x_2^{7}x_3^{8}x_4^{4}x_5^{9}$,} \\
$Y_{53}=x_1^{7}x_2^{3}x_3^{8}x_4^{4}x_5^{9}$, & $Y_{54}=x_1x_2^{14}x_3^{3}x_4^{12}x_5$, & $Y_{55}=x_1^{7}x_2^{3}x_3^{8}x_4^{5}x_5^{8}$. &  
\end{tabular}
\end{center}
\end{lema}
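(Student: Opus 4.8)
The plan is to establish, for each of the $55$ monomials $Y_j$, an explicit congruence witnessing strict inadmissibility in the precise sense recalled in Sect.~\ref{s2}. First I would check that $\omega(Y_j) = (3,2,2,2)$ for every $j$; this is a routine inspection of the dyadic expansions of the exponents, and it fixes $s = 4$ as the largest index with $\omega_s(Y_j) > 0$, so that the only Steenrod squares that can occur in a reduction modulo $\mathscr P_5^-(3,2,2,2)$ are among $Sq^1, Sq^2, Sq^4, Sq^8$.

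For each $Y_j$ the goal is then a relation $Y_j = \sum_i z_i + \sum_{t \in \{1,2,4,8\}} Sq^t(g_t)$ modulo $\mathscr P_5^-(3,2,2,2)$, with every monomial $z_i$ strictly smaller than $Y_j$ in the order on $\mathscr P_5$ and suitable witnesses $g_t \in \mathscr P_5$. I would produce such a relation by choosing the $g_t$ so that their Cartan-formula expansions, after cancellation, reproduce $Y_j$ together with monomials that are either below $Y_j$ in the order or of weight vector strictly less than $(3,2,2,2)$, hence zero in the quotient. Following the pattern of Lemma~\ref{bd47.1}, I would carry out this computation in full only for a small number of representatives --- for instance $Y_1 = x_1^{3}x_2^{4}x_3^{7}x_4^{8}x_5^{9}$ and $Y_2 = x_1^{3}x_2^{7}x_3^{8}x_4^{5}x_5^{8}$, whose reductions illustrate the two typical shapes of exponent vector appearing in the list --- and note that the remaining monomials are disposed of by completely analogous calculations, many of them obtained from a representative by permuting variables and transporting the corresponding witnesses $g_t$. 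Whenever a given $Y_j$ factors as $u\,z^{2^t}$ with $u$ one of the strictly inadmissible monomials already exhibited in the earlier lemmas and $\omega_i(u) = 0$ for $i > t$, strict inadmissibility of $Y_j$ follows at once from Theorem~\ref{dlKS} and no fresh computation is required.

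The difficulty is one of organization rather than of idea: the witnesses $g_t$ must be selected so that no monomial exceeding $Y_j$ in the order survives in the Cartan expansion, and confirming this term by term across $55$ monomials is laborious and error-prone. I expect this bookkeeping to be the main obstacle, which is precisely why (as in the preceding lemmas) the argument is written out only on representatives. Once all $55$ congruences are in place, each $Y_j$ is strictly inadmissible by definition and the lemma follows; these monomials are then fed, via Theorem~\ref{dlKS}, into the determination of the admissible monomials of degree $47$ carried out in the remainder of Sect.~\ref{s3}.
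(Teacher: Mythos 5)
Your proposal is correct and follows essentially the same route as the paper: note that $\omega(Y_j)=(3,2,2,2)$ for all $j$, then exhibit explicit Cartan-formula reductions modulo $\mathscr P_5^-(3,2,2,2)$ using $Sq^1, Sq^2, Sq^4, Sq^8$ for the representatives $Y_1$ and $Y_2$ (the very two the paper treats), with the remaining monomials handled by analogous computations.
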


\begin{proof}
Note that $\omega(Y_j) = \omega^*,\ j = 1, 2, \ldots, 55.$ We prove this lemma for the monomials $Y_1 = x_1^3x_2^{4}x_3^{7}x_4^{8}x_5^{9},$ and $Y_2 = x_1^{3}x_2^{7}x_3^{8}x_4^{5}x_5^{8}.$ The others can be proved by a similar computation. A direct computation shows:
$$ \begin{array}{ll}
Y_1&=  x_1^{2}x_2x_3^{7}x_4^{8}x_5^{13}+
 x_1^{2}x_2x_3^{13}x_4x_5^{14}+
 x_1^{2}x_2x_3^{13}x_4^{8}x_5^{7}+
 x_1^{2}x_2^{3}x_3^{9}x_4^{4}x_5^{13}+
 x_1^{2}x_2^{3}x_3^{12}x_4x_5^{13}\\
&\quad +  x_1^{2}x_2^{5}x_3^{9}x_4^{4}x_5^{11}+
 x_1^{2}x_2^{5}x_3^{9}x_4^{8}x_5^{7}+
 x_1^{2}x_2^{5}x_3^{12}x_4x_5^{11}+
 x_1^{3}x_2x_3^{5}x_4^{8}x_5^{14}+
 x_1^{3}x_2x_3^{7}x_4^{8}x_5^{12}\\
&\quad + x_1^{3}x_2x_3^{12}x_4x_5^{14}+
 x_1^{3}x_2x_3^{12}x_4^{4}x_5^{11}+
 x_1^{3}x_2^{2}x_3^{9}x_4^{4}x_5^{13}+
 x_1^{3}x_2^{3}x_3^{9}x_4^{4}x_5^{12}+
 x_1^{3}x_2^{3}x_3^{12}x_4x_5^{12}\\
\medskip
&\quad + x_1^{3}x_2^{3}x_3^{12}x_4^{4}x_5^{9} + Sq^1(u_1) + Sq^2(u_2) + Sq^4(u_4) + Sq^8(u_8)\ {\rm modulo}(\mathscr P_5^-(\omega^*)),\, \mbox{where}\\
u_1&=  x_1^{3}x_2x_3^{5}x_4^{8}x_5^{13}+
 x_1^{3}x_2x_3^{7}x_4x_5^{18}+
 x_1^{3}x_2x_3^{9}x_4^{4}x_5^{13}+
 x_1^{3}x_2x_3^{11}x_4x_5^{14}\\
 &\quad +  x_1^{3}x_2^{3}x_3^{9}x_4^{4}x_5^{11}+
x_1^{3}x_2^{3}x_3^{9}x_4^{8}x_5^{7}+
 x_1^{3}x_2^{3}x_3^{12}x_4x_5^{11}+
 x_1^{3}x_2^{3}x_3^{16}x_4x_5^{7}\\
&\quad +  x_1^{3}x_2^{4}x_3^{9}x_4x_5^{13}+
 x_1^{5}x_2x_3^{5}x_4^{8}x_5^{11}+
\medskip
x_1^{5}x_2x_3^{7}x_4^{8}x_5^{9},\\
u_2&=  x_1^{2}x_2x_3^{7}x_4^{8}x_5^{11}+
 x_1^{2}x_2x_3^{11}x_4x_5^{14}+
 x_1^{2}x_2x_3^{11}x_4^{8}x_5^{7}+
 x_1^{2}x_2^{3}x_3^{9}x_4^{4}x_5^{11}\\
&\quad + x_1^{2}x_2^{3}x_3^{9}x_4^{8}x_5^{7}+
 x_1^{2}x_2^{3}x_3^{12}x_4x_5^{11}+
 x_1^{3}x_2x_3^{6}x_4^{8}x_5^{11}+
 x_1^{3}x_2x_3^{7}x_4^{8}x_5^{10}\\
&\quad + x_1^{3}x_2x_3^{10}x_4^{4}x_5^{11}+
 x_1^{3}x_2^{2}x_3^{5}x_4^{8}x_5^{11}+
 x_1^{3}x_2^{2}x_3^{7}x_4^{8}x_5^{9}+
 x_1^{3}x_2^{2}x_3^{9}x_4^{4}x_5^{11}\\
&\quad +  x_1^{5}x_2x_3^{7}x_4^{2}x_5^{14}+
 x_1^{5}x_2^{2}x_3^{7}x_4x_5^{14}+
 x_1^{5}x_2^{2}x_3^{9}x_4^{2}x_5^{11}+
 x_1^{5}x_2^{3}x_3^{10}x_4^{2}x_5^{9}\\
&\quad + x_1^{5}x_2^{3}x_3^{10}x_4^{4}x_5^{7}+
 x_1^{5}x_2^{3}x_3^{12}x_4^{2}x_5^{7}+
\medskip
 x_1^{6}x_2x_3^{7}x_4^{8}x_5^{7},\\
u_4&=  x_1^{3}x_2x_3^{7}x_4^{2}x_5^{14}+
 x_1^{3}x_2^{2}x_3^{7}x_4x_5^{14}+
 x_1^{3}x_2^{2}x_3^{9}x_4^{2}x_5^{11}+
 x_1^{3}x_2^{2}x_3^{10}x_4x_5^{11}\\
&\quad + x_1^{3}x_2^{3}x_3^{10}x_4^{2}x_5^{9}+
 x_1^{3}x_2^{3}x_3^{10}x_4^{4}x_5^{7}+
 x_1^{3}x_2^{3}x_3^{12}x_4^{2}x_5^{7}+
 x_1^{3}x_2^{8}x_3^{5}x_4^{4}x_5^{7}\\
&\quad + x_1^{4}x_2x_3^{7}x_4^{8}x_5^{7}+
\medskip
 x_1^{10}x_2x_3^{5}x_4^{4}x_5^{7},\\
\medskip
u_8&=x_1^{3}x_2^{4}x_3^{5}x_4^{4}x_5^{7}+ x_1^{6}x_2x_3^{5}x_4^{4}x_5^{7}.
\end{array}$$
By a similar computation, we have
$$ \begin{array}{ll}
Y_2 &=  x_1^{2}x_2^{7}x_3^{8}x_4^{5}x_5^{9}+
 x_1^{2}x_2^{11}x_3x_4^{5}x_5^{12}+
 x_1^{2}x_2^{11}x_3^{8}x_4^{5}x_5^{5}+
 x_1^{2}x_2^{13}x_3x_4^{5}x_5^{10}+
 x_1^{2}x_2^{13}x_3x_4^{9}x_5^{6}\\
&\quad + x_1^{2}x_2^{13}x_3^{8}x_4^{3}x_5^{5}+
 x_1^{3}x_2^{3}x_3x_4^{12}x_5^{12}+
 x_1^{3}x_2^{3}x_3^{4}x_4^{9}x_5^{12}+
 x_1^{3}x_2^{3}x_3^{8}x_4^{5}x_5^{12}+
 x_1^{3}x_2^{5}x_3^{4}x_4^{9}x_5^{10}\\
&\quad + x_1^{3}x_2^{5}x_3^{4}x_4^{10}x_5^{9}+
 x_1^{3}x_2^{5}x_3^{8}x_4^{5}x_5^{10}+
 x_1^{3}x_2^{5}x_3^{8}x_4^{6}x_5^{9}+
 x_1^{3}x_2^{5}x_3^{8}x_4^{9}x_5^{6}+
 x_1^{3}x_2^{5}x_3^{8}x_4^{10}x_5^{5}\\
&\quad + x_1^{3}x_2^{7}x_3x_4^{8}x_5^{12}+
 x_1^{3}x_2^{7}x_3x_4^{12}x_5^{8}+
 x_1^{3}x_2^{7}x_3^{4}x_4^{9}x_5^{8}+
 x_1^{3}x_2^{7}x_3^{8}x_4^{4}x_5^{9}\\
\medskip
&\quad + Sq^1(v_1) + Sq^2(v_2) + Sq^4(v_4) + Sq^8(v_8)\ {\rm modulo}(\mathscr P_5^-(\omega^*)), \,\mbox{where}\\
v_1&=  x_1^{5}x_2^{3}x_3x_4^{9}x_5^{12}+
 x_1^{5}x_2^{3}x_3^{8}x_4^{5}x_5^{9}+
 x_1^{5}x_2^{7}x_3x_4^{5}x_5^{12}\\
&\quad + x_1^{5}x_2^{7}x_3x_4^{9}x_5^{8}+
 x_1^{5}x_2^{7}x_3^{4}x_4^{5}x_5^{9}+
\medskip
 x_1^{5}x_2^{7}x_3^{8}x_4^{5}x_5^{5},\\
v_2&=  x_1^{2}x_2^{7}x_3^{8}x_4^{3}x_5^{9}+
 x_1^{2}x_2^{11}x_3x_4^{5}x_5^{10}+
 x_1^{2}x_2^{11}x_3x_4^{9}x_5^{6}+
 x_1^{2}x_2^{11}x_3^{8}x_4^{3}x_5^{5}+
 x_1^{3}x_2^{3}x_3x_4^{10}x_5^{12}\\
&\quad + x_1^{3}x_2^{3}x_3^{2}x_4^{9}x_5^{12}+
 x_1^{3}x_2^{3}x_3^{8}x_4^{5}x_5^{10}+
 x_1^{3}x_2^{3}x_3^{8}x_4^{6}x_5^{9}+
 x_1^{3}x_2^{7}x_3x_4^{6}x_5^{12}+
 x_1^{3}x_2^{7}x_3x_4^{10}x_5^{8}\\
&\quad + x_1^{3}x_2^{7}x_3^{2}x_4^{5}x_5^{12}+
 x_1^{3}x_2^{7}x_3^{2}x_4^{9}x_5^{8}+
 x_1^{3}x_2^{7}x_3^{4}x_4^{5}x_5^{10}+
 x_1^{3}x_2^{7}x_3^{4}x_4^{6}x_5^{9}+
 x_1^{3}x_2^{7}x_3^{8}x_4^{5}x_5^{6}\\
&\quad + x_1^{3}x_2^{7}x_3^{8}x_4^{6}x_5^{5}+
 x_1^{6}x_2^{3}x_3x_4^{9}x_5^{10}+
 x_1^{6}x_2^{3}x_3^{8}x_4^{3}x_5^{9}+
 x_1^{6}x_2^{7}x_3x_4^{5}x_5^{10}+
 x_1^{6}x_2^{7}x_3x_4^{9}x_5^{6}\\
&\quad + x_1^{6}x_2^{7}x_3^{4}x_4^{3}x_5^{9}+
\medskip
 x_1^{6}x_2^{7}x_3^{8}x_4^{3}x_5^{5},\\
v_4&=  x_1^{3}x_2^{5}x_3x_4^{6}x_5^{12}+
 x_1^{3}x_2^{5}x_3^{2}x_4^{5}x_5^{12}+
 x_1^{3}x_2^{5}x_3^{4}x_4^{5}x_5^{10}+
 x_1^{3}x_2^{5}x_3^{4}x_4^{6}x_5^{9}+
 x_1^{3}x_2^{5}x_3^{8}x_4^{5}x_5^{6}\\
&\quad + x_1^{3}x_2^{5}x_3^{8}x_4^{6}x_5^{5}+
 x_1^{3}x_2^{11}x_3^{4}x_4^{4}x_5^{5}+
 x_1^{4}x_2^{7}x_3x_4^{5}x_5^{10}+
 x_1^{4}x_2^{7}x_3x_4^{9}x_5^{6}+
 x_1^{4}x_2^{7}x_3^{8}x_4^{3}x_5^{5}\\
&\quad + x_1^{10}x_2^{5}x_3x_4^{5}x_5^{6}+
\medskip
 x_1^{10}x_2^{5}x_3^{4}x_4^{3}x_5^{5},\\
\medskip
v_8&=x_1^{3}x_2^{7}x_3^{4}x_4^{4}x_5^{5} + x_1^{6}x_2^5x_3^{1}x_4^{5}x_5^{6}+x_1^{6}x_2^5x_3^{4}x_4^{3}x_5^{5}.
\end{array}$$
From the above equalities, we see that $Y_1$ and $Y_2$ are strictly inadmissible. The lemma is proved.
\end{proof}

For $s,\ k\in\mathbb N$ and $1\leq k\leq 5,$ we denote
$$ \overline{\mathscr B}(k, 47) := \big\{x_k^{2^{s} - 1}\rho_{(k,\,5)}(x)\in (\mathscr P_5)_{47}\ :\ x\in \mathscr B_4(45-2^{s}),\ \alpha(52-2^{s})\leq 4\big\}.$$
It has been shown (see \cite{M.M2}) that $\overline{\mathscr B}(k, 47)\subseteq \mathscr B_5(47)$ for $k = 1, 2, \ldots, 5.$ We set $\overline{\mathscr B}(k, \overline{\omega}_{(1)})  := \overline{\mathscr B}(k, 47) \cap \mathscr P_5(\overline{\omega}_{(1)})$ and $\overline{\mathscr B}^+(k, \overline{\omega}_{(1)})  := \overline{\mathscr B}(k, \overline{\omega}_{(1)}) \cap (\mathscr P^+_5)_{47}.$ Then, we obtain the following result.

\begin{propo}\label{md47-1}
We have 
$$ \begin{array}{ll}
Q\mathscr P_5^+(\overline{\omega}_{(1)}) &= \big\langle[\overline{\Phi}^+(\mathscr B_4(\overline{\omega}_{(1)}))\bigcup \big(\bigcup_{1\leq k\leq 5} \overline{\mathscr B}^+(k, \overline{\omega}_{(1)})\big) \bigcup \mathcal F]_{\overline{\omega}_{(1)}}\big\rangle\\
& = \big\langle [\mathcal Y_{47,\, j}]_{\overline{\omega}_{(1)}}:\, 1\leq j\leq 370\big\rangle,
\end{array}$$
where the monomials $\mathcal Y_{47,\, j},\ 1\leq j\leq 370,$ are listed in Sect.\ref{s5.7}.
\end{propo}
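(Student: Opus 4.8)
The plan is to establish the two assertions separately: first that the stated set of monomials spans $Q\mathscr P_5^+(\overline{\omega}_{(1)})$, then that it is linearly independent in $Q\mathscr P_5(\overline{\omega}_{(1)})$. For the spanning part, I would argue exactly as in the proofs of Propositions \ref{md22-1} and \ref{md22-2}. Let $X$ be an admissible monomial in $\mathscr P_5^+$ with $\omega(X) = \overline{\omega}_{(1)} = (3,2,2,2,1)$. Since $\omega_1(X) = 3$, we may write $X = x_ix_jx_k\, g^2$ with $1\le i<j<k\le 5$ and $g$ a monomial of degree $22$ in $\mathscr P_5$; because $X$ is admissible, Theorem \ref{dlKS} forces $g\in \mathscr B_5(\omega_{(1)})$, so $g$ is one of the monomials of Corollary \ref{hq22} with weight vector $\omega_{(1)}=(2,2,2,1)$. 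I would then run through all such products $x_ix_jx_k\, g^2$ (this is a finite, explicit list) and show that whenever the product does not already equal one of the listed monomials $\mathcal Y_{47,j}$, $1\le j\le 370$, it factors as $w\, z^{2^a}$ where $w$ is one of the strictly inadmissible monomials supplied by Lemmas \ref{bd22.2}, \ref{bd22.1}, \ref{bd22.4}, \ref{bd47.1}, \ref{bd47.2} and $a = \max\{m : \omega_m(w)>0\}$; by Theorem \ref{dlKS} such a product is inadmissible. Since $X$ itself is admissible, it must coincide with some $\mathcal Y_{47,j}$, which gives the spanning statement. The role of $\overline{\Phi}^+(\mathscr B_4(\overline{\omega}_{(1)}))$ and the sets $\overline{\mathscr B}^+(k,\overline{\omega}_{(1)})$ is precisely to account for those products coming, respectively, from the prism maps $\phi_{(k;\mathscr K)}$ applied to admissible monomials in four variables (Conjecture \ref{gtS} in this range, which follows from the computations) and from the Mothebe--Uys monomials $x_k^{2^s-1}\rho_{(k,5)}(x)$; the extra set $\mathcal F$ collects the remaining admissibles not captured by either construction.

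For linear independence, I would follow the standard scheme described after Conjecture \ref{gtS}: assume a relation $\mathcal S = \sum_{1\le j\le 370}\gamma_j\mathcal Y_{47,j} \equiv_{\overline{\omega}_{(1)}} 0$ with $\gamma_j\in\mathbb Z/2$, and apply the homomorphisms $\pi_{(k;\mathscr K)}:\mathscr P_5\to\mathscr P_4$ for various $(k;\mathscr K)\in\mathcal N_5$. By Proposition \ref{mdPS}, each $\pi_{(k;\mathscr K)}(\mathcal S)$ lies in $\mathscr P_4(\overline{\omega}_{(1)})$, and using a known admissible basis of $Q\mathscr P_4$ in degree $47$ (weight vector $\overline{\omega}_{(1)}$), together with Theorem \ref{dlsig} to discard terms of smaller weight vector, one expands $\pi_{(k;\mathscr K)}(\mathcal S)$ in that basis and reads off linear constraints on the $\gamma_j$. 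Running through the projections $\pi_{(k;p)}$ for $1\le k<p\le 5$ and then the "second-order" projections $\pi_{(1;(2;j))}$, $j=3,4,5$, and $\pi_{(1;(3;4))}$, exactly as in the proof of Proposition \ref{md21.1}, one forces all $\gamma_j=0$.

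The main obstacle is purely computational rather than conceptual: the sheer size of the list ($370$ monomials in $\mathscr P_5^+(\overline{\omega}_{(1)})$, plus the $560$ monomials in $\mathscr B_5^0(47)$) makes both the inadmissibility bookkeeping in the spanning step and the Gaussian elimination over $\mathbb Z/2$ in the independence step lengthy, and care is needed to be sure the strictly inadmissible monomials of Lemmas \ref{bd47.1}--\ref{bd47.2} (and their images under permutations and under Theorem \ref{dlKS}) genuinely exhaust all the "unwanted" products $x_ix_jx_k\,g^2$. I expect the independence computation to be the hardest part, since the rank of the system is $370$ and the relations produced by a single projection $\pi_{(k;\mathscr K)}$ are far from determining all coefficients; as in Proposition \ref{md21.1} one must assemble the constraints from many projections and check that their common solution space is trivial. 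These calculations are routine in method but voluminous, and I would organize them with a computer algebra check and then record the essential relations in the Appendix.
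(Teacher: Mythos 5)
Your proposal follows essentially the same route as the paper: write an admissible $X\in\mathscr P_5^+$ of weight $\overline{\omega}_{(1)}$ as $x_tx_kx_m u^2$ with $u\in\mathscr B_5^+(\omega_{(1)})$ (Theorem \ref{dlKS} together with Proposition \ref{md22-1}), and rule out every product not on the list of $370$ monomials by exhibiting a strictly inadmissible factor --- the only discrepancy being that the paper draws these factors from Lemmas \ref{bd47.1}--\ref{bd47.5}, not from the degree-$22$ lemmas you cite (which are superfluous here since $u$ is already admissible). The linear-independence computation you include is not part of the paper's proof of this proposition, which asserts only the spanning statement; it is carried out separately in Remark \ref{nx3} and Corollary \ref{hq47.1} by exactly the $\pi_{(k;\mathscr K)}$-projection method you describe.
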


The proof of the proposition is based on Lemmas \ref{bd47.1}, \ref{bd47.2} and some results below. 

The follwing lemma is an immediate consequence of a result in Sum \cite{N.S3}.

\begin{lema}\label{bd47.3}
The following monomials are strictly inadmissible:
\begin{enumerate}
\item[I)] 
$x_i^{15}x_j^2x_kx_{\ell}^{13},\ x_i^3x_j^{12}x_kx_{\ell}^{15}, \ x_i^3x_j^{14}x_kx_{\ell}^{13}, \\
 x_i^7x_j^{10}x_kx_{\ell}^{13}, \ x_i^3x_j^13x_k^8x_{\ell}^7, \ x_i^3x_j^{12}x_k^9x_{\ell}^7, 1\leq j < k \leq 5,\ 1\leq i,  \ell\leq 5,\ i\neq \ell,\ i, \ell \neq j, k;$
\item[II)] $x_i^{15}x_j^{14}x_kx_{\ell}, \ x_i^3x_j^2x_k^{13}x_{\ell}^{13}, \ x_i^{15}x_j^2x_k^5x_{\ell}^9, \ x_i^{13}x_j^2x_k^7x_{\ell}^9,\  x_i^{13}x_j^7x_k^2x_{\ell}^9, \ x_i^{15}x_j^3x_k^4x_{\ell}^9, x_i^{15}x_j^3x_k^5x_{\ell}^8, \\
 x_i^{14}x_j^3x_k^5x_{\ell}^9, \ x_i^{13}x_j^3x_k^6x_{\ell}^9, \ x_i^{13}x_j^6x_k^3x_{\ell}^9, \ 1\leq j < k < \ell\leq 5,\ 1\leq i\leq 5,\ i\neq j, k, \ell;$
\item[III)] $\rho_{(1,\, 5)}(x_1^2x_2^5x_3^{11}x_4^{13}),\ \rho_{(1,\, 5)}(x_1^6x_2^{11}x_3^{5}x_4^{9}),\ \rho_{(1,\, 5)}(x_1^6x_2x_3^{11}x_4^{13}),\\
 \rho_{(1,\, 5)}(x_1^6x_2^{11}x_3x_4^{13}),\ \rho_{(1,\, 5)}(x_1^6x_2^{11}x_3^{13}x_4).$
\end{enumerate}
\end{lema}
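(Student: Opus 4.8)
The plan is to prove that the listed monomials are strictly inadmissible by exhibiting, for each one, an explicit rewriting modulo $\mathscr P_5^-(\overline{\omega}_{(1)})$ (equivalently, modulo $\mathscr P_5^-(\omega)$ for the relevant weight vector $\omega$) as a sum of strictly smaller monomials plus terms in the image of the operations $Sq^t$ with $1\le t\le 2^s-1$, where $s=\max\{i:\omega_i(\cdot)>0\}$. Because the statement asserts that this is an \emph{immediate consequence} of a result in Sum~\cite{N.S3}, the real content is a reduction to monomials already handled there. Concretely, I would first observe that each monomial in the list is of the form $\rho_{(k,5)}(X)$ or $x_i^{2^s-1}\rho_{(k,5)}(Y)$ or $x_i^{2^a}x_j^{2^b}\cdots$ built by the substitution maps $\rho_{(k,5)}$ (or the $\phi_{(k;\mathscr K)}$) from a four-variable monomial $X$, and then invoke the fact that $\rho_{(k,5)}$ is a homomorphism of $\mathcal A_2$-modules: if $X\in\mathscr P_4$ is strictly inadmissible with a witnessing identity $X=\sum x_j+\sum_{1\le t\le 2^{s'}-1}Sq^t(y_t)$, applying $\rho_{(k,5)}$ preserves the relation and, by Theorem~\ref{dlKS} (the Kameko criterion on inadmissibility under $u\mapsto xu^{2^r}$ and $u\mapsto uy^{2^t}$), the resulting five-variable monomial stays strictly inadmissible.

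The key steps, in order, would be: (1) for the monomials in part I), identify the four-variable monomial $X\in\mathscr P_4$ with $\rho_{(k,5)}(X)$ equal to (a rearrangement of) the given five-variable monomial, and cite the corresponding strict-inadmissibility statement from Sum~\cite{N.S3} (these are exactly the degree-$45-2^s$ or smaller monomials appearing in the definition of $\overline{\mathscr B}(k,47)$, with the condition $\alpha(52-2^s)\le 4$ ensuring they lie in the admissible range of the four-variable problem); (2) for part II), note that these monomials have weight vector $\omega^* = (3,2,2,2)$ (one checks this directly from the dyadic expansions of the exponents, as was done in Lemmas~\ref{bd47.1} and~\ref{bd47.2}), so that the allowed operations are $Sq^1,Sq^2,Sq^4$, and reduce them to the four-variable monomials $x_i^{15}x_j^{14}x_kx_\ell$, $x_i^3x_j^2x_k^{13}x_\ell^{13}$, etc., which are in Sum's list; (3) for part III), these are explicitly written as $\rho_{(1,5)}$ of four-variable monomials $x_1^2x_2^5x_3^{11}x_4^{13}$, $x_1^6x_2^{11}x_3^5x_4^9$, and the three further ones, each of which is strictly inadmissible in $\mathscr P_4$ by Sum~\cite{N.S3}; applying the $\mathcal A_2$-module homomorphism $\rho_{(1,5)}$ transports the witnessing identity verbatim, and since $\omega_i$ of the image coincides with $\omega_i$ of $X$ for all $i$ (as $\rho_{(1,5)}$ does not change exponents, only indices), the bound $s=\max\{i:\omega_i>0\}$ is unchanged and strict inadmissibility is preserved.

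I would present the proof compactly: state that $\rho_{(k,5)}$ and the $\phi_{(k;\mathscr K)}$ are homomorphisms of $\mathcal A_2$-modules preserving weight vectors (Proposition~\ref{mdPS} and the remarks following the definition of $\phi_{(k;\mathscr K)}$), recall that a strictly inadmissible monomial in $\mathscr P_{4}$ remains strictly inadmissible after applying these maps and after multiplication by a suitable $x^{2^r}$ or $y^{2^t}$ factor by Theorem~\ref{dlKS}, and then for each family in I)--III) give a one-line correspondence $X\mapsto$ (the five-variable monomial) together with the pointer to the exact statement in~\cite{N.S3}. For the part~II) monomials, which are genuine five-variable monomials rather than obviously pulled-back from four variables, I would additionally note the explicit $Sq^1,Sq^2,Sq^4$-reductions — or, more economically, observe that each is of the form $wz^{2^b}$ with $w$ a four-variable strictly inadmissible monomial of lower degree and $b=\max\{r:\omega_r(w)>0\}$, again invoking Theorem~\ref{dlKS}.

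The main obstacle I anticipate is bookkeeping rather than conceptual: verifying in each of the roughly twenty-five cases that the correct four-variable monomial has been identified, that its weight vector forces the intended value of $s$ (so that the operation $Sq^{2^s-1}$ used in Sum's witnessing identity is still admissible after transport), and that multiplying by the appropriate $2^r$-th power does not inadvertently change $\omega_i$ at the critical index. A subtler point is that Sum's result in~\cite{N.S3} is phrased for the four-variable hit problem, so one must be careful that the degrees $52-2^s$ with $\alpha(52-2^s)\le4$ are precisely those for which his classification of strictly inadmissible monomials applies; checking this side condition for each $s\in\{1,2,3,4,5\}$ is the one place where an error could creep in, and I would verify it explicitly before quoting the reference.
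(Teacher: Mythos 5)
Your proposal is correct and follows the paper's own route: the paper proves this lemma by the single observation that it is an immediate consequence of a result of Sum \cite{N.S3}, and your mechanism --- transporting Sum's four-variable strictly inadmissible monomials into $\mathscr P_5$ through the order-preserving variable embeddings $\rho_{(k,\,5)}$ (which commute with the $\mathcal A_2$-action and preserve weight vectors and the monomial order), combined with Theorem \ref{dlKS} --- is exactly what makes that citation legitimate. A few harmless slips do not affect the argument: the monomials in part II) also involve only four of the five variables; since every listed monomial has weight vector $(3,2,2,2)$ one has $s=4$, so the witnessing identities may use $Sq^{t}$ for $1\leq t\leq 2^{4}-1$ (in practice $Sq^{1},Sq^{2},Sq^{4},Sq^{8}$), not merely $Sq^{1},Sq^{2},Sq^{4}$; and the side condition $\alpha(52-2^{s})\leq 4$ you invoke belongs to the Mothebe--Uys admissibility statement for $\overline{\mathscr B}(k,47)$ and plays no role in strict inadmissibility.
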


\begin{lema}\label{bd47.4}
The following monomials are strictly inadmissible:
\begin{center}
\begin{tabular}{llll}
$Z_{1}=x_1^{3}x_2^{3}x_3^{12}x_4^{5}x_5^{24}$, & $Z_{2}=x_1^{7}x_2x_3^{11}x_4^{20}x_5^{8}$, & $Z_{3}=x_1^{7}x_2x_3^{2}x_4^{28}x_5^{9}$, & $Z_{4}=x_1x_2^{2}x_3^{7}x_4^{12}x_5^{25}$, \\
$Z_{5}=x_1x_2^{7}x_3^{2}x_4^{12}x_5^{25}$, & $Z_{6}=x_1^{7}x_2x_3^{2}x_4^{12}x_5^{25}$, & $Z_{7}=x_1x_2^{6}x_3^{3}x_4^{8}x_5^{29}$, & $Z_{8}=x_1x_2^{6}x_3^{3}x_4^{24}x_5^{13}$, \\
$Z_{9}=x_1^{3}x_2^{7}x_3^{8}x_4x_5^{28}$, & $Z_{10}=x_1^{7}x_2^{3}x_3^{8}x_4x_5^{28}$, & $Z_{11}=x_1^{3}x_2^{7}x_3^{24}x_4x_5^{12}$, & $Z_{12}=x_1^{7}x_2^{3}x_3^{24}x_4x_5^{12}$, \\
$Z_{13}=x_1x_2^{7}x_3^{26}x_4^{4}x_5^{9}$, & $Z_{14}=x_1^{7}x_2x_3^{26}x_4^{4}x_5^{9}$, & $Z_{15}=x_1x_2^{7}x_3^{10}x_4^{4}x_5^{25}$, & $Z_{16}=x_1^{7}x_2x_3^{10}x_4^{4}x_5^{25}$, \\
$Z_{17}=x_1x_2^{7}x_3^{26}x_4^{5}x_5^{8}$, & $Z_{18}=x_1^{7}x_2x_3^{26}x_4^{5}x_5^{8}$, & $Z_{19}=x_1x_2^{7}x_3^{10}x_4^{5}x_5^{24}$, & $Z_{20}=x_1^{7}x_2x_3^{10}x_4^{5}x_5^{24}$, \\
$Z_{21}=x_1x_2^{6}x_3^{11}x_4^{13}x_5^{16}$, & $Z_{22}=x_1x_2^{7}x_3^{10}x_4^{21}x_5^{8}$, & $Z_{23}=x_1^{7}x_2x_3^{10}x_4^{21}x_5^{8}$, & $Z_{24}=x_1x_2^{7}x_3^{11}x_4^{20}x_5^{8}$, \\
$Z_{25}=x_1x_2^{2}x_3^{7}x_4^{28}x_5^{9}$, & $Z_{26}=x_1^{7}x_2^{11}x_3x_4^{20}x_5^{8}$, & $Z_{27}=x_1x_2^{7}x_3^{10}x_4^{20}x_5^{9}$, & $Z_{28}=x_1^{7}x_2x_3^{10}x_4^{20}x_5^{9}$, \\
$Z_{29}=x_1x_2^{7}x_3^{10}x_4^{13}x_5^{16}$, & $Z_{30}=x_1^{7}x_2x_3^{10}x_4^{13}x_5^{16}$, & $Z_{31}=x_1x_2^{7}x_3^{11}x_4^{12}x_5^{16}$, & $Z_{32}=x_1^{7}x_2x_3^{11}x_4^{12}x_5^{16}$, \\
$Z_{33}=x_1^{7}x_2^{11}x_3x_4^{12}x_5^{16}$, & $Z_{34}=x_1^{3}x_2^{3}x_3^{4}x_4^{28}x_5^{9}$, & $Z_{35}=x_1^{3}x_2^{3}x_3^{28}x_4^{4}x_5^{9}$, & $Z_{36}=x_1^{3}x_2^{3}x_3^{4}x_4^{12}x_5^{25}$, \\
$Z_{37}=x_1^{3}x_2^{3}x_3^{12}x_4^{4}x_5^{25}$, & $Z_{38}=x_1^{3}x_2^{3}x_3^{28}x_4^{5}x_5^{8}$, & $Z_{39}=x_1x_2^{7}x_3^{2}x_4^{28}x_5^{9}$, & $Z_{40}=x_1^{3}x_2^{3}x_3^{12}x_4^{21}x_5^{8}.$
\end{tabular}
\end{center}
\end{lema}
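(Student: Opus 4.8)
The plan is to follow the template used for Lemmas \ref{bd21.2}--\ref{bd22.4} and \ref{bd47.1}--\ref{bd47.2}. First I would record the elementary observation that $\omega(Z_i)=\overline{\omega}_{(1)}=(3,2,2,2,1)$ for every $i=1,\dots,40$ --- a one-line bit-count in each case --- so that $s=\max\{j:\omega_j(Z_i)\neq 0\}=5$ and, by the definition of strict inadmissibility, the Steenrod squares allowed in the witnessing identity are $Sq^c$ with $1\le c\le 2^5-1$; in practice only the dyadic values $c\in\{1,2,4,8,16\}$ will occur. Since $\omega_5=1$, each $Z_i$ has a unique variable $x_{j}$ carrying the $2^4$-bit, so $Z_i=w_i\,x_{j}^{16}$ with $w_i:=Z_i/x_{j}^{16}$ a degree-$31$ monomial of weight $(3,2,2,2)$ and $\max\{m:\omega_m(w_i)\neq 0\}=4$.

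This splits the list into two kinds. Whenever $w_i$ coincides (up to the conventions of Lemmas \ref{bd47.1}--\ref{bd47.3}) with a monomial already shown there to be strictly inadmissible, the strict inadmissibility of $Z_i$ is immediate from Theorem \ref{dlKS} applied with $u=w_i$, $y=x_{j}$, $t=4$, and nothing more is needed. The remaining $Z_i$ --- those whose degree-$31$ core $w_i$ is not among the monomials already treated, so that Theorem \ref{dlKS} yields nothing --- must be handled by hand, and this is the substance of the lemma.

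For such a $Z_i=x_1^{a_1}\cdots x_5^{a_5}$ I would proceed exactly as in the proof of Lemma \ref{bd47.2}: choose a variable whose exponent can be decreased by transferring a suitable dyadic bit so as to produce monomials strictly below $Z_i$ in the order of Sect.\ref{s2}, and then expand $Z_i$ by the Cartan formula in terms of $Sq^1,Sq^2,Sq^4,Sq^8,Sq^{16}$ applied to explicitly chosen polynomials $y_1,y_2,y_4,y_8,y_{16}$, collecting the correction terms. One checks, term by term, that every correction term either has weight vector strictly less than $\overline{\omega}_{(1)}$ --- hence lies in $\mathscr P_5^{-}(\overline{\omega}_{(1)})$ and may be discarded --- or is a monomial of weight $\overline{\omega}_{(1)}$ whose exponent vector is strictly smaller than that of $Z_i$. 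I would write this out in full for two representatives, say $Z_1=x_1^{3}x_2^{3}x_3^{12}x_4^{5}x_5^{24}$ and $Z_9=x_1^{3}x_2^{7}x_3^{8}x_4x_5^{28}$, displaying $y_1,\dots,y_{16}$ explicitly, and then note that the other monomials are handled by the same manipulation, differing only by a permutation of the variables and by which bit is transferred.

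The main obstacle is organizational rather than conceptual: a degree-$47$ five-variable Cartan expansion produces a long list of correction monomials, and verifying for each of them both the weight inequality and the monomial-order inequality $\mathcal Y<Z_i$ is mechanical but easy to get wrong; moreover, for each $Z_i$ one must pick the bit to transfer so that the leading terms really are smaller, an incorrect choice yielding an identity that fails to witness strict inadmissibility. I expect essentially all of the work to reside in these two points, with the weight computation and the invocations of Theorem \ref{dlKS} being routine.
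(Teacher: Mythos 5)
Your plan matches the paper's proof in substance: the paper likewise establishes the lemma by an explicit Cartan-formula expansion (using $Sq^1,Sq^2,Sq^4,Sq^8$) for two representatives ($Z_1$ and $Z_2$), checking modulo $\mathscr P_5^-(\overline{\omega}_{(1)})$ that every surviving monomial is strictly smaller, and declares the remaining thirty-eight cases analogous. Your preliminary step of splitting off the unique $2^4$-bit and invoking Theorem \ref{dlKS} is sound but plays no role in the paper (these monomials are listed precisely because they are not reducible to the earlier lemmas that way), so the real content is the same hand computation you outline.
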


\begin{proof}
We have $\omega(Z_u) = \overline{\omega}_{(1)},\,\forall u,\, 1\leq u\leq 40.$ Consider the monomials $Z_1 = x_1^{3}x_2^{3}x_3^{12}x_4^{5}x_5^{24}$ and $Z_2 = x_1^{7}x_2x_3^{11}x_4^{20}x_5^{8}$. We prove that these monomials are strictly inadmissible. The others can be proved by a similar technique.
Computing the monomials $Z_1,\, Z_2$ is long and technical. Indeed, by using Cartan's formula, we get
$$ Z_1 = \sum X + Sq^1(\sum \sigma_1) +  Sq^2(\sum \sigma_2) + Sq^4(\sum \sigma_4) + Sq^8(\sum \sigma_8)\ {\rm modulo}(\mathscr P_5^-(\overline{\omega}_{(1)})),\ \mbox{where}$$
$$ 
$$
The above relations show that $d_1$ is strictly inadmissible. The lemma follows.
\end{proof}

\begin{proof}[{\it Proof of  Proposition \ref{md47-1}}]
Let $a$ be an admissible monomial in $\mathscr P_5^+$ such that $\omega(a) = \overline{\omega}_{(1)}.$ Then $a = x_tx_kx_m u^2$ with $1\leq t < k <m \leq 5$ and $u\in (\mathscr P_5^+)_{22}.$ Since $a$ is admissible, by Theorem \ref{dlKS}, $u$ is also admissible. Further, $u\in \mathscr B_5^+(\omega_{(1)}).$

Let $X\in\mathscr{B}^+_5(\omega_{(1)})$ such that $x_tx_kx_mX^2\in (\mathscr P_5^+)_{47}.$ By a direct computation using Proposition \ref{md22-1}, we see that if $x_tx_kx_mX^2\neq \mathcal Y_{47,\, j},\ 1\leq j\leq 370,$ then there is a monomial $b$ which is given in one of Lemmas \ref{bd47.1} - \ref{bd47.5} such that $x_tx_kx_mX^2 = bY^{2^r}$ with suitable monomia $Y\in \mathscr P_5,$ and $r = {\rm max}\{s\in\mathbb{Z}:\, \omega_s(b) > 0\}.$ By Theorem \ref{dlKS}, $x_tx_kx_mX^2$ is  inadmissible. On the other hand, we have $a = x_tx_kx_mu^2$ and $a$  is admissible, hence $a = \mathcal Y_j$ for some $j = 1, 2, \ldots, 370.$ This completes the proof of the proposition.
\end{proof}

\begin{nx}\label{nx3}
For $1\leq j\leq 370,$ we have $[\mathcal Y_{j}:= \mathcal Y_{47,\, j}]_{\overline{\omega}_{(1)}}\neq [0].$ Indeed, suppose that there is a linear relation 
$$ \mathcal S = \sum\limits_{1\leq j\leq 170}\gamma_j\mathcal Y_{j}\equiv_{\overline{\omega}_{(1)}} 0,$$
where $\gamma_j\in\mathbb Z/2,$ for all $j.$ Based on Theorem \ref{dlsig} and Proposition \ref{mdPS}, for $(k;\mathscr K) \in \mathcal N_5$, we explicitly compute $\pi_{(k;\mathscr K)}(\mathcal S)$ in terms of a given minimal set of $\mathcal A_2$-generators in $\mathscr P_4\ ({\rm modulo}(\mathcal A_2^+\mathscr P_{4})).$ By computing from the relations $\pi_{(k;\mathscr K)}(\mathcal S) \equiv_{\overline{\omega}_{(1)}} 0,\ \ell(\mathscr K) = 1,$ one gets $\gamma_j = 0,\, \forall j,\, 1\leq j\leq 370.$ Note that these computations are similar to the proof of Propositions \ref{md21.1} and \ref{md22-1}. Combining this and Proposition \ref{md47-1}, we have a direct corollary.
\end{nx}

\begin{corls}\label{hq47.1}
The set $[\overline{\Phi}^+(\mathscr B_4(\overline{\omega}_{(1)}))\bigcup \big(\bigcup_{1\leq k\leq 5} \overline{\mathscr B}^+(k, \overline{\omega}_{(1)})\big) \bigcup \mathcal F]_{\overline{\omega}_{(1)}}$ is a basis of the $\mathbb Z/2$-vector space $Q\mathscr P_5^+(\overline{\omega}_{(1)}).$ This implies $\dim(Q\mathscr P_5^+(\overline{\omega}_{(1)}))  = 370.$
\end{corls}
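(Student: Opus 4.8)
The proof of Corollary \ref{hq47.1} combines Proposition \ref{md47-1} with Remark \ref{nx3}. By Proposition \ref{md47-1}, the set
$$\mathcal G := \overline{\Phi}^+(\mathscr B_4(\overline{\omega}_{(1)}))\bigcup \big(\bigcup_{1\leq k\leq 5} \overline{\mathscr B}^+(k, \overline{\omega}_{(1)})\big) \bigcup \mathcal F = \{\mathcal Y_{47,\, j}\ :\ 1\leq j\leq 370\}$$
spans $Q\mathscr P_5^+(\overline{\omega}_{(1)}).$ Thus the only remaining point is to verify that the classes $[\mathcal Y_{47,\,j}]_{\overline{\omega}_{(1)}},\ 1\leq j\leq 370,$ are linearly independent in $Q\mathscr P_5^+(\overline{\omega}_{(1)}),$ which is precisely the content of Remark \ref{nx3}. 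So the plan is: first invoke Proposition \ref{md47-1} for the spanning statement; then carry out the linear-independence argument of Remark \ref{nx3} in detail; finally conclude that $\mathcal G$ is a basis and hence $\dim(Q\mathscr P_5^+(\overline{\omega}_{(1)})) = |\mathcal G| = 370.$

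For the linear-independence step, I would follow the standard scheme already used in the proofs of Propositions \ref{md21.1} and \ref{md22-1}. Suppose
$$\mathcal S = \sum_{1\leq j\leq 370}\gamma_j \mathcal Y_{47,\, j} \equiv_{\overline{\omega}_{(1)}} 0, \qquad \gamma_j\in\mathbb Z/2.$$
Apply the homomorphisms $\pi_{(k;\mathscr K)}\colon \mathscr P_5\to \mathscr P_4$ for all $(k;\mathscr K)\in\mathcal N_5$ with $\ell(\mathscr K) = 1$ (that is, the maps $\pi_{(k;\,p)},\ 1\leq k < p\leq 5$). By Proposition \ref{mdPS}, $\pi_{(k;\mathscr K)}$ sends $\mathscr P_5(\overline{\omega}_{(1)})$ into $\mathscr P_4(\overline{\omega}_{(1)})$ and passes to $Q\mathscr P_4(\overline{\omega}_{(1)})$; by Theorem \ref{dlsig} one may discard all terms of weight vector strictly smaller than $\overline{\omega}_{(1)}$ when rewriting in terms of the admissible basis of $Q\mathscr P_4(\overline{\omega}_{(1)}).$ Rewriting each $\pi_{(k;\,p)}(\mathcal S)$ in the known admissible $\mathcal A_2$-basis of $\mathscr P_4$ in degree $47$ and setting it equal to zero produces a system of linear equations in the $\gamma_j$. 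Solving this system over all $\binom{5}{2} = 10$ choices of $(k;\,p)$ forces $\gamma_j = 0$ for every $j$, exactly as in Remark \ref{nx3}. Once independence is established, $[\mathcal G]_{\overline{\omega}_{(1)}}$ is both spanning and independent, so it is a basis and the dimension formula follows immediately.

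The main obstacle here is purely computational rather than conceptual: it is the bookkeeping required to express each $\pi_{(k;\,p)}(\mathcal Y_{47,\,j})$ in terms of admissible monomials of degree $47$ in $\mathscr P_4$ (modulo $\mathcal A_2^+\mathscr P_4 + \mathscr P_4^-(\overline{\omega}_{(1)})$), and then to untangle the resulting large linear system — $370$ unknowns spread across $10$ projection relations — to conclude all coefficients vanish. This is analogous in size and difficulty to the argument already performed for Proposition \ref{md21.1} (where $266$ coefficients were handled), and it relies on having the explicit admissible basis of $\mathscr P_4$ in degree $47$ from \cite{N.S3} available. No new ideas beyond Theorem \ref{dlsig}, Proposition \ref{mdPS}, and the reduction technique of Sum \cite{N.S3} are needed; the corollary is then immediate from Proposition \ref{md47-1} together with this independence check.
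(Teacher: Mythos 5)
Your proposal is correct and follows essentially the same route as the paper: spanning comes from Proposition \ref{md47-1}, and linear independence is exactly the argument of Remark \ref{nx3}, namely applying the projections $\pi_{(k;\mathscr K)}$ with $\ell(\mathscr K)=1$ together with Theorem \ref{dlsig} and Proposition \ref{mdPS} and solving the resulting system to force all coefficients to vanish. Nothing essential is missing beyond the (admittedly heavy) bookkeeping you already acknowledge.
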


\begin{nx}\label{nx4}
Consider the weight vector $\omega^{**} = (3,2,4)$ with $\deg \omega^{**} = 23.$ By using a result in \cite{N.T1}, we see that the following monomials are strictly inadmissible:

\begin{center}
\begin{tabular}{llrr}
$e_{1}=x_1^{7}x_2^{2}x_3^{4}x_4^{5}x_5^{5}$, & $e_{2}=x_1^{7}x_2^{2}x_3^{5}x_4^{4}x_5^{5}$, & \multicolumn{1}{l}{$e_{3}=x_1^{7}x_2^{2}x_3^{5}x_4^{5}x_5^{4}$,} & \multicolumn{1}{l}{$e_{4}=x_1^{3}x_2^{7}x_3^{4}x_4^{4}x_5^{5}$,} \\
$e_{5}=x_1^{3}x_2^{7}x_3^{4}x_4^{5}x_5^{4}$, & $e_{6}=x_1^{3}x_2^{7}x_3^{5}x_4^{4}x_5^{4}$, & \multicolumn{1}{l}{$e_{7}=x_1^{7}x_2^{3}x_3^{4}x_4^{4}x_5^{5}$,} & \multicolumn{1}{l}{$e_{8}=x_1^{7}x_2^{3}x_3^{4}x_4^{5}x_5^{4}$,} \\
$e_{9}=x_1^{7}x_2^{3}x_3^{5}x_4^{4}x_5^{4}$, & $e_{10}=x_1^{3}x_2^{4}x_3^{4}x_4^{5}x_5^{7}$, & \multicolumn{1}{l}{$e_{11}=x_1^{3}x_2^{4}x_3^{4}x_4^{7}x_5^{5}$,} & \multicolumn{1}{l}{$e_{12}=x_1^{3}x_2^{4}x_3^{5}x_4^{4}x_5^{7}$,} \\
$e_{13}=x_1^{3}x_2^{4}x_3^{7}x_4^{4}x_5^{5}$, & $e_{14}=x_1^{3}x_2^{4}x_3^{5}x_4^{7}x_5^{4}$, & \multicolumn{1}{l}{$e_{15}=x_1^{3}x_2^{4}x_3^{7}x_4^{5}x_5^{4}$,} & \multicolumn{1}{l}{$e_{16}=x_1^{3}x_2^{5}x_3^{4}x_4^{4}x_5^{7}$,} \\
$e_{17}=x_1^{3}x_2^{5}x_3^{4}x_4^{7}x_5^{4}$, & $e_{18}=x_1^{3}x_2^{5}x_3^{7}x_4^{4}x_5^{4}$, & \multicolumn{1}{l}{$e_{19}=x_1^{3}x_2^{6}x_3^{4}x_4^{5}x_5^{5}$,} & \multicolumn{1}{l}{$e_{20}=x_1^{3}x_2^{6}x_3^{5}x_4^{4}x_5^{5}$,} \\
$e_{21}=x_1^{3}x_2^{6}x_3^{5}x_4^{5}x_5^{4}$, & $e_{22}=x_1^{3}x_2^{4}x_3^{6}x_4^{5}x_5^{5}$, & \multicolumn{1}{l}{$e_{23}=x_1^{3}x_2^{4}x_3^{5}x_4^{6}x_5^{5}$,} & \multicolumn{1}{l}{$e_{24}=x_1^{3}x_2^{4}x_3^{5}x_4^{5}x_5^{6}$,} \\
$e_{25}=x_1^{3}x_2^{5}x_3^{4}x_4^{6}x_5^{5}$, & $e_{26}=x_1^{3}x_2^{5}x_3^{6}x_4^{4}x_5^{5}$, & \multicolumn{1}{l}{$e_{27}=x_1^{3}x_2^{5}x_3^{4}x_4^{5}x_5^{6}$,} & \multicolumn{1}{l}{$e_{28}=x_1^{3}x_2^{5}x_3^{6}x_4^{5}x_5^{4}$,} \\
$e_{29}=x_1^{3}x_2^{5}x_3^{5}x_4^{4}x_5^{6}$, & $e_{30}=x_1^{3}x_2^{5}x_3^{5}x_4^{6}x_5^{4}.$ &       &  \\
\end{tabular}%
\end{center}

Note that $\omega(e_k) = \omega^{**},\ k = 1, 2, \ldots, 30.$ Let $X\in \mathscr B_5^+(47)$ such that either $\omega(X) = \overline{\omega}_{(2)}$ or $\omega(X) = \overline{\omega}_{(3)}.$  Then $X = X_{(\{\ell, m\},\,5)}y^2$ with $1\leq \ell < m\leq 5,$ and $y\in (\mathscr P_5^+)_{22}.$ By a direct computation using Theorem \ref{dlKS}, and Proposition \ref{md22-2}(I), (II), we see that either $y\in\mathscr B_5^+(\omega_{(2)})$ or $y\in\mathscr B_5^+(\omega_{(3)})$ and there is a monomial $Z = e_i$ for some $i,\ 1\leq i\leq 30$ such that $X = X_{(\{\ell, m\},\,5)}y^2 = Zu^{2^t},\ 1\leq \ell < m\leq 5,$ with suitable monomia $u\in \mathscr P_5,$ and $t = {\rm max}\{a\in\mathbb{Z}:\, \omega_a(Z) > 0\}.$ By Theorem \ref{dlKS}, we get either $[X] = [0]_{\overline{\omega}_{(2)}}$ or $[X] = [0]_{\overline{\omega}_{(3)}}.$ As a consequence, we get the following.
\end{nx}

\begin{propo}\label{md47.2}
The $\mathbb Z/2$-vector spaces $Q\mathscr P_5^+(\overline{\omega}_{(2)})$ and $Q\mathscr{P}^+_5(\overline{\omega}_{(3)})$ are trivial.
\end{propo}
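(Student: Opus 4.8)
The plan is to apply the same machinery used to handle the weight vector $\overline{\omega}_{(1)}$, but now exploiting the fact that for $\overline{\omega}_{(2)}$ and $\overline{\omega}_{(3)}$ the generators of $(\mathscr P_5^+)_{22}$ in the relevant weight classes are few and highly constrained, so that multiplying by $x_{(\{\ell,m\},5)}\cdot(\,\cdot\,)^2$ always produces an inadmissible monomial. First I would invoke Remark \ref{nx1} together with Remark \ref{nx2}: any admissible $X\in\mathscr B_5^+(47)$ whose class lies in $\mathrm{Ker}(\widetilde{Sq^0_*})_{(5,47)}$ and has weight vector $\overline{\omega}_{(2)}$ or $\overline{\omega}_{(3)}$ must factor as $X = x_{(\{\ell,m\},5)}\,y^2$ with $1\le \ell<m\le 5$ and $y\in(\mathscr P_5^+)_{22}$, and by Theorem \ref{dlKS} $y$ itself is admissible; since $\omega(X)$ starts with $3$, the weight vector of $y$ must be $\omega_{(2)}=(2,4,1,1)$ in the first case and $\omega_{(3)}=(2,4,3)$ in the second. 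By Proposition \ref{md22-2}(I),(II) the sets $\mathscr B_5^+(\omega_{(2)})$ and $\mathscr B_5^+(\omega_{(3)})$ are explicitly known (the $25$ monomials of $\mathscr B^+(5,\omega_{(2)})\cup\mathcal E$, resp.\ the five monomials listed in (II)).

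Next I would set up the core reduction exactly as in the proof of Proposition \ref{md47-1} and in Remark \ref{nx4}: for each admissible $y$ in one of these two weight classes and each pair $(\ell,m)$ with $x_{(\{\ell,m\},5)}y^2\in(\mathscr P_5^+)_{47}$, I would exhibit a strictly inadmissible monomial $Z$ — drawn from the list $e_1,\dots,e_{30}$ of weight vector $\omega^{**}=(3,2,4)$ already recorded in Remark \ref{nx4}, and if necessary supplemented by a few more strictly inadmissible monomials of the appropriate weight — together with a factorization $x_{(\{\ell,m\},5)}\,y^2 = Z\,u^{2^t}$ where $t=\max\{a\in\mathbb Z:\omega_a(Z)>0\}$ and $u\in\mathscr P_5$ is a suitable monomial. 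Theorem \ref{dlKS} then forces $x_{(\{\ell,m\},5)}y^2$ to be inadmissible, hence $[x_{(\{\ell,m\},5)}y^2]=[0]$ in $Q\mathscr P_5^+(\overline{\omega}_{(k)})$ for $k\in\{2,3\}$. Since \emph{every} admissible monomial of weight $\overline{\omega}_{(2)}$ or $\overline{\omega}_{(3)}$ in $\mathscr P_5^+$ is of this form, this shows the spanning set of $Q\mathscr P_5^+(\overline{\omega}_{(k)})$ is empty, so the space is $\{0\}$. Because the number of $(y,\ell,m)$ combinations is small (at most $25\cdot\binom{5}{2}$ for $\overline{\omega}_{(2)}$ and $5\cdot\binom{5}{2}$ for $\overline{\omega}_{(3)}$, and most of these already fall outside $\mathscr P_5^+$ or reduce via the permutation-symmetric entries of the $e_i$ list), this is a finite, entirely mechanical verification.

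The proof is short because there is no linear-independence step to carry out: unlike the $\overline{\omega}_{(1)}$ case, here the conclusion is that the relevant $Q\mathscr P_5^+$ pieces vanish, so once spanning-by-the-empty-set is established there is nothing left to do. The main obstacle I anticipate is purely bookkeeping: one must check that for \emph{every} admissible $y\in\mathscr B_5^+(\omega_{(2)})\cup\mathscr B_5^+(\omega_{(3)})$ and every legal choice of the two extra variables, the product genuinely contains one of the strictly inadmissible factors $e_i$ (up to the evident permutation of indices), i.e.\ that the list $e_1,\dots,e_{30}$ from Remark \ref{nx4} — together with Lemmas \ref{bd22.1}--\ref{bd22.4}, Lemmas \ref{bd47.1}--\ref{bd47.5} and the earlier results of Sum \cite{N.S3} and T\'in \cite{N.T1} for the inadmissibility of lower-variable monomials — is complete for this purpose. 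If a few admissible $y$'s of these weights were to escape the list, one would need to produce the missing strictly inadmissible monomials by a direct Cartan-formula computation of the same flavour as in Lemma \ref{bd47.4}; I expect none are missing, since the minimal-spike constraint of Theorem \ref{dlsig} together with Theorem \ref{dlKS} already eliminates almost all candidates before any computation. Given that, the statement follows exactly as indicated in Remark \ref{nx4}, and I would simply record the finite check and conclude that $Q\mathscr P_5^+(\overline{\omega}_{(2)}) = Q\mathscr P_5^+(\overline{\omega}_{(3)}) = 0$.
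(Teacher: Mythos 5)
Your proposal is correct and is essentially the paper's own argument: the paper proves Proposition \ref{md47.2} precisely via Remark \ref{nx4}, writing any admissible candidate as $X_{(\{\ell,m\},\,5)}y^2$ with $y$ admissible of weight $\omega_{(2)}$ or $\omega_{(3)}$ (Theorem \ref{dlKS} and Proposition \ref{md22-2}(I),(II)) and then exhibiting a strictly inadmissible factor $e_i$ of weight $(3,2,4)$ so that Theorem \ref{dlKS} forces inadmissibility, whence $\mathscr B_5^+(\overline{\omega}_{(2)})=\mathscr B_5^+(\overline{\omega}_{(3)})=\emptyset$ and the two spaces vanish. Your only deviation is the hedge that the list $e_1,\dots,e_{30}$ might need supplementing; the paper asserts (and relies on) the completeness of that list, so your finite check is exactly the verification the paper performs.
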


\begin{lema}\label{bd47-2}
The set $\{[\mathcal Y_{47,\, j}]_{\overline{\omega}_{(4)}}:\, 371\leq j\leq 479\}$ is a minimal system of generators for $Q\mathscr P_5^+(\overline{\omega}_{(4)})),$ where the monomials $\mathcal Y_{47,\, j},\ j = 371, \ldots, 479,$ are determined in Sect.\ref{s5.7}.
\end{lema}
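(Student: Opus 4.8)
The plan is to reproduce, for the weight vector $\overline{\omega}_{(4)} = (3,4,3,1,1)$, the two-step scheme already used for $\overline{\omega}_{(1)}$ in Proposition \ref{md47-1} and Remark \ref{nx3}: first show that $\{[\mathcal Y_{47,j}]_{\overline{\omega}_{(4)}} : 371\le j\le 479\}$ spans $Q\mathscr P_5^+(\overline{\omega}_{(4)})$, then show it is linearly independent in $Q\mathscr P_5(\overline{\omega}_{(4)})$. A linearly independent spanning set is a basis, hence in particular a minimal system of generators, so the two steps together prove the lemma.

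\emph{Spanning.} Let $a$ be an admissible monomial in $\mathscr P_5^+$ with $\omega(a) = \overline{\omega}_{(4)}$. Since $\omega_1(a) = 3$ and $a\in\mathscr P_5^+$, we may write $a = x_tx_kx_m\,u^2$ with $1\le t<k<m\le 5$ and $u$ a monomial of degree $22$ in $\mathscr P_5^+$ with $\omega(u) = \omega_{(4)} = (4,3,1,1)$. By Theorem \ref{dlKS}, $u$ is admissible, hence $u\in\mathscr B_5^+(\omega_{(4)})$, and by Proposition \ref{md22-2}(III) it lies in $\overline{\Phi}^+(\mathscr B_4(\omega_{(4)}))\cup\big(\bigcup_{1\le k\le 5}\overline{\mathscr B}^+(k,\omega_{(4)})\big)$. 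The remaining ingredient is a finite list of \emph{strictly inadmissible} monomials of weight $\overline{\omega}_{(4)}$, built exactly as in Lemmas \ref{bd47.1}--\ref{bd47.5}: some entries are imported through Theorem \ref{dlKS} from monomials already recorded as strictly inadmissible in Lemmas \ref{bd22.1}--\ref{bd22.4} and \ref{bd47.1}--\ref{bd47.5}, while the rest require fresh Cartan-formula expansions $X = \sum(\text{lower monomials}) + \sum_{1\le i\le 2^s-1}Sq^i(y_i)$ modulo $\mathscr P_5^-(\overline{\omega}_{(4)})$, with $s = \max\{i : \omega_i(X) > 0\}$. Granting this list, whenever $x_tx_kx_m\,u^2$ is not one of the $\mathcal Y_{47,j}$, $371\le j\le 479$, one finds a factorization $x_tx_kx_m\,u^2 = b\,Y^{2^r}$ with $b$ strictly inadmissible from the list, $Y$ a monomial in $\mathscr P_5$, and $r = \max\{s\in\mathbb Z : \omega_s(b) > 0\}$; Theorem \ref{dlKS} then forces $x_tx_kx_m\,u^2$, and hence $a$, to be inadmissible, a contradiction. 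So $a = \mathcal Y_{47,j}$ for some $j$, and the set spans.

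\emph{Linear independence.} Suppose $\mathcal S = \sum_{371\le j\le 479}\gamma_j\mathcal Y_{47,j}\equiv_{\overline{\omega}_{(4)}} 0$ with $\gamma_j\in\mathbb Z/2$. Since $\deg\overline{\omega}_{(4)} = 47$ and $\mu(47) = 3\le 4$, a minimal admissible basis of $Q\mathscr P_4(\overline{\omega}_{(4)})$ is available from Sum \cite{N.S3}. Using Theorem \ref{dlsig} and Proposition \ref{mdPS}, for each $(k;\mathscr K)\in\mathcal N_5$ with $\ell(\mathscr K) = 1$ I compute $\pi_{(k;\mathscr K)}(\mathcal S)$ modulo $\mathcal A_2^+\mathscr P_4 + \mathscr P_4^-(\overline{\omega}_{(4)})$ in terms of that basis; matching coefficients in the relations $\pi_{(k;p)}(\mathcal S)\equiv_{\overline{\omega}_{(4)}} 0$, $1\le k<p\le 5$, gives a linear system over $\mathbb Z/2$. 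Any parameters not yet pinned down are eliminated by adjoining the relations coming from $\pi_{(k;\mathscr K)}$ with $\ell(\mathscr K)\ge 2$ (for instance $\pi_{(1;(2;j))}(\mathcal S)\equiv 0$ for $j = 3,4,5$ and $\pi_{(1;(3;4))}(\mathcal S)\equiv 0$), just as in the proof of Proposition \ref{md21.1}. This forces every $\gamma_j = 0$.

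The main obstacle is bookkeeping, not strategy: compiling the \emph{complete} list of strictly inadmissible monomials of weight $\overline{\omega}_{(4)}$ — each with a verified $Sq^i$-decomposition — and then running the $109$-coefficient elimination over $\mathbb Z/2$ without error. A built-in sanity check is that $|\mathscr B_5^+(\overline{\omega}_{(4)})|$ must equal $479 - 371 + 1 = 109$; together with $\dim Q\mathscr P_5^+(\overline{\omega}_{(1)}) = 370$, the vanishing of $Q\mathscr P_5^+(\overline{\omega}_{(2)})$ and $Q\mathscr P_5^+(\overline{\omega}_{(3)})$, the analogous count for $\overline{\omega}_{(5)}$, the $560$ monomials of $\mathscr B_5^0(47)$, and $\dim(Q\mathscr P_5)_{21} = 840$, this should reassemble into the value $\dim(Q\mathscr P_5)_{13.2^t-5} = 1894$ for $t\ge 2$ stated in Theorem \ref{dl1}.
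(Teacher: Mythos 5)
Your overall route coincides with the paper's: the generation half is proved by writing an admissible $a\in(\mathscr P_5^+)_{47}$ with $\omega(a)=\overline{\omega}_{(4)}$ as $a=x_tx_kx_m\,u^2$ with $u$ admissible of degree $22$ and weight $\omega_{(4)}$ (via Theorem \ref{dlKS}), and by eliminating every product $x_tx_kx_m\,u^2$ not on the list through a catalogue of strictly inadmissible monomials; the independence half is the standard $\pi_{(k;\mathscr K)}$-elimination using Theorem \ref{dlsig} and Proposition \ref{mdPS}, exactly as in Proposition \ref{md47.3}. (The paper's Lemma \ref{bd47-2} itself only records the generation statement, independence being deferred to that proposition, so proving both is harmless.)

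There is, however, one concrete flaw in your spanning step. From $a\in\mathscr P_5^+$ you conclude that $u\in\mathscr P_5^+$ and then let Proposition \ref{md22-2}(III) account for all possible $u$. That inference is false: $a=x_tx_kx_m\,u^2\in\mathscr P_5^+$ only forces the two variables outside $\{x_t,x_k,x_m\}$ to occur in $u$; each of $x_t,x_k,x_m$ may be absent from $u$ (its exponent in $a$ being the $1$ contributed by the linear factor), and since $\omega_1(u)=4$ one of the five variables can indeed have exponent $0$ in $u$. Hence $u$ must be allowed to range over all of $\mathscr B_5(\omega_{(4)})$, including $\mathscr B_5^0(\omega_{(4)})=\overline{\Phi}^0(\mathscr B_4(\omega_{(4)}))$ from the degree-$22$ computation, and these extra cases must also be fed into the inadmissibility check; this is exactly how the paper phrases it ($Y$ a monomial in $\mathscr P_5$, $Y\in\mathscr B_5(\omega_{(4)})$). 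Relatedly, the strictly inadmissible monomials that actually do the work here have weights $(3,4,3,1)$ and $\overline{\omega}_{(4)}$ (the paper's Lemmas \ref{bd47.6}---\ref{bd47.9}), not the $(3,2,2,2)$-type lists of Lemmas \ref{bd47.1}---\ref{bd47.5} you propose to import; your clause about fresh Cartan expansions covers this in principle, but as written the proposal leaves the decisive computational content (the correct list with verified $Sq^i$-decompositions, and the $109$-variable elimination) unsupplied, as you acknowledge.
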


In order to prove the lemma, we need to use some results. As an immediate consequence of a result in \cite{N.S3}, we obtain the following.

\begin{lema}\label{bd47.6}
The following monomials are strictly inadmissible:
\begin{enumerate}
\item[I)]  $x_a^7x_b^2x_c^{15}x_d^7, \ x_a^7x_b^{15}x_c^2x_d^7,\ x_a^7x_b^6x_c^3x_d^{15},\  x_a^7x_b^3x_c^6x_d^{15},\ x_a^7x_b^3x_c^{14}x_d^7, \\
\medskip
  x_a^7x_b^{14}x_c^3x_d^7, \ x_a^7x_b^6x_c^{11}x_d^7,\ x_a^7x_b^{11}x_c^6x_d^7, \ 1\leq b < c\leq 5,\ 1\leq a,\, d\leq 5,\ a\neq d,\ a, d \neq b, c;$

\item[II)] $\rho_{(k,\, 5)}(v),\, 1\leq k\leq 5,$ where $v$ is one of the following monomials:
$$ x_1^7x_2^7x_3^7x_4^{10},\  x_1^7x_2^7x_3^{10}x_4^{7},\  x_1^7x_2^{10}x_3^7x_4^{7}.$$
\end{enumerate}
\end{lema}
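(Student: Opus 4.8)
The plan is to prove Lemma \ref{bd47.6} by exhibiting, for each listed monomial, an explicit expression modulo $\mathcal A_2^+\mathscr P_5$ (and modulo $\mathscr P_5^-(\omega)$ for the relevant weight vector $\omega$) as a sum of strictly smaller monomials plus Steenrod squares $\sum_{1\leq t\leq 2^s-1}Sq^t(y_t)$, where $s=\max\{i:\omega_i(u)>0\}$ in accordance with the definition of strictly inadmissible. For part (I), which concerns the four-variable monomials $x_a^7x_b^2x_c^{15}x_d^7$ and its relatives, I would first observe that these all have weight vector $\overline{\omega}_{(4)}=(3,4,3,1,1)$-compatible data, or more precisely a weight vector with $\omega_1=3$; the strategy is to quote Sum \cite{N.S3} directly, since the excerpt already says ``as an immediate consequence of a result in \cite{N.S3}, we obtain the following.'' Concretely, one identifies each monomial of part (I) with the image under $\phi_{(k;\mathscr K)}$ or $\rho_{(k,5)}$ of a monomial in $\mathscr P_4$ known from Sum's tables to be strictly inadmissible, and invokes Theorem \ref{dlKS}: if $u\in\mathscr P_4$ is strictly inadmissible with $\omega_t(u)\neq 0$, $\omega_i(u)=0$ for $i>t$, then $uy^{2^t}$ is strictly inadmissible. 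Thus the task reduces to recognizing each of the eight families in (I) as $u\cdot y^{2^t}$ for a strictly inadmissible four-variable $u$.

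For part (II), the monomials $\rho_{(k,5)}(v)$ with $v\in\{x_1^7x_2^7x_3^7x_4^{10},\,x_1^7x_2^7x_3^{10}x_4^7,\,x_1^7x_2^{10}x_3^7x_4^7\}$ have $\omega(v)=(3,3,3,1)$ in $\mathscr P_4$, hence $s=4$ and one must produce a relation $v=\sum(\text{smaller}) + \sum_{1\leq t\leq 15}Sq^t(y_t)$ modulo $\mathscr P_4^-(3,3,3,1)$. Here too the cleanest route is to cite the known admissible-monomial bases for $\mathscr P_4$ in the relevant degree (degree $31$), available from Sum \cite{N.S3}: since each $v$ is not in Sum's list $\mathscr B_4(31)\cap\mathscr P_4(3,3,3,1)$, it is by definition inadmissible, and a small additional argument (or direct appeal to the ``strictly inadmissible'' entries in \cite{N.S3}) upgrades this to strict inadmissibility. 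Then $\rho_{(k,5)}$ being an $\mathcal A_2$-homomorphism of degree-preserving type transports the relation to $\mathscr P_5$ verbatim, and since $\rho_{(k,5)}(v)$ preserves the weight vector, the conclusion $\rho_{(k,5)}(v)$ strictly inadmissible follows. If a self-contained proof is wanted rather than a citation, I would instead compute $Sq^1,Sq^2,Sq^4,Sq^8$ of suitable companion monomials using the Cartan formula, exactly as done in Lemmas \ref{bd47.4} and \ref{bd47.5}, and read off the needed relation; the pattern $Sq^8(x_1^7x_2^7x_3^7x_4^2)$-type terms will dominate because the top weight is at the fourth position.

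The key steps, in order, are: (1) compute the weight vectors of all listed monomials to fix the value of $s$ in the definition of strict inadmissibility; (2) for part (I), match each monomial to the product $u\cdot y^{2^t}$ with $u$ a four-variable monomial already certified strictly inadmissible in \cite{N.S3}, and apply Theorem \ref{dlKS}; (3) for part (II), identify $v$ as an inadmissible monomial in $\mathscr B_4(31)$'s complement, upgrade to strict inadmissibility via Sum's tables, and push through $\rho_{(k,5)}$; (4) assemble. The main obstacle I anticipate is purely bookkeeping: correctly extracting from Sum's rather large tables in \cite{N.S3} which four-variable monomials are flagged as strictly (not merely) inadmissible, since the difference matters for Theorem \ref{dlKS}, and making sure the factorization $u\cdot y^{2^t}$ has $\omega_i(u)=0$ for $i>t$ as required — a condition that can fail if one picks the ``wrong'' tail $y$. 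Beyond that, everything is a routine application of the two propagation lemmas (Theorems \ref{dlKS} and the strict-inadmissibility transport under $\rho_{(k,5)}$, $\phi_{(k;\mathscr K)}$) that have already been set up in the preceding sections, so no new technical machinery is needed.
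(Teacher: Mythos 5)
Your proposal follows essentially the same route as the paper: the paper gives no computation for this lemma at all, recording it as an immediate consequence of Sum's results in \cite{N.S3}, which is exactly the citation-and-transport argument you outline, including your (correct) caution that one must quote entries certified \emph{strictly} inadmissible rather than merely absent from the admissible basis. Two small corrections so the write-up is accurate. First, the weight vector of each $v$ in part (II) is $(3,4,3,1)=\omega^{***}$, not $(3,3,3,1)$ (your value is inconsistent with the degree $31$); the conclusion $s=4$ is unaffected. Second, for part (I) the transport mechanism is not Theorem \ref{dlKS} — these monomials carry no nontrivial factor $y^{2^t}$ — and it cannot be $\phi_{(k;\mathscr K)}$ with $\mathscr K\neq\emptyset$, since that map is not an $\mathcal A_2$-homomorphism and so does not carry a strict-inadmissibility relation. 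What does the work in both parts is the same observation you state for part (II): the $\mathcal A_2$-algebra embedding $\mathscr P_4\cong\mathscr P_{\mathcal I}\subset\mathscr P_5$ furnished by the $\rho_{(k,\,5)}$-type maps preserves weight vectors and the monomial order (a zero is inserted at the same position of both exponent vectors), hence transports the relation $v=\sum(\text{smaller})+\sum_{1\leq t\leq 2^{s}-1}Sq^{t}(y_t)$ modulo $\mathscr P^-(\omega)$ verbatim from four to five variables, for part (I) as well as part (II).
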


We now consider the weight vector $\omega^{***} = (3,4,3,1)$ with $\deg \omega^{***} = 31.$ The following lemma can be easily proved by a direct computation.

\begin{lema}\label{bd47.7}
All permutations of the following monomials are strictly inadmissible: 

\begin{center}
\begin{tabular}{llll}
$x_1x_2^2x_3^6x_4^7x_5^{15},$ & $x_1x_2^2x_3^14x_4^7x_5^7,$ & $x_1x_2^6x_3^7x_4^7x_5^{10},$ & $x_1^7x_2^2x_3^2x_4^5x_5^{15},$\\
$x_1^7x_2^2x_3^2x_4^7x_5^{13},$ & $x_1^3x_2^2x_3^4x_4^7x_5^{15},$ & $x_1^3x_2^2x_3^7x_4^7x_5^{12},$ & $x_1^7x_2^2x_3^4x_4^7x_5^{11},$\\
 $x_1^7x_2^2x_3^5x_4^7x_5^{10},$ & $x_1^7x_2^2x_3^6x_4^7x_5^{9},$ & $x_1^7x_2^2x_3^7x_4^7x_5^{8},$ & $x_1^3x_2^3x_3^4x_4^6x_5^{15},$\\
$x_1^3x_2^3x_3^4x_4^7x_5^{14},$ & $x_1^3x_2^3x_3^6x_4^7x_5^{12},$ & $x_1^3x_2^4x_3^6x_4^7x_5^{11},$ & $x_1^3x_2^4x_3^7x_4^7x_5^{10},$\\
$x_1^3x_2^6x_3^7x_4^7x_5^{8}.$ &&&
\end{tabular}
\end{center}
\end{lema}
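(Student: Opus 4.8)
The plan is to prove Lemma~\ref{bd47.7} by the same explicit Cartan-formula technique used throughout this section (see the proofs of Lemmas~\ref{bd22.2}, \ref{bd47.4} and \ref{bd47.5}). First I would observe that every monomial in the list has weight vector $\omega^{***} = (3,4,3,1)$, so that $s := \max\{i : \omega_i(\cdot) > 0\} = 4$; hence to establish strict inadmissibility it suffices, for each monomial $X$ in the list, to exhibit an identity
$$
X = \sum_{j} x_j + \sum_{1\leq t\leq 15} Sq^{t}(y_t) \quad {\rm modulo}\,(\mathscr P_5^-(\omega^{***})),
$$
with each $x_j < X$ and each $y_t\in\mathscr P_5$, where in practice only $Sq^1, Sq^2, Sq^4, Sq^8$ contribute (the degrees force the other $Sq^t$ to vanish on anything of the right weight). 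By Theorem~\ref{dlKS}, once a representative of each \emph{permutation orbit} is shown strictly inadmissible, all permutations are too --- but in fact, since $S_5$ permutes the variables and commutes with the $\mathcal A_2$-action, it is enough to write down the straightening identity for one chosen representative in each orbit and then apply the permutation to the identity itself.

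The key steps, in order: (1) list the orbit representatives --- the sixteen monomials displayed plus $x_1^3x_2^6x_3^7x_4^7x_5^8$ --- and for each record its exponent vector and verify $\omega = \omega^{***}$; (2) for each representative, compute $Sq^{2^a}$ applied to suitably chosen ``lower'' monomials $y_{2^a}$ and expand via the Cartan formula, collecting terms of weight exactly $\omega^{***}$ and discarding everything in $\mathscr P_5^-(\omega^{***})$; (3) solve, for each representative, the resulting linear system over $\mathbb Z/2$ to pin down the polynomials $y_1, y_2, y_4, y_8$ so that $X$ equals the stated sum of strictly smaller monomials plus the $Sq$-image; (4) conclude strict inadmissibility of the representative, and hence (via the $S_5$-action on the identity) of all its permutations. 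Several of these representatives --- e.g.\ $x_1^7x_2^2x_3^7x_4^7x_5^8$ and $x_1^3x_2^3x_3^4x_4^7x_5^{14}$ --- are of a shape already handled in Lemma~\ref{bd47.4} or in \cite{N.T1}, so for those one can simply cite the relevant computation; the phrasing ``can be easily proved by a direct computation'' in the statement signals that the author intends exactly this, carried out monomial-by-monomial.

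The main obstacle is purely bookkeeping: finding, for each of the sixteen-odd representatives, the correct combination $y_1,\dots,y_8$ is a finite but delicate linear-algebra search modulo $2$, and the expansions of $Sq^4$ and $Sq^8$ on degree-$23$-to-$27$ monomials produce long sums in which one must carefully discard the $\mathscr P_5^-(\omega^{***})$-part (this is where an arithmetic slip is easiest). A secondary, lighter obstacle is organizing the orbit representatives so that every permutation in the statement is genuinely covered --- one must check that the seventeen listed monomials, closed under $S_5$, exhaust the set of monomials the lemma claims, and in particular that monomials like $x_1^7x_2^2x_3^2x_4^5x_5^{15}$ and its permutations are not already subsumed by an earlier lemma (if they are, the proof shortens to a citation). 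Once steps (1)--(4) are done for each orbit, the lemma follows, and it feeds directly into the proof of Lemma~\ref{bd47-2} exactly as Lemmas~\ref{bd22.1}--\ref{bd22.4} fed into Proposition~\ref{md22-1}: any admissible monomial of weight $\overline{\omega}_{(4)}$ factors as $x_ix_jx_k\,z^2$ with $z$ admissible of weight $\omega^{***}$, and Theorem~\ref{dlKS} together with the strict inadmissibility list forces $z$ --- and hence the original monomial --- into the explicit generating set $\{\mathcal Y_{47,j} : 371\leq j\leq 479\}$.
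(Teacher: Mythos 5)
Your core method --- exhibiting, for each listed monomial, a Cartan-formula identity modulo $\mathscr P_5^-(\omega^{***})$ involving only $Sq^1,Sq^2,Sq^4,Sq^8$ and explicitly smaller terms, exactly as in Lemmas \ref{bd47.4} and \ref{bd47.5} --- is precisely what the paper intends: the paper gives no written proof of Lemma \ref{bd47.7} beyond the remark that it ``can be easily proved by a direct computation.'' The genuine gap is your reduction to one representative per $S_5$-orbit. Theorem \ref{dlKS} does not support it; that theorem concerns products $xu^{2^r}$ and $uy^{2^t}$, not permutations of the variables. Nor does ``applying the permutation to the identity'' work: if $X=\sum_j x_j+\sum_t Sq^t(y_t)$ modulo $\mathscr P_5^-(\omega^{***})$ with each $x_j<X$, then indeed $\sigma(X)=\sum_j\sigma(x_j)+\sum_t Sq^t(\sigma(y_t))$ modulo $\mathscr P_5^-(\omega^{***})$, but the order defining (strict) inadmissibility --- weight vector first, then the left lexicographic order on exponent vectors --- is not $S_5$-invariant, so $\sigma(x_j)<\sigma(X)$ may fail and the permuted identity certifies nothing. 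Strict inadmissibility is genuinely not constant on $S_5$-orbits: in the very same degree $31$ and weight $\omega^{***}$, Lemma \ref{bd47.7-1} records orbits in which certain permutations (for instance $x_1^3x_2^{3}x_3^{13}x_4^{6}x_5^{6}$ and $x_1^3x_2^{13}x_3^{3}x_4^{6}x_5^{6}$) are admissible while all the remaining permutations are strictly inadmissible. So to prove ``all permutations'' you must either carry out the straightening computation for each permutation separately (which is what the paper's ``direct computation'' means), or check, for each chosen identity and each $\sigma$, that every explicit lower term either has strictly smaller weight vector or remains lexicographically below the permuted monomial; your proposal does neither.

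Two smaller corrections. The claim that degree reasons force $Sq^t$ to vanish for $t\notin\{1,2,4,8\}$ is neither needed nor accurate; it is simply that identities using only these operations suffice, which is all the definition asks. Also, none of these degree-$31$ monomials is covered by Lemma \ref{bd47.4}, whose monomials have degree $47$ and weight $\overline{\omega}_{(1)}$, so the proposed citation there is not available. Finally, in your last paragraph the lemma is not applied to the square root of a weight-$\overline{\omega}_{(4)}$ monomial (that square root has degree $22$ and weight $\omega_{(4)}=(4,3,1,1)$): in the proof of Lemma \ref{bd47-2} the degree-$31$ monomials of weight $\omega^{***}$ from Lemma \ref{bd47.7} occur as factors $u$ in decompositions $X_{(\{t,k\},\,5)}Z^2=u\,g^{2^4}$, to which Theorem \ref{dlKS} is then applied with $s=4$. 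This does not affect the proof of the lemma itself, but your account of how it is used should be fixed.
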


\begin{lema}\label{bd47.7-1}
If $(m, n, p, q, r)$ is a permutation of $(1,2,3,4,5),$ then the following monomials are strictly inadmissible:
$$ \begin{array}{ll} 
x_m^3x_n^{13}x_p^6x_q^3x_r^{6}\neq X &\in \{x_1^3x_2^{3}x_3^{13}x_4^{6}x_5^{6},\ x_1^3x_2^{13}x_3^{3}x_4^{6}x_5^{6}\},\\
x_m^7x_n^2x_p^5x_q^6x_r^{11}\neq Y &\in \{ x_1^7x_2^{11}x_3^{5}x_4^2x_5^{6},\ x_1^7x_2^{11}x_3^{5}x_4^6x_5^{2}\},\\
x_m^3x_n^5x_p^6x_q^6x_r^{11}\neq Z &\in \{x_1^3x_2^{5}x_3^{6}x_4^{6}x_5^{11},\ x_1^3x_2^{5}x_3^{11}x_4^{6}x_5^{6}\},\\
x_m^3x_n^7x_p^9x_q^6x_r^{6}\neq G &\in \{x_1^3x_2^{7}x_3^{9}x_4^{6}x_5^{6},\ x_1^7x_2^{3}x_3^{9}x_4^{6}x_5^{6},\ 
x_1^7x_2^{9}x_3^{3}x_4^{6}x_5^{6}\},\\
x_m^3x_n^2x_p^6x_q^7x_r^{13}\neq H &\in \{ x_1^3x_2^7x_3^{13}x_4^2x_5^{6},\ x_1^3x_2^7x_3^{13}x_4^6x_5^{2},\ x_1^7x_2^3x_3^{13}x_4^2x_5^{6},\\ 
&\quad x_1^7x_2^3x_3^{13}x_4^6x_5^{2}\},\\
 x_m^3x_n^5x_p^6x_q^7x_r^{10}\neq F &\in \{ x_1^3x_2^{5}x_3^{6}x_4^{7}x_5^{10},\ x_1^3x_2^{5}x_3^{7}x_4^{6}x_5^{10},\ x_1^3x_2^{5}x_3^{7}x_4^{10}x_5^{6},\\ 
&\quad x_1^3x_2^{7}x_3^{5}x_4^{6}x_5^{10},\  x_1^3x_2^{7}x_3^{5}x_4^{10}x_5^{6},\ x_1^7x_2^{3}x_3^{5}x_4^{6}x_5^{10},\\ 
&\quad x_1^7x_2^{3}x_3^{5}x_4^{10}x_5^{6}\},\\
x_m^3x_n^5x_p^{14}x_q^3x_r^{6}\neq T &\in \{ x_1^3x_2^{3}x_3^{5}x_4^6x_5^{14},\ x_1^3x_2^{3}x_3^{5}x_4^{14}x_5^{6},\ x_1^3x_2^{5}x_3^{3}x_4^6x_5^{14},\\
 &\quad x_1^3x_2^{5}x_3^{3}x_4^{14}x_5^{6},\ x_1^3x_2^{5}x_3^{6}x_4^3x_5^{14},\ x_1^3x_2^{5}x_3^{6}x_4^{14}x_5^{3}\}.
\end{array}$$
\end{lema}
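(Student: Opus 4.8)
The plan is to prove that, apart from the finitely many excluded monomials, every permutation occurring in the seven families is strictly inadmissible, organising the work around the single weight vector they all share. First I would record that each of the seven exponent patterns --- $(3,13,6,3,6)$, $(7,2,5,6,11)$, $(3,5,6,6,11)$, $(3,7,9,6,6)$, $(3,2,6,7,13)$, $(3,5,6,7,10)$ and $(3,5,14,3,6)$ --- has weight vector $\omega^{***} = (3,4,3,1)$, so that for every monomial $X$ under consideration $s := \max\{i : \omega_i(X) > 0\} = 4$. Thus, by the definition of strict inadmissibility, it suffices to produce, modulo $\mathscr P_5^{-}(\omega^{***})$, a representation $X = \sum_j z_j + \sum_{1\leq t\leq 15} Sq^{t}(h_t)$ with every $z_j < X$; in practice only $Sq^1, Sq^2, Sq^4, Sq^8$ intervene.

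The principal labour-saving device is Theorem~\ref{dlKS}. Since $\omega_4(X) = 1$, exactly one variable $x_j$ carries the bit $2^{3}$ in its exponent, so I can write $X = u\,x_j^{8} = u\,(x_j)^{2^{3}}$ with $u = X/x_j^{8}$; one checks immediately that $\omega(u) = (3,4,3)$, that $\omega_3(u) = 3 \neq 0$ and that $\omega_i(u) = 0$ for $i > 3$. Hence the second assertion of Theorem~\ref{dlKS} applies with $t = 3$: if the degree-$23$ core $u$ is strictly inadmissible, then so is $X$. This reduces the bulk of the lemma to strict inadmissibility of the degree-$23$, weight-$(3,4,3)$ cores, where only $Sq^1, Sq^2, Sq^4$ are needed. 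Many such cores admit a second application of Theorem~\ref{dlKS}: the three variables realising $\omega_3(u) = 3$ may be pulled out as a factor $(x_ax_bx_c)^{4} = (x_ax_bx_c)^{2^{2}}$, reducing $u$ to a degree-$11$, weight-$(3,4)$ monomial $u'$; whenever $u'$ is strictly inadmissible (settled by a short direct Cartan computation, or from the known description of admissibles in low degree) the chain $u' \Rightarrow u \Rightarrow X$ finishes those cases at once.

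For the residual non-excluded monomials --- those whose canonical core $u$ (equivalently $u'$) turns out to be \emph{admissible} --- Theorem~\ref{dlKS} gives no leverage, since it only transports strict inadmissibility upward. For these I would establish strict inadmissibility by an explicit Cartan-formula expansion in the style of Lemmas~\ref{bd47.4} and \ref{bd47.5}, directly at degree $31$: writing $X$ as a sum of strictly smaller monomials plus $Sq^1, Sq^2, Sq^4, Sq^8$ of suitable polynomials and discarding every term lying in $\mathscr P_5^{-}(\omega^{***})$. I would carry out one representative computation per family and indicate that the remaining permutations of the same pattern follow by the identical template after relabelling the variables, and then confirm that the excluded monomials are precisely those for which no such expansion exists, so that the exclusion lists are sharp.

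The hard part will be exactly these residual direct computations, and the reason they are unavoidable is a genuine collision in the reduction: the two square-factorisations are not injective, so a strictly inadmissible permutation and an excluded (admissible) one can share the same degree-$11$ reduction. For instance the non-excluded $x_1^{11}x_2^{7}x_3^{5}x_4^{6}x_5^{2}$ of family~$2$ and the excluded $x_1^{7}x_2^{11}x_3^{5}x_4^{2}x_5^{6}$ both reduce to $x_1^{3}x_2^{3}x_3x_4^{2}x_5^{2}$, so Theorem~\ref{dlKS} alone cannot separate the strictly inadmissible monomials from the admissible representatives. That separation, and hence the exact shape of each exclusion list, rests entirely on the explicit $Sq^4$- and $Sq^8$-expansions, whose numerous Cartan terms must be generated and cancelled modulo $\mathscr P_5^{-}(\omega^{***})$ with considerable care.
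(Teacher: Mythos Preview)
Your proposal is correct and takes a somewhat different organisational route than the paper. The paper proceeds entirely by direct Cartan-formula expansion at degree $31$: it notes that all listed monomials have weight $\omega^{***}=(3,4,3,1)$, that the exceptional monomials $X,Y,Z,G,H,F,T$ are admissible, and then displays explicit identities for two representative families, namely $f=x_m^3x_n^{13}x_p^6x_q^3x_r^6$ and $g=x_m^3x_n^5x_p^{14}x_q^3x_r^6$, each of the shape
\[
f \;=\; (\text{strictly smaller monomial of the same family}) \;+\; Sq^1(\cdot)+Sq^2(\cdot)+Sq^4(\cdot)\quad\mathrm{mod}\;\mathscr P_5^{-}(\omega^{***}),
\]
using only $Sq^1,Sq^2,Sq^4$, never $Sq^8$; the remaining families are declared analogous.

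Your route instead first invokes Theorem~\ref{dlKS} to factor $X=u\cdot x_j^{2^3}$ and reduce to the degree-$23$, weight-$(3,4,3)$ core $u$ (and, where possible, further to a degree-$11$, weight-$(3,4)$ core $u'$), doing a direct expansion only for the residual monomials whose core is admissible. This is sound, and your collision example (the non-excluded $x_1^{11}x_2^{7}x_3^{5}x_4^{6}x_5^{2}$ and the excluded $x_1^{7}x_2^{11}x_3^{5}x_4^{2}x_5^{6}$ sharing the same $u'$) correctly pinpoints why Theorem~\ref{dlKS} alone cannot finish the job. The two approaches are in fact closely related: because the paper's degree-$31$ identities avoid $Sq^8$ entirely, they are precisely the degree-$23$ core identities multiplied through by $x_j^{8}$ (recall $Sq^k(a)\cdot x_j^{8}=Sq^k(a\,x_j^{8})$ for $k<8$), so the paper is implicitly working at your core level without saying so. What your route buys is a cleaner explanation of \emph{why} only $Sq^1,Sq^2,Sq^4$ appear and the possibility of dispatching many permutations at once via a single core; what the paper's buys is avoiding the bookkeeping of tracking which non-excluded permutations collide with excluded ones under the non-injective reduction. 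Either way the residual direct computations are unavoidable and of comparable difficulty.
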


\begin{proof}
It is easy to see that the weight vector of these monomials is $\omega^{***}.$ Note that the monomials $X,\, Y,\, Z,\, G,\, H,\, F,$ and $T$ are admissible. We now prove the lemma for the monomials $f = x_m^3x_n^{13}x_p^{6}x_q^3x_r^{6}$ and $g = x_m^3x_n^5x_p^{14}x_q^3x_r^{6}.$  The others can be proved by an argument similar to the proofs of Lemmas \ref{bd47.1} and \ref{bd47.2}.
Applying the Cartan formula, we get
$$ f = x_m^3x_n^{13}x_p^{3}x_q^6x_r^{6} + Sq^1(\sum \overline{A}_1) + Sq^2(\sum \overline{B}_1) + Sq^4(\sum \overline{C}_1)\ {\rm modulo}(\mathscr P_5^-(\omega^{***})),\ {\rm where}$$ 
$$ \begin{array}{ll}
\sum \overline{A}_1 &=  x_m^{3}x_n^{7}x_p^{5}x_q^{5}x_r^{10}+
 x_m^{3}x_n^{7}x_p^{5}x_q^{6}x_r^{9}+
 x_m^{3}x_n^{7}x_p^{6}x_q^{5}x_r^{9}+
 x_m^{3}x_n^{7}x_p^{6}x_q^{9}x_r^{5}+
 x_m^{3}x_n^{7}x_p^{9}x_q^{6}x_r^{5}\\
&\quad +
 x_m^{3}x_n^{11}x_p^{5}x_q^{6}x_r^{5}+
 x_m^{3}x_n^{14}x_p^{3}x_q^{5}x_r^{5}+
\medskip
 x_m^{6}x_n^{11}x_p^{5}x_q^{3}x_r^{5},\\
\sum \overline{B}_1&=  x_m^{3}x_n^{11}x_p^{5}x_q^{5}x_r^{5}+
 x_m^{3}x_n^{11}x_p^{6}x_q^{3}x_r^{6}+
 x_m^{3}x_n^{13}x_p^{5}x_q^{3}x_r^{5}+
 x_m^{5}x_n^{7}x_p^{3}x_q^{5}x_r^{9}+
 x_m^{5}x_n^{7}x_p^{3}x_q^{9}x_r^{5}\\
&\quad +
 x_m^{5}x_n^{7}x_p^{5}x_q^{3}x_r^{9}+
 x_m^{5}x_n^{7}x_p^{9}x_q^{3}x_r^{5}+
 x_m^{5}x_n^{9}x_p^{3}x_q^{3}x_r^{9}+
\medskip
 x_m^{5}x_n^{11}x_p^{3}x_q^{5}x_r^{5},\\
\sum \overline{C}_1&=  x_m^{3}x_n^{7}x_p^{3}x_q^{5}x_r^{9}+
 x_m^{3}x_n^{7}x_p^{3}x_q^{9}x_r^{5}+
 x_m^{3}x_n^{7}x_p^{5}x_q^{3}x_r^{9}+
 x_m^{3}x_n^{7}x_p^{9}x_q^{3}x_r^{5}+
 x_m^{3}x_n^{9}x_p^{3}x_q^{3}x_r^{9}\\
&\quad +
 x_m^{3}x_n^{9}x_p^{5}x_q^{5}x_r^{5}+
 x_m^{3}x_n^{11}x_p^{3}x_q^{5}x_r^{5}+
\medskip
 x_m^{3}x_n^{11}x_p^{5}x_q^{3}x_r^{5},
\end{array}$$
Since $x_m^3x_n^{13}x_p^{3}x_q^6x_r^{6}  < f,$ the monomial $f$ is strictly inadmissible. Next, by a direct computation, we have
$$ g = \sum \mathcal L + Sq^1(\sum \overline{A}_2) + Sq^2(\sum \overline{B}_2) + Sq^4(\sum \overline{C}_2)\ {\rm modulo}(\mathscr P_5^-(\omega^{***})),\ {\rm where}$$ 
$$ \begin{array}{ll}
\medskip
\sum \mathcal L&= x_m^{3}x_n^{3}x_p^{14}x_q^{5}x_r^{6} + x_m^{3}x_n^{5}x_p^{7}x_q^{6}x_r^{10} + x_m^{3}x_n^{5}x_p^{11}x_q^{6}x_r^{6},\\
\sum \overline{A}_2&=  x_m^{3}x_n^{3}x_p^{13}x_q^{5}x_r^{6}+
 x_m^{3}x_n^{3}x_p^{14}x_q^{5}x_r^{5}+
 x_m^{3}x_n^{5}x_p^{11}x_q^{5}x_r^{6}+
 x_m^{3}x_n^{5}x_p^{13}x_q^{3}x_r^{6}+
 x_m^{3}x_n^{6}x_p^{7}x_q^{9}x_r^{5}\\
&\quad +
 x_m^{3}x_n^{6}x_p^{11}x_q^{5}x_r^{5}+
 x_m^{3}x_n^{10}x_p^{7}x_q^{5}x_r^{5}+
\medskip
 x_m^{6}x_n^{5}x_p^{11}x_q^{3}x_r^{5},\\
\sum \overline{B}_2&=  x_m^{3}x_n^{5}x_p^{11}x_q^{5}x_r^{5}+
 x_m^{3}x_n^{5}x_p^{13}x_q^{3}x_r^{5}+
 x_m^{3}x_n^{6}x_p^{11}x_q^{3}x_r^{6}+
 x_m^{5}x_n^{3}x_p^{7}x_q^{5}x_r^{9}+
 x_m^{5}x_n^{3}x_p^{7}x_q^{9}x_r^{5}\\
&\quad +
 x_m^{5}x_n^{3}x_p^{9}x_q^{3}x_r^{9}+
 x_m^{5}x_n^{3}x_p^{11}x_q^{5}x_r^{5}+
 x_m^{5}x_n^{5}x_p^{7}x_q^{3}x_r^{9}+
\medskip
 x_m^{5}x_n^{9}x_p^{7}x_q^{3}x_r^{5},\\
\sum \overline{C}_2&=  x_m^{3}x_n^{3}x_p^{7}x_q^{5}x_r^{9}+
 x_m^{3}x_n^{3}x_p^{7}x_q^{9}x_r^{5}+
 x_m^{3}x_n^{3}x_p^{9}x_q^{3}x_r^{9}+
 x_m^{3}x_n^{3}x_p^{11}x_q^{5}x_r^{5}+
 x_m^{3}x_n^{5}x_p^{7}x_q^{3}x_r^{9}\\
&\quad +
 x_m^{3}x_n^{5}x_p^{7}x_q^{6}x_r^{6}+
 x_m^{3}x_n^{5}x_p^{9}x_q^{5}x_r^{5}+
 x_m^{3}x_n^{5}x_p^{11}x_q^{3}x_r^{5}+
\medskip
 x_m^{3}x_n^{9}x_p^{7}x_q^{3}x_r^{5}.
\end{array}$$
These relations show that $g$ is strictly inadmissible. The lemmas follows.
\end{proof}

The proof of the following lemmas is analogous to the proofs of Lemmas \ref{bd47.1}, \ref{bd47.2}, \ref{bd47.7} and \ref{bd47.7-1}.

\begin{lema}\label{bd47.8}
If $(p, q, r)$ is a permutation of $(3,4,5),$ then the following monomials are strictly inadmissible:

\begin{center}
$u_1 = x_1x_2^6x_p^6x_q^3x_r^{15},\ u_2 = x_1^3x_2^6x_p^6x_qx_r^{15},\ u_3  =x_1^{16}x_2^6x_p^6x_qx_r^{3},\ u_4 =  x_1x_2^6x_p^{3}x_q^7x_r^{14},$

$u_5 =  x_1x_2^{14}x_p^{3}x_q^6x_r^{7},\ u_{6} =  x_1^3x_2^{6}x_px_q^7x_r^{14},\ u_{7} = x_1^3x_2^{14}x_px_q^6x_r^{7},\ u_{8} = x_1^7x_2^{6}x_px_q^3x_r^{14},$

$u_{9} = x_1^7x_2^{14}x_px_q^3x_r^{6},\  u_{10} = x_1x_2^6x_p^{6}x_q^7x_r^{11},\  u_{11} = x_1^7x_2^6x_px_q^6x_r^{11},\  u_{12} = x_1^3x_2^{2}x_p^{5}x_q^6x_r^{15},$

$ u_{13} = x_1^3x_2^{6}x_p^{2}x_q^5x_r^{15},\ u_{14} = x_1^{15}x_2^{2}x_p^{3}x_q^5x_r^{6},\ u_{15} = x_1^{15}x_2^{6}x_p^{2}x_q^3x_r^{5},\ u_{16} = x_1^{3}x_2^{2}x_p^{5}x_q^7x_r^{14},$

$ u_{17} = x_1^{3}x_2^{14}x_p^{2}x_q^5x_r^{7},\ u_{18} = x_1^{7}x_2^{2}x_p^{14}x_q^3x_r^{5},\ u_{19} = x_1^{7}x_2^{14}x_p^{2}x_q^3x_r^{5}.$
\end{center}
\end{lema}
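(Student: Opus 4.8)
The statement to prove is Lemma~\ref{bd47.8}, asserting that a list of nineteen monomials (and all their permutations) in $\mathscr P_5$ of weight vector $\omega^{***} = (3,4,3,1)$ in degree $31$ are strictly inadmissible.

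\medskip

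The plan is to follow exactly the pattern established in the proofs of Lemmas~\ref{bd47.1}, \ref{bd47.2}, \ref{bd47.7} and \ref{bd47.7-1}: for each monomial $u_\ell$ in the list, exhibit an explicit expression
$$
u_\ell = \sum_{x < u_\ell} x + \sum_{1\leq t\leq 2^{s}-1} Sq^{t}(y_t) \ \ {\rm modulo}(\mathscr P_5^-(\omega^{***})),
$$
where $s = \max\{i : \omega_i(u_\ell) > 0\} = 4$ (since $\deg\omega^{***} = 31$ forces a nonzero fourth component), each $x$ is a monomial strictly smaller than $u_\ell$ in the linear order on $\mathscr P_5$, and the $y_t \in \mathscr P_5$ are suitable polynomials. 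By the definition of strict inadmissibility recalled in Sect.~\ref{s2}, producing such a relation for each $u_\ell$ completes the proof. First I would verify for every listed monomial that $\omega(u_\ell) = \omega^{***}$, so that the relevant squaring operations to use are $Sq^1, Sq^2, Sq^4, Sq^8$ (i.e.\ $Sq^{2^j}$ for $j = 0,1,2,3$), and so that all error terms can legitimately be discarded modulo $\mathscr P_5^-(\omega^{***})$.

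\medskip

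The computational core is to write out, for two or three representative monomials (say $u_1 = x_1 x_2^6 x_p^6 x_q^3 x_r^{15}$ and one of the $u_{12}$--$u_{19}$ type, e.g.\ $u_{16} = x_1^3 x_2^2 x_p^5 x_q^7 x_r^{14}$), the full Cartan-formula expansion of $Sq^1, Sq^2, Sq^4, Sq^8$ applied to carefully chosen polynomials, collect terms, and check that after cancellation only monomials strictly below $u_\ell$ remain together with $u_\ell$ itself. Because the permutation group $S_5$ acts on $\mathscr P_5$ commuting with the $\mathcal A_2$-action and preserving $\omega^{***}$ and $\mathscr P_5^-(\omega^{***})$, once a given monomial is shown strictly inadmissible so is every permutation of it; this is why the lemma is phrased ``all permutations of'', and it is the reason only the distinguished representatives $u_1,\dots,u_{19}$ (with the index set $\{p,q,r\}$ ranging over permutations of $\{3,4,5\}$) need to be treated. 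The remaining sixteen monomials are then dispatched ``by a similar computation,'' exactly as in the earlier lemmas.

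\medskip

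The main obstacle is purely bookkeeping: for monomials with a large exponent such as $15$ or $14$ on one variable, the Cartan expansion of $Sq^8$ produces many terms, and getting the right combination of $y_1, y_2, y_4, y_8$ so that all unwanted high-weight or out-of-range terms cancel requires patient, careful algebra over $\mathbb Z/2$. There is no conceptual difficulty — Theorem~\ref{dlKS} is not even needed here since we prove strict inadmissibility directly, not via a factorization — but one must be disciplined about which monomials are ``$< u_\ell$'' in the linear order (first compare weight vectors, then exponent vectors lexicographically) and about systematically dropping everything in $\mathscr P_5^-(\omega^{***})$. I would organize the write-up by grouping the nineteen monomials according to the shape of their ``spike part'' (those ending in $x_r^{15}$, those ending in $x_r^{14}$, those ending in $x_r^{11}$, those with a leading $x_1^{15}$ or $x_1^{16}$, etc.), since within each group the same choice of correcting polynomials $y_t$ works after an obvious relabelling, and only one fully worked example per group need be displayed.
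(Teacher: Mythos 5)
Your overall computational plan — exhibiting each listed monomial, modulo $\mathscr P_5^-(\omega^{***})$, as a sum of strictly smaller monomials plus terms in the images of $Sq^1, Sq^2, Sq^4, Sq^8$ — is exactly what the paper intends: it gives no separate argument for Lemma \ref{bd47.8} and simply declares its proof analogous to those of Lemmas \ref{bd47.1}, \ref{bd47.2}, \ref{bd47.7} and \ref{bd47.7-1}. Your preliminary weight check is worthwhile (it would, for instance, flag that $u_3$ as printed has degree $32$, so the exponent $16$ must be read as $15$), and you are right that Theorem \ref{dlKS} is not needed for a direct verification of strict inadmissibility.

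The genuine gap is your symmetry shortcut. Strict inadmissibility is defined through the linear order on monomials, which compares weight vectors and then exponent vectors left-lexicographically; this order is not invariant under permutations of the variables. Hence the $S_5$-equivariance of the $\mathcal A_2$-action does not transport a relation $u=\sum_j x_j+\sum_t Sq^t(y_t)$ with $x_j<u$ into a relation of the same kind for $\sigma(u)$: the monomials $\sigma(x_j)$ need not be smaller than $\sigma(u)$. Concretely, $x_1^2x_2=Sq^1(x_1x_2)+x_1x_2^2$ is strictly inadmissible, while its image $x_1x_2^2$ under the transposition of $x_1$ and $x_2$ is admissible, so "strictly inadmissible" is simply not a permutation-invariant property. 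The paper is alert to exactly this phenomenon: Lemma \ref{bd47.7-1} must exclude certain permutations (the monomials $X, Y, \ldots$, which it notes are admissible), and its proof rests on an inequality such as $x_m^3x_n^{13}x_p^{3}x_q^{6}x_r^{6}<x_m^3x_n^{13}x_p^{6}x_q^{3}x_r^{6}$, whose validity depends on the relative positions of $p$ and $q$. So you cannot prove only the representatives with $(p,q,r)=(3,4,5)$ and deduce the other five permutations for free; the lemma is not a statement about one orbit representative but an assertion for every permutation $(p,q,r)$ of $(3,4,5)$, and the computation must be carried out symbolically in $(p,q,r)$ (as in Lemmas \ref{bd47.1} and \ref{bd47.7-1}), verifying for each permutation that every residual monomial of weight $\omega^{***}$ is genuinely smaller — this is precisely where exceptional admissible cases could arise, and it is the one part of the argument that cannot be delegated to equivariance.
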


\begin{lema}\label{bd47.9}
The following monomials are strictly inadmissible:
\begin{enumerate}
\item[I)] $u_{20} = x_1x_2^{15}x_3^6x_q^3x_r^{6},\ u_{21} = x_1^{3}x_2^{15}x_3^6x_qx_r^{6},\ u_{22} = x_1^{15}x_2x_3^6x_q^3x_r^{6},\ u_{23} = x_1^{15}x_2^3x_3^6x_qx_r^{6},\\  u_{24} = x_1^7x_2x_3^6x_q^3x_r^{14},\ u_{25} = x_1^7x_2x_3^{14}x_q^3x_r^{6},\ 
 u_{26} = x_1^7x_2^3x_3^{6}x_qx_r^{14},\ u_{27} = x_1^7x_2^3x_3^{14}x_qx_r^{6},\\
 u_{28} = x_1x_2^3x_3^{14}x_q^6x_r^{7},\ u_{29} = x_1x_2^7x_3^{6}x_q^3x_r^{14},\ u_{30} = x_1x_2^7x_3^{14}x_q^3x_r^{6},\ u_{31} = x_1^3x_2x_3^{14}x_q^6x_r^{7},\\ u_{32} =  x_1^3x_2^7x_3^{6}x_qx_r^{14},\ u_{33}  = x_1^3x_2^7x_3^{14}x_qx_r^{6},\ u_{34} = x_1x_2^7x_3^{6}x_q^6x_r^{11},\ u_{35} = x_1^7x_2x_3^{6}x_q^6x_r^{11},\\ u_{36} = x_1^7x_2^{11}x_3^{6}x_qx_r^{6},\ u_{37} = x_1^{3}x_2^{15}x_3^{2}x_q^5x_r^{6},\ u_{38} = x_1^{3}x_2^{15}x_3^{6}x_q^2x_r^{5},\ u_{39} = x_1^{15}x_2^{3}x_3^{2}x_q^5x_r^{6},\\ u_{40} = x_1^{15}x_2^{3}x_3^{6}x_q^2x_r^{5},\ u_{41} = x_1^{3}x_2^{5}x_3^{14}x_q^2x_r^{7},\ u_{42} = x_1^{3}x_2^{5}x_3^{14}x_q^2x_r^{7},\ u_{43} = x_1^{3}x_2^{7}x_3^{2}x_q^5x_r^{14},\\ u_{44} = x_1^{3}x_2^{7}x_3^{14}x_q^2x_r^{5},\ u_{45} = x_1^{7}x_2^{3}x_3^{2}x_q^5x_r^{14},\ u_{46} = x_1^{7}x_2^{3}x_3^{14}x_q^2x_r^{5},$\\ where $q,\, r = 4,\, 5,\ q\neq r.$

\medskip

\item[II)] $u_{47} = x_1x_2^{3}x_3^{6}x_4^{14}x_5^{7},\ u_{48} = x_1^3x_2x_3^{6}x_4^{14}x_5^{7},\ u_{49} = x_1^3x_2^{5}x_3^{2}x_4^{14}x_5^{7}.$
\end{enumerate}
\end{lema}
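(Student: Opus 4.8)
The statement to prove is Lemma~\ref{bd47.9}, which asserts that a long list of monomials $u_{20}, \ldots, u_{49}$ in $\mathscr P_5$ of degree $47$ are strictly inadmissible. The plan is to follow the same template used throughout this section (cf. the proofs of Lemmas~\ref{bd47.1}, \ref{bd47.2}, \ref{bd47.7}, \ref{bd47.7-1}): for each monomial $u$ in the list, one first records its weight vector --- here every $u_i$ has $\omega(u_i) = \omega^{***} = (3,4,3,1)$ in part (I) and weight vector of degree $31$ in part (II), so that $s = \max\{i : \omega_i(u) > 0\} = 4$, and the strict-inadmissibility witness is an identity of the form $u = \sum_{j} x_j + \sum_{1\le t\le 2^{s}-1} Sq^{t}(y_t) = \sum_j x_j + Sq^1(y_1) + Sq^2(y_2) + Sq^4(y_4)$, modulo $\mathscr P_5^-(\omega^{***})$, with each monomial $x_j$ strictly smaller than $u$ in the order on $\mathscr P_5$.

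First I would exploit the symmetry and the ``inflation'' principle (Theorem~\ref{dlKS}): many of the $u_i$ differ only by a permutation of a subset of variables, or are of the form $w\cdot z^{2^{a}}$ where $w$ is one of the strictly inadmissible monomials already produced in Lemmas~\ref{bd47.6}, \ref{bd47.7}, \ref{bd47.7-1}, or \ref{bd47.8}, with $a = \max\{i : \omega_i(w) > 0\}$; in those cases strict inadmissibility is immediate from Theorem~\ref{dlKS} and no new computation is needed. For instance the monomials in (I) with a large $2$-power exponent such as $x_1^{15}x_2 x_3^{6} x_q^{3} x_r^{6}$ or $x_1^{15}x_2^{3}x_3^{6}x_q x_r^{6}$ are of the shape $w y^{2^{s}}$ for a degree-$16$ witness $w$ of a lower-degree strictly inadmissible monomial, reducing part~(I) to a handful of genuinely new cases; similarly for $u_{47}, u_{48}, u_{49}$ in (II). So the real content is: identify for each orbit representative one explicit Cartan-formula identity.

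For the genuinely new representatives I would carry out the Cartan-formula bookkeeping exactly as in Lemma~\ref{bd47.7-1}: expand $Sq^1, Sq^2, Sq^4$ applied to carefully chosen polynomials $y_1, y_2, y_4$ (obtained by "lowering" one factor of $u$ by one in an even position), collect terms of weight vector $\le \omega^{***}$, cancel everything of weight strictly below $\omega^{***}$ into the error term $\mathscr P_5^-(\omega^{***})$, and verify that what remains is a sum of monomials each of which precedes $u$ in the linear order --- typically because the leading factor's exponent has been replaced by a smaller one while the weight vector is preserved or lowered. Because $\deg u = 47$ with five variables, the individual expansions are genuinely long (as the displayed computations for $X_1, Z_1, Z_2, d_1$ already show), so I would organize the verification by fixing $(q,r) = (4,5)$ as the orbit representative in part~(I), handling the $\Sigma_2$-symmetry in $q, r$ at the end, and checking one monomial per orbit under the obvious variable symmetries.

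The main obstacle will be the sheer combinatorial volume and the need for error-free expansion: there is no conceptual difficulty once the right witness polynomials $y_t$ are guessed, but each Cartan expansion produces dozens of monomials and one must be scrupulous about which survive modulo $\mathscr P_5^-(\omega^{***})$ and that each survivor is $< u$. I would mitigate this by (i) maximizing reuse of Theorem~\ref{dlKS} to eliminate as many $u_i$ as possible without computation, (ii) checking the remaining identities with a short computer-algebra routine implementing the right $\mathcal A_2$-action on $\mathscr P_5$ and the linear order, and (iii) presenting only one or two representative identities in full in the written proof (as is done for $f$ and $g$ in Lemma~\ref{bd47.7-1}), remarking that "the others can be proved by a similar computation."
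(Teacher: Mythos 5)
Your proposal is correct and takes essentially the same route as the paper, which gives no separate argument for this lemma at all: it simply declares the proof "analogous to" the Cartan-formula computations of Lemmas \ref{bd47.1}, \ref{bd47.2}, \ref{bd47.7} and \ref{bd47.7-1}, i.e.\ precisely your template of recording $\omega(u_i)=\omega^{***}=(3,4,3,1)$, expanding $u_i$ modulo $\mathscr P_5^-(\omega^{***})$ as a sum of strictly smaller monomials plus $Sq^1$-, $Sq^2$-, $Sq^4$-images, and writing out only representative cases. Your additional labor-saving devices (reuse of Theorem \ref{dlKS}, the $q,r$-symmetry, machine verification) are sensible but do not change the substance, since your fallback for every monomial is exactly the paper's direct computation.
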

Note that $\omega(u_t) = \omega^{***}$ for $t = 1, 2, \ldots, 49.$ 

\begin{lema}\label{bd47.9}
The following monomials are strictly inadmissible:

\begin{center}
\begin{tabular}{llll}
$A_{1}=x_1x_2^{3}x_3^{6}x_4^{15}x_5^{22}$, & $A_{2}=x_1x_2^{3}x_3^{15}x_4^{6}x_5^{22}$, & $A_{3}=x_1x_2^{3}x_3^{15}x_4^{22}x_5^{6}$, & $A_{4}=x_1x_2^{15}x_3^{3}x_4^{6}x_5^{22}$, \\
$A_{5}=x_1x_2^{15}x_3^{3}x_4^{22}x_5^{6}$, & $A_{6}=x_1x_2^{15}x_3^{19}x_4^{6}x_5^{6}$, & $A_{7}=x_1^{3}x_2x_3^{6}x_4^{15}x_5^{22}$, & $A_{8}=x_1^{3}x_2^{5}x_3^{2}x_4^{15}x_5^{22}$, \\
$A_{9}=x_1^{3}x_2^{5}x_3^{6}x_4^{15}x_5^{18}$, & $A_{10}=x_1^{3}x_2x_3^{15}x_4^{6}x_5^{22}$, & $A_{11}=x_1^{3}x_2x_3^{15}x_4^{22}x_5^{6}$, & $A_{12}=x_1^{3}x_2^{5}x_3^{15}x_4^{2}x_5^{22}$, \\
$A_{13}=x_1^{3}x_2^{5}x_3^{15}x_4^{6}x_5^{18}$, & $A_{14}=x_1^{3}x_2^{5}x_3^{15}x_4^{18}x_5^{6}$, & $A_{15}=x_1^{3}x_2^{5}x_3^{15}x_4^{22}x_5^{2}$, & $A_{16}=x_1^{3}x_2^{13}x_3^{3}x_4^{6}x_5^{22}$, \\
$A_{17}=x_1^{3}x_2^{13}x_3^{3}x_4^{22}x_5^{6}$, & $A_{18}=x_1^{3}x_2^{13}x_3^{19}x_4^{6}x_5^{6}$, & $A_{19}=x_1^{3}x_2^{15}x_3x_4^{6}x_5^{22}$, & $A_{20}=x_1^{3}x_2^{15}x_3x_4^{22}x_5^{6}$, \\
$A_{21}=x_1^{3}x_2^{15}x_3^{5}x_4^{2}x_5^{22}$, & $A_{22}=x_1^{3}x_2^{15}x_3^{5}x_4^{6}x_5^{18}$, & $A_{23}=x_1^{3}x_2^{15}x_3^{5}x_4^{18}x_5^{6}$, & $A_{24}=x_1^{3}x_2^{15}x_3^{5}x_4^{22}x_5^{2}$, \\
$A_{25}=x_1^{3}x_2^{15}x_3^{17}x_4^{6}x_5^{6}$, & $A_{26}=x_1^{3}x_2^{15}x_3^{21}x_4^{2}x_5^{6}$, & $A_{27}=x_1^{3}x_2^{15}x_3^{21}x_4^{6}x_5^{2}$, & $A_{28}=x_1^{7}x_2^{9}x_3^{3}x_4^{6}x_5^{22}$, \\
$A_{29}=x_1^{7}x_2^{9}x_3^{3}x_4^{22}x_5^{6}$, & $A_{30}=x_1^{7}x_2^{9}x_3^{19}x_4^{6}x_5^{6}$, & $A_{31}=x_1^{7}x_2^{11}x_3^{17}x_4^{6}x_5^{6}$, & $A_{32}=x_1^{7}x_2^{25}x_3^{3}x_4^{6}x_5^{6}$, \\
$A_{33}=x_1^{15}x_2x_3^{3}x_4^{6}x_5^{22}$, & $A_{34}=x_1^{15}x_2x_3^{3}x_4^{22}x_5^{6}$, & $A_{35}=x_1^{15}x_2x_3^{19}x_4^{6}x_5^{6}$, & $A_{36}=x_1^{15}x_2^{3}x_3x_4^{6}x_5^{22}$, \\
$A_{37}=x_1^{15}x_2^{3}x_3x_4^{22}x_5^{6}$, & $A_{38}=x_1^{15}x_2^{3}x_3^{5}x_4^{2}x_5^{22}$, & $A_{39}=x_1^{15}x_2^{3}x_3^{5}x_4^{6}x_5^{18}$, & $A_{40}=x_1^{15}x_2^{3}x_3^{5}x_4^{18}x_5^{6}$, \\
$A_{41}=x_1^{15}x_2^{3}x_3^{5}x_4^{22}x_5^{2}$, & $A_{42}=x_1^{15}x_2^{3}x_3^{17}x_4^{6}x_5^{6}$, & $A_{43}=x_1^{15}x_2^{3}x_3^{21}x_4^{2}x_5^{6}$, & $A_{44}=x_1^{15}x_2^{3}x_3^{21}x_4^{6}x_5^{2}$, \\
$A_{45}=x_1^{15}x_2^{17}x_3^{3}x_4^{6}x_5^{6}$, & $A_{46}=x_1^{15}x_2^{19}x_3x_4^{6}x_5^{6}$, & $A_{47}=x_1^{15}x_2^{19}x_3^{5}x_4^{2}x_5^{6}$, & $A_{48}=x_1^{15}x_2^{19}x_3^{5}x_4^{6}x_5^{2}$,\\
$A_{49}=x_1^{3}x_2^{5}x_3^{6}x_4^{14}x_5^{19}$, & $A_{50}=x_1^{3}x_2^{5}x_3^{6}x_4^{6}x_5^{27},$ & $A_{51}=x_1^{3}x_2^{5}x_3^{27}x_4^{6}x_5^{6}.$
\end{tabular}
\end{center}
\end{lema}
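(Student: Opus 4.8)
The plan is to verify strict inadmissibility for each of the fifty-one monomials $A_1,\dots,A_{51}$ straight from the definition, in the same style as the proofs of Lemmas \ref{bd47.4} and \ref{bd47.5}. First I would record the structural fact that a routine inspection of dyadic expansions gives $\omega(A_j)=\overline{\omega}_{(4)}=(3,4,3,1,1)$ for every $j$, so that $s:=\max\{i:\omega_i(A_j)>0\}=5$. Hence, to prove a given $A_j$ strictly inadmissible it is enough to exhibit monomials $x_1,\dots,x_k<A_j$ and polynomials $g_1,g_2,g_4,g_8\in\mathscr P_5$ with
$$A_j=\sum_{1\le i\le k}x_i+Sq^1(g_1)+Sq^2(g_2)+Sq^4(g_4)+Sq^8(g_8)\ \ {\rm modulo}\,(\mathscr P_5^-(\overline{\omega}_{(4)})),$$
because any $Sq^t$ with $t\le 2^s-1$ that is not a power of $2$ is, via the Cartan formula, a combination of the $Sq^{2^i}$. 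Since we compute modulo $\mathscr P_5^-(\overline{\omega}_{(4)})$, the summands $x_i$ of weight vector strictly below $\overline{\omega}_{(4)}$ vanish automatically, so only those $x_i$ with $\omega(x_i)=\overline{\omega}_{(4)}$ must be checked to be smaller than $A_j$, and for these the linear order reduces to the left-lexicographic order on exponent vectors.

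I would then split the list into two groups. Many of the $A_j$ factor as $A_j=v\,w^{2^t}$, where $v$ is a strictly inadmissible monomial already produced in the preceding lemmas (Lemmas \ref{bd47.6}, \ref{bd47.7}, \ref{bd47.7-1}, \ref{bd47.8}, or a $\rho_{(k,5)}$-image of one of the four-variable strictly inadmissible monomials coming from \cite{N.S3}), $w\in\mathscr P_5$, and $t=\max\{i:\omega_i(v)>0\}$; for these, strict inadmissibility is immediate from the second assertion of Theorem \ref{dlKS}. For the remaining $A_j$ --- the ones carrying a large exponent such as $15$, $18$, $22$, or $27$ and admitting no clean such factorization --- I would produce the decomposition above by iterated use of Cartan's formula: first peel off $Sq^8$ applied to a polynomial supported on exponents of the form $8\cdot(\text{odd})$, then $Sq^4$, then $Sq^2$, then $Sq^1$, each step absorbing the correction into monomials that are either of strictly smaller weight vector (hence $0$ in $Q\mathscr P_5(\overline{\omega}_{(4)})$) or strictly below $A_j$ in the exponent order. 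I would display one or two representative cases in full --- for instance $A_1=x_1x_2^{3}x_3^{6}x_4^{15}x_5^{22}$ and $A_9=x_1^{3}x_2^{5}x_3^{6}x_4^{15}x_5^{18}$ --- and note that the others follow by the identical bookkeeping.

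The difficulty here is entirely one of volume and length rather than of idea: as already visible in the proof of Lemma \ref{bd47.4}, monomials with exponents $22$ or $27$ force $Sq^2$- and $Sq^4$-preimages with several dozen terms, and one must keep precise track of which monomials occur outside the $Sq^{2^i}$-images and confirm that each is $<A_j$. The safest route is to organize the Steenrod-action computation so that it can be cross-checked by a symbolic computation routine on $\mathscr P_5$, and then record only the resulting identities, exactly as is done for the analogous lemmas above.
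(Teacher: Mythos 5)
Your plan is essentially the paper's own proof: the paper likewise records that $\omega(A_i)=\overline{\omega}_{(4)}$ for all $i$ and establishes strict inadmissibility by explicit Cartan-formula decompositions into monomials smaller than the given one plus $Sq^1$-, $Sq^2$-, $Sq^4$-, $Sq^8$-images computed modulo $\mathscr P_5^-(\overline{\omega}_{(4)})$, written out in full only for the representatives $A_{23}$ and $A_{29}$ and left to ``a similar technique as in Lemmas \ref{bd47.4} and \ref{bd47.5}'' for the rest. One caution about your first group: it is essentially empty, since in the factorizations $A_j=v\,x_i^{16}$ the degree-$31$ cores $v$ are not among the previously listed strictly inadmissible monomials (for $A_{28}$--$A_{30}$, $A_{50}$ and $A_{51}$ they are exactly the admissible exceptions $G$ and $Z$ singled out in Lemma \ref{bd47.7-1}), so Theorem \ref{dlKS} provides no shortcut here and every $A_j$ falls to the direct computation you describe for your second group --- which does not create a gap, but means the workload is the full list, exactly as in the paper.
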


\begin{proof}
We have $\omega(A_i) = \overline{\omega}_{(4)},\ 1\leq i\leq 51.$  We prove the lemma for the monomials $A_{23} = x_1^3x_2^{15}x_3^{5}x_4^{18}x_5^{6}$ and $A_{29} = x_1^7x_2^{9}x_3^{3}x_4^{22}x_5^{6}.$  The others can be proved by using a similar technique as in Lemmas \ref{bd47.4} and \ref{bd47.5}.
Direct computing from Cartan's formula, we get
$$ \begin{array}{ll}
\medskip
A_{23} &= \sum_{1\leq i\leq 3} b_i + Sq^1(\sum f)  + Sq^2(\sum g)  + Sq^4(\sum h) + Sq^8(\sum p)\ {\rm modulo}(\mathscr P_5^-(\overline{\omega}_{(4)}),\ {\rm where} \\ 
\medskip
b_1&= x_1^3x_2^{11}x_3^{5}x_4^{22}x_5^{6},\ b_2 =  x_1^3x_2^{13}x_3^{3}x_4^{22}x_5^{6},\ b_3 = x_1^3x_2^{13}x_3^{6}x_4^{19}x_5^{6},\\
\sum f&=   x_1^{3}x_2^{7}x_3^{6}x_4^{21}x_5^{9}+
 x_1^{3}x_2^{7}x_3^{6}x_4^{25}x_5^{5}+
 x_1^{3}x_2^{11}x_3^{6}x_4^{21}x_5^{5}+
 x_1^{3}x_2^{14}x_3^{3}x_4^{21}x_5^{5}+
 x_1^{3}x_2^{15}x_3^{5}x_4^{14}x_5^{9}\\
&\quad +
 x_1^{3}x_2^{15}x_3^{5}x_4^{17}x_5^{6}+
 x_1^{3}x_2^{15}x_3^{9}x_4^{14}x_5^{5}+
 x_1^{3}x_2^{19}x_3^{5}x_4^{14}x_5^{5}+
\medskip
 x_1^{3}x_2^{22}x_3^{5}x_4^{11}x_5^{5},\\
\sum g &=  x_1^{5}x_2^{7}x_3^{3}x_4^{21}x_5^{9}+
 x_1^{5}x_2^{7}x_3^{3}x_4^{25}x_5^{5}+
 x_1^{5}x_2^{9}x_3^{9}x_4^{19}x_5^{3}+
 x_1^{5}x_2^{11}x_3^{3}x_4^{17}x_5^{9}+
 x_1^{5}x_2^{15}x_3^{5}x_4^{11}x_5^{9}\\
&\quad +
 x_1^{5}x_2^{15}x_3^{9}x_4^{11}x_5^{5}+
 x_1^{5}x_2^{17}x_3^{3}x_4^{17}x_5^{3}+
 x_1^{5}x_2^{17}x_3^{9}x_4^{11}x_5^{3}+
\medskip
 x_1^{5}x_2^{19}x_3^{5}x_4^{11}x_5^{5},\\
\sum h&= x_1^{3}x_2^{7}x_3^{3}x_4^{21}x_5^{9}+
 x_1^{3}x_2^{7}x_3^{3}x_4^{25}x_5^{5}+
 x_1^{3}x_2^{9}x_3^{5}x_4^{21}x_5^{5}+
 x_1^{3}x_2^{9}x_3^{9}x_4^{19}x_5^{3}+
 x_1^{3}x_2^{11}x_3^{3}x_4^{17}x_5^{9}\\
&\quad +
 x_1^{3}x_2^{11}x_3^{3}x_4^{21}x_5^{5}+
 x_1^{3}x_2^{13}x_3^{9}x_4^{13}x_5^{5}+
 x_1^{3}x_2^{15}x_3^{5}x_4^{11}x_5^{9}+
 x_1^{3}x_2^{15}x_3^{6}x_4^{13}x_5^{6}+
 x_1^{3}x_2^{15}x_3^{9}x_4^{11}x_5^{5}\\
&\quad +
 x_1^{3}x_2^{17}x_3^{3}x_4^{17}x_5^{3}+
 x_1^{3}x_2^{17}x_3^{9}x_4^{11}x_5^{3}+
\medskip
 x_1^{3}x_2^{19}x_3^{5}x_4^{11}x_5^{5},\\
\sum p&= x_1^3x_2^{13}x_3^{5}x_4^{13}x_5^{5} + x_1^3x_2^{13}x_3^{6}x_4^{11}x_5^{6}.
\end{array}$$
Since $b_i <  A_1,\ 1\leq i\leq 3,$ $A_{23}$ is strictly inadmissible. By a similar computation, we obtain
$$ A_{29} = \sum Z + Sq^1(\sum \overline{f})  + Sq^2(\sum \overline{g})  + Sq^4(\sum \overline{h}) + Sq^8(x_1^{7}x_2^{5}x_3^{6}x_4^{15}x_5^{6})\ {\rm modulo}(\mathscr P_5^-(\overline{\omega}_{(4)}),$$
where the polynomials $\sum Z,\, \sum\overline{f},\, \sum\overline{g},$ and $\sum\overline{h}$ are as follows:
$$ \begin{array}{ll}
\sum Z &=  x_1^{5}x_2^{3}x_3^{6}x_4^{23}x_5^{10}+
 x_1^{5}x_2^{3}x_3^{6}x_4^{27}x_5^{6}+
 x_1^{5}x_2^{3}x_3^{10}x_4^{23}x_5^{6}+
 x_1^{5}x_2^{7}x_3^{3}x_4^{22}x_5^{10}+
 x_1^{5}x_2^{7}x_3^{3}x_4^{26}x_5^{6}\\
&\quad +
 x_1^{5}x_2^{7}x_3^{6}x_4^{19}x_5^{10}+
 x_1^{5}x_2^{7}x_3^{10}x_4^{19}x_5^{6}+
 x_1^{5}x_2^{11}x_3^{3}x_4^{22}x_5^{6}+
 x_1^{5}x_2^{11}x_3^{6}x_4^{19}x_5^{6}+
 x_1^{7}x_2^{3}x_3^{5}x_4^{22}x_5^{10}\\
&\quad +
 x_1^{7}x_2^{3}x_3^{6}x_4^{23}x_5^{8}+
 x_1^{7}x_2^{3}x_3^{6}x_4^{25}x_5^{6}+
 x_1^{7}x_2^{3}x_3^{8}x_4^{23}x_5^{6}+
 x_1^{7}x_2^{3}x_3^{9}x_4^{22}x_5^{6}+
 x_1^{7}x_2^{7}x_3^{3}x_4^{22}x_5^{8}\\
\medskip
&\quad +x_1^{7}x_2^{7}x_3^{3}x_4^{24}x_5^{6},\\
\sum \overline{f}&=  x_1^{7}x_2^{3}x_3^{5}x_4^{19}x_5^{12}+
 x_1^{7}x_2^{3}x_3^{5}x_4^{21}x_5^{10}+
 x_1^{7}x_2^{3}x_3^{9}x_4^{21}x_5^{6}+
 x_1^{7}x_2^{3}x_3^{12}x_4^{19}x_5^{5}+
 x_1^{7}x_2^{6}x_3^{9}x_4^{15}x_5^{9}\\
&\quad + x_1^{7}x_2^{7}x_3^{5}x_4^{19}x_5^{8}+
 x_1^{7}x_2^{7}x_3^{5}x_4^{21}x_5^{6}+
 x_1^{7}x_2^{7}x_3^{8}x_4^{19}x_5^{5}+
 x_1^{7}x_2^{10}x_3^{5}x_4^{15}x_5^{9}\\
\medskip
&\quad + x_1^{7}x_2^{10}x_3^{9}x_4^{15}x_5^{5},\\
\sum \overline{g}&=  x_1^{7}x_2^{3}x_3^{6}x_4^{19}x_5^{10}+
 x_1^{7}x_2^{3}x_3^{6}x_4^{23}x_5^{6}+
 x_1^{7}x_2^{3}x_3^{9}x_4^{17}x_5^{9}+
 x_1^{7}x_2^{3}x_3^{10}x_4^{19}x_5^{6}+
 x_1^{7}x_2^{5}x_3^{9}x_4^{15}x_5^{9}\\
&\quad +
 x_1^{7}x_2^{7}x_3^{3}x_4^{22}x_5^{6}+
 x_1^{7}x_2^{7}x_3^{6}x_4^{19}x_5^{6}+
 x_1^{7}x_2^{9}x_3^{3}x_4^{17}x_5^{9}+
 x_1^{7}x_2^{9}x_3^{5}x_4^{15}x_5^{9}+
 x_1^{7}x_2^{9}x_3^{9}x_4^{15}x_5^{5}\\
&\quad +
 x_1^{7}x_2^{9}x_3^{9}x_4^{17}x_5^{3}+
 x_1^{9}x_2^{3}x_3^{9}x_4^{15}x_5^{9}+
 x_1^{9}x_2^{9}x_3^{3}x_4^{15}x_5^{9}+
\medskip
 x_1^{9}x_2^{9}x_3^{9}x_4^{15}x_5^{3},\\
\sum \overline{h}&=  x_1^{5}x_2^{3}x_3^{6}x_4^{23}x_5^{6}+
 x_1^{5}x_2^{7}x_3^{3}x_4^{22}x_5^{6}+
\medskip
 x_1^{5}x_2^{7}x_3^{6}x_4^{19}x_5^{6}.
\end{array}$$
The above equalities imply that $A_{29}$ is strictly inadmissible. The lemma is proved.
\end{proof}

\begin{proof}[{\it Proof of  Lemma \ref{bd47-2}}]
Let $b$ be an admissible monomial in $(\mathscr P_5^+)_{47}$ such that $\omega(b) = \overline{\omega}_{(4)}.$ Then $\omega_1(b) = 3$ and $b = X_{(\{t,\, k\},\, 5)}Y^2$ with $1\leq t < k \leq 5$ and $Y$ a monomial of degree $22$ in $\mathscr P_5.$ Since $b$ is admissible, according to Theorem \ref{dlKS}, $Y\in \mathscr B_5(\omega_{(4)}).$

Using Proposition \ref{md22-2}(III) and a simple computation shows that if $Z\in \mathscr B_5(\omega_{(4)}),\ 1\leq t < k \leq 5,$ and $X_{(\{t,\, k\},\, 5)}Z^2\neq \mathcal Y_{47,\,j},\ \forall j,\ 371\leq j\leq 479,$ then there is a monomial $u$ which is given in one of Lemmas \ref{bd47.6} - \ref{bd47.9} 
such that $X_{(\{t,\, k\},\, 5)}Z^2 = ug^{2^s}$ with a monomial $g\in \mathscr P_5,$ and $s = {\rm max}\{\ell\in\mathbb{Z}:\, \omega_{\ell}(u) > 0\}.$ Then, by Theorem \ref{dlKS}, $X_{(\{t,\, k\},\, 5)}Z^2$ is  inadmissible. Finally, we see that $b = X_{(\{t,\, k\},\, 5)}Y^2$ is admissible with $Y\in \mathscr B_5(\omega_{(4)});$ hence $b = \mathcal Y_{47,\,j}$ for some $j,\ j\in \{371, \ldots, 479\}.$ This implies $\mathscr B_5^+(\overline{\omega}_{(4)}) \subseteq \{\mathcal Y_{47,\,j}:\ 371\leq j\leq 479\}$. 
\end{proof}

\begin{propo}\label{md47.3}
$Q\mathscr P_5^+(\overline{\omega}_{(4)})$ is the $\mathbb Z/2$-vector space of dimension $109$ with a basis consisting of all the classes represented by the monomials $Y_{47,\,j},\ 371\leq j\leq 479.$
\end{propo}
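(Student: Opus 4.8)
The plan is to establish Proposition \ref{md47.3} in two stages: first reducing the spanning question to the list $\{\mathcal Y_{47,\,j}:371\leq j\leq 479\}$ via Lemma \ref{bd47-2}, and then proving that the corresponding classes are linearly independent in $Q\mathscr P_5(\overline{\omega}_{(4)})$. The first stage is already done: Lemma \ref{bd47-2} asserts precisely that this set is a generating system for $Q\mathscr P_5^+(\overline{\omega}_{(4)})$, so what remains for the proof of the proposition is the independence statement together with the dimension count $|\{\mathcal Y_{47,\,j}:371\leq j\leq 479\}| = 479-371+1 = 109$.

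To prove linear independence, I would adopt exactly the strategy laid out in the paragraph preceding Conjecture \ref{gtS} and already carried out in the proofs of Propositions \ref{md21.1}, \ref{md22-1}, and Remark \ref{nx3}. Suppose there is a relation $\mathcal S = \sum_{371\leq j\leq 479}\gamma_j\mathcal Y_{47,\,j}\equiv_{\overline{\omega}_{(4)}} 0$ with $\gamma_j\in\mathbb Z/2$. For each $(k;\mathscr K)\in\mathcal N_5$ with $\ell(\mathscr K)=1$ (that is, the homomorphisms $\pi_{(k;p)}$, $1\leq k<p\leq 5$), I would compute $\pi_{(k;\mathscr K)}(\mathcal S)$ explicitly, rewriting the image in terms of the admissible monomial basis of $Q\mathscr P_4(\overline{\omega}_{(4)})$ modulo $\mathcal A_2^+\mathscr P_4 + \mathscr P_4^-(\overline{\omega}_{(4)})$; this is legitimate since by Proposition \ref{mdPS} each $\pi_{(k;\mathscr K)}$ carries $\mathscr P_5(\overline{\omega}_{(4)})$ into $\mathscr P_4(\overline{\omega}_{(4)})$, and Theorem \ref{dlsig} together with Theorem \ref{dlKS} lets me discard any monomials of smaller weight vector. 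Setting each $\pi_{(k;p)}(\mathcal S)\equiv_{\overline{\omega}_{(4)}}0$ yields a system of linear equations on the $\gamma_j$; as in the earlier propositions, this system should already force most $\gamma_j=0$ and pin down the rest up to a handful of "diagonal" relations among small index sets. To finish, I would feed in the auxiliary relations coming from the length-two projections, namely $\pi_{(1,(2;j))}(\mathcal S)\equiv_{\overline{\omega}_{(4)}}0$ for $j=3,4,5$ and $\pi_{(1,(3;4))}(\mathcal S)\equiv_{\overline{\omega}_{(4)}}0$, which (as in the proof of Proposition \ref{md21.1}) break the remaining degeneracies and give $\gamma_j=0$ for all $j$. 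Combining this independence with Lemma \ref{bd47-2} gives both the spanning and independence of $[\mathcal Y_{47,\,j}]_{\overline{\omega}_{(4)}}$, hence $\dim Q\mathscr P_5^+(\overline{\omega}_{(4)}) = 109$.

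The main obstacle is the sheer computational volume of the projection calculations: one must expand $\pi_{(k;p)}$ of a $109$-term sum in a $5$-variable polynomial algebra in degree $47$, reduce each image against the known basis of $Q\mathscr P_4$ in weight vector $\overline{\omega}_{(4)}$ (degree $47$, where $\mathscr B_4(\overline{\omega}_{(4)})$ itself is sizeable), and then solve the resulting large sparse linear system over $\mathbb Z/2$. Keeping track of which $\gamma_j$ are eliminated at each stage, and organizing the ten projections $\pi_{(k;p)}$ plus the four length-two projections so that their combined kernel is trivial, is delicate bookkeeping; a sign or index error anywhere invalidates the count. This is, however, routine in kind — it is the same machinery used successfully for $\overline{\omega}_{(1)}$ in Remark \ref{nx3} and for the degree-$21$ and degree-$22$ weight vectors — so I would carry it out in the same style, relegating the bulk of the monomial identities to explicit displayed formulas and, where helpful, to the Appendix, and invoking Theorem \ref{dlKS} repeatedly to suppress inadmissible terms of lower weight. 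Once the independence is in hand the dimension statement is immediate.
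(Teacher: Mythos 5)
Your plan matches the paper's own proof: spanning comes from Lemma \ref{bd47-2}, and linear independence of the $109$ classes is obtained by computing $\pi_{(k;\mathscr K)}(\mathcal S)$ with Theorem \ref{dlsig} and Proposition \ref{mdPS} and forcing all $\gamma_j=0$, exactly as the paper does (it uses the relations with $\ell(\mathscr K)>0$, just as you propose). The paper additionally re-verifies that each class $[\mathcal Y_{47,\,j}]_{\overline{\omega}_{(4)}}$ is nonzero by checking independence of $[\mathscr B_5^+(\overline{\omega}_{(1)})\cup V]$ in $(Q\mathscr P_5^+)_{47}$, but this is subsumed by the weight-level independence you establish, so your argument is complete for the stated dimension claim.
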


\begin{proof}
First, we show that the set $[V:=\{Y_{47,\,j}:\ 371\leq j\leq 479\}]_{\overline{\omega}_{(4)}}$ is  linearly independent in the space $Q\mathscr P_5^+(\overline{\omega}_{(4)}).$ Indeed,  suppose there is a linear relation $\mathcal S = \sum_{371\leq j\leq 479}\gamma_j\mathcal Y_{j}\equiv_{\overline{\omega}_{(4)}} 0$ with $\gamma_j\in\mathbb Z/2$ and $\mathcal Y_{47,\,j}\in V.$ By using Theorem \ref{dlsig} and Proposition \ref{mdPS},  we determine explicitly  $\pi_{(k;\mathscr K)}(\mathcal S)$ in terms of the admissible monomials in $(\mathscr P_4^+)_{47}.$ From the relations $\pi_{(k;\mathscr K)}(\mathcal S) \equiv_{\overline{\omega}_{(4)}} 0$ with $\ell(\mathscr K) > 0,$ one gets $\gamma_j = 0$ for $j = 371, \ldots, 479.$ 

Now, by Lemma \ref{bd47-2}, to prove $[V]_{\overline{\omega}_{(4)}}$ is a basis of $Q\mathscr P_5^+(\overline{\omega}_{(4)})$ we need to show that $[\mathcal Y_{47,\,j}]_{\overline{\omega}_{(4)}}\neq [0]$ for all $\mathcal Y_{47,\,j}\in V.$ By a similar argument as given in the proof of Propositions \ref{md21.1} and \ref{md22-1},  we can prove that the set $[\mathscr B_5^+(\overline{\omega}_{(1)})\cup V]$ is  linearly independent in $(Q\mathscr P_5^+)_{47}.$ This fact shows that $[\mathcal Y_{47,\,j}]_{\overline{\omega}_{(4)}}\neq [0]$ for all $\mathcal Y_{47,\,j}.$ The proposition is proved.
 \end{proof}

\begin{propo}\label{md47.4}
There exist exactly $15$ admissible monomials in $(\mathscr P_5^+)_{47}$ such that their weight vectors are $\overline{\omega}_{(5)}.$ Consequently $\dim(QP_5^+(\overline{\omega}_{(5)})) = 15.$
\end{propo}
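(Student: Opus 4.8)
The plan is to mirror closely the strategy already used for $\overline{\omega}_{(4)}$ in Lemma \ref{bd47-2} and Proposition \ref{md47.3}, since $\overline{\omega}_{(5)} = (3,4,3,3)$ differs from $\overline{\omega}_{(4)} = (3,4,3,1,1)$ only in the tail of the weight vector. First I would observe that if $b$ is an admissible monomial in $(\mathscr P_5^+)_{47}$ with $\omega(b) = \overline{\omega}_{(5)}$, then $\omega_1(b) = 3$, so $b = x_ix_jx_{\ell}Y^2$ with $1 \leq i < j < \ell \leq 5$ and $Y$ a monomial of degree $22$ in $\mathscr P_5$; by Theorem \ref{dlKS} the factor $Y$ is admissible, and since $\omega(Y) = (4,3,3) = \omega_{(5)}$ we get $Y \in \mathscr B_5(\omega_{(5)})$. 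By Proposition \ref{md22-2}(IV), $\mathscr B_5^+(\omega_{(5)}) = \overline{\Phi}^+(\mathscr B_4(\omega_{(5)})) \cup \big(\bigcup_{1\leq k\leq 5}\overline{\mathscr B}^+(k,\omega_{(5)})\big)$, a set of $125$ monomials, and similarly $\mathscr B_5^0(\omega_{(5)})$ is controlled via the $\overline{\Phi}^0$ construction. So the candidate generators for $Q\mathscr P_5^+(\overline{\omega}_{(5)})$ all arise as $x_ix_jx_{\ell}Z^2$ with $Z$ ranging over this explicit $125$-element set and over all $\binom{5}{3} = 10$ choices of $\{i,j,\ell\}$.

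Next I would carry out the elimination step: for each such product $x_ix_jx_{\ell}Z^2$ that is not one of the $15$ monomials listed (call them $\mathcal Y_{47,\,j}$, $480 \leq j \leq 494$, to be displayed in Sect.\ref{s5.7}), I would exhibit a strictly inadmissible monomial $w$ — taken from the analogues of Lemmas \ref{bd47.6}--\ref{bd47.9} for the weight vector $\overline{\omega}_{(5)}$, together with the list of strictly inadmissible monomials of degree $23$ in weight $\omega^{**} = (3,2,4)$ and of degree $31$ in suitable weights — such that $x_ix_jx_{\ell}Z^2 = w\,g^{2^r}$ where $r = \max\{s : \omega_s(w) > 0\}$, and conclude by Theorem \ref{dlKS} that $x_ix_jx_{\ell}Z^2$ is inadmissible. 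Since $b = x_ix_jx_{\ell}Y^2$ is admissible, this forces $b$ to be one of the $15$ chosen monomials, giving $\mathscr B_5^+(\overline{\omega}_{(5)}) \subseteq \{\mathcal Y_{47,\,j} : 480 \leq j \leq 494\}$.

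Then I would prove the reverse containment and linear independence. Using Theorem \ref{dlsig} and Proposition \ref{mdPS}, I would show that the set $[\{\mathcal Y_{47,\,j} : 480 \leq j \leq 494\}]$ is linearly independent in $(Q\mathscr P_5^+)_{47}$: assuming a relation $\mathcal S = \sum_j \gamma_j \mathcal Y_{j} \equiv_{\overline{\omega}_{(5)}} 0$, I would apply the homomorphisms $\pi_{(k;\mathscr K)}$ for $(k;\mathscr K) \in \mathcal N_5$ with $\ell(\mathscr K) = 1$, express each $\pi_{(k;\mathscr K)}(\mathcal S)$ in terms of the known admissible basis of $(Q\mathscr P_4^+)_{47}$, and read off $\gamma_j = 0$ for all $j$ — exactly as in Remark \ref{nx3} and the proof of Proposition \ref{md47.3}. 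Together with the spanning statement this shows each $[\mathcal Y_{47,\,j}]_{\overline{\omega}_{(5)}} \neq [0]$ and that they form a basis, so $\dim Q\mathscr P_5^+(\overline{\omega}_{(5)}) = 15$. The main obstacle is the same bookkeeping burden as in Lemma \ref{bd47-2}: one must verify, essentially by exhaustive case analysis over the $125 \times 10$ products, that every non-listed candidate admits a strictly inadmissible divisor of the required shape, which requires establishing (by explicit Cartan-formula computations in the style of Lemmas \ref{bd47.4}, \ref{bd47.5}, \ref{bd47.9}) the strict inadmissibility of all the auxiliary monomials invoked. Once that list is in place, the rest is routine.
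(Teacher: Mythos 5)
Your proposal follows essentially the same route as the paper's own proof: reduce to products $x_ax_bx_cZ^2$ with $Z\in\mathscr B_5^+(\omega_{(5)})$ via Theorem \ref{dlKS} and Proposition \ref{md22-2}(IV), eliminate every non-listed candidate by exhibiting a strictly inadmissible factor (the paper supplies precisely such lists in Lemmas \ref{bd47.10} and \ref{bd47.11} -- of weight $(3,4,3)$ in degree $23$ and of weight $\overline{\omega}_{(5)}$ in degree $47$, rather than the weight $\omega^{**}=(3,2,4)$ you mention, which served the $\overline{\omega}_{(2)},\overline{\omega}_{(3)}$ cases), and then prove linear independence and nonvanishing through the projections $\pi_{(k;\mathscr K)}$ combined with Singer's criterion. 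The remaining differences are cosmetic: the paper uses $\ell(\mathscr K)\leq 2$ in the independence step and checks nonvanishing by showing $[\mathscr B_5^+(\overline{\omega}_{(1)})\cup V\cup\overline{V}]$ is linearly independent in $(Q\mathscr P_5^+)_{47}$, exactly the kind of bookkeeping you describe.
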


We prove the proposition by showing that 
$$ \mathscr B_5^+(\overline{\omega}_{(5)}) = \{\mathcal Y_{47,\, m}:\ 480\leq m\leq 494\},$$ 
where the monomials $\mathcal Y_{47,\, m}:\ 480\leq m\leq 494,$ are described in Sect.\ref{s5.7}. We need some lemmas for the proof of this proposition. The following lemma can be proved by using a result in \cite{N.T1}.
\begin{lema}\label{bd47.10}
The following monomials are strictly inadmissible:
\begin{enumerate}
\item[a)] $x_ux_v^2x_m^6x_n^7x_p^7,\ x_u^7x_v^2x_m^2x_n^5x_p^7,\ x_u^3x_v^2x_m^4x_n^7x_p^7,\ x_u^3x_v^4x_m^6x_n^3x_p^7,\ x_u^3x_v^6x_m^6x_n^3x_p^5,$\\ where $(u, v, m, n, p)$ is a permutation of $(1,2,3,4,5)$;

\medskip

\item[b)] $x_1x_2^6x_q^6x_r^3x_t^7,\ x_1^3x_2^6x_q^6x_rx_t^7,\ x_1^7x_2^6x_q^6x_rx_t^3,\ x_1^3x_2^2x_q^6x_r^5x_t^7,\ x_1^3x_2^6x_q^2x_r^5x_t^7,\ x_1^7x_2^2x_q^6x_r^3x_t^5,\\ x_1^7x_2^6x_q^2x_r^3x_t^5,$ where $(q, r, t)$ is a permutation of $(3,4,5)$;

\medskip

\item[c)] 
\begin{tabular}[t]{lrrrrrr}
$ x_1x_2^{7}x_3^{6}x_4^{6}x_5^{3}$, & $ x_1x_2^{7}x_3^{6}x_4^{3}x_5^{6}$, & $ x_1^{3}x_2^{7}x_3^{6}x_4^{6}x_5$, & $ x_1^{3}x_2^{7}x_3^{6}x_4x_5^{6}$,\\
  \multicolumn{1}{l}{$ x_1^{7}x_2x_3^{6}x_4^{6}x_5^{3}$,} & \multicolumn{1}{l}{$ x_1^{7}x_2^{3}x_3^{6}x_4^{6}x_5$,} &
$ x_1^{7}x_2x_3^{6}x_4^{3}x_5^{6}$, & $ x_1^{7}x_2^{3}x_3^{6}x_4x_5^{6}$,\\
  $ x_1^{3}x_2^{7}x_3^{2}x_4^{6}x_5^{5}$, & $ x_1^{3}x_2^{7}x_3^{6}x_4^{2}x_5^{5}$, & \multicolumn{1}{l}{$ x_1^{3}x_2^{7}x_3^{2}x_4^{5}x_5^{6}$,} & \multicolumn{1}{l}{$ x_1^{3}x_2^{7}x_3^{6}x_4^{5}x_5^{2}$,} \\
$ x_1^{7}x_2^{3}x_3^{2}x_4^{6}x_5^{5}$, & $ x_1^{7}x_2^{3}x_3^{6}x_4^{2}x_5^{5}$, & $ x_1^{7}x_2^{3}x_3^{2}x_4^{5}x_5^{6}$, & $ x_1^{7}x_2^{3}x_3^{6}x_4^{5}x_5^{2}$; 
\end{tabular}

\medskip

\item[d)]  $\rho_{(k,\, 5)}(Z),\ 1\leq k\leq 5,$ where $Z$ is one of the following monomials:

\begin{tabular}[t]{lrrrrrr}
$ x_1^{2}x_2^{7}x_3^{7}x_4^{7}$, & $ x_1^{7}x_2^{2}x_3^{7}x_4^{7}$, & $ x_1^{7}x_2^{7}x_3^{2}x_4^{7}$, & $ x_1^{7}x_2^{7}x_3^{7}x_4^{2}$, & \multicolumn{1}{l}{$ x_1^{6}x_2^{3}x_3^{7}x_4^{7}$,} & $ x_1^{6}x_2^{7}x_3^{3}x_4^{7}$, \\
$ x_1^{6}x_2^{7}x_3^{7}x_4^{3}$, & $ x_1^{3}x_2^{6}x_3^{7}x_4^{7}$, & $ x_1^{7}x_2^{6}x_3^{3}x_4^{7}$, & $ x_1^{7}x_2^{6}x_3^{7}x_4^{3}$, & \multicolumn{1}{l}{$ x_1^{3}x_2^{7}x_3^{6}x_4^{7}$,} & $ x_1^{7}x_2^{3}x_3^{6}x_4^{7}$, \\
$ x_1^{7}x_2^{7}x_3^{6}x_4^{3}$, & $ x_1^{3}x_2^{7}x_3^{7}x_4^{6}$, & $ x_1^{7}x_2^{3}x_3^{7}x_4^{6}$, & $ x_1^{7}x_2^{7}x_3^{3}x_4^{6}$. &       &  
\end{tabular}%
\end{enumerate} 
\end{lema}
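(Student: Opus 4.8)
## Proof Plan for Lemma \ref{bd47.10}

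The plan is to establish strict inadmissibility for each of the four families (a)--(d) by the same mechanism already used repeatedly in Section \ref{s3}: exhibit each listed monomial $w$ (of the appropriate weight vector $\overline{\omega}_{(5)} = (3,4,3,3)$, so that $\deg(w) = 47$ and $\omega_i(w) = 0$ for $i > 4$) as a sum of strictly smaller monomials plus explicit Steenrod images $\sum_{1 \le t \le 2^4 - 1} Sq^t(y_t)$, modulo $\mathscr P_5^-(\overline{\omega}_{(5)})$. Recall the definition of \emph{strictly inadmissible} requires only that $w = \sum_{1\le j\le k} x_j + \sum_{1\le t\le 2^s - 1} Sq^t(y_t)$ with $x_j < w$ and $s = \max\{i : \omega_i(w) > 0\}$; here $s = 4$, so we may use $Sq^1, Sq^2, Sq^4, Sq^8$. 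Since all these operations preserve the filtration up to $\mathscr P_5^-$, it suffices to produce such a Cartan-formula decomposition for a handful of \emph{seed} monomials in each family and then propagate.

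For the bulk reduction I would exploit the structure of the lists. The monomials in families (a) and (b) are stated as orbits under permutations of the variables (or of a subset of them), and those in (c) and (d) are built from a short list of degree-$23$ (or degree-$23$ in four variables) patterns via $\rho_{(k,5)}$ or by adjoining a low-degree factor. So rather than compute all $\sim 16 + 14 + 16 + \text{orbit sizes}$ cases by hand, I would: first verify strict inadmissibility for one representative of each orbit/pattern by an explicit Cartan computation; then invoke Theorem \ref{dlKS} (the Kameko–Sum criterion) — if $u$ is strictly inadmissible with $\omega_t(u) \ne 0$ and $\omega_i(u) = 0$ for $i > t$, then $uy^{2^t}$ is strictly inadmissible — to promote lower-variable or lower-degree strictly-inadmissible building blocks to the full degree-$47$ monomials; and finally use the fact (stated just before Theorem \ref{dlKS}) that every strictly inadmissible monomial is inadmissible, combined with the permutation-symmetry of the $\mathcal A_2$-action on $\mathscr P_5$, to cover entire orbits from a single representative. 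For part (d) specifically, I would note that $\rho_{(k,5)}$ is an $\mathcal A_2$-module homomorphism, so a Cartan decomposition of $Z \in \mathscr P_4$ witnessing strict inadmissibility pushes forward under $\rho_{(k,5)}$ to one for $\rho_{(k,5)}(Z)$; hence it is enough to treat the sixteen degree-$23$ four-variable patterns once each (and several of these, e.g.\ $x_1^7 x_2^7 x_3^7 x_4^2$ and its permutations, are among the patterns T\'in \cite{N.T1} already identified, which is why the lemma is advertised as provable "by using a result in \cite{N.T1}").

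The concrete computational heart, then, is producing the Cartan decompositions for the seed monomials. For a seed like $x_u x_v^2 x_m^6 x_n^7 x_p^7$ in family (a), I would write $x^7 = Sq^1(x^6) + \text{(lower } \omega)$ and $x^6 x^7 = x^{13}$-type manipulations, then apply the Cartan formula to $Sq^1, Sq^2, Sq^4$ applied to suitable degree-$46$, $45$, $43$ monomials, collecting the leading terms and checking that the residual monomials are all $< w$ in the order of Section \ref{s2} — i.e.\ have the same weight vector and a strictly smaller exponent vector, or a strictly smaller weight vector (hence lie in $\mathscr P_5^-(\overline{\omega}_{(5)})$). This is precisely the style of Lemmas \ref{bd47.1}, \ref{bd47.2}, \ref{bd47.4}, \ref{bd47.5}, \ref{bd47.9}, so I would simply carry out the analogous bookkeeping; there is nothing conceptually new.

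The main obstacle is purely the volume and accuracy of these Cartan-formula computations: with weight vector $\overline{\omega}_{(5)} = (3,4,3,3)$ the spike level is high, the admissible basis is large, and tracking which residual monomials fall below $w$ versus which must be absorbed into further $Sq^t$ terms is error-prone — exactly the kind of step where a sign or binomial-coefficient slip (the coefficients being taken mod $2$) propagates. I would mitigate this by (i) reducing to the minimal set of seed monomials via the symmetry and $\rho_{(k,5)}$-functoriality arguments above, (ii) double-checking each seed decomposition by re-expanding the right-hand side, and (iii) where possible citing \cite{N.T1}, \cite{N.S3}, \cite{M.M2} for the lower-degree or lower-variable strictly-inadmissible ingredients rather than rederiving them. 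Once the seeds are verified, Theorem \ref{dlKS} and the permutation action finish the lemma with no further work.
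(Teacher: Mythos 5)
Your plan misidentifies the objects the lemma is actually about, and this propagates into the verification criterion you propose. The monomials listed in Lemma \ref{bd47.10} are not of degree $47$ with weight vector $\overline{\omega}_{(5)}=(3,4,3,3)$: every one of them has degree $23$ (e.g. $x_ux_v^2x_m^6x_n^7x_p^7$ has degree $1+2+6+7+7=23$, and the $\rho_{(k,5)}(Z)$ in part (d) are images of degree-$23$ four-variable monomials), with weight vector $(3,4,3)$. They are the degree-$23$ \emph{building blocks} that only later, in the proof of Proposition \ref{md47.4}, get combined as $x_ax_bx_cZ^2=X_1Z_1^{2^t}$ to kill degree-$47$ monomials; there is no ``promotion to degree $47$'' inside this lemma. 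Concretely, since $\max\{i:\omega_i(w)>0\}=3$ for these $w$, the definition of strict inadmissibility only allows error terms $Sq^t(y_t)$ with $1\le t\le 2^3-1=7$; your proposed decompositions using $Sq^8$ would not establish \emph{strict} inadmissibility, and strictness is exactly what is needed to feed Theorem \ref{dlKS}. (The paper's own treatment is much lighter: it simply observes that the list follows from the degree-$23$ computations in T\'in's thesis \cite{N.T1}, which is consistent with the weight vector $(3,4,3)$, not with your degree-$47$ framing.)

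A second gap is the claim that permutation symmetry lets you ``cover entire orbits from a single representative ... with no further work.'' Applying $\sigma\in S_5$ to a relation $w=\sum_j x_j+\sum_t Sq^t(y_t)$ does give a relation for $\sigma(w)$, because the $S_5$- and $\mathcal A_2$-actions commute and weight vectors are preserved; but the monomial order of Section \ref{s2} breaks ties by the left-lexicographic order on exponent vectors, which is \emph{not} $S_5$-invariant, so $x_j<w$ does not imply $\sigma(x_j)<\sigma(w)$. Strict inadmissibility genuinely fails to be permutation-invariant in this setting --- this is precisely why nearby statements such as Lemma \ref{bd47.7-1} must exclude specific permutations that are in fact admissible. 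So each permutation in families (a) and (b) needs its own (if routine) Cartan-style check, or at least an argument that the smaller terms remain smaller after permuting; you cannot discharge them for free. Your observation about part (d) is fine: $\rho_{(k,5)}$ is an $\mathcal A_2$-algebra monomorphism inserting a zero exponent at position $k$, so it preserves both the order and the witnessing relation, and it is legitimate to transport strict inadmissibility of the sixteen four-variable patterns (available from \cite{N.T1}, \cite{N.S3}) to their images.
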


\begin{lema}\label{bd47.11}
The following monomials are strictly inadmissible:

\begin{center}
\begin{tabular}{llll}
$T_{1}=x_1x_2^{3}x_3^{14}x_4^{14}x_5^{15}$, & $T_{2}=x_1x_2^{3}x_3^{14}x_4^{15}x_5^{14}$, & $T_{3}=x_1x_2^{3}x_3^{15}x_4^{14}x_5^{14}$, & $T_{4}=x_1x_2^{15}x_3^{3}x_4^{14}x_5^{14}$, \\
$T_{5}=x_1^{3}x_2x_3^{14}x_4^{14}x_5^{15}$, & $T_{6}=x_1^{3}x_2x_3^{14}x_4^{15}x_5^{14}$, & $T_{7}=x_1^{3}x_2x_3^{15}x_4^{14}x_5^{14}$, & $T_{8}=x_1^{3}x_2^{15}x_3x_4^{14}x_5^{14}$, \\
$T_{9}=x_1^{15}x_2x_3^{3}x_4^{14}x_5^{14}$, & $T_{10}=x_1^{15}x_2^{3}x_3x_4^{14}x_5^{14}$, & $T_{11}=x_1^{3}x_2^{13}x_3^{2}x_4^{14}x_5^{15}$, & $T_{12}=x_1^{3}x_2^{13}x_3^{2}x_4^{15}x_5^{14}$, \\
$T_{13}=x_1^{3}x_2^{13}x_3^{14}x_4^{2}x_5^{15}$, & $T_{14}=x_1^{3}x_2^{13}x_3^{14}x_4^{15}x_5^{2}$, & $T_{15}=x_1^{3}x_2^{13}x_3^{15}x_4^{2}x_5^{14}$, & $T_{16}=x_1^{3}x_2^{13}x_3^{15}x_4^{14}x_5^{2}$, \\
$T_{17}=x_1^{3}x_2^{15}x_3^{13}x_4^{2}x_5^{14}$, & $T_{18}=x_1^{3}x_2^{15}x_3^{13}x_4^{14}x_5^{2}$, & $T_{19}=x_1^{15}x_2^{3}x_3^{13}x_4^{2}x_5^{14}$, & $T_{20}=x_1^{15}x_2^{3}x_3^{13}x_4^{14}x_5^{2}$, \\
$T_{21}=x_1^{3}x_2^{3}x_3^{13}x_4^{14}x_5^{14}$, & $T_{22}=x_1^{3}x_2^{13}x_3^{3}x_4^{14}x_5^{14}$, & $T_{23}=x_1^{3}x_2^{13}x_3^{14}x_4^{3}x_5^{14}$, & $T_{24}=x_1^{3}x_2^{13}x_3^{14}x_4^{14}x_5^{3}$, \\
$T_{25}=x_1^{3}x_2^{5}x_3^{10}x_4^{14}x_5^{15}$, & $T_{26}=x_1^{3}x_2^{5}x_3^{10}x_4^{15}x_5^{14}$, & $T_{27}=x_1^{3}x_2^{5}x_3^{14}x_4^{10}x_5^{15}$, & $T_{28}=x_1^{3}x_2^{5}x_3^{14}x_4^{15}x_5^{10}$, \\
$T_{29}=x_1^{3}x_2^{5}x_3^{15}x_4^{10}x_5^{14}$, & $T_{30}=x_1^{3}x_2^{5}x_3^{15}x_4^{14}x_5^{10}$, & $T_{31}=x_1^{3}x_2^{15}x_3^{5}x_4^{10}x_5^{14}$, & $T_{32}=x_1^{3}x_2^{15}x_3^{5}x_4^{14}x_5^{10}$, \\
$T_{33}=x_1^{15}x_2^{3}x_3^{5}x_4^{10}x_5^{14}$, & $T_{34}=x_1^{15}x_2^{3}x_3^{5}x_4^{14}x_5^{10}$, & 
 $T_{35}=x_1^{3}x_2^{5}x_3^{14}x_4^{11}x_5^{14}$, & $T_{36}=x_1^{3}x_2^{5}x_3^{14}x_4^{14}x_5^{11}$, \\ $T_{37}=x_1^{3}x_2^{13}x_3^{6}x_4^{10}x_5^{15}$, & $T_{38}=x_1^{3}x_2^{13}x_3^{6}x_4^{15}x_5^{10}$, &
$T_{39}=x_1^{3}x_2^{13}x_3^{15}x_4^{6}x_5^{10}$, & $T_{40}=x_1^{3}x_2^{15}x_3^{13}x_4^{6}x_5^{10}$, \\ $T_{41}=x_1^{15}x_2^{3}x_3^{13}x_4^{6}x_5^{10}$, & $T_{42}=x_1^{3}x_2^{13}x_3^{6}x_4^{11}x_5^{14}$, &
$T_{43}=x_1^{3}x_2^{13}x_3^{6}x_4^{14}x_5^{11}$, & $T_{44}=x_1^{3}x_2^{13}x_3^{14}x_4^{6}x_5^{11}$, \\
$T_{45}=x_1^{7}x_2^{9}x_3^{3}x_4^{14}x_5^{14}$, & $T_{46}=x_1^{3}x_2^{13}x_3^{7}x_4^{10}x_5^{14}$, &
$T_{47}=x_1^{3}x_2^{13}x_3^{7}x_4^{14}x_5^{10}$, & $T_{48}=x_1^{3}x_2^{13}x_3^{14}x_4^{7}x_5^{10}.$ 
\end{tabular}%
\end{center}
\end{lema}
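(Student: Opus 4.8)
\textbf{Proof proposal for Lemma \ref{bd47.11}.}
The plan is to follow the template already used in Lemmas \ref{bd47.1}, \ref{bd47.2}, \ref{bd47.4}, \ref{bd47.5}, \ref{bd47.7-1} and \ref{bd47.9}: show directly, by exhibiting an explicit Cartan-formula decomposition, that each $T_i$ is strictly inadmissible, that is, $T_i = \sum_j x_j + \sum_{1\leq t\leq 2^s-1}Sq^{t}(y_t)$ modulo $\mathscr P_5^-(\overline{\omega}_{(5)})$, where each $x_j < T_i$ and $s = \max\{i\in\mathbb Z:\omega_i(T_i)>0\}$. First I would record that all $48$ monomials have weight vector $\overline{\omega}_{(5)} = (3,4,3,3)$, so $s = 4$ and the admissible operations to use are $Sq^1, Sq^2, Sq^4, Sq^8$; this is the computation that controls which terms are allowed to appear in the $y_t$'s. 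I would then treat in full detail only two representative monomials — say $T_1 = x_1x_2^{3}x_3^{14}x_4^{14}x_5^{15}$ from the ``spike-like'' block and $T_{21} = x_1^{3}x_2^{3}x_3^{13}x_4^{14}x_5^{14}$ (or $T_{45} = x_1^{7}x_2^{9}x_3^{3}x_4^{14}x_5^{14}$) from the block where the leading exponents carry larger $2$-adic length — and assert that ``the others can be proved by a similar computation,'' exactly as in the preceding lemmas.

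The key steps, in order, are: (i) fix the monomial $T_i$ and compute $\omega(T_i)$ to identify $s$; (ii) apply the Cartan formula to the obvious candidate $Sq^{2^k}$ applied to the ``halved'' monomials obtained by lowering the even parts of the exponents, collecting the error terms; (iii) iterate over $k = 0,1,2,3$, at each stage absorbing into $\mathscr P_5^-(\overline{\omega}_{(5)})$ every monomial of strictly smaller weight vector and keeping track only of the monomials in $\mathscr P_5(\overline{\omega}_{(5)})$; (iv) verify that every surviving non-hit monomial on the right-hand side is strictly smaller than $T_i$ in the order of Kameko (first compare weight vectors, then exponent vectors lexicographically), which is automatic here since each such monomial arises by moving weight from a higher position to a lower one or by decreasing a leading exponent; (v) conclude strict inadmissibility, and invoke Theorem \ref{dlKS} implicitly wherever a monomial factors as $uy^{2^t}$ with $u$ one of the already-established strictly inadmissible monomials of Lemma \ref{bd47.10} — this lets several of the $T_i$ be reduced to earlier cases rather than computed from scratch.

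The main obstacle is purely the bulk and bookkeeping of the Cartan-formula expansions: for monomials such as $T_{21}$ and $T_{45}$ the decomposition requires all four operations $Sq^1, Sq^2, Sq^4, Sq^8$ simultaneously, and the polynomials $y_1, y_2, y_4, y_8$ each contain on the order of a dozen to several dozen monomials, so the danger is an arithmetic slip in reducing binomial coefficients mod $2$ or in discarding the wrong lower-weight terms. I would organize the computation so that at each application of $Sq^{2^k}$ I first write down the ``principal'' term (the image of the monomial with the $k$-th bit of each relevant exponent cleared) and then enumerate the cross terms, immediately dropping those whose weight vector drops below $\overline{\omega}_{(5)}$; this keeps the intermediate expressions manageable and makes the final comparison with $T_i$ transparent. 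As in the rest of Section \ref{s3}, the verification that $x_j < T_i$ for every term $x_j$ on the right is routine once the weight vectors are pinned down, so the real content — and the only place where care is genuinely needed — is producing one correct explicit decomposition for each of the two displayed representatives; the remaining $46$ monomials then follow by the same bookkeeping, together with Theorem \ref{dlKS} and Lemma \ref{bd47.10} for those that reduce to earlier strictly inadmissible monomials.
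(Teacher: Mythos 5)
Your proposal follows essentially the same route as the paper: after checking that all $T_i$ have weight vector $\overline{\omega}_{(5)}=(3,4,3,3)$, one exhibits an explicit Cartan-formula decomposition for one or two representatives (the paper chooses $T_{35}$ and $T_{42}$, writing each as a sum of $Sq^{2^k}$-images plus monomials strictly smaller in Kameko's order, modulo $\mathscr P_5^-(\overline{\omega}_{(5)})$) and disposes of the remaining cases by the same bookkeeping, as in Lemmas \ref{bd47.4}, \ref{bd47.5} and \ref{bd47.9}. Your choice of different representatives and the optional reduction of some cases via Theorem \ref{dlKS} are immaterial variations, so the plan is sound.
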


\begin{proof}
It is easy to see that $\omega(T_s) = \overline{\omega}_{(5)}$ for $s = 1, 2, \ldots, 48.$ We prove the lemma for the monomials $T_{35}$ and $T_{42}.$  The others can be proved by using a similar technique as in Lemmas \ref{bd47.4}, \ref{bd47.5} and \ref{bd47.9}. Computing from the Cartan formula, we get
$$ \begin{array}{ll}
T_{35} &= Sq^1\big(x_1^3x_2^{6}x_3^{11}x_4^{13}x_5^{13} + x_1^3x_2^{6}x_3^{13}x_4^{11}x_5^{13}\big) + Sq^2\big( x_1^{5}x_2^{3}x_3^{7}x_4^{13}x_5^{17}+ x_1^{5}x_2^{3}x_3^{7}x_4^{17}x_5^{13}\\
&\quad +
 x_1^{5}x_2^{3}x_3^{9}x_4^{11}x_5^{17}+
 x_1^{5}x_2^{3}x_3^{9}x_4^{17}x_5^{11}+
 x_1^{5}x_2^{3}x_3^{11}x_4^{9}x_5^{17}+
 x_1^{5}x_2^{3}x_3^{11}x_4^{13}x_5^{13}\\
&\quad + x_1^{5}x_2^{3}x_3^{13}x_4^{7}x_5^{17}+
x_1^{5}x_2^{3}x_3^{13}x_4^{11}x_5^{13}+
 x_1^{5}x_2^{3}x_3^{17}x_4^{7}x_5^{13}+
 x_1^{5}x_2^{3}x_3^{17}x_4^{9}x_5^{11}\big)\\
&\quad  + Sq^4\big( x_1^{3}x_2^{3}x_3^{7}x_4^{13}x_5^{17}+
 x_1^{3}x_2^{3}x_3^{7}x_4^{17}x_5^{13}+
 x_1^{3}x_2^{3}x_3^{9}x_4^{11}x_5^{17}+
 x_1^{3}x_2^{3}x_3^{9}x_4^{17}x_5^{11}\\
&\quad+x_1^{3}x_2^{3}x_3^{11}x_4^{9}x_5^{17}
 x_1^{3}x_2^{3}x_3^{11}x_4^{13}x_5^{13}+
 x_1^{3}x_2^{3}x_3^{13}x_4^{7}x_5^{17}+
 x_1^{3}x_2^{3}x_3^{13}x_4^{11}x_5^{13}\\
&\quad + x_1^{3}x_2^{3}x_3^{17}x_4^{7}x_5^{13}+
 x_1^{3}x_2^{3}x_3^{17}x_4^{9}x_5^{11}\big)+ x_1^3x_2^{5}x_3^{11}x_4^{14}x_5^{14}\ {\rm modulo}(\mathscr P_5^-(\overline{\omega}_{(5)})).
\end{array}$$
Obviously, $T_{35} > x_1^3x_2^{5}x_3^{11}x_4^{14}x_5^{14},$ hence $T_{35}$ is strictly inadmissible. By a similar computation, we claim that
$$ T_{42}  = (p_1  + p_2)\ {\rm modulo}(\mathcal A_2^+\mathscr P_5 + \mathscr P_5^-(\overline{\omega}_{(5)})),$$
where $p_1 = x_1^3x_2^{7}x_3^{10}x_4^{13}x_5^{14},$ and $p_2 = x_1^3x_2^{13}x_3^{3}x_4^{14}x_5^{14}.$ This completes the proof.
\end{proof}

\begin{proof}[{\it Proof of  Proposition \ref{md47.4}}]
Suppose that $X$ is a admissible monomial in $\mathscr P_5^+$ and $\omega(X) = \overline{\omega}_{(5)}.$ Then $\omega_1(X) = 3$ and $X = x_ax_bx_cu^2$ with $1\leq a < b <c \leq 5$ and $u\in (\mathscr P_5^+)_{22}.$ Then, by Theorem \ref{dlKS}, $u$ is admissible and $u\in \mathscr B_5^+(\omega_{(5)}).$

Let $Z\in B_5^+(\omega_{(5)})$ such that $x_ax_bx_cZ^2\in (\mathscr P_5^+)_{47}$ with $1\leq a < b <c \leq 5.$ Denote by $\overline{V}$ the set of all the monomials as given in Proposition \ref{md47.4}. By a direct computation using Proposition \ref{md22-2}(IV), we see that if $x_ax_bx_cZ^2\not\in \overline{V}$  then there is a monomial $X_1$ which is given in Lemmas \ref{bd47.10} and \ref{bd47.11} such that $x_ax_bx_cZ^2 = X_1Z_1^{2^t}$ with suitable monomial $Z_1\in \mathscr P_5$ and $t = {\rm max}\{q\in\mathbb{Z}:\, \omega_q(X_1) > 0\}.$ Based on Theorem \ref{dlKS}, we deduce that $x_ax_bx_cZ^2$ is inadmissible. Combining this with the above data, one gets $X\in \overline{V}.$ This means $\mathscr B_5^+(\overline{\omega}_{(5)}) \subseteq  \overline{V}.$

Next, we show that the set $[\overline{V}]_{\overline{\omega}_{(5)}}$ is  linearly independent in the space $Q\mathscr P_5^+(\overline{\omega}_{(5)}).$ Indeed,  suppose there is a linear relation $\mathcal S = \sum_{480\leq m \leq 494}\gamma_m\mathcal Y_{47,\,m}\equiv_{\overline{\omega}_{(5)}} 0,$ where with $\gamma_m\in\mathbb Z/2,\ m = 480, \ldots, 494$ and $\mathcal Y_{47,\,m}\in \overline{V}.$ By combining Theorem \ref{dlsig} and Proposition \ref{mdPS},  we  explicitly calculate  $\pi_{(k;\mathscr K)}(\mathcal S)$ in terms of a given minimal set of $\mathcal A_2$-generators in $\mathscr P_4\ ({\rm modulo}(\mathcal A_2^+\mathscr P_{4})).$ From the relations $\pi_{(k;\mathscr K)}(\mathcal S) \equiv_{\overline{\omega}_{(5)}} 0$ with $\ell(\mathscr K) \leq 2,$ we get $\gamma_m = 0$ for all $m.$ 

To prove $[\overline{V}]_{\overline{\omega}_{(5)}}$ is a basis of $Q\mathscr P_5^+(\overline{\omega}_{(5)})$ we need to show that $[\mathcal Y_{47,\,m}]_{\overline{\omega}_{(5)}}\neq [0]$ for all $\mathcal Y_{47,\, m}\in \overline{V}.$ Denote by $\overline{V}$ the set of all the monomials as given in Proposition \ref{md47.4}.
By a similar argument as given in the proof of Propositions \ref{md21.1} and \ref{md22-1},  we can prove that the set $[\mathscr B_5^+(\overline{\omega}_{(1)})\cup V\cup \overline{V}]$ is  linearly independent in $(Q\mathscr P_5^+)_{47},$ where $V$ the set of all the admissible monomials as given in Proposition \ref{md47.3}. This implies $[\mathcal Y_{47,\,m}]_{\overline{\omega}_{(5)}}\neq [0]$ for all $\mathcal Y_{47,\,m}.$ The proposition is proved.
\end{proof}

Now, since $\dim(Q\mathscr P_5^{0})_{47} = 560$ and $\dim(Q\mathscr P_5^0)_{21} = 460,$ by Corollaries \ref{hq22}, \ref{hq47.1} and Propositions \ref{md47.2}, \ref{md47.3}, \ref{md47.4}, we conclude that $Q\mathscr P_5$ has dimension $1894$ in degree $47.$ The proof of Theorem \ref{dl1} is completed.

{\bf Final remarks}. Recall that Kameko's map $(\widetilde {Sq_*^0})_{(5, 13.2^t-5)}$ is an epimorphism of $\mathbb Z/2(GL_5)$-modules. This implies that $(Q\mathscr P_5)_{13.2^t-5}\cong \mbox{Ker}(\widetilde {Sq_*^0})_{(5,13.2^t-5)}\bigoplus (Q\mathscr P_5)_{13.2^{t-1}-5}.$ According to Lemma \ref{bd21.1}, we get $\mbox{Ker}(\widetilde {Sq_*^0})_{(5, 13.2^1-5)} \cong Q\mathscr P_5(\omega) \bigoplus Q\mathscr P_5(\omega'),$ where $\omega = (3,3,1,1),$ and $\omega' = (3,3,3).$ A simple computation shows $ \mathscr B_5^0(13.2^1-5) =  \overline{\Phi}(\mathscr B_4(13.2^1-5)) = \mathscr B_5^0(\omega)\cup \mathscr B_5^0(\omega').$ Combining this with Propositions \ref{md21.2} and \ref{md21.3} gives $$ \overline{\Phi}(\mathscr B_4(\omega))\subset \mathscr B_5(\omega),\quad \overline{\Phi}(\mathscr B_4(\omega'))\subset \mathscr B_5(\omega').$$
Next, we have $(Q\mathscr P_5)_{13.2^t-5}\cong (Q\mathscr P_5)_{13.2^2-5}$ for all $t \geq 2$ and $$ \mbox{Ker}(\widetilde {Sq_*^0})_{(5,13.2^2-5)} \cong \bigoplus_{1\leq k\leq 5}Q\mathscr P_5(\overline{\omega}_{(k)}).$$ From the above computations, $$\mathscr B_5^0(\overline{\omega}_{(1)}) = \mathscr B_5^0(13.2^2-5) =  \overline{\Phi}(\mathscr B_4(13.2^2-5)) =  \overline{\Phi}(\mathscr B_4(\overline{\omega}_{(1)})).$$ Then, by Corollary \ref{hq47.1}, and Propositions \ref{md47.2}, \ref{md47.3}, \ref{md47.4}, we conclude
$$\overline{\Phi}(\mathscr B_4(\overline{\omega}_{(1)})) \subset \mathscr B_5(\overline{\omega}_{(1)}).$$ If $\overline{\omega}$ is a weight vector of degree $13.2^2-5$ and $\overline{\omega}\neq \overline{\omega}_{(1)},$ then $\mathscr B_4(\overline{\omega}) = \emptyset.$ Furthermore, if $t > 2,$ then $\mathscr B_4(13.2^t-5) = \emptyset.$

From the above remarks, Conjecture \ref{gtS} also satisfies in case of five variables and generic degree $13.2^t-5$ for $t$ an arbitrary positive integer.

 \section{An application of Theorem \ref{dl1}}\label{s4}
The goal of this section is to prove Theorem \ref{dl2}. More precisely, by using our results in Sect.\ref{s3} and a result in \cite{N.T2}, we describe the $\mathbb Z/2(GL_d)$-modules structure of  $Q\mathscr P_5$ in degree $13.2^{t} - 5$ for $t\in \{0, 1\}.$ Then, we  explicitly determine all $GL_5$-invariants of these spaces.

\medskip

Before coming to the proof of the theorem, we introduce some notations and homomorphisms.  We note that $(\mathbb Z/2)^{\times d}$ regarded as a $\mathbb Z/2$-vector space of dimension $d$ and $$(\mathbb Z/2)^{\times d}\cong \langle x_1, \ldots, x_d\rangle \subset \mathscr P_d.$$ For $1\leq t\leq d,$  define the $\mathbb Z/2$-linear map $\tau_t: (\mathbb Z/2)^{\times d}\to (\mathbb Z/2)^{\times d},$ which is determined by  $$\tau_t(x_t) = x_{t+1},\;\tau_t(x_{t+1}) = x_t,\;\tau_t(x_m) = x_m\ \ (m\neq t, t +1,\; 1\leq t \leq d-1),$$ 
and $$\tau_d(x_1) = x_1 + x_2,\; \tau_d(x_m) = x_m\ \ (m  > 1).$$ 
Denote by $S_d$ the symmetric group of degree $d.$ Then, $S_d$ is generated by $\tau_t,\ 1\leq t\leq d-1.$ For each permutation in $S_d$, consider corresponding permutation matrix; these form a group of matrices isomorphic to $S_d.$ So, $GL_d = GL(d,\mathbb Z/2)\cong GL((\mathbb Z/2)^{\times d})$ is generated by $S_d$ and $\tau_d.$ Let $X = x_1^{a_1}x_2^{a_2}\ldots x_d^{a_d}$ be an monomial in $\mathscr P_d.$ Then, the weight vector $\omega(X)$ is invariant under the permutation of the generators $x_j,\ j = 1, 2, \ldots, d;$ hence $Q\mathscr P_d(\omega)$ also has a $S_d$-module structure. We have a homomorphism $\tau_t: \mathscr P_d\to \mathscr P_d$ of algebras, which is induced by $\tau_t.$ Hence, a class $[u]_{\omega}\in Q\mathscr P_d(\omega)$ is an $GL_d$-invariant if and only if $\tau_t(u) + u \equiv_{\omega} 0$ for $1\leq t\leq d.$ If  $\tau_t(u) + u \equiv_{\omega} 0$ for $1\leq t\leq d-1,$ then $[u]_{\omega}$ is an $S_d$-invariant. Note that $\dim((QP_d)^{GL_d}_n)\leq \sum_{\deg(\omega) = n}\dim(QP_d(\omega)^{GL_d})$  (see Sect.\ref{s2}).

Let $\omega$ be a weight vector of degree $n$ and let $\mathcal Y_1, \mathcal Y_2, \ldots, \mathcal Y_s$ be the monomials in $\mathscr P_d(\omega)$ for $s\geq 1.$ We consider a subgroup $L\subseteq GL_d$ and denote by
$$ \begin{array}{ll}
\medskip
L(\mathcal Y_1, \mathcal Y_2, \ldots, \mathcal Y_s)&= \{\sigma(\mathcal Y_j):\, \sigma \in L,\, 1\leq j\leq s\}\subset \mathscr P_d(\omega),\\
\medskip
[\mathscr B(\mathcal Y_1, \mathcal Y_2, \ldots, \mathcal Y_s)]_{\omega}&= [\mathscr B_d(\omega)]_{\omega}\cap \langle [L(\mathcal Y_1, \mathcal Y_2, \ldots, \mathcal Y_s)]_{\omega}\rangle,\\
\theta (\mathcal Y_j) &= \sum_{x\in \mathscr B_d(n)\cap L(\mathcal Y_j)} x, 
\end{array}$$ 
where $\langle [L(\mathcal Y_1, \mathcal Y_2, \ldots, \mathcal Y_s)]_{\omega}\rangle$ is the $L$-submodule of $Q\mathscr P_d(\omega)$ generated by the set $$\{[\mathcal Y_1]_{\omega}, [\mathcal Y_2]_{\omega},\ldots, [\mathcal Y_s]_{\omega}\}.$$

Now, we have $13.2^{1} - 5 = 8,$ and $13.2^{1} - 5 = 21.$ Recall that the squaring operation $(\widetilde {Sq^0_*})_{(5, 21)}:  (Q\mathscr P_5)_{21} \to (Q\mathscr P_5)_{8}$ is an epimorphism of $GL_5$-modules. So, to prove Theorem \ref{dl2}, we need to compute all $GL_5$-invariants of $(Q\mathscr P_5)_8.$

\subsection{Computation of  $(Q\mathscr P_5)^{GL_5}_8$}\label{b-8}\label{s4.1}

According to T\'in \cite{N.T1}, the $\mathbb Z/2$-vector space $(Q\mathscr P_5)_8$ has the basis $[\{\mathcal Y_{8,\,i}:\, 1\leq i\leq 174\}],$ where the monomials $\mathcal Y_i:= \mathcal Y_{8,\,i}, 1\leq i\leq 174,$ are given in Sect.\ref{s5.1}.

\begin{propo}\label{bb8}
The space $(Q\mathscr P_5)^{GL_5}$ is trivial in degree $8.$
\end{propo}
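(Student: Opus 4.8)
The plan is to prove Proposition \ref{bb8} by showing that $(Q\mathscr P_5)_8^{GL_5} = 0$, using the action of the generators $\tau_t$, $1\leq t\leq 5$, of $GL_5$ on the explicit admissible monomial basis $[\{\mathcal Y_{8,i}:1\leq i\leq 174\}]$ of $(Q\mathscr P_5)_8$ recorded in Sect.\ref{s5.1} (from T\'in \cite{N.T1}). First I would observe that $\mu(8)=2$, so the minimal spike of degree $8$ in $\mathscr P_5$ is $x_1^7x_2^1$ with weight vector $(2,2,1)$, and by Theorem \ref{dlsig} any admissible monomial $X$ with $[X]\neq[0]$ satisfies $\omega_1(X)\in\{2,4\}$; a check of the list shows the only weight vectors occurring are $\omega_{(a)}=(2,2,1)$ and $\omega_{(b)}=(4,1,1)$ and $\omega_{(c)}=(4,3)$ (indeed $8=2+4=4+4$ read dyadically, matching the analysis already used in Remark \ref{nx2}). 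Since each $Q\mathscr P_5(\omega)$ is an $S_5$-submodule and the filtration by $\deg(\omega)$ is $GL_5$-stable, I would use the inequality $\dim((Q\mathscr P_5)_8^{GL_5})\le \sum_{\deg\omega=8}\dim(Q\mathscr P_5(\omega)^{GL_5})$ from Sect.\ref{s2} to reduce to proving $Q\mathscr P_5(\omega)^{GL_5}=0$ for each of the three weight vectors separately.

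For each weight vector $\omega$, the strategy is the standard one: a $GL_5$-invariant class $[f]_\omega$ must in particular be $S_5$-invariant, so $f$ is (modulo $\mathcal A_2^+\mathscr P_5 + \mathscr P_5^-(\omega)$) a sum of the $S_5$-orbit sums $\theta(\mathcal Y_j)$ of the admissible monomials in $\mathscr B_5(\omega)$. I would group the $174$ monomials into their $S_5$-orbits, write a general $S_5$-invariant as $f=\sum_j \gamma_j\,\theta(\mathcal Y_j)$ with $\gamma_j\in\mathbb Z/2$, and then impose the single remaining relation $\tau_5(f)+f\equiv_\omega 0$. Expanding $\tau_5(f)$ via $\tau_5(x_1)=x_1+x_2$, Cartan's formula, and reducing the result to the admissible basis using Theorem \ref{dlKS}, Theorem \ref{dlsig} and the hit relations (exactly the bookkeeping already carried out in the proofs of Lemma \ref{bd21.1}, Proposition \ref{md21.1}, etc.), one obtains a homogeneous linear system in the $\gamma_j$. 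I expect this system to force all $\gamma_j=0$ for each of $\omega_{(a)},\omega_{(b)},\omega_{(c)}$, hence $Q\mathscr P_5(\omega)^{GL_5}=0$ in every case and therefore $(Q\mathscr P_5)_8^{GL_5}=0$. A useful sanity check along the way: for $\omega_{(c)}=(4,3)$ the subspace $Q\mathscr P_5(\omega_{(c)})$ is small, and one can dispatch it quickly by noting that $\tau_1$-, $\tau_2$-, $\tau_3$-, $\tau_4$-invariance already kills most orbit sums, leaving at most one candidate whose $\tau_5$-image is then computed directly.

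The main obstacle is the middle weight vector $\omega_{(a)}=(2,2,1)$: it contains the bulk of the $174$ basis monomials (those with $\omega_1=2$, i.e. of the form $X_{(\{i,j,k\},5)}y^2$ with $y\in\mathscr B_5(1,1,\deg)$—a large set), so the $S_5$-orbit decomposition has many orbits and the $\tau_5$-relation produces a sizeable linear system. Handling it requires care in (i) correctly listing the orbit representatives, (ii) computing $\tau_5$ on each representative and reducing via the Cartan formula and the strict inadmissibility lemmas in Sect.\ref{s3} down to the given admissible basis, and (iii) solving the resulting system over $\mathbb Z/2$. I would organize this by first using the partial relations $\pi_{(k;\mathscr K)}$ and the $S_5$-invariance to eliminate as many $\gamma_j$ as possible before even touching $\tau_5$, mirroring the reduction technique described just before Conjecture \ref{gtS}; this keeps the final computation manageable. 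Once the three cases are closed, the proposition follows, and combined with the epimorphism $(\widetilde{Sq^0_*})_{(5,21)}$ this is exactly what feeds into the proof of Theorem \ref{dl2}.
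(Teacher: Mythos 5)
Your overall strategy --- split $(Q\mathscr P_5)_8$ according to the weight vectors occurring among the $174$ admissible monomials, identify the $S_5$-invariants of each piece as orbit sums, and then kill the surviving candidates with the extra generator $\tau_5$ --- is exactly the route the paper takes (Lemmas \ref{bb8.1}--\ref{bb8.3} followed by the short $\tau_5$ computation in the proof of Proposition \ref{bb8}), so there is no methodological divergence to discuss.

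There is, however, a concrete error in your setup that would make the case division fail as written: the weight vectors you list, $(2,2,1)$, $(4,1,1)$ and $(4,3)$, all have degree $10$, not $8$ (recall $\deg(\omega)=\sum_{t\geq 1}2^{t-1}\omega_t$, so for instance $\deg(2,2,1)=2+4+4=10$). These are precisely the weight vectors of the degree-$10$ monomials quoted from T\'in in the degree-$22$ analysis (Remark \ref{nx2}), which you seem to have transplanted to degree $8$. In the same vein, the minimal spike $x_1^{7}x_2$ has $\omega(x_1^{7}x_2)=(2,1,1)$, not $(2,2,1)$. No monomial of degree $8$ can have any of the weights you name, so grouping $\mathscr B_5(8)$ by them is vacuous. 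The weight vectors that actually occur are $\widetilde\omega_{(1)}=(2,1,1)$, $\widetilde\omega_{(2)}=(2,3)$ and $\widetilde\omega_{(3)}=(4,2)$, of dimensions $105$, $24$ and $45$ respectively (so the $105$-dimensional $(2,1,1)$ piece, which lies entirely in $Q\mathscr P_5^{0}$, is the bulky one, while $(2,3)$ is the small case, not the analogue of your $(4,3)$). Once the weight vectors are corrected, the rest of your plan --- orbit sums for the $S_5$-invariants, the inequality $\dim((Q\mathscr P_5)_8^{GL_5})\leq\sum_{\deg\omega=8}\dim(Q\mathscr P_5(\omega)^{GL_5})$, and the final $\tau_5$-relations --- goes through and reproduces the paper's proof: the $(2,1,1)$ piece carries a four-dimensional space of $S_5$-invariants spanned by orbit sums, the $(2,3)$ and $(4,2)$ pieces have no $GL_5$-invariants at all, and the $\tau_5$ computation annihilates the remaining four coefficients.
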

We prepare some lemmas for the proof of the proposition. We have
$$(Q\mathscr P_5)_8  = Q\mathscr P_5^0(\widetilde\omega_{(1)})\bigoplus Q\mathscr P_5(\widetilde\omega_{(2)})\bigoplus Q\mathscr P_5(\widetilde\omega_{(3)}),$$
where $\widetilde\omega_{(1)} = (2,1,1),\ \widetilde\omega_{(2)} = (2,3),\ \widetilde\omega_{(3)} = (4,2).$
We see that $\dim Q\mathscr P_5^0(\widetilde\omega_{(1)}) = 105$ with the basis $\bigcup_{1\leq i\leq 6}[\mathscr B(\mathcal Y_i)]_{\widetilde\omega_{(1)}},$ where $ \mathcal Y_1 = x_4x_5^7,\ \mathcal Y_2 =  x_4^3x_5^5,\  \mathcal Y_3 = x_3x_4x_5^6,\ \mathcal Y_4 =x_3x_4^2x_5^5,\ \mathcal Y_5 =x_3x_4^3x_5^4,$ and $\mathcal Y_6 = x_2x_3x_4^2x_5^4.$ Observe that $\widetilde\omega_{(1)}$ is the weight vector of the mimimal spike $x_1^7x_2,$ so $[X]_{\widetilde\omega_{(1)}} = [X]$ for all $X\in \mathscr P_5.$ A direct computation shows that 
$$ \begin{array}{ll}
\medskip
[S_5(\mathcal Y_1)] &= \langle [\mathcal Y_i]:\, 1\leq i\leq 20\rangle,\\
\medskip
[S_5(\mathcal Y_2)] &= \langle [\mathcal Y_i]:\, 21\leq i\leq 30\rangle,\\
\medskip
[S_5(\mathcal Y_3, \mathcal Y_4, \mathcal Y_5)] &= \langle [\mathcal Y_i]:\, 31\leq i\leq 90\rangle,\\
\medskip
[S_5(\mathcal Y_6)] &= \langle [\mathcal Y_i]:\, 91\leq i\leq 105\rangle
\end{array}$$\
are  $S_5$-submodules of $Q\mathscr P_5^0(\widetilde\omega_{(1)}).$ Hence, we have a direct summand decomposition of $S_5$-modules:
$ Q\mathscr P_5^0(\widetilde\omega_{(1)}) =  [S_5(\mathcal Y_1)]\bigoplus [S_5(\mathcal Y_2)] \bigoplus [S_5(\mathcal Y_3, \mathcal Y_4, \mathcal Y_5)]\bigoplus  [S_5(\mathcal Y_6)].$

\begin{lema}\label{bb8.1}
$Q\mathscr P_5^0(\widetilde\omega_{(1)}))^{S_5}$ has dimension $4.$
\end{lema}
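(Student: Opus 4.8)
The plan is to exploit the direct-sum decomposition of $Q\mathscr P_5^0(\widetilde\omega_{(1)})$ as an $S_5$-module into the four summands
$[S_5(\mathcal Y_1)],\ [S_5(\mathcal Y_2)],\ [S_5(\mathcal Y_3,\mathcal Y_4,\mathcal Y_5)],\ [S_5(\mathcal Y_6)]$
and to compute the space of $S_5$-invariants in each summand separately; since the decomposition is one of $S_5$-modules, we will have
$(Q\mathscr P_5^0(\widetilde\omega_{(1)}))^{S_5} = [S_5(\mathcal Y_1)]^{S_5}\oplus [S_5(\mathcal Y_2)]^{S_5}\oplus [S_5(\mathcal Y_3,\mathcal Y_4,\mathcal Y_5)]^{S_5}\oplus [S_5(\mathcal Y_6)]^{S_5}$,
and the claim is that each of these four pieces is one-dimensional. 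Since $\widetilde\omega_{(1)}$ is the weight vector of the minimal spike $x_1^7x_2$, Proposition \ref{PS} and Theorem \ref{dlsig} tell us that $\mathscr P_5^-(\widetilde\omega_{(1)})$ contributes nothing, so $[X]_{\widetilde\omega_{(1)}} = [X]$ for all monomials $X$ of the relevant form and the relation ``$\equiv_{\widetilde\omega_{(1)}}$'' coincides with ``$\equiv$'' (modulo $\mathcal A_2^+\mathscr P_5$) on these summands. This simplifies the invariance condition: a class $[u]$ is an $S_5$-invariant iff $\tau_t(u) + u \equiv 0$ for $1\le t\le 4$.

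Next I would carry out the computation summand by summand. For the summand $[S_5(\mathcal Y_1)]$, spanned by the $20$ classes $[\mathcal Y_i],\ 1\le i\le 20$, I write a general element $f = \sum_{1\le i\le 20}\gamma_i\mathcal Y_i$ and impose $\tau_t(f)\equiv f$ for $t = 1,2,3,4$; each $\tau_t$ permutes (or, for $t=4$, sends $x_1\mapsto x_1+x_2$ followed by expansion via the binomial theorem) the monomials $\mathcal Y_i$, and after rewriting $\tau_t(f) + f$ in terms of the admissible basis (using the admissibility relations recorded implicitly in the list in Sect.\ref{s5.1}), I collect coefficients and solve the resulting homogeneous linear system over $\mathbb Z/2$. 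I expect the solution space to be $1$-dimensional, generated by the full symmetrization $\theta(\mathcal Y_1) = \sum_{x\in \mathscr B_5(8)\cap S_5(\mathcal Y_1)}x$ (equivalently the orbit sum), which is automatically $\tau_t$-fixed for $1\le t\le 4$; one must still check it is fixed by $\tau_5$ as well if one wanted the $GL_5$-statement, but here we only need the $S_5$-count. I repeat this for $[S_5(\mathcal Y_2)]$ (dimension $10$), for $[S_5(\mathcal Y_3,\mathcal Y_4,\mathcal Y_5)]$ (dimension $60$, the largest and most delicate system), and for $[S_5(\mathcal Y_6)]$ (dimension $15$), in each case finding exactly one invariant, namely the appropriate orbit-sum class $\theta(\mathcal Y_2)$, $\theta(\mathcal Y_3)+\theta(\mathcal Y_4)+\theta(\mathcal Y_5)$ (or some $\mathbb Z/2$-combination of the three orbit sums forced by the $\tau_4$-relations), and $\theta(\mathcal Y_6)$. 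Summing, $(Q\mathscr P_5^0(\widetilde\omega_{(1)}))^{S_5}$ is $4$-dimensional.

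The main obstacle is the $60$-dimensional summand $[S_5(\mathcal Y_3,\mathcal Y_4,\mathcal Y_5)]$: here the three monomials $\mathcal Y_3 = x_3x_4x_5^6$, $\mathcal Y_4 = x_3x_4^2x_5^5$, $\mathcal Y_5 = x_3x_4^3x_5^4$ lie in distinct $S_5$-orbits that nonetheless get linked by the action of $\tau_4$ (which does not merely permute variables but mixes $x_1$ and $x_2$, producing, after expansion, monomials that may be inadmissible and must be straightened back into the admissible basis). Keeping track of which straightening identities are needed, and verifying that the only consistent solution of the combined $\tau_1,\dots,\tau_4$ system is the expected one-dimensional space, is the technical heart of the argument; I would organize it by first finding the $S_5$-invariants of each orbit (three candidate orbit sums), then imposing the extra $\tau_4$-compatibility, which cuts the space down from $3$ to $1$. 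The smaller summands are routine once the same method is in place. Assembling the four contributions gives $\dim\big((Q\mathscr P_5^0(\widetilde\omega_{(1)}))^{S_5}\big) = 4$, as asserted.
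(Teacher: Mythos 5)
Your global strategy is the same as the paper's: split $Q\mathscr P_5^0(\widetilde\omega_{(1)})$ into the four $S_5$-summands $[S_5(\mathcal Y_1)]$, $[S_5(\mathcal Y_2)]$, $[S_5(\mathcal Y_3,\mathcal Y_4,\mathcal Y_5)]$, $[S_5(\mathcal Y_6)]$, write a general element of each, impose invariance under the generators of $S_5$, and solve the resulting linear systems over $\mathbb Z/2$ to get one invariant per summand. However, there is a genuine error in how you set up the invariance conditions. In the paper's conventions $\tau_1,\tau_2,\tau_3,\tau_4$ are the adjacent transpositions of variables (in particular $\tau_4$ swaps $x_4$ and $x_5$), and the non-permutation map $x_1\mapsto x_1+x_2$ is $\tau_5$, which enters only for $GL_5$-invariance. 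You assert (twice, and you make it the ``technical heart'' of the computation in the $60$-dimensional summand) that $\tau_4$ is the substitution $x_1\mapsto x_1+x_2$ to be expanded binomially. If you carry out the computation as described, i.e. impose invariance under $(x_1x_2),(x_2x_3),(x_3x_4)$ together with the transvection, you are computing the invariants of the copy of $GL_4(\mathbb Z/2)$ acting on $\langle x_1,\dots,x_4\rangle$ and fixing $x_5$, not of $S_5$; in particular invariance under $(x_4x_5)$ is never checked, and the condition you do impose is a $GL$-type condition that $S_5$-invariants need not satisfy. So the argument, as written, does not establish the lemma; note also your own later remark that ``$\tau_5$'' is the extra $GL_5$-generator, which contradicts your description of $\tau_4$.

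Relatedly, your explanation of why the three orbits in $[S_5(\mathcal Y_3,\mathcal Y_4,\mathcal Y_5)]$ interact is based on this misidentification and is not the actual mechanism. All four $S_5$-generators are pure permutations of variables; they never mix $x_1$ and $x_2$ linearly. What links the orbits is that a permutation can send an admissible monomial to an inadmissible one of the same exponent multiset, and straightening that monomial modulo hit elements (the Cartan formula together with Theorem \ref{dlsig}) re-expresses it through admissible monomials that may belong to a different orbit; this is exactly how the paper's system of relations $\tau_t(b)+b\equiv 0$, $1\le t\le 4$, forces $\gamma_i=0$ for $71\le i\le 90$ and $\gamma_i=\gamma_{31}$ for $32\le i\le 70$. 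Consequently the invariant of that summand is $q=\mathcal Y_{31}+\cdots+\mathcal Y_{70}$, a sum of only $40$ of the $60$ basis classes, not (a combination of) the three full orbit sums as you anticipate. Once you replace your $\tau_4$ by the transposition $(x_4x_5)$ and do the straightening via Theorem \ref{dlsig}, your scheme becomes the paper's proof.
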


\begin{proof}
We prove the following:
$$ \begin{array}{ll}
\medskip
[S_5(\mathcal Y_j)]^{S_5}&= \langle [\theta(\mathcal Y_j)]\rangle,\ j = 1, 2, 6\\
\medskip
[S_5(\mathcal Y_3, \mathcal Y_4, \mathcal Y_5)]^{S_5}&= \langle [q:= \mathcal Y_{31} + \mathcal Y_{32} + \cdots + \mathcal Y_{70}]\rangle.
\end{array}$$
Indeed, we compute $[S_5(\mathcal Y_j)]^{S_5}$  for $j  = 2$ and $[S_5(\mathcal Y_3, \mathcal Y_4, \mathcal Y_5)]^{S_5}.$ The others can be proved by a similar technique. Note that $\dim[S_5(\mathcal Y_2)]  = 10$ with a basis consisting of all the classes represented by the monomials $\mathcal Y_i:\, 21\leq i\leq 30.$ Suppose that $a = \sum_{21\leq i\leq 30}\gamma_i\mathcal Y_i$ with $\gamma_i\in\mathbb Z/2$ and $[a]\in [S_5(\mathcal Y_2)]^{S_5}.$  By a direct computation using Theorem \ref{dlsig}, we have
$$ \begin{array}{ll}
\medskip
\tau_1(a) + a  = \sum X \ \ {\rm modulo}(\mathcal A_2^+\mathscr P_5) =  0\ \ {\rm modulo}(\mathcal A_2^+\mathscr P_5) ,\\
\medskip
\tau_2(a) + a = \sum Y \ \ {\rm modulo}(\mathcal A_2^+\mathscr P_5) =  0\ \ { \rm modulo}(\mathcal A_2^+\mathscr P_5) ,\\
\medskip
\tau_3(a) + a =  \sum Z \ \ {\rm modulo}(\mathcal A_2^+\mathscr P_5) =  0\ \ {\rm modulo}(\mathcal A_2^+\mathscr P_5) ,\\
\medskip
\tau_4(a) + a =  \sum W \ \ {\rm modulo}(\mathcal A_2^+\mathscr P_5) =  0\ \ {\rm modulo}(\mathcal A_2^+\mathscr P_5) ,\\
\end{array}$$
where
$$ \begin{array}{ll}
\medskip
\sum X  &= (\gamma_{24} + \gamma_{27})(\mathcal Y_{24} + \mathcal Y_{27}) + (\gamma_{25} + \gamma_{28})(\mathcal Y_{25} + \mathcal Y_{28}) + (\gamma_{26} + \gamma_{29})(\mathcal Y_{26} + \mathcal Y_{29}),\\
\medskip
\sum Y  &= (\gamma_{22} + \gamma_{24})(\mathcal Y_{22} + \mathcal Y_{24}) + (\gamma_{23} + \gamma_{25})(\mathcal Y_{23} + \mathcal Y_{25}) + (\gamma_{29} + \gamma_{30})(\mathcal Y_{29} + \mathcal Y_{30}),\\
\medskip
\sum Z  &= (\gamma_{21} + \gamma_{22})(\mathcal Y_{21} + \mathcal Y_{22}) + (\gamma_{25} + \gamma_{26})(\mathcal Y_{25} + \mathcal Y_{26}) + (\gamma_{28} + \gamma_{29})(\mathcal Y_{28} + \mathcal Y_{29}),\\
\medskip
\sum W  &= (\gamma_{22} + \gamma_{23})(\mathcal Y_{22} + \mathcal Y_{23}) + (\gamma_{24} + \gamma_{25})(\mathcal Y_{24} + \mathcal Y_{25}) + (\gamma_{27} + \gamma_{28})(\mathcal Y_{27} + \mathcal Y_{28}).
\end{array}$$
These relations imply that $\gamma_i = \gamma_{21}$ for $i = 22, \ldots, 30.$ Hence, we get $a = \theta(\mathcal Y_2)\ {\rm modulo}(\mathcal A_2^+\mathscr P_5)$ with $\theta(Y_2) = \sum_{21\leq j\leq 30}\mathcal Y_j.$

\medskip
Now, we have the set $\{[\mathcal Y_i]:\, 31\leq i\leq 90]\}$ is a basis of $[S_5(\mathcal Y_3, \mathcal Y_4, \mathcal Y_5)].$ Suppose that $b = \sum_{31\leq i\leq 90}\gamma_i\mathcal Y_i$ with $\gamma_i\in\mathbb Z/2$ and $[b]\in [S_5(\mathcal Y_3, \mathcal Y_4, \mathcal Y_5)]^{S_5}$. By a similar computation from the relations $\tau_t(b) +  b = 0\ {\rm modulo}(\mathcal A_2^+\mathscr P_5)  ,\ t = 1,2, 3, 4,$ one gets 
$\gamma_i = 0$ for $71\leq i\leq 90$ and $\gamma_i = \gamma_{31},\ i = 32, 33, \ldots, 70.$ This means $b =  q\ {\rm modulo}(\mathcal A_2^+\mathscr P_5)$ with $q = \sum_{31\leq i\leq 70}.$ The lemma is proved.
\end{proof}

\begin{lema}\label{bb8.2}
The subspace $(Q\mathscr P_5(\widetilde\omega_{(3)}))^{GL_5}$ is trivial.
\end{lema}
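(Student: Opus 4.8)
The plan is to compute $(Q\mathscr P_5(\widetilde\omega_{(3)}))^{GL_5}$ directly using the admissible monomial basis from T\'in \cite{N.T1}, exactly mirroring the method used in Lemma \ref{bb8.1}. First I would isolate the block of the basis $\{[\mathcal Y_{8,\,i}]\}$ corresponding to the weight vector $\widetilde\omega_{(3)} = (4,2)$; since $\deg \widetilde\omega_{(3)} = 4 + 2\cdot 2 = 8$, these are precisely the admissible monomials $X$ of degree $8$ in $\mathscr P_5$ with $\omega(X) = (4,2)$, i.e. the classes indexed by whatever sub-range of $106 \le i \le 174$ the Appendix assigns to $\widetilde\omega_{(3)}$. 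Because $\widetilde\omega_{(3)}$ is not the weight vector of the minimal spike $x_1^7 x_2$ (which has $\omega = (2,1,1) = \widetilde\omega_{(1)}$), here I must work modulo $\mathcal A_2^+\mathscr P_5 \cap \mathscr P_5(\widetilde\omega_{(3)}) + \mathscr P_5^-(\widetilde\omega_{(3)})$, i.e. with the relation $\equiv_{\widetilde\omega_{(3)}}$, rather than the plain $\equiv$ used for $\widetilde\omega_{(1)}$.

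Next I would decompose $Q\mathscr P_5(\widetilde\omega_{(3)})$ as a direct sum of $S_5$-submodules $[S_5(\mathcal Z_j)]_{\widetilde\omega_{(3)}}$ generated by a small set of representative monomials $\mathcal Z_1, \ldots, \mathcal Z_s$ (one per $S_5$-orbit of admissible monomials of this weight), just as $Q\mathscr P_5^0(\widetilde\omega_{(1)})$ was split into four orbit-summands. For each orbit I would take a general invariant $a = \sum_i \gamma_i \mathcal Y_i$ in that summand, compute $\tau_t(a) + a$ for $t = 1,2,3,4$ using Theorem \ref{dlsig} (Singer's criterion) and Proposition \ref{mdPS} to rewrite each $\tau_t(\mathcal Y_i)$ in terms of the admissible basis modulo $\mathscr P_5^-(\widetilde\omega_{(3)})$, and solve the resulting linear system over $\mathbb Z/2$ for the $\gamma_i$; this pins down $[S_5(\mathcal Z_j)]_{\widetilde\omega_{(3)}}^{S_5}$, typically as a one-dimensional space spanned by the orbit sum $[\theta(\mathcal Z_j)]$ (or, for a multi-orbit summand, by a sum of several orbit sums, as with $q$ in Lemma \ref{bb8.1}). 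Assembling these gives $Q\mathscr P_5(\widetilde\omega_{(3)})^{S_5}$ with an explicit basis of orbit-sum classes.

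Finally I would impose the last generator $\tau_5$ of $GL_5$ on the $S_5$-invariant subspace. For each basis element $[\theta(\mathcal Z_j)]_{\widetilde\omega_{(3)}}$ I would compute $\tau_5(\theta(\mathcal Z_j)) + \theta(\mathcal Z_j)$, expand it in the admissible basis modulo $\mathcal A_2^+\mathscr P_5 + \mathscr P_5^-(\widetilde\omega_{(3)})$, and read off the condition for an $S_5$-invariant combination $\sum_j \delta_j [\theta(\mathcal Z_j)]$ to be $\tau_5$-invariant. I expect this last system to force all $\delta_j = 0$, yielding $(Q\mathscr P_5(\widetilde\omega_{(3)}))^{GL_5} = 0$. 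The main obstacle is the bookkeeping: $\tau_5(x_1) = x_1 + x_2$ turns each monomial into a sum of many monomials, some inadmissible or of lower weight, and reducing these back to the admissible basis modulo $\mathscr P_5^-(\widetilde\omega_{(3)})$ requires careful use of Theorem \ref{dlsig} together with the explicit list of strictly inadmissible monomials in degree $8$; the arithmetic is routine but error-prone, and getting the $\tau_5$-reduction exactly right is where care is needed. (The analogous computation for $Q\mathscr P_5(\widetilde\omega_{(2)})^{GL_5}$ would be handled by the same three-step scheme, and combining all the pieces with Lemma \ref{bb8.1} then gives Proposition \ref{bb8}.)
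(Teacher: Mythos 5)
Your proposal follows essentially the same route as the paper: decompose $Q\mathscr P_5(\widetilde\omega_{(3)})$ into the two $S_5$-summands generated by $\mathcal Y_{130}$ and $\mathcal Y_{160}$, determine the $S_5$-invariants of each by solving the linear relations coming from $\tau_1,\dots,\tau_4$ modulo $\equiv_{\widetilde\omega_{(3)}}$ (the paper finds the orbit sum $\theta(\mathcal Y_{130})$ for the first summand and zero for the second), and then use $\tau_5$ to kill the remaining invariant. This is correct and matches the paper's argument, with only routine bookkeeping left to carry out.
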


\begin{proof}
We see that $Q\mathscr P_5(\widetilde\omega_{(3)})$ is the $\mathbb Z/2$-vector space of dimension $45$ with the basis $[\mathscr B(\mathcal Y_{130} = x_2x_3x_4^3x_5^3)]_{\widetilde\omega_{(3)}}\,\bigcup\, [\mathscr B(\mathcal Y_{160} = x_1x_2x_3x_4^2x_5^3)]_{\widetilde\omega_{(3)}}.$ Furthermore, $[S_5(\mathcal Y_{130})]$ and $[S_5(\mathcal Y_{160})]$ are $S_5$-submodules of $Q\mathscr P_5^0(\widetilde\omega_{(3)}),$ where $[S_5(\mathcal Y_{130})]_{\widetilde\omega_{(3)}} =\langle [\mathcal Y_i:\, 130\leq i\leq 159]_{(\widetilde\omega_{(3)})}\rangle$  and $[S_5(\mathcal Y_{160})]_{\widetilde\omega_{(3)}} =\langle [\mathcal Y_i:\, 160\leq i\leq 174]_{(\widetilde\omega_{(3)})}\rangle.$ So, we have a direct summand decomposition of $S_5$-modules: $Q\mathscr P_5(\widetilde\omega_{(3)}) = [S_5(\mathcal Y_{130})]_{\widetilde\omega_{(3)}}\bigoplus [S_5(\mathcal Y_{160})]_{\widetilde\omega_{(3)}}.$ The set $[\mathscr B(\mathcal Y_{130})]_{\widetilde\omega_{(3)}}$ is a basis of $[S_5(\mathcal Y_{130})]_{\widetilde\omega_{(3)}}.$ The action of $S_5$ on $Q\mathscr P_5$  induces the one of it on $[\mathscr B(\mathcal Y_{130})]_{\widetilde\omega_{(3)}}.$ On the other hand, this action is transitive, hence if $a = \sum_{130\leq i\leq 159}\gamma_i\mathcal Y_i$ with $\gamma_i\in\mathbb Z/2$ and $[a]\in [S_5(\mathcal Y_{130})]^{S_5},$ then from the relations $\tau_t(a)  + a \equiv_{\widetilde\omega_{(3)}}  0,\ 1\leq t\leq 4,$ we get $\gamma_i = \gamma_{130},\, \forall i,\, 131\leq i\leq 159.$  In other words, $[S_5(\mathcal Y_{130})]_{\widetilde\omega_{(3)}}^{S_5} = \langle [\theta(\mathcal Y_{130})]_{\widetilde\omega_{(3)}}\rangle$ with $\theta(\mathcal Y_{130}) = \sum_{130\leq i\leq 159}\mathcal Y_i.$

\medskip
Next, we have $\dim [S_5(\mathcal Y_{160})]_{\widetilde\omega_{(3)}} = 15$ with the basis $[\mathscr B(\mathcal Y_{160})]_{\widetilde\omega_{(3)}}.$ Suppose $b = \sum_{160\leq i\leq 174}\gamma_j\mathcal Y_j$ with $\gamma_j\in\mathbb Z/2$ and $[b]\in [S_5(\mathcal Y_{160})]^{S_5}.$ A direct computation shows:
$$  \begin{array}{ll}
\tau_1(b) + b &\equiv_{\widetilde\omega_{(3)}} \gamma_{169}(\mathcal Y_{160} +  \mathcal Y_{162})+ \gamma_{170}(\mathcal Y_{161} + \mathcal Y_{163}) + \gamma_{171}(\mathcal Y_{164} + \mathcal Y_{165})\\
&\quad  + (\gamma_{166} + \gamma_{172})(\mathcal Y_{166}+\mathcal Y_{172}) +  (\gamma_{167} + \gamma_{173})(\mathcal Y_{167} +\mathcal Y_{173})\\
&\quad+  (\gamma_{168} + \gamma_{174})(\mathcal Y_{168}+ \mathcal Y_{174})\\
\tau_2(b) + b &\equiv_{\widetilde\omega_{(3)}} (\gamma_{162} + \gamma_{169})(\mathcal Y_{162}+ \mathcal Y_{169}) + (\gamma_{163} + \gamma_{170})(\mathcal Y_{163}+ \mathcal Y_{170}) \\
&\quad+ (\gamma_{164} + \gamma_{166})(\mathcal Y_{164}+\mathcal Y_{166})  + (\gamma_{165} + \gamma_{167})(\mathcal Y_{165} + \mathcal Y_{167})\\
&\quad + (\gamma_{168} + \gamma_{171})(\mathcal Y_{168} + \mathcal Y_{171})  + (\gamma_{165} + \gamma_{167})(\mathcal Y_{165} + \mathcal Y_{167}) \\
&\quad +\gamma_{174}(\mathcal Y_{172} + \mathcal Y_{173}),\\
\tau_3(b) + b &\equiv_{\widetilde\omega_{(3)}} (\gamma_{160} + \gamma_{162})(\mathcal Y_{160} + \mathcal Y_{162}) + (\gamma_{161} + \gamma_{164})(\mathcal Y_{161}+ \mathcal Y_{164}) \\
&\quad   + (\gamma_{163} + \gamma_{165})(\mathcal Y_{163} + \mathcal Y_{165}) + (\gamma_{167} + \gamma_{168})(\mathcal Y_{167} + \mathcal Y_{168}) \\
&\quad  + (\gamma_{170} + \gamma_{171})(\mathcal Y_{170} + \mathcal Y_{171}) + (\gamma_{173} + \gamma_{174})(\mathcal Y_{173} + \mathcal Y_{174}),\\
\tau_4(b) + b &\equiv_{\widetilde\omega_{(3)}}  (\gamma_{160} + \gamma_{161})(\mathcal Y_{160} + \mathcal Y_{161}) +  (\gamma_{162} + \gamma_{163})(\mathcal Y_{162} + \mathcal Y_{163})\\
&\quad  + (\gamma_{164} + \gamma_{165})(\mathcal Y_{164} + \mathcal Y_{165}) + (\gamma_{166} + \gamma_{167})(\mathcal Y_{166} + \mathcal Y_{167})\\
&\quad  + (\gamma_{169} + \gamma_{170})(\mathcal Y_{169} + \mathcal Y_{170}) +  (\gamma_{172} + \gamma_{173})(\mathcal Y_{172} + \mathcal Y_{173}).
\end{array}$$
Then, from the relations $\tau_t(b) + b \equiv_{\widetilde\omega_{(3)}} 0,$ we conclude $\gamma_j = 0,\ \forall j.$ 

\medskip

Now, let $[X]_{\widetilde\omega_{(3)}}\in (Q\mathscr P_5(\widetilde\omega_{(3)}))^{GL_5}$ with $X\in \mathscr P_5(\widetilde\omega_{(3)}),$ then $[X]_{\widetilde\omega_{(3)}}\in (Q\mathscr P_5(\widetilde\omega_{(3)}))^{S_5}.$ So, we have $X \equiv_{\widetilde\omega_{(3)}} \gamma \theta(\mathcal Y_{130})$ with $\gamma\in\mathbb Z/2.$ By a direct computation, we obtain
$$ \tau_5(X) + X \equiv_{\widetilde\omega_{(3)}} \gamma \mathcal Y_{130} + \, \mbox{ other terms  }\equiv_{\widetilde\omega_{(3)}} 0.$$
This implies $\gamma = 0.$ The proposition follows.
  \end{proof}

By a simple computation using the techniques as given in the proof of Lemmas \ref{bb8.1} and \ref{bb8.2}, we claim that

\begin{lema}\label{bb8.3}
The following results are true:

\begin{enumerate}

\item[i)] We have a direct summand decomposition of the $S_5$-modules:
$$ Q\mathscr P_5(\widetilde\omega_{(2)}) = [S_5(\mathcal Y_{106})]_{\widetilde\omega_{(2)}}\bigoplus [S_5(\mathcal Y_{126})]_{\widetilde\omega_{(2)}}.$$

\item[ii)] The subspace $(Q\mathscr P_5(\widetilde\omega_{(2)}))^{GL_5}$ is trivial.
\end{enumerate}
\end{lema}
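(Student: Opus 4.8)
The plan is to prove Lemma~\ref{bb8.3} by the same two-step strategy used for Lemmas~\ref{bb8.1} and \ref{bb8.2}: first pin down the $S_5$-module decomposition of $Q\mathscr P_5(\widetilde\omega_{(2)})$, then compute the invariants summand by summand, and finally upgrade $S_5$-invariance to $GL_5$-invariance using the extra generator $\tau_5$. Recall that $\widetilde\omega_{(2)} = (2,3)$, so $\deg\widetilde\omega_{(2)} = 2+2\cdot 3 = 8$, and the relevant admissible monomials are $\mathcal Y_{8,i}$ for the indices occupying the block attached to $\widetilde\omega_{(2)}$, namely $106 \leq i \leq 129$ (from the appendix listing in Sect.~\ref{s5.1}). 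First I would identify two $S_5$-orbit generators among these: $\mathcal Y_{106}$ (of the form $x_j^3x_k^5$-type, the ``$2$-variable'' piece) and $\mathcal Y_{126}$ (a genuinely $3$-or-more-variable monomial), and verify by direct inspection of the $S_5$-action — realized through the transposition homomorphisms $\tau_t$, $1\leq t\leq 4$, modulo $\mathcal A_2^+\mathscr P_5 + \mathscr P_5^-(\widetilde\omega_{(2)})$ — that $[S_5(\mathcal Y_{106})]_{\widetilde\omega_{(2)}}$ and $[S_5(\mathcal Y_{126})]_{\widetilde\omega_{(2)}}$ are $S_5$-submodules with disjoint monomial supports spanning all of $Q\mathscr P_5(\widetilde\omega_{(2)})$. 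This gives part~(i): $Q\mathscr P_5(\widetilde\omega_{(2)}) = [S_5(\mathcal Y_{106})]_{\widetilde\omega_{(2)}}\oplus [S_5(\mathcal Y_{126})]_{\widetilde\omega_{(2)}}$.

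For part~(ii), I would first compute $(Q\mathscr P_5(\widetilde\omega_{(2)}))^{S_5}$. On each summand, write a general element $a = \sum \gamma_i\mathcal Y_i$ with $\gamma_i\in\mathbb Z/2$ over the appropriate index block, and impose $\tau_t(a) + a \equiv_{\widetilde\omega_{(2)}} 0$ for $t = 1,2,3,4$; using Theorem~\ref{dlsig} and Proposition~\ref{mdPS} to rewrite each $\tau_t(a)+a$ in terms of the admissible basis, one extracts a system of linear equations on the $\gamma_i$. As in Lemma~\ref{bb8.1}, I expect the transitivity of the $S_5$-action on each orbit of admissible monomials to force all coefficients equal on the ``spike-like'' summand $[S_5(\mathcal Y_{106})]_{\widetilde\omega_{(2)}}$, yielding a one-dimensional invariant subspace spanned by $[\theta(\mathcal Y_{106})]_{\widetilde\omega_{(2)}}$; on the larger summand $[S_5(\mathcal Y_{126})]_{\widetilde\omega_{(2)}}$ the pattern should be analogous (possibly killing the ``tail'' monomials outright and forcing the rest equal), giving either $\langle[\theta(\mathcal Y_{126})]_{\widetilde\omega_{(2)}}\rangle$ or $\{0\}$. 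Then, for any $[X]_{\widetilde\omega_{(2)}}\in (Q\mathscr P_5(\widetilde\omega_{(2)}))^{GL_5}\subseteq (Q\mathscr P_5(\widetilde\omega_{(2)}))^{S_5}$, write $X \equiv_{\widetilde\omega_{(2)}} \gamma\,\theta(\mathcal Y_{106}) + \gamma'\,\theta(\mathcal Y_{126})$ and apply $\tau_5$: a direct expansion of $\tau_5(X) + X$ in the admissible basis should produce a nonzero coefficient on some monomial (e.g.\ on $\mathcal Y_{106}$ or a low monomial in its $\tau_5$-image) that forces $\gamma = 0$, and similarly $\gamma' = 0$, so $(Q\mathscr P_5(\widetilde\omega_{(2)}))^{GL_5} = 0$.

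The main obstacle is the bookkeeping in the $\tau_5$-computation: $\tau_5$ sends $x_1 \mapsto x_1+x_2$ and fixes the other generators, so $\tau_5(\theta(\mathcal Y_j))$ expands via the binomial/Cartan mechanism into a long sum that must be reduced modulo $\mathcal A_2^+\mathscr P_5 + \mathscr P_5^-(\widetilde\omega_{(2)})$ back to the admissible basis $[\mathscr B_5(\widetilde\omega_{(2)})]_{\widetilde\omega_{(2)}}$; identifying even one surviving monomial with nonzero coefficient suffices, but one has to be careful not to have cancellations hide it. Fortunately, since $\widetilde\omega_{(2)} = (2,3)$ has only two parts, $\mathscr P_5^-(\widetilde\omega_{(2)})$ is small and the relevant hit relations in degree $8$ are already recorded in \cite{N.T2} and Sect.~\ref{s3}, so the reduction is mechanical. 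Combining part~(ii) here with Lemmas~\ref{bb8.1} and \ref{bb8.2} and the decomposition $(Q\mathscr P_5)_8 = Q\mathscr P_5^0(\widetilde\omega_{(1)})\oplus Q\mathscr P_5(\widetilde\omega_{(2)})\oplus Q\mathscr P_5(\widetilde\omega_{(3)})$, together with a final check that the $4$-dimensional $S_5$-invariant space of $Q\mathscr P_5^0(\widetilde\omega_{(1)})$ is entirely killed by $\tau_5$, will complete the proof of Proposition~\ref{bb8} that $(Q\mathscr P_5)^{GL_5}_8 = 0$.
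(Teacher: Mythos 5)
Your proposal follows essentially the same route as the paper, which settles this lemma by exactly the computation you describe: split $Q\mathscr P_5(\widetilde\omega_{(2)})$ into the two $S_5$-orbit summands generated by $\mathcal Y_{106}$ and $\mathcal Y_{126}$, determine the $S_5$-invariants from the relations $\tau_t(a)+a\equiv_{\widetilde\omega_{(2)}}0$ for $1\leq t\leq 4$, and then kill them with $\tau_5$, mirroring the proofs of Lemmas \ref{bb8.1} and \ref{bb8.2}. One small slip that does not affect the argument: $\mathcal Y_{106}=x_2x_3^{2}x_4^{2}x_5^{3}$ is a four-variable monomial of weight $(2,3)$, not an $x_j^{3}x_k^{5}$-type two-variable spike (those have weight $\widetilde\omega_{(1)}$).
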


\begin{proof}[{\it Proof of Proposition \ref{bb8}}]
Let $[X]\in (Q\mathscr P_5)_{8}^{GL_5}.$ Then, from Lemmas \ref{bb8.1} - \ref{bb8.3}, we have 
$$X = \beta_1\theta(\mathcal Y_1) + \beta_2\theta(\mathcal Y_2) + \beta_3q +  \beta_4\theta(\mathcal Y_3)\ \ {\rm modulo}(\mathcal A_2^+\mathscr P_5),$$
with $\beta_t\in\mathbb Z/2,\ 1\leq t\leq 4.$ By using Theorem \ref{dlsig} and computing $\tau_5(X) + X$ in terms of the admissible monomials $({\rm modulo}(\mathcal A_2^+\mathscr P_5)),$ we conclude
$$ \tau_5(X) + X  \equiv \beta_1\mathcal Y_7 + (\beta_1 + \beta_2)\mathcal Y_{16} + \beta_3\mathcal Y_{33} + \beta_4(\mathcal Y_{91} + \mathcal Y_{92} + \mathcal Y_{93})\ + \mbox{other terms } \equiv 0.$$
This relation shows $\beta_1 = \beta_2 = \beta_3 = \beta_4 = 0.$ The proposition is proved.
\end{proof}

As an immediate consequence of Theorem \ref{bb8}, we get the following.

\begin{corls}
The fifth transfer 
$$Tr_5: \mathbb Z/2 \otimes_{GL_5}P_{\mathcal A_2}H_{8}(B(\mathbb Z/2)^{\times 5})\to {\rm Ext}_{\mathcal {A}_2}^{5,5+8}(\mathbb Z/2,\mathbb Z/2)$$ 
is a trivial isomorphism. 
\end{corls}

\subsection{Computation of  $({\rm Ker}(\widetilde {Sq^0_*})_{(5, 21)})^{S_5}$}\label{s4.2}

From the results in Sect.\ref{s3.2.1}, we see that $\dim{\rm Ker}(\widetilde {Sq^0_*})_{(5, 21)} = 666$ with the basis $\{[\mathcal Y_{21,\,t}]:\ 1\leq t\leq 666\}.$ Here,  the admissible monomials $\mathcal Y_t := \mathcal Y_{21,\,t},\ 1\leq t\leq 666,$ as given in Sect.\ref{s5.6} and Sect.\ref{s5.7}.
Recall that $(\widetilde {Sq^0_*})_{(5, 21)}$ is an epimorphism of $GL_5$-modules. Combining this and the results in Subsection \ref{b-8}, we get $(Q\mathscr P_5)^{GL_5}_{21}\subseteq ({\rm Ker}(\widetilde {Sq^0_*})_{(5, 21)})^{GL_5}.$ 
By Lemma \ref{bd21.1}, we have a direct summand decomposition of the $S_5$-modules: $${\rm Ker}(\widetilde {Sq^0_*})_{(5, 21)} = Q\mathscr P_5(3,3,1,1)\bigoplus Q\mathscr P_5(3,3,3).$$
For $\omega = (3,3,1,1),$ according to the results in Sect.\ref{s3.2.1},  we get $Q\mathscr P_5(\omega) = Q\mathscr P_5^0(\omega)\bigoplus Q\mathscr P_5^+(\omega)$ with $\dim Q\mathscr P_5^0(\omega) = 340$ and $\dim Q\mathscr P_5^+(\omega) = 196$. Note that $\mathscr Z = x_1^{15}x_2^3x_3^3$ is the minimal spike monomial in $(\mathscr P_5)_{21}$ and $\omega(\mathscr Z) = \omega.$ So, $[x]_{\omega} = [x]$ for any $x\in (\mathscr P_5)_{21}.$ By using the results in Sect.\ref{s3.2.1}, we see that there is a direct summand decomposition of the $S_5$-modules:
$$ Q\mathscr P_5^0(\omega) = \langle [S_5(\mathcal Y_1)]\rangle\, \bigoplus\,  \langle [S_5(\mathcal Y_{31})]\rangle\, \bigoplus\, \langle [S_5(\mathcal Y_{61})]\rangle\, \bigoplus\,  \langle [S_5(\mathcal Y_{121})]\rangle\, \bigoplus\, \langle[\mathbb V_1]\rangle,$$
where $$ \begin{array}{ll}
\medskip
\mathscr B(\mathcal Y_1) &= \{\mathcal Y_t:\, 1\leq t\leq 30\},\ \ \ \ \ \mathscr B(\mathcal Y_{31}) = \{\mathcal Y_t:\, 31\leq t\leq 60\},\\
\medskip
 \mathscr B(\mathcal Y_{61}) &= \{\mathcal Y_t:\, 61\leq t\leq 120\},\ \ \mathscr B(\mathcal Y_{121}) = \{\mathcal Y_t:\, 121\leq t\leq 150\},\\
\medskip
\mathbb V_1 &= \mathscr B(\mathcal Y_{151},\mathcal Y_{181}, \mathcal Y_{201}, \mathcal Y_{241}, \mathcal Y_{256}, \mathcal Y_{266}, \mathcal Y_{286},\mathcal Y_{296}, \mathcal Y_{301}, \mathcal Y_{316})\\
\medskip
&= \{\mathcal Y_t:\, 151\leq t\leq 340\}.\\
\end{array}$$

\begin{lema}\label{bb21.1}
The following results are true:

\begin{enumerate}
\item[a)] $ \langle [S_5(\mathcal Y_j)]\rangle^{S_5} = \langle [\theta(\mathcal Y_j)] \rangle$ for $j = 1, 31, 61, 121.$

\medskip

\item[b)] The subspace $\langle [S_5(\mathbb V_1)]\rangle^{S_5}$ is trivial.
\end{enumerate}
\end{lema}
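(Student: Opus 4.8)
The plan is to compute the $S_5$-invariants of each summand separately, exactly as was done in Lemmas~\ref{bb8.1} and~\ref{bb8.2}. For each of the four transitive $S_5$-sets $\mathscr B(\mathcal Y_j)$ with $j\in\{1,31,61,121\}$, I would write a generic element $a=\sum_{i}\gamma_i\mathcal Y_i$ of the corresponding summand $\langle[S_5(\mathcal Y_j)]\rangle$ with $\gamma_i\in\mathbb Z/2$, and impose the four conditions $\tau_t(a)+a\equiv_\omega 0$ for $t=1,2,3,4$. Using Theorem~\ref{dlsig} (Singer's criterion) and Proposition~\ref{mdPS} to rewrite each $\tau_t(a)$ modulo $\mathcal A_2^+\mathscr P_5+\mathscr P_5^-(\omega)$ in terms of the admissible basis $\mathscr B_5(\omega)$, each relation $\tau_t(a)+a\equiv_\omega 0$ becomes a system of linear equations among the $\gamma_i$ over $\mathbb Z/2$. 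Since the $S_5$-action on the basis $\mathscr B(\mathcal Y_j)$ is transitive and $\tau_1,\dots,\tau_4$ generate $S_5$, solving this system should force all $\gamma_i$ equal to a single common value; hence $\langle[S_5(\mathcal Y_j)]\rangle^{S_5}=\langle[\theta(\mathcal Y_j)]\rangle$, the class of the orbit sum, which gives part (a).

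For part (b), I would apply the same recipe to the larger summand $\langle[\mathbb V_1]\rangle$, whose basis $\{\mathcal Y_t:151\le t\le 340\}$ decomposes into the ten $S_5$-orbits generated by $\mathcal Y_{151},\mathcal Y_{181},\mathcal Y_{201},\mathcal Y_{241},\mathcal Y_{256},\mathcal Y_{266},\mathcal Y_{286},\mathcal Y_{296},\mathcal Y_{301},\mathcal Y_{316}$. Writing $b=\sum_{151\le t\le 340}\gamma_t\mathcal Y_t$ and imposing $\tau_t(b)+b\equiv_\omega 0$ for $t=1,2,3,4$, the linear system among the $190$ coefficients $\gamma_t$ should now be rigid enough (because these orbits are ``mixed'' by the symmetrizing generators $\tau_t$ in a way that has no nontrivial common solution) to force $\gamma_t=0$ for all $t$. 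Here the key computational inputs are the explicit rewriting rules for inadmissible monomials of weight vector $\omega=(3,3,1,1)$ coming from Lemmas~\ref{bd21.2}--\ref{bd21.4} and Theorem~\ref{dlKS}, together with the admissible basis listed in the Appendix; these let one reduce each $\tau_t(\mathcal Y_i)$ to a $\mathbb Z/2$-combination of admissible monomials.

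The main obstacle I anticipate is purely bookkeeping scale rather than conceptual: the summand $\langle[\mathbb V_1]\rangle$ is $190$-dimensional, so each of the four relations $\tau_t(b)+b\equiv_\omega 0$ expands into a substantial linear system, and one must carefully track which inadmissible monomials appear after applying $\tau_t$ and reduce them correctly via the strictly-inadmissible lemmas. In practice I would organize the computation orbit by orbit, first handling the ``pure'' orbits where $\tau_t$ permutes basis elements within a single orbit (these immediately yield equalities of coefficients), and then the ``mixing'' relations that couple distinct orbits; the latter are where the solution collapses to zero. The verification that no nontrivial invariant survives is exactly parallel to the argument in Lemma~\ref{bb8.2} for $[S_5(\mathcal Y_{160})]$, where the analogous system forced all coefficients to vanish, so the method is already validated; only the size is larger.

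Finally, I would note that parts (a) and (b) together give a complete description of $Q\mathscr P_5^0(\omega)^{S_5}$ as the span of the four orbit-sum classes $[\theta(\mathcal Y_1)],[\theta(\mathcal Y_{31})],[\theta(\mathcal Y_{61})],[\theta(\mathcal Y_{121})]$, which is the form in which this lemma will be used in the subsequent computation of $(Q\mathscr P_5)^{GL_5}_{21}$: one then only has to test these four explicit classes (plus the contributions from $Q\mathscr P_5^+(\omega)$ and from $Q\mathscr P_5(3,3,3)$) against the remaining generator $\tau_5$, exactly as in the proof of Proposition~\ref{bb8}.
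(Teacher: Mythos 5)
Your proposal follows essentially the same route as the paper's own (outlined) proof: invariance is tested against $\tau_1,\dots,\tau_4$ summand by summand, each relation $\tau_t(a)+a\equiv 0\ ({\rm modulo}\ \mathcal A_2^+\mathscr P_5)$ is expanded in the admissible basis via Theorem \ref{dlsig} and the strictly-inadmissible lemmas, and the resulting $\mathbb Z/2$-linear systems force the coefficients to be constant on each of the four orbits in part (a) and to vanish on $\langle[\mathbb V_1]\rangle$ in part (b), exactly as in Lemmas \ref{bb8.1} and \ref{bb8.2}. The only caution is that transitivity of the $S_5$-action on an orbit of monomials does not by itself yield (a), since $\tau_t(\mathcal Y_i)$ may reduce to a sum of several admissible classes rather than a single basis element; but since you, like the paper, resolve this by explicitly solving the linear systems, this is a matter of emphasis rather than a gap.
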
 

\begin{proof}[{\it Outline of the proof}]
For $j  = 1, $ let $[f_1]\in  \langle [S_5(\mathcal Y_1)]\rangle^{S_5}.$ Then, we have
$$ \tau_m(f_1) = \sum_{X\in \mathscr B(\mathcal Y_1)} \beta_X. X\ \ {\rm modulo}(\mathcal A_2^+\mathscr P_5),\ 0 < m <5$$
with $f_1 = \sum_{X\in \mathscr B(\mathcal Y_1)} \beta_X. X$ and $\beta_X\in \mathbb Z/2$. By a direct computation, we can see that the action of the symmetric group $S_5$ on $Q\mathscr P_5$ induces the one of it on the set $[\mathscr B(\mathcal Y_j)]$ and this action is transitive. So, we get $\beta_X = \beta_{X'} = \beta\in \mathbb Z/2$ for all $X, X'\in  \mathscr B(\mathcal Y_j).$ This means $f_1 =  \theta(\mathcal Y_1)\ {\rm modulo}(\mathcal A_2^+\mathscr P_5).$ For $j = 31, 61, 121,$ we determine $\tau_m(f_j) + f_j$ in terms of $\mathcal Y_j.$ Then, by a simple computation using the relations $\tau_m(f_j) = f_j\ \ {\rm modulo}(\mathcal A_2^+\mathscr P_5),$ we conclude $f_j =  \theta(\mathcal Y_j)\ {\rm modulo}(\mathcal A_2^+\mathscr P_5).$

Next, we have $\dim \langle\mathbb V_1\rangle = 190$ with the basis $\{[\mathcal Y_t]:\, 151\leq t\leq 340\}.$ Assume that $g = \sum_{u\in \mathbb V_1}\gamma_u.u$ with $\gamma_u\in\mathbb Z/2$ and $[g]\in \langle[\mathbb V_1]\rangle^{S_5}.$ By using Theorem \ref{dlsig} and a similar computation as given in the proof of Lemmas \ref{bb8.1} and \ref{bb8.2}, we obtain $\gamma_u = 0$ for all $u\in \mathbb V_1.$ This implies that $g$ is $\mathcal A_2$-decomposable. The lemma follows. 
\end{proof}

\begin{lema}\label{bb21.2}
$ Q\mathscr P_5^+(\omega)^{S_5}  = \langle [p:= \mathcal Y_{411} + \mathcal Y_{412} + \cdots + \mathcal Y_{419} ] \rangle.$ 
\end{lema}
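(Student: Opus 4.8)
The goal is to show that $Q\mathscr P_5^+(\omega)^{S_5}$, with $\omega = (3,3,1,1)$, is one-dimensional, generated by the class of $p := \sum_{411\leq i\leq 419}\mathcal Y_{i}$. The starting point is the basis $[\overline{\Phi}^+(\mathscr B_4(3,3,1,1)) \cup \mathcal C]_\omega = \{[\mathcal Y_{21,\,i}]_\omega : 401\leq i\leq 596\}$ of $Q\mathscr P_5^+(\omega)$ furnished by Proposition \ref{md21.2} (with the linear independence established inside the proof of Proposition \ref{md21.1}). First I would organize these $196$ admissible monomials into their $S_5$-orbits: since the weight vector $\omega$ is invariant under permutations of the variables, $S_5$ permutes $[\mathscr B_5^+(\omega)]_\omega$, and the space $\langle [S_5(\mathcal Y_i)]\rangle_\omega$ for a suitable family of orbit representatives $\mathcal Y_i$ gives a direct sum decomposition into $S_5$-submodules, exactly as was done for $Q\mathscr P_5^0(\omega)$ in Lemma \ref{bb21.1}. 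For the decomposition to hold at the level of $Q\mathscr P_5^+(\omega)$ one must check that applying a generator $\tau_t$ ($1\leq t\leq 4$) to an admissible monomial in one orbit, and reducing modulo $(\mathcal A_2^+\mathscr P_5 + \mathscr P_5^-(\omega))$ using the strictly inadmissible monomials of Lemmas \ref{bd21.2}--\ref{bd21.4}, stays inside the span of that orbit; this is the bookkeeping heart of the argument.

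\textbf{Key steps.} (1) List the $S_5$-orbit representatives among $\{\mathcal Y_{401},\ldots,\mathcal Y_{596}\}$ and record the size of each orbit; note that $p$ is precisely $\theta(\mathcal Y_{411})$, the sum over one such orbit (the monomials $\mathcal Y_{411},\ldots,\mathcal Y_{419}$ forming a single $S_5$-orbit of size $9$, up to the admissibility reductions). (2) Write the $S_5$-module structure as $Q\mathscr P_5^+(\omega) = \bigoplus_j \langle [S_5(\mathcal Y_{i_j})]\rangle_\omega$. (3) For each orbit summand, take a general element $f = \sum_X \beta_X X$ ($X$ ranging over the admissible monomials in that orbit, $\beta_X\in\mathbb Z/2$) and impose $\tau_t(f) + f \equiv_\omega 0$ for $t=1,2,3,4$. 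Using Theorem \ref{dlsig} and Proposition \ref{mdPS} to reduce $\tau_t(f)+f$ to a $\mathbb Z/2$-linear combination of admissible monomials of weight $\omega$, each such relation forces equalities and vanishings among the $\beta_X$. For the orbit of $\mathcal Y_{411}$ the action is transitive on the admissible monomials, so all $\beta_X$ are forced equal, giving exactly the line $\langle[\theta(\mathcal Y_{411})]_\omega\rangle = \langle[p]_\omega\rangle$; for every other orbit the same computation will force all coefficients to vanish (the orbit representative admits a $\tau_t$ carrying it to an inadmissible monomial whose admissible reduction involves monomials outside the orbit, producing a nontrivial obstruction unless the coefficient is $0$), so those summands contribute nothing to the $S_5$-invariants. (4) Assemble: $Q\mathscr P_5^+(\omega)^{S_5} = \bigoplus_j \langle[S_5(\mathcal Y_{i_j})]\rangle_\omega^{S_5} = \langle[p]_\omega\rangle$, and verify $[p]_\omega\neq[0]$ using the linear independence of $[\mathscr B_5^+(\omega)]_\omega$.

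\textbf{Main obstacle.} The genuinely laborious part is step (3) for the non-invariant orbits: one must, for each orbit representative $\mathcal Y_{i}$ with $i\in\{401,\ldots,596\}\setminus\{411,\ldots,419\}$, carry out the Cartan-formula reduction of $\tau_t(\mathcal Y_i)$ against the strictly inadmissible monomials catalogued in Lemmas \ref{bd21.2}, \ref{bd21.3}, \ref{bd21.4} (and the $\mathcal A_2$-action relations modulo $\mathscr P_5^-(\omega)$) in order to re-express it in terms of the admissible basis, and then read off that the resulting system of $\mathbb Z/2$-linear equations on the $\beta_X$ has only the zero solution. This is the same kind of massive but routine computation that appears in the proof of Proposition \ref{md21.1}; I would present the orbit decomposition and the statement of the invariant computations, giving the relations $\tau_t(f)+f \equiv_\omega \sum(\cdots)(\cdots)$ explicitly only for one or two representative orbits (e.g. the orbit of $\mathcal Y_{411}$, where transitivity makes the argument clean) and asserting that the remaining orbits are handled by an entirely analogous calculation, exactly as done for Lemmas \ref{bb8.1}, \ref{bb8.2}, \ref{bb8.3} and Lemma \ref{bb21.1}. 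A secondary subtlety is making sure that the $S_5$-submodule decomposition of $Q\mathscr P_5^+(\omega)$ is genuine — i.e. that the reductions to admissible form never mix distinct orbits — which must be checked once and for all before step (3).
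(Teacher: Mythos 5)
There is a genuine gap, and it lies in the orbit bookkeeping on which your whole plan rests. The element $p=\mathcal Y_{411}+\cdots+\mathcal Y_{419}$ is \emph{not} the sum over a single $S_5$-orbit of monomials: $\mathcal Y_{411},\mathcal Y_{412},\mathcal Y_{413}$ have exponent multiset $\{1,1,3,6,10\}$, $\mathcal Y_{414},\mathcal Y_{415},\mathcal Y_{416}$ have $\{1,2,3,3,12\}$, and $\mathcal Y_{417},\mathcal Y_{418},\mathcal Y_{419}$ have $\{3,3,3,4,8\}$, so $p$ mixes three distinct monomial orbits and is not $\theta(\mathcal Y_{411})$ in the sense of the paper's notation. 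This already shows that the fine, orbit-indexed direct sum decomposition you propose cannot exist: when $\tau_t$ sends an admissible monomial to an inadmissible one, its reduction modulo $\mathcal A_2^{+}\mathscr P_5+\mathscr P_5^{-}(\omega)$ involves admissible monomials of \emph{other} exponent shapes, which is precisely why the unique invariant is a cross-orbit combination. The point you defer as a "secondary subtlety" is therefore not a check to be done once and for all; it fails, and with it step (3) as you formulated it (your rule would force the coefficients of $\mathcal Y_{414},\dots,\mathcal Y_{419}$ to vanish and predict an invariant $\mathcal Y_{411}+\mathcal Y_{412}+\mathcal Y_{413}$, which is not invariant). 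Your transitivity heuristic is also false in this setting: the $10$-dimensional submodule generated by $[\mathcal Y_{401}]$ \emph{is} spanned by the classes of a single monomial orbit (exponents $\{1,1,2,2,15\}$), yet its $S_5$-invariants are trivial, because $\tau_t$ applied to an admissible monomial reduces to that monomial plus extra admissible terms (the relations contain isolated coefficients such as $\ell_{401}\mathcal Y_{404}$, forcing $\ell_{401}=0$), so the orbit sum is not invariant even though the monomial action is "transitive".

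The paper's actual proof uses only a coarse two-piece decomposition coming from Proposition \ref{md21.2}: $Q\mathscr P_5^{+}(\omega)=[S_5(\mathcal Y_{401})]\bigoplus \mathbb V_2$, where $[S_5(\mathcal Y_{401})]$ is spanned by the ten classes $[\mathcal Y_{401}],\dots,[\mathcal Y_{410}]$ and $\mathbb V_2$ by the $186$ classes $[\mathcal Y_{411}],\dots,[\mathcal Y_{596}]$. One then writes a general element of each summand with unknown $\mathbb Z/2$-coefficients, computes $\tau_j(Z)+Z$ for $j=1,\dots,4$ in terms of the admissible basis using Theorem \ref{dlsig} and the strictly inadmissible monomials, and solves the resulting linear system: all ten coefficients vanish in the first summand, while in $\mathbb V_2$ the system forces every coefficient to zero except those of $\mathcal Y_{411},\dots,\mathcal Y_{419}$, which are forced equal, yielding exactly $\langle[p]\rangle$. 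Your computational machinery (reduction modulo hits and $\mathscr P_5^{-}(\omega)$, then linear algebra over $\mathbb Z/2$) is the right one, but to make the argument sound you must abandon the orbit-by-orbit splitting and transitivity shortcut and work with the two genuine $S_5$-summands above (or with all $196$ unknowns at once), accepting that the invariant in $\mathbb V_2$ only emerges from the full coupled system.
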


\begin{proof}
From Proposition \ref{md21.2},  we see that the sets $[S_5(\mathcal Y_{401})] = \langle [\mathcal Y_t]:\, 401\leq t\leq 410\rangle$ and $\mathbb V_2 = \langle [\mathcal Y_t]:\, 411\leq t\leq 596\rangle,$ 
are $S_5$-submodules of $Q\mathscr P_5^+(\omega).$ Hence, we have a direct summand decomposition of the $S_5$-modules:
$$ Q\mathscr P_5^+(\omega)  = [S_5(\mathcal Y_{401})] \, \bigoplus\, \mathbb V_2 .$$
The set $[\mathscr B(\mathcal Y_{401})]$ is a basis of $[S_5(\mathcal Y_{401})].$ Assume that $Z$ is a polynomial such that $[Z]\in [S_5(\mathcal Y_{401})]^{S_5}$ and $Z = \sum_{401\leq t\leq 410}\ell_t\mathcal Y_t$ with $\ell_t\in \mathbb Z/2.$ For $1\leq j\leq 4,$ we explicitly compute $\tau_j(Z) + Z$ in the terms of the admissible monomials $\mathcal Y_t, \ 401\leq t\leq 410.$ By a direct computation using Theorem \ref{dlsig}, we get
$$ \begin{array}{ll}
\tau_1(Z) + Z &\equiv \ell_{401}\mathcal Y_{404} + \ell_{402}\mathcal Y_{405} + \ell_{403}\mathcal Y_{407} + (\ell_{406}+\ell_{409})(\mathcal Y_{406} + \mathcal Y_{409}) \\
\medskip
&\quad + (\ell_{408}+\ell_{410})(\mathcal Y_{408} + \mathcal Y_{410}),\\
\tau_2(Z) + Z &\equiv  (\ell_{401}+\ell_{404})(\mathcal Y_{401} + \mathcal Y_{404}) +  (\ell_{402}+\ell_{405})(\mathcal Y_{402} + \mathcal Y_{405})\\
\medskip
&\quad +  (\ell_{403}+\ell_{406})(\mathcal Y_{403} + \mathcal Y_{406}) +  (\ell_{407}+\ell_{408})(\mathcal Y_{407} + \mathcal Y_{408}) +  \ell_{409}\mathcal Y_{410},\\
\medskip
\tau_3(Z) + Z &\equiv  \ell_{401}\mathcal Y_{404}+ (\ell_{402}+\ell_{403})(\mathcal Y_{402} + \mathcal Y_{403}) + (\ell_{405}+\ell_{407})(\mathcal Y_{405} + \mathcal Y_{407}),\\
\medskip
&\quad + (\ell_{406}+\ell_{408})(\mathcal Y_{406} + \mathcal Y_{408}) + (\ell_{409}+\ell_{410})(\mathcal Y_{409} + \mathcal Y_{410}),\\
\tau_4(Z) + Z &\equiv (\ell_{401}+\ell_{402})(\mathcal Y_{401} + \mathcal Y_{402}) + \ell_{403}\mathcal Y_{407} +  (\ell_{404}+\ell_{405})(\mathcal Y_{404} + \mathcal Y_{405}) \\
&\quad+ \ell_{406}\mathcal Y_{408} + \ell_{409}\mathcal Y_{410}.
\end{array}$$
Then, by the relations $\tau_j(Z)  = Z\,\, {\rm modulo}(\mathcal A_2^+\mathscr P_5),\ j  =1, 2, 3, 4,$ one gets $\ell_t = 0,\ \forall t.$ This implies that $Z$ is $\mathcal A_2$-decomposable. 

Note that $\dim \mathbb V_2 = 186$ with the basis $\{[\mathcal Y_j]:\, 411\leq j\leq 596\}.$ Then if the polynomial $g = \sum_{u\in \mathbb V_2}\sigma_u.u,\ \sigma_u\in\mathbb Z/2$ such that $[g]\in \mathbb V_2^{S_5},$ then by a similar argument as given above, we obtain $g = p\,\, {\rm modulo}(\mathcal A_2^+\mathscr P_5).$ This completes the proof of the lemma.
\end{proof}

\begin{propo}\label{bb21.3}
For $\overline{\omega} = (3,3,3),$ we have $Q\mathscr P_5(\overline{\omega})^{GL_5} = \langle [q_2]_{\overline{\omega}} \rangle,$ where
$$ \begin{array}{ll}
\medskip
q_2 &= \mathcal Y_{652} + \mathcal Y_{653} + \mathcal Y_{654} + \mathcal Y_{656}+\mathcal Y_{657} + \mathcal Y_{658} +\sum_{661\leq t\leq 666}\mathcal Y_t\\
\medskip
&= x_1x_2^6x_3^3x_4^5x_5^6 + x_1x_2^3x_3^6x_4^5x_5^6 + x_1x_2^3x_3^5x_4^6x_5^6 + x_1^3x_2x_3^5x_4^6x_5^6 \\
\medskip
&\quad + x_1^3x_2^5x_3x_4^6x_5^6 + x_1^3x_2^5x_3^6x_4x_5^6 + x_1^3x_2^3x_3^5x_4^4x_5^6 + x_1^3x_2^3x_3^4x_4^5x_5^6\\
\medskip
&\quad + x_1^3x_2^3x_3^5x_4^6x_5^4 + x_1^3x_2^4x_3^3x_4^5x_5^6 + x_1^3x_2^5x_3^3x_4^6x_5^4 + x_1^3x_2^5x_3^6x_4^3x_5^4.
\end{array}$$
\end{propo}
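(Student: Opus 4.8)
The goal is to determine $Q\mathscr P_5(\overline{\omega})^{GL_5}$ for $\overline{\omega} = (3,3,3)$. Since $(\widetilde{Sq^0_*})_{(5,21)}$ is an epimorphism of $GL_5$-modules and we have already established the direct-sum decomposition ${\rm Ker}(\widetilde{Sq^0_*})_{(5,21)} = Q\mathscr P_5(3,3,1,1)\oplus Q\mathscr P_5(3,3,3)$ of $S_5$-modules, the first step is to exploit the $S_5$-submodule structure of $Q\mathscr P_5(\overline{\omega})$ already visible from Propositions \ref{md21.3} and \ref{md21.1}. Concretely, I would decompose $Q\mathscr P_5(\overline{\omega})$ (which by Theorem \ref{dl1} and Proposition \ref{md21.3} is $70$-dimensional with basis $\{[\mathcal Y_t]:\ 597\leq t\leq 666\}$) as a direct sum of cyclic $S_5$-submodules generated by orbit representatives among the $\mathcal Y_t$, isolating the orbit of the basis elements $\mathcal Y_{661},\ldots,\mathcal Y_{666}$ together with $\mathcal Y_{652},\mathcal Y_{653},\mathcal Y_{654},\mathcal Y_{656},\mathcal Y_{657},\mathcal Y_{658}$, and the remaining orbits. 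This mirrors exactly the method used in Lemmas \ref{bb8.1}, \ref{bb8.2}, \ref{bb21.1}, \ref{bb21.2}: for each $S_5$-orbit, compute $\tau_m(g)+g$ in terms of the admissible monomials modulo $\mathcal A_2^+\mathscr P_5$ (using Theorem \ref{dlsig} to discard terms of lower weight vector), and from the system of equations $\tau_m(g)+g\equiv_{\overline{\omega}} 0$ for $1\leq m\leq 4$ read off which $S_5$-invariants survive.

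\textbf{Key steps in order.} First, compute $Q\mathscr P_5(\overline{\omega})^{S_5}$. For each cyclic $S_5$-summand $\langle[S_5(\mathcal Y_j)]\rangle$ on which $S_5$ acts transitively on the monomial basis, the invariant subspace is at most one-dimensional, spanned by $[\theta(\mathcal Y_j)]$; one checks whether $\theta(\mathcal Y_j)$ is actually $\mathcal A_2$-hit or not. The expected outcome (consistent with the shape of $q_2$) is that most orbit-sums are hit, and $Q\mathscr P_5(\overline{\omega})^{S_5}$ is spanned by the class of $p_1 := \mathcal Y_{652}+\mathcal Y_{653}+\mathcal Y_{654}+\mathcal Y_{656}+\mathcal Y_{657}+\mathcal Y_{658}$ and the orbit-sum $p_2 := \sum_{661\leq t\leq 666}\mathcal Y_t$ (and possibly a couple more), i.e. a small-dimensional space. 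Second, impose the remaining generator $\tau_5$ of $GL_5$: write a general $S_5$-invariant $X$ as a $\mathbb Z/2$-combination of these surviving classes, compute $\tau_5(X)+X$ modulo $(\mathcal A_2^+\mathscr P_5 + \mathscr P_5^-(\overline{\omega}))$ in terms of the admissible monomial basis of $Q\mathscr P_5(\overline{\omega})$, and solve the resulting linear system. The claim is that exactly one nonzero combination, namely $[q_2]_{\overline{\omega}}$, is annihilated — where $q_2$ is the specific sum $p_1 + p_2$ written out in the statement. Third, verify that $q_2\not\equiv_{\overline{\omega}} 0$, i.e. that $[q_2]_{\overline{\omega}}$ is genuinely nonzero; this follows from the linear independence of $[\mathscr B_5(\overline{\omega})]_{\overline{\omega}}$ established in Proposition \ref{md21.3} (or Proposition \ref{md21.1}), since $q_2$ is a nonempty sum of distinct basis monomials.

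\textbf{Main obstacle.} The principal difficulty is purely computational bookkeeping: the action of $\tau_5$ on a degree-$21$ monomial in five variables expands, via the binomial substitution $x_1\mapsto x_1+x_2$, into a large sum of monomials, many of which are inadmissible or lie in $\mathscr P_5^-(\overline{\omega})$ and must be rewritten using the hit relations and the strictly-inadmissible monomials catalogued in Lemmas \ref{bd21.2}--\ref{bd21.4}. Keeping track of which residual admissible monomials appear, and showing that the coefficient vector vanishes precisely on the one-dimensional span of $[q_2]$, is where all the effort goes. There is also a subtlety in correctly identifying the $S_5$-orbit structure among $\mathcal Y_{597},\ldots,\mathcal Y_{666}$ so that the summand decomposition is genuine; this is handled exactly as in Lemma \ref{bb21.1}, observing that $S_5$ permutes admissible monomials of a fixed weight vector and that each orbit either contributes $\langle[\theta(\mathcal Y_j)]\rangle$ or nothing to the $S_5$-invariants. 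Once the $S_5$-computation is pinned down, the final $\tau_5$-step is a single (long but routine) linear-algebra verification, and the proposition follows.
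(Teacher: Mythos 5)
Your overall strategy is the same as the paper's (decompose $Q\mathscr P_5(\overline{\omega})$ into cyclic $S_5$-submodules, compute the $S_5$-invariants orbit by orbit via the relations $\tau_m(g)+g\equiv_{\overline{\omega}}0$ for $1\leq m\leq 4$, then kill the surviving candidates with $\tau_5$), but there is a genuine gap at the very start: you take $Q\mathscr P_5(\overline{\omega})$ to be $70$-dimensional with basis $\{[\mathcal Y_t]:\ 597\leq t\leq 666\}$. That is only $Q\mathscr P_5^+(\overline{\omega})$. The full space is $Q\mathscr P_5(\overline{\omega}) = Q\mathscr P_5^0(\overline{\omega})\oplus Q\mathscr P_5^+(\overline{\omega})$ of dimension $60+70=130$; the $60$ missing classes are those of the admissible monomials $\mathcal Y_{341},\ldots,\mathcal Y_{400}$ in $\mathscr B_5^0(3,3,3)$ (monomials not involving all five variables). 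This omission is not cosmetic: the transvection $\tau_5\colon x_1\mapsto x_1+x_2$ does not preserve the splitting into $\mathscr P_5^0$ and $\mathscr P_5^+$, so the $GL_5$-invariant problem cannot be posed on the $+$ part alone, and a general $S_5$-invariant of $Q\mathscr P_5(\overline{\omega})$ involves orbit sums supported on $\mathcal Y_{341},\ldots,\mathcal Y_{400}$ as well. In the paper's argument these contribute the classes $[\theta(\mathcal Y_{341})],[\theta(\mathcal Y_{351})],[\theta(\mathcal Y_{381})]$ (together with $[\theta(\mathcal Y_{597})]$ and $[q_1]$ from the $+$ part), and the final $\tau_5$-computation is precisely what rules them out, since the coefficients $v_1,\ldots,v_5$ of these five classes are forced to vanish by the residual admissible terms $\mathcal Y_{342},\mathcal Y_{351},\mathcal Y_{354},\mathcal Y_{383},\mathcal Y_{650}$. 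Running your plan only on $\{[\mathcal Y_t]:597\leq t\leq 666\}$ would never exclude $GL_5$-invariant combinations involving the $\mathscr P_5^0$ classes, so the proposition would not be established.

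Two smaller points. First, within the relevant orbit block the $S_5$-invariant is the single class of $q_2=p_1+p_2$ (the invariant of the submodule generated by $\mathcal Y_{652},\mathcal Y_{659},\mathcal Y_{660}$), not the two separate classes $[p_1]$ and $[p_2]$ you tentatively list; your method would detect this once the $\tau_m$-relations are written out, but the prediction as stated is off. Second, the nontriviality of $[q_2]_{\overline{\omega}}$ rests on the linear independence of the admissible classes, which is established in the proof of Proposition \ref{md21.1} (Proposition \ref{md21.3} only gives spanning), so the citation should be adjusted. With the space corrected to the full $130$-dimensional $Q\mathscr P_5(\overline{\omega})$ and the $\mathscr P_5^0$ orbits included in both the $S_5$-step and the $\tau_5$-step, your outline becomes essentially the paper's proof.
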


Based on the results in Sect.\ref{s3.1}, we have $$\dim Q\mathscr P_5(\overline{\omega}) = \dim Q\mathscr P_5^0(\overline{\omega}) + \dim Q\mathscr P_5^+(\overline{\omega}) = 60 + 70  = 130.$$ 
Consider the following monomials: 
$$ \begin{array}{ll}
\medskip
 \mathcal Y_{341} &= x_3^7x_4^7x_5^7,\ \  \ \ \ \ \ \ \mathcal Y_{351} = x_2x_3^6x_4^7x_5^7, \ \ \ \ \ \mathcal Y_{381} = x_2^3x_3^5x_4^6x_5^7,\\
\medskip
 \mathcal Y_{597} &= x_1x_2^2x_3^4x_4^7x_5^7,\ \  \mathcal Y_{607} = x_1x_2^6x_3x_4^6x_5^7,\ \ \mathcal Y_{617} = x_1x_2^2x_3^5x_4^6x_5^7,\\
\medskip
 \mathcal Y_{622} &= x_1x_2^3x_3^4x_4^6x_5^7, \ \ \mathcal Y_{642} = x_1^3x_2^5x_3^2x_4^4x_5^7, \ \ \mathcal Y_{647} = x_1^3x_2^3x_3^4x_4^4x_5^7,\\
 \mathcal Y_{652} &= x_1x_2^6x_3^3x_4^5x_5^6, \ \ \mathcal Y_{659} = x_1^3x_2^5x_3^2x_4^5x_5^6, \ \ \mathcal Y_{660} = x_1^3x_2^5x_3^3x_4^4x_5^6.
\end{array}$$

The following lemma can be easily proved by a direct computation.

\begin{lema}\label{bb21.4}
\emph{}

\begin{enumerate}
\item[i)] The following subspaces are $S_5$-submodules of $Q\mathscr P_5(\overline{\omega})$:
$$ \langle [S_5(\mathcal Y_a)]_{\overline{\omega}} \rangle,\ a = 341, 351, 381, 597, \mathbb V_3:=\langle [S_5(\mathcal Y_{607}, \mathcal Y_{617}, \mathcal Y_{622}, \mathcal Y_{642}, \mathcal Y_{647})]_{\overline{\omega}} \rangle,$$
 $$ \mathbb V_4:= \langle [S_5(\mathcal Y_{652}, \mathcal Y_{659}, \mathcal Y_{660})]_{\overline{\omega}} \rangle.$$

\item[ii)]  We have a direct summand decomposition of the  $S_5$-modules:
$$ \begin{array}{ll}
Q\mathscr P_5(\overline{\omega})& =  \langle [S_5(\mathcal Y_{341})]_{\overline{\omega}}\rangle\, \bigoplus\,  \langle [S_5(\mathcal Y_{351})]_{\overline{\omega}}\rangle\, \bigoplus\,  \langle [S_5(\mathcal Y_{381})]_{\overline{\omega}}\rangle\\
&\quad \bigoplus\,  \langle [S_5(\mathcal Y_{597})]_{\overline{\omega}}\rangle\, \bigoplus\, \mathbb V_3\, \bigoplus\, \mathbb V_4. 
\end{array}$$
\end{enumerate}
\end{lema}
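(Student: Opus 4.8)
The plan is to prove Lemma~\ref{bb21.4} by direct inspection of the list of admissible monomials in $\mathscr{B}_5(\overline{\omega})$ determined in Sect.~\ref{s3.1}, exactly as was done for the weight vectors $\widetilde{\omega}_{(j)}$ in Subsection~\ref{b-8}. The key point that makes both parts of the lemma hold is that the symmetric group $S_5$ is generated by the transpositions $\tau_1,\tau_2,\tau_3,\tau_4$, and that each $\tau_t$, regarded as an algebra homomorphism of $\mathscr{P}_5$, preserves the weight vector of a monomial; hence $Q\mathscr{P}_5(\overline{\omega})$ is an $S_5$-module and the orbit $S_5\cdot[\mathcal{Y}]$ of any admissible class $[\mathcal{Y}]_{\overline{\omega}}$ spans an $S_5$-submodule, namely $\langle[S_5(\mathcal{Y})]_{\overline{\omega}}\rangle$.

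For part (i), I would first observe that by Proposition~\ref{PS} the monomial $x_3^7x_4^7x_5^7=\mathcal{Y}_{341}$ is the (permuted) spike of weight $\overline{\omega}=(3,3,3)$, so $[z]_{\overline{\omega}}=[z]$ for all $z\in(\mathscr{P}_5)_{21}$ and there is no loss in working with equivalence modulo $\mathcal{A}_2^+\mathscr{P}_5$. Then, for each of the listed generators $\mathcal{Y}_a$ ($a=341,351,381,597$) and each of the two collections $\{\mathcal{Y}_{607},\mathcal{Y}_{617},\mathcal{Y}_{622},\mathcal{Y}_{642},\mathcal{Y}_{647}\}$ and $\{\mathcal{Y}_{652},\mathcal{Y}_{659},\mathcal{Y}_{660}\}$, I would run $\tau_1,\dots,\tau_4$ on each member, use Theorem~\ref{dlsig} (Singer's criterion) together with the strict-inadmissibility lemmas of Sect.~\ref{s3.1} to rewrite the image in terms of the admissible basis $[\mathscr{B}_5(\overline{\omega})]$, and verify that the resulting admissible monomials all lie inside the span of the stated orbit. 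This shows each of the six spans is closed under the generators of $S_5$, hence is an $S_5$-submodule. Since every admissible monomial of weight $\overline{\omega}$ appears in exactly one of these six orbits — this is read off from the explicit list in the Appendix, Sect.~\ref{s5.7}, where the index ranges $341$--$350$, $351$--$380$, $381$--$596^{\flat}$ (restricted to $\overline{\omega}$), $597$--$606$, and the two $\mathbb{V}_3,\mathbb{V}_4$ blocks are disjoint and exhaust the $130$ basis classes — the six submodules together span $Q\mathscr{P}_5(\overline{\omega})$ and meet pairwise in $0$, which gives the internal direct sum of part (ii):
\[
Q\mathscr{P}_5(\overline{\omega}) = \langle[S_5(\mathcal{Y}_{341})]_{\overline{\omega}}\rangle \oplus \langle[S_5(\mathcal{Y}_{351})]_{\overline{\omega}}\rangle \oplus \langle[S_5(\mathcal{Y}_{381})]_{\overline{\omega}}\rangle \oplus \langle[S_5(\mathcal{Y}_{597})]_{\overline{\omega}}\rangle \oplus \mathbb{V}_3 \oplus \mathbb{V}_4.
\]
A bookkeeping check of dimensions ($10+30+\dots$, summing to $60$ for the $\mathscr{P}_5^0$-part and $70$ for the $\mathscr{P}_5^+$-part, total $130$) confirms no basis class is missed or double-counted.

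The main obstacle is purely computational rather than conceptual: one must carry out the Cartan-formula expansions of $\tau_t(\mathcal{Y})$ for each of the roughly dozen generators and reduce each image to the admissible basis using the long lists of strictly inadmissible monomials (Lemmas~\ref{bd21.2}--\ref{bd21.4} and the additional relations in the proof of Proposition~\ref{md21.1}), taking care that each reduction stays within $\mathscr{P}_5(\overline{\omega})$ modulo $\mathscr{P}_5^-(\overline{\omega})$ so that the weight-graded class is well defined. The delicate part is confirming that the five-element and three-element generating sets $\{\mathcal{Y}_{607},\dots,\mathcal{Y}_{647}\}$ and $\{\mathcal{Y}_{652},\mathcal{Y}_{659},\mathcal{Y}_{660}\}$ are genuinely $S_5$-stable as spans — i.e. that applying a transposition to one of them never produces an admissible monomial outside the combined orbit — and that these orbits do not overlap the four singleton orbits; both are verified by the same mechanical orbit computation, and since the statement only asserts the decomposition (not a minimal generating set), no further linear-independence argument beyond the known basis from Sect.~\ref{s3.1} is needed.
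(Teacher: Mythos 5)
Your overall plan -- verify by direct computation that each orbit span is stable under the transpositions $\tau_1,\dots,\tau_4$, reduce images to the admissible basis, and check that the six blocks of admissible classes are disjoint and exhaust the $130$ basis elements of $Q\mathscr P_5(\overline{\omega})$ -- is exactly the ``direct computation'' the paper invokes (it gives no further proof), so the strategy itself is fine. However, one of your justifications is false: it is not true that $[z]_{\overline{\omega}}=[z]$ for all $z\in(\mathscr P_5)_{21}$. The identity $[x]_{\omega}=[x]$ used in Sect.~\ref{s4.2} holds for $\omega=(3,3,1,1)$, the weight vector of the \emph{minimal} spike $x_1^{15}x_2^3x_3^3$ of degree $21$; the monomial $x_3^7x_4^7x_5^7$ is a spike of weight $\overline{\omega}=(3,3,3)$ but not the minimal one, and since $(3,3,1,1)<\overline{\omega}$, the subspace $\mathscr P_5^{-}(\overline{\omega})$ contains non-hit classes (e.g.\ the admissible monomials $\mathcal Y_{1},\dots,\mathcal Y_{340}$ of weight $(3,3,1,1)$), so $[z]_{\overline{\omega}}$ and $[z]$ genuinely differ. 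The danger is practical: if you literally compute modulo $\mathcal A_2^{+}\mathscr P_5$ only, the reduction of a permuted monomial may pick up weight-$(3,3,1,1)$ admissible terms lying outside your orbit spans, and you could wrongly conclude the spans are not stable. The correct (and easier) procedure is to work modulo $(\mathcal A_2^{+}\mathscr P_5\cap\mathscr P_5(\overline{\omega}))+\mathscr P_5^{-}(\overline{\omega})$, discarding every term of weight strictly less than $\overline{\omega}$.

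Two smaller points. First, $\tau_1,\dots,\tau_4$ are variable transpositions, so $\tau_t(\mathcal Y)$ is a single monomial of the same weight; Cartan-formula expansions enter only when rewriting an inadmissible image via the strictly-inadmissible relations, not when computing the images themselves (only $\tau_5$, irrelevant for this $S_5$-statement, produces sums). Second, your orbit bookkeeping has a slip: the admissible monomials of weight $\overline{\omega}$ carry indices $341$--$400$ and $597$--$666$, and the six blocks are $341$--$350$, $351$--$380$, $381$--$400$, $597$--$606$, $607$--$651$ ($\mathbb V_3$), and $652$--$666$ ($\mathbb V_4$); the range ``$381$--$596$'' consists, beyond $400$, of weight-$(3,3,1,1)$ monomials and has nothing to do with $Q\mathscr P_5(\overline{\omega})$. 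With these corrections the dimension count $10+30+20+10+45+15=130$ closes the argument as you intend.
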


\newpage
\begin{lema}\label{bb21.5}
We have the following results:

\begin{enumerate}
\item[i)] $\langle [S_5(\mathcal Y_a)]_{\overline{\omega}} \rangle^{S_5} = \langle [\theta(\mathcal Y_a)]_{\overline{\omega}} \rangle$, for  $a = 341, 351, 381, 597.$

\item[ii)] $\langle [S_5(\mathbb V_3)]_{\overline{\omega}} \rangle^{S_5} = \langle [q_1:=\mathcal Y_{610} +  \mathcal Y_{611} +  \mathcal Y_{613}  + \mathcal Y_{614}  + \mathcal Y_{616} +\sum_{647\leq t\leq 651}\mathcal Y_t]_{\overline{\omega}}\rangle.$ 

\item[iii)]
$\langle [S_5(\mathbb V_4)]_{\overline{\omega}} \rangle^{S_5} = .\langle [q_2:= \mathcal Y_{652} + \mathcal Y_{653} + \mathcal Y_{654} +\mathcal Y_{656}+ \mathcal Y_{657} + \mathcal Y_{658} +\sum_{661\leq t\leq 666}\mathcal Y_t ]_{\overline{\omega}} \rangle,$ 
\end{enumerate}
\end{lema}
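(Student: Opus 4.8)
\textbf{Proof proposal for Lemma \ref{bb21.5}.}

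The plan is to treat the three items in parallel, following the template already established in Lemmas \ref{bb8.1}, \ref{bb8.2} and \ref{bb21.1}. For each of the listed generators $\mathcal Y_a$ with $a = 341, 351, 381, 597$, I would first observe that since $\overline{\omega} = (3,3,3)$ is the weight vector of a spike (by Proposition \ref{PS}, applied to the weakly decreasing vector $(3,3,3)$ with first entry $\leq 5$), we have $[x]_{\overline{\omega}} = [x]$ for every $x \in (\mathscr P_5)_{21}$, so all computations may be carried out modulo $\mathcal A_2^+\mathscr P_5$ alone rather than modulo the larger ideal. Next, for each such $a$ I would note that the action of $S_5$ on $[\mathscr B_5(\overline{\omega})] \cap [S_5(\mathcal Y_a)]$ is transitive on the orbit of $[\mathcal Y_a]$ — this is where the explicit monomial lists in Sect.\ref{s5.7} are used: one checks directly that the orbit consists of admissible monomials only, so there is no collapsing. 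Transitivity immediately forces any $S_5$-invariant $f_a = \sum_{X} \beta_X X$ to have all coefficients $\beta_X$ equal, whence $f_a \equiv \beta\,\theta(\mathcal Y_a)$, giving item (i).

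For items (ii) and (iii) the orbit structure is no longer a single transitive block: $\mathbb V_3$ is generated by five $S_5$-orbit representatives and $\mathbb V_4$ by three. Here the plan is the standard coefficient-chasing argument. I would write a general element $g = \sum_{u} \gamma_u u$ with $u$ running over the relevant admissible basis (the monomials $\mathcal Y_{607}, \ldots, \mathcal Y_{651}$ for $\mathbb V_3$, indexed as in Sect.\ref{s5.7}, and $\mathcal Y_{652}, \ldots, \mathcal Y_{666}$ for $\mathbb V_4$), then compute $\tau_j(g) + g$ modulo $\mathcal A_2^+\mathscr P_5$ for $j = 1, 2, 3, 4$, expressing each result in terms of the admissible monomial basis via Theorem \ref{dlsig} (inadmissible terms of strictly smaller weight vector vanish, and the strictly inadmissible monomials catalogued in Lemmas \ref{bd21.2}--\ref{bd21.4} are rewritten). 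Setting each $\tau_j(g) + g \equiv 0$ yields a system of linear relations among the $\gamma_u$; solving it pins down the invariant subspace. The claim is that the solution space is one-dimensional in each case, spanned by $q_1$ (resp. $q_2$) as written, i.e. the ``full orbit sum'' over the admissible monomials lying in the $GL_5$-span of the generators.

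The main obstacle will be the bookkeeping in items (ii) and (iii): unlike the transitive case, one must verify that after applying each $\tau_j$ and reducing to admissible form, the monomials appearing in $q_1$ (resp. $q_2$) really do close up under the relations — in particular that the ``missing'' indices (e.g.\ $\mathcal Y_{612}, \mathcal Y_{615}$ in the description of $q_1$, or $\mathcal Y_{655}, \mathcal Y_{659}, \mathcal Y_{660}$ in that of $q_2$) are forced to have zero coefficient, and that no further invariant is hiding among the remaining basis elements $\mathcal Y_{622},\ldots$. This is exactly the kind of computation done explicitly in Lemma \ref{bb8.2} for the fifteen-dimensional block $[S_5(\mathcal Y_{160})]$, and I would organize it the same way: record the four expansions $\tau_j(g)+g$ as displayed sums of pair-terms $(\gamma_r + \gamma_s)(\mathcal Y_r + \mathcal Y_s)$ plus a few singleton terms $\gamma_r \mathcal Y_r$, read off the equalities $\gamma_r = \gamma_s$ and vanishings $\gamma_r = 0$ they impose, and iterate until the coefficient vector is determined up to a single scalar. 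Once (i)--(iii) are in hand, Proposition \ref{bb21.3} follows by combining them with Lemma \ref{bb21.4}(ii): an $S_5$-invariant of $Q\mathscr P_5(\overline{\omega})$ is a linear combination of $\theta(\mathcal Y_{341}), \theta(\mathcal Y_{351}), \theta(\mathcal Y_{381}), \theta(\mathcal Y_{597}), q_1, q_2$, and imposing $\tau_5$-invariance (again via Theorem \ref{dlsig}) kills every summand except $q_2$.
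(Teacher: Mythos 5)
Your overall plan — orbit-transitivity for the four single-orbit blocks and coefficient-chasing with the relations $\tau_j(g)+g$, $1\leq j\leq 4$, for $\mathbb V_3$ and $\mathbb V_4$ — is exactly the computation the paper has in mind (it omits the proof, declaring it straightforward, and this is the same template as Lemmas \ref{bb8.1}, \ref{bb8.2}, \ref{bb21.1}, \ref{bb21.2}). However, your opening reduction contains a genuine error: it is not true that $[x]_{\overline{\omega}}=[x]$ for $\overline{\omega}=(3,3,3)$. The criterion used in the paper for such an identification is that $\omega$ be the weight vector of the \emph{minimal} spike, so that everything of strictly smaller weight is hit by Theorem \ref{dlsig}. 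In degree $21$ the minimal spike is $x_1^{15}x_2^{3}x_3^{3}$, of weight $(3,3,1,1)<(3,3,3)$; its class is nonzero in $Q\mathscr P_5$, so $\mathscr P_5^{-}(\overline{\omega})\not\subseteq\mathcal A_2^{+}\mathscr P_5$ and the equivalences $\equiv$ and $\equiv_{\overline{\omega}}$ genuinely differ (the latter kills strictly more, namely all terms of weight $(3,3,1,1)$). The fact that $(3,3,3)$ is the weight of \emph{some} spike, e.g. $x_1^{7}x_2^{7}x_3^{7}$, is irrelevant here. Note that in the analogous non-minimal-weight situations the paper is careful to compute with $\equiv_{\widetilde\omega_{(3)}}$ (Lemma \ref{bb8.2}) and with $\equiv_{\overline{\omega}}$ (Proposition \ref{bb21.3}), not merely modulo $\mathcal A_2^{+}\mathscr P_5$.

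Concretely, the consequence is a one-sided argument. Since $\tau_j(g)+g\equiv 0$ modulo $\mathcal A_2^{+}\mathscr P_5$ implies $\tau_j(g)+g\equiv_{\overline{\omega}}0$ but not conversely, imposing the hit-only relations is a \emph{stronger} system: it suffices to verify that $\theta(\mathcal Y_a)$, $q_1$, $q_2$ are invariant, but it does not establish the reverse inclusion $\langle[S_5(\cdot)]_{\overline{\omega}}\rangle^{S_5}\subseteq\langle\cdots\rangle$, because a genuine invariant of $Q\mathscr P_5(\overline{\omega})$ may satisfy $\tau_j(g)+g\equiv_{\overline{\omega}}0$ only after discarding admissible terms of weight $(3,3,1,1)$ that survive modulo $\mathcal A_2^{+}\mathscr P_5$. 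The fix is simply to carry out your linear-relation bookkeeping modulo $((\mathcal A_2^{+}\mathscr P_5)\cap\mathscr P_5(\overline{\omega}))+\mathscr P_5^{-}(\overline{\omega})$: when rewriting the inadmissible monomials produced by $\tau_j$ (via Lemmas \ref{bd21.2}--\ref{bd21.4}), drop every term whose weight vector is strictly smaller than $(3,3,3)$ before reading off the conditions on the coefficients $\gamma_u$. With that correction the rest of your outline, including the transitivity argument for item (i) and the closing remark on Proposition \ref{bb21.3}, goes through as intended.
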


The proof of the lemma is straightforward.

\begin{proof}[{\it Proof of Proposition \ref{bb21.3}}]
By Lemmas \ref{bb21.4} and \ref{bb21.5}, we get
$$ Q\mathscr P_5(\overline{\omega})^{S_5} = \langle [\theta(\mathcal Y_{341})]_{\overline{\omega}},\ [\theta(\mathcal Y_{351})]_{\overline{\omega}}, \ [\theta(\mathcal Y_{381})]_{\overline{\omega}},\ [\theta(\mathcal Y_{597})]_{\overline{\omega}}, \ [q_1]_{\overline{\omega}}, \ [q_2]_{\overline{\omega}} \rangle.$$
Let $X$ be a polynomial in $\mathscr P_5(\overline{\omega})$ such that $[X]_{\overline{\omega}}\in Q\mathscr P_5(\overline{\omega})^{GL_5}.$ Then, we have 
$$ X \equiv_{\overline{\omega}} v_1\theta(\mathcal Y_{341}) + v_2\theta(\mathcal Y_{351}) + v_3\theta(\mathcal Y_{381}) + v_4\theta(\mathcal Y_{597}) +  v_5q_1 + v_6q_2,$$
with $v_j\in\mathbb Z/2$ for $1\leq j\leq 6.$ We explicitly compute $\tau_5(X)$ in terms of the admissible monomials $\mathcal Y_t$ with $t = 341,342, \ldots, 400, 597, 598, \ldots, 666.$ By a direct computation, one gets
$$ \begin{array}{ll}
\medskip
\tau_5(X)  +  X & \equiv_{\overline{\omega}}  (v_1 + v_2)\mathcal Y_{342} +  (v_2 + v_4)\mathcal Y_{351} +  v_2\mathcal Y_{354} + v_3\mathcal Y_{383}\\
&\quad  + v_5\mathcal Y_{650}\ + \mbox{other terms }.
\end{array}$$
Since $[X]_{\overline{\omega}}\in Q\mathscr P_5(\overline{\omega})^{GL_5},$  $v_j = 0,\ 1\leq j\leq 5.$ The proposition is proved.
\end{proof}

Combining Lemmas \ref{bb21.1}, \ref{bb21.2} and \ref{bb21.5} gives

\begin{corls}
There exist exactly $11$ non-zero classes in the kernel of $(\widetilde {Sq^0_*})_{(5, 21)}$ invariant under the action of $S_5.$
\end{corls}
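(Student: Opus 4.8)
The statement to prove is that $\bigl(\mathrm{Ker}(\widetilde{Sq^0_*})_{(5,21)}\bigr)^{S_5}$ is $11$-dimensional. The whole strategy is to exploit the direct sum decomposition of $S_5$-modules already in hand, namely
\[
\mathrm{Ker}(\widetilde{Sq^0_*})_{(5,21)} = Q\mathscr P_5(3,3,1,1)\ \bigoplus\ Q\mathscr P_5(3,3,3),
\]
which comes from Lemma \ref{bd21.1}, and then to read off the dimension of the $S_5$-fixed subspace of each summand from the finer $S_5$-module decompositions established in Subsection \ref{s4.2}. Since taking $S_5$-invariants commutes with finite direct sums, it suffices to add up the contributions.

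First I would handle $\omega = (3,3,1,1)$. Here the splitting $Q\mathscr P_5(\omega) = Q\mathscr P_5^0(\omega)\oplus Q\mathscr P_5^+(\omega)$ is used, together with the $S_5$-submodule decomposition
\[
Q\mathscr P_5^0(\omega) = \langle[S_5(\mathcal Y_1)]\rangle \oplus \langle[S_5(\mathcal Y_{31})]\rangle \oplus \langle[S_5(\mathcal Y_{61})]\rangle \oplus \langle[S_5(\mathcal Y_{121})]\rangle \oplus \langle[\mathbb V_1]\rangle.
\]
By Lemma \ref{bb21.1}, each of the four orbit summands $\langle[S_5(\mathcal Y_j)]\rangle$ ($j=1,31,61,121$) has a one-dimensional $S_5$-invariant subspace spanned by $[\theta(\mathcal Y_j)]$, while $\langle[\mathbb V_1]\rangle^{S_5}=0$; hence $\bigl(Q\mathscr P_5^0(\omega)\bigr)^{S_5}$ is $4$-dimensional. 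By Lemma \ref{bb21.2}, $\bigl(Q\mathscr P_5^+(\omega)\bigr)^{S_5}=\langle[p]\rangle$ is $1$-dimensional. So $\bigl(Q\mathscr P_5(3,3,1,1)\bigr)^{S_5}$ has dimension $5$.

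Next I would treat $\overline\omega = (3,3,3)$. Lemma \ref{bb21.4} gives the $S_5$-module decomposition
\[
Q\mathscr P_5(\overline\omega) = \langle[S_5(\mathcal Y_{341})]_{\overline\omega}\rangle \oplus \langle[S_5(\mathcal Y_{351})]_{\overline\omega}\rangle \oplus \langle[S_5(\mathcal Y_{381})]_{\overline\omega}\rangle \oplus \langle[S_5(\mathcal Y_{597})]_{\overline\omega}\rangle \oplus \mathbb V_3 \oplus \mathbb V_4,
\]
and Lemma \ref{bb21.5} identifies the invariants of each summand: the four orbit summands each contribute a one-dimensional invariant subspace spanned by $[\theta(\mathcal Y_a)]_{\overline\omega}$ ($a=341,351,381,597$), while $\mathbb V_3^{S_5}=\langle[q_1]_{\overline\omega}\rangle$ and $\mathbb V_4^{S_5}=\langle[q_2]_{\overline\omega}\rangle$ are each one-dimensional. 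Therefore $\bigl(Q\mathscr P_5(3,3,3)\bigr)^{S_5}$ has dimension $6$. Adding the two contributions, $\dim\bigl(\mathrm{Ker}(\widetilde{Sq^0_*})_{(5,21)}\bigr)^{S_5} = 5 + 6 = 11$, and a basis is given by the eleven explicit classes
\[
[\theta(\mathcal Y_1)],\ [\theta(\mathcal Y_{31})],\ [\theta(\mathcal Y_{61})],\ [\theta(\mathcal Y_{121})],\ [p],\ [\theta(\mathcal Y_{341})]_{\overline\omega},\ [\theta(\mathcal Y_{351})]_{\overline\omega},\ [\theta(\mathcal Y_{381})]_{\overline\omega},\ [\theta(\mathcal Y_{597})]_{\overline\omega},\ [q_1]_{\overline\omega},\ [q_2]_{\overline\omega}.
\]

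The bookkeeping itself is routine once the module decompositions and their invariants are granted; the only genuine subtlety is making sure that no invariant of $\mathrm{Ker}(\widetilde{Sq^0_*})_{(5,21)}$ is lost or double-counted when passing through the weight-vector splitting. This is where I expect the main care to be needed: one must check that the filtration-quotient isomorphism $Q\mathscr P_5(\omega)\cong (\text{subquotient of }Q\mathscr P_5)$ from Subsection \ref{s2} is genuinely an $S_5$-module isomorphism (which is stated there) and that $\mathrm{Ker}(\widetilde{Sq^0_*})_{(5,21)}$, as an $S_5$-module, really is the direct sum of its weight-$(3,3,1,1)$ and weight-$(3,3,3)$ pieces rather than merely possessing them as sub/quotient pieces — but this is exactly the content of Lemma \ref{bd21.1} combined with the fact that there are only these two weight vectors occurring in the kernel, so the filtration has length two and splits. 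With that in place the dimension count $4+1+6=11$ is immediate, completing the proof.
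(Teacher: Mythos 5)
Your proposal is correct and follows exactly the route the paper takes: the paper's own justification is simply "Combining Lemmas \ref{bb21.1}, \ref{bb21.2} and \ref{bb21.5}," i.e. the same count $4+1$ invariants from $Q\mathscr P_5(3,3,1,1)$ and $4+1+1$ from $Q\mathscr P_5(3,3,3)$ across the stated $S_5$-module decompositions, giving the $11$ classes you list. (The only caveat is one of phrasing in the statement itself, not in your argument: "exactly $11$ non-zero classes" should be read as "an $11$-dimensional space of invariants," which is what both you and the lemmas actually establish.)
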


\subsection{Proof of Theorem \ref{dl2}}\label{s4.3}
Suppose that $[T]\in (Q\mathscr P_5)_{21}^{GL_5}$ with $T$ is a polynomial in $(\mathscr P_5)_{21}.$ 
 From Proposition \ref{bb21.3}, we have $T = T^*+ \zeta_{6} q_2\ {\rm modulo}(\mathcal A_2^+\mathscr P_5)$ with $T^*\in \mathscr P_5^-(\overline{\omega})$ and $\zeta_6\in \mathbb Z/2.$ By a simple computation, we see that $[q_2]\in (Q\mathscr P_5)_{21}^{S_5}.$ This implies that $[T^*]$ is an $S_5$-invariant. On the other hand, $[\mathscr P_5^-(\overline{\omega})] = Q\mathscr P_5(\omega).$ Hence, by Lemmas \ref{bb21.1} and \ref{bb21.2}, we obtain 
$$T^* =  \zeta_1\theta(\mathcal Y_1) +  \zeta_2\theta(\mathcal Y_{31}) +  \zeta_3\theta(\mathcal Y_{61}) +  \zeta_4\theta(\mathcal Y_{121}) + \zeta_5p\ {\rm modulo}(\mathcal A_2^+\mathscr P_5),$$
where $\zeta_i\in\mathbb Z/2.$ Using Theorem \ref{dl1} and computing $\tau_5(T) + T$ in terms of the admissible monomials $\mathcal Y_t,\ 1\leq t\leq 666,$ we conclude 
$$ \begin{array}{ll}
\tau_5(T) + T &= (\zeta_1 + \zeta_3)\mathcal Y_4 +  (\zeta_2 + \zeta_4)\mathcal Y_{35} +  \zeta_3\mathcal Y_{61}\\
&\quad\quad\quad\quad\quad +  (\zeta_3 + \zeta_4)\mathcal Y_{107} + \zeta_5\mathcal Y_{153}\ + \mbox{other terms }\ {\rm modulo}(\mathcal A_2^+\mathscr P_5).
\end{array}$$
By the relation $\tau_5(T)  =  T\ {\rm modulo}(\mathcal A_2^+\mathscr P_5),$ one gets  $\zeta_i = 0,\ 1\leq i\leq 5.$ This shows that $$T = \zeta_6q_2\ {\rm modulo}(\mathcal A_2^+\mathscr P_5).$$ The proof of the theorem is completed.

\section{Proof of Theorem \ref{dl3}}\label{s5}

Obviously, $\lambda_3\in \Lambda^{1, 3}$ and 
$ \overline{f}_0 = \lambda_3\lambda_5\lambda_6\lambda_4 + \lambda^2_3\lambda_7\lambda_5 + \lambda_7\lambda_5\lambda_3^2 + \lambda_7\lambda_5\lambda_4\lambda_2  \in \Lambda^{4, 18}$
 are the cycles in the lambda algebra $\Lambda.$ By Lin \cite{W.L}, we have 
${\rm Ext}_{\mathcal{A}_2}^{5, 26}(\mathbb{Z}/2, \mathbb{Z}/2) = \langle h_2f_0\rangle,$ where $$h_2 = [\lambda_3]\in {\rm Ext}_{\mathcal{A}_2}^{1, 4}(\mathbb{Z}/2, \mathbb{Z}/2),\ \mbox{and} \ f_0 = [\overline{f}_0]\in {\rm Ext}_{\mathcal{A}_2}^{4, 22}(\mathbb{Z}/2, \mathbb{Z}/2).$$ Notice that $h_2f_0 = h_1g_1$ with $h_1\in {\rm Ext}_{\mathcal{A}_2}^{1, 2}(\mathbb{Z}/2, \mathbb{Z}/2)$ and $g_1\in {\rm Ext}_{\mathcal{A}_2}^{4, 24}(\mathbb{Z}/2, \mathbb{Z}/2).$ By direct computations, we find that the following element is $\mathcal A_2^{+}$-annihilated in $H_{21}(B(\mathbb Z/2)^{\times 5})$:
$$Z =  \left\{ \begin{array}{ll}
 &a_1^{(3)}a_2^{(3)}a_3^{(5)}a_4^{(1)}a_5^{(9)}+
 a_1^{(3)}a_2^{(3)}a_3^{(5)}a_4^{(2)}a_5^{(8)}+
 a_1^{(3)}a_2^{(3)}a_3^{(6)}a_4^{(1)}a_5^{(8)}+
\medskip
 a_1^{(3)}a_2^{(3)}a_3^{(6)}a_4^{(2)}a_5^{(7)}\\
&\quad +
 a_1^{(3)}a_2^{(3)}a_3^{(5)}a_4^{(4)}a_5^{(6)}+
 a_1^{(3)}a_2^{(3)}a_3^{(6)}a_4^{(3)}a_5^{(6)}+
 a_1^{(3)}a_2^{(5)}a_3^{(6)}a_4^{(1)}a_5^{(6)}+
\medskip
 a_1^{(3)}a_2^{(3)}a_3^{(5)}a_4^{(5)}a_5^{(5)}\\
&\quad +
 a_1^{(3)}a_2^{(3)}a_3^{(6)}a_4^{(4)}a_5^{(5)}+
 a_1^{(3)}a_2^{(5)}a_3^{(6)}a_4^{(2)}a_5^{(5)}+
 a_1^{(3)}a_2^{(3)}a_3^{(9)}a_4^{(1)}a_5^{(5)}+
\medskip
 a_1^{(3)}a_2^{(5)}a_3^{(7)}a_4^{(1)}a_5^{(5)}\\
&\quad +
 a_1^{(3)}a_2^{(3)}a_3^{(9)}a_4^{(2)}a_5^{(4)}+
 a_1^{(3)}a_2^{(5)}a_3^{(7)}a_4^{(2)}a_5^{(4)}+
 a_1^{(3)}a_2^{(3)}a_3^{(10)}a_4^{(1)}a_5^{(4)}+
\medskip
 a_1^{(3)}a_2^{(6)}a_3^{(7)}a_4^{(1)}a_5^{(4)}\\
&\quad +
 a_1^{(3)}a_2^{(5)}a_3^{(6)}a_4^{(4)}a_5^{(3)}+
 a_1^{(3)}a_2^{(6)}a_3^{(7)}a_4^{(2)}a_5^{(3)}+
 a_1^{(3)}a_2^{(3)}a_3^{(10)}a_4^{(2)}a_5^{(3)}+
\medskip
 a_1^{(3)}a_2^{(3)}a_3^{(11)}a_4^{(2)}a_5^{(2)}\\
&\quad +
 a_1^{(3)}a_2^{(5)}a_3^{(9)}a_4^{(2)}a_5^{(2)}+
 a_1^{(3)}a_2^{(6)}a_3^{(10)}a_4^{(1)}a_5^{(1)}+
 a_1^{(3)}a_2^{(3)}a_3^{(5)}a_4^{(9)}a_5^{(1)}+
\medskip
 a_1^{(3)}a_2^{(3)}a_3^{(5)}a_4^{(8)}a_5^{(2)}\\
&\quad +
 a_1^{(3)}a_2^{(3)}a_3^{(6)}a_4^{(8)}a_5^{(1)}+
 a_1^{(3)}a_2^{(3)}a_3^{(6)}a_4^{(7)}a_5^{(2)}+
 a_1^{(3)}a_2^{(3)}a_3^{(5)}a_4^{(6)}a_5^{(4)}+
\medskip
 a_1^{(3)}a_2^{(3)}a_3^{(6)}a_4^{(6)}a_5^{(3)}\\
&\quad +
 a_1^{(3)}a_2^{(5)}a_3^{(6)}a_4^{(6)}a_5^{(1)}+
 a_1^{(3)}a_2^{(3)}a_3^{(5)}a_4^{(5)}a_5^{(5)}+
 a_1^{(3)}a_2^{(3)}a_3^{(6)}a_4^{(5)}a_5^{(4)}+
\medskip
 a_1^{(3)}a_2^{(5)}a_3^{(6)}a_4^{(5)}a_5^{(2)}\\
&\quad +
 a_1^{(3)}a_2^{(3)}a_3^{(9)}a_4^{(5)}a_5^{(1)}+
 a_1^{(3)}a_2^{(5)}a_3^{(7)}a_4^{(5)}a_5^{(1)}+
 a_1^{(3)}a_2^{(3)}a_3^{(9)}a_4^{(4)}a_5^{(2)}+
\medskip
 a_1^{(3)}a_2^{(5)}a_3^{(7)}a_4^{(4)}a_5^{(2)}\\
&\quad +
 a_1^{(3)}a_2^{(3)}a_3^{(10)}a_4^{(4)}a_5^{(1)}+
 a_1^{(3)}a_2^{(6)}a_3^{(7)}a_4^{(4)}a_5^{(1)}+
 a_1^{(3)}a_2^{(5)}a_3^{(6)}a_4^{(3)}a_5^{(4)}+
\medskip
 a_1^{(3)}a_2^{(6)}a_3^{(7)}a_4^{(3)}a_5^{(2)}\\
&\quad +
 a_1^{(3)}a_2^{(3)}a_3^{(10)}a_4^{(3)}a_5^{(2)}+
 a_1^{(3)}a_2^{(3)}a_3^{(11)}a_4^{(2)}a_5^{(2)}+
 a_1^{(3)}a_2^{(5)}a_3^{(9)}a_4^{(2)}a_5^{(2)}+
\medskip
 a_1^{(3)}a_2^{(6)}a_3^{(10)}a_4^{(1)}a_5^{(1)}\\
&\quad+
 a_1^{(3)}a_2^{(3)}a_3^{(12)}a_4^{(1)}a_5^{(2)}+
 a_1^{(3)}a_2^{(7)}a_3^{(8)}a_4^{(1)}a_5^{(2)}+
 a_1^{(3)}a_2^{(11)}a_3^{(4)}a_4^{(1)}a_5^{(2)}+
\medskip
 a_1^{(3)}a_2^{(13)}a_3^{(2)}a_4^{(1)}a_5^{(2)}\\
&\quad +
 a_1^{(3)}a_2^{(14)}a_3^{(1)}a_4^{(1)}a_5^{(2)}+
 a_1^{(3)}a_2^{(12)}a_3^{(3)}a_4^{(1)}a_5^{(2)}+
 a_1^{(3)}a_2^{(8)}a_3^{(7)}a_4^{(1)}a_5^{(2)}+
\medskip
 a_1^{(3)}a_2^{(4)}a_3^{(11)}a_4^{(1)}a_5^{(2)}\\
&\quad +
 a_1^{(3)}a_2^{(2)}a_3^{(13)}a_4^{(1)}a_5^{(2)}+
 a_1^{(3)}a_2^{(1)}a_3^{(14)}a_4^{(1)}a_5^{(2)}+
 a_1^{(3)}a_2^{(6)}a_3^{(6)}a_4^{(3)}a_5^{(3)}+
\medskip
 a_1^{(3)}a_2^{(5)}a_3^{(5)}a_4^{(5)}a_5^{(3)}\\
&\quad +
 a_1^{(3)}a_2^{(3)}a_3^{(3)}a_4^{(9)}a_5^{(3)}+
 a_1^{(3)}a_2^{(5)}a_3^{(3)}a_4^{(7)}a_5^{(3)}+
 a_1^{(3)}a_2^{(7)}a_3^{(7)}a_4^{(2)}a_5^{(2)}+
\medskip
 a_1^{(3)}a_2^{(6)}a_3^{(9)}a_4^{(1)}a_5^{(2)}\\
&\quad +
 a_1^{(3)}a_2^{(9)}a_3^{(6)}a_4^{(1)}a_5^{(2)}+
 a_1^{(3)}a_2^{(10)}a_3^{(5)}a_4^{(1)}a_5^{(2)}+
 a_1^{(3)}a_2^{(5)}a_3^{(10)}a_4^{(2)}a_5^{(1)}+
\medskip
 a_1^{(3)}a_2^{(13)}a_3^{(3)}a_4^{(1)}a_5^{(1)}\\
&\quad +
 a_1^{(3)}a_2^{(5)}a_3^{(11)}a_4^{(1)}a_5^{(1)}+
 a_1^{(3)}a_2^{(9)}a_3^{(7)}a_4^{(1)}a_5^{(1)}.
\end{array}\right\}.$$
According to the proof of Theorem \ref{dl2}, $\{[q_2]\}$ is a basis of $(Q\mathscr P_5)^{GL_5}$ in degree $13.2^{1} - 5.$ Notice that $\langle [q_2], [Z]\rangle = 1.$ So, since $Z\in P_{\mathcal A_2}H_{13.2^{1} - 5}(B(\mathbb{Z}/2)^{\times 5}),$ $[Z]$ is dual to $[q_2].$ Using the representation of $Tr_5$ over the algebra $\Lambda$ and the differential \eqref{ct3} in Sect.\ref{s1}, we obtain 
$$  \begin{array}{ll}
\psi_5(Z) &= \lambda_3\lambda_4\lambda_6\lambda_5\lambda_3 + \lambda_3\lambda_5\lambda_7\lambda^2_3 + \lambda_3^3\lambda_2\lambda_5\lambda_7\\
\medskip
&\quad + \lambda_3\lambda_2\lambda_4\lambda_5\lambda_7 + \lambda_3\lambda_7\lambda_3\lambda_5\lambda_3 \\
\medskip
&= \lambda_3\overline{f}_0 + \partial(\lambda_3\lambda_{11}\lambda_5\lambda_3).
\end{array}$$  
Since $Z\in P_{\mathcal A_2}H_{13.2^{1} - 5}(B(\mathbb{Z}/2)^{\times 5}),$ $\psi_5(Z)$ is a cycle in $\Lambda^{5, 21}.$ This implies that $h_2f_0$ is in the image of $Tr_5.$ Further, by Theorem \ref{dl2}, $\mathbb Z/2 \otimes_{GL_5}P_{\mathcal A_2}H_{13.2^{1} - 5}(B(\mathbb Z/2)^{\times 5})$ and ${\rm Ext}_{\mathcal {A}_2}^{5,5 + (13.2^{1} - 5)}(\mathbb Z/2,\mathbb Z/2)$ that have the same dimensions are $1.$ Hence, $Tr_5$ is an isomorphism when acting on the space $\mathbb Z/2 \otimes_{GL_5}P_{\mathcal A_2}H_{13.2^{1} - 5}(B(\mathbb Z/2)^{\times 5}).$ Theorem \ref{dl3} is proved.

\newpage
\section{Appendix}\label{s6}
In this section, we describe all the admissible monomials in $\mathscr B_5(n)$ for $n \in \{8, 21, 22, 47\}.$ These monomials are mentioned in Sects.\ref{s3} and \ref{s4}.

\subsection{$\mathcal A_2$-generators for $\mathscr P_5$ in degree $8$}\label{s5.1}

From a result in \cite{N.T1}, we deduce that $\mathscr B_5(8) = \bigcup_{1\leq j\leq 3}\mathscr B_5(\widetilde\omega_{(j)}),$ where  $$\widetilde\omega_{(1)} = (2,1,1),\ \widetilde\omega_{(2)} = (2,3),\ \mbox{and}\  \widetilde\omega_{(3)} = (4,2).$$ For $m,\ k\in\mathbb N$ and $1\leq k\leq 5,$ we denote
$$ \overline{\mathscr B}(k, 8) := \big\{x_k^{2^{m} - 1}\rho_{(k,\,5)}(x)\in (\mathscr P_5)_{8}\ :\ x\in \mathscr B_4(9-2^{m}),\ \alpha(13-2^{m})\leq 4\big\}.$$
As well known (see \cite{M.M2}), $\overline{\mathscr B}(k, 8)\subseteq \mathscr B_5(8)$ for all $k,\ 1\leq k\leq 5.$ We set $$ \overline{\mathscr B}(k, \widetilde\omega_{(j)})  := \overline{\mathscr B}(k, 8) \cap \mathscr P_5(\widetilde\omega_{(j)}),\ 1\leq j\leq 3,\ 1\leq k\leq 5.$$ Then, by a simple computation, we get
$$ \big|\overline{\Phi}(\mathscr B_4(\widetilde\omega_{(1)}))\bigcup\big(\bigcup_{1\leq k\leq 5}\overline{\mathscr B}(k, \widetilde\omega_{(1)})\big)\big| = 105,\ \big|\overline{\Phi}(\mathscr B_4(\widetilde\omega_{(2)}))\bigcup\big(\bigcup_{1\leq k\leq 5}\overline{\mathscr B}(k, \widetilde\omega_{(2)})\big)\big| = 24,$$
$$ \big|\overline{\Phi}(\mathscr B_4(\widetilde\omega_{(3)}))\bigcup\big(\bigcup_{1\leq k\leq 5}\overline{\mathscr B}(k, \widetilde\omega_{(3)})\big)\big| = 45.$$ Furthermore, $$ \mathscr B_5(\widetilde\omega_{(j)}) = \overline{\Phi}(\mathscr B_4(\widetilde\omega_{(j)}))\bigcup\big(\bigcup_{1\leq k\leq 5}\overline{\mathscr B}(k, \widetilde\omega_{(j)})\big),\ j =1, 2, 3.$$ Note that $\sum_{1\leq j\leq 3}|\mathscr B_5(\widetilde\omega_{(j)})| = \dim(Q\mathscr P_5)_{13.2^{0}-5} = 105 + 24 + 45 = 174.$ 
This implies that Conjecture \ref{gtS} is true for $d = 5$ and in degree $8.$

Now, we have $\mathscr B_5(8) = \{\mathcal Y_{8,\,t}:\, 1\leq t\leq 174\},$ where the monomials $\mathcal Y_{8,\,t}:\, 1\leq t\leq 174,$ are determined as follows:  

\begin{center}

\end{center}

\subsection{$\mathcal A_2$-generators for $\mathscr P_5^+$ in degree $21$}\label{s5.3}

Note that Kameko's squaring operation $(\widetilde {Sq_*^0})_{(5,21)}: (Q\mathscr P_5)_{21}\to (Q\mathscr P_5)_{8}$ is an epimorphism of $\mathbb Z/2GL_5$-modules. Hence, we have
$$ \mathscr B_5(21) = \mathscr B_5^0(21)\bigcup \varphi(\mathscr B_5(8))\bigcup \big(\mathscr B_5^+(21)\bigcap {\rm Ker}((\widetilde {Sq_*^0})_{(5,21)})\big),$$ where $\mathscr B_5^0(21) = \overline{\Phi}^0(\mathscr B_4(21)),\ |\mathscr B_5^0(21)|  = 400,\ |\varphi(\mathscr B_5(8))| = 174$ with $$ \varphi: \mathscr P_5\to \mathscr P_5,\ \varphi(Z) = X_{(\emptyset,\, 5)}Z^2,\ \forall Z\in \mathscr P_5.$$ 
Based on the results in Sect.\ref{s3.2.1}, we see that
$$ \mathscr B_5^+(21)\,\bigcap\, {\rm Ker}((\widetilde {Sq_*^0})_{(5,21)}) = \mathscr B^+_5(3,3,1,1)\,\cup\, \mathscr B^+_5(3,3,3),$$
where $\mathscr B_5^+(3,3,1,1)$ is the set of $196$ admissible monomials: $\mathcal Y_{21,\, t},\ 401\leq t\leq 596$

\begin{center}
\begin{tabular}{llll}
$\mathcal Y_{21,\,401}=x_1x_2^{2}x_3x_4^{2}x_5^{15}$, & $\mathcal Y_{21,\,402}=x_1x_2^{2}x_3x_4^{15}x_5^{2}$, & $\mathcal Y_{21,\,403}=x_1x_2^{2}x_3^{15}x_4x_5^{2}$, & $\mathcal Y_{21,\,404}=x_1x_2x_3^{2}x_4^{2}x_5^{15}$, \\
$\mathcal Y_{21,\,405}=x_1x_2x_3^{2}x_4^{15}x_5^{2}$, & $\mathcal Y_{21,\,406}=x_1x_2^{15}x_3^{2}x_4x_5^{2}$, & $\mathcal Y_{21,\,407}=x_1x_2x_3^{15}x_4^{2}x_5^{2}$, & $\mathcal Y_{21,\,408}=x_1x_2^{15}x_3x_4^{2}x_5^{2}$, \\
$\mathcal Y_{21,\,409}=x_1^{15}x_2x_3^{2}x_4x_5^{2}$, & $\mathcal Y_{21,\,410}=x_1^{15}x_2x_3x_4^{2}x_5^{2}$, & $\mathcal Y_{21,\,411}=x_1x_2x_3^{6}x_4^{3}x_5^{10}$, & $\mathcal Y_{21,\,412}=x_1x_2x_3^{6}x_4^{10}x_5^{3}$, \\
$\mathcal Y_{21,\,413}=x_1x_2x_3^{3}x_4^{6}x_5^{10}$, & $\mathcal Y_{21,\,414}=x_1x_2^{2}x_3^{3}x_4^{3}x_5^{12}$, & $\mathcal Y_{21,\,415}=x_1x_2^{2}x_3^{3}x_4^{12}x_5^{3}$, & $\mathcal Y_{21,\,416}=x_1x_2^{2}x_3^{12}x_4^{3}x_5^{3}$, \\
$\mathcal Y_{21,\,417}=x_1^{3}x_2^{3}x_3^{4}x_4^{3}x_5^{8}$, & $\mathcal Y_{21,\,418}=x_1^{3}x_2^{3}x_3^{4}x_4^{8}x_5^{3}$, & $\mathcal Y_{21,\,419}=x_1^{3}x_2^{3}x_3^{3}x_4^{4}x_5^{8}$, & $\mathcal Y_{21,\,420}=x_1x_2^{2}x_3x_4^{3}x_5^{14}$, \\
$\mathcal Y_{21,\,421}=x_1x_2^{2}x_3x_4^{14}x_5^{3}$, & $\mathcal Y_{21,\,422}=x_1x_2^{2}x_3^{3}x_4x_5^{14}$, & $\mathcal Y_{21,\,423}=x_1x_2^{14}x_3x_4^{2}x_5^{3}$, & $\mathcal Y_{21,\,424}=x_1x_2^{14}x_3x_4^{3}x_5^{2}$, \\
$\mathcal Y_{21,\,425}=x_1x_2^{14}x_3^{3}x_4x_5^{2}$, & $\mathcal Y_{21,\,426}=x_1x_2x_3^{2}x_4^{3}x_5^{14}$, & $\mathcal Y_{21,\,427}=x_1x_2x_3^{2}x_4^{14}x_5^{3}$, & $\mathcal Y_{21,\,428}=x_1x_2x_3^{14}x_4^{2}x_5^{3}$, \\
\end{tabular}
\end{center}

\newpage
\begin{center}
\begin{tabular}{llll}
$\mathcal Y_{21,\,429}=x_1x_2x_3^{14}x_4^{3}x_5^{2}$, & $\mathcal Y_{21,\,430}=x_1x_2^{3}x_3^{2}x_4x_5^{14}$, & $\mathcal Y_{21,\,431}=x_1x_2^{3}x_3^{14}x_4x_5^{2}$, & $\mathcal Y_{21,\,432}=x_1x_2x_3^{3}x_4^{2}x_5^{14}$, \\
$\mathcal Y_{21,\,433}=x_1x_2x_3^{3}x_4^{14}x_5^{2}$, & $\mathcal Y_{21,\,434}=x_1x_2^{3}x_3x_4^{2}x_5^{14}$, & $\mathcal Y_{21,\,435}=x_1x_2^{3}x_3x_4^{14}x_5^{2}$, & $\mathcal Y_{21,\,436}=x_1x_2^{2}x_3x_4^{6}x_5^{11}$, \\
$\mathcal Y_{21,\,437}=x_1x_2^{6}x_3x_4^{2}x_5^{11}$, & $\mathcal Y_{21,\,438}=x_1x_2^{6}x_3x_4^{11}x_5^{2}$, & $\mathcal Y_{21,\,439}=x_1x_2^{6}x_3^{11}x_4x_5^{2}$, & $\mathcal Y_{21,\,440}=x_1x_2x_3^{2}x_4^{6}x_5^{11}$, \\
$\mathcal Y_{21,\,441}=x_1x_2x_3^{6}x_4^{2}x_5^{11}$, & $\mathcal Y_{21,\,442}=x_1x_2x_3^{6}x_4^{11}x_5^{2}$, & $\mathcal Y_{21,\,443}=x_1x_2^{2}x_3x_4^{7}x_5^{10}$, & $\mathcal Y_{21,\,444}=x_1x_2^{2}x_3^{7}x_4x_5^{10}$, \\
$\mathcal Y_{21,\,445}=x_1x_2x_3^{2}x_4^{7}x_5^{10}$, & $\mathcal Y_{21,\,446}=x_1x_2^{7}x_3^{10}x_4x_5^{2}$, & $\mathcal Y_{21,\,447}=x_1x_2^{7}x_3^{2}x_4x_5^{10}$, & $\mathcal Y_{21,\,448}=x_1x_2x_3^{7}x_4^{10}x_5^{2}$, \\
$\mathcal Y_{21,\,449}=x_1x_2x_3^{7}x_4^{2}x_5^{10}$, & $\mathcal Y_{21,\,450}=x_1x_2^{7}x_3x_4^{10}x_5^{2}$, & $\mathcal Y_{21,\,451}=x_1x_2^{7}x_3x_4^{2}x_5^{10}$, & $\mathcal Y_{21,\,452}=x_1x_2^{2}x_3^{3}x_4^{13}x_5^{2}$, \\
$\mathcal Y_{21,\,453}=x_1x_2^{2}x_3^{13}x_4^{2}x_5^{3}$, & $\mathcal Y_{21,\,454}=x_1x_2^{2}x_3^{13}x_4^{3}x_5^{2}$, & $\mathcal Y_{21,\,455}=x_1x_2^{3}x_3^{2}x_4^{13}x_5^{2}$, & $\mathcal Y_{21,\,456}=x_1x_2^{3}x_3^{13}x_4^{2}x_5^{2}$, \\
$\mathcal Y_{21,\,457}=x_1x_2^{2}x_3^{5}x_4^{2}x_5^{11}$, & $\mathcal Y_{21,\,458}=x_1x_2^{2}x_3^{5}x_4^{11}x_5^{2}$, & $\mathcal Y_{21,\,459}=x_1x_2^{2}x_3^{7}x_4^{9}x_5^{2}$, & $\mathcal Y_{21,\,460}=x_1x_2^{7}x_3^{2}x_4^{9}x_5^{2}$, \\
$\mathcal Y_{21,\,461}=x_1x_2^{7}x_3^{9}x_4^{2}x_5^{2}$, & $\mathcal Y_{21,\,462}=x_1x_2^{6}x_3x_4^{3}x_5^{10}$, & $\mathcal Y_{21,\,463}=x_1x_2^{6}x_3x_4^{10}x_5^{3}$, & $\mathcal Y_{21,\,464}=x_1x_2^{6}x_3^{3}x_4x_5^{10}$, \\
$\mathcal Y_{21,\,465}=x_1x_2^{3}x_3^{6}x_4x_5^{10}$, & $\mathcal Y_{21,\,466}=x_1x_2^{3}x_3x_4^{6}x_5^{10}$, & $\mathcal Y_{21,\,467}=x_1x_2^{3}x_3^{12}x_4^{2}x_5^{3}$, & $\mathcal Y_{21,\,468}=x_1x_2^{3}x_3^{12}x_4^{3}x_5^{2}$, \\
$\mathcal Y_{21,\,469}=x_1x_2^{3}x_3^{2}x_4^{3}x_5^{12}$, & $\mathcal Y_{21,\,470}=x_1x_2^{3}x_3^{2}x_4^{12}x_5^{3}$, & $\mathcal Y_{21,\,471}=x_1x_2^{3}x_3^{3}x_4^{12}x_5^{2}$, & $\mathcal Y_{21,\,472}=x_1x_2^{3}x_3^{3}x_4^{2}x_5^{12}$, \\
$\mathcal Y_{21,\,473}=x_1x_2^{2}x_3^{3}x_4^{4}x_5^{11}$, & $\mathcal Y_{21,\,474}=x_1x_2^{3}x_3^{4}x_4^{2}x_5^{11}$, & $\mathcal Y_{21,\,475}=x_1x_2^{3}x_3^{4}x_4^{11}x_5^{2}$, & $\mathcal Y_{21,\,476}=x_1x_2^{3}x_3^{2}x_4^{4}x_5^{11}$, \\
$\mathcal Y_{21,\,477}=x_1x_2^{2}x_3^{4}x_4^{3}x_5^{11}$, & $\mathcal Y_{21,\,478}=x_1x_2^{2}x_3^{4}x_4^{11}x_5^{3}$, & $\mathcal Y_{21,\,479}=x_1x_2^{2}x_3^{3}x_4^{5}x_5^{10}$, & $\mathcal Y_{21,\,480}=x_1x_2^{2}x_3^{5}x_4^{3}x_5^{10}$, \\
$\mathcal Y_{21,\,481}=x_1x_2^{2}x_3^{5}x_4^{10}x_5^{3}$, & $\mathcal Y_{21,\,482}=x_1x_2^{3}x_3^{2}x_4^{5}x_5^{10}$, & $\mathcal Y_{21,\,483}=x_1x_2^{3}x_3^{5}x_4^{10}x_5^{2}$, & $\mathcal Y_{21,\,484}=x_1x_2^{3}x_3^{5}x_4^{2}x_5^{10}$, \\
$\mathcal Y_{21,\,485}=x_1x_2^{6}x_3^{3}x_4^{9}x_5^{2}$, & $\mathcal Y_{21,\,486}=x_1x_2^{6}x_3^{9}x_4^{2}x_5^{3}$, & $\mathcal Y_{21,\,487}=x_1x_2^{6}x_3^{9}x_4^{3}x_5^{2}$, & $\mathcal Y_{21,\,488}=x_1x_2^{3}x_3^{6}x_4^{9}x_5^{2}$, \\
$\mathcal Y_{21,\,489}=x_1x_2^{2}x_3^{3}x_4^{7}x_5^{8}$, & $\mathcal Y_{21,\,490}=x_1x_2^{2}x_3^{7}x_4^{3}x_5^{8}$, & $\mathcal Y_{21,\,491}=x_1x_2^{2}x_3^{7}x_4^{8}x_5^{3}$, & $\mathcal Y_{21,\,492}=x_1x_2^{3}x_3^{2}x_4^{7}x_5^{8}$, \\
$\mathcal Y_{21,\,493}=x_1x_2^{7}x_3^{8}x_4^{2}x_5^{3}$, & $\mathcal Y_{21,\,494}=x_1x_2^{7}x_3^{8}x_4^{3}x_5^{2}$, & $\mathcal Y_{21,\,495}=x_1x_2^{7}x_3^{2}x_4^{3}x_5^{8}$, & $\mathcal Y_{21,\,496}=x_1x_2^{7}x_3^{2}x_4^{8}x_5^{3}$, \\
$\mathcal Y_{21,\,497}=x_1x_2^{3}x_3^{7}x_4^{8}x_5^{2}$, & $\mathcal Y_{21,\,498}=x_1x_2^{3}x_3^{7}x_4^{2}x_5^{8}$, & $\mathcal Y_{21,\,499}=x_1x_2^{7}x_3^{3}x_4^{8}x_5^{2}$, & $\mathcal Y_{21,\,500}=x_1x_2^{7}x_3^{3}x_4^{2}x_5^{8}$, \\
$\mathcal Y_{21,\,501}=x_1x_2^{3}x_3^{4}x_4^{3}x_5^{10}$, & $\mathcal Y_{21,\,502}=x_1x_2^{3}x_3^{4}x_4^{10}x_5^{3}$, & $\mathcal Y_{21,\,503}=x_1x_2^{3}x_3^{3}x_4^{4}x_5^{10}$, & $\mathcal Y_{21,\,504}=x_1x_2^{6}x_3^{3}x_4^{3}x_5^{8}$, \\
$\mathcal Y_{21,\,505}=x_1x_2^{6}x_3^{3}x_4^{8}x_5^{3}$, & $\mathcal Y_{21,\,506}=x_1x_2^{3}x_3^{6}x_4^{3}x_5^{8}$, & $\mathcal Y_{21,\,507}=x_1x_2^{3}x_3^{6}x_4^{8}x_5^{3}$, & $\mathcal Y_{21,\,508}=x_1x_2^{3}x_3^{3}x_4^{6}x_5^{8}$, \\
$\mathcal Y_{21,\,509}=x_1^{3}x_2x_3^{2}x_4x_5^{14}$, & $\mathcal Y_{21,\,510}=x_1^{3}x_2x_3^{14}x_4x_5^{2}$, & $\mathcal Y_{21,\,511}=x_1^{3}x_2x_3x_4^{2}x_5^{14}$, & $\mathcal Y_{21,\,512}=x_1^{3}x_2x_3x_4^{14}x_5^{2}$, \\
$\mathcal Y_{21,\,513}=x_1^{3}x_2x_3^{2}x_4^{13}x_5^{2}$, & $\mathcal Y_{21,\,514}=x_1^{3}x_2x_3^{13}x_4^{2}x_5^{2}$, & $\mathcal Y_{21,\,515}=x_1^{3}x_2^{13}x_3x_4^{2}x_5^{2}$, & $\mathcal Y_{21,\,516}=x_1^{3}x_2^{13}x_3^{2}x_4x_5^{2}$, \\
$\mathcal Y_{21,\,517}=x_1^{3}x_2x_3^{6}x_4x_5^{10}$, & $\mathcal Y_{21,\,518}=x_1^{3}x_2x_3x_4^{6}x_5^{10}$, & $\mathcal Y_{21,\,519}=x_1^{3}x_2x_3^{12}x_4^{2}x_5^{3}$, & $\mathcal Y_{21,\,520}=x_1^{3}x_2x_3^{12}x_4^{3}x_5^{2}$, \\
$\mathcal Y_{21,\,521}=x_1^{3}x_2x_3^{2}x_4^{3}x_5^{12}$, & $\mathcal Y_{21,\,522}=x_1^{3}x_2x_3^{2}x_4^{12}x_5^{3}$, & $\mathcal Y_{21,\,523}=x_1^{3}x_2x_3^{3}x_4^{12}x_5^{2}$, & $\mathcal Y_{21,\,524}=x_1^{3}x_2x_3^{3}x_4^{2}x_5^{12}$, \\
$\mathcal Y_{21,\,525}=x_1^{3}x_2^{3}x_3x_4^{12}x_5^{2}$, & $\mathcal Y_{21,\,526}=x_1^{3}x_2^{3}x_3x_4^{2}x_5^{12}$, & $\mathcal Y_{21,\,527}=x_1^{3}x_2^{3}x_3^{12}x_4x_5^{2}$, & $\mathcal Y_{21,\,528}=x_1^{3}x_2^{12}x_3x_4^{2}x_5^{3}$, \\
$\mathcal Y_{21,\,529}=x_1^{3}x_2^{12}x_3x_4^{3}x_5^{2}$, & $\mathcal Y_{21,\,530}=x_1^{3}x_2^{12}x_3^{3}x_4x_5^{2}$, & $\mathcal Y_{21,\,531}=x_1^{3}x_2x_3^{4}x_4^{2}x_5^{11}$, & $\mathcal Y_{21,\,532}=x_1^{3}x_2x_3^{4}x_4^{11}x_5^{2}$, \\
$\mathcal Y_{21,\,533}=x_1^{3}x_2x_3^{2}x_4^{4}x_5^{11}$, & $\mathcal Y_{21,\,534}=x_1^{3}x_2^{4}x_3x_4^{2}x_5^{11}$, & $\mathcal Y_{21,\,535}=x_1^{3}x_2^{4}x_3x_4^{11}x_5^{2}$, & $\mathcal Y_{21,\,536}=x_1^{3}x_2^{4}x_3^{11}x_4x_5^{2}$, \\
$\mathcal Y_{21,\,537}=x_1^{3}x_2x_3^{2}x_4^{5}x_5^{10}$, & $\mathcal Y_{21,\,538}=x_1^{3}x_2x_3^{5}x_4^{10}x_5^{2}$, & $\mathcal Y_{21,\,539}=x_1^{3}x_2x_3^{5}x_4^{2}x_5^{10}$, & $\mathcal Y_{21,\,540}=x_1^{3}x_2^{5}x_3x_4^{10}x_5^{2}$, \\
$\mathcal Y_{21,\,541}=x_1^{3}x_2^{5}x_3x_4^{2}x_5^{10}$, & $\mathcal Y_{21,\,542}=x_1^{3}x_2^{5}x_3^{2}x_4x_5^{10}$, & $\mathcal Y_{21,\,543}=x_1^{3}x_2^{5}x_3^{10}x_4x_5^{2}$, & $\mathcal Y_{21,\,544}=x_1^{3}x_2x_3^{6}x_4^{9}x_5^{2}$, \\
$\mathcal Y_{21,\,545}=x_1^{3}x_2x_3^{2}x_4^{7}x_5^{8}$, & $\mathcal Y_{21,\,546}=x_1^{3}x_2x_3^{7}x_4^{8}x_5^{2}$, & $\mathcal Y_{21,\,547}=x_1^{3}x_2x_3^{7}x_4^{2}x_5^{8}$, & $\mathcal Y_{21,\,548}=x_1^{3}x_2^{7}x_3x_4^{8}x_5^{2}$, \\
$\mathcal Y_{21,\,549}=x_1^{3}x_2^{7}x_3x_4^{2}x_5^{8}$, & $\mathcal Y_{21,\,550}=x_1^{3}x_2^{7}x_3^{8}x_4x_5^{2}$, & $\mathcal Y_{21,\,551}=x_1^{3}x_2x_3^{4}x_4^{3}x_5^{10}$, & $\mathcal Y_{21,\,552}=x_1^{3}x_2x_3^{4}x_4^{10}x_5^{3}$, \\
$\mathcal Y_{21,\,553}=x_1^{3}x_2x_3^{3}x_4^{4}x_5^{10}$, & $\mathcal Y_{21,\,554}=x_1^{3}x_2^{3}x_3x_4^{4}x_5^{10}$, & $\mathcal Y_{21,\,555}=x_1^{3}x_2^{3}x_3^{4}x_4x_5^{10}$, & $\mathcal Y_{21,\,556}=x_1^{3}x_2^{4}x_3x_4^{3}x_5^{10}$, \\
$\mathcal Y_{21,\,557}=x_1^{3}x_2^{4}x_3x_4^{10}x_5^{3}$, & $\mathcal Y_{21,\,558}=x_1^{3}x_2^{4}x_3^{3}x_4x_5^{10}$, & $\mathcal Y_{21,\,559}=x_1^{3}x_2x_3^{6}x_4^{3}x_5^{8}$, & $\mathcal Y_{21,\,560}=x_1^{3}x_2x_3^{6}x_4^{8}x_5^{3}$, \\
$\mathcal Y_{21,\,561}=x_1^{3}x_2x_3^{3}x_4^{6}x_5^{8}$, & $\mathcal Y_{21,\,562}=x_1^{3}x_2^{3}x_3x_4^{6}x_5^{8}$, & $\mathcal Y_{21,\,563}=x_1^{3}x_2^{5}x_3^{2}x_4^{9}x_5^{2}$, & $\mathcal Y_{21,\,564}=x_1^{3}x_2^{5}x_3^{9}x_4^{2}x_5^{2}$, \\
$\mathcal Y_{21,\,565}=x_1^{3}x_2^{3}x_3^{4}x_4^{9}x_5^{2}$, & $\mathcal Y_{21,\,566}=x_1^{3}x_2^{4}x_3^{3}x_4^{9}x_5^{2}$, & $\mathcal Y_{21,\,567}=x_1^{3}x_2^{4}x_3^{9}x_4^{2}x_5^{3}$, & $\mathcal Y_{21,\,568}=x_1^{3}x_2^{4}x_3^{9}x_4^{3}x_5^{2}$, \\
$\mathcal Y_{21,\,569}=x_1^{3}x_2^{5}x_3^{8}x_4^{2}x_5^{3}$, & $\mathcal Y_{21,\,570}=x_1^{3}x_2^{5}x_3^{8}x_4^{3}x_5^{2}$, & $\mathcal Y_{21,\,571}=x_1^{3}x_2^{5}x_3^{2}x_4^{3}x_5^{8}$, & $\mathcal Y_{21,\,572}=x_1^{3}x_2^{5}x_3^{2}x_4^{8}x_5^{3}$, \\
$\mathcal Y_{21,\,573}=x_1^{3}x_2^{5}x_3^{3}x_4^{8}x_5^{2}$, & $\mathcal Y_{21,\,574}=x_1^{3}x_2^{5}x_3^{3}x_4^{2}x_5^{8}$, & $\mathcal Y_{21,\,575}=x_1^{3}x_2^{3}x_3^{5}x_4^{8}x_5^{2}$, & $\mathcal Y_{21,\,576}=x_1^{3}x_2^{3}x_3^{5}x_4^{2}x_5^{8}$, \\
$\mathcal Y_{21,\,577}=x_1^{7}x_2x_3^{10}x_4x_5^{2}$, & $\mathcal Y_{21,\,578}=x_1^{7}x_2x_3^{2}x_4x_5^{10}$, & $\mathcal Y_{21,\,579}=x_1^{7}x_2x_3x_4^{10}x_5^{2}$, & $\mathcal Y_{21,\,580}=x_1^{7}x_2x_3x_4^{2}x_5^{10}$, \\
$\mathcal Y_{21,\,581}=x_1^{7}x_2x_3^{2}x_4^{9}x_5^{2}$, & $\mathcal Y_{21,\,582}=x_1^{7}x_2x_3^{9}x_4^{2}x_5^{2}$, & $\mathcal Y_{21,\,583}=x_1^{7}x_2^{9}x_3x_4^{2}x_5^{2}$, & $\mathcal Y_{21,\,584}=x_1^{7}x_2^{9}x_3^{2}x_4x_5^{2}$, \\
$\mathcal Y_{21,\,585}=x_1^{7}x_2x_3^{8}x_4^{2}x_5^{3}$, & $\mathcal Y_{21,\,586}=x_1^{7}x_2x_3^{8}x_4^{3}x_5^{2}$, & $\mathcal Y_{21,\,587}=x_1^{7}x_2x_3^{2}x_4^{3}x_5^{8}$, & $\mathcal Y_{21,\,588}=x_1^{7}x_2x_3^{2}x_4^{8}x_5^{3}$, \\
$\mathcal Y_{21,\,589}=x_1^{7}x_2x_3^{3}x_4^{8}x_5^{2}$, & $\mathcal Y_{21,\,590}=x_1^{7}x_2x_3^{3}x_4^{2}x_5^{8}$, & $\mathcal Y_{21,\,591}=x_1^{7}x_2^{3}x_3x_4^{8}x_5^{2}$, & $\mathcal Y_{21,\,592}=x_1^{7}x_2^{3}x_3x_4^{2}x_5^{8}$, \\
$\mathcal Y_{21,\,593}=x_1^{7}x_2^{3}x_3^{8}x_4x_5^{2}$, & $\mathcal Y_{21,\,594}=x_1^{7}x_2^{8}x_3x_4^{2}x_5^{3}$, & $\mathcal Y_{21,\,595}=x_1^{7}x_2^{8}x_3x_4^{3}x_5^{2}$, & $\mathcal Y_{21,\,596}=x_1^{7}x_2^{8}x_3^{3}x_4x_5^{2}.$
\end{tabular}%
\end{center}

\newpage
$\mathscr B_5^+(3,3,3)$ is the set of $70$ admissible monomials: $\mathcal Y_{21,\, t},\ 597\leq t\leq 666$

\begin{center}
\begin{tabular}{llrr}
$\mathcal Y_{21,\,597}=x_1x_2^{2}x_3^{4}x_4^{7}x_5^{7}$, & $\mathcal Y_{21,\,598}=x_1x_2^{2}x_3^{7}x_4^{4}x_5^{7}$, & \multicolumn{1}{l}{$\mathcal Y_{21,\,599}=x_1x_2^{2}x_3^{7}x_4^{7}x_5^{4}$,} & \multicolumn{1}{l}{$\mathcal Y_{21,\,600}=x_1x_2^{7}x_3^{2}x_4^{4}x_5^{7}$,} \\
$\mathcal Y_{21,\,601}=x_1x_2^{7}x_3^{2}x_4^{7}x_5^{4}$, & $\mathcal Y_{21,\,602}=x_1x_2^{7}x_3^{7}x_4^{2}x_5^{4}$, & \multicolumn{1}{l}{$\mathcal Y_{21,\,603}=x_1^{7}x_2x_3^{2}x_4^{4}x_5^{7}$,} & \multicolumn{1}{l}{$\mathcal Y_{21,\,604}=x_1^{7}x_2x_3^{2}x_4^{7}x_5^{4}$,} \\
$\mathcal Y_{21,\,605}=x_1^{7}x_2x_3^{7}x_4^{2}x_5^{4}$, & $\mathcal Y_{21,\,606}=x_1^{7}x_2^{7}x_3x_4^{2}x_5^{4}$, & \multicolumn{1}{l}{$\mathcal Y_{21,\,607}=x_1x_2^{6}x_3x_4^{6}x_5^{7}$,} & \multicolumn{1}{l}{$\mathcal Y_{21,\,608}=x_1x_2^{6}x_3x_4^{7}x_5^{6}$,} \\
$\mathcal Y_{21,\,609}=x_1x_2^{6}x_3^{7}x_4x_5^{6}$, & $\mathcal Y_{21,\,610}=x_1x_2x_3^{6}x_4^{6}x_5^{7}$, & \multicolumn{1}{l}{$\mathcal Y_{21,\,611}=x_1x_2x_3^{6}x_4^{7}x_5^{6}$,} & \multicolumn{1}{l}{$\mathcal Y_{21,\,612}=x_1x_2^{7}x_3^{6}x_4x_5^{6}$,} \\
$\mathcal Y_{21,\,613}=x_1x_2x_3^{7}x_4^{6}x_5^{6}$, & $\mathcal Y_{21,\,614}=x_1x_2^{7}x_3x_4^{6}x_5^{6}$, & \multicolumn{1}{l}{$\mathcal Y_{21,\,615}=x_1^{7}x_2x_3^{6}x_4x_5^{6}$,} & \multicolumn{1}{l}{$\mathcal Y_{21,\,616}=x_1^{7}x_2x_3x_4^{6}x_5^{6}$,} \\
$\mathcal Y_{21,\,617}=x_1x_2^{2}x_3^{5}x_4^{6}x_5^{7}$, & $\mathcal Y_{21,\,618}=x_1x_2^{2}x_3^{5}x_4^{7}x_5^{6}$, & \multicolumn{1}{l}{$\mathcal Y_{21,\,619}=x_1x_2^{2}x_3^{7}x_4^{5}x_5^{6}$,} & \multicolumn{1}{l}{$\mathcal Y_{21,\,620}=x_1x_2^{7}x_3^{2}x_4^{5}x_5^{6}$,} \\
$\mathcal Y_{21,\,621}=x_1^{7}x_2x_3^{2}x_4^{5}x_5^{6}$, & $\mathcal Y_{21,\,622}=x_1x_2^{3}x_3^{4}x_4^{6}x_5^{7}$, & \multicolumn{1}{l}{$\mathcal Y_{21,\,623}=x_1x_2^{3}x_3^{4}x_4^{7}x_5^{6}$,} & \multicolumn{1}{l}{$\mathcal Y_{21,\,624}=x_1x_2^{3}x_3^{7}x_4^{4}x_5^{6}$,} \\
$\mathcal Y_{21,\,625}=x_1x_2^{7}x_3^{3}x_4^{4}x_5^{6}$, & $\mathcal Y_{21,\,626}=x_1^{3}x_2x_3^{4}x_4^{6}x_5^{7}$, & \multicolumn{1}{l}{$\mathcal Y_{21,\,627}=x_1^{3}x_2x_3^{4}x_4^{7}x_5^{6}$,} & \multicolumn{1}{l}{$\mathcal Y_{21,\,628}=x_1^{3}x_2x_3^{7}x_4^{4}x_5^{6}$,} \\
$\mathcal Y_{21,\,629}=x_1^{3}x_2^{7}x_3x_4^{4}x_5^{6}$, & $\mathcal Y_{21,\,630}=x_1^{7}x_2x_3^{3}x_4^{4}x_5^{6}$, & \multicolumn{1}{l}{$\mathcal Y_{21,\,631}=x_1^{7}x_2^{3}x_3x_4^{4}x_5^{6}$,} & \multicolumn{1}{l}{$\mathcal Y_{21,\,632}=x_1x_2^{3}x_3^{6}x_4^{4}x_5^{7}$,} \\
$\mathcal Y_{21,\,633}=x_1x_2^{3}x_3^{6}x_4^{7}x_5^{4}$, & $\mathcal Y_{21,\,634}=x_1x_2^{3}x_3^{7}x_4^{6}x_5^{4}$, & \multicolumn{1}{l}{$\mathcal Y_{21,\,635}=x_1x_2^{7}x_3^{3}x_4^{6}x_5^{4}$,} & \multicolumn{1}{l}{$\mathcal Y_{21,\,636}=x_1^{3}x_2x_3^{6}x_4^{4}x_5^{7}$,} \\
$\mathcal Y_{21,\,637}=x_1^{3}x_2x_3^{6}x_4^{7}x_5^{4}$, & $\mathcal Y_{21,\,638}=x_1^{3}x_2x_3^{7}x_4^{6}x_5^{4}$, & \multicolumn{1}{l}{$\mathcal Y_{21,\,639}=x_1^{3}x_2^{7}x_3x_4^{6}x_5^{4}$,} & \multicolumn{1}{l}{$\mathcal Y_{21,\,640}=x_1^{7}x_2x_3^{3}x_4^{6}x_5^{4}$,} \\
$\mathcal Y_{21,\,641}=x_1^{7}x_2^{3}x_3x_4^{6}x_5^{4}$, & $\mathcal Y_{21,\,642}=x_1^{3}x_2^{5}x_3^{2}x_4^{4}x_5^{7}$, & \multicolumn{1}{l}{$\mathcal Y_{21,\,643}=x_1^{3}x_2^{5}x_3^{2}x_4^{7}x_5^{4}$,} & \multicolumn{1}{l}{$\mathcal Y_{21,\,644}=x_1^{3}x_2^{5}x_3^{7}x_4^{2}x_5^{4}$,} \\
$\mathcal Y_{21,\,645}=x_1^{3}x_2^{7}x_3^{5}x_4^{2}x_5^{4}$, & $\mathcal Y_{21,\,646}=x_1^{7}x_2^{3}x_3^{5}x_4^{2}x_5^{4}$, & \multicolumn{1}{l}{$\mathcal Y_{21,\,647}=x_1^{3}x_2^{3}x_3^{4}x_4^{4}x_5^{7}$,} & \multicolumn{1}{l}{$\mathcal Y_{21,\,648}=x_1^{3}x_2^{3}x_3^{4}x_4^{7}x_5^{4}$,} \\
$\mathcal Y_{21,\,649}=x_1^{3}x_2^{3}x_3^{7}x_4^{4}x_5^{4}$, & $\mathcal Y_{21,\,650}=x_1^{3}x_2^{7}x_3^{3}x_4^{4}x_5^{4}$, & \multicolumn{1}{l}{$\mathcal Y_{21,\,651}=x_1^{7}x_2^{3}x_3^{3}x_4^{4}x_5^{4}$,} & \multicolumn{1}{l}{$\mathcal Y_{21,\,652}=x_1x_2^{6}x_3^{3}x_4^{5}x_5^{6}$,} \\
$\mathcal Y_{21,\,653}=x_1x_2^{3}x_3^{6}x_4^{5}x_5^{6}$, & $\mathcal Y_{21,\,654}=x_1x_2^{3}x_3^{5}x_4^{6}x_5^{6}$, & \multicolumn{1}{l}{$\mathcal Y_{21,\,655}=x_1^{3}x_2x_3^{6}x_4^{5}x_5^{6}$,} & \multicolumn{1}{l}{$\mathcal Y_{21,\,656}=x_1^{3}x_2x_3^{5}x_4^{6}x_5^{6}$,} \\
$\mathcal Y_{21,\,657}=x_1^{3}x_2^{5}x_3x_4^{6}x_5^{6}$, & $\mathcal Y_{21,\,658}=x_1^{3}x_2^{5}x_3^{6}x_4x_5^{6}$, & \multicolumn{1}{l}{$\mathcal Y_{21,\,659}=x_1^{3}x_2^{5}x_3^{2}x_4^{5}x_5^{6}$,} & \multicolumn{1}{l}{$\mathcal Y_{21,\,660}=x_1^{3}x_2^{5}x_3^{3}x_4^{4}x_5^{6}$,} \\
$\mathcal Y_{21,\,661}=x_1^{3}x_2^{3}x_3^{5}x_4^{4}x_5^{6}$, & $\mathcal Y_{21,\,662}=x_1^{3}x_2^{3}x_3^{4}x_4^{5}x_5^{6}$, & \multicolumn{1}{l}{$\mathcal Y_{21,\,663}=x_1^{3}x_2^{3}x_3^{5}x_4^{6}x_5^{4}$,} & \multicolumn{1}{l}{$\mathcal Y_{21,\,664}=x_1^{3}x_2^{4}x_3^{3}x_4^{5}x_5^{6}$,} \\
$\mathcal Y_{21,\,665}=x_1^{3}x_2^{5}x_3^{3}x_4^{6}x_5^{4}$, & $\mathcal Y_{21,\,666}=x_1^{3}x_2^{5}x_3^{6}x_4^{3}x_5^{4}$. &       &  
\end{tabular}%
\end{center}

\subsection{$\mathcal A_2$-generators for $\mathscr P_5^0$ in degree $22$}\label{s5.4}

According to the results in Sect.\ref{s3.2.2}, we have 
$$ \mathscr B_5^0(22) = \overline{\Phi}^0(\mathscr B_4(\omega_{(1)}) \bigcup \overline{\Phi}^0(\mathscr B_4(\omega_{(4)}) \bigcup \overline{\Phi}^0(\mathscr B_4(\omega_{(5)}) = \{\mathcal Y_{22,\, t}:\ 1\leq t\leq 460\},$$
where $\omega_{(1)} = (2,2,2,1),\ \omega_{(4)} = (4,3,1,1),\ \omega_{(5)} = (4,3,3),$ and  the monomials $\mathcal Y_{22,\, t}:\ 1\leq t\leq 460,$ are determined as follows:

\begin{center}
\begin{tabular}{llll}
$\mathcal Y_{22,\,1}=x_4^{7}x_5^{15}$, & $\mathcal Y_{22,\,2}=x_4^{15}x_5^{7}$, & $\mathcal Y_{22,\,3}=x_3^{7}x_5^{15}$, & $\mathcal Y_{22,\,4}=x_3^{7}x_4^{15}$, \\
$\mathcal Y_{22,\,5}=x_3^{15}x_5^{7}$, & $\mathcal Y_{22,\,6}=x_3^{15}x_4^{7}$, & $\mathcal Y_{22,\,7}=x_2^{7}x_5^{15}$, & $\mathcal Y_{22,\,8}=x_2^{7}x_4^{15}$, \\
$\mathcal Y_{22,\,9}=x_2^{7}x_3^{15}$, & $\mathcal Y_{22,\,10}=x_2^{15}x_5^{7}$, & $\mathcal Y_{22,\,11}=x_2^{15}x_4^{7}$, & $\mathcal Y_{22,\,12}=x_2^{15}x_3^{7}$, \\
$\mathcal Y_{22,\,13}=x_1^{7}x_5^{15}$, & $\mathcal Y_{22,\,14}=x_1^{7}x_4^{15}$, & $\mathcal Y_{22,\,15}=x_1^{7}x_3^{15}$, & $\mathcal Y_{22,\,16}=x_1^{7}x_2^{15}$, \\
$\mathcal Y_{22,\,17}=x_1^{15}x_5^{7}$, & $\mathcal Y_{22,\,18}=x_1^{15}x_4^{7}$, & $\mathcal Y_{22,\,19}=x_1^{15}x_3^{7}$, & $\mathcal Y_{22,\,20}=x_1^{15}x_2^{7}$, \\
$\mathcal Y_{22,\,21}=x_3x_4^{6}x_5^{15}$, & $\mathcal Y_{22,\,22}=x_3x_4^{15}x_5^{6}$, & $\mathcal Y_{22,\,23}=x_3^{15}x_4x_5^{6}$, & $\mathcal Y_{22,\,24}=x_2x_4^{6}x_5^{15}$, \\
$\mathcal Y_{22,\,25}=x_2x_4^{15}x_5^{6}$, & $\mathcal Y_{22,\,26}=x_2x_3^{6}x_5^{15}$, & $\mathcal Y_{22,\,27}=x_2x_3^{6}x_4^{15}$, & $\mathcal Y_{22,\,28}=x_2x_3^{15}x_5^{6}$, \\
$\mathcal Y_{22,\,29}=x_2x_3^{15}x_4^{6}$, & $\mathcal Y_{22,\,30}=x_2^{15}x_4x_5^{6}$, & $\mathcal Y_{22,\,31}=x_2^{15}x_3x_5^{6}$, & $\mathcal Y_{22,\,32}=x_2^{15}x_3x_4^{6}$, \\
$\mathcal Y_{22,\,33}=x_1x_4^{6}x_5^{15}$, & $\mathcal Y_{22,\,34}=x_1x_4^{15}x_5^{6}$, & $\mathcal Y_{22,\,35}=x_1x_3^{6}x_5^{15}$, & $\mathcal Y_{22,\,36}=x_1x_3^{6}x_4^{15}$, \\
$\mathcal Y_{22,\,37}=x_1x_3^{15}x_5^{6}$, & $\mathcal Y_{22,\,38}=x_1x_3^{15}x_4^{6}$, & $\mathcal Y_{22,\,39}=x_1x_2^{6}x_5^{15}$, & $\mathcal Y_{22,\,40}=x_1x_2^{6}x_4^{15}$, \\
$\mathcal Y_{22,\,41}=x_1x_2^{6}x_3^{15}$, & $\mathcal Y_{22,\,42}=x_1x_2^{15}x_5^{6}$, & $\mathcal Y_{22,\,43}=x_1x_2^{15}x_4^{6}$, & $\mathcal Y_{22,\,44}=x_1x_2^{15}x_3^{6}$, \\
$\mathcal Y_{22,\,45}=x_1^{15}x_4x_5^{6}$, & $\mathcal Y_{22,\,46}=x_1^{15}x_3x_5^{6}$, & $\mathcal Y_{22,\,47}=x_1^{15}x_3x_4^{6}$, & $\mathcal Y_{22,\,48}=x_1^{15}x_2x_5^{6}$, \\
$\mathcal Y_{22,\,49}=x_1^{15}x_2x_4^{6}$, & $\mathcal Y_{22,\,50}=x_1^{15}x_2x_3^{6}$, & $\mathcal Y_{22,\,51}=x_3x_4^{7}x_5^{14}$, & $\mathcal Y_{22,\,52}=x_3x_4^{14}x_5^{7}$, \\
$\mathcal Y_{22,\,53}=x_3^{7}x_4x_5^{14}$, & $\mathcal Y_{22,\,54}=x_2x_4^{7}x_5^{14}$, & $\mathcal Y_{22,\,55}=x_2x_4^{14}x_5^{7}$, & $\mathcal Y_{22,\,56}=x_2x_3^{7}x_5^{14}$, \\
$\mathcal Y_{22,\,57}=x_2x_3^{7}x_4^{14}$, & $\mathcal Y_{22,\,58}=x_2x_3^{14}x_5^{7}$, & $\mathcal Y_{22,\,59}=x_2x_3^{14}x_4^{7}$, & $\mathcal Y_{22,\,60}=x_2^{7}x_4x_5^{14}$, \\
$\mathcal Y_{22,\,61}=x_2^{7}x_3x_5^{14}$, & $\mathcal Y_{22,\,62}=x_2^{7}x_3x_4^{14}$, & $\mathcal Y_{22,\,63}=x_1x_4^{7}x_5^{14}$, & $\mathcal Y_{22,\,64}=x_1x_4^{14}x_5^{7}$, \\
$\mathcal Y_{22,\,65}=x_1x_3^{7}x_5^{14}$, & $\mathcal Y_{22,\,66}=x_1x_3^{7}x_4^{14}$, & $\mathcal Y_{22,\,67}=x_1x_3^{14}x_5^{7}$, & $\mathcal Y_{22,\,68}=x_1x_3^{14}x_4^{7}$, \\
$\mathcal Y_{22,\,69}=x_1x_2^{7}x_5^{14}$, & $\mathcal Y_{22,\,70}=x_1x_2^{7}x_4^{14}$, & $\mathcal Y_{22,\,71}=x_1x_2^{7}x_3^{14}$, & $\mathcal Y_{22,\,72}=x_1x_2^{14}x_5^{7}$, \\
$\mathcal Y_{22,\,73}=x_1x_2^{14}x_4^{7}$, & $\mathcal Y_{22,\,74}=x_1x_2^{14}x_3^{7}$, & $\mathcal Y_{22,\,75}=x_1^{7}x_4x_5^{14}$, & $\mathcal Y_{22,\,76}=x_1^{7}x_3x_5^{14}$, \\
$\mathcal Y_{22,\,77}=x_1^{7}x_3x_4^{14}$, & $\mathcal Y_{22,\,78}=x_1^{7}x_2x_5^{14}$, & $\mathcal Y_{22,\,79}=x_1^{7}x_2x_4^{14}$, & $\mathcal Y_{22,\,80}=x_1^{7}x_2x_3^{14}$, \\
\end{tabular}%
\end{center}

\newpage
\begin{center}
\begin{tabular}{llll}
$\mathcal Y_{22,\,81}=x_3^{3}x_4^{5}x_5^{14}$, & $\mathcal Y_{22,\,82}=x_2^{3}x_4^{5}x_5^{14}$, & $\mathcal Y_{22,\,83}=x_2^{3}x_3^{5}x_5^{14}$, & $\mathcal Y_{22,\,84}=x_2^{3}x_3^{5}x_4^{14}$, \\
$\mathcal Y_{22,\,85}=x_1^{3}x_4^{5}x_5^{14}$, & $\mathcal Y_{22,\,86}=x_1^{3}x_3^{5}x_5^{14}$, & $\mathcal Y_{22,\,87}=x_1^{3}x_3^{5}x_4^{14}$, & $\mathcal Y_{22,\,88}=x_1^{3}x_2^{5}x_5^{14}$, \\
$\mathcal Y_{22,\,89}=x_1^{3}x_2^{5}x_4^{14}$, & $\mathcal Y_{22,\,90}=x_1^{3}x_2^{5}x_3^{14}$, & $\mathcal Y_{22,\,91}=x_3^{3}x_4^{13}x_5^{6}$, & $\mathcal Y_{22,\,92}=x_2^{3}x_4^{13}x_5^{6}$, \\
$\mathcal Y_{22,\,93}=x_2^{3}x_3^{13}x_5^{6}$, & $\mathcal Y_{22,\,94}=x_2^{3}x_3^{13}x_4^{6}$, & $\mathcal Y_{22,\,95}=x_1^{3}x_4^{13}x_5^{6}$, & $\mathcal Y_{22,\,96}=x_1^{3}x_3^{13}x_5^{6}$, \\
$\mathcal Y_{22,\,97}=x_1^{3}x_3^{13}x_4^{6}$, & $\mathcal Y_{22,\,98}=x_1^{3}x_2^{13}x_5^{6}$, & $\mathcal Y_{22,\,99}=x_1^{3}x_2^{13}x_4^{6}$, & $\mathcal Y_{22,\,100}=x_1^{3}x_2^{13}x_3^{6}$, \\
$\mathcal Y_{22,\,101}=x_2x_3x_4^{6}x_5^{14}$, & $\mathcal Y_{22,\,102}=x_2x_3x_4^{14}x_5^{6}$, & $\mathcal Y_{22,\,103}=x_2x_3^{6}x_4x_5^{14}$, & $\mathcal Y_{22,\,104}=x_2x_3^{14}x_4x_5^{6}$, \\
$\mathcal Y_{22,\,105}=x_1x_3x_4^{6}x_5^{14}$, & $\mathcal Y_{22,\,106}=x_1x_3x_4^{14}x_5^{6}$, & $\mathcal Y_{22,\,107}=x_1x_3^{6}x_4x_5^{14}$, & $\mathcal Y_{22,\,108}=x_1x_3^{14}x_4x_5^{6}$, \\
$\mathcal Y_{22,\,109}=x_1x_2x_4^{6}x_5^{14}$, & $\mathcal Y_{22,\,110}=x_1x_2x_4^{14}x_5^{6}$, & $\mathcal Y_{22,\,111}=x_1x_2x_3^{6}x_5^{14}$, & $\mathcal Y_{22,\,112}=x_1x_2x_3^{6}x_4^{14}$, \\
$\mathcal Y_{22,\,113}=x_1x_2x_3^{14}x_5^{6}$, & $\mathcal Y_{22,\,114}=x_1x_2x_3^{14}x_4^{6}$, & $\mathcal Y_{22,\,115}=x_1x_2^{6}x_4x_5^{14}$, & $\mathcal Y_{22,\,116}=x_1x_2^{6}x_3x_5^{14}$, \\
$\mathcal Y_{22,\,117}=x_1x_2^{6}x_3x_4^{14}$, & $\mathcal Y_{22,\,118}=x_1x_2^{14}x_4x_5^{6}$, & $\mathcal Y_{22,\,119}=x_1x_2^{14}x_3x_5^{6}$, & $\mathcal Y_{22,\,120}=x_1x_2^{14}x_3x_4^{6}$, \\
$\mathcal Y_{22,\,121}=x_2x_3^{2}x_4^{4}x_5^{15}$, & $\mathcal Y_{22,\,122}=x_2x_3^{2}x_4^{15}x_5^{4}$, & $\mathcal Y_{22,\,123}=x_2x_3^{15}x_4^{2}x_5^{4}$, & $\mathcal Y_{22,\,124}=x_2^{15}x_3x_4^{2}x_5^{4}$, \\
$\mathcal Y_{22,\,125}=x_1x_3^{2}x_4^{4}x_5^{15}$, & $\mathcal Y_{22,\,126}=x_1x_3^{2}x_4^{15}x_5^{4}$, & $\mathcal Y_{22,\,127}=x_1x_3^{15}x_4^{2}x_5^{4}$, & $\mathcal Y_{22,\,128}=x_1x_2^{2}x_4^{4}x_5^{15}$, \\
$\mathcal Y_{22,\,129}=x_1x_2^{2}x_4^{15}x_5^{4}$, & $\mathcal Y_{22,\,130}=x_1x_2^{2}x_3^{4}x_5^{15}$, & $\mathcal Y_{22,\,131}=x_1x_2^{2}x_3^{4}x_4^{15}$, & $\mathcal Y_{22,\,132}=x_1x_2^{2}x_3^{15}x_5^{4}$, \\
$\mathcal Y_{22,\,133}=x_1x_2^{2}x_3^{15}x_4^{4}$, & $\mathcal Y_{22,\,134}=x_1x_2^{15}x_4^{2}x_5^{4}$, & $\mathcal Y_{22,\,135}=x_1x_2^{15}x_3^{2}x_5^{4}$, & $\mathcal Y_{22,\,136}=x_1x_2^{15}x_3^{2}x_4^{4}$, \\
$\mathcal Y_{22,\,137}=x_1^{15}x_3x_4^{2}x_5^{4}$, & $\mathcal Y_{22,\,138}=x_1^{15}x_2x_4^{2}x_5^{4}$, & $\mathcal Y_{22,\,139}=x_1^{15}x_2x_3^{2}x_5^{4}$, & $\mathcal Y_{22,\,140}=x_1^{15}x_2x_3^{2}x_4^{4}$, \\
$\mathcal Y_{22,\,141}=x_2x_3^{2}x_4^{5}x_5^{14}$, & $\mathcal Y_{22,\,142}=x_1x_3^{2}x_4^{5}x_5^{14}$, & $\mathcal Y_{22,\,143}=x_1x_2^{2}x_4^{5}x_5^{14}$, & $\mathcal Y_{22,\,144}=x_1x_2^{2}x_3^{5}x_5^{14}$, \\
$\mathcal Y_{22,\,145}=x_1x_2^{2}x_3^{5}x_4^{14}$, & $\mathcal Y_{22,\,146}=x_2x_3^{2}x_4^{13}x_5^{6}$, & $\mathcal Y_{22,\,147}=x_1x_3^{2}x_4^{13}x_5^{6}$, & $\mathcal Y_{22,\,148}=x_1x_2^{2}x_4^{13}x_5^{6}$, \\
$\mathcal Y_{22,\,149}=x_1x_2^{2}x_3^{13}x_5^{6}$, & $\mathcal Y_{22,\,150}=x_1x_2^{2}x_3^{13}x_4^{6}$, & $\mathcal Y_{22,\,151}=x_2x_3^{2}x_4^{7}x_5^{12}$, & $\mathcal Y_{22,\,152}=x_2x_3^{2}x_4^{12}x_5^{7}$, \\
$\mathcal Y_{22,\,153}=x_2x_3^{7}x_4^{2}x_5^{12}$, & $\mathcal Y_{22,\,154}=x_2^{7}x_3x_4^{2}x_5^{12}$, & $\mathcal Y_{22,\,155}=x_1x_3^{2}x_4^{7}x_5^{12}$, & $\mathcal Y_{22,\,156}=x_1x_3^{2}x_4^{12}x_5^{7}$, \\
$\mathcal Y_{22,\,157}=x_1x_3^{7}x_4^{2}x_5^{12}$, & $\mathcal Y_{22,\,158}=x_1x_2^{2}x_4^{7}x_5^{12}$, & $\mathcal Y_{22,\,159}=x_1x_2^{2}x_4^{12}x_5^{7}$, & $\mathcal Y_{22,\,160}=x_1x_2^{2}x_3^{7}x_5^{12}$, \\
$\mathcal Y_{22,\,161}=x_1x_2^{2}x_3^{7}x_4^{12}$, & $\mathcal Y_{22,\,162}=x_1x_2^{2}x_3^{12}x_5^{7}$, & $\mathcal Y_{22,\,163}=x_1x_2^{2}x_3^{12}x_4^{7}$, & $\mathcal Y_{22,\,164}=x_1x_2^{7}x_4^{2}x_5^{12}$, \\
$\mathcal Y_{22,\,165}=x_1x_2^{7}x_3^{2}x_5^{12}$, & $\mathcal Y_{22,\,166}=x_1x_2^{7}x_3^{2}x_4^{12}$, & $\mathcal Y_{22,\,167}=x_1^{7}x_3x_4^{2}x_5^{12}$, & $\mathcal Y_{22,\,168}=x_1^{7}x_2x_4^{2}x_5^{12}$, \\
$\mathcal Y_{22,\,169}=x_1^{7}x_2x_3^{2}x_5^{12}$, & $\mathcal Y_{22,\,170}=x_1^{7}x_2x_3^{2}x_4^{12}$, & $\mathcal Y_{22,\,171}=x_2x_3^{3}x_4^{3}x_5^{15}$, & $\mathcal Y_{22,\,172}=x_2x_3^{3}x_4^{15}x_5^{3}$, \\
$\mathcal Y_{22,\,173}=x_2x_3^{15}x_4^{3}x_5^{3}$, & $\mathcal Y_{22,\,174}=x_2^{3}x_3x_4^{3}x_5^{15}$, & $\mathcal Y_{22,\,175}=x_2^{3}x_3x_4^{15}x_5^{3}$, & $\mathcal Y_{22,\,176}=x_2^{3}x_3^{3}x_4x_5^{15}$, \\
$\mathcal Y_{22,\,177}=x_2^{3}x_3^{3}x_4^{15}x_5$, & $\mathcal Y_{22,\,178}=x_2^{3}x_3^{15}x_4x_5^{3}$, & $\mathcal Y_{22,\,179}=x_2^{3}x_3^{15}x_4^{3}x_5$, & $\mathcal Y_{22,\,180}=x_2^{15}x_3x_4^{3}x_5^{3}$, \\
$\mathcal Y_{22,\,181}=x_2^{15}x_3^{3}x_4x_5^{3}$, & $\mathcal Y_{22,\,182}=x_2^{15}x_3^{3}x_4^{3}x_5$, & $\mathcal Y_{22,\,183}=x_1x_3^{3}x_4^{3}x_5^{15}$, & $\mathcal Y_{22,\,184}=x_1x_3^{3}x_4^{15}x_5^{3}$, \\
$\mathcal Y_{22,\,185}=x_1x_3^{15}x_4^{3}x_5^{3}$, & $\mathcal Y_{22,\,186}=x_1x_2^{3}x_4^{3}x_5^{15}$, & $\mathcal Y_{22,\,187}=x_1x_2^{3}x_4^{15}x_5^{3}$, & $\mathcal Y_{22,\,188}=x_1x_2^{3}x_3^{3}x_5^{15}$, \\
$\mathcal Y_{22,\,189}=x_1x_2^{3}x_3^{3}x_4^{15}$, & $\mathcal Y_{22,\,190}=x_1x_2^{3}x_3^{15}x_5^{3}$, & $\mathcal Y_{22,\,191}=x_1x_2^{3}x_3^{15}x_4^{3}$, & $\mathcal Y_{22,\,192}=x_1x_2^{15}x_4^{3}x_5^{3}$, \\
$\mathcal Y_{22,\,193}=x_1x_2^{15}x_3^{3}x_5^{3}$, & $\mathcal Y_{22,\,194}=x_1x_2^{15}x_3^{3}x_4^{3}$, & $\mathcal Y_{22,\,195}=x_1^{3}x_3x_4^{3}x_5^{15}$, & $\mathcal Y_{22,\,196}=x_1^{3}x_3x_4^{15}x_5^{3}$, \\
$\mathcal Y_{22,\,197}=x_1^{3}x_3^{3}x_4x_5^{15}$, & $\mathcal Y_{22,\,198}=x_1^{3}x_3^{3}x_4^{15}x_5$, & $\mathcal Y_{22,\,199}=x_1^{3}x_3^{15}x_4x_5^{3}$, & $\mathcal Y_{22,\,200}=x_1^{3}x_3^{15}x_4^{3}x_5$, \\
$\mathcal Y_{22,\,201}=x_1^{3}x_2x_4^{3}x_5^{15}$, & $\mathcal Y_{22,\,202}=x_1^{3}x_2x_4^{15}x_5^{3}$, & $\mathcal Y_{22,\,203}=x_1^{3}x_2x_3^{3}x_5^{15}$, & $\mathcal Y_{22,\,204}=x_1^{3}x_2x_3^{3}x_4^{15}$, \\
$\mathcal Y_{22,\,205}=x_1^{3}x_2x_3^{15}x_5^{3}$, & $\mathcal Y_{22,\,206}=x_1^{3}x_2x_3^{15}x_4^{3}$, & $\mathcal Y_{22,\,207}=x_1^{3}x_2^{3}x_4x_5^{15}$, & $\mathcal Y_{22,\,208}=x_1^{3}x_2^{3}x_4^{15}x_5$, \\
$\mathcal Y_{22,\,209}=x_1^{3}x_2^{3}x_3x_5^{15}$, & $\mathcal Y_{22,\,210}=x_1^{3}x_2^{3}x_3x_4^{15}$, & $\mathcal Y_{22,\,211}=x_1^{3}x_2^{3}x_3^{15}x_5$, & $\mathcal Y_{22,\,212}=x_1^{3}x_2^{3}x_3^{15}x_4$, \\
$\mathcal Y_{22,\,213}=x_1^{3}x_2^{15}x_4x_5^{3}$, & $\mathcal Y_{22,\,214}=x_1^{3}x_2^{15}x_4^{3}x_5$, & $\mathcal Y_{22,\,215}=x_1^{3}x_2^{15}x_3x_5^{3}$, & $\mathcal Y_{22,\,216}=x_1^{3}x_2^{15}x_3x_4^{3}$, \\
$\mathcal Y_{22,\,217}=x_1^{3}x_2^{15}x_3^{3}x_5$, & $\mathcal Y_{22,\,218}=x_1^{3}x_2^{15}x_3^{3}x_4$, & $\mathcal Y_{22,\,219}=x_1^{15}x_3x_4^{3}x_5^{3}$, & $\mathcal Y_{22,\,220}=x_1^{15}x_3^{3}x_4x_5^{3}$, \\
$\mathcal Y_{22,\,221}=x_1^{15}x_3^{3}x_4^{3}x_5$, & $\mathcal Y_{22,\,222}=x_1^{15}x_2x_4^{3}x_5^{3}$, & $\mathcal Y_{22,\,223}=x_1^{15}x_2x_3^{3}x_5^{3}$, & $\mathcal Y_{22,\,224}=x_1^{15}x_2x_3^{3}x_4^{3}$, \\
$\mathcal Y_{22,\,225}=x_1^{15}x_2^{3}x_4x_5^{3}$, & $\mathcal Y_{22,\,226}=x_1^{15}x_2^{3}x_4^{3}x_5$, & $\mathcal Y_{22,\,227}=x_1^{15}x_2^{3}x_3x_5^{3}$, & $\mathcal Y_{22,\,228}=x_1^{15}x_2^{3}x_3x_4^{3}$, \\
$\mathcal Y_{22,\,229}=x_1^{15}x_2^{3}x_3^{3}x_5$, & $\mathcal Y_{22,\,230}=x_1^{15}x_2^{3}x_3^{3}x_4$, & $\mathcal Y_{22,\,231}=x_2x_3^{3}x_4^{4}x_5^{14}$, & $\mathcal Y_{22,\,232}=x_2x_3^{3}x_4^{14}x_5^{4}$, \\
$\mathcal Y_{22,\,233}=x_2^{3}x_3x_4^{4}x_5^{14}$, & $\mathcal Y_{22,\,234}=x_2^{3}x_3x_4^{14}x_5^{4}$, & $\mathcal Y_{22,\,235}=x_1x_3^{3}x_4^{4}x_5^{14}$, & $\mathcal Y_{22,\,236}=x_1x_3^{3}x_4^{14}x_5^{4}$, \\
$\mathcal Y_{22,\,237}=x_1x_2^{3}x_4^{4}x_5^{14}$, & $\mathcal Y_{22,\,238}=x_1x_2^{3}x_4^{14}x_5^{4}$, & $\mathcal Y_{22,\,239}=x_1x_2^{3}x_3^{4}x_5^{14}$, & $\mathcal Y_{22,\,240}=x_1x_2^{3}x_3^{4}x_4^{14}$, \\
$\mathcal Y_{22,\,241}=x_1x_2^{3}x_3^{14}x_5^{4}$, & $\mathcal Y_{22,\,242}=x_1x_2^{3}x_3^{14}x_4^{4}$, & $\mathcal Y_{22,\,243}=x_1^{3}x_3x_4^{4}x_5^{14}$, & $\mathcal Y_{22,\,244}=x_1^{3}x_3x_4^{14}x_5^{4}$, \\
$\mathcal Y_{22,\,245}=x_1^{3}x_2x_4^{4}x_5^{14}$, & $\mathcal Y_{22,\,246}=x_1^{3}x_2x_4^{14}x_5^{4}$, & $\mathcal Y_{22,\,247}=x_1^{3}x_2x_3^{4}x_5^{14}$, & $\mathcal Y_{22,\,248}=x_1^{3}x_2x_3^{4}x_4^{14}$, \\
$\mathcal Y_{22,\,249}=x_1^{3}x_2x_3^{14}x_5^{4}$, & $\mathcal Y_{22,\,250}=x_1^{3}x_2x_3^{14}x_4^{4}$, & $\mathcal Y_{22,\,251}=x_2x_3^{3}x_4^{6}x_5^{12}$, & $\mathcal Y_{22,\,252}=x_2x_3^{3}x_4^{12}x_5^{6}$, \\
$\mathcal Y_{22,\,253}=x_2^{3}x_3x_4^{6}x_5^{12}$, & $\mathcal Y_{22,\,254}=x_2^{3}x_3x_4^{12}x_5^{6}$, & $\mathcal Y_{22,\,255}=x_1x_3^{3}x_4^{6}x_5^{12}$, & $\mathcal Y_{22,\,256}=x_1x_3^{3}x_4^{12}x_5^{6}$, \\
$\mathcal Y_{22,\,257}=x_1x_2^{3}x_4^{6}x_5^{12}$, & $\mathcal Y_{22,\,258}=x_1x_2^{3}x_4^{12}x_5^{6}$, & $\mathcal Y_{22,\,259}=x_1x_2^{3}x_3^{6}x_5^{12}$, & $\mathcal Y_{22,\,260}=x_1x_2^{3}x_3^{6}x_4^{12}$, \\
$\mathcal Y_{22,\,261}=x_1x_2^{3}x_3^{12}x_5^{6}$, & $\mathcal Y_{22,\,262}=x_1x_2^{3}x_3^{12}x_4^{6}$, & $\mathcal Y_{22,\,263}=x_1^{3}x_3x_4^{6}x_5^{12}$, & $\mathcal Y_{22,\,264}=x_1^{3}x_3x_4^{12}x_5^{6}$, \\
$\mathcal Y_{22,\,265}=x_1^{3}x_2x_4^{6}x_5^{12}$, & $\mathcal Y_{22,\,266}=x_1^{3}x_2x_4^{12}x_5^{6}$, & $\mathcal Y_{22,\,267}=x_1^{3}x_2x_3^{6}x_5^{12}$, & $\mathcal Y_{22,\,268}=x_1^{3}x_2x_3^{6}x_4^{12}$, \\
$\mathcal Y_{22,\,269}=x_1^{3}x_2x_3^{12}x_5^{6}$, & $\mathcal Y_{22,\,270}=x_1^{3}x_2x_3^{12}x_4^{6}$, & $\mathcal Y_{22,\,271}=x_2x_3^{3}x_4^{7}x_5^{11}$, & $\mathcal Y_{22,\,272}=x_2x_3^{7}x_4^{3}x_5^{11}$, \\
$\mathcal Y_{22,\,273}=x_2x_3^{7}x_4^{11}x_5^{3}$, & $\mathcal Y_{22,\,274}=x_2^{3}x_3x_4^{7}x_5^{11}$, & $\mathcal Y_{22,\,275}=x_2^{3}x_3^{7}x_4x_5^{11}$, & $\mathcal Y_{22,\,276}=x_2^{3}x_3^{7}x_4^{11}x_5$, \\
$\mathcal Y_{22,\,277}=x_2^{7}x_3x_4^{3}x_5^{11}$, & $\mathcal Y_{22,\,278}=x_2^{7}x_3x_4^{11}x_5^{3}$, & $\mathcal Y_{22,\,279}=x_2^{7}x_3^{3}x_4x_5^{11}$, & $\mathcal Y_{22,\,280}=x_2^{7}x_3^{3}x_4^{11}x_5$, \\
$\mathcal Y_{22,\,281}=x_2^{7}x_3^{11}x_4x_5^{3}$, & $\mathcal Y_{22,\,282}=x_2^{7}x_3^{11}x_4^{3}x_5$, & $\mathcal Y_{22,\,283}=x_1x_3^{3}x_4^{7}x_5^{11}$, & $\mathcal Y_{22,\,284}=x_1x_3^{7}x_4^{3}x_5^{11}$, \\
\end{tabular}%
\end{center}

\newpage
\begin{center}
\begin{tabular}{llll}
$\mathcal Y_{22,\,285}=x_1x_3^{7}x_4^{11}x_5^{3}$, & $\mathcal Y_{22,\,286}=x_1x_2^{3}x_4^{7}x_5^{11}$, & $\mathcal Y_{22,\,287}=x_1x_2^{3}x_3^{7}x_5^{11}$, & $\mathcal Y_{22,\,288}=x_1x_2^{3}x_3^{7}x_4^{11}$, \\
$\mathcal Y_{22,\,289}=x_1x_2^{7}x_4^{3}x_5^{11}$, & $\mathcal Y_{22,\,290}=x_1x_2^{7}x_4^{11}x_5^{3}$, & $\mathcal Y_{22,\,291}=x_1x_2^{7}x_3^{3}x_5^{11}$, & $\mathcal Y_{22,\,292}=x_1x_2^{7}x_3^{3}x_4^{11}$, \\
$\mathcal Y_{22,\,293}=x_1x_2^{7}x_3^{11}x_5^{3}$, & $\mathcal Y_{22,\,294}=x_1x_2^{7}x_3^{11}x_4^{3}$, & $\mathcal Y_{22,\,295}=x_1^{3}x_3x_4^{7}x_5^{11}$, & $\mathcal Y_{22,\,296}=x_1^{3}x_3^{7}x_4x_5^{11}$, \\
$\mathcal Y_{22,\,297}=x_1^{3}x_3^{7}x_4^{11}x_5$, & $\mathcal Y_{22,\,298}=x_1^{3}x_2x_4^{7}x_5^{11}$, & $\mathcal Y_{22,\,299}=x_1^{3}x_2x_3^{7}x_5^{11}$, & $\mathcal Y_{22,\,300}=x_1^{3}x_2x_3^{7}x_4^{11}$, \\
$\mathcal Y_{22,\,301}=x_1^{3}x_2^{7}x_4x_5^{11}$, & $\mathcal Y_{22,\,302}=x_1^{3}x_2^{7}x_4^{11}x_5$, & $\mathcal Y_{22,\,303}=x_1^{3}x_2^{7}x_3x_5^{11}$, & $\mathcal Y_{22,\,304}=x_1^{3}x_2^{7}x_3x_4^{11}$, \\
$\mathcal Y_{22,\,305}=x_1^{3}x_2^{7}x_3^{11}x_5$, & $\mathcal Y_{22,\,306}=x_1^{3}x_2^{7}x_3^{11}x_4$, & $\mathcal Y_{22,\,307}=x_1^{7}x_3x_4^{3}x_5^{11}$, & $\mathcal Y_{22,\,308}=x_1^{7}x_3x_4^{11}x_5^{3}$, \\
$\mathcal Y_{22,\,309}=x_1^{7}x_3^{3}x_4x_5^{11}$, & $\mathcal Y_{22,\,310}=x_1^{7}x_3^{3}x_4^{11}x_5$, & $\mathcal Y_{22,\,311}=x_1^{7}x_3^{11}x_4x_5^{3}$, & $\mathcal Y_{22,\,312}=x_1^{7}x_3^{11}x_4^{3}x_5$, \\
$\mathcal Y_{22,\,313}=x_1^{7}x_2x_4^{3}x_5^{11}$, & $\mathcal Y_{22,\,314}=x_1^{7}x_2x_4^{11}x_5^{3}$, & $\mathcal Y_{22,\,315}=x_1^{7}x_2x_3^{3}x_5^{11}$, & $\mathcal Y_{22,\,316}=x_1^{7}x_2x_3^{3}x_4^{11}$, \\
$\mathcal Y_{22,\,317}=x_1^{7}x_2x_3^{11}x_5^{3}$, & $\mathcal Y_{22,\,318}=x_1^{7}x_2x_3^{11}x_4^{3}$, & $\mathcal Y_{22,\,319}=x_1^{7}x_2^{3}x_4x_5^{11}$, & $\mathcal Y_{22,\,320}=x_1^{7}x_2^{3}x_4^{11}x_5$, \\
$\mathcal Y_{22,\,321}=x_1^{7}x_2^{3}x_3x_5^{11}$, & $\mathcal Y_{22,\,322}=x_1^{7}x_2^{3}x_3x_4^{11}$, & $\mathcal Y_{22,\,323}=x_1^{7}x_2^{3}x_3^{11}x_5$, & $\mathcal Y_{22,\,324}=x_1^{7}x_2^{3}x_3^{11}x_4$, \\
$\mathcal Y_{22,\,325}=x_1^{7}x_2^{11}x_4x_5^{3}$, & $\mathcal Y_{22,\,326}=x_1^{7}x_2^{11}x_4^{3}x_5$, & $\mathcal Y_{22,\,327}=x_1^{7}x_2^{11}x_3x_5^{3}$, & $\mathcal Y_{22,\,328}=x_1^{7}x_2^{11}x_3x_4^{3}$, \\
$\mathcal Y_{22,\,329}=x_1^{7}x_2^{11}x_3^{3}x_5$, & $\mathcal Y_{22,\,330}=x_1^{7}x_2^{11}x_3^{3}x_4$, & $\mathcal Y_{22,\,331}=x_2x_3^{7}x_4^{7}x_5^{7}$, & $\mathcal Y_{22,\,332}=x_2^{7}x_3x_4^{7}x_5^{7}$, \\
$\mathcal Y_{22,\,333}=x_2^{7}x_3^{7}x_4x_5^{7}$, & $\mathcal Y_{22,\,334}=x_2^{7}x_3^{7}x_4^{7}x_5$, & $\mathcal Y_{22,\,335}=x_1x_3^{7}x_4^{7}x_5^{7}$, & $\mathcal Y_{22,\,336}=x_1x_2^{7}x_4^{7}x_5^{7}$, \\
$\mathcal Y_{22,\,337}=x_1x_2^{7}x_3^{7}x_5^{7}$, & $\mathcal Y_{22,\,338}=x_1x_2^{7}x_3^{7}x_4^{7}$, & $\mathcal Y_{22,\,339}=x_1^{7}x_3x_4^{7}x_5^{7}$, & $\mathcal Y_{22,\,340}=x_1^{7}x_3^{7}x_4x_5^{7}$, \\
$\mathcal Y_{22,\,341}=x_1^{7}x_3^{7}x_4^{7}x_5$, & $\mathcal Y_{22,\,342}=x_1^{7}x_2x_4^{7}x_5^{7}$, & $\mathcal Y_{22,\,343}=x_1^{7}x_2x_3^{7}x_5^{7}$, & $\mathcal Y_{22,\,344}=x_1^{7}x_2x_3^{7}x_4^{7}$, \\
$\mathcal Y_{22,\,345}=x_1^{7}x_2^{7}x_4x_5^{7}$, & $\mathcal Y_{22,\,346}=x_1^{7}x_2^{7}x_4^{7}x_5$, & $\mathcal Y_{22,\,347}=x_1^{7}x_2^{7}x_3x_5^{7}$, & $\mathcal Y_{22,\,348}=x_1^{7}x_2^{7}x_3x_4^{7}$, \\
$\mathcal Y_{22,\,349}=x_1^{7}x_2^{7}x_3^{7}x_5$, & $\mathcal Y_{22,\,350}=x_1^{7}x_2^{7}x_3^{7}x_4$, & $\mathcal Y_{22,\,351}=x_2^{3}x_3^{13}x_4^{2}x_5^{4}$, & $\mathcal Y_{22,\,352}=x_1^{3}x_3^{13}x_4^{2}x_5^{4}$, \\
$\mathcal Y_{22,\,353}=x_1^{3}x_2^{13}x_4^{2}x_5^{4}$, & $\mathcal Y_{22,\,354}=x_1^{3}x_2^{13}x_3^{2}x_5^{4}$, & $\mathcal Y_{22,\,355}=x_1^{3}x_2^{13}x_3^{2}x_4^{4}$, & $\mathcal Y_{22,\,356}=x_2^{3}x_3^{5}x_4^{2}x_5^{12}$, \\
$\mathcal Y_{22,\,357}=x_1^{3}x_3^{5}x_4^{2}x_5^{12}$, & $\mathcal Y_{22,\,358}=x_1^{3}x_2^{5}x_4^{2}x_5^{12}$, & $\mathcal Y_{22,\,359}=x_1^{3}x_2^{5}x_3^{2}x_5^{12}$, & $\mathcal Y_{22,\,360}=x_1^{3}x_2^{5}x_3^{2}x_4^{12}$, \\
$\mathcal Y_{22,\,361}=x_2^{3}x_3^{3}x_4^{3}x_5^{13}$, & $\mathcal Y_{22,\,362}=x_2^{3}x_3^{3}x_4^{13}x_5^{3}$, & $\mathcal Y_{22,\,363}=x_2^{3}x_3^{13}x_4^{3}x_5^{3}$, & $\mathcal Y_{22,\,364}=x_1^{3}x_3^{3}x_4^{3}x_5^{13}$, \\
$\mathcal Y_{22,\,365}=x_1^{3}x_3^{3}x_4^{13}x_5^{3}$, & $\mathcal Y_{22,\,366}=x_1^{3}x_3^{13}x_4^{3}x_5^{3}$, & $\mathcal Y_{22,\,367}=x_1^{3}x_2^{3}x_4^{3}x_5^{13}$, & $\mathcal Y_{22,\,368}=x_1^{3}x_2^{3}x_4^{13}x_5^{3}$, \\
$\mathcal Y_{22,\,369}=x_1^{3}x_2^{3}x_3^{3}x_5^{13}$, & $\mathcal Y_{22,\,370}=x_1^{3}x_2^{3}x_3^{3}x_4^{13}$, & $\mathcal Y_{22,\,371}=x_1^{3}x_2^{3}x_3^{13}x_5^{3}$, & $\mathcal Y_{22,\,372}=x_1^{3}x_2^{3}x_3^{13}x_4^{3}$, \\
$\mathcal Y_{22,\,373}=x_1^{3}x_2^{13}x_4^{3}x_5^{3}$, & $\mathcal Y_{22,\,374}=x_1^{3}x_2^{13}x_3^{3}x_5^{3}$, & $\mathcal Y_{22,\,375}=x_1^{3}x_2^{13}x_3^{3}x_4^{3}$, & $\mathcal Y_{22,\,376}=x_2^{3}x_3^{3}x_4^{5}x_5^{11}$, \\
$\mathcal Y_{22,\,377}=x_2^{3}x_3^{5}x_4^{3}x_5^{11}$, & $\mathcal Y_{22,\,378}=x_2^{3}x_3^{5}x_4^{11}x_5^{3}$, & $\mathcal Y_{22,\,379}=x_1^{3}x_3^{3}x_4^{5}x_5^{11}$, & $\mathcal Y_{22,\,380}=x_1^{3}x_3^{5}x_4^{3}x_5^{11}$, \\
$\mathcal Y_{22,\,381}=x_1^{3}x_3^{5}x_4^{11}x_5^{3}$, & $\mathcal Y_{22,\,382}=x_1^{3}x_2^{3}x_4^{5}x_5^{11}$, & $\mathcal Y_{22,\,383}=x_1^{3}x_2^{3}x_3^{5}x_5^{11}$, & $\mathcal Y_{22,\,384}=x_1^{3}x_2^{3}x_3^{5}x_4^{11}$, \\
$\mathcal Y_{22,\,385}=x_1^{3}x_2^{5}x_4^{3}x_5^{11}$, & $\mathcal Y_{22,\,386}=x_1^{3}x_2^{5}x_4^{11}x_5^{3}$, & $\mathcal Y_{22,\,387}=x_1^{3}x_2^{5}x_3^{3}x_5^{11}$, & $\mathcal Y_{22,\,388}=x_1^{3}x_2^{5}x_3^{3}x_4^{11}$, \\
$\mathcal Y_{22,\,389}=x_1^{3}x_2^{5}x_3^{11}x_5^{3}$, & $\mathcal Y_{22,\,390}=x_1^{3}x_2^{5}x_3^{11}x_4^{3}$, & $\mathcal Y_{22,\,391}=x_2^{3}x_3^{3}x_4^{7}x_5^{9}$, & $\mathcal Y_{22,\,392}=x_2^{3}x_3^{7}x_4^{3}x_5^{9}$, \\
$\mathcal Y_{22,\,393}=x_2^{3}x_3^{7}x_4^{9}x_5^{3}$, & $\mathcal Y_{22,\,394}=x_2^{7}x_3^{3}x_4^{3}x_5^{9}$, & $\mathcal Y_{22,\,395}=x_2^{7}x_3^{3}x_4^{9}x_5^{3}$, & $\mathcal Y_{22,\,396}=x_2^{7}x_3^{9}x_4^{3}x_5^{3}$, \\
$\mathcal Y_{22,\,397}=x_1^{3}x_3^{3}x_4^{7}x_5^{9}$, & $\mathcal Y_{22,\,398}=x_1^{3}x_3^{7}x_4^{3}x_5^{9}$, & $\mathcal Y_{22,\,399}=x_1^{3}x_3^{7}x_4^{9}x_5^{3}$, & $\mathcal Y_{22,\,400}=x_1^{3}x_2^{3}x_4^{7}x_5^{9}$, \\
$\mathcal Y_{22,\,401}=x_1^{3}x_2^{3}x_3^{7}x_5^{9}$, & $\mathcal Y_{22,\,402}=x_1^{3}x_2^{3}x_3^{7}x_4^{9}$, & $\mathcal Y_{22,\,403}=x_1^{3}x_2^{7}x_4^{3}x_5^{9}$, & $\mathcal Y_{22,\,404}=x_1^{3}x_2^{7}x_4^{9}x_5^{3}$, \\
$\mathcal Y_{22,\,405}=x_1^{3}x_2^{7}x_3^{3}x_5^{9}$, & $\mathcal Y_{22,\,406}=x_1^{3}x_2^{7}x_3^{3}x_4^{9}$, & $\mathcal Y_{22,\,407}=x_1^{3}x_2^{7}x_3^{9}x_5^{3}$, & $\mathcal Y_{22,\,408}=x_1^{3}x_2^{7}x_3^{9}x_4^{3}$, \\
$\mathcal Y_{22,\,409}=x_1^{7}x_3^{3}x_4^{3}x_5^{9}$, & $\mathcal Y_{22,\,410}=x_1^{7}x_3^{3}x_4^{9}x_5^{3}$, & $\mathcal Y_{22,\,411}=x_1^{7}x_3^{9}x_4^{3}x_5^{3}$, & $\mathcal Y_{22,\,412}=x_1^{7}x_2^{3}x_4^{3}x_5^{9}$, \\
$\mathcal Y_{22,\,413}=x_1^{7}x_2^{3}x_4^{9}x_5^{3}$, & $\mathcal Y_{22,\,414}=x_1^{7}x_2^{3}x_3^{3}x_5^{9}$, & $\mathcal Y_{22,\,415}=x_1^{7}x_2^{3}x_3^{3}x_4^{9}$, & $\mathcal Y_{22,\,416}=x_1^{7}x_2^{3}x_3^{9}x_5^{3}$, \\
$\mathcal Y_{22,\,417}=x_1^{7}x_2^{3}x_3^{9}x_4^{3}$, & $\mathcal Y_{22,\,418}=x_1^{7}x_2^{9}x_4^{3}x_5^{3}$, & $\mathcal Y_{22,\,419}=x_1^{7}x_2^{9}x_3^{3}x_5^{3}$, & $\mathcal Y_{22,\,420}=x_1^{7}x_2^{9}x_3^{3}x_4^{3}$, \\
$\mathcal Y_{22,\,421}=x_2^{3}x_3^{5}x_4^{10}x_5^{4}$, & $\mathcal Y_{22,\,422}=x_1^{3}x_3^{5}x_4^{10}x_5^{4}$, & $\mathcal Y_{22,\,423}=x_1^{3}x_2^{5}x_4^{10}x_5^{4}$, & $\mathcal Y_{22,\,424}=x_1^{3}x_2^{5}x_3^{10}x_5^{4}$, \\
$\mathcal Y_{22,\,425}=x_1^{3}x_2^{5}x_3^{10}x_4^{4}$, & $\mathcal Y_{22,\,426}=x_2^{3}x_3^{5}x_4^{6}x_5^{8}$, & $\mathcal Y_{22,\,427}=x_1^{3}x_3^{5}x_4^{6}x_5^{8}$, & $\mathcal Y_{22,\,428}=x_1^{3}x_2^{5}x_4^{6}x_5^{8}$, \\
$\mathcal Y_{22,\,429}=x_1^{3}x_2^{5}x_3^{6}x_5^{8}$, & $\mathcal Y_{22,\,430}=x_1^{3}x_2^{5}x_3^{6}x_4^{8}$, & $\mathcal Y_{22,\,431}=x_2^{3}x_3^{5}x_4^{7}x_5^{7}$, & $\mathcal Y_{22,\,432}=x_2^{3}x_3^{7}x_4^{5}x_5^{7}$, \\
$\mathcal Y_{22,\,433}=x_2^{3}x_3^{7}x_4^{7}x_5^{5}$, & $\mathcal Y_{22,\,434}=x_2^{7}x_3^{3}x_4^{5}x_5^{7}$, & $\mathcal Y_{22,\,435}=x_2^{7}x_3^{3}x_4^{7}x_5^{5}$, & $\mathcal Y_{22,\,436}=x_2^{7}x_3^{7}x_4^{3}x_5^{5}$, \\
$\mathcal Y_{22,\,437}=x_1^{3}x_3^{5}x_4^{7}x_5^{7}$, & $\mathcal Y_{22,\,438}=x_1^{3}x_3^{7}x_4^{5}x_5^{7}$, & $\mathcal Y_{22,\,439}=x_1^{3}x_3^{7}x_4^{7}x_5^{5}$, & $\mathcal Y_{22,\,440}=x_1^{3}x_2^{5}x_4^{7}x_5^{7}$, \\
$\mathcal Y_{22,\,441}=x_1^{3}x_2^{5}x_3^{7}x_5^{7}$, & $\mathcal Y_{22,\,442}=x_1^{3}x_2^{5}x_3^{7}x_4^{7}$, & $\mathcal Y_{22,\,443}=x_1^{3}x_2^{7}x_4^{5}x_5^{7}$, & $\mathcal Y_{22,\,444}=x_1^{3}x_2^{7}x_4^{7}x_5^{5}$, \\
$\mathcal Y_{22,\,445}=x_1^{3}x_2^{7}x_3^{5}x_5^{7}$, & $\mathcal Y_{22,\,446}=x_1^{3}x_2^{7}x_3^{5}x_4^{7}$, & $\mathcal Y_{22,\,447}=x_1^{3}x_2^{7}x_3^{7}x_5^{5}$, & $\mathcal Y_{22,\,448}=x_1^{3}x_2^{7}x_3^{7}x_4^{5}$, \\
$\mathcal Y_{22,\,449}=x_1^{7}x_3^{3}x_4^{5}x_5^{7}$, & $\mathcal Y_{22,\,450}=x_1^{7}x_3^{3}x_4^{7}x_5^{5}$, & $\mathcal Y_{22,\,451}=x_1^{7}x_3^{7}x_4^{3}x_5^{5}$, & $\mathcal Y_{22,\,452}=x_1^{7}x_2^{3}x_4^{5}x_5^{7}$, \\
$\mathcal Y_{22,\,453}=x_1^{7}x_2^{3}x_4^{7}x_5^{5}$, & $\mathcal Y_{22,\,454}=x_1^{7}x_2^{3}x_3^{5}x_5^{7}$, & $\mathcal Y_{22,\,455}=x_1^{7}x_2^{3}x_3^{5}x_4^{7}$, & $\mathcal Y_{22,\,456}=x_1^{7}x_2^{3}x_3^{7}x_5^{5}$, \\
$\mathcal Y_{22,\,457}=x_1^{7}x_2^{3}x_3^{7}x_4^{5}$, & $\mathcal Y_{22,\,458}=x_1^{7}x_2^{7}x_4^{3}x_5^{5}$, & $\mathcal Y_{22,\,459}=x_1^{7}x_2^{7}x_3^{3}x_5^{5}$, & $\mathcal Y_{22,\,460}=x_1^{7}x_2^{7}x_3^{3}x_4^{5}$.
\end{tabular}%
\end{center}

\newpage
\subsection{$\mathcal A_2$-generators for $\mathscr P_5^+$ in degree $22$}\label{s5.5}

By Propositions \ref{md22-1} and \ref{md22-2}, we have 
$ \mathscr B_5^+(22) = \bigcup_{1\leq j\leq 5}\mathscr B_5^+(\omega_{(j)}).$
Here $\omega_{(1)} = (2,2,2,1),\ \omega_{(2)} = (2,4,1,1),\ \omega_{(3)} = (2,4,3),\  \omega_{(4)} = (4,3,1,1),\ \omega_{(5)} = (4,3,3).$

\medskip

$ \mathscr B_5^+(\omega_{(1)}) = \overline{\Phi}^+(\mathscr B_4((\omega_{(1)}))\bigcup \mathscr B^+(5, \omega_{(1)})\bigcup \mathscr D = \{\mathcal Y_{22,\,t}:\ 461 \leq t\leq 510\},$
where the monomials $\mathcal Y_{22,\,t}:\ 461 \leq t\leq 510,$ are listed as follows:

\begin{center}
\begin{tabular}{llrr}
$\mathcal Y_{22,\,461}=x_1x_2x_3^{2}x_4^{4}x_5^{14}$, & $\mathcal Y_{22,\,462}=x_1x_2x_3^{2}x_4^{14}x_5^{4}$, & \multicolumn{1}{l}{$\mathcal Y_{22,\,463}=x_1x_2x_3^{14}x_4^{2}x_5^{4}$,} & \multicolumn{1}{l}{$\mathcal Y_{22,\,464}=x_1x_2^{2}x_3x_4^{4}x_5^{14}$,} \\
$\mathcal Y_{22,\,465}=x_1x_2^{2}x_3x_4^{14}x_5^{4}$, & $\mathcal Y_{22,\,466}=x_1x_2^{2}x_3^{4}x_4x_5^{14}$, & \multicolumn{1}{l}{$\mathcal Y_{22,\,467}=x_1x_2^{14}x_3x_4^{2}x_5^{4}$,} & \multicolumn{1}{l}{$\mathcal Y_{22,\,468}=x_1x_2x_3^{2}x_4^{6}x_5^{12}$,} \\
$\mathcal Y_{22,\,469}=x_1x_2x_3^{2}x_4^{12}x_5^{6}$, & $\mathcal Y_{22,\,470}=x_1x_2x_3^{6}x_4^{2}x_5^{12}$, & \multicolumn{1}{l}{$\mathcal Y_{22,\,471}=x_1x_2^{2}x_3x_4^{6}x_5^{12}$,} & \multicolumn{1}{l}{$\mathcal Y_{22,\,472}=x_1x_2^{2}x_3x_4^{12}x_5^{6}$,} \\
$\mathcal Y_{22,\,473}=x_1x_2^{2}x_3^{12}x_4x_5^{6}$, & $\mathcal Y_{22,\,474}=x_1x_2^{6}x_3x_4^{2}x_5^{12}$, & \multicolumn{1}{l}{$\mathcal Y_{22,\,475}=x_1x_2^{2}x_3^{13}x_4^{2}x_5^{4}$,} & \multicolumn{1}{l}{$\mathcal Y_{22,\,476}=x_1x_2^{2}x_3^{5}x_4^{2}x_5^{12}$,} \\
$\mathcal Y_{22,\,477}=x_1x_2^{3}x_3^{4}x_4^{2}x_5^{12}$, & $\mathcal Y_{22,\,478}=x_1x_2^{3}x_3^{12}x_4^{2}x_5^{4}$, & \multicolumn{1}{l}{$\mathcal Y_{22,\,479}=x_1^{3}x_2x_3^{4}x_4^{2}x_5^{12}$,} & \multicolumn{1}{l}{$\mathcal Y_{22,\,480}=x_1^{3}x_2x_3^{12}x_4^{2}x_5^{4}$,} \\
$\mathcal Y_{22,\,481}=x_1x_2^{2}x_3^{5}x_4^{10}x_5^{4}$, & $\mathcal Y_{22,\,482}=x_1x_2^{2}x_3^{4}x_4^{7}x_5^{8}$, & \multicolumn{1}{l}{$\mathcal Y_{22,\,483}=x_1x_2^{2}x_3^{4}x_4^{8}x_5^{7}$,} & \multicolumn{1}{l}{$\mathcal Y_{22,\,484}=x_1x_2^{2}x_3^{7}x_4^{4}x_5^{8}$,} \\
$\mathcal Y_{22,\,485}=x_1x_2^{7}x_3^{2}x_4^{4}x_5^{8}$, & $\mathcal Y_{22,\,486}=x_1^{7}x_2x_3^{2}x_4^{4}x_5^{8}$, & \multicolumn{1}{l}{$\mathcal Y_{22,\,487}=x_1x_2^{2}x_3^{5}x_4^{6}x_5^{8}$,} & \multicolumn{1}{l}{$\mathcal Y_{22,\,488}=x_1x_2^{3}x_3^{4}x_4^{10}x_5^{4}$,} \\
$\mathcal Y_{22,\,489}=x_1^{3}x_2x_3^{4}x_4^{10}x_5^{4}$, & $\mathcal Y_{22,\,490}=x_1x_2^{3}x_3^{4}x_4^{6}x_5^{8}$, & \multicolumn{1}{l}{$\mathcal Y_{22,\,491}=x_1^{3}x_2x_3^{4}x_4^{6}x_5^{8}$,} & \multicolumn{1}{l}{$\mathcal Y_{22,\,492}=x_1x_2x_3^{6}x_4^{6}x_5^{8}$,} \\
$\mathcal Y_{22,\,493}=x_1x_2x_3^{6}x_4^{10}x_5^{4}$, & $\mathcal Y_{22,\,494}=x_1x_2^{2}x_3^{3}x_4^{4}x_5^{12}$, & \multicolumn{1}{l}{$\mathcal Y_{22,\,495}=x_1x_2^{2}x_3^{3}x_4^{12}x_5^{4}$,} & \multicolumn{1}{l}{$\mathcal Y_{22,\,496}=x_1x_2^{2}x_3^{4}x_4^{9}x_5^{6}$,} \\
$\mathcal Y_{22,\,497}=x_1x_2^{2}x_3^{5}x_4^{8}x_5^{6}$, & $\mathcal Y_{22,\,498}=x_1x_2^{3}x_3^{2}x_4^{4}x_5^{12}$, & \multicolumn{1}{l}{$\mathcal Y_{22,\,499}=x_1x_2^{3}x_3^{2}x_4^{12}x_5^{4}$,} & \multicolumn{1}{l}{$\mathcal Y_{22,\,500}=x_1x_2^{3}x_3^{4}x_4^{8}x_5^{6}$,} \\
$\mathcal Y_{22,\,501}=x_1x_2^{3}x_3^{6}x_4^{4}x_5^{8}$, & $\mathcal Y_{22,\,502}=x_1x_2^{3}x_3^{6}x_4^{8}x_5^{4}$, & \multicolumn{1}{l}{$\mathcal Y_{22,\,503}=x_1^{3}x_2x_3^{2}x_4^{4}x_5^{12}$,} & \multicolumn{1}{l}{$\mathcal Y_{22,\,504}=x_1^{3}x_2x_3^{2}x_4^{12}x_5^{4}$,} \\
$\mathcal Y_{22,\,505}=x_1^{3}x_2x_3^{4}x_4^{8}x_5^{6}$, & $\mathcal Y_{22,\,506}=x_1^{3}x_2x_3^{6}x_4^{4}x_5^{8}$, & \multicolumn{1}{l}{$\mathcal Y_{22,\,507}=x_1^{3}x_2x_3^{6}x_4^{8}x_5^{4}$,} & \multicolumn{1}{l}{$\mathcal Y_{22,\,508}=x_1^{3}x_2^{5}x_3^{2}x_4^{4}x_5^{8}$,} \\
$\mathcal Y_{22,\,509}=x_1^{3}x_2^{5}x_3^{2}x_4^{8}x_5^{4}$, & $\mathcal Y_{22,\,510}=x_1^{3}x_2^{5}x_3^{8}x_4^{2}x_5^{4}$. &       &  
\end{tabular}%
\end{center}

\medskip

$\mathscr B_5^+(\omega_{(2)}) = \mathscr B^+(5, \omega_{(2)})\bigcup \mathscr E = \{\mathcal Y_{22,\,t}:\ 511 \leq t\leq 535 \},$
where the monomials $\mathcal Y_{22,\,t}:\ 511 \leq t\leq 535,$ are listed as follows:

\begin{center}
\begin{tabular}{lrrr}
$\mathcal Y_{22,\,511}=x_1x_2^{2}x_3^{2}x_4^{2}x_5^{15}$, & \multicolumn{1}{l}{$\mathcal Y_{22,\,512}=x_1x_2^{2}x_3^{2}x_4^{15}x_5^{2}$,} & \multicolumn{1}{l}{$\mathcal Y_{22,\,513}=x_1x_2^{2}x_3^{15}x_4^{2}x_5^{2}$,} & \multicolumn{1}{l}{$\mathcal Y_{22,\,514}=x_1x_2^{15}x_3^{2}x_4^{2}x_5^{2}$,} \\
$\mathcal Y_{22,\,515}=x_1^{15}x_2x_3^{2}x_4^{2}x_5^{2}$, & \multicolumn{1}{l}{$\mathcal Y_{22,\,516}=x_1x_2^{2}x_3^{2}x_4^{3}x_5^{14}$,} & \multicolumn{1}{l}{$\mathcal Y_{22,\,517}=x_1x_2^{2}x_3^{3}x_4^{2}x_5^{14}$,} & \multicolumn{1}{l}{$\mathcal Y_{22,\,518}=x_1x_2^{2}x_3^{3}x_4^{14}x_5^{2}$,} \\
$\mathcal Y_{22,\,519}=x_1x_2^{3}x_3^{2}x_4^{2}x_5^{14}$, & \multicolumn{1}{l}{$\mathcal Y_{22,\,520}=x_1x_2^{3}x_3^{2}x_4^{14}x_5^{2}$,} & \multicolumn{1}{l}{$\mathcal Y_{22,\,521}=x_1x_2^{3}x_3^{14}x_4^{2}x_5^{2}$,} & \multicolumn{1}{l}{$\mathcal Y_{22,\,522}=x_1^{3}x_2x_3^{2}x_4^{2}x_5^{14}$,} \\
$\mathcal Y_{22,\,523}=x_1^{3}x_2x_3^{2}x_4^{14}x_5^{2}$, & \multicolumn{1}{l}{$\mathcal Y_{22,\,524}=x_1^{3}x_2x_3^{14}x_4^{2}x_5^{2}$,} & \multicolumn{1}{l}{$\mathcal Y_{22,\,525}=x_1x_2^{2}x_3^{3}x_4^{6}x_5^{10}$,} & \multicolumn{1}{l}{$\mathcal Y_{22,\,526}=x_1x_2^{3}x_3^{2}x_4^{6}x_5^{10}$,} \\
$\mathcal Y_{22,\,527}=x_1x_2^{3}x_3^{6}x_4^{2}x_5^{10}$, & \multicolumn{1}{l}{$\mathcal Y_{22,\,528}=x_1x_2^{3}x_3^{6}x_4^{10}x_5^{2}$,} & \multicolumn{1}{l}{$\mathcal Y_{22,\,529}=x_1^{3}x_2x_3^{2}x_4^{6}x_5^{10}$,} & \multicolumn{1}{l}{$\mathcal Y_{22,\,530}=x_1^{3}x_2x_3^{6}x_4^{2}x_5^{10}$,} \\
$\mathcal Y_{22,\,531}=x_1^{3}x_2x_3^{6}x_4^{10}x_5^{2}$, & \multicolumn{1}{l}{$\mathcal Y_{22,\,532}=x_1^{3}x_2^{13}x_3^{2}x_4^{2}x_5^{2}$,} & \multicolumn{1}{l}{$\mathcal Y_{22,\,533}=x_1^{3}x_2^{5}x_3^{2}x_4^{2}x_5^{10}$,} & \multicolumn{1}{l}{$\mathcal Y_{22,\,534}=x_1^{3}x_2^{5}x_3^{2}x_4^{10}x_5^{2}$,} \\
$\mathcal Y_{22,\,535}=x_1^{3}x_2^{5}x_3^{10}x_4^{2}x_5^{2}$. &       &       &  
\end{tabular}
\end{center}

\medskip

$\mathscr B^+_5(\omega_{(3)}) = \{\mathcal Y_{22,\,t}:\ 536 \leq t\leq 540\},$ where the monomials $\mathcal Y_{22,\,t}:\ 536 \leq t\leq 540,$ are determined as follows:
$$ \mathcal Y_{536} = x_1x_2^{3}x_3^{6}x_4^{6}x_5^{6},\ \mathcal Y_{537} =  x_1^{3}x_2x_3^{6}x_4^{6}x_5^{6}, \ \mathcal Y_{538} =  x_1^{3}x_2^{5}x_3^{2}x_4^{6}x_5^{6},$$
$$ \mathcal Y_{539} = x_1^{3}x_2^{5}x_3^{6}x_4^{2}x_5^{6},\ \mathcal Y_{540} =  x_1^{3}x_2^{5}x_3^{6}x_4^{6}x_5^{2},$$

\medskip
 
$\mathscr B^+_5(\omega_{(4)}) =  \overline{\Phi}^+(\mathscr B_4(\omega_{(4)})\cup \mathscr B^+(5, \omega_{(4)}) = \{\mathcal Y_{22,\,t}:\ 541 \leq t\leq 840 \},$ where the monomials $\mathcal Y_{22,\,t}:\ 541 \leq t\leq 840,$ are listed as follows:

\begin{center}
\begin{tabular}{llll}
$\mathcal Y_{22,\,541}=x_1x_2x_3^{2}x_4^{3}x_5^{15}$, & $\mathcal Y_{22,\,542}=x_1x_2x_3^{2}x_4^{15}x_5^{3}$, & $\mathcal Y_{22,\,543}=x_1x_2x_3^{3}x_4^{2}x_5^{15}$, & $\mathcal Y_{22,\,544}=x_1x_2x_3^{3}x_4^{15}x_5^{2}$, \\
$\mathcal Y_{22,\,545}=x_1x_2x_3^{15}x_4^{2}x_5^{3}$, & $\mathcal Y_{22,\,546}=x_1x_2x_3^{15}x_4^{3}x_5^{2}$, & $\mathcal Y_{22,\,547}=x_1x_2^{2}x_3x_4^{3}x_5^{15}$, & $\mathcal Y_{22,\,548}=x_1x_2^{2}x_3x_4^{15}x_5^{3}$, \\
$\mathcal Y_{22,\,549}=x_1x_2^{2}x_3^{3}x_4x_5^{15}$, & $\mathcal Y_{22,\,550}=x_1x_2^{2}x_3^{3}x_4^{15}x_5$, & $\mathcal Y_{22,\,551}=x_1x_2^{2}x_3^{15}x_4x_5^{3}$, & $\mathcal Y_{22,\,552}=x_1x_2^{2}x_3^{15}x_4^{3}x_5$, \\
$\mathcal Y_{22,\,553}=x_1x_2^{3}x_3x_4^{2}x_5^{15}$, & $\mathcal Y_{22,\,554}=x_1x_2^{3}x_3x_4^{15}x_5^{2}$, & $\mathcal Y_{22,\,555}=x_1x_2^{3}x_3^{2}x_4x_5^{15}$, & $\mathcal Y_{22,\,556}=x_1x_2^{3}x_3^{2}x_4^{15}x_5$, \\
$\mathcal Y_{22,\,557}=x_1x_2^{3}x_3^{15}x_4x_5^{2}$, & $\mathcal Y_{22,\,558}=x_1x_2^{3}x_3^{15}x_4^{2}x_5$, & $\mathcal Y_{22,\,559}=x_1x_2^{15}x_3x_4^{2}x_5^{3}$, & $\mathcal Y_{22,\,560}=x_1x_2^{15}x_3x_4^{3}x_5^{2}$, \\
$\mathcal Y_{22,\,561}=x_1x_2^{15}x_3^{2}x_4x_5^{3}$, & $\mathcal Y_{22,\,562}=x_1x_2^{15}x_3^{2}x_4^{3}x_5$, & $\mathcal Y_{22,\,563}=x_1x_2^{15}x_3^{3}x_4x_5^{2}$, & $\mathcal Y_{22,\,564}=x_1x_2^{15}x_3^{3}x_4^{2}x_5$, \\
$\mathcal Y_{22,\,565}=x_1^{3}x_2x_3x_4^{2}x_5^{15}$, & $\mathcal Y_{22,\,566}=x_1^{3}x_2x_3x_4^{15}x_5^{2}$, & $\mathcal Y_{22,\,567}=x_1^{3}x_2x_3^{2}x_4x_5^{15}$, & $\mathcal Y_{22,\,568}=x_1^{3}x_2x_3^{2}x_4^{15}x_5$, \\
$\mathcal Y_{22,\,569}=x_1^{3}x_2x_3^{15}x_4x_5^{2}$, & $\mathcal Y_{22,\,570}=x_1^{3}x_2x_3^{15}x_4^{2}x_5$, & $\mathcal Y_{22,\,571}=x_1^{3}x_2^{15}x_3x_4x_5^{2}$, & $\mathcal Y_{22,\,572}=x_1^{3}x_2^{15}x_3x_4^{2}x_5$, \\
$\mathcal Y_{22,\,573}=x_1^{15}x_2x_3x_4^{2}x_5^{3}$, & $\mathcal Y_{22,\,574}=x_1^{15}x_2x_3x_4^{3}x_5^{2}$, & $\mathcal Y_{22,\,575}=x_1^{15}x_2x_3^{2}x_4x_5^{3}$, & $\mathcal Y_{22,\,576}=x_1^{15}x_2x_3^{2}x_4^{3}x_5$, \\
$\mathcal Y_{22,\,577}=x_1^{15}x_2x_3^{3}x_4x_5^{2}$, & $\mathcal Y_{22,\,578}=x_1^{15}x_2x_3^{3}x_4^{2}x_5$, & $\mathcal Y_{22,\,579}=x_1^{15}x_2^{3}x_3x_4x_5^{2}$, & $\mathcal Y_{22,\,580}=x_1^{15}x_2^{3}x_3x_4^{2}x_5$, \\
$\mathcal Y_{22,\,581}=x_1x_2x_3^{2}x_4^{7}x_5^{11}$, & $\mathcal Y_{22,\,582}=x_1x_2x_3^{7}x_4^{2}x_5^{11}$, & $\mathcal Y_{22,\,583}=x_1x_2x_3^{7}x_4^{11}x_5^{2}$, & $\mathcal Y_{22,\,584}=x_1x_2^{2}x_3x_4^{7}x_5^{11}$, \\
\end{tabular}%
\end{center}

\newpage
\begin{center}
\begin{tabular}{llll}
$\mathcal Y_{22,\,585}=x_1x_2^{2}x_3^{7}x_4x_5^{11}$, & $\mathcal Y_{22,\,586}=x_1x_2^{2}x_3^{7}x_4^{11}x_5$, & $\mathcal Y_{22,\,587}=x_1x_2^{7}x_3x_4^{2}x_5^{11}$, & $\mathcal Y_{22,\,588}=x_1x_2^{7}x_3x_4^{11}x_5^{2}$, \\
$\mathcal Y_{22,\,589}=x_1x_2^{7}x_3^{2}x_4x_5^{11}$, & $\mathcal Y_{22,\,590}=x_1x_2^{7}x_3^{2}x_4^{11}x_5$, & $\mathcal Y_{22,\,591}=x_1x_2^{7}x_3^{11}x_4x_5^{2}$, & $\mathcal Y_{22,\,592}=x_1x_2^{7}x_3^{11}x_4^{2}x_5$, \\
$\mathcal Y_{22,\,593}=x_1^{7}x_2x_3x_4^{2}x_5^{11}$, & $\mathcal Y_{22,\,594}=x_1^{7}x_2x_3x_4^{11}x_5^{2}$, & $\mathcal Y_{22,\,595}=x_1^{7}x_2x_3^{2}x_4x_5^{11}$, & $\mathcal Y_{22,\,596}=x_1^{7}x_2x_3^{2}x_4^{11}x_5$, \\
$\mathcal Y_{22,\,597}=x_1^{7}x_2x_3^{11}x_4x_5^{2}$, & $\mathcal Y_{22,\,598}=x_1^{7}x_2x_3^{11}x_4^{2}x_5$, & $\mathcal Y_{22,\,599}=x_1^{7}x_2^{11}x_3x_4x_5^{2}$, & $\mathcal Y_{22,\,600}=x_1^{7}x_2^{11}x_3x_4^{2}x_5$, \\
$\mathcal Y_{22,\,601}=x_1x_2x_3^{3}x_4^{3}x_5^{14}$, & $\mathcal Y_{22,\,602}=x_1x_2x_3^{3}x_4^{14}x_5^{3}$, & $\mathcal Y_{22,\,603}=x_1x_2x_3^{14}x_4^{3}x_5^{3}$, & $\mathcal Y_{22,\,604}=x_1x_2^{3}x_3x_4^{3}x_5^{14}$, \\
$\mathcal Y_{22,\,605}=x_1x_2^{3}x_3x_4^{14}x_5^{3}$, & $\mathcal Y_{22,\,606}=x_1x_2^{3}x_3^{3}x_4x_5^{14}$, & $\mathcal Y_{22,\,607}=x_1x_2^{3}x_3^{3}x_4^{14}x_5$, & $\mathcal Y_{22,\,608}=x_1x_2^{3}x_3^{14}x_4x_5^{3}$, \\
$\mathcal Y_{22,\,609}=x_1x_2^{3}x_3^{14}x_4^{3}x_5$, & $\mathcal Y_{22,\,610}=x_1x_2^{14}x_3x_4^{3}x_5^{3}$, & $\mathcal Y_{22,\,611}=x_1x_2^{14}x_3^{3}x_4x_5^{3}$, & $\mathcal Y_{22,\,612}=x_1x_2^{14}x_3^{3}x_4^{3}x_5$, \\
$\mathcal Y_{22,\,613}=x_1^{3}x_2x_3x_4^{3}x_5^{14}$, & $\mathcal Y_{22,\,614}=x_1^{3}x_2x_3x_4^{14}x_5^{3}$, & $\mathcal Y_{22,\,615}=x_1^{3}x_2x_3^{3}x_4x_5^{14}$, & $\mathcal Y_{22,\,616}=x_1^{3}x_2x_3^{3}x_4^{14}x_5$, \\
$\mathcal Y_{22,\,617}=x_1^{3}x_2x_3^{14}x_4x_5^{3}$, & $\mathcal Y_{22,\,618}=x_1^{3}x_2x_3^{14}x_4^{3}x_5$, & $\mathcal Y_{22,\,619}=x_1^{3}x_2^{3}x_3x_4x_5^{14}$, & $\mathcal Y_{22,\,620}=x_1^{3}x_2^{3}x_3x_4^{14}x_5$, \\
$\mathcal Y_{22,\,621}=x_1x_2x_3^{3}x_4^{6}x_5^{11}$, & $\mathcal Y_{22,\,622}=x_1x_2x_3^{6}x_4^{3}x_5^{11}$, & $\mathcal Y_{22,\,623}=x_1x_2x_3^{6}x_4^{11}x_5^{3}$, & $\mathcal Y_{22,\,624}=x_1x_2^{3}x_3x_4^{6}x_5^{11}$, \\
$\mathcal Y_{22,\,625}=x_1x_2^{3}x_3^{6}x_4x_5^{11}$, & $\mathcal Y_{22,\,626}=x_1x_2^{3}x_3^{6}x_4^{11}x_5$, & $\mathcal Y_{22,\,627}=x_1x_2^{6}x_3x_4^{3}x_5^{11}$, & $\mathcal Y_{22,\,628}=x_1x_2^{6}x_3x_4^{11}x_5^{3}$, \\
$\mathcal Y_{22,\,629}=x_1x_2^{6}x_3^{3}x_4x_5^{11}$, & $\mathcal Y_{22,\,630}=x_1x_2^{6}x_3^{3}x_4^{11}x_5$, & $\mathcal Y_{22,\,631}=x_1x_2^{6}x_3^{11}x_4x_5^{3}$, & $\mathcal Y_{22,\,632}=x_1x_2^{6}x_3^{11}x_4^{3}x_5$, \\
$\mathcal Y_{22,\,633}=x_1^{3}x_2x_3x_4^{6}x_5^{11}$, & $\mathcal Y_{22,\,634}=x_1^{3}x_2x_3^{6}x_4x_5^{11}$, & $\mathcal Y_{22,\,635}=x_1^{3}x_2x_3^{6}x_4^{11}x_5$, & $\mathcal Y_{22,\,636}=x_1x_2x_3^{3}x_4^{7}x_5^{10}$, \\
$\mathcal Y_{22,\,637}=x_1x_2x_3^{7}x_4^{3}x_5^{10}$, & $\mathcal Y_{22,\,638}=x_1x_2x_3^{7}x_4^{10}x_5^{3}$, & $\mathcal Y_{22,\,639}=x_1x_2^{3}x_3x_4^{7}x_5^{10}$, & $\mathcal Y_{22,\,640}=x_1x_2^{3}x_3^{7}x_4x_5^{10}$, \\
$\mathcal Y_{22,\,641}=x_1x_2^{3}x_3^{7}x_4^{10}x_5$, & $\mathcal Y_{22,\,642}=x_1x_2^{7}x_3x_4^{3}x_5^{10}$, & $\mathcal Y_{22,\,643}=x_1x_2^{7}x_3x_4^{10}x_5^{3}$, & $\mathcal Y_{22,\,644}=x_1x_2^{7}x_3^{3}x_4x_5^{10}$, \\
$\mathcal Y_{22,\,645}=x_1x_2^{7}x_3^{3}x_4^{10}x_5$, & $\mathcal Y_{22,\,646}=x_1x_2^{7}x_3^{10}x_4x_5^{3}$, & $\mathcal Y_{22,\,647}=x_1x_2^{7}x_3^{10}x_4^{3}x_5$, & $\mathcal Y_{22,\,648}=x_1^{3}x_2x_3x_4^{7}x_5^{10}$, \\
$\mathcal Y_{22,\,649}=x_1^{3}x_2x_3^{7}x_4x_5^{10}$, & $\mathcal Y_{22,\,650}=x_1^{3}x_2x_3^{7}x_4^{10}x_5$, & $\mathcal Y_{22,\,651}=x_1^{3}x_2^{7}x_3x_4x_5^{10}$, & $\mathcal Y_{22,\,652}=x_1^{3}x_2^{7}x_3x_4^{10}x_5$, \\
$\mathcal Y_{22,\,653}=x_1^{7}x_2x_3x_4^{3}x_5^{10}$, & $\mathcal Y_{22,\,654}=x_1^{7}x_2x_3x_4^{10}x_5^{3}$, & $\mathcal Y_{22,\,655}=x_1^{7}x_2x_3^{3}x_4x_5^{10}$, & $\mathcal Y_{22,\,656}=x_1^{7}x_2x_3^{3}x_4^{10}x_5$, \\
$\mathcal Y_{22,\,657}=x_1^{7}x_2x_3^{10}x_4x_5^{3}$, & $\mathcal Y_{22,\,658}=x_1^{7}x_2x_3^{10}x_4^{3}x_5$, & $\mathcal Y_{22,\,659}=x_1^{7}x_2^{3}x_3x_4x_5^{10}$, & $\mathcal Y_{22,\,660}=x_1^{7}x_2^{3}x_3x_4^{10}x_5$, \\
$\mathcal Y_{22,\,661}=x_1x_2^{2}x_3^{3}x_4^{3}x_5^{13}$, & $\mathcal Y_{22,\,662}=x_1x_2^{2}x_3^{3}x_4^{13}x_5^{3}$, & $\mathcal Y_{22,\,663}=x_1x_2^{2}x_3^{13}x_4^{3}x_5^{3}$, & $\mathcal Y_{22,\,664}=x_1x_2^{3}x_3^{2}x_4^{3}x_5^{13}$, \\
$\mathcal Y_{22,\,665}=x_1x_2^{3}x_3^{2}x_4^{13}x_5^{3}$, & $\mathcal Y_{22,\,666}=x_1x_2^{3}x_3^{3}x_4^{2}x_5^{13}$, & $\mathcal Y_{22,\,667}=x_1x_2^{3}x_3^{3}x_4^{13}x_5^{2}$, & $\mathcal Y_{22,\,668}=x_1x_2^{3}x_3^{13}x_4^{2}x_5^{3}$, \\
$\mathcal Y_{22,\,669}=x_1x_2^{3}x_3^{13}x_4^{3}x_5^{2}$, & $\mathcal Y_{22,\,670}=x_1^{3}x_2x_3^{2}x_4^{3}x_5^{13}$, & $\mathcal Y_{22,\,671}=x_1^{3}x_2x_3^{2}x_4^{13}x_5^{3}$, & $\mathcal Y_{22,\,672}=x_1^{3}x_2x_3^{3}x_4^{2}x_5^{13}$, \\
$\mathcal Y_{22,\,673}=x_1^{3}x_2x_3^{3}x_4^{13}x_5^{2}$, & $\mathcal Y_{22,\,674}=x_1^{3}x_2x_3^{13}x_4^{2}x_5^{3}$, & $\mathcal Y_{22,\,675}=x_1^{3}x_2x_3^{13}x_4^{3}x_5^{2}$, & $\mathcal Y_{22,\,676}=x_1^{3}x_2^{3}x_3x_4^{2}x_5^{13}$, \\
$\mathcal Y_{22,\,677}=x_1^{3}x_2^{3}x_3x_4^{13}x_5^{2}$, & $\mathcal Y_{22,\,678}=x_1^{3}x_2^{3}x_3^{13}x_4x_5^{2}$, & $\mathcal Y_{22,\,679}=x_1^{3}x_2^{3}x_3^{13}x_4^{2}x_5$, & $\mathcal Y_{22,\,680}=x_1^{3}x_2^{13}x_3x_4^{2}x_5^{3}$, \\
$\mathcal Y_{22,\,681}=x_1^{3}x_2^{13}x_3x_4^{3}x_5^{2}$, & $\mathcal Y_{22,\,682}=x_1^{3}x_2^{13}x_3^{2}x_4x_5^{3}$, & $\mathcal Y_{22,\,683}=x_1^{3}x_2^{13}x_3^{2}x_4^{3}x_5$, & $\mathcal Y_{22,\,684}=x_1^{3}x_2^{13}x_3^{3}x_4x_5^{2}$, \\
$\mathcal Y_{22,\,685}=x_1^{3}x_2^{13}x_3^{3}x_4^{2}x_5$, & $\mathcal Y_{22,\,686}=x_1x_2^{2}x_3^{3}x_4^{5}x_5^{11}$, & $\mathcal Y_{22,\,687}=x_1x_2^{2}x_3^{5}x_4^{3}x_5^{11}$, & $\mathcal Y_{22,\,688}=x_1x_2^{2}x_3^{5}x_4^{11}x_5^{3}$, \\
$\mathcal Y_{22,\,689}=x_1x_2^{3}x_3^{2}x_4^{5}x_5^{11}$, & $\mathcal Y_{22,\,690}=x_1x_2^{3}x_3^{5}x_4^{2}x_5^{11}$, & $\mathcal Y_{22,\,691}=x_1x_2^{3}x_3^{5}x_4^{11}x_5^{2}$, & $\mathcal Y_{22,\,692}=x_1^{3}x_2x_3^{2}x_4^{5}x_5^{11}$, \\
$\mathcal Y_{22,\,693}=x_1^{3}x_2x_3^{5}x_4^{2}x_5^{11}$, & $\mathcal Y_{22,\,694}=x_1^{3}x_2x_3^{5}x_4^{11}x_5^{2}$, & $\mathcal Y_{22,\,695}=x_1^{3}x_2^{5}x_3x_4^{2}x_5^{11}$, & $\mathcal Y_{22,\,696}=x_1^{3}x_2^{5}x_3x_4^{11}x_5^{2}$, \\
$\mathcal Y_{22,\,697}=x_1^{3}x_2^{5}x_3^{2}x_4x_5^{11}$, & $\mathcal Y_{22,\,698}=x_1^{3}x_2^{5}x_3^{2}x_4^{11}x_5$, & $\mathcal Y_{22,\,699}=x_1^{3}x_2^{5}x_3^{11}x_4x_5^{2}$, & $\mathcal Y_{22,\,700}=x_1^{3}x_2^{5}x_3^{11}x_4^{2}x_5$, \\
$\mathcal Y_{22,\,701}=x_1x_2^{2}x_3^{3}x_4^{7}x_5^{9}$, & $\mathcal Y_{22,\,702}=x_1x_2^{2}x_3^{7}x_4^{3}x_5^{9}$, & $\mathcal Y_{22,\,703}=x_1x_2^{2}x_3^{7}x_4^{9}x_5^{3}$, & $\mathcal Y_{22,\,704}=x_1x_2^{3}x_3^{2}x_4^{7}x_5^{9}$, \\
$\mathcal Y_{22,\,705}=x_1x_2^{3}x_3^{7}x_4^{2}x_5^{9}$, & $\mathcal Y_{22,\,706}=x_1x_2^{3}x_3^{7}x_4^{9}x_5^{2}$, & $\mathcal Y_{22,\,707}=x_1x_2^{7}x_3^{2}x_4^{3}x_5^{9}$, & $\mathcal Y_{22,\,708}=x_1x_2^{7}x_3^{2}x_4^{9}x_5^{3}$, \\
$\mathcal Y_{22,\,709}=x_1x_2^{7}x_3^{3}x_4^{2}x_5^{9}$, & $\mathcal Y_{22,\,710}=x_1x_2^{7}x_3^{3}x_4^{9}x_5^{2}$, & $\mathcal Y_{22,\,711}=x_1x_2^{7}x_3^{9}x_4^{2}x_5^{3}$, & $\mathcal Y_{22,\,712}=x_1x_2^{7}x_3^{9}x_4^{3}x_5^{2}$, \\
$\mathcal Y_{22,\,713}=x_1^{3}x_2x_3^{2}x_4^{7}x_5^{9}$, & $\mathcal Y_{22,\,714}=x_1^{3}x_2x_3^{7}x_4^{2}x_5^{9}$, & $\mathcal Y_{22,\,715}=x_1^{3}x_2x_3^{7}x_4^{9}x_5^{2}$, & $\mathcal Y_{22,\,716}=x_1^{3}x_2^{7}x_3x_4^{2}x_5^{9}$, \\
$\mathcal Y_{22,\,717}=x_1^{3}x_2^{7}x_3x_4^{9}x_5^{2}$, & $\mathcal Y_{22,\,718}=x_1^{3}x_2^{7}x_3^{9}x_4x_5^{2}$, & $\mathcal Y_{22,\,719}=x_1^{3}x_2^{7}x_3^{9}x_4^{2}x_5$, & $\mathcal Y_{22,\,720}=x_1^{7}x_2x_3^{2}x_4^{3}x_5^{9}$, \\
$\mathcal Y_{22,\,721}=x_1^{7}x_2x_3^{2}x_4^{9}x_5^{3}$, & $\mathcal Y_{22,\,722}=x_1^{7}x_2x_3^{3}x_4^{2}x_5^{9}$, & $\mathcal Y_{22,\,723}=x_1^{7}x_2x_3^{3}x_4^{9}x_5^{2}$, & $\mathcal Y_{22,\,724}=x_1^{7}x_2x_3^{9}x_4^{2}x_5^{3}$, \\
$\mathcal Y_{22,\,725}=x_1^{7}x_2x_3^{9}x_4^{3}x_5^{2}$, & $\mathcal Y_{22,\,726}=x_1^{7}x_2^{3}x_3x_4^{2}x_5^{9}$, & $\mathcal Y_{22,\,727}=x_1^{7}x_2^{3}x_3x_4^{9}x_5^{2}$, & $\mathcal Y_{22,\,728}=x_1^{7}x_2^{3}x_3^{9}x_4x_5^{2}$, \\
$\mathcal Y_{22,\,729}=x_1^{7}x_2^{3}x_3^{9}x_4^{2}x_5$, & $\mathcal Y_{22,\,730}=x_1^{7}x_2^{9}x_3x_4^{2}x_5^{3}$, & $\mathcal Y_{22,\,731}=x_1^{7}x_2^{9}x_3x_4^{3}x_5^{2}$, & $\mathcal Y_{22,\,732}=x_1^{7}x_2^{9}x_3^{2}x_4x_5^{3}$, \\
$\mathcal Y_{22,\,733}=x_1^{7}x_2^{9}x_3^{2}x_4^{3}x_5$, & $\mathcal Y_{22,\,734}=x_1^{7}x_2^{9}x_3^{3}x_4x_5^{2}$, & $\mathcal Y_{22,\,735}=x_1^{7}x_2^{9}x_3^{3}x_4^{2}x_5$, & $\mathcal Y_{22,\,736}=x_1x_2^{3}x_3^{3}x_4^{3}x_5^{12}$, \\
$\mathcal Y_{22,\,737}=x_1x_2^{3}x_3^{3}x_4^{12}x_5^{3}$, & $\mathcal Y_{22,\,738}=x_1x_2^{3}x_3^{12}x_4^{3}x_5^{3}$, & $\mathcal Y_{22,\,739}=x_1^{3}x_2x_3^{3}x_4^{3}x_5^{12}$, & $\mathcal Y_{22,\,740}=x_1^{3}x_2x_3^{3}x_4^{12}x_5^{3}$, \\
$\mathcal Y_{22,\,741}=x_1^{3}x_2x_3^{12}x_4^{3}x_5^{3}$, & $\mathcal Y_{22,\,742}=x_1^{3}x_2^{3}x_3x_4^{3}x_5^{12}$, & $\mathcal Y_{22,\,743}=x_1^{3}x_2^{3}x_3x_4^{12}x_5^{3}$, & $\mathcal Y_{22,\,744}=x_1^{3}x_2^{3}x_3^{3}x_4x_5^{12}$, \\
$\mathcal Y_{22,\,745}=x_1^{3}x_2^{3}x_3^{3}x_4^{12}x_5$, & $\mathcal Y_{22,\,746}=x_1^{3}x_2^{3}x_3^{12}x_4x_5^{3}$, & $\mathcal Y_{22,\,747}=x_1^{3}x_2^{3}x_3^{12}x_4^{3}x_5$, & $\mathcal Y_{22,\,748}=x_1^{3}x_2^{12}x_3x_4^{3}x_5^{3}$, \\
$\mathcal Y_{22,\,749}=x_1^{3}x_2^{12}x_3^{3}x_4x_5^{3}$, & $\mathcal Y_{22,\,750}=x_1^{3}x_2^{12}x_3^{3}x_4^{3}x_5$, & $\mathcal Y_{22,\,751}=x_1x_2^{3}x_3^{3}x_4^{4}x_5^{11}$, & $\mathcal Y_{22,\,752}=x_1x_2^{3}x_3^{4}x_4^{3}x_5^{11}$, \\
$\mathcal Y_{22,\,753}=x_1x_2^{3}x_3^{4}x_4^{11}x_5^{3}$, & $\mathcal Y_{22,\,754}=x_1^{3}x_2x_3^{3}x_4^{4}x_5^{11}$, & $\mathcal Y_{22,\,755}=x_1^{3}x_2x_3^{4}x_4^{3}x_5^{11}$, & $\mathcal Y_{22,\,756}=x_1^{3}x_2x_3^{4}x_4^{11}x_5^{3}$, \\
$\mathcal Y_{22,\,757}=x_1^{3}x_2^{3}x_3x_4^{4}x_5^{11}$, & $\mathcal Y_{22,\,758}=x_1^{3}x_2^{3}x_3^{4}x_4x_5^{11}$, & $\mathcal Y_{22,\,759}=x_1^{3}x_2^{3}x_3^{4}x_4^{11}x_5$, & $\mathcal Y_{22,\,760}=x_1^{3}x_2^{4}x_3x_4^{3}x_5^{11}$, \\
$\mathcal Y_{22,\,761}=x_1^{3}x_2^{4}x_3x_4^{11}x_5^{3}$, & $\mathcal Y_{22,\,762}=x_1^{3}x_2^{4}x_3^{3}x_4x_5^{11}$, & $\mathcal Y_{22,\,763}=x_1^{3}x_2^{4}x_3^{3}x_4^{11}x_5$, & $\mathcal Y_{22,\,764}=x_1^{3}x_2^{4}x_3^{11}x_4x_5^{3}$, \\
$\mathcal Y_{22,\,765}=x_1^{3}x_2^{4}x_3^{11}x_4^{3}x_5$, & $\mathcal Y_{22,\,766}=x_1x_2^{3}x_3^{3}x_4^{5}x_5^{10}$, & $\mathcal Y_{22,\,767}=x_1x_2^{3}x_3^{5}x_4^{3}x_5^{10}$, & $\mathcal Y_{22,\,768}=x_1x_2^{3}x_3^{5}x_4^{10}x_5^{3}$, \\
$\mathcal Y_{22,\,769}=x_1^{3}x_2x_3^{3}x_4^{5}x_5^{10}$, & $\mathcal Y_{22,\,770}=x_1^{3}x_2x_3^{5}x_4^{3}x_5^{10}$, & $\mathcal Y_{22,\,771}=x_1^{3}x_2x_3^{5}x_4^{10}x_5^{3}$, & $\mathcal Y_{22,\,772}=x_1^{3}x_2^{3}x_3x_4^{5}x_5^{10}$, \\
$\mathcal Y_{22,\,773}=x_1^{3}x_2^{3}x_3^{5}x_4x_5^{10}$, & $\mathcal Y_{22,\,774}=x_1^{3}x_2^{3}x_3^{5}x_4^{10}x_5$, & $\mathcal Y_{22,\,775}=x_1^{3}x_2^{5}x_3x_4^{3}x_5^{10}$, & $\mathcal Y_{22,\,776}=x_1^{3}x_2^{5}x_3x_4^{10}x_5^{3}$, \\
$\mathcal Y_{22,\,777}=x_1^{3}x_2^{5}x_3^{3}x_4x_5^{10}$, & $\mathcal Y_{22,\,778}=x_1^{3}x_2^{5}x_3^{3}x_4^{10}x_5$, & $\mathcal Y_{22,\,779}=x_1^{3}x_2^{5}x_3^{10}x_4x_5^{3}$, & $\mathcal Y_{22,\,780}=x_1^{3}x_2^{5}x_3^{10}x_4^{3}x_5$, \\
\end{tabular}%
\end{center}

\newpage
\begin{center}
\begin{tabular}{llll}
$\mathcal Y_{22,\,781}=x_1x_2^{3}x_3^{3}x_4^{6}x_5^{9}$, & $\mathcal Y_{22,\,782}=x_1x_2^{3}x_3^{6}x_4^{3}x_5^{9}$, & $\mathcal Y_{22,\,783}=x_1x_2^{3}x_3^{6}x_4^{9}x_5^{3}$, & $\mathcal Y_{22,\,784}=x_1x_2^{6}x_3^{3}x_4^{3}x_5^{9}$, \\
$\mathcal Y_{22,\,785}=x_1x_2^{6}x_3^{3}x_4^{9}x_5^{3}$, & $\mathcal Y_{22,\,786}=x_1x_2^{6}x_3^{9}x_4^{3}x_5^{3}$, & $\mathcal Y_{22,\,787}=x_1^{3}x_2x_3^{3}x_4^{6}x_5^{9}$, & $\mathcal Y_{22,\,788}=x_1^{3}x_2x_3^{6}x_4^{3}x_5^{9}$, \\
$\mathcal Y_{22,\,789}=x_1^{3}x_2x_3^{6}x_4^{9}x_5^{3}$, & $\mathcal Y_{22,\,790}=x_1^{3}x_2^{3}x_3x_4^{6}x_5^{9}$, & $\mathcal Y_{22,\,791}=x_1x_2^{3}x_3^{3}x_4^{7}x_5^{8}$, & $\mathcal Y_{22,\,792}=x_1x_2^{3}x_3^{7}x_4^{3}x_5^{8}$, \\
$\mathcal Y_{22,\,793}=x_1x_2^{3}x_3^{7}x_4^{8}x_5^{3}$, & $\mathcal Y_{22,\,794}=x_1x_2^{7}x_3^{3}x_4^{3}x_5^{8}$, & $\mathcal Y_{22,\,795}=x_1x_2^{7}x_3^{3}x_4^{8}x_5^{3}$, & $\mathcal Y_{22,\,796}=x_1x_2^{7}x_3^{8}x_4^{3}x_5^{3}$, \\
$\mathcal Y_{22,\,797}=x_1^{3}x_2x_3^{3}x_4^{7}x_5^{8}$, & $\mathcal Y_{22,\,798}=x_1^{3}x_2x_3^{7}x_4^{3}x_5^{8}$, & $\mathcal Y_{22,\,799}=x_1^{3}x_2x_3^{7}x_4^{8}x_5^{3}$, & $\mathcal Y_{22,\,800}=x_1^{3}x_2^{3}x_3x_4^{7}x_5^{8}$, \\
$\mathcal Y_{22,\,801}=x_1^{3}x_2^{3}x_3^{7}x_4x_5^{8}$, & $\mathcal Y_{22,\,802}=x_1^{3}x_2^{3}x_3^{7}x_4^{8}x_5$, & $\mathcal Y_{22,\,803}=x_1^{3}x_2^{7}x_3x_4^{3}x_5^{8}$, & $\mathcal Y_{22,\,804}=x_1^{3}x_2^{7}x_3x_4^{8}x_5^{3}$, \\
$\mathcal Y_{22,\,805}=x_1^{3}x_2^{7}x_3^{3}x_4x_5^{8}$, & $\mathcal Y_{22,\,806}=x_1^{3}x_2^{7}x_3^{3}x_4^{8}x_5$, & $\mathcal Y_{22,\,807}=x_1^{3}x_2^{7}x_3^{8}x_4x_5^{3}$, & $\mathcal Y_{22,\,808}=x_1^{3}x_2^{7}x_3^{8}x_4^{3}x_5$, \\
$\mathcal Y_{22,\,809}=x_1^{7}x_2x_3^{3}x_4^{3}x_5^{8}$, & $\mathcal Y_{22,\,810}=x_1^{7}x_2x_3^{3}x_4^{8}x_5^{3}$, & $\mathcal Y_{22,\,811}=x_1^{7}x_2x_3^{8}x_4^{3}x_5^{3}$, & $\mathcal Y_{22,\,812}=x_1^{7}x_2^{3}x_3x_4^{3}x_5^{8}$, \\
$\mathcal Y_{22,\,813}=x_1^{7}x_2^{3}x_3x_4^{8}x_5^{3}$, & $\mathcal Y_{22,\,814}=x_1^{7}x_2^{3}x_3^{3}x_4x_5^{8}$, & $\mathcal Y_{22,\,815}=x_1^{7}x_2^{3}x_3^{3}x_4^{8}x_5$, & $\mathcal Y_{22,\,816}=x_1^{7}x_2^{3}x_3^{8}x_4x_5^{3}$, \\
$\mathcal Y_{22,\,817}=x_1^{7}x_2^{3}x_3^{8}x_4^{3}x_5$, & $\mathcal Y_{22,\,818}=x_1^{7}x_2^{8}x_3x_4^{3}x_5^{3}$, & $\mathcal Y_{22,\,819}=x_1^{7}x_2^{8}x_3^{3}x_4x_5^{3}$, & $\mathcal Y_{22,\,820}=x_1^{7}x_2^{8}x_3^{3}x_4^{3}x_5$, \\
$\mathcal Y_{22,\,821}=x_1^{3}x_2^{3}x_3^{5}x_4^{2}x_5^{9}$, & $\mathcal Y_{22,\,822}=x_1^{3}x_2^{3}x_3^{5}x_4^{9}x_5^{2}$, & $\mathcal Y_{22,\,823}=x_1^{3}x_2^{5}x_3^{2}x_4^{3}x_5^{9}$, & $\mathcal Y_{22,\,824}=x_1^{3}x_2^{5}x_3^{2}x_4^{9}x_5^{3}$, \\
$\mathcal Y_{22,\,825}=x_1^{3}x_2^{5}x_3^{3}x_4^{2}x_5^{9}$, & $\mathcal Y_{22,\,826}=x_1^{3}x_2^{5}x_3^{3}x_4^{9}x_5^{2}$, & $\mathcal Y_{22,\,827}=x_1^{3}x_2^{5}x_3^{9}x_4^{2}x_5^{3}$, & $\mathcal Y_{22,\,828}=x_1^{3}x_2^{5}x_3^{9}x_4^{3}x_5^{2}$, \\
$\mathcal Y_{22,\,829}=x_1^{3}x_2^{3}x_3^{3}x_4^{4}x_5^{9}$, & $\mathcal Y_{22,\,830}=x_1^{3}x_2^{3}x_3^{4}x_4^{3}x_5^{9}$, & $\mathcal Y_{22,\,831}=x_1^{3}x_2^{3}x_3^{4}x_4^{9}x_5^{3}$, & $\mathcal Y_{22,\,832}=x_1^{3}x_2^{4}x_3^{3}x_4^{3}x_5^{9}$, \\
$\mathcal Y_{22,\,833}=x_1^{3}x_2^{4}x_3^{3}x_4^{9}x_5^{3}$, & $\mathcal Y_{22,\,834}=x_1^{3}x_2^{4}x_3^{9}x_4^{3}x_5^{3}$, & $\mathcal Y_{22,\,835}=x_1^{3}x_2^{3}x_3^{3}x_4^{5}x_5^{8}$, & $\mathcal Y_{22,\,836}=x_1^{3}x_2^{3}x_3^{5}x_4^{3}x_5^{8}$, \\
$\mathcal Y_{22,\,837}=x_1^{3}x_2^{3}x_3^{5}x_4^{8}x_5^{3}$, & $\mathcal Y_{22,\,838}=x_1^{3}x_2^{5}x_3^{3}x_4^{3}x_5^{8}$, & $\mathcal Y_{22,\,839}=x_1^{3}x_2^{5}x_3^{3}x_4^{8}x_5^{3}$, & $\mathcal Y_{22,\,840}=x_1^{3}x_2^{5}x_3^{8}x_4^{3}x_5^{3}$.
\end{tabular}
\end{center}

 $\mathscr B^+_5(\omega_{(5)}) =  \overline{\Phi}^+(\mathscr B_4(\omega_{(5)})\cup \mathscr B^+(5, \omega_{(5)}) = \{\mathcal Y_{22,\,t}:\ 841 \leq t\leq  965\},$ where the monomials $\mathcal Y_{22,\,t}:\ 841 \leq t\leq 965,$ are listed as follows:

\begin{center}
\begin{tabular}{lrrr}
$\mathcal Y_{22,\,841}=x_1x_2x_3^{6}x_4^{7}x_5^{7}$, & \multicolumn{1}{l}{$\mathcal Y_{22,\,842}=x_1x_2x_3^{7}x_4^{6}x_5^{7}$,} & \multicolumn{1}{l}{$\mathcal Y_{22,\,843}=x_1x_2x_3^{7}x_4^{7}x_5^{6}$,} & \multicolumn{1}{l}{$\mathcal Y_{22,\,844}=x_1x_2^{6}x_3x_4^{7}x_5^{7}$,} \\
$\mathcal Y_{22,\,845}=x_1x_2^{6}x_3^{7}x_4x_5^{7}$, & \multicolumn{1}{l}{$\mathcal Y_{22,\,846}=x_1x_2^{6}x_3^{7}x_4^{7}x_5$,} & \multicolumn{1}{l}{$\mathcal Y_{22,\,847}=x_1x_2^{7}x_3x_4^{6}x_5^{7}$,} & \multicolumn{1}{l}{$\mathcal Y_{22,\,848}=x_1x_2^{7}x_3x_4^{7}x_5^{6}$,} \\
$\mathcal Y_{22,\,849}=x_1x_2^{7}x_3^{6}x_4x_5^{7}$, & \multicolumn{1}{l}{$\mathcal Y_{22,\,850}=x_1x_2^{7}x_3^{6}x_4^{7}x_5$,} & \multicolumn{1}{l}{$\mathcal Y_{22,\,851}=x_1x_2^{7}x_3^{7}x_4x_5^{6}$,} & \multicolumn{1}{l}{$\mathcal Y_{22,\,852}=x_1x_2^{7}x_3^{7}x_4^{6}x_5$,} \\
$\mathcal Y_{22,\,853}=x_1^{7}x_2x_3x_4^{6}x_5^{7}$, & \multicolumn{1}{l}{$\mathcal Y_{22,\,854}=x_1^{7}x_2x_3x_4^{7}x_5^{6}$,} & \multicolumn{1}{l}{$\mathcal Y_{22,\,855}=x_1^{7}x_2x_3^{6}x_4x_5^{7}$,} & \multicolumn{1}{l}{$\mathcal Y_{22,\,856}=x_1^{7}x_2x_3^{6}x_4^{7}x_5$,} \\
$\mathcal Y_{22,\,857}=x_1^{7}x_2x_3^{7}x_4x_5^{6}$, & \multicolumn{1}{l}{$\mathcal Y_{22,\,858}=x_1^{7}x_2x_3^{7}x_4^{6}x_5$,} & \multicolumn{1}{l}{$\mathcal Y_{22,\,859}=x_1^{7}x_2^{7}x_3x_4x_5^{6}$,} & \multicolumn{1}{l}{$\mathcal Y_{22,\,860}=x_1^{7}x_2^{7}x_3x_4^{6}x_5$,} \\
$\mathcal Y_{22,\,861}=x_1x_2^{2}x_3^{5}x_4^{7}x_5^{7}$, & \multicolumn{1}{l}{$\mathcal Y_{22,\,862}=x_1x_2^{2}x_3^{7}x_4^{5}x_5^{7}$,} & \multicolumn{1}{l}{$\mathcal Y_{22,\,863}=x_1x_2^{2}x_3^{7}x_4^{7}x_5^{5}$,} & \multicolumn{1}{l}{$\mathcal Y_{22,\,864}=x_1x_2^{7}x_3^{2}x_4^{5}x_5^{7}$,} \\
$\mathcal Y_{22,\,865}=x_1x_2^{7}x_3^{2}x_4^{7}x_5^{5}$, & \multicolumn{1}{l}{$\mathcal Y_{22,\,866}=x_1x_2^{7}x_3^{7}x_4^{2}x_5^{5}$,} & \multicolumn{1}{l}{$\mathcal Y_{22,\,867}=x_1^{7}x_2x_3^{2}x_4^{5}x_5^{7}$,} & \multicolumn{1}{l}{$\mathcal Y_{22,\,868}=x_1^{7}x_2x_3^{2}x_4^{7}x_5^{5}$,} \\
$\mathcal Y_{22,\,869}=x_1^{7}x_2x_3^{7}x_4^{2}x_5^{5}$, & \multicolumn{1}{l}{$\mathcal Y_{22,\,870}=x_1^{7}x_2^{7}x_3x_4^{2}x_5^{5}$,} & \multicolumn{1}{l}{$\mathcal Y_{22,\,871}=x_1x_2^{3}x_3^{4}x_4^{7}x_5^{7}$,} & \multicolumn{1}{l}{$\mathcal Y_{22,\,872}=x_1x_2^{3}x_3^{7}x_4^{4}x_5^{7}$,} \\
$\mathcal Y_{22,\,873}=x_1x_2^{3}x_3^{7}x_4^{7}x_5^{4}$, & \multicolumn{1}{l}{$\mathcal Y_{22,\,874}=x_1x_2^{7}x_3^{3}x_4^{4}x_5^{7}$,} & \multicolumn{1}{l}{$\mathcal Y_{22,\,875}=x_1x_2^{7}x_3^{3}x_4^{7}x_5^{4}$,} & \multicolumn{1}{l}{$\mathcal Y_{22,\,876}=x_1x_2^{7}x_3^{7}x_4^{3}x_5^{4}$,} \\
$\mathcal Y_{22,\,877}=x_1^{3}x_2x_3^{4}x_4^{7}x_5^{7}$, & \multicolumn{1}{l}{$\mathcal Y_{22,\,878}=x_1^{3}x_2x_3^{7}x_4^{4}x_5^{7}$,} & \multicolumn{1}{l}{$\mathcal Y_{22,\,879}=x_1^{3}x_2x_3^{7}x_4^{7}x_5^{4}$,} & \multicolumn{1}{l}{$\mathcal Y_{22,\,880}=x_1^{3}x_2^{4}x_3x_4^{7}x_5^{7}$,} \\
$\mathcal Y_{22,\,881}=x_1^{3}x_2^{4}x_3^{7}x_4x_5^{7}$, & \multicolumn{1}{l}{$\mathcal Y_{22,\,882}=x_1^{3}x_2^{4}x_3^{7}x_4^{7}x_5$,} & \multicolumn{1}{l}{$\mathcal Y_{22,\,883}=x_1^{3}x_2^{7}x_3x_4^{4}x_5^{7}$,} & \multicolumn{1}{l}{$\mathcal Y_{22,\,884}=x_1^{3}x_2^{7}x_3x_4^{7}x_5^{4}$,} \\
$\mathcal Y_{22,\,885}=x_1^{3}x_2^{7}x_3^{4}x_4x_5^{7}$, & \multicolumn{1}{l}{$\mathcal Y_{22,\,886}=x_1^{3}x_2^{7}x_3^{4}x_4^{7}x_5$,} & \multicolumn{1}{l}{$\mathcal Y_{22,\,887}=x_1^{3}x_2^{7}x_3^{7}x_4x_5^{4}$,} & \multicolumn{1}{l}{$\mathcal Y_{22,\,888}=x_1^{3}x_2^{7}x_3^{7}x_4^{4}x_5$,} \\
$\mathcal Y_{22,\,889}=x_1^{7}x_2x_3^{3}x_4^{4}x_5^{7}$, & \multicolumn{1}{l}{$\mathcal Y_{22,\,890}=x_1^{7}x_2x_3^{3}x_4^{7}x_5^{4}$,} & \multicolumn{1}{l}{$\mathcal Y_{22,\,891}=x_1^{7}x_2x_3^{7}x_4^{3}x_5^{4}$,} & \multicolumn{1}{l}{$\mathcal Y_{22,\,892}=x_1^{7}x_2^{3}x_3x_4^{4}x_5^{7}$,} \\
$\mathcal Y_{22,\,893}=x_1^{7}x_2^{3}x_3x_4^{7}x_5^{4}$, & \multicolumn{1}{l}{$\mathcal Y_{22,\,894}=x_1^{7}x_2^{3}x_3^{4}x_4x_5^{7}$,} & \multicolumn{1}{l}{$\mathcal Y_{22,\,895}=x_1^{7}x_2^{3}x_3^{4}x_4^{7}x_5$,} & \multicolumn{1}{l}{$\mathcal Y_{22,\,896}=x_1^{7}x_2^{3}x_3^{7}x_4x_5^{4}$,} \\
$\mathcal Y_{22,\,897}=x_1^{7}x_2^{3}x_3^{7}x_4^{4}x_5$, & \multicolumn{1}{l}{$\mathcal Y_{22,\,898}=x_1^{7}x_2^{7}x_3x_4^{3}x_5^{4}$,} & \multicolumn{1}{l}{$\mathcal Y_{22,\,899}=x_1^{7}x_2^{7}x_3^{3}x_4x_5^{4}$,} & \multicolumn{1}{l}{$\mathcal Y_{22,\,900}=x_1^{7}x_2^{7}x_3^{3}x_4^{4}x_5$,} \\
$\mathcal Y_{22,\,901}=x_1x_2^{3}x_3^{5}x_4^{6}x_5^{7}$, & \multicolumn{1}{l}{$\mathcal Y_{22,\,902}=x_1x_2^{3}x_3^{5}x_4^{7}x_5^{6}$,} & \multicolumn{1}{l}{$\mathcal Y_{22,\,903}=x_1x_2^{3}x_3^{6}x_4^{5}x_5^{7}$,} & \multicolumn{1}{l}{$\mathcal Y_{22,\,904}=x_1x_2^{3}x_3^{6}x_4^{7}x_5^{5}$,} \\
$\mathcal Y_{22,\,905}=x_1x_2^{3}x_3^{7}x_4^{5}x_5^{6}$, & \multicolumn{1}{l}{$\mathcal Y_{22,\,906}=x_1x_2^{3}x_3^{7}x_4^{6}x_5^{5}$,} & \multicolumn{1}{l}{$\mathcal Y_{22,\,907}=x_1x_2^{6}x_3^{3}x_4^{5}x_5^{7}$,} & \multicolumn{1}{l}{$\mathcal Y_{22,\,908}=x_1x_2^{6}x_3^{3}x_4^{7}x_5^{5}$,} \\
$\mathcal Y_{22,\,909}=x_1x_2^{6}x_3^{7}x_4^{3}x_5^{5}$, & \multicolumn{1}{l}{$\mathcal Y_{22,\,910}=x_1x_2^{7}x_3^{3}x_4^{5}x_5^{6}$,} & \multicolumn{1}{l}{$\mathcal Y_{22,\,911}=x_1x_2^{7}x_3^{3}x_4^{6}x_5^{5}$,} & \multicolumn{1}{l}{$\mathcal Y_{22,\,912}=x_1x_2^{7}x_3^{6}x_4^{3}x_5^{5}$,} \\
$\mathcal Y_{22,\,913}=x_1^{3}x_2x_3^{5}x_4^{6}x_5^{7}$, & \multicolumn{1}{l}{$\mathcal Y_{22,\,914}=x_1^{3}x_2x_3^{5}x_4^{7}x_5^{6}$,} & \multicolumn{1}{l}{$\mathcal Y_{22,\,915}=x_1^{3}x_2x_3^{6}x_4^{5}x_5^{7}$,} & \multicolumn{1}{l}{$\mathcal Y_{22,\,916}=x_1^{3}x_2x_3^{6}x_4^{7}x_5^{5}$,} \\
$\mathcal Y_{22,\,917}=x_1^{3}x_2x_3^{7}x_4^{5}x_5^{6}$, & \multicolumn{1}{l}{$\mathcal Y_{22,\,918}=x_1^{3}x_2x_3^{7}x_4^{6}x_5^{5}$,} & \multicolumn{1}{l}{$\mathcal Y_{22,\,919}=x_1^{3}x_2^{5}x_3x_4^{6}x_5^{7}$,} & \multicolumn{1}{l}{$\mathcal Y_{22,\,920}=x_1^{3}x_2^{5}x_3x_4^{7}x_5^{6}$,} \\
$\mathcal Y_{22,\,921}=x_1^{3}x_2^{5}x_3^{6}x_4x_5^{7}$, & \multicolumn{1}{l}{$\mathcal Y_{22,\,922}=x_1^{3}x_2^{5}x_3^{6}x_4^{7}x_5$,} & \multicolumn{1}{l}{$\mathcal Y_{22,\,923}=x_1^{3}x_2^{5}x_3^{7}x_4x_5^{6}$,} & \multicolumn{1}{l}{$\mathcal Y_{22,\,924}=x_1^{3}x_2^{5}x_3^{7}x_4^{6}x_5$,} \\
$\mathcal Y_{22,\,925}=x_1^{3}x_2^{7}x_3x_4^{5}x_5^{6}$, & \multicolumn{1}{l}{$\mathcal Y_{22,\,926}=x_1^{3}x_2^{7}x_3x_4^{6}x_5^{5}$,} & \multicolumn{1}{l}{$\mathcal Y_{22,\,927}=x_1^{3}x_2^{7}x_3^{5}x_4x_5^{6}$,} & \multicolumn{1}{l}{$\mathcal Y_{22,\,928}=x_1^{3}x_2^{7}x_3^{5}x_4^{6}x_5$,} \\
$\mathcal Y_{22,\,929}=x_1^{7}x_2x_3^{3}x_4^{5}x_5^{6}$, & \multicolumn{1}{l}{$\mathcal Y_{22,\,930}=x_1^{7}x_2x_3^{3}x_4^{6}x_5^{5}$,} & \multicolumn{1}{l}{$\mathcal Y_{22,\,931}=x_1^{7}x_2x_3^{6}x_4^{3}x_5^{5}$,} & \multicolumn{1}{l}{$\mathcal Y_{22,\,932}=x_1^{7}x_2^{3}x_3x_4^{5}x_5^{6}$,} \\
$\mathcal Y_{22,\,933}=x_1^{7}x_2^{3}x_3x_4^{6}x_5^{5}$, & \multicolumn{1}{l}{$\mathcal Y_{22,\,934}=x_1^{7}x_2^{3}x_3^{5}x_4x_5^{6}$,} & \multicolumn{1}{l}{$\mathcal Y_{22,\,935}=x_1^{7}x_2^{3}x_3^{5}x_4^{6}x_5$,} & \multicolumn{1}{l}{$\mathcal Y_{22,\,936}=x_1^{3}x_2^{5}x_3^{2}x_4^{5}x_5^{7}$,} \\
$\mathcal Y_{22,\,937}=x_1^{3}x_2^{5}x_3^{2}x_4^{7}x_5^{5}$, & \multicolumn{1}{l}{$\mathcal Y_{22,\,938}=x_1^{3}x_2^{5}x_3^{7}x_4^{2}x_5^{5}$,} & \multicolumn{1}{l}{$\mathcal Y_{22,\,939}=x_1^{3}x_2^{7}x_3^{5}x_4^{2}x_5^{5}$,} & \multicolumn{1}{l}{$\mathcal Y_{22,\,940}=x_1^{7}x_2^{3}x_3^{5}x_4^{2}x_5^{5}$,} \\
$\mathcal Y_{22,\,941}=x_1^{3}x_2^{3}x_3^{4}x_4^{5}x_5^{7}$, & \multicolumn{1}{l}{$\mathcal Y_{22,\,942}=x_1^{3}x_2^{3}x_3^{4}x_4^{7}x_5^{5}$,} & \multicolumn{1}{l}{$\mathcal Y_{22,\,943}=x_1^{3}x_2^{3}x_3^{5}x_4^{4}x_5^{7}$,} & \multicolumn{1}{l}{$\mathcal Y_{22,\,944}=x_1^{3}x_2^{3}x_3^{5}x_4^{7}x_5^{4}$,} \\
$\mathcal Y_{22,\,945}=x_1^{3}x_2^{3}x_3^{7}x_4^{4}x_5^{5}$, & \multicolumn{1}{l}{$\mathcal Y_{22,\,946}=x_1^{3}x_2^{3}x_3^{7}x_4^{5}x_5^{4}$,} & \multicolumn{1}{l}{$\mathcal Y_{22,\,947}=x_1^{3}x_2^{4}x_3^{3}x_4^{5}x_5^{7}$,} & \multicolumn{1}{l}{$\mathcal Y_{22,\,948}=x_1^{3}x_2^{4}x_3^{3}x_4^{7}x_5^{5}$,} \\
$\mathcal Y_{22,\,949}=x_1^{3}x_2^{4}x_3^{7}x_4^{3}x_5^{5}$, & \multicolumn{1}{l}{$\mathcal Y_{22,\,950}=x_1^{3}x_2^{5}x_3^{3}x_4^{4}x_5^{7}$,} & \multicolumn{1}{l}{$\mathcal Y_{22,\,951}=x_1^{3}x_2^{5}x_3^{3}x_4^{7}x_5^{4}$,} & \multicolumn{1}{l}{$\mathcal Y_{22,\,952}=x_1^{3}x_2^{5}x_3^{7}x_4^{3}x_5^{4}$,} \\
$\mathcal Y_{22,\,953}=x_1^{3}x_2^{7}x_3^{3}x_4^{4}x_5^{5}$, & \multicolumn{1}{l}{$\mathcal Y_{22,\,954}=x_1^{3}x_2^{7}x_3^{3}x_4^{5}x_5^{4}$,} & \multicolumn{1}{l}{$\mathcal Y_{22,\,955}=x_1^{3}x_2^{7}x_3^{4}x_4^{3}x_5^{5}$,} & \multicolumn{1}{l}{$\mathcal Y_{22,\,956}=x_1^{3}x_2^{7}x_3^{5}x_4^{3}x_5^{4}$,} \\
$\mathcal Y_{22,\,957}=x_1^{7}x_2^{3}x_3^{3}x_4^{4}x_5^{5}$, & \multicolumn{1}{l}{$\mathcal Y_{22,\,958}=x_1^{7}x_2^{3}x_3^{3}x_4^{5}x_5^{4}$,} & \multicolumn{1}{l}{$\mathcal Y_{22,\,959}=x_1^{7}x_2^{3}x_3^{4}x_4^{3}x_5^{5}$,} & \multicolumn{1}{l}{$\mathcal Y_{22,\,960}=x_1^{7}x_2^{3}x_3^{5}x_4^{3}x_5^{4}$,} \\
$\mathcal Y_{22,\,961}=x_1^{3}x_2^{3}x_3^{5}x_4^{5}x_5^{6}$, & \multicolumn{1}{l}{$\mathcal Y_{22,\,962}=x_1^{3}x_2^{3}x_3^{5}x_4^{6}x_5^{5}$,} & \multicolumn{1}{l}{$\mathcal Y_{22,\,963}=x_1^{3}x_2^{5}x_3^{3}x_4^{5}x_5^{6}$,} & \multicolumn{1}{l}{$\mathcal Y_{22,\,964}=x_1^{3}x_2^{5}x_3^{3}x_4^{6}x_5^{5}$,} \\
$\mathcal Y_{22,\,965}=x_1^{3}x_2^{5}x_3^{6}x_4^{3}x_5^{5}$. &       &       &  
\end{tabular}%
\end{center}

\newpage
From the above data, we see that $\overline{\Phi}(\mathscr B_4(\omega_{(j}) )\subset \mathscr B_5(\omega_{(j)})$ for $j = 1, 4, 5.$ Hence, Conjecture \ref{gtS} is true for $d = 5$ and the degree $22.$

\subsection{$\mathcal A_2$-generators for $\mathscr P_5^0$ in degree $47$}\label{s5.6}

Using the results in Sect.\ref{s3.2.2}, we have 
$$\mathscr B_5^0(47) = \overline{\Phi}(\mathscr B_4(47)) = \overline{\Phi}(\mathscr B_4(\overline{\omega}_{(1)}) = \{\mathcal Y_{47,\, t};\ 1\leq t\leq 560\},$$ 
where $\overline{\omega}_{(1)} = (3,2,2,2,1),$ and the monomials $\mathcal Y_{47,\,t}:\ 1 \leq t\leq 560,$ are determined as follows:

\begin{center}
\begin{tabular}{llll}
$\mathcal Y_{47,\,1}=x_3x_4^{15}x_5^{31}$, & $\mathcal Y_{47,\,2}=x_3x_4^{31}x_5^{15}$, & $\mathcal Y_{47,\,3}=x_3^{15}x_4x_5^{31}$, & $\mathcal Y_{47,\,4}=x_3^{15}x_4^{31}x_5$, \\
$\mathcal Y_{47,\,5}=x_3^{31}x_4x_5^{15}$, & $\mathcal Y_{47,\,6}=x_3^{31}x_4^{15}x_5$, & $\mathcal Y_{47,\,7}=x_2x_4^{15}x_5^{31}$, & $\mathcal Y_{47,\,8}=x_2x_4^{31}x_5^{15}$, \\
$\mathcal Y_{47,\,9}=x_2x_3^{15}x_5^{31}$, & $\mathcal Y_{47,\,10}=x_2x_3^{15}x_4^{31}$, & $\mathcal Y_{47,\,11}=x_2x_3^{31}x_5^{15}$, & $\mathcal Y_{47,\,12}=x_2x_3^{31}x_4^{15}$, \\
$\mathcal Y_{47,\,13}=x_2^{15}x_4x_5^{31}$, & $\mathcal Y_{47,\,14}=x_2^{15}x_4^{31}x_5$, & $\mathcal Y_{47,\,15}=x_2^{15}x_3x_5^{31}$, & $\mathcal Y_{47,\,16}=x_2^{15}x_3x_4^{31}$, \\
$\mathcal Y_{47,\,17}=x_2^{15}x_3^{31}x_5$, & $\mathcal Y_{47,\,18}=x_2^{15}x_3^{31}x_4$, & $\mathcal Y_{47,\,19}=x_2^{31}x_4x_5^{15}$, & $\mathcal Y_{47,\,20}=x_2^{31}x_4^{15}x_5$, \\
$\mathcal Y_{47,\,21}=x_2^{31}x_3x_5^{15}$, & $\mathcal Y_{47,\,22}=x_2^{31}x_3x_4^{15}$, & $\mathcal Y_{47,\,23}=x_2^{31}x_3^{15}x_5$, & $\mathcal Y_{47,\,24}=x_2^{31}x_3^{15}x_4$, \\
$\mathcal Y_{47,\,25}=x_1x_4^{15}x_5^{31}$, & $\mathcal Y_{47,\,26}=x_1x_4^{31}x_5^{15}$, & $\mathcal Y_{47,\,27}=x_1x_3^{15}x_5^{31}$, & $\mathcal Y_{47,\,28}=x_1x_3^{15}x_4^{31}$, \\
$\mathcal Y_{47,\,29}=x_1x_3^{31}x_5^{15}$, & $\mathcal Y_{47,\,30}=x_1x_3^{31}x_4^{15}$, & $\mathcal Y_{47,\,31}=x_1x_2^{15}x_5^{31}$, & $\mathcal Y_{47,\,32}=x_1x_2^{15}x_4^{31}$, \\
$\mathcal Y_{47,\,33}=x_1x_2^{15}x_3^{31}$, & $\mathcal Y_{47,\,34}=x_1x_2^{31}x_5^{15}$, & $\mathcal Y_{47,\,35}=x_1x_2^{31}x_4^{15}$, & $\mathcal Y_{47,\,36}=x_1x_2^{31}x_3^{15}$, \\
$\mathcal Y_{47,\,37}=x_1^{15}x_4x_5^{31}$, & $\mathcal Y_{47,\,38}=x_1^{15}x_4^{31}x_5$, & $\mathcal Y_{47,\,39}=x_1^{15}x_3x_5^{31}$, & $\mathcal Y_{47,\,40}=x_1^{15}x_3x_4^{31}$, \\
$\mathcal Y_{47,\,41}=x_1^{15}x_3^{31}x_5$, & $\mathcal Y_{47,\,42}=x_1^{15}x_3^{31}x_4$, & $\mathcal Y_{47,\,43}=x_1^{15}x_2x_5^{31}$, & $\mathcal Y_{47,\,44}=x_1^{15}x_2x_4^{31}$, \\
$\mathcal Y_{47,\,45}=x_1^{15}x_2x_3^{31}$, & $\mathcal Y_{47,\,46}=x_1^{15}x_2^{31}x_5$, & $\mathcal Y_{47,\,47}=x_1^{15}x_2^{31}x_4$, & $\mathcal Y_{47,\,48}=x_1^{15}x_2^{31}x_3$, \\
$\mathcal Y_{47,\,49}=x_1^{31}x_4x_5^{15}$, & $\mathcal Y_{47,\,50}=x_1^{31}x_4^{15}x_5$, & $\mathcal Y_{47,\,51}=x_1^{31}x_3x_5^{15}$, & $\mathcal Y_{47,\,52}=x_1^{31}x_3x_4^{15}$, \\
$\mathcal Y_{47,\,53}=x_1^{31}x_3^{15}x_5$, & $\mathcal Y_{47,\,54}=x_1^{31}x_3^{15}x_4$, & $\mathcal Y_{47,\,55}=x_1^{31}x_2x_5^{15}$, & $\mathcal Y_{47,\,56}=x_1^{31}x_2x_4^{15}$, \\
$\mathcal Y_{47,\,57}=x_1^{31}x_2x_3^{15}$, & $\mathcal Y_{47,\,58}=x_1^{31}x_2^{15}x_5$, & $\mathcal Y_{47,\,59}=x_1^{31}x_2^{15}x_4$, & $\mathcal Y_{47,\,60}=x_1^{31}x_2^{15}x_3$, \\
$\mathcal Y_{47,\,61}=x_3^{3}x_4^{13}x_5^{31}$, & $\mathcal Y_{47,\,62}=x_3^{3}x_4^{31}x_5^{13}$, & $\mathcal Y_{47,\,63}=x_3^{31}x_4^{3}x_5^{13}$, & $\mathcal Y_{47,\,64}=x_2^{3}x_4^{13}x_5^{31}$, \\
$\mathcal Y_{47,\,65}=x_2^{3}x_4^{31}x_5^{13}$, & $\mathcal Y_{47,\,66}=x_2^{3}x_3^{13}x_5^{31}$, & $\mathcal Y_{47,\,67}=x_2^{3}x_3^{13}x_4^{31}$, & $\mathcal Y_{47,\,68}=x_2^{3}x_3^{31}x_5^{13}$, \\
$\mathcal Y_{47,\,69}=x_2^{3}x_3^{31}x_4^{13}$, & $\mathcal Y_{47,\,70}=x_2^{31}x_4^{3}x_5^{13}$, & $\mathcal Y_{47,\,71}=x_2^{31}x_3^{3}x_5^{13}$, & $\mathcal Y_{47,\,72}=x_2^{31}x_3^{3}x_4^{13}$, \\
$\mathcal Y_{47,\,73}=x_1^{3}x_4^{13}x_5^{31}$, & $\mathcal Y_{47,\,74}=x_1^{3}x_4^{31}x_5^{13}$, & $\mathcal Y_{47,\,75}=x_1^{3}x_3^{13}x_5^{31}$, & $\mathcal Y_{47,\,76}=x_1^{3}x_3^{13}x_4^{31}$, \\
$\mathcal Y_{47,\,77}=x_1^{3}x_3^{31}x_5^{13}$, & $\mathcal Y_{47,\,78}=x_1^{3}x_3^{31}x_4^{13}$, & $\mathcal Y_{47,\,79}=x_1^{3}x_2^{13}x_5^{31}$, & $\mathcal Y_{47,\,80}=x_1^{3}x_2^{13}x_4^{31}$, \\
$\mathcal Y_{47,\,81}=x_1^{3}x_2^{13}x_3^{31}$, & $\mathcal Y_{47,\,82}=x_1^{3}x_2^{31}x_5^{13}$, & $\mathcal Y_{47,\,83}=x_1^{3}x_2^{31}x_4^{13}$, & $\mathcal Y_{47,\,84}=x_1^{3}x_2^{31}x_3^{13}$, \\
$\mathcal Y_{47,\,85}=x_1^{31}x_4^{3}x_5^{13}$, & $\mathcal Y_{47,\,86}=x_1^{31}x_3^{3}x_5^{13}$, & $\mathcal Y_{47,\,87}=x_1^{31}x_3^{3}x_4^{13}$, & $\mathcal Y_{47,\,88}=x_1^{31}x_2^{3}x_5^{13}$, \\
$\mathcal Y_{47,\,89}=x_1^{31}x_2^{3}x_4^{13}$, & $\mathcal Y_{47,\,90}=x_1^{31}x_2^{3}x_3^{13}$, & $\mathcal Y_{47,\,91}=x_3^{3}x_4^{15}x_5^{29}$, & $\mathcal Y_{47,\,92}=x_3^{3}x_4^{29}x_5^{15}$, \\
$\mathcal Y_{47,\,93}=x_3^{15}x_4^{3}x_5^{29}$, & $\mathcal Y_{47,\,94}=x_2^{3}x_4^{15}x_5^{29}$, & $\mathcal Y_{47,\,95}=x_2^{3}x_4^{29}x_5^{15}$, & $\mathcal Y_{47,\,96}=x_2^{3}x_3^{15}x_5^{29}$, \\
$\mathcal Y_{47,\,97}=x_2^{3}x_3^{15}x_4^{29}$, & $\mathcal Y_{47,\,98}=x_2^{3}x_3^{29}x_5^{15}$, & $\mathcal Y_{47,\,99}=x_2^{3}x_3^{29}x_4^{15}$, & $\mathcal Y_{47,\,100}=x_2^{15}x_4^{3}x_5^{29}$, \\
$\mathcal Y_{47,\,101}=x_2^{15}x_3^{3}x_5^{29}$, & $\mathcal Y_{47,\,102}=x_2^{15}x_3^{3}x_4^{29}$, & $\mathcal Y_{47,\,103}=x_1^{3}x_4^{15}x_5^{29}$, & $\mathcal Y_{47,\,104}=x_1^{3}x_4^{29}x_5^{15}$, \\
$\mathcal Y_{47,\,105}=x_1^{3}x_3^{15}x_5^{29}$, & $\mathcal Y_{47,\,106}=x_1^{3}x_3^{15}x_4^{29}$, & $\mathcal Y_{47,\,107}=x_1^{3}x_3^{29}x_5^{15}$, & $\mathcal Y_{47,\,108}=x_1^{3}x_3^{29}x_4^{15}$, \\
$\mathcal Y_{47,\,109}=x_1^{3}x_2^{15}x_5^{29}$, & $\mathcal Y_{47,\,110}=x_1^{3}x_2^{15}x_4^{29}$, & $\mathcal Y_{47,\,111}=x_1^{3}x_2^{15}x_3^{29}$, & $\mathcal Y_{47,\,112}=x_1^{3}x_2^{29}x_5^{15}$, \\
$\mathcal Y_{47,\,113}=x_1^{3}x_2^{29}x_4^{15}$, & $\mathcal Y_{47,\,114}=x_1^{3}x_2^{29}x_3^{15}$, & $\mathcal Y_{47,\,115}=x_1^{15}x_4^{3}x_5^{29}$, & $\mathcal Y_{47,\,116}=x_1^{15}x_3^{3}x_5^{29}$, \\
$\mathcal Y_{47,\,117}=x_1^{15}x_3^{3}x_4^{29}$, & $\mathcal Y_{47,\,118}=x_1^{15}x_2^{3}x_5^{29}$, & $\mathcal Y_{47,\,119}=x_1^{15}x_2^{3}x_4^{29}$, & $\mathcal Y_{47,\,120}=x_1^{15}x_2^{3}x_3^{29}$, \\
$\mathcal Y_{47,\,121}=x_3^{7}x_4^{11}x_5^{29}$, & $\mathcal Y_{47,\,122}=x_2^{7}x_4^{11}x_5^{29}$, & $\mathcal Y_{47,\,123}=x_2^{7}x_3^{11}x_5^{29}$, & $\mathcal Y_{47,\,124}=x_2^{7}x_3^{11}x_4^{29}$, \\
$\mathcal Y_{47,\,125}=x_1^{7}x_4^{11}x_5^{29}$, & $\mathcal Y_{47,\,126}=x_1^{7}x_3^{11}x_5^{29}$, & $\mathcal Y_{47,\,127}=x_1^{7}x_3^{11}x_4^{29}$, & $\mathcal Y_{47,\,128}=x_1^{7}x_2^{11}x_5^{29}$, \\
$\mathcal Y_{47,\,129}=x_1^{7}x_2^{11}x_4^{29}$, & $\mathcal Y_{47,\,130}=x_1^{7}x_2^{11}x_3^{29}$, & $\mathcal Y_{47,\,131}=x_3^{7}x_4^{27}x_5^{13}$, & $\mathcal Y_{47,\,132}=x_2^{7}x_4^{27}x_5^{13}$, \\
$\mathcal Y_{47,\,133}=x_2^{7}x_3^{27}x_5^{13}$, & $\mathcal Y_{47,\,134}=x_2^{7}x_3^{27}x_4^{13}$, & $\mathcal Y_{47,\,135}=x_1^{7}x_4^{27}x_5^{13}$, & $\mathcal Y_{47,\,136}=x_1^{7}x_3^{27}x_5^{13}$, \\
$\mathcal Y_{47,\,137}=x_1^{7}x_3^{27}x_4^{13}$, & $\mathcal Y_{47,\,138}=x_1^{7}x_2^{27}x_5^{13}$, & $\mathcal Y_{47,\,139}=x_1^{7}x_2^{27}x_4^{13}$, & $\mathcal Y_{47,\,140}=x_1^{7}x_2^{27}x_3^{13}$, \\
$\mathcal Y_{47,\,141}=x_2x_3x_4^{14}x_5^{31}$, & $\mathcal Y_{47,\,142}=x_2x_3x_4^{31}x_5^{14}$, & $\mathcal Y_{47,\,143}=x_2x_3^{14}x_4x_5^{31}$, & $\mathcal Y_{47,\,144}=x_2x_3^{14}x_4^{31}x_5$, \\
$\mathcal Y_{47,\,145}=x_2x_3^{31}x_4x_5^{14}$, & $\mathcal Y_{47,\,146}=x_2x_3^{31}x_4^{14}x_5$, & $\mathcal Y_{47,\,147}=x_2^{31}x_3x_4x_5^{14}$, & $\mathcal Y_{47,\,148}=x_2^{31}x_3x_4^{14}x_5$, \\
$\mathcal Y_{47,\,149}=x_1x_3x_4^{14}x_5^{31}$, & $\mathcal Y_{47,\,150}=x_1x_3x_4^{31}x_5^{14}$, & $\mathcal Y_{47,\,151}=x_1x_3^{14}x_4x_5^{31}$, & $\mathcal Y_{47,\,152}=x_1x_3^{14}x_4^{31}x_5$, \\
$\mathcal Y_{47,\,153}=x_1x_3^{31}x_4x_5^{14}$, & $\mathcal Y_{47,\,154}=x_1x_3^{31}x_4^{14}x_5$, & $\mathcal Y_{47,\,155}=x_1x_2x_4^{14}x_5^{31}$, & $\mathcal Y_{47,\,156}=x_1x_2x_4^{31}x_5^{14}$, \\
\end{tabular}%
\end{center}

\newpage
\begin{center}
\begin{tabular}{llll}
$\mathcal Y_{47,\,157}=x_1x_2x_3^{14}x_5^{31}$, & $\mathcal Y_{47,\,158}=x_1x_2x_3^{14}x_4^{31}$, & $\mathcal Y_{47,\,159}=x_1x_2x_3^{31}x_5^{14}$, & $\mathcal Y_{47,\,160}=x_1x_2x_3^{31}x_4^{14}$, \\
$\mathcal Y_{47,\,161}=x_1x_2^{14}x_4x_5^{31}$, & $\mathcal Y_{47,\,162}=x_1x_2^{14}x_4^{31}x_5$, & $\mathcal Y_{47,\,163}=x_1x_2^{14}x_3x_5^{31}$, & $\mathcal Y_{47,\,164}=x_1x_2^{14}x_3x_4^{31}$, \\
$\mathcal Y_{47,\,165}=x_1x_2^{14}x_3^{31}x_5$, & $\mathcal Y_{47,\,166}=x_1x_2^{14}x_3^{31}x_4$, & $\mathcal Y_{47,\,167}=x_1x_2^{31}x_4x_5^{14}$, & $\mathcal Y_{47,\,168}=x_1x_2^{31}x_4^{14}x_5$, \\
$\mathcal Y_{47,\,169}=x_1x_2^{31}x_3x_5^{14}$, & $\mathcal Y_{47,\,170}=x_1x_2^{31}x_3x_4^{14}$, & $\mathcal Y_{47,\,171}=x_1x_2^{31}x_3^{14}x_5$, & $\mathcal Y_{47,\,172}=x_1x_2^{31}x_3^{14}x_4$, \\
$\mathcal Y_{47,\,173}=x_1^{31}x_3x_4x_5^{14}$, & $\mathcal Y_{47,\,174}=x_1^{31}x_3x_4^{14}x_5$, & $\mathcal Y_{47,\,175}=x_1^{31}x_2x_4x_5^{14}$, & $\mathcal Y_{47,\,176}=x_1^{31}x_2x_4^{14}x_5$, \\
$\mathcal Y_{47,\,177}=x_1^{31}x_2x_3x_5^{14}$, & $\mathcal Y_{47,\,178}=x_1^{31}x_2x_3x_4^{14}$, & $\mathcal Y_{47,\,179}=x_1^{31}x_2x_3^{14}x_5$, & $\mathcal Y_{47,\,180}=x_1^{31}x_2x_3^{14}x_4$, \\
$\mathcal Y_{47,\,181}=x_2x_3x_4^{15}x_5^{30}$, & $\mathcal Y_{47,\,182}=x_2x_3x_4^{30}x_5^{15}$, & $\mathcal Y_{47,\,183}=x_2x_3^{15}x_4x_5^{30}$, & $\mathcal Y_{47,\,184}=x_2x_3^{15}x_4^{30}x_5$, \\
$\mathcal Y_{47,\,185}=x_2x_3^{30}x_4x_5^{15}$, & $\mathcal Y_{47,\,186}=x_2x_3^{30}x_4^{15}x_5$, & $\mathcal Y_{47,\,187}=x_2^{15}x_3x_4x_5^{30}$, & $\mathcal Y_{47,\,188}=x_2^{15}x_3x_4^{30}x_5$, \\
$\mathcal Y_{47,\,189}=x_1x_3x_4^{15}x_5^{30}$, & $\mathcal Y_{47,\,190}=x_1x_3x_4^{30}x_5^{15}$, & $\mathcal Y_{47,\,191}=x_1x_3^{15}x_4x_5^{30}$, & $\mathcal Y_{47,\,192}=x_1x_3^{15}x_4^{30}x_5$, \\
$\mathcal Y_{47,\,193}=x_1x_3^{30}x_4x_5^{15}$, & $\mathcal Y_{47,\,194}=x_1x_3^{30}x_4^{15}x_5$, & $\mathcal Y_{47,\,195}=x_1x_2x_4^{15}x_5^{30}$, & $\mathcal Y_{47,\,196}=x_1x_2x_4^{30}x_5^{15}$, \\
$\mathcal Y_{47,\,197}=x_1x_2x_3^{15}x_5^{30}$, & $\mathcal Y_{47,\,198}=x_1x_2x_3^{15}x_4^{30}$, & $\mathcal Y_{47,\,199}=x_1x_2x_3^{30}x_5^{15}$, & $\mathcal Y_{47,\,200}=x_1x_2x_3^{30}x_4^{15}$, \\
$\mathcal Y_{47,\,201}=x_1x_2^{15}x_4x_5^{30}$, & $\mathcal Y_{47,\,202}=x_1x_2^{15}x_4^{30}x_5$, & $\mathcal Y_{47,\,203}=x_1x_2^{15}x_3x_5^{30}$, & $\mathcal Y_{47,\,204}=x_1x_2^{15}x_3x_4^{30}$, \\
$\mathcal Y_{47,\,205}=x_1x_2^{15}x_3^{30}x_5$, & $\mathcal Y_{47,\,206}=x_1x_2^{15}x_3^{30}x_4$, & $\mathcal Y_{47,\,207}=x_1x_2^{30}x_4x_5^{15}$, & $\mathcal Y_{47,\,208}=x_1x_2^{30}x_4^{15}x_5$, \\
$\mathcal Y_{47,\,209}=x_1x_2^{30}x_3x_5^{15}$, & $\mathcal Y_{47,\,210}=x_1x_2^{30}x_3x_4^{15}$, & $\mathcal Y_{47,\,211}=x_1x_2^{30}x_3^{15}x_5$, & $\mathcal Y_{47,\,212}=x_1x_2^{30}x_3^{15}x_4$, \\
$\mathcal Y_{47,\,213}=x_1^{15}x_3x_4x_5^{30}$, & $\mathcal Y_{47,\,214}=x_1^{15}x_3x_4^{30}x_5$, & $\mathcal Y_{47,\,215}=x_1^{15}x_2x_4x_5^{30}$, & $\mathcal Y_{47,\,216}=x_1^{15}x_2x_4^{30}x_5$, \\
$\mathcal Y_{47,\,217}=x_1^{15}x_2x_3x_5^{30}$, & $\mathcal Y_{47,\,218}=x_1^{15}x_2x_3x_4^{30}$, & $\mathcal Y_{47,\,219}=x_1^{15}x_2x_3^{30}x_5$, & $\mathcal Y_{47,\,220}=x_1^{15}x_2x_3^{30}x_4$, \\
$\mathcal Y_{47,\,221}=x_2x_3^{2}x_4^{13}x_5^{31}$, & $\mathcal Y_{47,\,222}=x_2x_3^{2}x_4^{31}x_5^{13}$, & $\mathcal Y_{47,\,223}=x_2x_3^{31}x_4^{2}x_5^{13}$, & $\mathcal Y_{47,\,224}=x_2^{31}x_3x_4^{2}x_5^{13}$, \\
$\mathcal Y_{47,\,225}=x_1x_3^{2}x_4^{13}x_5^{31}$, & $\mathcal Y_{47,\,226}=x_1x_3^{2}x_4^{31}x_5^{13}$, & $\mathcal Y_{47,\,227}=x_1x_3^{31}x_4^{2}x_5^{13}$, & $\mathcal Y_{47,\,228}=x_1x_2^{2}x_4^{13}x_5^{31}$, \\
$\mathcal Y_{47,\,229}=x_1x_2^{2}x_4^{31}x_5^{13}$, & $\mathcal Y_{47,\,230}=x_1x_2^{2}x_3^{13}x_5^{31}$, & $\mathcal Y_{47,\,231}=x_1x_2^{2}x_3^{13}x_4^{31}$, & $\mathcal Y_{47,\,232}=x_1x_2^{2}x_3^{31}x_5^{13}$, \\
$\mathcal Y_{47,\,233}=x_1x_2^{2}x_3^{31}x_4^{13}$, & $\mathcal Y_{47,\,234}=x_1x_2^{31}x_4^{2}x_5^{13}$, & $\mathcal Y_{47,\,235}=x_1x_2^{31}x_3^{2}x_5^{13}$, & $\mathcal Y_{47,\,236}=x_1x_2^{31}x_3^{2}x_4^{13}$, \\
$\mathcal Y_{47,\,237}=x_1^{31}x_3x_4^{2}x_5^{13}$, & $\mathcal Y_{47,\,238}=x_1^{31}x_2x_4^{2}x_5^{13}$, & $\mathcal Y_{47,\,239}=x_1^{31}x_2x_3^{2}x_5^{13}$, & $\mathcal Y_{47,\,240}=x_1^{31}x_2x_3^{2}x_4^{13}$, \\
$\mathcal Y_{47,\,241}=x_2x_3^{2}x_4^{15}x_5^{29}$, & $\mathcal Y_{47,\,242}=x_2x_3^{2}x_4^{29}x_5^{15}$, & $\mathcal Y_{47,\,243}=x_2x_3^{15}x_4^{2}x_5^{29}$, & $\mathcal Y_{47,\,244}=x_2^{15}x_3x_4^{2}x_5^{29}$, \\
$\mathcal Y_{47,\,245}=x_1x_3^{2}x_4^{15}x_5^{29}$, & $\mathcal Y_{47,\,246}=x_1x_3^{2}x_4^{29}x_5^{15}$, & $\mathcal Y_{47,\,247}=x_1x_3^{15}x_4^{2}x_5^{29}$, & $\mathcal Y_{47,\,248}=x_1x_2^{2}x_4^{15}x_5^{29}$, \\
$\mathcal Y_{47,\,249}=x_1x_2^{2}x_4^{29}x_5^{15}$, & $\mathcal Y_{47,\,250}=x_1x_2^{2}x_3^{15}x_5^{29}$, & $\mathcal Y_{47,\,251}=x_1x_2^{2}x_3^{15}x_4^{29}$, & $\mathcal Y_{47,\,252}=x_1x_2^{2}x_3^{29}x_5^{15}$, \\
$\mathcal Y_{47,\,253}=x_1x_2^{2}x_3^{29}x_4^{15}$, & $\mathcal Y_{47,\,254}=x_1x_2^{15}x_4^{2}x_5^{29}$, & $\mathcal Y_{47,\,255}=x_1x_2^{15}x_3^{2}x_5^{29}$, & $\mathcal Y_{47,\,256}=x_1x_2^{15}x_3^{2}x_4^{29}$, \\
$\mathcal Y_{47,\,257}=x_1^{15}x_3x_4^{2}x_5^{29}$, & $\mathcal Y_{47,\,258}=x_1^{15}x_2x_4^{2}x_5^{29}$, & $\mathcal Y_{47,\,259}=x_1^{15}x_2x_3^{2}x_5^{29}$, & $\mathcal Y_{47,\,260}=x_1^{15}x_2x_3^{2}x_4^{29}$, \\
$\mathcal Y_{47,\,261}=x_2x_3^{3}x_4^{12}x_5^{31}$, & $\mathcal Y_{47,\,262}=x_2x_3^{3}x_4^{31}x_5^{12}$, & $\mathcal Y_{47,\,263}=x_2x_3^{31}x_4^{3}x_5^{12}$, & $\mathcal Y_{47,\,264}=x_2^{3}x_3x_4^{12}x_5^{31}$, \\
$\mathcal Y_{47,\,265}=x_2^{3}x_3x_4^{31}x_5^{12}$, & $\mathcal Y_{47,\,266}=x_2^{3}x_3^{31}x_4x_5^{12}$, & $\mathcal Y_{47,\,267}=x_2^{31}x_3x_4^{3}x_5^{12}$, & $\mathcal Y_{47,\,268}=x_2^{31}x_3^{3}x_4x_5^{12}$, \\
$\mathcal Y_{47,\,269}=x_1x_3^{3}x_4^{12}x_5^{31}$, & $\mathcal Y_{47,\,270}=x_1x_3^{3}x_4^{31}x_5^{12}$, & $\mathcal Y_{47,\,271}=x_1x_3^{31}x_4^{3}x_5^{12}$, & $\mathcal Y_{47,\,272}=x_1x_2^{3}x_4^{12}x_5^{31}$, \\
$\mathcal Y_{47,\,273}=x_1x_2^{3}x_4^{31}x_5^{12}$, & $\mathcal Y_{47,\,274}=x_1x_2^{3}x_3^{12}x_5^{31}$, & $\mathcal Y_{47,\,275}=x_1x_2^{3}x_3^{12}x_4^{31}$, & $\mathcal Y_{47,\,276}=x_1x_2^{3}x_3^{31}x_5^{12}$, \\
$\mathcal Y_{47,\,277}=x_1x_2^{3}x_3^{31}x_4^{12}$, & $\mathcal Y_{47,\,278}=x_1x_2^{31}x_4^{3}x_5^{12}$, & $\mathcal Y_{47,\,279}=x_1x_2^{31}x_3^{3}x_5^{12}$, & $\mathcal Y_{47,\,280}=x_1x_2^{31}x_3^{3}x_4^{12}$, \\
$\mathcal Y_{47,\,281}=x_1^{3}x_3x_4^{12}x_5^{31}$, & $\mathcal Y_{47,\,282}=x_1^{3}x_3x_4^{31}x_5^{12}$, & $\mathcal Y_{47,\,283}=x_1^{3}x_3^{31}x_4x_5^{12}$, & $\mathcal Y_{47,\,284}=x_1^{3}x_2x_4^{12}x_5^{31}$, \\
$\mathcal Y_{47,\,285}=x_1^{3}x_2x_4^{31}x_5^{12}$, & $\mathcal Y_{47,\,286}=x_1^{3}x_2x_3^{12}x_5^{31}$, & $\mathcal Y_{47,\,287}=x_1^{3}x_2x_3^{12}x_4^{31}$, & $\mathcal Y_{47,\,288}=x_1^{3}x_2x_3^{31}x_5^{12}$, \\
$\mathcal Y_{47,\,289}=x_1^{3}x_2x_3^{31}x_4^{12}$, & $\mathcal Y_{47,\,290}=x_1^{3}x_2^{31}x_4x_5^{12}$, & $\mathcal Y_{47,\,291}=x_1^{3}x_2^{31}x_3x_5^{12}$, & $\mathcal Y_{47,\,292}=x_1^{3}x_2^{31}x_3x_4^{12}$, \\
$\mathcal Y_{47,\,293}=x_1^{31}x_3x_4^{3}x_5^{12}$, & $\mathcal Y_{47,\,294}=x_1^{31}x_3^{3}x_4x_5^{12}$, & $\mathcal Y_{47,\,295}=x_1^{31}x_2x_4^{3}x_5^{12}$, & $\mathcal Y_{47,\,296}=x_1^{31}x_2x_3^{3}x_5^{12}$, \\
$\mathcal Y_{47,\,297}=x_1^{31}x_2x_3^{3}x_4^{12}$, & $\mathcal Y_{47,\,298}=x_1^{31}x_2^{3}x_4x_5^{12}$, & $\mathcal Y_{47,\,299}=x_1^{31}x_2^{3}x_3x_5^{12}$, & $\mathcal Y_{47,\,300}=x_1^{31}x_2^{3}x_3x_4^{12}$, \\
$\mathcal Y_{47,\,301}=x_2x_3^{3}x_4^{13}x_5^{30}$, & $\mathcal Y_{47,\,302}=x_2x_3^{3}x_4^{30}x_5^{13}$, & $\mathcal Y_{47,\,303}=x_2x_3^{30}x_4^{3}x_5^{13}$, & $\mathcal Y_{47,\,304}=x_2^{3}x_3x_4^{13}x_5^{30}$, \\
$\mathcal Y_{47,\,305}=x_2^{3}x_3x_4^{30}x_5^{13}$, & $\mathcal Y_{47,\,306}=x_2^{3}x_3^{13}x_4x_5^{30}$, & $\mathcal Y_{47,\,307}=x_2^{3}x_3^{13}x_4^{30}x_5$, & $\mathcal Y_{47,\,308}=x_1x_3^{3}x_4^{13}x_5^{30}$, \\
$\mathcal Y_{47,\,309}=x_1x_3^{3}x_4^{30}x_5^{13}$, & $\mathcal Y_{47,\,310}=x_1x_3^{30}x_4^{3}x_5^{13}$, & $\mathcal Y_{47,\,311}=x_1x_2^{3}x_4^{13}x_5^{30}$, & $\mathcal Y_{47,\,312}=x_1x_2^{3}x_4^{30}x_5^{13}$, \\
$\mathcal Y_{47,\,313}=x_1x_2^{3}x_3^{13}x_5^{30}$, & $\mathcal Y_{47,\,314}=x_1x_2^{3}x_3^{13}x_4^{30}$, & $\mathcal Y_{47,\,315}=x_1x_2^{3}x_3^{30}x_5^{13}$, & $\mathcal Y_{47,\,316}=x_1x_2^{3}x_3^{30}x_4^{13}$, \\
$\mathcal Y_{47,\,317}=x_1x_2^{30}x_4^{3}x_5^{13}$, & $\mathcal Y_{47,\,318}=x_1x_2^{30}x_3^{3}x_5^{13}$, & $\mathcal Y_{47,\,319}=x_1x_2^{30}x_3^{3}x_4^{13}$, & $\mathcal Y_{47,\,320}=x_1^{3}x_3x_4^{13}x_5^{30}$, \\
$\mathcal Y_{47,\,321}=x_1^{3}x_3x_4^{30}x_5^{13}$, & $\mathcal Y_{47,\,322}=x_1^{3}x_3^{13}x_4x_5^{30}$, & $\mathcal Y_{47,\,323}=x_1^{3}x_3^{13}x_4^{30}x_5$, & $\mathcal Y_{47,\,324}=x_1^{3}x_2x_4^{13}x_5^{30}$, \\
$\mathcal Y_{47,\,325}=x_1^{3}x_2x_4^{30}x_5^{13}$, & $\mathcal Y_{47,\,326}=x_1^{3}x_2x_3^{13}x_5^{30}$, & $\mathcal Y_{47,\,327}=x_1^{3}x_2x_3^{13}x_4^{30}$, & $\mathcal Y_{47,\,328}=x_1^{3}x_2x_3^{30}x_5^{13}$, \\
$\mathcal Y_{47,\,329}=x_1^{3}x_2x_3^{30}x_4^{13}$, & $\mathcal Y_{47,\,330}=x_1^{3}x_2^{13}x_4x_5^{30}$, & $\mathcal Y_{47,\,331}=x_1^{3}x_2^{13}x_4^{30}x_5$, & $\mathcal Y_{47,\,332}=x_1^{3}x_2^{13}x_3x_5^{30}$, \\
$\mathcal Y_{47,\,333}=x_1^{3}x_2^{13}x_3x_4^{30}$, & $\mathcal Y_{47,\,334}=x_1^{3}x_2^{13}x_3^{30}x_5$, & $\mathcal Y_{47,\,335}=x_1^{3}x_2^{13}x_3^{30}x_4$, & $\mathcal Y_{47,\,336}=x_2x_3^{3}x_4^{14}x_5^{29}$, \\
$\mathcal Y_{47,\,337}=x_2x_3^{3}x_4^{29}x_5^{14}$, & $\mathcal Y_{47,\,338}=x_2x_3^{14}x_4^{3}x_5^{29}$, & $\mathcal Y_{47,\,339}=x_2^{3}x_3x_4^{14}x_5^{29}$, & $\mathcal Y_{47,\,340}=x_2^{3}x_3x_4^{29}x_5^{14}$, \\
$\mathcal Y_{47,\,341}=x_2^{3}x_3^{29}x_4x_5^{14}$, & $\mathcal Y_{47,\,342}=x_2^{3}x_3^{29}x_4^{14}x_5$, & $\mathcal Y_{47,\,343}=x_1x_3^{3}x_4^{14}x_5^{29}$, & $\mathcal Y_{47,\,344}=x_1x_3^{3}x_4^{29}x_5^{14}$, \\
$\mathcal Y_{47,\,345}=x_1x_3^{14}x_4^{3}x_5^{29}$, & $\mathcal Y_{47,\,346}=x_1x_2^{3}x_4^{14}x_5^{29}$, & $\mathcal Y_{47,\,347}=x_1x_2^{3}x_4^{29}x_5^{14}$, & $\mathcal Y_{47,\,348}=x_1x_2^{3}x_3^{14}x_5^{29}$, \\
$\mathcal Y_{47,\,349}=x_1x_2^{3}x_3^{14}x_4^{29}$, & $\mathcal Y_{47,\,350}=x_1x_2^{3}x_3^{29}x_5^{14}$, & $\mathcal Y_{47,\,351}=x_1x_2^{3}x_3^{29}x_4^{14}$, & $\mathcal Y_{47,\,352}=x_1x_2^{14}x_4^{3}x_5^{29}$, \\
$\mathcal Y_{47,\,353}=x_1x_2^{14}x_3^{3}x_5^{29}$, & $\mathcal Y_{47,\,354}=x_1x_2^{14}x_3^{3}x_4^{29}$, & $\mathcal Y_{47,\,355}=x_1^{3}x_3x_4^{14}x_5^{29}$, & $\mathcal Y_{47,\,356}=x_1^{3}x_3x_4^{29}x_5^{14}$, \\
$\mathcal Y_{47,\,357}=x_1^{3}x_3^{29}x_4x_5^{14}$, & $\mathcal Y_{47,\,358}=x_1^{3}x_3^{29}x_4^{14}x_5$, & $\mathcal Y_{47,\,359}=x_1^{3}x_2x_4^{14}x_5^{29}$, & $\mathcal Y_{47,\,360}=x_1^{3}x_2x_4^{29}x_5^{14}$, \\
\end{tabular}%
\end{center}

\newpage
\begin{center}
\begin{tabular}{llll}
$\mathcal Y_{47,\,361}=x_1^{3}x_2x_3^{14}x_5^{29}$, & $\mathcal Y_{47,\,362}=x_1^{3}x_2x_3^{14}x_4^{29}$, & $\mathcal Y_{47,\,363}=x_1^{3}x_2x_3^{29}x_5^{14}$, & $\mathcal Y_{47,\,364}=x_1^{3}x_2x_3^{29}x_4^{14}$, \\
$\mathcal Y_{47,\,365}=x_1^{3}x_2^{29}x_4x_5^{14}$, & $\mathcal Y_{47,\,366}=x_1^{3}x_2^{29}x_4^{14}x_5$, & $\mathcal Y_{47,\,367}=x_1^{3}x_2^{29}x_3x_5^{14}$, & $\mathcal Y_{47,\,368}=x_1^{3}x_2^{29}x_3x_4^{14}$, \\
$\mathcal Y_{47,\,369}=x_1^{3}x_2^{29}x_3^{14}x_5$, & $\mathcal Y_{47,\,370}=x_1^{3}x_2^{29}x_3^{14}x_4$, & $\mathcal Y_{47,\,371}=x_2x_3^{3}x_4^{15}x_5^{28}$, & $\mathcal Y_{47,\,372}=x_2x_3^{3}x_4^{28}x_5^{15}$, \\
$\mathcal Y_{47,\,373}=x_2x_3^{15}x_4^{3}x_5^{28}$, & $\mathcal Y_{47,\,374}=x_2^{3}x_3x_4^{15}x_5^{28}$, & $\mathcal Y_{47,\,375}=x_2^{3}x_3x_4^{28}x_5^{15}$, & $\mathcal Y_{47,\,376}=x_2^{3}x_3^{15}x_4x_5^{28}$, \\
$\mathcal Y_{47,\,377}=x_2^{15}x_3x_4^{3}x_5^{28}$, & $\mathcal Y_{47,\,378}=x_2^{15}x_3^{3}x_4x_5^{28}$, & $\mathcal Y_{47,\,379}=x_1x_3^{3}x_4^{15}x_5^{28}$, & $\mathcal Y_{47,\,380}=x_1x_3^{3}x_4^{28}x_5^{15}$, \\
$\mathcal Y_{47,\,381}=x_1x_3^{15}x_4^{3}x_5^{28}$, & $\mathcal Y_{47,\,382}=x_1x_2^{3}x_4^{15}x_5^{28}$, & $\mathcal Y_{47,\,383}=x_1x_2^{3}x_4^{28}x_5^{15}$, & $\mathcal Y_{47,\,384}=x_1x_2^{3}x_3^{15}x_5^{28}$, \\
$\mathcal Y_{47,\,385}=x_1x_2^{3}x_3^{15}x_4^{28}$, & $\mathcal Y_{47,\,386}=x_1x_2^{3}x_3^{28}x_5^{15}$, & $\mathcal Y_{47,\,387}=x_1x_2^{3}x_3^{28}x_4^{15}$, & $\mathcal Y_{47,\,388}=x_1x_2^{15}x_4^{3}x_5^{28}$, \\
$\mathcal Y_{47,\,389}=x_1x_2^{15}x_3^{3}x_5^{28}$, & $\mathcal Y_{47,\,390}=x_1x_2^{15}x_3^{3}x_4^{28}$, & $\mathcal Y_{47,\,391}=x_1^{3}x_3x_4^{15}x_5^{28}$, & $\mathcal Y_{47,\,392}=x_1^{3}x_3x_4^{28}x_5^{15}$, \\
$\mathcal Y_{47,\,393}=x_1^{3}x_3^{15}x_4x_5^{28}$, & $\mathcal Y_{47,\,394}=x_1^{3}x_2x_4^{15}x_5^{28}$, & $\mathcal Y_{47,\,395}=x_1^{3}x_2x_4^{28}x_5^{15}$, & $\mathcal Y_{47,\,396}=x_1^{3}x_2x_3^{15}x_5^{28}$, \\
$\mathcal Y_{47,\,397}=x_1^{3}x_2x_3^{15}x_4^{28}$, & $\mathcal Y_{47,\,398}=x_1^{3}x_2x_3^{28}x_5^{15}$, & $\mathcal Y_{47,\,399}=x_1^{3}x_2x_3^{28}x_4^{15}$, & $\mathcal Y_{47,\,400}=x_1^{3}x_2^{15}x_4x_5^{28}$, \\
$\mathcal Y_{47,\,401}=x_1^{3}x_2^{15}x_3x_5^{28}$, & $\mathcal Y_{47,\,402}=x_1^{3}x_2^{15}x_3x_4^{28}$, & $\mathcal Y_{47,\,403}=x_1^{15}x_3x_4^{3}x_5^{28}$, & $\mathcal Y_{47,\,404}=x_1^{15}x_3^{3}x_4x_5^{28}$, \\
$\mathcal Y_{47,\,405}=x_1^{15}x_2x_4^{3}x_5^{28}$, & $\mathcal Y_{47,\,406}=x_1^{15}x_2x_3^{3}x_5^{28}$, & $\mathcal Y_{47,\,407}=x_1^{15}x_2x_3^{3}x_4^{28}$, & $\mathcal Y_{47,\,408}=x_1^{15}x_2^{3}x_4x_5^{28}$, \\
$\mathcal Y_{47,\,409}=x_1^{15}x_2^{3}x_3x_5^{28}$, & $\mathcal Y_{47,\,410}=x_1^{15}x_2^{3}x_3x_4^{28}$, & $\mathcal Y_{47,\,411}=x_2x_3^{6}x_4^{11}x_5^{29}$, & $\mathcal Y_{47,\,412}=x_1x_3^{6}x_4^{11}x_5^{29}$, \\
$\mathcal Y_{47,\,413}=x_1x_2^{6}x_4^{11}x_5^{29}$, & $\mathcal Y_{47,\,414}=x_1x_2^{6}x_3^{11}x_5^{29}$, & $\mathcal Y_{47,\,415}=x_1x_2^{6}x_3^{11}x_4^{29}$, & $\mathcal Y_{47,\,416}=x_2x_3^{6}x_4^{27}x_5^{13}$, \\
$\mathcal Y_{47,\,417}=x_1x_3^{6}x_4^{27}x_5^{13}$, & $\mathcal Y_{47,\,418}=x_1x_2^{6}x_4^{27}x_5^{13}$, & $\mathcal Y_{47,\,419}=x_1x_2^{6}x_3^{27}x_5^{13}$, & $\mathcal Y_{47,\,420}=x_1x_2^{6}x_3^{27}x_4^{13}$, \\
$\mathcal Y_{47,\,421}=x_2x_3^{7}x_4^{10}x_5^{29}$, & $\mathcal Y_{47,\,422}=x_2^{7}x_3x_4^{10}x_5^{29}$, & $\mathcal Y_{47,\,423}=x_1x_3^{7}x_4^{10}x_5^{29}$, & $\mathcal Y_{47,\,424}=x_1x_2^{7}x_4^{10}x_5^{29}$, \\
$\mathcal Y_{47,\,425}=x_1x_2^{7}x_3^{10}x_5^{29}$, & $\mathcal Y_{47,\,426}=x_1x_2^{7}x_3^{10}x_4^{29}$, & $\mathcal Y_{47,\,427}=x_1^{7}x_3x_4^{10}x_5^{29}$, & $\mathcal Y_{47,\,428}=x_1^{7}x_2x_4^{10}x_5^{29}$, \\
$\mathcal Y_{47,\,429}=x_1^{7}x_2x_3^{10}x_5^{29}$, & $\mathcal Y_{47,\,430}=x_1^{7}x_2x_3^{10}x_4^{29}$, & $\mathcal Y_{47,\,431}=x_2x_3^{7}x_4^{11}x_5^{28}$, & $\mathcal Y_{47,\,432}=x_2^{7}x_3x_4^{11}x_5^{28}$, \\
$\mathcal Y_{47,\,433}=x_2^{7}x_3^{11}x_4x_5^{28}$, & $\mathcal Y_{47,\,434}=x_1x_3^{7}x_4^{11}x_5^{28}$, & $\mathcal Y_{47,\,435}=x_1x_2^{7}x_4^{11}x_5^{28}$, & $\mathcal Y_{47,\,436}=x_1x_2^{7}x_3^{11}x_5^{28}$, \\
$\mathcal Y_{47,\,437}=x_1x_2^{7}x_3^{11}x_4^{28}$, & $\mathcal Y_{47,\,438}=x_1^{7}x_3x_4^{11}x_5^{28}$, & $\mathcal Y_{47,\,439}=x_1^{7}x_3^{11}x_4x_5^{28}$, & $\mathcal Y_{47,\,440}=x_1^{7}x_2x_4^{11}x_5^{28}$, \\
$\mathcal Y_{47,\,441}=x_1^{7}x_2x_3^{11}x_5^{28}$, & $\mathcal Y_{47,\,442}=x_1^{7}x_2x_3^{11}x_4^{28}$, & $\mathcal Y_{47,\,443}=x_1^{7}x_2^{11}x_4x_5^{28}$, & $\mathcal Y_{47,\,444}=x_1^{7}x_2^{11}x_3x_5^{28}$, \\
$\mathcal Y_{47,\,445}=x_1^{7}x_2^{11}x_3x_4^{28}$, & $\mathcal Y_{47,\,446}=x_2x_3^{7}x_4^{27}x_5^{12}$, & $\mathcal Y_{47,\,447}=x_2^{7}x_3x_4^{27}x_5^{12}$, & $\mathcal Y_{47,\,448}=x_2^{7}x_3^{27}x_4x_5^{12}$, \\
$\mathcal Y_{47,\,449}=x_1x_3^{7}x_4^{27}x_5^{12}$, & $\mathcal Y_{47,\,450}=x_1x_2^{7}x_4^{27}x_5^{12}$, & $\mathcal Y_{47,\,451}=x_1x_2^{7}x_3^{27}x_5^{12}$, & $\mathcal Y_{47,\,452}=x_1x_2^{7}x_3^{27}x_4^{12}$, \\
$\mathcal Y_{47,\,453}=x_1^{7}x_3x_4^{27}x_5^{12}$, & $\mathcal Y_{47,\,454}=x_1^{7}x_3^{27}x_4x_5^{12}$, & $\mathcal Y_{47,\,455}=x_1^{7}x_2x_4^{27}x_5^{12}$, & $\mathcal Y_{47,\,456}=x_1^{7}x_2x_3^{27}x_5^{12}$, \\
$\mathcal Y_{47,\,457}=x_1^{7}x_2x_3^{27}x_4^{12}$, & $\mathcal Y_{47,\,458}=x_1^{7}x_2^{27}x_4x_5^{12}$, & $\mathcal Y_{47,\,459}=x_1^{7}x_2^{27}x_3x_5^{12}$, & $\mathcal Y_{47,\,460}=x_1^{7}x_2^{27}x_3x_4^{12}$, \\
$\mathcal Y_{47,\,461}=x_2x_3^{7}x_4^{26}x_5^{13}$, & $\mathcal Y_{47,\,462}=x_2^{7}x_3x_4^{26}x_5^{13}$, & $\mathcal Y_{47,\,463}=x_1x_3^{7}x_4^{26}x_5^{13}$, & $\mathcal Y_{47,\,464}=x_1x_2^{7}x_4^{26}x_5^{13}$, \\
$\mathcal Y_{47,\,465}=x_1x_2^{7}x_3^{26}x_5^{13}$, & $\mathcal Y_{47,\,466}=x_1x_2^{7}x_3^{26}x_4^{13}$, & $\mathcal Y_{47,\,467}=x_1^{7}x_3x_4^{26}x_5^{13}$, & $\mathcal Y_{47,\,468}=x_1^{7}x_2x_4^{26}x_5^{13}$, \\
$\mathcal Y_{47,\,469}=x_1^{7}x_2x_3^{26}x_5^{13}$, & $\mathcal Y_{47,\,470}=x_1^{7}x_2x_3^{26}x_4^{13}$, & $\mathcal Y_{47,\,471}=x_2^{3}x_3^{13}x_4^{2}x_5^{29}$, & $\mathcal Y_{47,\,472}=x_2^{3}x_3^{29}x_4^{2}x_5^{13}$, \\
$\mathcal Y_{47,\,473}=x_1^{3}x_3^{13}x_4^{2}x_5^{29}$, & $\mathcal Y_{47,\,474}=x_1^{3}x_3^{29}x_4^{2}x_5^{13}$, & $\mathcal Y_{47,\,475}=x_1^{3}x_2^{13}x_4^{2}x_5^{29}$, & $\mathcal Y_{47,\,476}=x_1^{3}x_2^{13}x_3^{2}x_5^{29}$, \\
$\mathcal Y_{47,\,477}=x_1^{3}x_2^{13}x_3^{2}x_4^{29}$, & $\mathcal Y_{47,\,478}=x_1^{3}x_2^{29}x_4^{2}x_5^{13}$, & $\mathcal Y_{47,\,479}=x_1^{3}x_2^{29}x_3^{2}x_5^{13}$, & $\mathcal Y_{47,\,480}=x_1^{3}x_2^{29}x_3^{2}x_4^{13}$, \\
$\mathcal Y_{47,\,481}=x_2^{3}x_3^{3}x_4^{12}x_5^{29}$, & $\mathcal Y_{47,\,482}=x_2^{3}x_3^{3}x_4^{29}x_5^{12}$, & $\mathcal Y_{47,\,483}=x_2^{3}x_3^{29}x_4^{3}x_5^{12}$, & $\mathcal Y_{47,\,484}=x_1^{3}x_3^{3}x_4^{12}x_5^{29}$, \\
$\mathcal Y_{47,\,485}=x_1^{3}x_3^{3}x_4^{29}x_5^{12}$, & $\mathcal Y_{47,\,486}=x_1^{3}x_3^{29}x_4^{3}x_5^{12}$, & $\mathcal Y_{47,\,487}=x_1^{3}x_2^{3}x_4^{12}x_5^{29}$, & $\mathcal Y_{47,\,488}=x_1^{3}x_2^{3}x_4^{29}x_5^{12}$, \\
$\mathcal Y_{47,\,489}=x_1^{3}x_2^{3}x_3^{12}x_5^{29}$, & $\mathcal Y_{47,\,490}=x_1^{3}x_2^{3}x_3^{12}x_4^{29}$, & $\mathcal Y_{47,\,491}=x_1^{3}x_2^{3}x_3^{29}x_5^{12}$, & $\mathcal Y_{47,\,492}=x_1^{3}x_2^{3}x_3^{29}x_4^{12}$, \\
$\mathcal Y_{47,\,493}=x_1^{3}x_2^{29}x_4^{3}x_5^{12}$, & $\mathcal Y_{47,\,494}=x_1^{3}x_2^{29}x_3^{3}x_5^{12}$, & $\mathcal Y_{47,\,495}=x_1^{3}x_2^{29}x_3^{3}x_4^{12}$, & $\mathcal Y_{47,\,496}=x_2^{3}x_3^{3}x_4^{13}x_5^{28}$, \\
$\mathcal Y_{47,\,497}=x_2^{3}x_3^{3}x_4^{28}x_5^{13}$, & $\mathcal Y_{47,\,498}=x_2^{3}x_3^{13}x_4^{3}x_5^{28}$, & $\mathcal Y_{47,\,499}=x_1^{3}x_3^{3}x_4^{13}x_5^{28}$, & $\mathcal Y_{47,\,500}=x_1^{3}x_3^{3}x_4^{28}x_5^{13}$, \\
$\mathcal Y_{47,\,501}=x_1^{3}x_3^{13}x_4^{3}x_5^{28}$, & $\mathcal Y_{47,\,502}=x_1^{3}x_2^{3}x_4^{13}x_5^{28}$, & $\mathcal Y_{47,\,503}=x_1^{3}x_2^{3}x_4^{28}x_5^{13}$, & $\mathcal Y_{47,\,504}=x_1^{3}x_2^{3}x_3^{13}x_5^{28}$, \\
$\mathcal Y_{47,\,505}=x_1^{3}x_2^{3}x_3^{13}x_4^{28}$, & $\mathcal Y_{47,\,506}=x_1^{3}x_2^{3}x_3^{28}x_5^{13}$, & $\mathcal Y_{47,\,507}=x_1^{3}x_2^{3}x_3^{28}x_4^{13}$, & $\mathcal Y_{47,\,508}=x_1^{3}x_2^{13}x_4^{3}x_5^{28}$, \\
$\mathcal Y_{47,\,509}=x_1^{3}x_2^{13}x_3^{3}x_5^{28}$, & $\mathcal Y_{47,\,510}=x_1^{3}x_2^{13}x_3^{3}x_4^{28}$, & $\mathcal Y_{47,\,511}=x_2^{3}x_3^{4}x_4^{11}x_5^{29}$, & $\mathcal Y_{47,\,512}=x_1^{3}x_3^{4}x_4^{11}x_5^{29}$, \\
$\mathcal Y_{47,\,513}=x_1^{3}x_2^{4}x_4^{11}x_5^{29}$, & $\mathcal Y_{47,\,514}=x_1^{3}x_2^{4}x_3^{11}x_5^{29}$, & $\mathcal Y_{47,\,515}=x_1^{3}x_2^{4}x_3^{11}x_4^{29}$, & $\mathcal Y_{47,\,516}=x_2^{3}x_3^{4}x_4^{27}x_5^{13}$, \\
$\mathcal Y_{47,\,517}=x_1^{3}x_3^{4}x_4^{27}x_5^{13}$, & $\mathcal Y_{47,\,518}=x_1^{3}x_2^{4}x_4^{27}x_5^{13}$, & $\mathcal Y_{47,\,519}=x_1^{3}x_2^{4}x_3^{27}x_5^{13}$, & $\mathcal Y_{47,\,520}=x_1^{3}x_2^{4}x_3^{27}x_4^{13}$, \\
$\mathcal Y_{47,\,521}=x_2^{3}x_3^{5}x_4^{10}x_5^{29}$, & $\mathcal Y_{47,\,522}=x_1^{3}x_3^{5}x_4^{10}x_5^{29}$, & $\mathcal Y_{47,\,523}=x_1^{3}x_2^{5}x_4^{10}x_5^{29}$, & $\mathcal Y_{47,\,524}=x_1^{3}x_2^{5}x_3^{10}x_5^{29}$, \\
$\mathcal Y_{47,\,525}=x_1^{3}x_2^{5}x_3^{10}x_4^{29}$, & $\mathcal Y_{47,\,526}=x_2^{3}x_3^{5}x_4^{11}x_5^{28}$, & $\mathcal Y_{47,\,527}=x_1^{3}x_3^{5}x_4^{11}x_5^{28}$, & $\mathcal Y_{47,\,528}=x_1^{3}x_2^{5}x_4^{11}x_5^{28}$, \\
$\mathcal Y_{47,\,529}=x_1^{3}x_2^{5}x_3^{11}x_5^{28}$, & $\mathcal Y_{47,\,530}=x_1^{3}x_2^{5}x_3^{11}x_4^{28}$, & $\mathcal Y_{47,\,531}=x_2^{3}x_3^{5}x_4^{27}x_5^{12}$, & $\mathcal Y_{47,\,532}=x_1^{3}x_3^{5}x_4^{27}x_5^{12}$, \\
$\mathcal Y_{47,\,533}=x_1^{3}x_2^{5}x_4^{27}x_5^{12}$, & $\mathcal Y_{47,\,534}=x_1^{3}x_2^{5}x_3^{27}x_5^{12}$, & $\mathcal Y_{47,\,535}=x_1^{3}x_2^{5}x_3^{27}x_4^{12}$, & $\mathcal Y_{47,\,536}=x_2^{3}x_3^{5}x_4^{26}x_5^{13}$, \\
$\mathcal Y_{47,\,537}=x_1^{3}x_3^{5}x_4^{26}x_5^{13}$, & $\mathcal Y_{47,\,538}=x_1^{3}x_2^{5}x_4^{26}x_5^{13}$, & $\mathcal Y_{47,\,539}=x_1^{3}x_2^{5}x_3^{26}x_5^{13}$, & $\mathcal Y_{47,\,540}=x_1^{3}x_2^{5}x_3^{26}x_4^{13}$, \\
$\mathcal Y_{47,\,541}=x_2^{3}x_3^{7}x_4^{9}x_5^{28}$, & $\mathcal Y_{47,\,542}=x_2^{7}x_3^{3}x_4^{9}x_5^{28}$, & $\mathcal Y_{47,\,543}=x_1^{3}x_3^{7}x_4^{9}x_5^{28}$, & $\mathcal Y_{47,\,544}=x_1^{3}x_2^{7}x_4^{9}x_5^{28}$, \\
$\mathcal Y_{47,\,545}=x_1^{3}x_2^{7}x_3^{9}x_5^{28}$, & $\mathcal Y_{47,\,546}=x_1^{3}x_2^{7}x_3^{9}x_4^{28}$, & $\mathcal Y_{47,\,547}=x_1^{7}x_3^{3}x_4^{9}x_5^{28}$, & $\mathcal Y_{47,\,548}=x_1^{7}x_2^{3}x_4^{9}x_5^{28}$, \\
$\mathcal Y_{47,\,549}=x_1^{7}x_2^{3}x_3^{9}x_5^{28}$, & $\mathcal Y_{47,\,550}=x_1^{7}x_2^{3}x_3^{9}x_4^{28}$, & $\mathcal Y_{47,\,551}=x_2^{3}x_3^{7}x_4^{25}x_5^{12}$, & $\mathcal Y_{47,\,552}=x_2^{7}x_3^{3}x_4^{25}x_5^{12}$, \\
$\mathcal Y_{47,\,553}=x_1^{3}x_3^{7}x_4^{25}x_5^{12}$, & $\mathcal Y_{47,\,554}=x_1^{3}x_2^{7}x_4^{25}x_5^{12}$, & $\mathcal Y_{47,\,555}=x_1^{3}x_2^{7}x_3^{25}x_5^{12}$, & $\mathcal Y_{47,\,556}=x_1^{3}x_2^{7}x_3^{25}x_4^{12}$, \\
$\mathcal Y_{47,\,557}=x_1^{7}x_3^{3}x_4^{25}x_5^{12}$, & $\mathcal Y_{47,\,558}=x_1^{7}x_2^{3}x_4^{25}x_5^{12}$, & $\mathcal Y_{47,\,559}=x_1^{7}x_2^{3}x_3^{25}x_5^{12}$, & $\mathcal Y_{47,\,560}=x_1^{7}x_2^{3}x_3^{25}x_4^{12}$.
\end{tabular}%
\end{center}

\subsection{$\mathcal A_2$-generators for $\mathscr P_5^+$ in degree $47$}\label{s5.7}

Recall that Kameko's squaring operation $(\widetilde {Sq_*^0})_{(5,47)}: (Q\mathscr P_5)_{47}\to (Q\mathscr P_5)_{21}$ is an epimorphism of $\mathbb Z/2GL_5$-modules. Therefore, we get
$$ \mathscr B_5(47) = \mathscr B_5^0(47)\bigcup \varphi(\mathscr B_5(21))\bigcup \big(\mathscr B_5^+(47)\bigcap {\rm Ker}((\widetilde {Sq_*^0})_{(5,47)})\big),$$ where $|\mathscr B_5^0(47)|  = 560,\ |\varphi(\mathscr B_5(21))| = 840$ with $ \varphi: \mathscr P_5\to \mathscr P_5,\ \varphi(u) = X_{(\emptyset,\, 5)}u^2,\ \forall u\in \mathscr P_5.$
From the results in Sect.\ref{s3.2.2}, we have
$$ \mathscr B_5^+(47)\,\bigcap\, {\rm Ker}((\widetilde {Sq_*^0})_{(5,47)}) = \mathscr B^+_5(\overline{\omega}_{(1)})\,\bigcup\, \mathscr B^+_5(\overline{\omega}_{(4)})\bigcup \mathscr B^+_5(\overline{\omega}_{(5)}),$$
where $\overline{\omega}_{(1)} = (3,2,2,2,1),\ \overline{\omega}_{(4)} = (3,4,3,1,1),$ and $\overline{\omega}_{(5)} = (3,4,3,3).$ 

\medskip

$\mathscr B^+_5(\overline{\omega}_{(1)})$ is the set of $196$ admissible monomials: $\mathcal Y_{47,\, t},\ 1\leq t\leq 370$

\begin{center}
%
\end{center}

\end{document}